\definecolor{MidnightBlack}{rgb}{0.1,0.1,.34}
\definecolor{MidnightBlue}{rgb}{0.1,0.1,0.43}
\definecolor{Black}{rgb}{0,0, 0}
\definecolor{Blue}{rgb}{0, 0 ,1}
\definecolor{Red}{rgb}{1, 0 ,0}
\definecolor{White}{rgb}{1, 1, 1}
\definecolor{grey}{rgb}{.6, .6, .6}
\definecolor{Mygreen}{rgb}{.0, .7, .0}
\definecolor{Yellow}{rgb}{.55,.55,0}
\definecolor{Mustard}{rgb}{1.0, 0.86, 0.35}
\definecolor{applegreen}{rgb}{0.55, 0.71, 0.0}
\definecolor{darkturquoise}{rgb}{0.0, 0.81, 0.82}
\definecolor{celestialblue}{rgb}{0.29, 0.59, 0.82}
\definecolor{green_yellow}{rgb}{0.68, 1.0, 0.18}
\definecolor{crimsonglory}{rgb}{0.75, 0.0, 0.2}
\definecolor{darkmagenta}{rgb}{0.30, 0.0, 0.30}
\definecolor{magenta}{rgb}{0.50, 0.0, 0.50}
\definecolor{internationalorange}{rgb}{1.0, 0.31, 0.0}
\definecolor{darkorange}{rgb}{1.0, 0.55, 0.0}
\definecolor{ao}{rgb}{0.0, 0.5, 0.0}
\definecolor{awesome}{rgb}{1.0, 0.13, 0.32}
\definecolor{darkcyan}{rgb}{0.0, 0.50, 0.50}
\definecolor{violet}{rgb}{0.93, 0.51, 0.93}
\definecolor{brown}{rgb}{0.65, 0.16, 0.16}
\definecolor{orange}{rgb}{1.0, 0.65, 0.0}
\definecolor{cornflowerblue}{rgb}{0.39, 0.58, 0.93}
\newcommand{\cref}[1]{\autoref{#1}}
\newcommand{\remove}[1]{}
\newcounter{func}
\newcommand{\funref}[1]{\hyperref[#1]{f_{\ref*{#1}}}} 
\tikzset{black node/.style={draw, circle, fill = black, minimum size = 5pt, inner sep = 0pt}}
\tikzset{white node/.style={draw, circlternary_treese, fill = white, minimum size = 5pt, inner sep = 0pt}}
\tikzset{normal/.style = {draw=none, fill = none}}
\tikzset{lean/.style = {draw=none, rectangle, fill = none, minimum size = 0pt, inner sep = 0pt}}
\tikzset{diam/.style={draw, diamond, fill = black, minimum size = 7pt, inner sep = 0pt}}
\tikzset{
	position/.style args={#1:#2 from #3}{
		at=($(#3)+(#1:#2)$)
	}
}
\tikzset{
  v:main/.style = {draw, circle, scale=0.8, thick,fill=black,inner sep=0.7mm},
  v:ghost/.style = {inner sep=0pt,scale=1},
  v:marked/.style = {circle, scale=1.3, fill=DarkGoldenrod,opacity=0.4},
  >={latex},
  e:main/.style = {line width=1pt}
}
\newcommand{\Acal}{\mathcal{A}}
\newcommand{\Bcal}{\mathcal{B}}
\newcommand{\Ccal}{\mathcal{C}}
\newcommand{\Dcal}{\mathcal{D}}
\newcommand{\Ecal}{\mathcal{E}}
\newcommand{\Fcal}{\mathcal{F}}
\newcommand{\Gcal}{\mathcal{G}}
\newcommand{\Hcal}{\mathcal{H}}
\newcommand{\Kcal}{\mathcal{K}}
\newcommand{\Lcal}{\mathcal{L}}
\newcommand{\Mcal}{\mathcal{M}}
\newcommand{\Ocal}{\mathcal{O}}
\newcommand{\Pcal}{\mathcal{P}}
\newcommand{\Qcal}{\mathcal{Q}}
\newcommand\Rcal{\mathcal{R}}
\newcommand{\Scal}{\mathcal{S}}
\newcommand{\Tcal}{\mathcal{T}}
\newcommand{\Ucal}{\mathcal{U}}
\newcommand{\Vcal}{\mathcal{V}}
\newcommand{\Wcal}{\mathcal{W}}
\newcommand{\Ycal}{\mathcal{Y}}
\newcommand{\Zcal}{\mathcal{Z}}
\newcommand{\Ebbb}{\mathbb{E}}
\newcommand{\Nbbb}{\mathbb{N}}
\newcommand{\Obbb}{\mathbb{O}}
\newcommand{\Rbbb}{\mathbb{R}}
\newcommand{\Sbbb}{\mathbb{S}}
\newcommand{\eqdef}{\stackrel{{\scriptsize\rm def}}{=}}
\definecolor{Red}{rgb}{1, 0 ,0}
\definecolor{Blue}{rgb}{0, 0 ,1}
\newtheorem{theorem}{Theorem}[section]
\newaliascnt{question}{theorem}
\newaliascnt{lemma}{theorem}
\newtheorem{lemma}[lemma]{Lemma}
\newaliascnt{claim}{theorem}
\newaliascnt{invariant}{theorem}
\newaliascnt{proposition}{theorem}
\newtheorem{proposition}[proposition]{Proposition}
\newaliascnt{observation}{theorem}
\newtheorem{observation}[observation]{Observation}
\newaliascnt{corollary}{theorem}
\newtheorem{corollary}[corollary]{Corollary}
\newaliascnt{definition}{theorem}
\newaliascnt{conjecture}{theorem}
\newaliascnt{counterexample}{theorem}
\newcommand{\hh}{\end{document}}
\newcommand{\p}{{\sf p}}
\newcommand{\sobs}{{\sf sobs}}
\newcommand{\obs}{{\sf obs}}
\newcommand{\excl}{{\sf excl}}
\newcommand{\gall}{\mathcal{G}_{{\text{\rm  \textsf{all}}}}}
\newcommand{\tw}{{\sf tw}\xspace}
\newcommand{\size}{{\sf size}\xspace}
\newcommand{\td}{{\sf td}\xspace}
\newcommand{\apex}{{\sf apex}\xspace}
\newcommand{\barrier}{{\sf barrier}\xspace}
\newcommand{\ed}{{\sf ed}\xspace}
\newcommand{\cupall}{{\pmb{\bigcup}}}
\newcommand*\samethanks[1][\value{footnote}]{\footnotemark[#1]}
\newcommand{\ground}{\ensuremath{\mathsf{ground}}}
\newcommand{\torso}{\ensuremath{\mathsf{torso}}}
\newcommand{\poly}{\text{$\mathsf{poly}$}\xspace}
\newcommand{\folio}{{\sf \mbox{-}folio}\xspace}
\newcommand{\bd}{{\sf bd}\xspace}
\newcommand{\rev}{\mathsf{rev}\xspace}
\newcommand{\dissolve}{\mathsf{dissolve}\xspace}
\newcommand{\rot}[1]{(#1)}
\newcommand{\lin}[1]{\langle #1\rangle}
\newcommand{\numen}[1]{\ifthenelse{\not\equal{#1}{1}}{#1}{}}
\newcommand{\nn}[2]{\Nbbb^{\numen{#1}}\to\Nbbb^{\numen{#2}}}
\definecolor{vagelisColour}{RGB}{0, 65, 130}
\newcommand{\bfl}[1]{\textbf{\bf #1}}
\newcommand{\trace}{\mathsf{trace}\xspace}%
\newcommand{\defi}[1]{\emph{#1}}
\newcommand{\llabel}[1]{\label{#1}}
\title{Obstructions to Erdős-Pósa Dualities for Minors\thanks{All authors were supported by the French-German Collaboration ANR/DFG Project UTMA (ANR-20-CE92-0027). The third author was also supported by the Franco-Norwegian project PHC AURORA 2024 (Projet n° 51260WL). The last author  was also supported by   the Institute for Basic Science (IBS-R029-C1).}}
\author{Christophe Paul\thanks{LIRMM, Univ Montpellier, CNRS, Montpellier, France.} \and Evangelos Protopapas\samethanks \and Dimitrios M. Thilikos\samethanks\\ \and    Sebastian Wiederrecht\thanks{Discrete Mathematics Group, Institute for Basic Science, Daejeon, South Korea.}}
\date{\empty}
\begin{document}

\maketitle

\begin{abstract}
\medskip\medskip

\noindent
Let $\mathcal{G}$ and $\mathcal{H}$ be minor-closed graph classes.
We say that the pair $(\mathcal{H},\mathcal{G})$ is an \textsl{Erd\H{o}s-P{\'o}sa pair} (EP-pair) if there exists a function $f$ such that for every $k$ and every graph $G\in \mathcal{G},$ either $G$ has $k$ pairwise vertex-disjoint subgraphs which do not belong to $\mathcal{H},$ or there exists a set $S\subseteq V(G)$ of size at most $f(k)$ for which $G - S \in \mathcal{H}.$
The classic result of Erd\H{o}s and P{\'o}sa says that if $\mathcal{F}$ is the class of forests, then $(\mathcal{F},\mathcal{G})$ is an EP-pair \textsl{for all} graph classes $\mathcal{G}.$
A minor-closed graph class $\mathcal{G}$ is an \textsl{EP-counterexample} for $\mathcal{H}$ if $\mathcal{G}$ is minimal with the property that $(\mathcal{H},\mathcal{G})$ is \textsl{not} an EP-pair.

In this paper, we prove that for every minor-closed graph class $\mathcal{H}$ the set $\mathfrak{C}_{\mathcal{H}}$ of all EP-counterexamples for $\mathcal{H}$ is \textsl{finite}.
In particular, we provide a complete characterization of $\mathfrak{C}_{\mathcal{H}}$ for every $\mathcal{H}$ and give a constructive upper bound on its size.
We show that each class $\mathcal{G}$ in $\mathfrak{C}_{\mathcal{H}}$ can be described as the set of all minors of some, suitably defined, sequence of grid-like graphs $\langle \mathscr{W}_{k} \rangle_{k\in \mathbb{N}}.$
Moreover, each $\mathscr{W}_{k}$ admits a half-integral packing, i.e., $k$ copies of some $H \not\in \mathcal{H}$ where no vertex is used more than twice. 
This implies a complete  delineation of the half-integrality threshold of the Erd\H{o}s-P{\'o}sa property for minors and as a corollary, we obtain a constructive proof of Thomas' conjecture on the half-integral Erd\H{o}s-P{\'o}sa property for minors which was recently confirmed by Liu.
Our results are algorithmic.
Let $h=h(\mathcal{H})$ denote the maximum size of an obstruction to $\mathcal{H}.$
For every  minor-closed graph class $\mathcal{H},$  we construct an algorithm that, given a graph $G$ and an integer $k,$ either outputs a half-integral packing of $k$ copies of some $H \not\in \mathcal{H}$ or outputs a set of at most ${2^{k^{\mathcal{O}_h(1)}}}$ vertices whose deletion creates a graph in $\mathcal{H}$ in time $2^{2^{{k^{\mathcal{O}_h(1)}}}} \cdot |G|^4 \log |G|.$
Moreover, as a consequence of our results, for every minor-closed class $\mathcal{H},$ we obtain min-max-dualities, which may be seen as analogues of the celebrated Grid Theorem of Robertson and Seymour, for the recently introduced parameters $\mathcal{H}$-treewidth and elimination distance to $\mathcal{H}.$
\end{abstract}
\medskip\medskip\medskip

\noindent{\bf Keywords:} Erd\H{o}s-P{\'o}sa dualities; Graph parameters; Graph minors; Treewidth, Elimination distance; Universal Obstructions; Parametric graphs; Decomposition theorems\\

\medskip\medskip
\noindent{\bf Mathematics Subject Classification:} 05C83; 05C85; 05C10; 05C75, 68R10

\newpage

\tableofcontents

\newpage

\section{Introduction}
Min-max dualities like Menger's Theorem or the related Max-Flow/Min-Cut Theorem are among the most powerful results in discrete mathematics, usually having strong algorithmic consequences.
In many cases, exact dualities like the ones above do not exist, however, allowing for some gap may yield approximate results.
Two emblematic theorems of this type in structural graph theory are the Erd\H{o}s-P{\'o}sa Theorem for hitting and packing cycles \cite{ErdosPosaOriginal} and the Grid Theorem of Robertson and Seymour which ties treewidth to the existence of large and highly connected planar minors \cite{RobertsonS86GMV}.
In this paper we generalize both results above, maintaining their deep connection, to arbitrary minor-closed graph classes $\mathcal{H}.$
Regarding the former, we determine \textsl{all} minor-closed classes $\mathcal{G}$ where, for each of its members, the absence of a small hitting set for the obstructions of $\mathcal{H}$ implies the presence of many disjoint such obstructions.
Moreover, regarding the second, we prove that any graph where the parameter ``$\mathcal{H}$-treewidth'' is large must contain a certificate for this in the form of a highly regular grid-like minor containing many ``highly entangled'' obstructions to $\mathcal{H}.$

\subsection{Erd\H{o}s-P{\'o}sa dualities for graph minors}
We begin with a general introduction to Erd\H{o}s-P{\'o}sa-type dualities together with some basic notation for the presentation of our results.

\paragraph{Minors.}
A graph $Z$ is a \defi{minor} of a graph $G$ if $Z$ can be obtained from a subgraph of $G$ by contracting edges.
This induces a partial order on the class of all graphs which we denote by $\leq.$
A graph class $\mathcal{G}$ is \defi{minor-closed} if it contains all minors of its elements and it is \defi{proper} if it does not contain all graphs.
The \defi{obstruction set} $\obs(\mathcal{G})$ of a minor-closed graph class $\mathcal{G}$ is the set of $\leq$-minimal graphs not in $\mathcal{G}.$
We refer to the graphs in $\obs(\mathcal{G})$ as the \defi{obstructions} of $\mathcal{G}.$
Due to the main result of the Graph Minors series by Robertson and Seymour \cite{robertson2004GMXX} we know that $\mathsf{obs}(\mathcal{G})$ is a \textsl{finite} set for all minor-closed graph classes $\mathcal{G}.$

\paragraph{Apices and barriers.}
Given a graph class $\Hcal,$ an \defi{$\Hcal$-modulator} of a graph $G$  is a subset of vertices of $G$ whose deletion yields a graph in $\Hcal.$
The \defi{$\Hcal$-apex} number of $G,$ denoted by $\apex_{\Hcal}(G),$ is defined as the minimum size of an $\Hcal$-modulator in $G.$
When $\Hcal$ is a minor-closed graph class, the study of this parameter includes the proof of constructive bounds, as a function in $k,$ for the number of obstructions of the graph class $\{G\mid \apex_{\Hcal}(G)≤k\}$ \cite{AdlerGK08comp,FellowsL89anan} as well as fixed-parameter algorithms for deciding whether a graph belongs to $\{G\mid \apex_{\Hcal}(G)≤k\},$ parameterized by $k$ \cite{MarxS07obta,KociumakaP19dele,JansenLS14anea,Kawarabayashi09plan,SauST20anfp,SauST21kapiII,SauST22apicesalg}.
Instances of this problem include the classic case where $\mathcal{H}$ is the class of all forests \cite{Downey1995FPT.I.,Bodlaender2007FVSKernel,Jansen2011FVSKernels,Majumdar2017FVS} as well as other classes with ``good'' algorithmic properties such as planar graphs \cite{Kawarabayashi09plan,JansenLS14anea} and graphs of bounded genus \cite{KociumakaP19dele}, that have attracted particular interest.

As an attempt to introduce an ``obstructing measure'' for a graph $G$ to belong to some  graph class $\Hcal,$ we consider a collection $\Bcal$ of subgraphs of $G,$ each  ``obstructing'' membership to $\Hcal.$ 
When $\Hcal$ is minor-closed, every member of $\Bcal$ should contain a common\footnote{One might alternatively ask for minor-containment of different elements of $\obs(\Hcal)$; however this would  not define a parameter that is functionally different, as $\mathsf{obs}(\mathcal{H})$ is finite by the Robertson-Seymour Theorem.} element of $\obs(\Hcal)$ as a minor.
If the subgraphs in $\Bcal$ are pairwise vertex disjoint we call $\Bcal$ an \defi{integral $\Hcal$-barrier} (or $\Hcal$-barrier for short).
If every vertex of $G$ belongs to at most two members of $\Bcal,$ then $\Bcal$ is a \defi{half-integral $\Hcal$-barrier}.
We may turn this notion into a maximization parameter $\barrier_{\Hcal}(G)$ (resp. $\nicefrac{1}{2}\text{-}\barrier_{\Hcal}(G)$) by defining it as the maximum size of an $\Hcal$-barrier (resp.  half-integral $\Hcal\text{-}\barrier$) of $G.$
For example, in the case where $\Hcal$ is the class of forests, an integral $\Hcal$-barrier of $G$ is a set of pairwise vertex-disjoint cycles in $G$ (notice that each cycle contains $K_3\in\mathsf{obs}(\mathcal{H})=\{K_{3}\}$ as a minor).

\paragraph{Erd\H{o}s-P{\'o}sa dualities.}
If $G$ contains an $\Hcal$-barrier $\Bcal,$ then every $\Hcal$-modulator is forced to contain at least one vertex from each member of $\Bcal.$
Therefore, $\barrier_{\Hcal}(G)\leq \apex_{\Hcal}(G).$
This holds for all possible choices of $\Hcal$ and $G.$
Similarly, one may observe that $\nicefrac{1}{2}\text{-}\barrier_{\Hcal}(G)\leq \frac{1}{2}\cdot \apex_{\Hcal}(G).$
The question of duality is whether the reverse inequalities hold approximately: 
\begin{quote}
    \textsl{Under which assumptions does the absence of a large (half-integral) $\Hcal$-barrier imply the existence of a small $\Hcal$-modulator?}
\end{quote}

\smallskip
Providing a precise and complete answer to the ``\textsl{when}'' but also to the ``\textsl{why}'' of the question above is the objective of this paper.
\smallskip

In order to discuss this objective formally, let us present a unified way to express Erd\H{o}s-P{\'o}sa type questions and results.
Let $\Hcal$ and $\Gcal$ be two minor-closed graph classes.
We refer to $\Hcal$ as the \defi{target class} and $\Gcal$ as the \defi{environment class}.
The pair $(\Hcal,\Gcal)$ is an \defi{Erd\H{o}s-P{\'o}sa pair}, or \defi{EP-pair} for short, if there is a function $f\colon\Nbbb\to\Nbbb,$ called the \defi{gap function}, such that, for every graph $G\in\Gcal,$ it holds that $\apex_{\Hcal}(G)\leq f(\barrier_{\Hcal}(G)).$
The concept of \defi{EP-pair} allows us to express \textsl{all} possible Erd\H{o}s-P{\'o}sa dualities for the minor relation and thereby formalize the ``when''-part of the question above as follows
\begin{eqnarray}
\begin{minipage}{14cm}
\centering
\textsl{Which pairs $(\Hcal,\Gcal)$ are EP-pairs and what are their gap functions?}
\end{minipage}\llabel{leading_question}
\end{eqnarray}

\noindent
Let $\mathcal{G}_{\mathsf{all}}$ denote the class of all graphs and let $\Fcal$ be the class of all forests.
The seminal theorem of Erd\H{o}s and P{\'o}sa \cite{ErdosPosaOriginal} initiated research for Erd\H{o}s-P{\'o}sa dualities and motivated their name. It can be reformulated as:
\begin{eqnarray}
\begin{minipage}{14cm}
\centering
 $(\Fcal,\gall)$ is an EP-pair whose gap function belongs to $\mathcal{O}(k\log k).$
\end{minipage}\llabel{EP_original}
\end{eqnarray}

\noindent
Robertson and Seymour, as a consequence of their celebrated Grid Theorem, proved the following: \cite{RobertsonS86GMV}
\begin{eqnarray}
\begin{minipage}{14cm}
\centering
 $(\Hcal,\gall)$ is an EP-pair if and only if $\obs(\Hcal)$ contains a planar graph.
\end{minipage}\llabel{EP_for_minors}
\end{eqnarray}

\noindent
The proof of \eqref{EP_for_minors} identifies candidates for graph families that indicate an answer to the ``why''-part of \eqref{leading_question}: they are wall-like structures representing the graphs embeddable in non-spherical surfaces such as the projective plane or the torus (see \cref{torus_and_crosscap}).
Cames van Batenburg, Huynh, Joret, and Raymond \cite{CamesVBat2019TightEP} proved that for every  EP-pair $(\mathcal{H},\mathcal{G}_{\mathsf{all}})$ the gap function\footnote{Given two functions $\chi,\psi\colon \mathbb{N}\rightarrow \mathbb{N},$ we write $\chi(n)=\mathcal{O}_{x}(\psi(n))$ to denote that there exists a computable function $f\colon\mathbb{N} \rightarrow \mathbb{N}$ such that $\chi(n)=\mathcal{O}( f(x)\cdot \psi(n)).$
Moreover, we write $\chi(n)=\poly_{x}(\psi(n))$ to denote that there exists a function $f\colon\mathbb{N}\to\mathbb{N}$ such that $\chi(n)=f(x) n^{\mathcal{O}(1)}.$} is $\mathcal{O}_h(k\log k)$ where $h\coloneqq h(\mathcal{H})$ denotes the maximum size of a member of $\mathsf{obs}(\mathcal{H}),$ thereby  matching the bound of \eqref{EP_original}.

Using $\nicefrac{1}{2}\text{-}\barrier_{\Hcal}(G)$ instead of $\barrier_{\Hcal}(G)$ in the discussion above yields the concept of an \defi{$\nicefrac{1}{2}\text{-}$Erd\H{o}s-P{\'o}sa pair}, or a \defi{$\nicefrac{1}{2}\text{-}$EP-pair} for short.
In a major breakthrough result, Liu \cite{liu2022packing} proved\footnote{A proof of Thomas' Conjecture was announced by Norin in 2010 at the SIAM Conference on Discrete Mathematics but it was never published \cite{NorinAnnouncement}. The proof we present in this paper is independent of both Liu's and Norin's work.} the conjecture of Thomas (see \cite{Kawarabayashi2007HalfIntegral}) stating the following:
\begin{eqnarray}
  \begin{minipage}{14cm}
  \centering
   $(\Hcal,\gall)$ is a $\nicefrac{1}{2}$-EP-pair for all proper minor-closed graph classes $\mathcal{H}.$
  \end{minipage}\llabel{half_integral_EP_for_minors}
\end{eqnarray} 
\noindent
A major drawback of Liu's result is that it does neither provide explicit bounds on the gap function for \eqref{half_integral_EP_for_minors}, nor does it provide a constructive way to obtain either the $\Hcal$-modulator, or the half-integral $\Hcal$-barrier.
For this reason, algorithmic applications of \eqref{half_integral_EP_for_minors} are not immediate.
Pursuing a constructive version of Liu's result appears to be a crucial step in the further development of the algorithmic aspects of Graph Minors series and its applications.

\smallskip
The Grid Theorem of Robertson and Seymour \cite{RobertsonS86GMV} implies that the target graph classes $\mathcal{H}$ which together with $\gall$ form EP-pairs are exactly those of bounded treewidth\footnote{A definition of treewidth can be found in \autoref{sec_obstructions_for_Htw}.}.
This leads to the following variant of \eqref{leading_question}:
\begin{quote}
\textsl{Given a minor-closed class $\mathcal{H}$ of unbounded treewidth, what is the most general environment class $\mathcal{G}$ such that $(\mathcal{H},\mathcal{G})$ is an EP-pair?}    
\end{quote}

\paragraph{Counterexamples to the Erd\H{o}s-P{\'o}sa property.}
To answer the question above, we define $\mathfrak{C}_{\Hcal}$ as the collection of all graph classes $\Gcal$ for which $(\Hcal,\Gcal)$ is not an EP-pair, while $(\Hcal,\Gcal')$ is an EP-pair for every minor-closed class $\Gcal'$ properly contained in $\Gcal.$
In other words, $\mathfrak{C}_{\Hcal}$ contains all environment classes that are subset-minimal counterexamples for being an EP-pair with $\Hcal$ as the target class.
Observe that, by \eqref{EP_for_minors}, $\mathfrak{C}_{\Hcal}=\emptyset$ if and only if $\Hcal$ has bounded treewidth.

\smallskip
We wish to stress the ``second order'' nature of the collection of graph classes $\mathfrak{C}_{\mathcal{H}}.$

\paragraph{An order-theoretic perspective.}

With respect to set containment, we may think of $\mathfrak{C}_{\mathcal{H}}$ as an \textsl{obstruction} for a minor-closed environment class $\mathcal{G}$ to form an EP-pair with $\mathcal{H}.$ Indeed, if $\Ebbb_{\Hcal}$
is the collection of all environment minor-closed graph classes that form EP-pairs with the target minor-closed class $\Hcal,$ then 
$\mathfrak{C}_{\mathcal{H}}$ contains the $\subseteq$-minimal minor-closed graph classes that do not belong in $\Ebbb_{\Hcal}.$ In this way, $\mathfrak{C}_{\mathcal{H}}$ can be seen as a \textsl{second-order obstruction} for the collection of classes $\Ebbb_{\Hcal}.$

Therefore, providing a characterization of $\mathfrak{C}_{\Hcal}$ is a way to describe \textsl{all} possible Erd\H{o}s-P{\'o}sa duality theorems for minor-closed graph classes.
The fact that the collection $\mathfrak{C}_{\Hcal}$ exists and is countable for every $\Hcal$ is a consequence of the fact that $(\gall,\leq)$ is a WQO, as proven by Robertson and Seymour \cite{robertson2004GMXX}.
Indeed, this follows by the fact that every graph class in $\mathfrak{C}_{\Hcal}$ is minor-closed and thereby is characterized by a finite set of obstructions.
However, since $\mathfrak{C}_{\Hcal}$ is a \textsl{collection} of graph classes, WQO for the minor relation does not imply that $\mathfrak{C}_{\Hcal}$ is finite.
In fact, the finiteness of $\mathfrak{C}_{\Hcal},$ for all $\Hcal,$ would follow immediately from the stronger conjecture that $(\gall,\leq)$ is an ``$\omega^2$-WQO'' \cite{paul2023graph}.
This conjecture, as well as the more general one that $(\gall,\leq)$ is a Better Quasi Ordering (BQO) remain central open problem in order theory \cite{Requinot2017TowardsBetter}.
Unfortunately, proving these conjectures seems to be a ``hopelessly difficult problem'', as mentioned by Robertson, Seymour, and Thomas in \cite{Robertson1995BQOisHard}.
Moreover, it is unlikely that if such a proof ever appears, it is a constructive one.
Even a constructive proof that $(\gall,\leq)$ is a WQO appears to be out of reach \cite{FriedmanRS87them,krombholz2019upper,fellows1988nonconstructive}.

An alternative way to obtain explicit bounds for the gap function would be a constructive proof for Erd\H{o}s-P{\'o}sa-type dualities.
This would constitute the basis of an algorithmic theory.
For these reasons, to tackle such theorems, it seems imperative to study the collections $\mathfrak{C}_{\Hcal}$ explicitly.

\paragraph{Our results.}
In this paper we  fully characterize the collection $\mathfrak{C}_{\Hcal}$ for every proper minor-closed graph class $\Hcal$ and prove that it is \textsl{finite}.

A \defi{parametric graph} is a minor-monotone sequence $\mathscr{G}=\langle \mathscr{G}_t \rangle_{t\in\mathbb{N}}$ of graphs, i.\@ e.\@ for every $t\in\mathbb{N},$ $\mathscr{G}_{t}\leq \mathscr{G}_{t+1}.$
As we shall see, the concept of parametric graph is central to the characterization of the collection $\mathfrak{C}_{\Hcal},$ for every proper minor-closed graph class $\Hcal.$
Indeed, for every proper minor-closed graph class $\Hcal,$ we construct a \textsl{finite} collection of parametric graphs  $\mathfrak{F}_{\Hcal}^{\nicefrac{1}{2}}=\{\mathscr{F}^{\Hcal,1},\ldots,\mathscr{F}^{\Hcal,r}\},$ $r \coloneqq r(\mathcal{H}),$ consisting of ``wall-like'' parametric graphs, hereafter called \textsl{walloids},
and prove that, every class in $\mathfrak{C}_{\Hcal}$ is the set of all minors of some parametric graph from $\mathfrak{F}_{\Hcal}^{\nicefrac{1}{2}}.$
In particular, we prove the following.

\begin{theorem}[Main result]\llabel{main_result_informal}
For every proper minor-closed graph class $\mathcal{H}$ and every minor-closed graph class $\mathcal{G},$ $(\Hcal,\Gcal)$ is an EP-pair if and only if the class $\mathcal{G}$ excludes a member of $\mathscr{F},$ for every $ \mathscr{F}\in\mathfrak{F}^{\nicefrac{1}{2}}_{\mathcal{H}}.$
\end{theorem}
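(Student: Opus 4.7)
The plan is to establish the two directions of the biconditional separately, leaning on the walloid construction and on a constructive version of the half-integral Erd\H{o}s-P\'osa property \eqref{half_integral_EP_for_minors}.

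For the ``only if'' direction, I would show that each parametric family $\mathscr{F} = \langle \mathscr{W}_{k}\rangle_{k\in\mathbb{N}}\in \mathfrak{F}^{\nicefrac{1}{2}}_{\mathcal{H}}$ is a \emph{genuine} EP-counterexample: by construction, $\mathscr{W}_{k}$ carries a half-integral $\mathcal{H}$-barrier of size $k$, while $\apex_{\mathcal{H}}(\mathscr{W}_{k})\to\infty$ and $\barrier_{\mathcal{H}}(\mathscr{W}_{k})$ stays bounded (essentially by a constant depending only on the ``topology'' of the walloid). So if $\mathcal{G}$ contained $\mathscr{W}_{k}$ for every $k$, no gap function could bound $\apex_{\mathcal{H}}$ in terms of $\barrier_{\mathcal{H}}$ on $\mathcal{G}$; thus any EP-pair $(\mathcal{H},\mathcal{G})$ must exclude some $\mathscr{W}_{N}$ from every $\mathscr{F}\in\mathfrak{F}^{\nicefrac{1}{2}}_{\mathcal{H}}$. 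This direction reduces to verifying the advertised properties of the walloids, together with the fact that $\mathcal{G}$ being minor-closed lets us transfer the counterexamples directly into $\mathcal{G}$.

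For the ``if'' direction, suppose $\mathcal{G}$ excludes $\mathscr{W}_{N_{1}},\dots,\mathscr{W}_{N_{r}}$ from the $r$ families of $\mathfrak{F}^{\nicefrac{1}{2}}_{\mathcal{H}}$. I would produce a gap function by proving the contrapositive structural dichotomy: for every $G\in\mathcal{G}$, if $\apex_{\mathcal{H}}(G)$ exceeds a threshold $t(k,N_{1},\dots,N_{r})$, then $G$ contains \emph{either} an integral $\mathcal{H}$-barrier of size $k$ \emph{or} one of the forbidden walloids $\mathscr{W}_{N_{i}}$ as a minor. The second alternative is ruled out by assumption, yielding $\barrier_{\mathcal{H}}(G)\geq k$ and hence an effectively bounded gap function. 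The dichotomy itself I would derive by first applying the constructive half-integral theorem to extract a half-integral barrier $\mathcal{B}$ of size comparable to $\apex_{\mathcal{H}}(G)$, and then running an uncrossing/decomposition argument: either the pairwise overlaps in $\mathcal{B}$ can be rerouted to produce an integral barrier of size $k$, or the unavoidable overlaps concentrate on a grid-like piece of $G$ whose torso admits an embedding in a bounded-genus surface with vortices, from which a walloid from some family of $\mathfrak{F}^{\nicefrac{1}{2}}_{\mathcal{H}}$ can be recovered by inspection.

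The hardest step will be the uncrossing-to-walloid reduction. It amounts to a complete classification of the minimal ``entanglement patterns'' between obstructions of $\mathcal{H}$ that prevent promotion of a half-integral barrier to an integral one, and a matching of these patterns with the finite catalogue $\mathfrak{F}^{\nicefrac{1}{2}}_{\mathcal{H}}$. This in turn should rely on a Robertson–Seymour style decomposition for graphs excluding all members of $\obs(\mathcal{H})$ and on a finite topological analysis of how members of $\obs(\mathcal{H})$ can be laid out densely on surfaces and vortices; the finiteness of $\mathfrak{F}^{\nicefrac{1}{2}}_{\mathcal{H}}$ should ultimately drop out of the finiteness of $\obs(\mathcal{H})$ combined with the bounded-genus/bounded-vortex-width constraints imposed by excluding a single fixed minor on each surface type.
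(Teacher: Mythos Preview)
Your ``only if'' direction matches the paper's approach: use \autoref{obs_half_integral_counterexamples} ($\barrier_{\Hcal}(\mathscr{F}_t)=1$) together with \autoref{obs_apex_lower_bound} ($\apex_{\Hcal}(\mathscr{F}_t)=\Omega_h(t)$) to conclude that if $\Gcal$ contains an entire $\mathscr{F}\in\mathfrak{F}^{\nicefrac{1}{2}}_{\Hcal}$ then no gap function can exist.

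For the ``if'' direction you correctly identify the target dichotomy (large $\apex_{\Hcal}$ forces either a large integral barrier or a large walloid from $\mathfrak{F}^{\nicefrac{1}{2}}_{\Hcal}$), but your proposed route to it is problematic. You want to first invoke \eqref{half_integral_EP_for_minors} to produce a large half-integral barrier and then ``uncross'' it. There are two issues. First, Liu's proof of \eqref{half_integral_EP_for_minors} is non-constructive and gives no structural handle on the half-integral packing; the constructive version is precisely what this paper derives \emph{as a corollary} of the walloid machinery, so invoking it here is circular. Second, even granting a half-integral barrier, the step ``either uncross to an integral barrier or recover a walloid'' is the entire content of the paper, and there is no known direct uncrossing argument of the kind you sketch. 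A half-integral packing of $Z$-minors carries no a priori grid-like infrastructure, surface embedding, or vortex structure from which a member of $\mathfrak{F}^{\nicefrac{1}{2}}_{\Hcal}$ could be read off ``by inspection.''

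The paper proceeds in the opposite logical order. The dichotomy is packaged as \autoref{thm_apex_upper_bound}: if $\apex_{\Hcal}(G)\geq f(h,k)$ then $G$ contains $\mathscr{F}_k$ for some $\mathscr{F}\in\mathfrak{F}_{\Hcal}=\mathfrak{F}^{\nicefrac{1}{2}}_{\Hcal}\cup\mathfrak{F}^{\varnothing}_{\Hcal}$, and since $\Gcal$ excludes the former at some fixed level $t^{\star}$, one is forced into $\mathfrak{F}^{\varnothing}_{\Hcal}$, i.e., into $k\cdot Z$ for some $Z\in\obs(\Hcal)$. The proof of \autoref{thm_apex_upper_bound} does not start from a half-integral barrier at all; it starts from a large \emph{wall} (via the Grid Theorem and \autoref{cor_Htw_equivalences}), then applies the local structure theorem \autoref{thm_local_structure}, which is built through a long refinement of the Graph Minors Structure Theorem (meadow $\to$ garden $\to$ orchard $\to$ harvest, Sections~\ref{Thidplodowinga}--\ref{extr_flowers_thickets}). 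Your ``uncrossing-to-walloid reduction'' would have to reproduce all of this, and your sketch does not indicate a mechanism for doing so.
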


Notice that \autoref{main_result_informal} indeed provides an answer to the ``when'' part of our leading question \eqref{leading_question}: the absence of a large $\mathcal{H}$-barrier in a graph $G\in\mathcal{G}$ implies the existence of a small $\mathcal{H}$-modulator if and only if $\mathcal{G}$ excludes a graph for each of the classes generated by the parametric graphs in $\mathfrak{F}_{\Hcal}^{\nicefrac{1}{2}}.$
An important feature of $\mathfrak{F}_{\Hcal}^{\nicefrac{1}{2}}$ (see the definition in \eqref{fourth_walloid_formula}) is the following.

\begin{observation}\llabel{obs_half_integral_counterexamples}
For every proper minor-closed graph class $\mathcal{H},$ every $\langle\mathscr{F}_t\rangle_{t\in\Nbbb}\in {\mathfrak{F}_{\Hcal}^{\nicefrac{1}{2}}},$ and every $t\in\mathbb{N}$ we have that $\barrier_{\Hcal}(\mathscr{F}_t)=1.$
\end{observation}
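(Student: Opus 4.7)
The plan is to verify the two inequalities $\barrier_{\mathcal{H}}(\mathscr{F}_t) \geq 1$ and $\barrier_{\mathcal{H}}(\mathscr{F}_t) \leq 1$ separately, as both are needed to establish the equality.

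For the lower bound, I would invoke the half-integral packing property advertised in the abstract: by construction, every walloid $\mathscr{F}_t$ in $\mathfrak{F}_{\mathcal{H}}^{\nicefrac{1}{2}}$ admits a half-integral packing of $t$ copies of some graph $H \notin \mathcal{H}$. For any $t \geq 1$, such a packing immediately certifies that $\mathscr{F}_t$ itself contains $H \in \obs(\mathcal{H})_{\leq}$ as a minor, hence $\mathscr{F}_t \notin \mathcal{H}$. Therefore the single subgraph $\{\mathscr{F}_t\}$ is an $\mathcal{H}$-barrier of size $1$ in $\mathscr{F}_t$, giving $\barrier_{\mathcal{H}}(\mathscr{F}_t) \geq 1$.

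For the upper bound, I would argue from the structural definition of the walloids $\mathscr{F}^{\mathcal{H},i}$ in the collection $\mathfrak{F}_{\mathcal{H}}^{\nicefrac{1}{2}}$ (reference formula \eqref{fourth_walloid_formula}). Each walloid consists of a ``ground'' wall-like structure whose minors all lie in $\mathcal{H}$ together with a single ``exceptional part'' (the crosscap/handle/apex-type attachment corresponding to the surface-theoretic reason why the EP-duality fails). The key structural claim is that every subgraph of $\mathscr{F}_t$ which contains some member of $\obs(\mathcal{H})$ as a minor must intersect the exceptional part, because the ground wall alone is $\mathcal{H}$-minor-free. Since the exceptional part is localized and has bounded size, two vertex-disjoint subgraphs cannot both meet it, so no integral $\mathcal{H}$-barrier of size $2$ can exist in $\mathscr{F}_t$.

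The main obstacle is establishing that last structural claim rigorously. This reduces to showing that the ground wall is $\mathcal{H}$-minor-free, which in turn requires analyzing what obstructions can arise from planar wall-like graphs embedded in a sphere; and then showing that every obstruction minor uses the exceptional attachment, which amounts to a topological argument about how members of $\obs(\mathcal{H})$ can be modeled inside a walloid whose exceptional part is precisely the minimal ``crossing'' responsible for membership in $\mathfrak{C}_{\mathcal{H}}$. Both sub-claims follow from the design principles of the walloids in $\mathfrak{F}_{\mathcal{H}}^{\nicefrac{1}{2}}$ laid out earlier in the paper, so the proof of the observation itself is a short deduction from the construction; the substantive work lies entirely in the construction of $\mathfrak{F}_{\mathcal{H}}^{\nicefrac{1}{2}}$.
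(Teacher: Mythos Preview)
Your lower bound argument is essentially correct.

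Your upper bound argument has a genuine gap. The walloid is not a ground wall plus a single localized exceptional part of bounded size: a guest walloid $\mathscr{W}^{\mathbf{p}}_t$ for $\mathbf{p}=(\Sigma,\mathbf{B})$ carries $\mathsf{h}$ handle segments, $\mathsf{c}$ crosscap segments, and one flower segment per society in $\mathbf{B}$, and each flower segment already hosts $t$ pairwise-disjoint copies of its society. The non-wall part therefore grows with $t$ and can be met by many pairwise-disjoint subgraphs simultaneously; for instance, the walloid for $\mathbf{p}=(\Sigma^{(0,0)},\{\langle K_{3,3},\langle y\rangle\rangle\})$ visibly contains $t$ disjoint copies of $K_{3,3}$. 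Your claim that ``two vertex-disjoint subgraphs cannot both meet the exceptional part'' thus fails in general, and you never invoke the one device the paper actually uses, namely the filter in the definition~\eqref{fourth_walloid_formula}.

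That filter is the whole argument on the paper's side: by~\eqref{fourth_walloid_formula}, membership $\mathscr{F}\in\mathfrak{F}_{\mathcal{H}}^{\nicefrac{1}{2}}$ requires $\langle t\cdot Z\rangle_{t\in\mathbb{N}}\not\lesssim\mathscr{F}$ for every $Z\in\obs(\mathcal{H})$, i.e., no $\mathscr{F}_{t'}$ hosts arbitrarily many disjoint copies of any obstruction. The step from ``bounded packing'' to ``no two disjoint copies'' then uses the self-similar scaling built into the guest walloids (the same mechanism that produces the half-integral packings later in the paper): once some $\mathscr{F}_{t_0}$ hosts two disjoint $Z$-models, suitably larger members of the sequence host $k$ disjoint $Z$-models for every $k$, forcing $\langle t\cdot Z\rangle_t\lesssim\mathscr{F}$ and contradicting the filter. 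So the upper bound is definitional-plus-scaling, not a bounded-cut argument about a single exceptional piece.
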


In light of \autoref{obs_half_integral_counterexamples}, the ``why'' part is answered precisely because the parametric graphs in $\mathfrak{F}_{\Hcal}^{\nicefrac{1}{2}}$ are those where it is impossible to pack two copies of some member of $\mathsf{obs}(\mathcal{H}),$ while, with increasing $t,$ they require larger and larger $\mathcal{H}$-modulators.
In this sense, they indeed act as proper counterexamples to the Erd\H{o}s-P{\'o}sa property in the exact same way as the surface-based examples of Robertson and Seymour \cite{RobertsonS86GMV}.

For each proper minor-closed class $\Hcal,$ the set $\mathfrak{F}^{\nicefrac{1}{2}}_{\Hcal}$ is constructed using the obstructions of $\Hcal$ as a starting point. 
We postpone the description of this construction to \autoref{subsec_universal_obstructions}.
We stress however that for every class $\Hcal$ of bounded treewidth, as $\mathfrak{C}_{\Hcal}=\emptyset,$ we have that $\mathfrak{F}_{\Hcal}^{\nicefrac{1}{2}}=\emptyset.$
As a first non-trivial example, let us consider $\mathcal{H}$ to be a minor-closed class such that $\mathsf{obs}(\mathcal{H})$ contains no planar graphs and at least one single-crossing minor\footnote{A graph is a \emph{single-crossing minor} if it is a minor of some graph that can be drawn in the plane with at most one crossing.}.
It turns out that $\mathfrak{F}^{\nicefrac{1}{2}}_{\Hcal}$ is composed of the two parametric graphs that are depicted in \autoref{torus_and_crosscap}\footnote{In \cite{PaulPTW24Delineating}, we have proven, among others, that the parametric graphs depicted in \autoref{torus_and_crosscap} are the two parametric graphs in $\mathfrak{F}_{\Hcal}^{\nicefrac{1}{2}}$ for every minor-closed graph class $\Hcal$ whose all obstructions are non-planar and at least one of them is single-crossing minor. The result of the first paper only applies to a restricted collection of classes $\mathcal{H}$ whose obstruction set contains at least one graph that belongs to a \textsl{specific} proper subclass of $K_8$-minor-free graphs. We explain this in more detail in \autoref{sec_conclusion}.}.

\begin{figure}[ht]
  \begin{center}
  \scalebox{.6}{\includegraphics{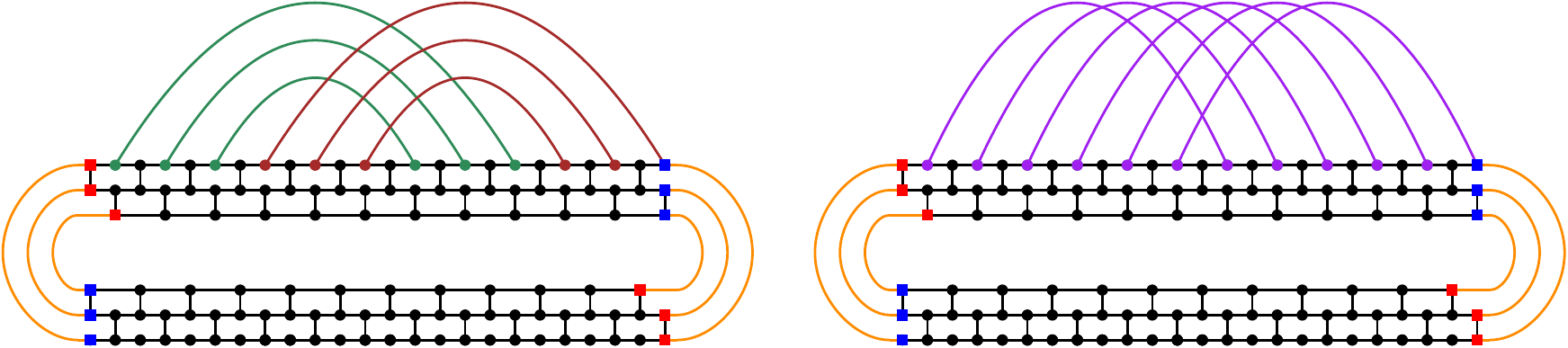}}
  \end{center}
    \caption{Two wall-like graphs in $\mathfrak{F}^{\nicefrac{1}{2}}_{\Hcal}$ when $\Hcal$ is the class of $K_{3,3}$-minor free graphs. }
  \llabel{torus_and_crosscap}
\end{figure}

This implies that $\mathfrak{C}_{\Hcal}$ is composed of the class of all toroidal graphs and the class of all projective planar graphs.
This generalizes \eqref{EP_for_minors}, by replacing the condition that the target graph class has a planar obstruction, to the condition that the target graph class $\Hcal$ has obstructions that are all non-planar and that at least one of them is a single-crossing minor, obtaining that, in this case, $(\mathcal{H},\mathcal{G})$ is an EP-pair if and only if $\mathcal{G}$ excludes some toroidal and some projective planar graph.

In fact, for every class $\mathcal{H}$ whose obstruction set does not contain a planar graph, $\mathfrak{F}^{\nicefrac{1}{2}}_{\mathcal{H}}$ contains a parametric graph representing some orientable surface other than the sphere and a parametric graph representing a non-orientable surface.
However, in general not all members of $\mathfrak{F}^{\nicefrac{1}{2}}_{\mathcal{H}}$ correspond to surfaces.
As we shall see in \autoref{subsec_universal_obstructions}, the members of $\mathfrak{F}^{\nicefrac{1}{2}}_{\mathcal{H}}$ may be arbitrarily far away from being surface-embeddable. A wide family of $\Hcal$'s where 
$\mathfrak{F}^{\nicefrac{1}{2}}_{\mathcal{H}}$ indeed corresponds to surfaces is studied in \cite{PaulPTW24Delineating}.

Finally, we wish to stress that our results yield two additional insights.
The first is that
\begin{eqnarray}
  \begin{minipage}{14cm}
    \centering
  $|\mathfrak{F}^{\nicefrac{1}{2}}_{\Hcal}|=|\mathfrak{C}_{\Hcal}|\in 2^{2^{\mathcal{O}(\ell(h^2))}}$ for every proper minor-closed graph class $\mathcal{H},$
  \end{minipage}\llabel{half_int_obs}
\end{eqnarray}
where $\ell(\cdot)$ is the so-called \defi{unique linkage function} (see \cite{Robertson2009GMXXI,Kawarabayashi2010Shorter,mazoit2013single,AdlerKKLST17Irrelevant,GolovachST22Combing} and \autoref{prop_cae}) and $h\coloneqq h(\mathcal{H})$ is the maximum size of a member of $\mathsf{obs}(\mathcal{H}).$

The second resides on following  important feature of $\mathfrak{F}_{\Hcal}^{\nicefrac{1}{2}}$: each parametric graph $\mathscr{F} = \langle \mathscr{F}_t \rangle_{t\in\Nbbb}\in \mathfrak{F}_{\Hcal}^{\nicefrac{1}{2}}$ consists of wall-like graphs that contain a half-integral $\Hcal\text{-}\barrier$ whose size increases as a function in their index $t$ (see \autoref{lemma_half_integral_Brambles}).
Likewise, the wall-like objects that ``obstruct'' the integral Erd\H{o}s-P{\'o}sa duality (see \autoref{main_result_informal}), do serve as certificates of the \textsl{half-integral} Erd\H{o}s-P{\'o}sa duality.
As a corollary of this, we obtain a constructive proof of Thomas' conjecture which implies that the gap function of \eqref{half_integral_EP_for_minors} is bounded by ${2^{k^{\mathcal{O}_{h}(1)}}}.$ 

\subsection{Min-max dualities and a high-level overview of our approach}\llabel{sec_min_max_dualities}
At its core, our approach is very close to the spirit of the Graph Minor series of Robertson and Seymour.
That is, we do not immediately obtain \autoref{main_result_informal}.
Instead, we take a detour and first describe the structure of graphs that ``locally'' exclude a large (half-integral) $\mathcal{H}$-barrier.

The Grid Theorem \cite{RobertsonS86GMV} states that there exists a function $f\colon\mathbb{N}\to\mathbb{N}$ such that for every graph $G$ and every integer $k,$ if $G$ has treewidth at least $f(k)$ then $G$ contains a \textsl{$k$-wall}\footnote{See \autoref{subsec_segments} for the definition of walls.} as a subgraph.
For this we need two ingredients.
The first is a description of what ``local'' with respect to a wall means.
Let $k$ be a positive integer, $W$ be a $k$-wall in a graph $G$ and $S\subseteq V(G)$ be a set of \textsl{strictly less} than $k$ vertices.
Then there exists at least one pair of a horizontal and a vertical path of $W$ which avoids $S.$
Notice that all possible choices of such pairs belong to the same connected component of $G-S.$
We say that this component \defi{contains the majority of $G-S$}\footnote{For the reader familiar with the theory of graph minors it is probably apparent that this notion does indeed define a tangle. However, for our purposes it suffices to deal with tangles only in this implicit fashion.}.
The second ingredient is another collection of parametric graphs whose explicit definition we postpone to \autoref{subsec_universal_obstructions}.
For every proper minor-closed graph class $\mathcal{H},$ we will describe a finite collection of parametric graphs $\mathfrak{W}_{\mathcal{H}} = \{ \mathscr{W}^{\mathcal{H},1},\dots,\mathscr{W}^{\mathcal{H},w}\},$ where $w=w(\mathcal{H})$ is a constant depending only on the class $\mathcal{H},$ consisting of ``wall-like'' parametric graphs which have, among others, the following three important properties:
\begin{enumerate}
  \item if, for some $\mathscr{W}=\langle \mathscr{W}_t\rangle_{t\in\mathbb{N}}\in\mathfrak{W}_{\mathcal{H}},$ there does not exist $q\in\mathbb{N}$ such that $\mathscr{W}_q$ contains two disjoint copies of some member of $\mathsf{obs}(\mathcal{H})$ as a minor, then $\mathscr{W}\in\mathfrak{F}^{\nicefrac{1}{2}}_{\mathcal{H}},$
  \item there exists $f^1_{\mathcal{H}}\colon\mathbb{N}\to\mathbb{N}$ such that for every $\mathscr{W}=\langle W_t\rangle_{t\in\mathbb{N}}\in\mathfrak{W}_{\mathcal{H}}\setminus\mathfrak{F}^{\nicefrac{1}{2}}_{\mathcal{H}}$ and every $k\in\mathbb{N},$ $\mathscr{W}_{{f^1_{\mathcal{H}}}(k)}$  contains an $\mathcal{H}$-barrier of size $k,$ and 
  \item there exists a function $f^2_{\mathcal{H}}\colon\mathbb{N}\to\mathbb{N}$ such that for every $i\in[w]$ and every positive integer $k,$ $\mathscr{W}^{\mathcal{H},i}_{f^2_{\mathcal{H}}(k)}$ contains a half-integral $\mathcal{H}$-barrier of size $k$ and an $f^2_{\mathcal{H}}(k)$-wall such that the wall and the barrier cannot be separated by fewer than $k$ vertices.
\end{enumerate}

At the heart of our proof sits the following ``local structure theorem''.
We present here an equivalent but slightly simplified version of \autoref{thm_local_structure} which avoids a lot of the heavier notations and technical definitions.

\begin{theorem}[Local structure theorem (simplified)]\llabel{local_structure_informal}
  For every proper minor-closed graph class $\mathcal{H}$ there exist functions $f^1,f^2\colon\mathbb{N}\to\mathbb{N}$ such that for every positive integer $k$ and every graph $G,$ if $W$ is a $f^1(k)$-wall in $G,$ then
   \begin{itemize}
      \item there exists a parametric graph $\mathscr{W}=\langle \mathscr{W}_t\rangle_{t\in\mathbb{N}}\in\mathfrak{W}_{\mathcal{H}}$ such that $G$ contains a subdivision $W'$ of $\mathscr{W}_k$ and $W'$ cannot be separated from $W$ by fewer than $k$ vertices, or
      \item there exists a set $S\subseteq V(G)$ of size at most $f^2(k)<f^1(k)$ such that the component of $G-S$ which contains the   majority of the degree-$3$-vertices of $W-S$ belong to $\mathcal{H}.$
   \end{itemize}
\end{theorem}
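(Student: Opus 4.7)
The plan is to prove \autoref{local_structure_informal} by a two-stage extraction procedure, starting from the given $f^1(k)$-wall $W$ and either iteratively building a subdivision of some $\mathscr{W}_k \in \mathfrak{W}_{\mathcal{H}}$ attached to $W$, or producing a small modulator $S$ that makes the majority component $\mathcal{H}$-minor-free. The overall strategy is close in spirit to the iterative application of the Flat Wall Theorem used in the Graph Minors series.

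First I would localize with the wall. The wall $W$, provided $f^1(k)$ is large enough, induces a tangle of order exceeding any modulator we could possibly produce in the second outcome. Restricting to the tangle's majority side lets us identify, unambiguously, the ``majority component'' of $G-S$ for any small $S$: it is the unique component meeting most branch vertices of $W-S$. This reduction ensures that any future modulator bound is measured against a well-defined target component.

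Next I would run an iterative augmentation in rounds $i=1,\dots,k$. At round $i$ we maintain a partial structure $\Pi_i$ consisting of a sub-wall $W_i$ of $W$ together with $i-1$ pairwise disjoint minor-models $M_1,\dots,M_{i-1}$ of members of $\obs(\mathcal{H})$, each attached to $W_i$ via bounded-length linkages. We then search for a new model $M_i$ disjoint from $\Pi_i$ whose attachment to a large sub-wall of $W_i$ respects the tangle. If this search succeeds, we update $\Pi_{i+1}$ and continue; if it fails, we certify the second outcome. The certification step combines the Flat Wall Theorem with the Unique Linkage Theorem at the scale of $\ell(h^2)$: failure at round $i$ means that, after removing a bounded set of apex vertices revealed along the search, the majority side supports no obstruction minor attached to any sufficiently large sub-wall, and a standard rerouting argument upgrades this into the bound $\apex_{\mathcal{H}}\le f^2(k)$ in the full component.

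If the iteration runs for the full $k$ rounds, $\Pi_k$ contains $k$ pairwise disjoint obstruction copies attached to $W$ in some repeated pattern. Since $\mathfrak{W}_{\mathcal{H}}$ is finite (Property~1 of the catalogue from \autoref{subsec_universal_obstructions}, invoked via property~\eqref{half_int_obs}), a Ramsey-style pigeonhole, applied with $f^1(k)$ chosen large enough to absorb the blow-up, extracts from $\Pi_k$ a subpattern isomorphic to the $k$-th term $\mathscr{W}_k$ of some $\mathscr{W}\in\mathfrak{W}_{\mathcal{H}}$. The non-separability of $W'$ from $W$ by fewer than $k$ vertices is automatic, because $W'$ inherits the tangle of $W$ by construction. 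The main obstacle will be the failure analysis of the iterative step: one must argue that when no further disjoint obstruction can be attached, the bounded vertex set assembled from the failed searches plus the apex vertices of the flat-wall certificates truly dominates \emph{every} remaining obstruction model — not just those aligned with the current sub-wall. This requires a careful induction over the combinatorial shape of $\obs(\mathcal{H})$ and a quantitative use of unique linkages of order $\ell(h^2)$ to reroute any stray obstruction model back into the part of $G$ already controlled by $S$.
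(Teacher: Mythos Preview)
Your approach has a fundamental structural mismatch with what the walloids in $\mathfrak{W}_{\mathcal{H}}$ actually are. You propose to accumulate $k$ pairwise \emph{disjoint} minor-models of members of $\obs(\mathcal{H})$ attached to $W$, and then Ramsey-pigeonhole them into some $\mathscr{W}_k$. But the walloids $\mathscr{W}^{\mathbf{p}}$ are built from an embedding pair $\mathbf{p}=(\Sigma,\mathbf{B})$: they carry handle and crosscap segments encoding the surface $\Sigma$, together with flower segments hosting many copies of specific \emph{linear societies} coming from a subgraph partition of a single extension $M$ of $Z$. A collection of $k$ disjoint obstruction models, no matter how attached to $W$, does not in general contain this data. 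More concretely, \autoref{obs_half_integral_counterexamples} says that every walloid in $\mathfrak{F}_{\mathcal{H}}^{\nicefrac{1}{2}}\subseteq\mathfrak{W}_{\mathcal{H}}$ has $\barrier_{\mathcal{H}}(\mathscr{W}_t)=1$ for all $t$: these walloids contain \emph{no} two disjoint obstruction copies, only half-integral packings. Your iterative disjoint-augmentation procedure can never output such a walloid, and yet they are precisely the obstructions that must appear in the first outcome when $\mathcal{H}$ has unbounded treewidth and the surface is non-spherical.

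The failure-analysis step is also a genuine gap, not a technicality. You assert that if one cannot find an $i$-th disjoint obstruction attached to a large sub-wall then, after removing boundedly many apices, the majority side becomes $\mathcal{H}$-minor-free. This is exactly what the paper spends most of its effort on, and it is not a ``standard rerouting argument''. The paper's route is to first invoke the local GMST (via \autoref{surfex_lemma}) to obtain an almost-embedding in a surface of bounded genus with bounded-breadth, bounded-depth vortices; then to refine this through several stages (meadow $\to$ garden $\to$ orchard) so that every way an extension of $Z$ can sit in a flap is already represented by a parterre segment; and finally to run a local Erd\H{o}s--P\'osa argument for \emph{fibers} inside each bounded-depth thicket (\autoref{func_sharvsp}, \autoref{harvesting_lemma}). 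The small set $S$ is the union of the apices from GMST and the fiber-covers from the thickets; the reason $S$ kills every remaining $\delta$-bound extension of $Z$ is topological, relying on the structure of the $\Sigma$-decomposition and the combing arguments of \autoref{combing_variant}. None of this is captured by the Flat Wall Theorem alone, and the Unique Linkage Theorem at scale $\ell(h^2)$ enters not to ``reroute stray obstruction models back into $S$'' but to bound the number of boundary vertices of a crop (\autoref{my_crops_findslo}).
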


With \autoref{local_structure_informal} we have a powerful tool at our disposal which allows us to describe any graph $G$ which excludes a member of every parametric graph in $\mathfrak{W}_{\mathcal{H}}$ in terms of a \textsl{tree decomposition} into pieces that are either small or belong to $\mathcal{H}.$
This type of behavior is very reminiscent of the Grid Theorem of Robertson and Seymour \cite{RobertsonS86GMV}, but also of their general structure theorem for $H$-minor-free graphs \cite{Robertson2003GMXVI}, and it is similar to other, more recent results such as the one in \cite{thilikos2022killing}.
To provide a description of this intermediate result and how it leads to \autoref{main_result_informal}, we require some additional definitions.
In the following we describe, roughly, how all of our results reduce to \autoref{local_structure_informal}.

A (very) brief outline of the proof of \autoref{local_structure_informal} is given in \autoref{subsec_proof_summary}, for a slightly more in-depth discussion and overview of the proof, including several of the more technical definitions we postponed so far, we refer the reader to \autoref{subsec_universal_obstructions} and \autoref{proof_outline_extended}.

\paragraph{Parametric families.}
Let  $\mathscr{G}=\langle \mathscr{G}_t\rangle_{t\in\mathbb{N}}$ and $\mathscr{G}'=\langle \mathscr{G}_t'\rangle_{t\in\mathbb{N}}$ be two parametric graphs.
We write $\mathscr{G} \lesssim \mathscr{G}',$ if there is some function $f\colon\nn{1}{1}$ such that $\mathscr{G}_t≤ \mathscr{G}_{f(t)}'$ for all $t\in\mathbb{N}.$
If $\mathscr{G} \lesssim \mathscr{G}'$ and $\mathscr{G}' \lesssim \mathscr{G}$ we say that $\mathscr{G}$ and $\mathscr{G}'$ are \defi{equivalent}, denoted by $\mathscr{G}\equiv \mathscr{G}'.$

Let $\mathfrak{G}$ and $\mathfrak{G}'$ be parametric families.
We say that $\mathfrak{G}\lesssim^{*}\mathfrak{G}'$ if for every $\mathscr{G}'\in \mathfrak{G}'$ there is some $\mathscr{G}\in \mathfrak{G}$ such that $\mathscr{G}\lesssim \mathscr{G}'$\footnote{This ``lifting'' of the relation $\lesssim$ on parametric graphs to families of parametric graphs is called the \textsl{Smyth extension} of $\lesssim$ in the theory of WQOs.}.
Moreover, $\mathfrak{G}$ and $\mathfrak{G}'$ are said to be \defi{equivalent} if $\mathfrak{G}\lesssim^{*}\mathfrak{G}'$ and $\mathfrak{G}'\lesssim^{*}\mathfrak{G},$ this is denoted by  $\mathfrak{G} \equiv^{*} \mathfrak{G}'.$
A parametric family $\mathfrak{G}$ is an \defi{antichain} if for every pair $\mathscr{G},\mathscr{G}'\in\mathfrak{G}$ with $\mathscr{G}\neq\mathscr{G}'$ it holds that $\mathscr{G}\not\lesssim\mathscr{G}'$ and $\mathscr{G}'\not\lesssim\mathscr{G}.$
If $\mathfrak{G}$ is a parametric family, we say that a subset $\mathfrak{G}'$ of $\mathfrak{G}$ is a \defi{minimization} of $\mathfrak{G}$ if $\mathfrak{G}'\equiv^{*}\mathfrak{G}$ and $\mathfrak{G}'$ is an antichain.
Notice that all minimizations of $\mathfrak{G}$ are pairwise equivalent.
This permits us to use $\mathsf{min}(\mathfrak{G})$ to denote an arbitrarily chosen minimization of $\mathfrak{G}.$

\paragraph{Graph parameters and universal obstructions.}
A graph parameter is a function $\p\colon\gall\to\Nbbb$ mapping graphs to non-negative integers.
Given two graph parameters $\p$ and $\mathsf{q},$ we write $\p \preceq \mathsf{q}$  if there is a function $f\colon\mathbb{N} \to \mathbb{N}$ such that $\p(G) \leq f(\mathsf{q}(G))$ for every graph $G.$
We refer to $f$ as the \defi{gap function} of this relation.
It is not hard to observe that $\preceq$ is a quasi-ordering on the set of graph parameters.
We also say that $\p$ and $\mathsf{q}$ are \defi{equivalent} with gap function $f,$ denoted by $\p \sim \mathsf{q},$ if $\p \preceq \mathsf{q}$ and $\mathsf{q} \preceq \p$ where $f$ is a gap function witnessing both relations.
Let $\mathfrak{P}$ be a finite set of graph parameters. 
We define $\max(\mathfrak{P})$ as the parameter $\p(G) \coloneqq \max\{\mathsf{q}(G) \mid \mathsf{q}\in\mathfrak{P}\}.$

Given a parametric graph $\mathscr{G}=\langle \mathscr{G}_t\rangle_{t\in\mathbb{N}},$ we define the graph parameter $\p_{\mathscr{G}}\colon\gall\to\Nbbb$
to be
\begin{align*} 
\p_{\mathscr{G}}(G)\coloneqq \max\{t\mid \mathscr{G}_{t}\leq G\}.
\end{align*}
Let $\mathfrak{G}$ be a parametric family.
We define $\p_{\mathfrak{G}}(G)\coloneqq \max(\{\p_{\mathscr{G}}\mid \mathscr{G}\in\mathfrak{G}\}).$
One may now observe that $\mathfrak{G}^1\lesssim^{*}\mathfrak{G}^2$ if and only if $\p_{\mathfrak{G}^1}\preceq\p_{\mathfrak{G}^2}.$
We say that a parametric family $\mathfrak{G}$ is an \defi{obstructing set} for a graph parameter $\p$ if $\p\sim\p_\mathfrak{G}.$
Moreover, $\mathfrak{G}$ is a \defi{universal obstruction} for $\p$ if it is an obstructing set for $\p$ and a $\lesssim$-antichain.
Observe that $\mathfrak{G}_1$ and  $\mathfrak{G}_2$ are both obstructing sets for the same graph parameter if and only if $\mathfrak{G}_1\equiv^{*}\mathfrak{G}_2.$ 
\medskip

With these notions at hand, we are now ready to describe how our approach yields a far-reaching generalization of the Grid Theorem of Robertson and Seymour.
Indeed, as we will see, the parameter $\mathsf{p}_{\mathfrak{W}_{\mathcal{H}}}$ is equivalent to a generalization of treewidth which has received considerable attention in recent years, specifically by the algorithmic community.

\paragraph{$\Hcal$-treewidth.}
Given a graph $G$ and a set $X\subseteq V(G),$ we define $\mathsf{torso}(G,X)$ as the graph obtained from $G[X]$ by turning, for each component $K$ of $G-X,$ the set $N_G(V(K))$ (i.e., the neighbors of the component in $X$) into a clique. 
Given a class $\Hcal,$ the parameter \defi{$\Hcal$-treewidth}, denoted by $\Hcal$-$\tw,$ is defined as follows.
\begin{align}
\mathcal{H}\text{-}\mathsf{tw}(G) \coloneqq \min \big\{  k\in\mathbb{N} \mid&  \text{\ there exists }X\subseteq V(G)\text{ such that }\mathsf{tw}(\mathsf{torso}(G,X))\leq k\text{ and}\llabel{basic_scheme}\\
&\text{ every component of }G-X\text{ belongs to }\mathcal{H} \big\}\nonumber
\end{align}
Here $\mathsf{tw}(G)$ denotes the \textsl{treewidth} of the graph $G$ (see \autoref{sec_obstructions_for_Htw} for a definition).

The above definition was proposed by  Eiben, Ganian, Hamm, and Kwon \cite{EibenGHK21} as a natural extension of treewidth that measures the tree decomposability of a graph into pieces belonging to some class $\Hcal$ and has interesting algorithmic applications (see \cite{EibenGHK21,JansenK021verte,AgrawalKLPRSZ22delet,Agrawal2022Distance,Jansen20235Approx,inamdar2023fpt,MorelleSST23fast}).
A direct consequence of the Grid Theorem of Robertson and Seymour is that $\Hcal$-$\mathsf{tw}\sim\tw$ if and only if $\mathsf{obs}(\Hcal)$ contains a planar graph (see \autoref{obs_Htw_boundedtw}).

Our first core result is that, for every $\Hcal,$ $\mathfrak{W}_{\Hcal}$ is an obstructing set for $\Hcal$-$\tw.$
For this, we need two results.
The first is the following.

\begin{theorem}
\llabel{thm_H_tw_lowerbound}
For every proper minor-closed graph class $\Hcal,$ every parametric graph $\mathscr{W}=\langle\mathscr{W}_t\rangle_{t\in\Nbbb}$ in $\mathfrak{W}_{\Hcal},$ and every $t\in\mathbb{N}$ it holds that $\Hcal\text{-}\tw(\mathscr{W}_t)\in \Omega_{h}(t).$
\end{theorem}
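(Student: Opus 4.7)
The plan is to prove the statement in the following strong form: for every $X\subseteq V(\mathscr{W}_t)$ such that every component of $\mathscr{W}_t-X$ belongs to $\mathcal{H},$ the torso $\torso(\mathscr{W}_t,X)$ already has treewidth $\Omega_h(t).$ Minimising over all such $X$ then yields $\mathcal{H}\text{-}\tw(\mathscr{W}_t)=\Omega_h(t).$ The heart of the argument is a ``wall survival'' claim: the $\Theta(t)$-wall underlying $\mathscr{W}_t$ reappears inside $\torso(\mathscr{W}_t,X)$ as a minor of order $\Omega_h(t)$ for every valid modulator $X,$ so that the classical fact that a $k$-wall has treewidth $\Theta(k)$ transfers to the torso.

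The setup relies on the explicit construction of $\mathfrak{W}_{\mathcal{H}}$ to be given in \autoref{subsec_universal_obstructions}: each $\mathscr{W}_t$ is built on top of an underlying $\Theta(t)$-wall $W$ by attaching to each brick $c$ of $W$ a decoration $D_c$ of size bounded in terms of $h\coloneqq h(\mathcal{H}).$ Each $D_c$ is glued to $W$ through a bounded-diameter interface around $c,$ and the subgraph spanned by $V(D_c)$ together with this interface contains some obstruction of $\mathcal{H}$ as a minor. Consequently, any valid $X$ must hit every decoration-plus-interface pair: otherwise a single component of $\mathscr{W}_t-X$ would contain an entire obstruction of $\mathcal{H},$ contradicting the hypothesis. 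In particular, $X$ is forced to intersect a neighbourhood of every brick of $W.$

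The wall-survival step then proceeds by a case distinction on how $X$ is distributed. If a large fraction of $X$ lies on $W$ itself, then $V(W)\cap X$ is a large subset of $V(W)$ that survives in the torso with all its original wall-adjacencies preserved, and combining these surviving edges with cliquifications of component-boundaries yields a minor model of an $\Omega_h(t)$-wall. Otherwise $X$ meets mostly the decorations, in which case the components of $\mathscr{W}_t-X$ intersect $W$ in large connected regions whose boundaries in $X$ have size $\Omega_h(t),$ and the clique imposed on any such boundary by the torso operation alone already forces treewidth $\Omega_h(t).$ The main obstacle is carrying out this case analysis uniformly across all walloids in $\mathfrak{W}_{\mathcal{H}},$ whose underlying shapes range from planar to surface-like to genuinely non-embeddable; the bounded-diameter attachment of decorations built into the construction of $\mathfrak{W}_{\mathcal{H}}$ is precisely what prevents a component of $\mathscr{W}_t-X$ from ``short-circuiting'' distant rows or columns of $W$ and collapsing the recovered wall.
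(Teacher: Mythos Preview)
Your proposal rests on a mistaken picture of the walloids in $\mathfrak{W}_{\mathcal{H}}.$ A walloid $\mathscr{W}^{\mathbf{p}}_t$ is \emph{not} obtained by attaching a bounded-size decoration to each brick of a $\Theta(t)$-wall such that each decoration together with its local interface already contains an obstruction of $\mathcal{H}$ as a minor. Instead, $\mathscr{W}^{\mathbf{p}}_t$ is a cylindrical concatenation of a wall segment, handle and crosscap segments encoding a surface $\Sigma,$ and flower segments, where each flower hosts $t$ copies of a \emph{fragment} of the obstruction $Z$ (one linear society from the decomposition of $Z$). The obstruction $Z$ is only realised globally, by routing through the base cylinder, the handle/crosscap rainbows, and one flower from each segment simultaneously (this is exactly the content of \autoref{lemma_walloid_contains_obstruction}). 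Consequently, a valid modulator $X$ need not touch any brick at all: for the torus walloid $\mathscr{W}^{(\Sigma^{(1,0)},\emptyset)}_t$ with $Z=K_5,$ deleting the $t$ vertices of a single handle rainbow already yields a planar graph, so $X$ misses essentially the entire wall. Your premise that ``$X$ is forced to intersect a neighbourhood of every brick'' therefore fails, and with it the subsequent case distinction.

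The paper proceeds along an entirely different route: it introduces \emph{strict $\mathcal{H}$-brambles} (collections of pairwise intersecting connected subgraphs, each outside $\mathcal{H}$), proves that a strict $\mathcal{H}$-bramble of order $k$ forces $\mathcal{H}\text{-}\tw\geq k-1$ (\autoref{lemma_bramble_lowerbound}), and then constructs inside $\mathscr{W}^{\mathbf{p}}_t$ a half-integral family of $\Omega_h(t)$ pairwise intersecting extensions of $Z,$ using the self-similarity of the walloid (\autoref{lemma_half_integral_Brambles}). The lower bound then follows at once. If you want to salvage a torso-based argument, the right claim to aim for is not that $X$ is spread across the wall but that \emph{either} $X$ is large \emph{or} the large component of $\mathscr{W}^{\mathbf{p}}_t-X$ still contains $Z$; this is essentially the bramble argument in disguise.
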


The proof of \autoref{thm_H_tw_lowerbound} is based on an appropriate adaptation of the notion of \textsl{brambles} to the concept of $\mathcal{H}$-treewidth.
Brambles are known to act as lower bounds to treewidth \cite{Reed1997Treewidth}.
For the notion of $\mathcal{H}$-treewidth we adopt the following definition. 
A \defi{strict\footnote{Notice that the original notion of brambles (see for example \cite{Reed97Tree}) allows for two bramble elements to be disjoint if they are joined by an edge.
While we believe that such a (more relaxed) definition of $\mathcal{H}$-brambles could yield even tighter duality results, strict $\mathcal{H}$-brambles occur more naturally within our proofs.} $\mathcal{H}$}-bramble in a graph $G$ is a collection $\mathcal{B}$ of connected subgraphs of $G$ which pairwise intersect such that $\mathcal{B}\cap\mathcal{H}=\emptyset.$
The \defi{order} of a strict $\mathcal{H}$-bramble $\mathcal{H}$ is the minimum size of a hitting set for all elements of $\mathcal{B}.$
In \autoref{fig_intro_obstructing} we provide a sketch of the construction of a strict $\mathcal{H}$-bramble of large order in a member of $\mathfrak{W}_{\mathcal{H}}.$
The fact that the members of $\mathfrak{W}_{\mathcal{H}}$ contain large order strict $\mathcal{H}$-brambles stems from their construction.
We show that for each $\langle\mathscr{W}_t\rangle_{t\in\mathbb{N}}\in\mathfrak{W}_{\mathcal{H}}$ there exists some constant $c=c(\mathcal{H})$ such that $\mathscr{W}_{c}$ contains a member of $\mathsf{obs}(\mathcal{H})$ as a minor (see \autoref{lemma_walloid_contains_obstruction}).
In fact, for increasing $t,$ $\mathscr{W}_t$ contains a large half-integral $\mathcal{H}$-barrier consisting of such minors and this barrier also acts as the strict $\mathcal{H}$-bramble.

\begin{figure}[ht]
\begin{center}
\scalebox{.859}{\includegraphics{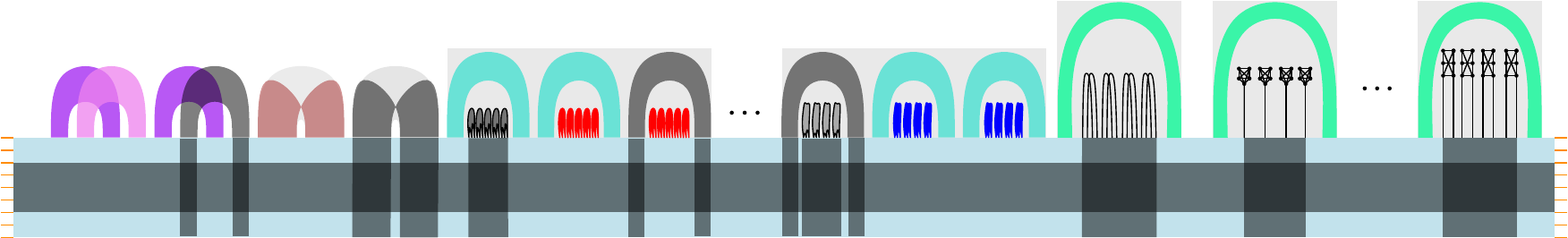}}
\end{center}
      \caption{A strict bramble of subgraphs, all obstructing membership to $\mathcal{H},$ in a member $\mathscr{W}_k$ of $\mathfrak{W}_{\mathcal{H}}.$
     The blue strip at the bottom represents the large wall that constitutes the basis of $\mathscr{W}_k.$\\
    At the top of this wall, we attach gadgets with different properties.
    These gadgets are, from left to right, two ``handle segments'' and two ``crosscap segments'' which model a surface.
    What follows are several ``flower segments''. These segments model parts of an obstruction $Z\in\mathsf{obs}(\mathcal{H})$ which attach to the rest of the graph through few (that is less than three) vertices.
    Each ``flower segment'' hosts $k$ disjoint copies of the same part.
    Finally, on the far right, there are additional ``flower segments''.
    These flowers still model parts of $Z,$ but these parts are now allowed to attach to the rest of the graph via boundaries of size larger than three (although we will show that the boundaries are still of bounded size).\\
    The gray area serves two purposes.
    First, it illustrates one possible way $Z$ might fit into $\mathscr{W}_k$ as a minor.
    Second, it indicates how to draw several copies of $Z$ in a way that no vertex of $\mathscr{W}_k$ is contained in more than two copies and that any two copies intersect. Such a family of copies of $Z,$ embedded as minors into $\mathscr{W}_k$ forms the strict bramble.
    See \autoref{fig_half_integral_packing} and \autoref{lemma_half_integral_Brambles} for a more detailed description.}
    \llabel{fig_intro_obstructing}
\end{figure}

The proof that strict $\mathcal{H}$-brambles of large order act as lower bounds to $\mathcal{H}$-treewidth follows the line of fairly standard arguments (see \autoref{lemma_bramble_lowerbound}).
The most technical part of this paper is the upper bound towards the parametric equivalence between $\mathcal{H}\text{-}\mathsf{tw}$ and $\mathsf{p}_{\mathfrak{W}_{\mathcal{H}}}.$
This proof spans \autoref{Thidplodowinga} to \autoref{sec_Htw}.
The key step, which is the proof of \autoref{local_structure_informal}, can be found in \autoref{sec_local_structure} while the most involved and novel technical parts are confined to \autoref{sec_harvest_crops} and \autoref{extr_flowers_thickets}.

Due to the wall-like nature of the parametric graphs in $\mathfrak{W}_{\Hcal},$ \autoref{main_grid_general} and \autoref{thm_H_tw_lowerbound} provide a \textsl{grid theorem} for $\Hcal$-treewidth for \textsl{every} proper minor-closed graph class $\Hcal.$
The final proof of \autoref{main_grid_general} utilizes \autoref{local_structure_informal} together with known balanced separator arguments. 

\begin{theorem}[General grid theorem]
  \llabel{main_grid_general}
  There exists a function $f_{\ref{main_grid_general}}\colon\nn{2}{1}$ such that for every proper minor-closed graph class $\Hcal,$ where $h=h(\Hcal),$ every graph $G,$ and every $k\in\Nbbb,$ if $\Hcal\text{-}\tw(G)\geq f(h,k),$ then there exists $\langle\mathscr{W}_t\rangle_{t\in\Nbbb}\in\mathfrak{W}_{\Hcal}$ such that $G$ contains a subdivision of $\mathscr{W}_k.$
   Moreover, $f_{\ref{main_grid_general}}(h,k)={2^{k^{\mathcal{O}_h(1)}}}.$
 \end{theorem}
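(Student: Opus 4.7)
The plan is to prove the contrapositive: assuming that $G$ contains no subdivision of $\mathscr{W}_k$ for any parametric graph $\mathscr{W}=\langle\mathscr{W}_t\rangle_{t\in\mathbb{N}}$ in $\mathfrak{W}_{\mathcal{H}}$, I will construct an $\mathcal{H}$-tree-decomposition of $G$ of width $2^{k^{\mathcal{O}_h(1)}}$, witnessing $\mathcal{H}\text{-}\mathsf{tw}(G)\le 2^{k^{\mathcal{O}_h(1)}}$.

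The starting observation is that $\mathcal{H}\text{-}\mathsf{tw}(G)\le \mathsf{tw}(G)$, obtained by choosing $X=V(G)$ in \eqref{basic_scheme}: the torso is then $G$ itself, and the single component $\emptyset$ of $G-X$ trivially lies in $\mathcal{H}$. Hence, if the conclusion fails and $\mathcal{H}\text{-}\mathsf{tw}(G)$ exceeds our target, so does $\mathsf{tw}(G)$, and Robertson and Seymour's Grid Theorem produces a wall $W$ in $G$ of order at least $f^1(k)$, where $f^1$ is the function from \autoref{local_structure_informal}. Applying the local structure theorem to $W$, the first alternative is ruled out by our standing assumption, so we obtain a set $S\subseteq V(G)$ of size at most $f^2(k)$ such that the component $C$ of $G-S$ containing the majority of degree-$3$ vertices of $W-S$ belongs to $\mathcal{H}$.

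Now comes the balanced-separator iteration: $S$ acts as a small separator that peels the $\mathcal{H}$-piece $C$ from the remainder. I attach $C$ as an $\mathcal{H}$-leaf under a bag containing $S$, and recurse on each component of $G-(S\cup V(C))$, adjoined to its neighbourhood in $S$. The key measure of progress is the number of branch vertices of $W$ surviving in the current subproblem: the majority fall into $C$ and are absorbed, so fewer than half survive into any recursive call. After $\mathcal{O}(\log |V(W)|)$ levels, every surviving subproblem contains no wall of order $f^1(k)$ and therefore has treewidth, and hence $\mathcal{H}$-treewidth, bounded by (the converse of) the Grid Theorem; such pieces are closed with ordinary treewidth-bags. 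Merging the partial decompositions and taking width $\mathcal{O}(f^2(k))=2^{k^{\mathcal{O}_h(1)}}$ yields the desired $\mathcal{H}$-tree-decomposition; the final quantitative estimate inherits the bound $f^2(k)\in 2^{k^{\mathcal{O}_h(1)}}$ that will have been established in the proof of \autoref{local_structure_informal}.

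The main obstacle will be to coordinate the ``majority'' choices across iterations so that each recursive subproblem still admits a wall on which the local structure theorem can be re-applied. This is exactly where the balanced-separator machinery must be invoked non-trivially: one fixes a tangle-like notion of ``large side'' in $G$ and argues that the majority component produced by the local structure theorem always lies on that side, so that removing $S$ together with $C$ either decreases the tangle order by at most $|S|$ or extinguishes the tangle entirely. This guarantees a controlled descent and matches the peeling with the recursive structure. A secondary bookkeeping point is to verify that at each non-$\mathcal{H}$ bag the torso has treewidth $\mathcal{O}(f^2(k))$, which is immediate once one observes that every such bag has size $\mathcal{O}(f^2(k))$ and $\mathsf{tw}(K_n)=n-1$.
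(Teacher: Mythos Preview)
Your approach has a genuine gap in the recursion. The measure of progress you propose---the number of branch vertices of the \emph{original} wall $W$ surviving into a recursive subproblem---becomes meaningless after one step: a component of $G-(S\cup V(C))$ may contain none of $W$'s branch vertices yet still have arbitrarily large $\mathcal{H}$-treewidth and its own huge wall. So the claim ``after $\mathcal{O}(\log|V(W)|)$ levels every subproblem contains no wall of order $f^1(k)$'' is unjustified. Moreover, even if you re-apply the local structure theorem to a fresh wall in each piece, nothing bounds the accumulated boundary that is passed down the recursion, so the bag sizes (and hence the width) can blow up.

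The paper avoids this by \emph{separating} the two roles you are trying to play simultaneously. It introduces the notion of a $k$-$\mathcal{H}$-linked set (\autoref{subsec_H_brambles}) and proves two independent facts. First (\autoref{lemma_Htw_lew_Hlink}): if $G$ has no $k$-$\mathcal{H}$-linked set then $\mathcal{H}\text{-}\mathsf{tw}(G)\le 4k+4$, via a standard balanced-separator recursion that never invokes the local structure theorem. Second (\autoref{lemma_no_Hlinked_sets}): if $G$ \emph{does} have a large $\mathcal{H}$-linked set $X$, then $X$ is in particular well-linked, so one finds a wall $W$ that \emph{agrees} with $X$ and applies the local structure theorem \emph{once}; its second outcome would produce a small $S$ whose majority component lies in $\mathcal{H}$, directly contradicting the $\mathcal{H}$-linkedness of $X$, so the first outcome (the walloid) must hold. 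The two lemmas combine via \autoref{cor_Htw_equivalences} to give the theorem in two lines. Your final paragraph gestures toward the right tangle-like object, but the fix is not to iterate the local structure theorem with a decreasing tangle order; it is to use the tangle (i.e., the $\mathcal{H}$-linked set) to force a one-shot contradiction, and to run the recursive decomposition entirely in the complementary case where no such object exists.
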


As a direct corollary, we also obtain a similar statement for the set $\mathfrak{W}_{\mathcal{H}}^{\mathsf{min}}$ of universal obstructions.

\begin{corollary}
  \llabel{cor_grid_general}
  There exists a function $f_{\ref{cor_grid_general}}\colon\nn{2}{1}$ such that for every proper minor-closed graph class $\Hcal,$ where $h=h(\Hcal),$ every graph $G,$ and every $k\in\Nbbb,$ if $\Hcal\text{-}\tw(G)\geq f(h,k),$ then there exists $\langle\mathscr{W}_t\rangle_{t\in\Nbbb}\in\mathfrak{W}^{\mathsf{min}}_{\Hcal}$ such that $G$ contains a subdivision of $\mathscr{W}_k.$
\end{corollary}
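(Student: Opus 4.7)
The plan is to derive \autoref{cor_grid_general} directly from \autoref{main_grid_general} using only the fact that $\mathfrak{W}^{\mathsf{min}}_{\mathcal{H}}$ is, by definition, a minimization of $\mathfrak{W}_{\mathcal{H}}$. By this definition, $\mathfrak{W}^{\mathsf{min}}_{\mathcal{H}} \lesssim^{*} \mathfrak{W}_{\mathcal{H}}$, so for every $\mathscr{W}\in \mathfrak{W}_{\mathcal{H}}$ there exist $\mathscr{W}^{\min}=\langle \mathscr{W}^{\min}_t\rangle_{t\in\mathbb{N}}\in \mathfrak{W}^{\mathsf{min}}_{\mathcal{H}}$ and a function $g_{\mathscr{W}}\colon\mathbb{N}\to\mathbb{N}$ with $\mathscr{W}^{\min}_t\leq \mathscr{W}_{g_{\mathscr{W}}(t)}$ for every $t\in\mathbb{N}$. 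Because $\mathfrak{W}_{\mathcal{H}}$ is finite, I would aggregate these into a single function $g_{\mathcal{H}}(k):=\max_{\mathscr{W}\in \mathfrak{W}_{\mathcal{H}}} g_{\mathscr{W}}(k)$; inspection of the construction of the walloids sketched earlier in the paper shows that each $g_{\mathscr{W}}$, and hence $g_{\mathcal{H}}$, can be taken to be polynomial in $k$ with degree depending only on $h$.

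Given $G$ and $k$, I would then invoke \autoref{main_grid_general} with parameter $g_{\mathcal{H}}(k)$ in place of $k$: this yields some $\mathscr{W}\in\mathfrak{W}_{\mathcal{H}}$ such that $G$ contains a subdivision of $\mathscr{W}_{g_{\mathcal{H}}(k)}$ and, in particular, a subdivision of $\mathscr{W}_{g_{\mathscr{W}}(k)}$. Selecting the associated $\mathscr{W}^{\min}\in\mathfrak{W}^{\mathsf{min}}_{\mathcal{H}}$, we have $\mathscr{W}^{\min}_k\leq \mathscr{W}_{g_{\mathscr{W}}(k)}$ as a minor, and it remains to pass from minor containment inside the walloid to a subdivision of $\mathscr{W}^{\min}_k$ inside $G$.

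The step I expect to demand the most care is precisely this upgrade from minor to topological minor containment. For this I would rely on the fact that the walloids in $\mathfrak{W}_{\mathcal{H}}$ have maximum degree bounded by a constant depending only on $\mathcal{H}$, since they are built from a subcubic wall base with bounded-degree segment attachments. Under this bounded-degree condition, minor containment between walloids can be converted into topological minor containment at the cost of an inflation of the parameter by a constant factor depending on $\mathcal{H}$; after absorbing this factor into $g_{\mathcal{H}}$, we may assume that $\mathscr{W}^{\min}_k$ is already a topological minor of $\mathscr{W}_{g_{\mathscr{W}}(k)}$. Transitivity of subdivision containment then yields a subdivision of $\mathscr{W}^{\min}_k$ in $G$, which is exactly the conclusion of the corollary. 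Setting $f_{\ref{cor_grid_general}}(h,k):=f_{\ref{main_grid_general}}(h,g_{\mathcal{H}}(k))$ provides the desired function, and since $g_{\mathcal{H}}(k)$ is polynomial in $k$ with $h$-dependent degree, the asymptotic bound $2^{k^{\mathcal{O}_h(1)}}$ coming from \autoref{main_grid_general} is preserved.
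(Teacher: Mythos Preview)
The paper offers no proof of this corollary beyond calling it ``a direct corollary'' of \autoref{main_grid_general}, so your overall strategy --- apply \autoref{main_grid_general} with an inflated parameter and then use $\mathfrak{W}^{\mathsf{min}}_{\mathcal{H}} \lesssim^{*} \mathfrak{W}_{\mathcal{H}}$ (together with finiteness of $\mathfrak{W}_{\mathcal{H}}$) to descend to a member of the minimization --- is exactly what the paper has in mind.

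The one place that needs more than you give it is the minor-to-subdivision upgrade, and your stated justification is not correct as written. Bounded maximum degree does \emph{not} in general turn minor containment into topological-minor containment; the standard fact requires maximum degree at most three, whereas the flower petals of a walloid $\mathscr{W}^{\mathbf{p}}$ can carry vertices of degree up to roughly $\mathsf{ext}(h)$. What you should appeal to instead is the specific structure of the walloids: the high-degree vertices are confined to the flower petals, each of which is a fixed graph of size $\mathcal{O}_h(1)$ attached to the base wall through a bounded boundary, while the remaining skeleton (base cylinder, handle and crosscap segments, rainbows) is genuinely subcubic. A careful argument would show that whenever $\mathscr{W}^{\mathbf{p}_1}\lesssim\mathscr{W}^{\mathbf{p}_2}$ for embedding pairs $\mathbf{p}_1,\mathbf{p}_2\in\mathfrak{P}_{\mathcal{H}}$, the witnessing minor models can be arranged so that each petal of $\mathscr{W}^{\mathbf{p}_1}_t$ is realized as a subdivision inside petals and surrounding infrastructure of $\mathscr{W}^{\mathbf{p}_2}_{g(t)}$, after which the subcubic skeleton upgrades to a subdivision for free. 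The paper glosses over this point as well, so your flagging it is already more than the text does; but ``bounded degree'' is the wrong hook to hang it on.
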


\paragraph{From $\Hcal\text{-}\tw$ to $\apex_{\Hcal}.$}
To express the structural behavior of $\mathsf{apex}_{\mathcal{H}}$ in the form of an equivalence between parameters we must consider the family $\mathfrak{F}^{\nicefrac{1}{2}}_{\mathcal{H}}$ and all possible $\mathcal{H}$-barriers.
We define (see \eqref{second_walloid_formula} and the surrounding definitions) $\mathfrak{F}_{\mathcal{H}}$ to be the union of $\mathfrak{F}_{\mathcal{H}}^{\nicefrac{1}{2}}$ and the parametric family $\{ \mathscr{W}^{\mathbf{p}_Z} \mid Z\in\mathsf{obs}(\mathcal{H}) \}$ where $\mathscr{W}^{\mathbf{p}_Z}\coloneqq \langle t\cdot Z\rangle_{t\in\mathbb{N}}.$ 

With this, we obtain the following lower bound towards our desired parametric equivalence between the parameters $\mathsf{apex}_{\mathcal{H}}$ and $\mathsf{p}_{\mathfrak{F}_{\mathcal{H}}}.$

\begin{observation}\llabel{obs_apex_lower_bound}
For every proper minor-closed graph class $\Hcal$ and every parametric graph $\langle\mathscr{W}_t\rangle_{t\in\Nbbb}\in \mathfrak{F}_{\Hcal}$ it holds that $\apex_{\Hcal}(\mathscr{W}_t)=\Omega_{h}(t)$ where $h\coloneqq h(\mathcal{H})$ is the maximum size of a member of $\mathsf{obs}(\mathcal{H}).$
\end{observation}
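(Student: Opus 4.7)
The plan is to split along the definition
\[
\mathfrak{F}_{\mathcal{H}} \;=\; \mathfrak{F}^{\nicefrac{1}{2}}_{\mathcal{H}} \;\cup\; \{\mathscr{W}^{\mathbf{p}_Z} \mid Z\in\mathsf{obs}(\mathcal{H})\}
\]
and, in each of the two cases, to apply the elementary packing/covering inequality that any $\mathcal{H}$-modulator must hit every member of an $\mathcal{H}$-barrier. The fully integral case handles the disjoint-copies family by hand, while the half-integral case is reduced to the existence of a linear-in-$t$ half-integral $\mathcal{H}$-barrier in every walloid of $\mathfrak{F}^{\nicefrac{1}{2}}_{\mathcal{H}}$, which is precisely the content of the lemma highlighted in the introduction (\autoref{lemma_half_integral_Brambles}).

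\textbf{Case 1: $\mathscr{W}_t = t\cdot Z$ for some $Z\in \mathsf{obs}(\mathcal{H})$.} The $t$ connected components of $\mathscr{W}_t$ are pairwise disjoint and each is a copy of $Z$, so $\{\text{components}\}$ is an (integral) $\mathcal{H}$-barrier of size $t$. For any $S\subseteq V(\mathscr{W}_t)$ with $\mathscr{W}_t-S\in\mathcal{H}$, every component must meet $S$: otherwise an untouched copy of $Z$ survives as a connected component of $\mathscr{W}_t-S$, hence as a minor, contradicting minor-closedness of $\mathcal{H}$ and $Z\notin\mathcal{H}$. Therefore $\apex_{\mathcal{H}}(\mathscr{W}_t)\geq t$, which is $\Omega_h(t)$.

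\textbf{Case 2: $\langle \mathscr{W}_t\rangle_{t\in\mathbb{N}}\in \mathfrak{F}^{\nicefrac{1}{2}}_{\mathcal{H}}$.} By \autoref{lemma_half_integral_Brambles} (whose construction is sketched in \autoref{fig_intro_obstructing}), there is a constant $c_h$ depending only on $h$ such that for every $t\in\mathbb{N}$ the graph $\mathscr{W}_t$ contains a half-integral $\mathcal{H}$-barrier $\mathcal{B}_t$ of size at least $c_h\cdot t$. Fix any $\mathcal{H}$-modulator $S$ of $\mathscr{W}_t$. As in Case 1, each $B\in\mathcal{B}_t$ contains (as a minor) some element of $\mathsf{obs}(\mathcal{H})$, so $S\cap V(B)\neq\emptyset$ for every $B\in\mathcal{B}_t$. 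Summing these inequalities and using that every vertex of $\mathscr{W}_t$ lies in at most two members of $\mathcal{B}_t$,
\[
2|S| \;\geq\; \sum_{B\in\mathcal{B}_t}|S\cap V(B)| \;\geq\; |\mathcal{B}_t| \;\geq\; c_h\cdot t,
\]
so $\apex_{\mathcal{H}}(\mathscr{W}_t)\geq (c_h/2)\cdot t = \Omega_h(t)$.

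\textbf{Main obstacle.} There is essentially no obstacle once \autoref{lemma_half_integral_Brambles} is available: the inequality $2\cdot\nicefrac{1}{2}\text{-}\barrier_{\mathcal{H}}\leq \apex_{\mathcal{H}}$ is a one-line double-counting argument, and Case 1 uses only that $\mathcal{H}$ is minor-closed. All of the genuine work is pushed into the construction of the walloids of $\mathfrak{F}^{\nicefrac{1}{2}}_{\mathcal{H}}$ and the accompanying embedding of linearly many half-integrally overlapping copies of a member of $\mathsf{obs}(\mathcal{H})$; the constant $c_h$ appearing in $\Omega_h(t)$ is inherited from that embedding and depends only on $h=h(\mathcal{H})$.
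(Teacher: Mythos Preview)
Your argument is essentially correct and matches the paper's informal justification of the observation, which also appeals to \autoref{lemma_half_integral_Brambles} together with the easy inequality $\tfrac{1}{2}\text{-}\barrier_{\mathcal{H}}\leq \tfrac{1}{2}\apex_{\mathcal{H}}$. There is, however, one technical gap in your Case~2: \autoref{lemma_half_integral_Brambles} is stated only for a single walloid $\mathscr{W}^{\mathbf{p}}$ with $\mathbf{p}\in\mathfrak{P}^{(Z)}$, whereas a member of $\mathfrak{F}^{\nicefrac{1}{2}}_{\mathcal{H}}$ is in general a disjoint sum $\mathscr{W}^{\mathbf{p}_1}_t+\cdots+\mathscr{W}^{\mathbf{p}_r}_t$ indexed by the components $Z_1,\ldots,Z_r$ of some $Z\in\obs(\mathcal{H})$, and some $\mathbf{p}_i$ may be the special symbol $\mathbf{p}_{Z_i}$. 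The fix is immediate: each summand carries a half-integral packing of $\Omega_h(t)$ copies of the corresponding $Z_i$ (from \autoref{lemma_half_integral_Brambles} or, for $\mathbf{p}_{Z_i}$, trivially), and pairing these up across the disjoint summands yields a half-integral $\excl(Z)$-barrier of size $\Omega_h(t)$ in the sum, after which your double-counting applies verbatim.

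The paper's formal treatment (\autoref{added_rec_obs} in \autoref{disc_case_obs}) handles this disconnected case slightly differently, proceeding by induction on the number of components via \autoref{two_sum_univ} and routing the connected base case through $\mathcal{H}\text{-}\tw$ (\autoref{cor_walloid_lower_bound}) rather than directly through the barrier inequality. Your direct double-counting is arguably cleaner; the two approaches are equivalent in substance.
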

 
The less trivial direction of the equivalence between $\apex_{\Hcal}$ and $\p_{\mathfrak{F}_{\Hcal}}$ and, in particular, obtaining its single exponential gap function are mostly a matter of carefully combining our results on $\mathcal{H}$-treewidth with established techniques from the study of the Erd\H{o}s-P{\'o}sa property for minors.
 
\begin{theorem}\llabel{thm_apex_upper_bound}
There exists a function $f_{\ref{thm_apex_upper_bound}}\colon\nn{2}{1}$ such that for every proper minor-closed graph class $\Hcal,$ where $h=h(\Hcal),$ every graph $G,$ and every $k\in\Nbbb,$ if $\apex_{\Hcal}(G)\geq f_{\ref{thm_apex_upper_bound}}(h,k),$ then there exists $\langle\mathscr{F}_t\rangle_{t\in\Nbbb}\in \mathfrak{F}_{\Hcal}$ such that $G$ contains $\mathscr{F}_k$ as a minor.
Moreover, $f_{\ref{thm_apex_upper_bound}}(h,k)={2^{k^{\mathcal{O}_h(1)}}}.$
\end{theorem}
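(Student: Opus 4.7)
My plan is to obtain the theorem through a case distinction on the value of $\Hcal\text{-}\tw(G)$, combining the General Grid Theorem (\autoref{main_grid_general}) with a tree-decomposition-based Erd\H{o}s--P\'osa extraction argument. I set $k':=f^{1}_{\Hcal}(k)$, where $f^{1}_{\Hcal}$ is the function provided by property~(2) of $\mathfrak{W}_{\Hcal}$, and $t_{0}:=f_{\ref{main_grid_general}}(h,k')=2^{k^{\mathcal{O}_{h}(1)}}$. The final threshold $f_{\ref{thm_apex_upper_bound}}(h,k)$ will be of the form $C(h)\cdot t_{0}\cdot k$, where $C(h)$ absorbs $|\obs(\Hcal)|$, which is finite and bounded in terms of $h$ by the Robertson--Seymour Theorem.

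In the first case, where $\Hcal\text{-}\tw(G)\geq t_{0}$, I would invoke \autoref{cor_grid_general} to obtain a parametric graph $\mathscr{W}=\langle\mathscr{W}_{t}\rangle\in\mathfrak{W}^{\mathsf{min}}_{\Hcal}$ such that $\mathscr{W}_{k'}$ is a subdivision (hence a minor) of $G$. If $\mathscr{W}\in\mathfrak{F}^{\nicefrac{1}{2}}_{\Hcal}\subseteq\mathfrak{F}_{\Hcal}$, then we are done since $\mathscr{W}_{k}\leq\mathscr{W}_{k'}\leq G$. Otherwise, property~(2) of $\mathfrak{W}_{\Hcal}$ guarantees that $\mathscr{W}_{k'}$ contains an integral $\Hcal$-barrier of size $k$, i.e., $k$ pairwise vertex-disjoint subgraphs of $G$ each containing a common $Z\in\obs(\Hcal)$ as a minor, and hence $G$ contains $k\cdot Z=\mathscr{W}^{\mathbf{p}_{Z}}_{k}\in\mathfrak{F}_{\Hcal}$ as a minor.

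In the second case, where $\Hcal\text{-}\tw(G)<t_{0}$, I would fix an optimal witness $X\subseteq V(G)$ for $\Hcal\text{-}\tw$ and extend a width-$t_{0}$ tree decomposition of $\torso(G,X)$ to a tree decomposition $(T,\beta)$ of $G$ by placing each component of $G-X$ entirely inside the unique bag containing its (clique-)neighborhood. The result has adhesion at most $t_{0}+1$, with all adhesion sets inside $X$, and every bag minus its at most $t_{0}+1$ vertices of $X$ is a disjoint union of members of $\Hcal$. I then root $T$ and run a bottom-up greedy extraction: at each node $v$, I test whether the subgraph induced by the subtree at $v$, stripped of the vertices extracted from descendants, contains a minor model of some $Z\in\obs(\Hcal)$ that is disjoint from the adhesion to $v$'s parent; if so, extract it, otherwise charge the (at most $t_{0}+1$) adhesion vertices as part of a global modulator. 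A potential-function argument then shows that once $\apex_{\Hcal}(G)\geq C(h)\cdot t_{0}\cdot k$, at least $k\cdot|\obs(\Hcal)|$ disjoint minor models of obstructions are extracted, and pigeonhole over $\obs(\Hcal)$ yields $k$ disjoint models of the same $Z$, giving $\mathscr{W}^{\mathbf{p}_{Z}}_{k}\leq G$. The main technical difficulty is exactly this accounting in the second case: I must ensure that whenever a subtree fails to produce an obstruction disjoint from its parental adhesion, the obstructions truly hidden inside that subtree can be covered by those $\leq t_{0}+1$ adhesion vertices, which is the standard-but-delicate cost analysis underlying Erd\H{o}s--P\'osa-style arguments on bounded-adhesion tree decompositions.
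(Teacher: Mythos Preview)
Your high-level strategy—case split on $\Hcal\text{-}\tw(G)$, invoking \autoref{main_grid_general} in the large case and a tree-decomposition extraction in the small case—matches the paper's approach, and your Case~1 aligns with the sketch following \autoref{obs_apex_lower_bound}. Your Case~2 is also essentially the paper's \autoref{obs_con_apex}, which handles the situation where every member of $\obs(\Hcal)$ is \emph{connected}.

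The gap is Case~2 when $\obs(\Hcal)$ contains a disconnected graph. Your potential-function claim—that $\apex_{\Hcal}(G)\geq C(h)\cdot t_{0}\cdot k$ forces at least $k\cdot|\obs(\Hcal)|$ pairwise disjoint $Z$-models—is false in that setting. Take $\Hcal=\excl(K_{5}+K_{5})$ and $G=G_{1}+G_{2}$ where each $G_{i}$ is the toroidal walloid of order $m$ (a member of $\mathfrak{F}^{\nicefrac{1}{2}}_{\excl(K_{5})}$, hence $\barrier_{\excl(K_{5})}(G_{i})=1$ while $\apex_{\excl(K_{5})}(G_{i})=\Omega(m)$). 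Each $G_{i}$ is $(K_{5}+K_{5})$-free, so $\Hcal\text{-}\tw(G)=0$ and you land in Case~2 with all adhesions empty. Yet $\apex_{\Hcal}(G)=\min_{i}\apex_{\excl(K_{5})}(G_{i})=\Omega(m)$, while $G$ admits at most one disjoint $(K_{5}+K_{5})$-model. Your extraction produces one model, charges nothing, and the asserted bound on $\apex_{\Hcal}(G)$ is violated for large $m$. The correct certificate here is $G_{1}+G_{2}$ itself, a disjoint union of two walloids lying in $\mathfrak{F}^{\nicefrac{1}{2}}_{\Hcal}\subseteq\mathfrak{F}_{\Hcal}$; an argument that only hunts for $k\cdot Z$ can never output it.

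The paper closes this gap in \autoref{apex_disc_obs} by an induction on the number $r$ of components of $Z$: for every $2$-connectivization $Z'$ of $Z$ (which has $r-1$ components) it applies the inductive hypothesis, obtaining either an element of $\mathfrak{F}_{\excl(Z')}$—which dominates one of $\mathfrak{F}_{\excl(Z)}$—or a modulator $S_{Z'}$; after deleting $\bigcup_{Z'}S_{Z'}$, any surviving $Z$-minor has its $r$ components in $r$ distinct connected components of the remaining graph, and a Hall-matching argument between $[r]$ and those components (combined with \autoref{obs_con_apex} applied per component) then yields either the desired disjoint-union obstruction in $\mathfrak{F}_{\excl(Z)}$ or the final modulator. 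Your proposal is missing this entire additional layer.
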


The proof of \autoref{thm_apex_upper_bound} makes use of the three core properties of the family $\mathfrak{W}_{\mathcal{H}}$ which allow, by \autoref{main_grid_general} to reduce to the case of bounded $\mathcal{H}\text{-}\mathsf{tw}.$
This reduction goes as follows:
If we find a large enough member of one of the parametric graphs in $\mathfrak{F}^{\nicefrac{1}{2}}_{\mathcal{H}}$ we are done.
Moreover, by combining the first and the second core property of $\mathfrak{W}_{\mathcal{H}}$ we know that, whenever we find a large enough member of one of the parametric graphs in $\mathfrak{W}_{\mathcal{H}}\setminus \mathfrak{F}^{\nicefrac{1}{2}}_{\mathcal{H}},$ we have found a member of one of the parametric graphs in $\{ \mathscr{W}^{\mathbf{p}_Z} \mid Z\in\mathsf{obs}(\mathcal{H}) \}.$
Hence, whenever $\mathcal{H}$-treewidth of some graph $G$ is large enough, we immediately obtain that $\mathsf{p}_{\mathfrak{F}_{\mathcal{H}}}(G)$ is also large by \autoref{main_grid_general}.
It follows that the only instances we have to check are those where $\mathcal{H}$-treewidth is bounded.

From here, we are able to use standard arguments in two steps.
First, we show the theorem for every class, minor-excluding some \textsl{connected} graph $Z$  (\cref{univ_apex_con_discon}).
Roughly speaking, we employ the same recursive method as described by Diestel in his proof of \eqref{EP_for_minors} (see \cite{Diestel2010Book}).
This leaves open only cases where $\mathsf{obs}(\mathcal{H})$ contains disconnected graphs.
These cases need some additional arguments using the connected case as a basis (\cref{disc_case_obs}).
\smallskip

It is worthwhile to mention that the \textsl{mechanism} that allows us to deduce a finite obstructing set for $\mathsf{apex}_{\mathcal{H}}$ appears far more universal.
In \autoref{sec_conclusion} we discuss this approach in a more general setting and conjecture that our results are the basis for a whole family of similar results based on the knowledge of obstructing sets for certain parameters.
As a proof of concept, we apply this mechanism to the parameter \textsl{$\mathcal{H}$-treedepth} (also known as \textsl{elimination distance to $\mathcal{H}$}) in \autoref{univ_td_all} and obtain a finite obstructing set for every choice of a proper minor-closed graph class $\mathcal{H}.$
This parameter was originally introduced by Bulian and Dawar \cite{BulianD16grap,BulianD17fixe} and has inspired its own avenue of research \cite{EibenGHK21,JansenK021verte,AgrawalKLPRSZ22delet,Agrawal2022Distance,Jansen20235Approx,inamdar2023fpt,MorelleSST23fast}.

\paragraph{A sketch of the proof of \autoref{main_result_informal}.}
By the definition, $\mathfrak{F}_{\mathcal{H}}$ is the union of two parametric families, namely $\mathfrak{F}^{\nicefrac{1}{2}}_{\mathcal{H}}$ and $\{ \mathscr{W}^{\mathbf{p}_Z} \mid Z\in\mathsf{obs}(\mathcal{H}) \}$ where the latter is consisting exclusively of $\mathcal{H}$-barriers.
Now let $h=h(\mathcal{H})$ be the maximum size of a member of $\mathsf{obs}(\mathcal{H})$ and let $\mathcal{G}$ be a minor-closed graph class.
If there exists $\mathscr{F}=\langle \mathscr{F}_t\rangle_{t\in\mathbb{N}}\in\mathfrak{F}^{\nicefrac{1}{2}}_{\mathcal{H}}$ such that $\mathscr{F}_t\in \mathcal{G}$ for every $t\in\mathbb{N},$ then, by \autoref{obs_half_integral_counterexamples} and \autoref{obs_apex_lower_bound}, it follows that $(\mathcal{H},\mathcal{G})$ is not an Erd\H{o}s-P{\'o}sa pair.
For the reverse direction, we may assume that for each $\mathscr{F}=\langle \mathscr{F}_t\rangle_{t\in\mathbb{N}}\in\mathfrak{F}^{\nicefrac{1}{2}}_{\mathcal{H}},$ there exists some $t_{\mathscr{F}}\in\mathbb{N}$ such that $\mathscr{F}_t\notin\mathcal{G}$ for all $t\geq t_{\mathscr{F}}.$
Let $t^{\star}\coloneqq \max \{ t_{\mathscr{F}} \mid \mathscr{F}\in\mathfrak{F}^{\nicefrac{1}{2}}_{\mathcal{H}} \}.$
It follows that, for every $k\in\mathbb{N},$ if for any graph $G\in\mathcal{G}$ it holds that $\mathsf{apex}_{\mathcal{H}}(G)\geq f_{\ref{thm_apex_upper_bound}}(h,t^{\star}+1+k),$ then, by \autoref{thm_apex_upper_bound}, $G$ must contain an $\mathcal{H}$-barrier of size $k.$
Therefore, $(\mathcal{H},\mathcal{G})$ is indeed an Erd\H{o}s-P{\'o}sa pair.

\subsection{Algorithmic consequences}
\llabel{sec_algo_consequences}

Let us address some important algorithmic consequences of our results along with subsequent open problems.

\paragraph{A constructive (approximation) \textsf{FPT}-algorithm for half-integral packing.}
\llabel{page_Half_Integral_packing}

Our results imply, for every proper minor-closed graph class $\Hcal,$ the existence of an algorithm $\mathbf{A}_{\Hcal}$ that, given a graph $G$ and an integer $k,$ either outputs a half-integral $\Hcal$-barrier of size  $k$ or an $\Hcal$-modulator of size at most $2^{k^{\mathcal{O}(1)}}$ in time $2^{2^{{k^{\mathcal{O}_{h}(1)}}}}\cdot |G|^4\log|G|.$ 
Observe that $\mathbf{A}_{\Hcal}$ can be used as an \textsf{FPT}-approximation algorithm for the following problem, defined for every graph $Z.$
Given a graph $Z,$ we define $\mathsf{excl}(Z)$ to be the class of all graphs that exclude $Z$ as a minor.
 
\begin{center}
\begin{minipage}{12cm}
\noindent \textsc{Half-Integral $Z$-Packing}
\begin{description}
\vspace{-0.7em}
  \item[Input:] A graph $G$ and a non-negative integer $k.$
\vspace{-0.7em}
  \item[Question:] Does $G$ contain a half-integral $\mathsf{excl}(Z)$-barrier of size $k$?
\end{description}
\end{minipage}
\end{center}

Due to the algorithmic consequences of the Robertson and Seymour theorem, there \textsl{exists} an algorithm that resolves the problem above, for every $\Hcal,$ in time $f(k)\cdot |G|^2$ (see \cite{fellows1988nonconstructive}), however it is non-constructive as it is based on the knowledge of the set $\obs(\{G\mid \nicefrac{1}{2}\text{-}\barrier_{\Hcal}(G)\leq k\}).$
Due to the lack of an explicit upper bound on the size of this set and of its members, so far no constructive algorithm is known for any non-planar graph $Z.$
We observe that for the class $\mathsf{excl}({Z}),$ the algorithm $\mathbf{A}_{\mathsf{excl}(Z)}$ discussed above, either decides that $(G,k)$ is a \textsf{yes}-instance of \textsc{Half-Integral $Z$-Packing} or that $(G,f(k))$ is a \textsf{no}-instance of \textsc{Half-Integral $Z$-Packing}, where $f(k)={2^{k^{\mathcal{O}_h(1)}}}.$
This constitutes the first constructive (approximation) \textsf{FPT}-algorithm for the \textsc{Half-Integral $Z$-Packing} problem.
In \autoref{sec_conclusion} we discuss further algorithmic consequences of our results.

\paragraph{Deciding when an enviroment class has the Erd\H{o}s-P{\'o}sa property.}
Given a minor antichain $\Rcal,$ we denote by $\mathsf{excl}(\Rcal)$ the class of graphs that excludes the graphs in $\Rcal$ as minors, i.\@ e.\@, the class $\gall\cap\bigcap_{R\in\mathcal{R}}\mathsf{excl}(R).$
Consider the following problem for any given non-empty minor antichain $\Rcal.$

\begin{center}
\begin{minipage}{12cm}
\noindent \textsc{Checking EP-pairs for $\Rcal$-minor free target}
\begin{description}
    \vspace{-0.7em}
    \item[Input:] An antichain $\Zcal$ for the minor relation.
    \vspace{-0.7em}
    \item[Output:] Is $\big(\mathsf{excl}(\Rcal),\excl(\Zcal)\big)$ an EP-pair?
\end{description}
\end{minipage}
\end{center}

The antichain $\Rcal$ (resp. $\Zcal$) provides a finite representation of a target (resp. environment) class and the question is whether the fixed target class $\excl(\Rcal)$ makes an EP-pair with the input environment class $\excl(\Zcal).$

For every $\mathscr{F} = \lin{ \mathscr{F}_{t}}_{t\in\Nbbb} \in \mathfrak{F}_{\mathsf{excl}(\Rcal)},$ we define $\mathscr{F}\!\!\downarrow\coloneqq \{G\mid \text{$G$ is a minor of $\mathscr{F}_{t}$ for some $t\in\Nbbb$}\}.$
Notice that, by definition, $\mathscr{F}\!\!\downarrow$ is a minor-closed class.
Let now 
$$\Obbb_{\Rcal}\coloneqq \{\obs(\mathscr{F}\!\!\downarrow)\mid \mathscr{F}\in\mathfrak{F}_{\excl(\Rcal)}\}.$$
Observe that our results imply that the set above is \textsl{finite}. Moreover, by Robertson and Seymour theorem,  it consists of \textsl{finite} sets of graphs.
In light of the definitions above, a possible restatement of our main result (\autoref{main_result_informal}), is the following:
\begin{quote}
For any two antichains $\mathcal{R}$ and $\mathcal{Z}$ for the minor relation, where $\Rcal\neq\emptyset,$ the tuple $\big(\mathsf{excl}(\Rcal),\excl(\Zcal)\big)$ is an EP-pair if and only if for every $\Ocal\in \Obbb_{\Rcal},$ there exists some graph $Z\in\Zcal$ such that no graph in $\Ocal$ is a minor of $Z.$
\end{quote} 

The description above is given in terms of finite obstruction sets to emphasize that the entire problem
has a finite description.
The formulation above reflects that for each $\mathscr{F}\in\mathfrak{F}_{\mathsf{excl}(\mathcal{R})},$ $\mathcal{Z}\cap \mathscr{F}\!\!\downarrow\neq\emptyset,$ i.\@ e.\@, none of the $\mathscr{F}\!\!\downarrow$ is fully contained in $\mathsf{excl}(\mathcal{Z}).$

This implies that the \textsc{Checking EP-pairs for $\Rcal$-minor free target} problem can be solved by performing $\sum_{\Ocal\in\Obbb_{\Rcal}}\sum_{Z\in\Zcal}|\Ocal|$ calls to the almost-linear minor-checking algorithm of Korhonen, Pilipczuk, and  Stamoulis \cite{KorhonenPS24minorcont} (see also \cite{RobertsonS95b,KawarabayashiKR12Thedisjoint} for previous algorithms, dating back to the Graph Minor series).
Therefore,  for every non-empty antichain $\Rcal$ of the minor relation, \textsc{Checking EP-pairs for $\Rcal$-minor free target} is solvable in almost-linear time.
Making this algorithm constructive, assuming the knowledge of $\Rcal,$ is an interesting open question.

\paragraph{Erd\H{o}s-P{\'o}sa dualities for topological minors.}
A relevant question on Erd\H{o}s-P{\'o}sa dualities is, to what extent an analogous theory of half-integrality, based on obstructing sets, can be build for other partial-orders one might define on graphs.
A particularly natural cousin of the minor relation that we may consider is the topological minor relation.
Here, we already have the half-integrality result of Liu \cite{liu2022packing}.
Let $\mathcal{H}$ and $\mathcal{G}$ be graph classes that are closed under topological minors.
We say that $(\Hcal,\Gcal)$ is an \defi{Erd\H{o}s-P{\'o}sa pair for topological minors}, in short is a \defi{tEP-pair}, if there exists a function $f\colon\nn{1}{1}$ such that every graph $G\in\Gcal,$ without $k$ disjoint subdivisions of graphs that do not belong to $\mathcal{H},$ contains a set $S$ of at most $f(k)$ vertices such that $G-S\in\Hcal.$

An immediate question is: When is $(\Hcal,\gall)$ a tEP-pair? 
Is there a characterization analogous to the one of \eqref{EP_for_minors} for the topological minor relation?
This question was already asked by Robertson and Seymour in \cite{RobertsonS86GMV} and, as mentioned in \cite{liu2022packing}, Liu, Postle, and Wollan have indeed found a complete (but complicated) characterization.
Moreover, this characterization seems to be qualitatively more complex than the planarity criterion in \eqref{EP_for_minors}  for the case of minors.

Let $\mathsf{texcl}(R)$ be the class of graphs excluding the graph $R$ as a topological minor.
According to Liu \cite{Liu2024private}, the problem that, given a graph $R,$ asks whether $\big(\mathsf{texcl}(R),\gall\big)$ is a tEP-pair is \textsf{NP}-complete even if we restrict its inputs to trees.
We wish to stress the contrast to the fact that, by \eqref{EP_for_minors}, the question ``is \textsl{$\big(\mathsf{excl}(R),\gall\big)$ an EP-pair?}'' is just planarity testing and thus, can be answered in polynomial time.
This suggests that a theory of obstructing sets such as the one for minors presented in this paper is either impossible or, at least, extremely complex for the case of topological minors.
A possible source of this complexity gap seems to be the fact that graphs ordered by the topological minor relation do not form a WQO.
However, it is an interesting question whether our approach may help in the direction of finding a constructive proof of Liu's theorem with explicit bounds.
In \autoref{iceberg_metaphor} we illustrate the current landscape of Erd\H{o}s-P{\'o}sa dualities.

\begin{figure}[ht]
\begin{center}
\scalebox{.7}{\includegraphics{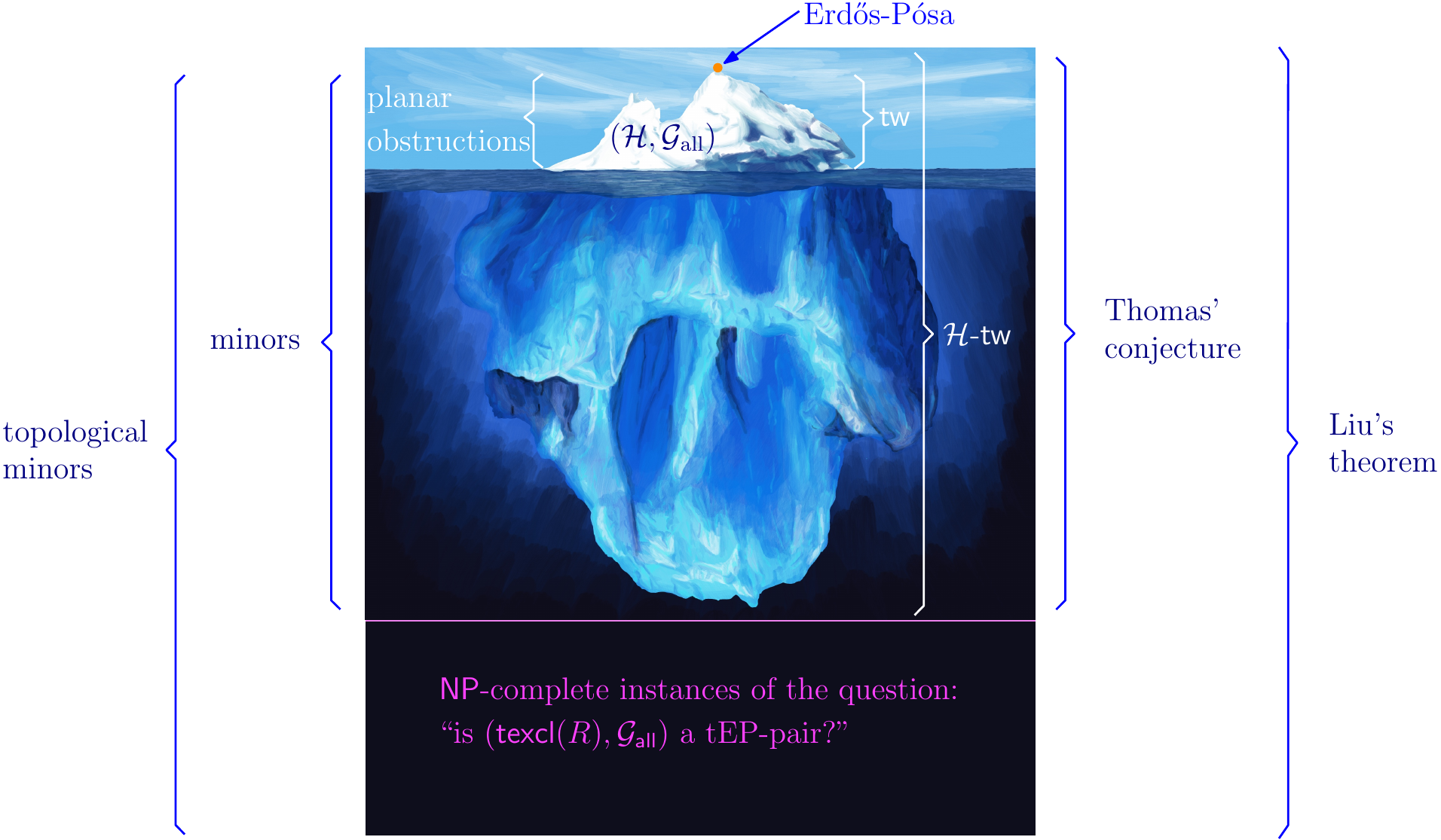}}
\end{center}
\caption{The iceberg provides a visual metaphor for the landscape on Erd\H{o}s-P{\'o}sa dualities for (topological) minor-closed graph classes.}\llabel{iceberg_metaphor}
\end{figure}

\paragraph{Improving the gap function.}
All parametric equivalences that we prove in this paper, namely $\p_{\mathfrak{W}_{\Hcal}}\sim \Hcal\text{-}\tw,$  $p_{\mathfrak{T}_{\Hcal}}\sim \Hcal\text{-}\td,$ and $p_{\mathfrak{F}_{\Hcal}}\sim \apex_\Hcal,$ have exponential gap functions, i.e., $f(h,k)\in {2^{k^{\mathcal{O}_h(1)}}}.$ 
A natural question is whether this gap can be improved to a polynomial one, that is, one of the form $k^{\mathcal{O}_h(1)}.$
This would also imply an improvement of the approximation gap of the \textsf{FPT}-approximation algorithm for the \textsc{Half-Integral $Z$-Packing} problem (mentioned on page \pageref{page_Half_Integral_packing}).
One of the two sources of exponential explosion can be found in \autoref{surfex_lemma} whose proof is given in  \cite{thilikos2023excluding} which, in turn uses \cite[Theorem 11.4]{kawarabayashi2020quickly} as a departure point where the parametric dependencies are already exponential.
This indicates that any improvement on this might require a general improvement of the dependencies of the Graph minors Structural Theorem ({GMST}) in \cite{Robertson2003GMXVI,kawarabayashi2020quickly}. 
The second level of exponentiation comes from \autoref{ripe_obt_orch} where we need thickets with nests of exponential size around their vortices in order to split those vortices.
This is because each split consumes a part of the nest to maintain the large radial linkage which leads back to the wall, and to extract new flowers.
Interestingly the reason for the exponentiality in the proof of \cite[Theorem 11.4]{kawarabayashi2020quickly} is also the vortex splitting operation.
It seems reasonable to conjecture that finding a way to bound the dependencies of the GMST by polynomials would also provide a way to avoid the first source of exponentiation.
Such a result would be a major advancement.

Finally, the degree of the polynomial in $k,$ even in the exponent of $2^{k^{\mathcal{O}_h(1)}},$ is highly dependent on the ``meta parameter'' $h=h(\mathcal{H}).$
In fact, this dependency is at least double exponential in $h.$
Therefore, another avenue of improvement could be to lower this dependency.
A bound on the gap function of the form $2^{\mathsf{poly}_h(k)},$ or, if the improvements to polynomial dependency we mention above can indeed be achieved, even $\mathsf{poly}_h(k)$ cannot be ruled out.

\subsection{A short description of the proof of \autoref{local_structure_informal}}\llabel{subsec_proof_summary}

Since this proof is by far the most technical and involved part of the entire paper, to properly explain it requires a large number of deeply nested definitions.
Several of these definitions are given in \autoref{subsec_universal_obstructions} and the majority of the remaining definitions can be found in \autoref{sec_preliminaries}.
Here we provide a very brief and informal overview of the different steps and our approach.
A second, more in-depth but also more technical overview of our proof can be found in \autoref{proof_outline_extended}.

To avoid dealing with graph classes $\mathcal{H}$ directly in our proof, we aim for the following variant of \autoref{local_structure_informal}.

\begin{theorem}\llabel{local_structure_informal_reformulated}
  There exist functions $f^1,f^2\colon\mathbb{N}^2\to\mathbb{N}$ such that for all positive integers $h$ and $k$ and every graph $G,$ if $W$ is an $f^1(h,k)$-wall in $G,$ then there exists a set $S\subseteq V(G)$ of size at most $f^2(h,k)<f^1(h,k)$ such that for every graph $Z$ on at most $h$ vertices, either
   \begin{itemize}
      \item there exists a parametric graph $\mathscr{W}=\langle \mathscr{W}_t\rangle_{t\in\mathbb{N}}\in\mathfrak{W}_{\mathsf{excl}(Z)}$ such that $G$ contains a subdivision $W'$ of $\mathscr{W}_k$ and $W'$ cannot be separated from $W$ by fewer than $k$ vertices, or
      \item the component of $G-S$ which contains the majority of $W-S$ is $Z$-minor-free.
   \end{itemize}
\end{theorem}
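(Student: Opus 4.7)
\medskip
\textbf{Proof proposal.} The plan is to apply the Graph Minors Structure Theorem (GMST) relative to the tangle defined by the wall $W$, and then, for every candidate graph $Z$ on at most $h$ vertices, either conclude that the central component is already $Z$-minor-free or extract a walloid from $\mathfrak{W}_{\mathsf{excl}(Z)}$ out of the near-embedded structure. The crucial feature we exploit is that the structural parameters supplied by GMST (Euler genus, number of apices, number and depth of vortices) depend only on $h$, not on the specific $Z$ we are testing. This is what makes it possible to choose a \emph{single} set $S$ that works uniformly for all $Z$ of size at most $h$.

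First, if $f^1(h,k)$ is chosen sufficiently large (as a function of $h$ alone for the structural step, and of $k$ for the size of the wall we want to construct), the wall $W$ induces a tangle of high order in $G$. Applying GMST to this tangle yields a description of the tangle-part of $G$ as an almost-embedding into a surface $\Sigma$ of Euler genus at most $g(h)$, with at most $a(h)$ apex vertices and at most $v(h)$ vortices each of depth at most $d(h)$. Define $S$ as the union of the apex set together with a bounded-size set of ``boundary'' vertices that separate the vortices from external pieces of the tree decomposition. A counting argument then gives $|S|\le f^2(h,k)$, which we can arrange to be strictly smaller than $f^1(h,k)$. Let $C$ be the component of $G-S$ containing the majority of the degree-$3$ vertices of $W-S$; then $C$ is almost-embedded in $\Sigma$ with vortices only.

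Now fix any graph $Z$ on at most $h$ vertices and test whether $C$ contains $Z$ as a minor. If not, the second bullet of the dichotomy is satisfied. Otherwise, we extract a walloid $\mathscr{W}_k$ from $\mathfrak{W}_{\mathsf{excl}(Z)}$, as follows. Members of $\mathfrak{W}_{\mathsf{excl}(Z)}$ are assembled by attaching to a large base wall a sequence of segments: handle segments recording orientable genus, crosscap segments recording non-orientable genus, and two kinds of flower segments recording attachments of $Z$-pieces of small (respectively bounded but larger) boundary. The base wall comes from a sub-wall of $W$; the genus of $\Sigma$ provides handles and crosscaps inside the near-embedding; and the presence of $Z$ as a minor of $C$, combined with the grid-like redundancy of $W$, lets us reroute through many parallel regions of the embedding to produce $k$ disjoint copies of the $Z$-pieces needed to form the flower and vortex-flower segments. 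Each such extraction is controlled so that the resulting subdivision of $\mathscr{W}_k$ sits inside the tangle, and hence cannot be separated from $W$ by fewer than $k$ vertices.

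The main obstacle lies in the vortex-flower step: to produce flower segments whose boundaries exceed three vertices, one must ``split'' each vortex into simpler pieces while maintaining, around the remainder, both a large nested family of concentric cycles and a radial linkage back to $W$. Each split consumes a layer of the nest, so to extract $k$ such flowers the starting nest must have exponential size in $k$; this is the source of the $f^1(h,k)=2^{k^{\mathcal{O}_h(1)}}$ bound. The subtlety is that all of this must be performed while keeping $S$ fixed independently of $Z$, so the vortex-splitting procedure must be universal; verifying that the same split supports extraction for \emph{every} $Z$ of size at most $h$ is the core technical content, carried out in the later sections on harvesting and on extracting flowers and thickets.
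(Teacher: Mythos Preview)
Your overall architecture is right—apply the local GMST to the tangle of $W$, get an almost-embedding with bounded apex/genus/breadth/depth depending only on $h$, and then dichotomize on each $Z$—but there is a real gap in your definition of $S$ and in the extraction step.

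You take $S$ to be the apex set plus some ``boundary'' vertices separating vortices from the rest. With that $S$, the assertion ``if $C$ still contains $Z$ as a minor then a walloid can be extracted'' does not follow: a single $Z$-model may invade a vortex in a unique, non-replicable way, and nothing about grid redundancy alone forces $k$ disjoint copies of the invading piece. The paper's $S$ is not just structural boundary data; it is built from a local Erd\H{o}s--P\'osa argument. Concretely, the paper first refines the almost-embedding into a \emph{ripe orchard}: a meadow is plowed so that all bricks have the same bounded-detail folio, a garden is sown so that every flat flap-type already has a parterre segment, and then the exceptional region is split via transactions into boundedly many thickets of bounded depth. Only then is $S$ defined, as the apex set together with, for each thicket disk $\Delta$, a $(d,k)$-cover produced by the \emph{harvesting lemma}: for every linear society of detail $\le d$ one either finds $k$ disjoint fibers of that type in $\Delta$ or a small $\Delta$-compact hitting set, and $S$ collects all these hitting sets. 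The key property this buys you is that any $Z$-extension in $G-S$ whose crop in some $\Delta$ has bounded boundary must be of a type for which $k$ copies were found, hence a flower can be built. A separate rerouting argument (the representation lemma, via the unique linkage theorem) shows that any $Z$-extension avoiding $S$ can be replaced by one whose crops all have bounded boundary, so the cover is complete.

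Your last paragraph gestures at the vortex difficulty but conflates two distinct stages. The ``vortex-splitting that consumes nest layers'' you describe is the \emph{orchard-ripening} step (splitting thickets along large cultivated transactions), which happens before $S$ is defined and is what bounds the number and depth of thickets. The step that actually defines $S$ and certifies why a surviving $Z$-model yields $k$ flowers is the subsequent harvesting/covering argument inside each ripe thicket; this is the missing mechanism in your proposal.
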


\paragraph{The Graph Minor Structure Theorem of Robertson and Seymour.}
To explain our approach, it is necessary to introduce the Graph Minor Structure Theorem (GMST) of Robertson and Seymour \cite{Robertson2003GMXVI,kawarabayashi2020quickly}.
We present here an abbreviated version of the local version of their theorem which avoids most of the technical statements.
Here local means again ``with respect to a large wall''.

Let $\alpha,b,d$ be non-negative integers.
We say that a graph $G$ is \defi{$\alpha$-almost embeddable} with \defi{breadth} $b$ and \defi{depth} $d$ in some surface $\Sigma$ if there exists a set $A\subseteq V(G)$ of size at most $\alpha$ and there are graphs $G^{(i)}$ such that $G-A = G^{(0)}\cup G^{(1)}\cup\dots\cup G^{(b)}$ where $G^{(0)}$ is a graph embedded in $\Sigma$ and each \defi{vortex} $G^{(i)},$ $i\in[b],$ is a graph of pathwidth at most $d$ which is ``attached along a path decomposition\footnote{Please see \autoref{subsec_almost_embeddings} for a formal definition.}'' to the vertices of some face of $G^{(0)}.$

The local version of the GMST of Robertson and Seymour now reads as follows.

\begin{center}
{\begin{eqnarray}
  \begin{minipage}{12.8cm}
  There exist functions $f^1,f^2\colon \mathbb{N}\to\mathbb{N}$ such that for every positive integer $k$ and every graph $G$ with an $f^1(k)$-wall $W,$ there either exists (1) a $K_k$-minor which cannot be separated from $W$ by deleting fewer than $k$ vertices, or (2) there exists a subgraph $G'$ of $G$ which cannot be separated from $W$ by deleting fewer than $k$ vertices and whose torso is $f^2(k)$-almost embeddable with breadth $k^2$ and depth $f^2(k)$ in a surface where $K_k$ does not embed.
  \end{minipage}
  \llabel{GMST}
\end{eqnarray}}
\end{center}
Instead of starting at \eqref{GMST} we make use of a recent refinement by Thilikos and Wiederrecht \cite{thilikos2023excluding} which allows us to obtain one of three things.
We either find: (1) a large clique minor; (2) a walloid of large order that represents a surface $\Sigma$ of large enough Euler-genus such that every graph on at most $h$ vertices can be drawn on $\Sigma;$ or (3) a subgraph $G'$ with the properties from the second outcome of \eqref{GMST} together with a walloid $W^*$ of large order which is fully contained in the graph $G^{(0)}$ of the almost embedding of the torso of $G'$ in this outcome.

From here on our proof follows along three crucial steps.

\paragraph{Step 1: Almost embeddings of a graph $Z.$}
Let us fix $G^{(0)}$ to be the part of the torso of $G'$ as above which is drawn in the surface.
Moreover, let $M$ be a minimal connected subgraph of $G$ such that $M$ contains some graph $Z$ on at most $h$ vertices as a minor and $M$ contains a vertex of the wall $W.$
Now some part of $M$ belongs to $G^{(0)},$ but other parts of $M$ may belong to the vortices $G^{(i)}$ or to the so-called ``flaps''.
Flaps are those parts of $G$ which are separated from $G'-A$ by at most three vertices.
Since these separators are so small, they do not have to be caught within the vortices.
Instead, they form facial triangles in $G^{(0)}$ (see \autoref{thm_two_paths} and the definitions surrounding it).
This induces a specific $0$-almost embedding of bounded breadth and depth for the graph $M.$
A \defi{flower} of $M$ is any maximal subgraph of $M$ which is fully contained in either a vortex or a flap\footnote{We refer to \autoref{subsec_universal_obstructions} for a formal definition.}.
Our proof introduces a procedure that either finds many duplicates of every single flower of $M,$ or finds a small hitting set for at least one of those flowers.

In \autoref{sec_harvest_crops} and \autoref{extr_flowers_thickets} we show, among others, that it suffices to repeat this procedure for only a bounded number of such graphs $M$ for every graph $Z$ on at most $h$ vertices.
Moreover, we show that we may assume that $M$ has only a bounded number, in $|Z|,$ of vertices of degree {different}  than two.

For any such $M,$ where each individual flower appears in abundance, we will then be able to construct a corresponding member of $\mathfrak{W}_{\mathsf{excl}(Z)}$ of large order.
This is made precise in \autoref{lemma_grounded_extensions_to_obstructions}.
A formal definition of flowers and the way we obtain these graphs $M$ and their $0$-almost embeddings, together with a full definition of the family $\mathfrak{W}_{\mathsf{excl}(Z)}$ is given in \autoref{subsec_universal_obstructions}.

\paragraph{Step 2: Refining an almost embedding.}
The ``gathering'' and ``hitting'' of the flowers of the graphs $M$ as above happens in two stages.
In the first stage we gather all possible flowers which can easily be found to be abundant.
Meanwhile all flowers where this is impossible, or success is at least ``not as apparent'', are confined within a small number of disks described with reference to the embedding of $G^{(0)}.$
Dealing with these disks, we call them \textsl{thickets}, is the second stage and the only part where we actually find the hitting set.

The first stage is exclusively concerned with flowers of $M$ that are produced via the flaps of $G^{(0)}.$ 
Recall that we have the walloid $W^*$ within $G^{(0)}$ which fully represents our surface $\Sigma.$
With the help of $W^*$ we now produce a tiling of $\Sigma$ using only hexagons. 
We may assume that there are only a bounded number of $M$'s we have to consider and each $M$ can only produce a bounded number of different flowers.
This allows us to classify, using the pigeonhole principle, those flowers that appear (a) in many different and far apart hexagons of our tiling and those (b) that appear either clustered closely together or a bounded number of times.
This substep is done in \autoref{Thidplodowinga} and \autoref{sec_gardening}.

Let us mark all hexagons which contain flowers of type (b).
By imitating a technique developed by Kawarabayashi, Thomas, and Wollan \cite{kawarabayashi2020quickly} in their new proof of \eqref{GMST} we now gather all marked hexagons in a small number of disks within the drawing of $G^{(0)}$ and then further refine our distinction between abundant and scarce flowers until each of the disks that still contains marked hexagons can be seen as a vortex of bounded depth itself.
This step is done in \autoref{ripe_obt_orch}.

\paragraph{Step 3: Harvesting the flowers of a ``thicket''.}
The vortices created in the previous steps are all equipped with additional infrastructure.
This allows us to route anything ``of value'' we might find inside, back to $W^*.$
We call these objects \textsl{thickets}.
The second purpose of \autoref{sec_harvest_crops} and \autoref{extr_flowers_thickets} is to apply a 
vast  generalization of the ``vortex killing'' argument of Thilikos and Wiederrecht \cite{thilikos2022killing}.
The outcome of this argument is, for any possible flower $F$ of any $M$ we have to consider, to either find many disjoint copies of $F$ pasted along a single face of $G^{(0)}$ and highly connected to $W^*,$ or find a small hitting set for all occurrences of $F$ within this thicket.
Let $S$ be the union of all of these hitting sets for all thickets.
Finally, we prove that whenever some $M$ avoids the set $S,$ then this must mean that there is an equivalent $M'$ whose flowers have been found in abundance in \textbf{Step 2} and this one, and thus we are able to produce the corresponding member of $\mathfrak{W}_{\mathsf{excl(Z)}}$ (see \autoref{lemma_grounded_extensions_to_obstructions}).

The proof of \autoref{local_structure_informal_reformulated}, or its more technical incarnation \autoref{thm_local_structure}, is now only a matter of carefully combining the previous steps.

\subsection{Defining the families $\mathfrak{F}^{\nicefrac{1}{2}}_{\mathcal{H}}$ and $\mathfrak{W}_{\mathcal{H}}$: A constructive description of $\mathfrak{C}_{\mathcal{H}}$}\llabel{subsec_universal_obstructions}

In order to state \autoref{main_result_informal} in full formality, we first need to make our definitions of the graphs $M$ that appear in \textbf{Step 1} of \autoref{subsec_proof_summary} and then describe how the families $\mathfrak{W}_{\mathcal{H}}$ and ultimately $\mathfrak{F}^{\nicefrac{1}{2}}_{\mathcal{H}}$ are defined.
Once this is done, the collection $\mathfrak{C}_{\mathcal{H}}$ of counterexamples is fully described, in accordance with \autoref{main_result_informal}, as the collection of every graph class generated through the minor closure of a single member of $\mathfrak{F}^{\nicefrac{1}{2}}_{\mathcal{H}}.$
Some definitions presented in this section still hide further technicalities which are expanded upon in \autoref{sec_preliminaries}.

As a first step, we make the $0$-almost embeddings of our ``minor models''\footnote{Later we will call these graphs the \textsl{extensions} of $Z.$} explicit.

\paragraph{Hypergraphs from graph partitions.}

A \defi{subgraph partition} of a graph $Z$ is a collection $\Pcal$ of subgraphs of $Z$ such that\footnote{Given a set $\Scal$
of objects where $\cup$ has been defined, we use $\cupall\Scal$ as a shorthand for $\bigcup_{S\in\Scal}S.$ Here the union of two graphs $G_{1}=(V_{1},E_{1})$ and $G_{2}=(V_{2},E_{2})$ is defined as $G_{1}\cup G_{2}=(V_{1}\cup V_{2}, E_{1}\cup E_{2}).$} $\cupall\Pcal=Z$ and $\{ E(P) \mid P\in\mathcal{P} \}$ is a  partition of $E(Z).$
Given a subgraph $P\in \Pcal,$ we define the \defi{boundary} of $P$ in $Z$ to be the set $\partial_{Z}(P)$ containing every vertex of  $V(P)$ that is also a vertex of some other member of $\Pcal\setminus\{P\}.$
We define $\Pcal(Z)$ to be the set containing all subgraph partitions $\mathcal{P}$ of $Z$ such that for every $P\in\mathcal{P},$ $\partial_Z(P)\neq\emptyset.$ 

Given a subgraph partition $\Pcal\in\Pcal(Z),$ we define the hypergraph $\Kcal_{\Pcal}=(V_{\Pcal},\Ecal_{\Pcal})$ where $\Ecal_{\Pcal}=\{\partial_{Z}(P) \mid P\in \Pcal\}$ and $V_{\Pcal}=\cupall \Ecal_{\Pcal}.$
It follows that each hyperedge $e$ of $\Kcal_{\Pcal}$ is the boundary of some member, say $G_e,$ of the graph partition (notice that $\Ecal_{\Pcal}$ may be a multi-set, as in the case of the example of \autoref{ex_obst_univ_walloid}).
\medskip

In \autoref{subsec_proof_summary} we spoke a lot about ``pasting graphs along the vertices of a face'' of ``confining'' parts of a graph within a disk.
The following provides a formal framework to deal with notions like those in a way that is, at first, independent of topology.

\paragraph{Linear societies and their decompositions.}
A \defi{linear society} is a pair $\langle H,\Lambda \rangle$ where $H$ is a {non-empty} graph and $\Lambda$ is a linear ordering of some {non-empty} subset of vertices of $H.$
We use $V(\Lambda)$ to denote the set of vertices of $\Lambda$ and we refer to $V(\Lambda)$ as the \defi{boundary} of the linear society $\langle H,\Lambda\rangle.$
Moreover, we \textsl{always} demand that every connected component of $H$ contains some vertex of $V(\Lambda).$

Let $\langle H,\Lambda\rangle$ be a linear society where $\Lambda=\lin{y_{1},\ldots,y_{r}}.$
Consider a drawing, possibly with crossings, of $H$ in a closed disk $\Delta$ such that the vertices drawn on the boundary, denoted by  $\bd(\Delta),$ are exactly the vertices of $\Lambda$ drawn with respect to the order $\Lambda.$
We also demand that all edges of $H$ are drawn in the interior of $\Delta.$
For simplicity, we do not distinguish between the vertices of $V(\Lambda)$ and the points of $\bd(\Delta)$ where these vertices are drawn.
\smallskip

Whenever different components of $H$ are drawn within $\Delta$ in the way above such that there exists a curve $\gamma$ in $\Delta$ with exactly its endpoints in $\bd(\Delta)$ where $\gamma$ does not intersect the drawing of $H$ in any way, we are able to ``split'' $\langle H,\Lambda\rangle$ into smaller linear societies using $\gamma$ while also respecting the original linear ordering $\Lambda$ up to shifts.
We formalize this intuition with the definition below to obtain a notion of ``components'' of a linear society.

We see the set $\bd(\Delta)\setminus V(\Lambda)$ as a set of open lines between consecutive vertices in $\Lambda.$
For each $i\in[r],$ let $p_{i}$ be a point of the line with endpoints $y_{i-1}$ and $y_{i}$ (where $y_{0}=y_{r}$).
A closed subdisk $\Delta'\subseteq\Delta$ is \defi{splitting} for $\Delta$ if 
\begin{itemize}
  \item $H$ can be drawn (possibly with crossings) inside $\Delta\setminus \Delta',$
  \item $\{p_{1}\}\subseteq \bd(\Delta)\cap \bd(\Delta')\subseteq\{p_{1},\ldots,p_{r}\},$ and
  \item the cardinality of $|\bd(\Delta)\cap \bd(\Delta')|$ is maximized.
\end{itemize}
Let  $\bd(\Delta)\cap \bd(\Delta')=\{p_{i}\mid i\in I\}$ for some $[1]\subseteq I\subseteq [r]$ where $I=\{i_1=1,i_2,\ldots,i_{q}\}$ and observe that $I$ is the same for every choice of a splitting subdisk $\Delta'$ for $\Delta.$
For each $j\in[q],$ let $\Lambda_{j}\coloneqq \lin{y_{i_{j}},\ldots,y_{i_{j+1}-1}}$ and let $H_{j}$ be the union of all connected components of $H$ that contain vertices of $\Lambda_{j}.$
We denote the \defi{decomposition} of $\langle H,\Lambda\rangle$ into the linear societies $\langle H_i,\Lambda_i\rangle$ as above by $\mathsf{dec}({H,\Lambda})\coloneqq \lin{\lin{H_{1},\Lambda_{1}},\ldots,\lin{H_{q},\Lambda_{q}}}.$

Next, we reintroduce the topology to fully understand how linear societies may be used to encode the ``almost'' part of our embeddings.

\paragraph{Surfaces and linear societies.}
By \defi{surface} we mean a two-dimensional manifold (possibly with boundary).
We say that a surface $\Sigma$ is \defi{contained} in some surface $\Sigma'$ if $\Sigma'$ can be obtained from $\Sigma$ by adding handles and crosscaps.
Given a pair $(\mathsf{h},\mathsf{c})\in\mathbb{N}\times[0,2]$ we define $\Sigma^{(\mathsf{h},\mathsf{c})}$ to be the surface without boundary created from the sphere by adding $\mathsf{h}$ handles and $\mathsf{c}$ crosscaps.
If $\mathsf{c}=0,$ the surface $\Sigma^{(\mathsf{h},\mathsf{c})}$ is \defi{orientable}, otherwise it is \defi{non-orientable}.
By Dyck's theorem \cite{Dyck1888Beitrage,Francis99ConwayZIP}, two crosscaps are equivalent (via homeomorphisms) to a handle in the presence of a (third) crosscap. 
This implies that the notation $\Sigma^{(\mathsf{h},\mathsf{c})}$ is sufficient to capture all surfaces without boundary.

Given a graph $Z$ we denote by $\Sbbb^{Z}$ the set containing all surfaces without boundary where $Z$ cannot be embedded.
We define $\sobs(\Sbbb^{Z})$ to be the set of all containment-wise minimal surfaces not in $\Sbbb^{Z}.$ 
It follows that $\sobs(\Sbbb^{Z})$ contains one or two surfaces, see \cite{thilikos2023excluding}.
We define  $\Sbbb^{(Z)}$ as the set containing every surface that is contained in some surface of $\sobs(\Sbbb^{Z}).$
\smallskip

Let now $\Sigma$ be some surface and let $M$ be a graph, let $\Pcal\in \Pcal(M)$ be a subgraph partition of $M,$ and assume that $\Gamma$ is a $\Sigma$-embedding.
In such an embedding all vertices of a hyperedge $e\in\Ecal_{\Pcal}$ are drawn on the boundary of some closed disk $\Delta_{e}$ of $\Sigma$ and two disks $\Delta_{e}$  and $\Delta_{e'}$  intersect only on the common vertices of $e$ and $e'.$
Given a choice of starting vertex $y\in e$ and some rotation of the boundary of $\Delta_{e},$ we obtain a linear ordering $\Lambda_{e,y}$ of the vertices of $e$ starting from $y.$
The rotation of each hyperedge is chosen to be counterclockwise (that is, it has $\Delta_{e}$ on the left) in case $\Sigma$ orientable, and it is chosen to be an arbitrary rotation if $\Sigma$ is non-orientable.
Each $\langle G_{e},\Lambda_{e,y}\rangle,$ for $y\in e,$ can now be seen as a linear society. 
We say that the linear society $\langle G_{e},\Lambda_{e,y}\rangle$ is \defi{disk embeddable} if the graph $G_{e}$ can be embedded in a closed disk $\Delta$ such that exactly the vertices of $\Lambda_{e,y}$ are drawn on its boundary.
\medskip

We are finally ready to give a definition of those $0$-almost embeddings of graphs that sit at the core of our description of $\mathfrak{C}_{\mathcal{H}}.$

\paragraph{Embedding pairs.}
Let $Z$ be a graph.
An \defi{embedding pair} of $Z$ is a pair $(\Sigma,\mathbf{B}),$ where $\Sigma\in \Sbbb^{(Z)}$ is a surface, and $\mathbf{B}$ is the set of all linear societies which are generated, as described below, by a fixed triple of the form $(M, \Pcal,\Gamma)$ where
\begin{enumerate}
\item $M$ is a connected graph containing $Z$ as a minor such that $|M| ≤ \mathsf{ext}(|Z|),$ 
\item $\Pcal\in \Pcal(M),$ and
\item $\Gamma$ is a $\Sigma$-embedding of $\Kcal_{\Pcal},$
\end{enumerate}
where $\mathsf{ext} \colon \mathbb{N} \to \mathbb{N}$ is a function of order $2^{\Ocal(\ell(|Z|^{2}))}$ that will be determined in the proof of \autoref{lemma_grounded_extensions_to_obstructions} in \autoref{sec_local_structure} and $\ell(\cdot)$ is the \textsl{unique linkage function} (see \cite{Robertson2009GMXXI,Kawarabayashi2010Shorter,AdlerKKLST17Irrelevant,mazoit2013single,GolovachST22Combing}).

Given a triple as above, we further consider some function $\eta\colon\Ecal_{\Pcal}\to V(M)$ that assigns a starting vertex $\eta(e)\in e$ to every $e\in\Ecal_{\Pcal}.$
Thereby, $\eta$ generates the following set of linear societies
$$\{\lin{G_{e},\Lambda_{e,\eta(e)}}\mid e\in \Ecal_{\Pcal} \text{~and $\langle G_{e},\Lambda_{e,\eta(e)}\rangle$ is not disk-embeddable}\}.$$

Notice that ${\bf B}$ may be empty.
We denote by $\mathfrak{P}_{\mathsf{excl}(Z)}$ the set containing all embedding pairs of $Z.$
Notice that a graph on $h$ vertices has at most $2^{\mathcal{O}((\mathsf{ext}(|Z|))^2)}$ many subgraph partitions.
Moreover, there exist at most $2|Z|^2+1$ many different surfaces in $\Sbbb^{(Z)}$ and for each choice of $\mathcal{P}\in\mathcal{P}(Z)$ we have $|\mathcal{P}|\leq \mathsf{ext}(|Z|)^2$ and $|P|\leq \mathsf{ext}(|Z|)$ for every $P\in\mathcal{P}$ which implies that there are at most $\mathsf{ext}(|Z|)^3$ many choices for $\eta.$
It follows that $\mathfrak{P}_{\mathsf{excl}(Z)}$ has $2^{(\mathsf{ext}(|Z|))^2}$ elements.
Given a proper minor-closed graph class $\mathcal{H}$ we define $\mathfrak{P}_{\mathcal{H}}\coloneqq \bigcup_{Z\in\mathsf{obs}(\mathcal{H})}\mathfrak{P}_{\mathsf{excl}(Z)}.$
We now obtain, in particular, that $|\mathfrak{P}_{\Hcal}|$ is finite for every choice of $\Hcal.$

\medskip
The walloids that make up our families $\mathfrak{F}^{\nicefrac{1}{2}}_{\mathcal{H}}$ and $\mathfrak{W}_{\mathcal{H}}$ are constructed from several smaller pieces that can be circularly arranged.
Such a modular definition allows for a nicer presentation, but it also facilitates our proofs as it allows us in many places to reduce to the analysis of single pieces of the walloid rather than its entire global structure.

\paragraph{$t$-segments.} 
Each member $\mathscr{W}$ of $\mathfrak{F}_{\Hcal}^{\nicefrac{1}{2}}$ is a wall-like parametric graph to which, from now on, we refer as an \defi{elementary} \defi{guest walloid}.
These parametric graphs $\mathscr{W}$ are built using the {embedding pairs} from $\mathfrak{P}_{\Hcal}.$
The main building components of elementary guest walloids are called \defi{elementary} \defi{$t$-segments}.
While the formal definitions are explained in detail in \autoref{subsec_segments}.
Here, we provide an intuitive description which will be sufficient to understand the family $\mathfrak{F}_{\Hcal}^{\nicefrac{1}{2}}$ and the more detailed summary of the proof of \autoref{local_structure_informal}.

The three basic components are the elementary \textsl{$(t',t)$-wall segment}, the \textsl{elementary $t$-handle segment}, and the 
\textsl{elementary $t$-crosscap segment}.
The \defi{elementary $(t',t)$-wall segment} is an elementary $(t',t)$-wall and acts as the starting point of our construction of all other elementary $t$-segments.
The leftmost drawing of \autoref{fdsjuy456uryufd} depicts an elementary $(12,3)$-wall segment. 
We depict the $t$ ``leftmost'' vertices of an elementary $(t',t)$-wall segment in red and its $t$ ``rightmost'' vertices in blue. Later, we will use these vertices in order to glue together distinct $t$-segments, using new edges depicted in orange. 
The $t'$ ``top'' vertices of the elementary $(t',t)$-wall segment are drawn with white interiors.

The \defi{elementary $t$-handle segment} is an elementary $(4t, t)$-wall segment where we added two collections of $t$ edges, each between the top vertices of the $(4t,t)$-wall, as indicated in the second drawing of \autoref{fdsjuy456uryufd}.
The \defi{elementary $t$-crosscap segment} is an elementary $(4t, t)$-wall segment obtained by adding a collection of $2t$ edges between the top vertices of the elementary $(4t,t)$-wall in a pairwise crossing manner, as indicated in the third drawing of \autoref{fdsjuy456uryufd}. 

\begin{figure}[ht]
  \begin{center}
  \scalebox{.56}{\includegraphics{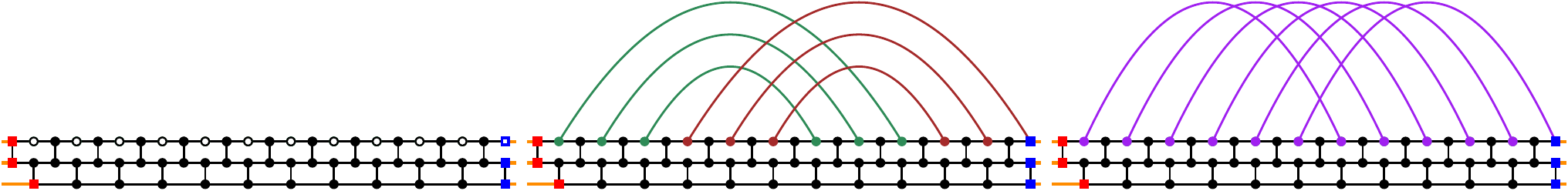}\hspace{1mm}\includegraphics{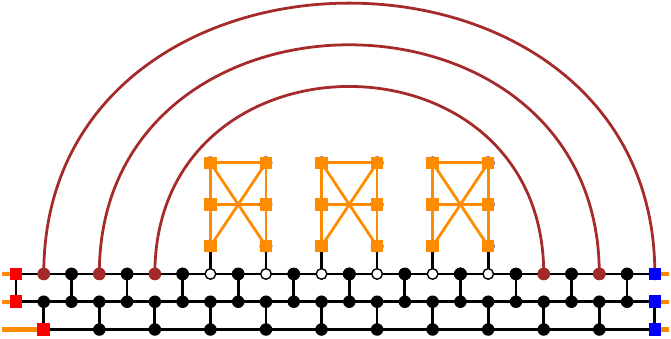}}
  \end{center}
    \caption{From left to right: an elementary $3$-wall segment, an elementary $3$-handle segment, an elementary $3$-crosscap segment, and an elementary $(3,3,K_{3,3}^-,\langle y_1,y_2\rangle)$-flower segment.}
  \llabel{fdsjuy456uryufd}
\end{figure}

The fourth and most complicated, and somehow central, elementary $t$-segment is parametrized by an integer $b\in\Nbbb_{≥1}$ and a linear society $\langle H,\Lambda\rangle$ where $\mathsf{dec}(H,\Lambda)=\lin{\lin{H_{1},\Lambda_{1}},\ldots,\lin{H_{q},\Lambda_{q}}}.$

An \defi{elementary $(t,b,H,\Lambda)$-flower segment} is constructed by starting from an elementary $(t+b|\Lambda|+t, t)$-wall segment.
As a first step, we add a collection of $t$ edges joining its $t$ leftmost top and its $t$ rightmost top vertices, as indicated by the brown edges of the rightmost drawing of \autoref{fdsjuy456uryufd}. 
The construction is completed by taking, for every $j\in[q],$ $b$ copies $\langle H^1_j,\Lambda^1_j\rangle,$\ldots,$\langle H^b_j,\Lambda^b_j\rangle$ of $\langle H_j,\Lambda_j\rangle$ and connecting the $p$-th vertex of $\Lambda^{i}_j$ with the $(t+(\Sigma_{j'\in[j-1]}|\Lambda_{j-1}|)b+(i-1)|\Lambda_{j}|+p)$-th top vertex for every $j\in[q],$ every $i\in[b],$ and every $p\in |\Lambda_j|.$

The rightmost drawing in \autoref{fdsjuy456uryufd} depicts an elementary $(3,3,K_{3,3}^-,\langle y_1,y_2\rangle)$-flower segment, where $K_{3,3}^-$ is the graph obtained from $K_{3,3}$ by deleting the edge $y_{1}y_{2}.$
The 3 copies of $K_{3,3}^-$ are depicted in orange (see also \autoref{ex_obst_univ_walloid}).
\medskip

\paragraph{Cylindrical concatenations and the construction of $\mathfrak{W}_{\Hcal}.$}
Given a sequence $\lin{W_{1},\ldots,W_{q'}}$  of $t$-segments, we define their \defi{cylindrical concatenation} as the graph obtained by adding edges (depicted in orange) between same-height rightmost (depicted in blue) vertices of $W_{i}$ and leftmost (depicted in red) vertices of $W_{i+1}$ for every $i\in[q'-1].$
The construction is then completed by adding edges between same height rightmost vertices of $W_{1}$ and leftmost vertices of $W_{q'}$ (see \autoref{ex_obst_univ_walloid} for an example).

\begin{figure}[ht]
  \begin{center}
  \scalebox{1}{\includegraphics{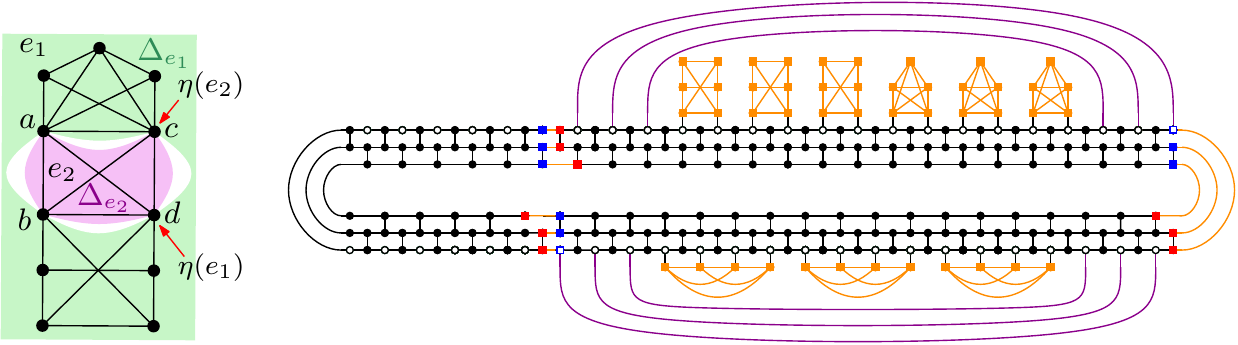}}
  \end{center}
    \caption{Left: A graph $M=Z$ and a subgraph partition $\Pcal$ of $M$ containing two graphs $H_{1}$ and $H_{2}.$ $H_{1}$ is drawn in the green disk while $H_{2}$ is drawn in the violet disk. The hypergraph $\Kcal_{\Pcal}$ contains the vertices $a,b,c,$ and $d$ and two hyper-edges $e_{1}=e_{2}=\{a,b,c,d\}.$ The figure shows an embedding of $\Kcal_{\Pcal}$ in the sphere.
    The choices for $Z,$ $M,$ $\Pcal,$ and $\eta,$ generate the embedding pair $\mathbf{p}=(\Sigma,\mathbf{B})$ for the class $\mathsf{excl}(Z)$ where $\mathbf{B}=\{\lin{H_{1},\lin{d,b,a,c}},\lin{H_{2},\lin{c,a,b,d}}\}.$ Right: The guest walloid $\mathscr{W}^{\mathbf{p}}_3.$}
  \llabel{ex_obst_univ_walloid}
\end{figure}

Let now $\mathbf{p}=(\Sigma,\mathbf{B})$ be some embedding pair of $\mathfrak{P}_{\Hcal},$ where $\Sigma=\Sigma^{(\mathsf{h},\mathsf{c})}$ for some $\mathsf{h}\in\Nbbb$ and $\mathsf{c}\in[0,2].$
Let also $\mathbf{B}=\{(H_{1},\Lambda_{1}),\ldots,(H_{q},\Lambda_{q})\}.$
We define the parametric graph $\mathscr{W}^{\mathbf{p}}=\langle \mathscr{W}^{\mathbf{p}}_t\rangle_{t\in \Nbbb},$ where $\mathscr{W}^{\mathbf{p}}_t$ is the cylindric concatenation of $\lin{W_{1},\ldots,W_{q'}},$ where $q'=1+\mathsf{h}+\mathsf{c}+q$ and such that $W_{1}$ is the elementary $(4t,t)$-wall segment, $W_{2},\ldots,W_{1+\mathsf{h}}$ are elementary $t$-handle segments, $W_{1+\mathsf{h}+1},\ldots,W_{1+\mathsf{h}+\mathsf{c}}$ are  elementary $t$-crosscap segments, and for $i\in[q],$ $W_{1+\mathsf{h}+\mathsf{c}+i}$ is the elementary $(t,t,H_i,\Lambda_i)$-flower segment.

Notice that the above construction of $\mathscr{W}^{\mathbf{p}}$ depends on the ordering of the elements of $\mathbf{B}$ considered.
This is not a problem for our definition as all alternative parametric graphs defined by different orderings are pairwise equivalent (see for example the proof of \autoref{lem_garden_creation} and \autoref{subsec_swapping}). 
So we may choose an arbitrary ordering for for defining $\mathscr{W}^{\mathbf{p}}.$

For the purposes of our construction, for every graph $Z,$ we also set up a special symbol ${\bf p}_{Z}$ (treated as a separate ``embedding pair'') and we set $\mathscr{W}^{{\bf p}_{Z}}=\lin{t\cdot Z\mid t\in\Nbbb}$ and $\widehat{\mathfrak{P}}^{(Z)}\coloneqq \mathfrak{P}^{(Z)}\cup\{{\bf p}_{Z}\}.$ Let now $\Hcal$ be a proper minor-closed class.
We define $\mathfrak{F}_{\Hcal}^{\varnothing}\coloneqq \{\mathscr{W}^{{\bf p}_{Z}}\mid Z\in \obs(\Hcal)\}.$

Let $Z_{1},\ldots,Z_{r},$ be the connected components of our graph $Z.$
We define the following four parametric families.
\begin{eqnarray}
\mathfrak{F}^{(Z)} & \coloneqq & \big\{\mathscr{W}^{\mathbf{p}_1}+\cdots+{\mathscr{W}}^{\mathbf{p}_r}\mid (\mathbf{p}_1,\ldots,\mathbf{p}_r)\in\widehat{\mathfrak{P}}^{(Z_1)}\times\cdots\times \widehat{\mathfrak{P}}^{(Z_r)}\big\}, \llabel{first_walloid_formula}\\
\mathfrak{F}_{\Hcal}^{\nicefrac{1}{2}} & \coloneqq & \big\{ \mathscr{W} \in \cupall\{ \mathfrak{F}^{(Z)}\mid {Z\in\obs(\Hcal)}\}\mid \text{for every $\mathscr{W}' \in \mathfrak{F}_{\Hcal}^{\varnothing}$ it holds that $\mathscr{W}' \not\lesssim \mathscr{W}$} \big\},\llabel{fourth_walloid_formula}\\
{\mathfrak{F}}_{\Hcal} & \coloneqq & \mathfrak{F}^{\nicefrac{1}{2}}_{\mathcal{H}} \cup \mathfrak{F}_{\Hcal}^{\varnothing} \llabel{second_walloid_formula}, \text{~and}\\
{\mathfrak{F}}_{\Hcal}^{\mathsf{min}} & \coloneqq & \mathsf{min}({\mathfrak{F}}_{\Hcal}). \llabel{third_walloid_formula}
\end{eqnarray}
As we already mentioned in the beginning of this section, all parametric families defined by the minimization in \eqref{second_walloid_formula} are $\lesssim$-antichains.
All of these minimizations, for a fixed class $\mathcal{H},$ are pairwise equivalent, and, in particular, they are equivalent to the set that they minimize.
Therefore, the definitions for $\mathfrak{F}_{\Hcal}$ and $\mathfrak{F}_{\Hcal}^{\nicefrac{1}{2}}$ are unique up to equivalence and we may take an arbitrary representative.

\paragraph{Examples and the obstructing sets for $\mathcal{H}$-treewidth.}
An important special case in the construction above is any choice $\Hcal=\excl(Z)$ where $Z$ is a connected planar graph.
Then 
$\mathfrak{P}^{(Z)}$ contains (among others) the pair $(\Sigma^{(0,0)},\emptyset).$
This implies that $\mathsf{min}(\{\mathscr{W}^{\mathbf{p}}\mid \mathbf{p}\in \widehat{\mathfrak{P}}^{(Z)})$ (as in \eqref{second_walloid_formula}) contains only the cylindrical concatenation of the $(4t,t)$-wall segment. 
Moroever, the minimization in $\eqref{second_walloid_formula}$ then ``absorbs'' this wall into $\mathscr{W}^{{\bf p}_{Z}}=\lin{k\cdot P}_{k\in\Nbbb}.$
This means that $\mathfrak{F}_{\Hcal}=\mathfrak{F}_{\Hcal}^{\varnothing}$ and therefore $\mathfrak{F}_{\Hcal}^{\nicefrac{1}{2}}=\emptyset.$ 

We now define the two new parametric families which act as our obstructing sets for $\mathcal{H}$-treewidth.
\begin{align}
\mathfrak{W}_{\Hcal} &\coloneqq \{\mathscr{W}^{\mathbf{p}}\mid \mathbf{p}\in \mathfrak{P}^{(Z)}\  \text{and}\ Z\in\obs(\Hcal)\}\text{ and}\\
\mathfrak{W}_{\Hcal}^{\mathsf{min}} &\coloneqq \mathsf{min}(\mathfrak{W}_{\Hcal}) 
\end{align}
Notice that the definition of $\mathfrak{W}_{\Hcal}$ above and the definition of ${\mathfrak{F}}_{\Hcal}$ in \eqref{first_walloid_formula} and \eqref{second_walloid_formula} differ in two aspects:
The first is that we now use $\mathfrak{P}^{(Z)}$ instead of the more enhanced $\widehat{\mathfrak{P}}^{(Z)}$ and the second is that we do not have to go through an additional construction step such as \eqref{first_walloid_formula}.
This readily implies that $\mathfrak{F}_{\Hcal}\lesssim^{*}\mathfrak{W}_{\Hcal}.$
\medskip

Let us give another simple, but less trivial, example for the construction of $\mathfrak{F}_{\Hcal}.$
This time we discuss the case where $\Hcal=\excl(K_{3,3}).$
Notice that $\Sbbb^{(\{K_{3,3}\})}=\{\Sigma^{(0,0)},\Sigma^{(1,0)},\Sigma^{(0,1)}\},$ i.e., $\Sbbb^{(K_{3,3})}$ contains the sphere, the torus, and the projective plane.
Next, observe that for each $\Sigma\in\{\Sigma^{(1,0)},\Sigma^{(0,1)}\}$ its holds that $(\Sigma,\emptyset)\in\mathfrak{P}^{(K_{3,3})}$ and that for every other $(\Sigma,\mathbf{B})\in\mathfrak{P}^{(K_{3,3})}$ we have that $\mathscr{W}^{{(\Sigma,\emptyset)}}\lesssim\mathscr{W}^{(\Sigma,\mathbf{B})}.$
Therefore $\mathfrak{F}_{\mathsf{excl}(K_{3,3})}$ should contain the cylindrical concatenation of the elementary $t$-wall segment and the elementary $t$-handle segment as well as the cylindrical concatenation of the elementary $t$-wall segment and the elementary $t$-crosscap segment.
For $t=3,$ these graphs are depicted in \autoref{torus_and_crosscap}.
Let us now examine the parametric graphs in $\mathfrak{W}_{\Hcal}$ that are generated by choosing the sphere as the surface $\Sigma.$
As $K_{3,3}$ cannot be embedded in $\Sigma^{(0,0)}$ we have to consider some non-trivial subgraph partition $\Pcal$ of $K_{3,3}.$
It is easy to observe that for every $(\Sigma^{(0,0)},\mathbf{B})\in\mathfrak{P}^{(K_{3,3})}$ it holds that  $\mathscr{W}^{{(\Sigma^{(0,0)},\mathbf{B}')}}\lesssim\mathscr{W}^{(\Sigma^{(0,0)},\mathbf{B})}$ where $\mathbf{B}'=\{(K_{3,3},\lin{y})\}$ and $y$ is an arbitrary vertex of $K_{3,3}.$

Thus, $\mathscr{W}^{{(\Sigma^{(0,0)},\mathbf{B}')}}$ is the only parametric graph that has some chance of surviving in $\mathfrak{F}^{\mathsf{min}}_{\mathsf{excl}(K_{3,3})}$ among those generated by the sphere.
But this chance is further lost because $\mathscr{W}^{{\bf p}_{K_{3,3}}}=\lin{t\cdot Z\mid t\in\Nbbb}\lesssim \mathscr{W}^{{(\Sigma^{(0,0)},\mathbf{B}')}}.$
So all parametric graphs generated by the sphere are absorbed by $\mathfrak{F}_{\mathsf{excl}(K_{3,3})}^{\varnothing}.$
Therefore $\mathfrak{F}_{\mathsf{excl}(K_{3,3})}^{\nicefrac{1}{2}}$ is as announced in \autoref{torus_and_crosscap}. 
The same analysis holds if, instead of $K_{3,3},$ we consider any minor-closed graph class excluding some single-crossing minor.

As asserted by our main result, \autoref{main_result_informal}, $\mathfrak{F}_{\Hcal}^{\nicefrac{1}{2}}$ provides a precise and finite delineation of the half-integrality for $\Hcal.$
In the next subsection, we give some further intuition why this is the case.

\subsection{A (more) detailed outline of the proof of \autoref{local_structure_informal}.}\llabel{proof_outline_extended}

The proof of \autoref{main_result_informal} we presented at the end of \autoref{sec_min_max_dualities} utilized as a core tool \autoref{thm_apex_upper_bound} which, in turn, can be obtained from \autoref{main_grid_general}.
Let us begin by giving a short description of how we obtain \autoref{main_grid_general} from the local structure theorem \autoref{local_structure_informal}.

\paragraph{Balanced $\mathcal{H}$-separators and wall(oid)s.}
A core concept from the (algorithmic) theory of tree decompositions is the notion of ``well-linked'' sets and balanced separators \cite{reed1992finding}.
We define analogues of these concepts with respect to a graph class $\mathcal{H}.$
A similar concept has been used by Jansen, de Kroon, and W{\l}odarczyk \cite{Jansen20235Approx} in their approximation algorithm for $\mathcal{H}$-treewidth.

A set $X\subseteq V(G)$ is $k$-$\mathcal{H}$-linked if for every set $S\subseteq V(G)$ of size at most $k-1$ there exists a component $K$ of $G-S$ such that $|V(K)\cap X|\geq\frac{2}{3}|X|,$ and $K\notin \mathcal{H}.$
This means that whenever we delete few vertices, most of $X$ still belongs to some subgraph of $G$ which contains an obstruction of $\mathcal{H}.$
It is straightforward to verify that any graph with a $k$-$\mathcal{H}$-linked set must have $\mathcal{H}$-treewidth at least $k.$
Moreover, under the assumption that there are no $k$-$\mathcal{H}$-linked sets in $G$ one can construct a tree-decomposition that witnesses small $\mathcal{H}$-treewidth using standard arguments.

Since the existence of a highly $\mathcal{H}$-linked set $X$ implies large $\mathcal{H}$-treewidth, it, in particular, implies large treewidth.
By using the Grid Theorem \cite{RobertsonS86GMV,chuzhoy2021towards} together with some arguments from \cite{kawarabayashi2020quickly} we are able to find a large wall which is ``associated''\footnote{Formally speaking and to help a reader familiar with the terminology of graph minors, the tangle of the wall agrees with the tangle of the highly linked set.} with our highly $\mathcal{H}$-linked set.
This wall may now be used as the input for an application of \autoref{local_structure_informal}.
Finally, since the wall and $X$ are ``associated'', it is impossible that the second outcome of \autoref{local_structure_informal} holds.
Therefore, \autoref{main_grid_general} follows from the parametric equivalence between the largest number $k$ for which a graph has a $k$-$\mathcal{H}$-linked set and $\mathcal{H}$-treewidth together with \autoref{local_structure_informal}.
\medskip

In the remainder of this section, we give a more detailed overview of our proof for \autoref{local_structure_informal} and the techniques involved.
As mentioned earlier, we prove a more general theorem, that is \autoref{local_structure_informal_reformulated}, which yields the outcome of \autoref{local_structure_informal} for all proper minor-closed graph classes $\mathcal{H}$ with $h(\mathcal{H})\leq h$ simultaneously.
In most of the discussion that follows, we focus on a single graph $Z$ on at most $h$ vertices for ease of presentation.

Let $\Sigma=\Sigma^{(\mathsf{h},\mathsf{c})}$ be some surface.
We define the parametric graph $\mathscr{W}^{(\mathsf{h},\mathsf{c})}=\langle \mathscr{W}^{(\mathsf{h},\mathsf{c})}_t\rangle_{t\in\mathbb{N}}$ where $\mathscr{W}^{(\mathsf{h},\mathsf{c})}_t$ is obtained from the cylindrical concatenation of a single $t$-wall segment, $\mathsf{h}$ $t$-handle segments, and $\mathsf{c}$ $t$-crosscap segments.
A key tool we borrow from \cite{thilikos2023excluding} is an algorithm that takes as an input some graph $G$ and a large wall $W$ together with integers $t,$ $k,$ and $g.$
This algorithm then returns one of three things.
\begin{enumerate}
    \item a $K_k$-minor,
    \item a subdivision of $\mathscr{W}_t^{(\mathsf{h},\mathsf{c})}$ where $g=2\mathsf{h}+\mathsf{c},$ or
    \item a small ``apex'' set $A\subseteq V(G)$ together with a so-called \textsl{$\Sigma$-decomposition} $\delta$ of $G-A$ which ``respects'' the wall $W,$ where the Euler-genus of the surface $\Sigma$ is $2\mathsf{h}'+\mathsf{c}'<g$ for appropriate choices of $\mathsf{h}'$ and $\mathsf{c}',$ and a subdivision $W_1$ of $\mathscr{W}_t^{(\mathsf{h}',\mathsf{c}')}$ which is also ``respected'' by $\delta.$
    Moreover, the number and depth of the vortices of $\delta$ is bounded in some function in $t$ and $k.$
\end{enumerate}
For most intents and purposes, one may think of a $\Sigma$-decomposition as an almost-embedding where the depth of the vortices is not bounded.
The way $\delta$ ``respects'' $W$ and $W_1$ may be understood as: they appear as subdivisions in $G^{(0)}$ when $\delta$ is understood as a $0$-almost embedding.
See \autoref{subsec_almost_embeddings} for a full definition of $\Sigma$-decompositions.

An alternative way to understand a $\Sigma$-decomposition $\delta$ is through subgraph partitions and their hypergraphs as follows.
On an intuitive level, one may see $\delta$ as an embedding of the hypergraph $\Kcal_{\Pcal}=(V_{\Pcal},\Ecal_{\Pcal})$  in $\Sigma^{(\mathsf{h}',\mathsf{c}')}$ where $\Pcal\in \Pcal(G)$ is a subgraph partition of $G$ where only a bounded number of hyperedges have size $\geq 4.$
Moreover, each hyperedge $e\in\Ecal_{\Pcal}$ is seen as a disk $c,$ called \defi{cell} of $\delta$ where the graph $G_{c}\coloneqq G_{e}$ is drawn (possibly with crossings) inside $c$ and the vertices of $e$ are drawn on the boundary $c$ in some liner ordering $\Lambda_{c}.$
We call a cell with at least $4$ vertices drawn on their boundary a \defi{vortex} while we call the others \defi{flaps}.
This defines a linear society $(G_{c},\Lambda_c)$ that has bounded depth in the sense that there is no big flow in $G_{c}$ from some set $Q\subseteq e$ of vertices appearing consecutively on $\bd(\Delta)$ to  $e\setminus Q$ (we call such a flow a \defi{transaction} of the linear society $(G_{c},\Lambda_c)$). 
Part of the outcome of the algorithm from \cite{thilikos2023excluding} (originally proven in \cite{kawarabayashi2020quickly}) ensures also that all vortices are situated ``inside'' the biggest cycle of $W_1.$
We refer to this cycle as the \defi{exceptional cycle} of $W_1$ (see \autoref{thickets_break_pattern_sp} for a visualization).
We refer to the set $V_{\delta}\coloneqq V(V_{\Pcal})$ as the set of \defi{ground vertices} of $\delta.$

By editing the $\Sigma$-decomposition $\delta$ and collecting everything that is drawn in the unique disk defined by the exceptional cycle of $W_1$ which avoids all of the other cycles of $W_1$ into a single cell, we obtain a new $\Sigma$-decomposition $\delta_1$ of $G_1\coloneqq G-A.$
This produces a triple $(G_1,\delta_1,W_1)$ which represents the arable land mapped out by $W_1.$
At this stage, we call the triple $(G_1,\delta_1,W_1)$ a \defi{$(t,\Sigma)$-meadow}.

From here on, we begin a sequence of cultivation steps that turn the wildly grown \textsl{meadow} into a neatly organized \textsl{orchard} whose ``flowers'' are ready for the \textsl{harvest}.
Below we provide a description of the individual steps and provide a sketch of how these ultimately lead to \autoref{local_structure_informal}.

For the purpose of our proofs, it is helpful to consider a specific type of witness for the existence of a graph $Z$ as a minor.
We call these objects \textsl{extensions} of $Z$ and they are defined in \autoref{def_extensions}.
For the purpose of this introduction, it suffices to think of an extension as a subgraph $M$ of $G$ which is minimal with respect to the following properties: 1) $M$ contains $Z$ as a minor, 2) $M$ is connected, 3) $M$ contains a vertex of $W_1,$ and 4) the total number of vertices of $M$ whose degree is different from two is bounded by some function in $|Z|$ which will be determined later.
In our proofs we do not actually require property 3), but our arguments are tailored to specifically deal with extensions of this type.
Moreover, 4) is not part of the actual definition of an extension of $Z,$ but an integral part of the content of \autoref{sec_harvest_crops} and \autoref{extr_flowers_thickets} is to prove that it suffices to only consider extensions of this type.

\paragraph{Plowing the meadow.}
Notice that all but one cell of $\delta_1$ are flaps.
For each flap we define a set $\mathsf{folio}(G_c,\Delta_c)$ that encodes all possible ways an extension of $Z$ may ``invade'' the linear society $(G_c,\Lambda_c).$
We also define the set $\mathsf{folio}(\delta_1)$ as the union of all possible folios of the flaps of $\delta_1.$
We will show that the size of this set is bounded by a function in $|Z|.$
We now divide the non-vortex part of $\Sigma=\Sigma^{(\mathsf{h}',\mathsf{c})'}$ into $2\mathsf{h}'+\mathsf{c}'+1$ \defi{enclosures}, which are areas that may be treated separately throughout our cultivation process.
In \autoref{thickets_break_pattern_sp} we provide an example of such a division.
Each face $F,$ with respect to $\delta_1,$ of an enclosure is called a \defi{brick} and we define $\mathsf{folio}(F)$ to be the union of all folios of the flaps that are either fully inside or on its boundary. 

Our first step is to plow the meadow and thereby prepare it for further cultivation.
That is we homogenize it by updating the triple $(G_1,\delta_1,W_1)$ to a new triple $(G_1,\delta_2,W_2)$ such that for every enclosure $C$ of the new walloid $W_2,$ all bricks of $C$ have the same folio.
This is done by both ``merging'' bricks of $(G_1,\delta_1,W_1)$ and by ``pushing'' some of the flaps of $\delta_1$ into the exceptional face of $W_2.$
Notice that such a process comes at a great price in terms of the order of the walloid $W_1$ and thus $W_1$ must be chosen to be enormous from the beginning.
This completes the plowing procedure.
 
\begin{figure}[ht]
\scalebox{.91}{\includegraphics{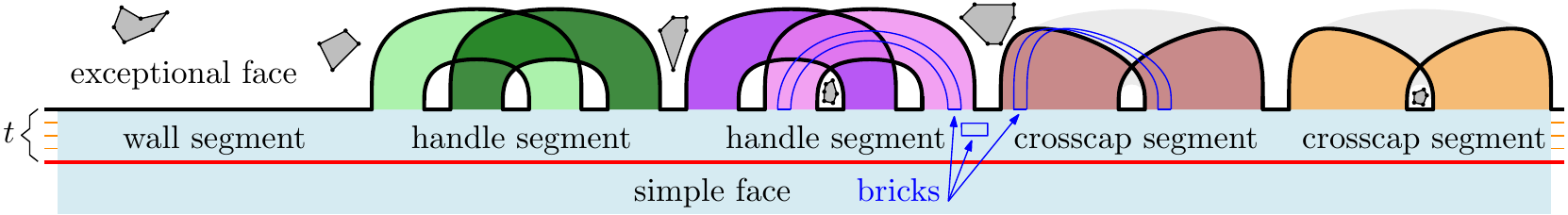}}
    \caption{A $(t,\Sigma^{(2,2)})$-meadow with two handle segments and two crosscap segments. The black line is the boundary of the exceptional face and the red line is the boundary of the simple face.
    The colored territories are the enclosures of $W.$
    The vortices are represented by the gray polygons and are all inside the exceptional face.
    }
  \llabel{thickets_break_pattern_sp}
\end{figure}

\paragraph{Sowing a garden.}
So far we have prepared our meadow and turned it into a field ready to receive the seeds that will eventually grow into the flowers and trees of our \textsl{orchard}.
Towards this goal, the next step on our agenda is to plant the first \textsl{flowers}.
This means that we are going to represent every linear society $(H,\Lambda)$ in $\mathsf{folio}(\delta_2)$ by ``planting'' a sequence of $h\coloneqq|Z|$ many consecutive $(t,b,H,\Lambda)$-flower segments in our meadow.
We refer to the newly created ``super segment'' consisting of all these flower segments as a $(t,b,h,H,\Lambda)$-\defi{parterre segment} where $b$ is a function in $k.$
The key feature of our plowed meadow $(G_1,\delta_2,W_2)$ that allows for this procedure is the homogeneity of its enclosures which implies that every society in $\mathsf{folio}(\delta_2)$ appears in abundance.
This allows us to rearrange $W_2$ in a way that such parterre appears on the boundary of the exceptional face.
Once this process is complete, we have obtained a new triple $(G_1,\delta_2,W_3)$ called a (blooming) \emph{$(t,b,h,\Sigma)$-garden}.
This garden now contains, for each member $F$ of $\mathsf{folio}(\delta_2)$ a parterre segment whose flowers represent this particular $F.$
Notice that $\delta_2$ still has only one vortex of (possibly) unbounded depth which sits in the exceptional face of $W_3.$

\begin{figure}[ht]
\begin{center}
\scalebox{0.85}{\includegraphics{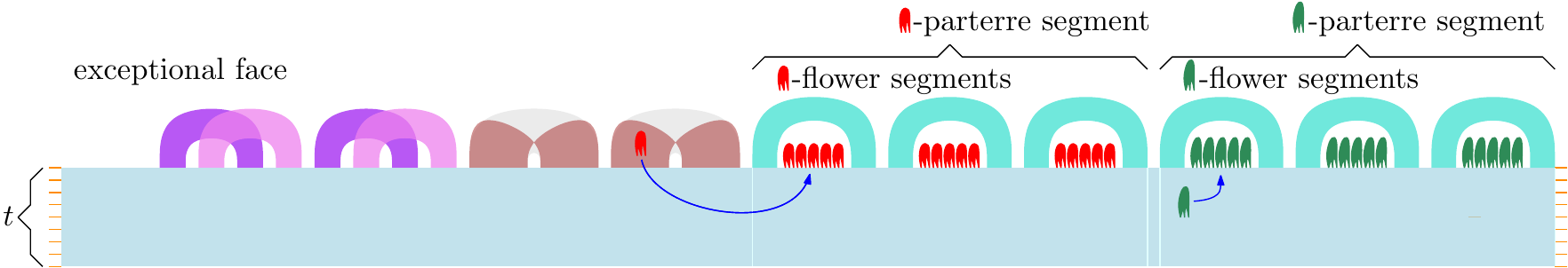}}
\end{center}
    \caption{A $(t,b,h,\Sigma^{(2,2)})$-garden with two handle segments, two crosscap segments, two parterre segments each containing three flower segments of two distinct linear societies (depicted in red and green).
  Each society appears 4 times inside each flower.}
  \llabel{thickets_break_pattern_before}
\end{figure}

\paragraph{Growing an orchard.}
With $(G_1,\delta_2,W_3)$ we have reached a first milestone in the cultivation of our farmland.
What is left is the wilderness in the form of the huge vortex that has been created in the exceptional face of $W_3$ during the previous steps.
The time has come to conquer this jungle and expand the arable land until the remaining wilderness is confined to a small number of rich biotopes that allow for other means of exploration.

To put our plan for the next step into more technical terms:
In the previous steps, we have given up on our vortices of bounded depth in order to achieve a form of homogenization with respect to the folio of the enclosures of $W_3.$
We now want to expand the homogeneous area and recreate (a possibly larger number of) the vortices of bounded depth.
This might create a new type of vortex: Before a vortex was a cell that could not be embedded in a disk in a way that agrees with our $\Sigma$-decomposition $\delta_2.$
Now, some vortices might actually have such a ``flat rendition'', but for every such rendition we choose, the folio of one of its flaps will contain a linear society $\langle H,\Lambda\rangle$ that is not abundant.
This means that, apriori, we do not have enough copies of $\langle H,\Lambda\rangle$ available to create a large enough parterre segment for this kind of flower.

To fully formalize the new target structure, we need to introduce a new type of $t$-segment: a \textsl{$(t,s)$-thicket segment}.
While before we always attached flowers or edges in a very structured manner to the top vertices of a $t$-wall segment, now we have to be a bit more lax.
We select a collection of $s$ concentric cycles $\mathcal{C}=\{ C_1,\dots,C_s\},$ called the \textsl{nest}, all of which are drawn in $\delta_2,$ and extend the vertical paths of our $t$-wall segment beyond the top vertices, letting the resulting paths traverse the cycles of $\mathcal{C}$ and reaching the ``inner'' area, that is the disk $\Delta$ of $\Sigma$ which is enclosed by the closed curve that represents $C_1$ in $\delta_2.$
These paths are called the \textsl{rails} of the thicket.
We then require that within this disk $\Delta$ there exists a single vortex of $\delta_2.$
A triple $(G',\delta',W')$ is now called a \textsl{$(t,b,h,s,\Sigma)$-orchard} if it can be seen as a $(t,b,h,\Sigma)$-garden when pushing all of its thicket segments back into a single vortex, and there exists a $(t,s)$-thicket segment for every vortex of $\delta'.$
See \autoref{thickets_break_pattern_more} for an illustration.
Notice that each thicket segment induces its own linear society based on the outer-most cycle $C_s$ of its nest.
This allows us to define the \textsl{depth} of a thicket in the same way we defined the depth of a vortex above.
Our goal now is to create more and more thicket segments, until all thickets are of bounded depth or we have created a parterre segment for each possible flower (recall that there are only finitely many possible flowers because we are only interested in flowers of bounded ``detail'').

Since $\delta_2$ has a single vortex, we may immediately interpret $(G_1,\delta_2,W_3)$ as an $(t,s)$-orchard with a single thicket\footnote{This transformation actually requires to sacrifice a bit of the infrastructure of $W_3$ as observed in \autoref{obs_single_thicket_orchard} but for ease of presentation we skip this very minor transformation step.}.
Before we describe the process of ``splitting'' a single thicket segment let us quickly discuss why there exists an upper bound on the total number of thicket we have to consider at any point in time.
For this, observe first that the original $\Sigma$-decomposition $\delta_1$ had a bounded number, say $x,$ of vortices that contained an actual obstruction to disk-embeddability.
Let us call these vortices the \textsl{real} vortices for now.
These vortices were already of bounded depth and we may use this fact to show that there can never be more than $x$ real vortices at any point in time.
A parterre segment requires us to have $h\cdot b$ many isomorphic flowers.
Moreover, since the ``detail'' of the flowers we are interested in is bounded by some number, say $d,$ there are at most $\mathsf{d}=2^{2^{\mathcal{O}(d^2)}}$ many possible folios (see \autoref{obs_size_dfolio}).
This means, if we ever have more than $h\cdot b\cdot \mathsf{d}^2$ thicket segments whose vortices are not real, then there must be $h\cdot b\cdot \mathsf{d}$ many, all with the same folio.
Let $\mathcal{F}$ be this folio.
This gives us the right, using the infrastructure provided by the rails and the nests of these thickets, to extract a large enough parterre segment for each member of $\mathcal{F}$ and afterward, the thicket segments with folio $\mathcal{F}$ can be discarded.
This means that we have an upper bound on the total number of thickets present at any given point in time of our procedure.

\begin{figure}[ht]
\begin{center}
\scalebox{.823}{\includegraphics{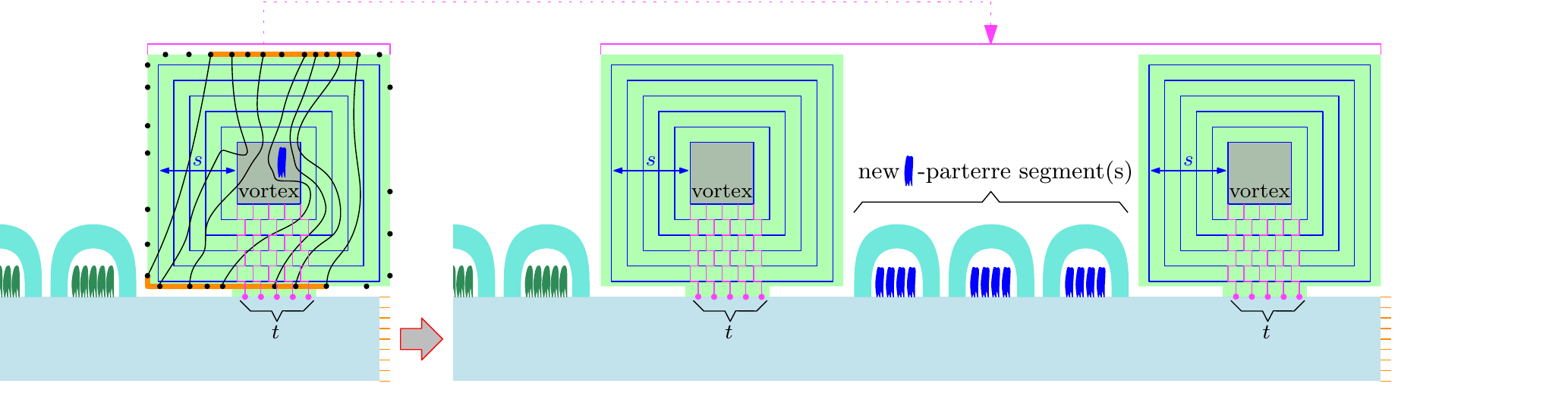}}
\end{center}
    \caption{Left: a $(t,b,h,s,\Sigma^{(2,2)})$-orchard with one $(s,t)$-thicket and a big transaction traversing it (in black).
    The black vertices mark the boundary of the thicket society.
    Right: the result of splitting a thicket along this transaction and creating a new parterre.
    The magenta vertices form the base of the thicket.}
  \llabel{thickets_break_pattern_more}
\end{figure} 

We are now ready to describe the operation of ``splitting'' a thicket.
For this, we consider a large transaction on the linear society defined by the outer-most cycle of the nest.
We may assume that such a transaction exists since otherwise the thicket is of bounded depth and there is nothing to be done.
Moreover, since there are at most $x$ real vortices and all of them are of bounded depth, which is already certified by $\delta_1,$ we may assume that the ``area'' defined by the transaction can be used to extend $\delta_2$ by reintegrating a part of $\delta_1$ that was forgotten earlier.
Such a transaction is depicted in the left part of \autoref{thickets_break_pattern_more}.
When we have found this new ``flat'' area, we may use the same arguments as those used for the plowing step above to further refine the transaction into a homogenized area with respect to the folios of its cells.
In case we find a linear society in these folios that has not been represented in a parterre segment of $W_3,$ we may now use the infrastructure provided by the rails and the nest of the thicket, together with the paths of the transaction, in order to extract additional parterre segments.
An illustration of this can be found in the right part of \autoref{thickets_break_pattern_more}.
Finally, the paths of the transaction, together with the cycles of the nest $\mathcal{C}$ now define up to two new disks in the area that was a single vortex before.
By making use of the paths in the transaction, the nest, and the rails, this allows to ``split' the thicket into up to two new thicket segments as depicted in \autoref{thickets_break_pattern_more}.
Please notice that all of the operations described here require the sacrifice of some part of our infrastructure.
However, as discussed above, the total number of thickets and the total number of folios, and therefore parterre segments, is bounded.
This means that any of these steps can only occur a bounded number of times and thus, by choosing the infrastructure provided by $(G_1,\delta_2,W_3)$ to be large enough in the beginning, we make sure to always have enough ``space'' available.

Eventually, the extraction operations above will not be available anymore.
We might still find more large transactions on the societies of our thickets, but these transactions can now only create new nests that are increasingly ``tight'' around the vortices.
Finally, the entire process must come to a halt and when this happens, all of our thickets are of bounded depth.
We call the resulting triple $(G_1,\delta_3,W_4)$ a \textsl{ripe} $(t,b,h,s,\Sigma)$-orchard.
See \autoref{fig_half_integral_packing} for an illustration. 

\begin{figure}[ht]
\begin{center}
\scalebox{.834}{\includegraphics{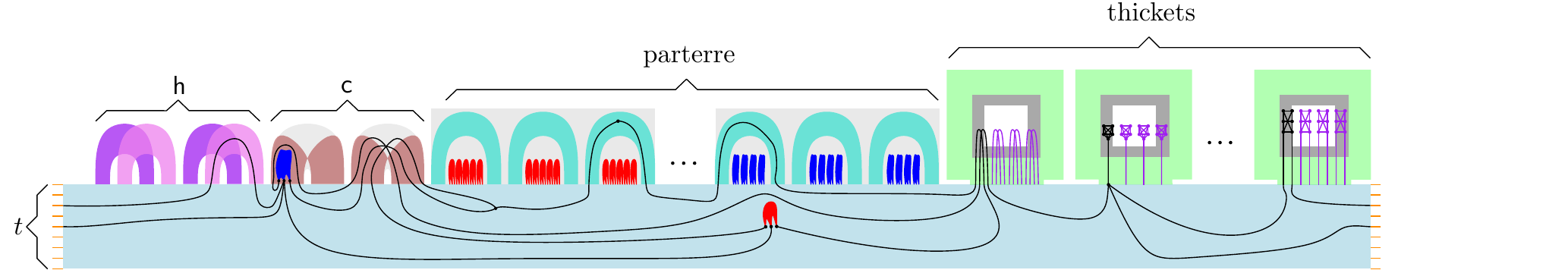}}
\end{center}
    \caption{A ripe $(t,b,h,s,\Sigma^{(2,2)})$-orchard: All vortices (one for each thicket) are of bounded depth together with some extension $Z'$ of a graph $Z$ (black) invading (some of) the thickets.
    This extension goes through flaps and orchards.
    Each flap is $k$-fold represented in a flower and each orchard can also be seen as a flower.
    In the figure, the red and the blue linear societies of $Z'$ are located inside flaps and are represented by blue and red flowers inside the corresponding parterre.}
  \llabel{fig_half_integral_packing}
\end{figure} 

\paragraph{Harvesting.}
We next focus our attention on how the {ripe} $(t,b,h,s,\Sigma)$-orchard $(G_1,\delta_3,W_4)$ can be traversed by an extension $M$
of some graph $Z$ where $|Z|≤h.$ 
As we already mentioned, $M$ has a bounded (by some quadratic function in $|Z|$) number of vertices of degree different from two.
We call these vertices \defi{terminals} of $M.$
As we argued before, the ways $M$ may visit the flaps of $\delta$ are already represented by the flowers of the parterres.
So we should fix our attention on how $M$ may invade the non-flat cells of $\delta,$ i.e., its vortices.
recall that each vortex sits deep inside the thickets and consider $\Delta$ to be a disk around each vortex $c$ defined as the are enclosed by the closed curve that is the outermost cycle of a given thicket segment of $(G_1,\delta_3,W_4).$
Since $(G_1,\delta_3,W_4)$ is ripe, there are only few vortices and therefore few disks as above, and the depth of both the given vortex as well as the society defined by $\Delta$ is bounded by some function in $h$ and $k.$

Next, we fix our attention to the part of $M$ which is drawn inside the disk $\Delta.$
Let us denote this part of $M$ by $M_{\Delta}.$
Also, we use $T_{\Delta}$ for the terminals of $M$ that are drawn inside $\Delta.$
This defines the notion of a \textit{fiber} $\mu = (M_{\Delta}, T_{\Delta},\widehat{\Lambda})$ where $\widehat{\Lambda}$ is a cyclic ordering of the vertices of $M$ that are drawn on the boundary of $\Delta$ in the order they appear in $\widehat{\Lambda}.$
While the number of terminals of any fiber is bounded, we cannot say the same for its boundary vertices, i.e., for the vertices in $\widehat{\Lambda}.$
For this, we consider fibers whose boundary has size at most $b,$ let us call them \textit{$b$-fibers}, and we prove a local Erd\H{o}s-P{\'o}sa duality on how they may be covered or packed inside $\Delta$ with respect to the boundary of $\Delta.$
This is done by the so-called \textsl{harvesting procedure} which either finds a set that intersects any $b$-fiber or finds a sequence of a big enough number of copies of such $b$-fibers which are well arranged on the boundary of $\Delta$ in the form of a ``primitive flower'' (\cref{sec_harvest_crops}). 

We see a $b$-fiber as being drawn inside the union of some collection of sub-disks of $\Delta,$ called \defi{pseudo-disk}.
Here, the number of vertices drawn in the boundary of this pseudo-disk is bounded by some function in the depth of the vortex $c$ and the depth of the disk $\Delta$ (see \cref{pse_cylindr_rend}).
While the depth of both is bounded in a function in $h$ and $k,$ we are able to show that, for a single $b$-fiber, the ``depth'' of its drawing now only depends on $h.$
For the harvesting procedure, we define the notion of a \textsl{critical pseudo-disk} that  ``critically'' contains a $b$-fiber $\mu$ in the sense that $\mu$ is not contained in any of its proper pseudo-disks (\cref{Critical_pseudo_disks}).
Then we pick a $b$-fiber $\mu$ and we detect a packing of critical pseudo-disks containing $\mu$ (\cref{harvesting_fibers}). 
If ``enough'', where the value for enough depends on a specific function in $k$ and $h,$ of them are found we can organize them into \textit{packages} each containing many, say $k,$ copies of each of the sub-fibers corresponding to the elements of $\mathsf{dec}(M_{\Delta},\widehat{\Lambda}).$
This collection of packages will be the primitive form of a flower, parts of which will constitute some of the flower segments of our parametric graphs.
If only few critical pseudo-disks are found, we may consider the vertices of the boundaries of their disks in order to find a covering of all $b$-fibers of $\Delta$ that are isomorphic to $\mu.$
The size of this set is bounded by a function in $k$ and $h.$ 
The union of all these covering sets for all possible fibers of boundary at most $d$ is the $(d,k)$-\textsl{cover} of the disk $\Delta.$ 
Recall that fibers were defined as ways that an extension $M$ enters the disk $\Delta$ and thus, this $(d,k)$-cover should now constitute a hitting set for all such fibers which are not abundant on the boundary of $\Delta.$
We next prove that we may choose $d$ so that if $S$ is a $(d,k)$-\textsl{cover} of the disk $\Delta,$ then every restriction $M_{\Delta}$ of an extension $M$ in $\Delta$ that defines a fiber with more than $d$ boundary vertices and which is not covered by $S$ can be rearranged to one that defines a $d$-fiber and still is not intersected by $S$ (\cref{main_harvesting_lemmata}).
The proof of this applies a global rerouting of $M,$ based on the unique linkage theorem and some suitable adaptation of it to our needs (\cref{tamming_more_sec}).
The proof also uses the fact that $S$ resides on the union of disk boundaries and strongly uses the topological properties of these disks as well as the cycle collection $\Ccal$ insulating the vortex $c$ from $\Delta.$
This means that whenever $S$ is not a covering of our model $M$ some part of this model, in the form of a collection of packages of $k$ fibers 
can be found inside $\Delta$ as desired.
Moreover, the above choice of $d$ is bounded by some function in $h.$
This implies that it is enough to ``pack'' or ``cover'' $b$-fibers, for his particular value of $b,$ instead of fibers in general.
Therefore, the $(d,k)$-\textsl{cover} of the disk $\Delta$ is bounded by some function in $k$ and $h.$
We now define $S_{\Delta}$ as the union of all  the $(d,k)$\textsl{covers} of all the disks $\Delta,$ as above, corresponding to the thickets of the $(t,b,h,s,\Sigma)$-orchard $(G_1,\delta_3,W_4).$
We denote this collection of disks by $\mathbf{\Delta}$ and define $S_{\mathsf{all}}$ to be the union of the (already bounded) apex set $A$ from the very beginning and $\bigcup_{\Delta\in\mathbf{\Delta}}S_{\Delta}.$
Clearly, $|S_{\mathsf{all}}|$ is bounded by a function in $h$ and $k$ and by the number of thickets of $(G_1,\delta_3,W_4)$ which is also bounded by some function in $k$ and $h.$

\paragraph{Plowing flowers from thickets.}

Our next step is to analyze how the Erd\H{o}s-P{\'o}sa duality for objects ``anchored'' on the society of a vortex from the previous step interacts with the overall way an extension $M$ of $Z$ may be situated within our $(t,b,h,s,\Sigma)$-orchard $(G_1,\delta_3,W_4)$ and its thickets.
It follows that if $S_{\mathsf{all}}$ intersects every extension of $Z$ that is ``bound'' to $\delta$ (i.e., it contains a vertex of $W_4$; for the formal definition, see \cref{bound_def}), then the component of $G-S_{\mathsf{all}}$ which contains the majority of $W_{4}-S_{\mathsf{all}}$ is $Z$-minor-free and this is the second outcome of the (simplified) local structure theorem (\cref{local_structure_informal}).

\begin{figure}[ht]
\begin{center}
\scalebox{.83}{\includegraphics{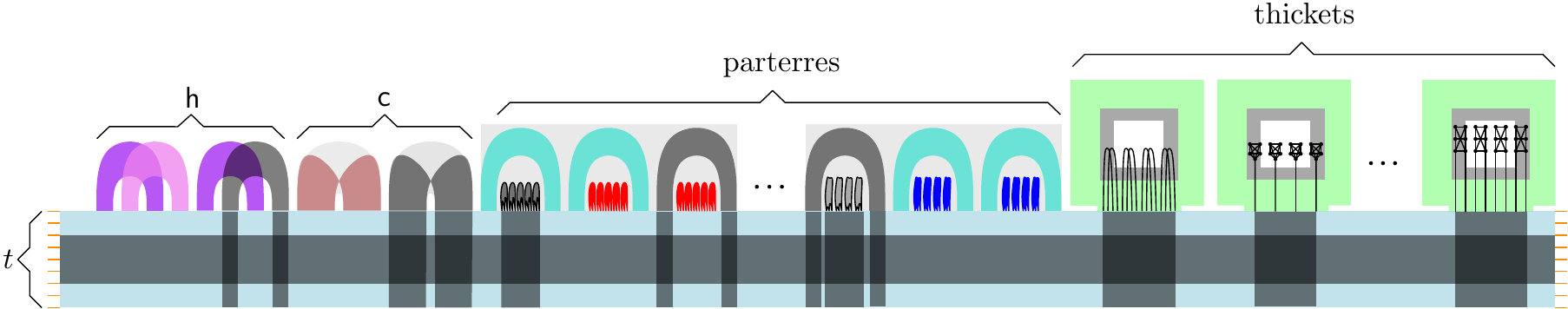}}
\end{center}
\caption{A subdivision of a walloid in $\mathscr{W}^{\mathbf{p}}$ (in black) inside an orchard.}
\llabel{fig_half_integral_packing_2}
\end{figure}

Assume now that there is an extension $M$ of $Z$ which is not intersected by $S_{\mathsf{all}}.$
Then, due to the Erd\H{o}s-P{\'o}sa property above, the parts of $M$ corresponding to the disks $\Delta\in\mathbf{\Delta'}$ are organized as a sequence $\Mcal_{1}^{\Delta},\ldots,\Mcal_{p}^{\Delta}$ of consecutive ``packages'', each containing $k$ vertex disjoint and pairwise isomorphic 
sub-fibers emerging from the decomposition of some $\Delta$-invading fiber. 
At this point, we observe that among these sub-fibers, only those that invade the corresponding vortices are of interest as all others, being drawn in the flat parts of $\delta,$ are already represented in the parterre of $(G_1,\delta_3,W_4).$
Also, it follows from the results in \cref{tamming_more_sec} than the number of disks intersected by $M,$ we denote the set of these disks by $\mathbf{\Delta}'\subseteq\mathbf{\Delta},$ is also bounded by some function in $h.$
Let $\Sigma^{\mathsf{c}}$ be the closure of $\Sigma\setminus\bigcup_{\Delta\in\mathbf{\Delta}'}\Delta.$
We now observe that the part of $M,$ call it $M^{\mathsf{c}},$ that is drawn outside the disks of $\mathbf{\Delta}'$ is drawn in $\Sigma$ in a ``flat way'' as all invaded vortices are inside the disks. This drawing is encoded as a $\Sigma^{\mathsf{c}}$-decomposition of $M^{\mathsf{c}}$ and constitutes a certificate of the flatness of $M$ outside the disks of $\mathbf{\Delta}'.$
In \cref{certif_flat} we formalize this flatness certificate via the concept of a \textit{$\Sigma$-flat $Z$-certifying complement} of  $\{\mu_{\Delta}\mid \Delta\in\mathbf{\Delta}'\}.$ 
Observe now that the fibers in the packages $\Mcal_{1}^{\Delta},\ldots,\Mcal^{\Delta}_{p^{\Delta}}$ are close to the structure of a flower based on $M_{\Delta}.$
{However, these fibers are rooted on the boundary of $\Delta$ and we need their boundaries to be in the base of the thicket corresponding to $\Delta$ in order to see them as a flower. 
For this, we prove a special variant of the combing lemma from \cite{GolovachST22Combing} in \cref{Variant_combing}.
This lemma permits a rerouting that, after sacrificing a considerable amount of the fibers, revises $\Mcal_{1}^{\Delta},\ldots,\Mcal^{\Delta}_{p^{\Delta}}$ so that it is indeed rooted at the base of the thicket corresponding to $\Delta,$ for every $\Delta\in\mathbf{\Delta}'$ (\cref{the_main_extr_flowers_final}). }
Later we additionally use the surrounding infrastructure of the corresponding thickets in order to interpret all thickets whose vortices are invaded by $M$ as flowers.
All of these transformations have as a starting point a sequence of packages $\Mcal_{1}^{\Delta},\ldots,\Mcal^{\Delta}_{p^{\Delta}},$ for each $\Delta\in\mathbf{\Delta}',$ corresponding to a collection of fibers that admit some $\Sigma$-flat $Z$-certifying complement.
In our proofs, this flat complement certification property stays invariant for all revised packages.
We now arrived in a situation where all parts of $M$ that are embedded outside of the vortices of $(G_1,\delta_3,W_4)$ are represented by the flowers of its parterres while the parts of $M$ that are invading the vortices, i.e., the vortices of the disks in $\mathbf{\Delta}',$ are fibers corresponding to sequences of packages that permit us see the thickets as flowers of these fibers.
Let $M'$ be the graph obtained by considering all boundary vertices of the fiber $\mu_{\Delta}$ generated by $M$ as terminal vertices.
This defines a subdivision of a graph whose size is bounded by some function in $h$ that contains 
$Z$ as a minor.
As we prove in \cref{lemma_grounded_extensions_to_obstructions}, this function is the function \textsf{ext} used in the definition of embedding pairs.
Next, using the graph $M',$ we interpret the subgraphs of $M'$ drawn in the cells of the flat complement $\delta^{\mathsf{c}}$ and the graphs drawn in the disks of $\mathbf{\Delta}'$ as a subgraph partition $\Pcal$ of $M'$ and this, in turn, defines an embedding pair $\mathbf{p}=(\Sigma,\mathbf{B})$ where $\mathbf{B}$ contains societies that are either the parts of $M'$ drawn in the flaps of $\delta^{\mathsf{c}}$ (which are also flaps of $\delta$) or the parts of $M'$ that are the fibers in $\{\mu_{\Delta}\mid \Delta\in\mathbf{\Delta}'\}.$
As we already noticed, the first category of societies is already present in the parterres of $(G_1,\delta_3,W_4)$ and the second category of societies is present as flowers inside the thickets.
As a consequence of this, a subdivision of some big enough walloid in $\mathscr{W}^{\mathbf{p}}$
can be found as a subgraph of $G$ (see \cref{fig_half_integral_packing_2}).
This gives the first outcome of the (simplified) local structure theorem (\cref{local_structure_informal}).

\section{Preliminaries}\llabel{sec_preliminaries}

In this section, we present the bulk of definitions that will be necessary for the development of the proofs in the following sections.

\subsection{Basic concepts}

In this first subsection, we present the basic concepts we require about sets, integers, and graphs.

\paragraph{Sets and integers.} We denote by $\mathbb{N}$ the set of non-negative integers, by $\Nbbb_{\geq n},$ $n > 1,$ to be the set $\Nbbb \setminus \{ m \in \Nbbb \mid m < n \},$ and by $\Nbbb^\mathsf{even}$ the set of even numbers in $\Nbbb.$
Given two integers $p, q,$ where $p \leq q,$ we denote by $[p, q]$ the set $\{p, \dots, q\}.$ For an integer $p \geq 1,$ we set $[p] = [1, p]$ and $\mathbb{N}_{\geq p} = \mathbb{N} \setminus [0, p - 1].$
For a set $S,$ we denote by $2^{S}$ the set of all subsets of $S$ and by $\binom{S}{2}$ the set of all subsets of $S$ of size $2.$
If $\mathcal{S}$ is a collection of objects where the operation $\cup$ is defined, then we denote $\cupall S = \bigcup_{X \in \mathcal{S}} X.$
Also, given a function $f\colon A\to B$ we always consider its extension $f:2^A\to B$ such that for every $X\subseteq A,$ $f(X)=\{f(a)\mid a \in X\}.$
  
\paragraph{Basic concepts on graphs.} A graph $G$ is a pair $(V, E)$ where $V$ is a finite set and $E \subseteq \binom{V}{2},$ i.e., all graphs in this paper are undirected, finite, and without loops or multiple edges. When we denote 
an edge $\{x,y\},$ we use instead the simpler notation $xy$ (or $yx$). 
We write $\gall$ for the set of all graphs.
We also define $V(G) = V$ and $E(G) = E.$

We say that a pair $(A, B) \in 2^{V(G)} \times 2^{V(G)}$ is a \defi{separation} of $G$ if $A \cup B = V(G)$ and there is no edge in $G$ between $A \setminus B$ and $B \setminus A.$
We call $|A \cap B|$ the \defi{order} of $(A, B).$

If $H$ is a subgraph of $G,$ that is $V(H)\subseteq V(G)$ and $E(H)\subseteq E(G),$ we denote this by $H\subseteq G.$
Given two graphs $G_{1}$ and $G_{2},$ we denote $G_{1}\cup G_{2}=(V(G_{1})\cup V(G_{2}),E(G_{1})\cup E(G_{2})).$ We also use $G_{1}+G_{2}$ to denote the disjoint union of $G_{1}$ and $G_{2}.$
Also, given a $t \in \Nbbb,$ we denote by $t\cdot G$ the disjoint union of $t$ copies of $G.$

Given a vertex $v \in V(G),$ we denote by $N_{G}(v)$ the set of vertices of $G$ that are adjacent to $v$ in $G.$
Also, given a set $S \subseteq V(G),$ we set $N_{G}(S) = \bigcup_{v \in S} N_{G}(v) \setminus S.$

For $S \subseteq V(G),$ we set $G[S] = (S, E \cap \binom{S}{2})$ and use $G - S$ to denote $G[V(G) \setminus S].$ We say that $G[S]$ is an \defi{induced (by $S$) subgraph} of $G.$

Given an edge $e = uv \in E(G),$ we define the \defi{subdivision} of $e$ to be the operation of deleting $e,$ adding a new vertex $w,$ and making it adjacent to $u$ and $v.$ 
Given two graphs $H$ and $G,$ we say that $H$ is  \defi{a subdivision} of $G$ if $H$ can be obtained from $G$ by subdividing edges.

The \defi{contraction} of an edge $e = uv \in E(G)$ results in a graph $G'$ obtained from $G \setminus \{ u, v \}$ by adding a new vertex $w$ adjacent to all vertices in the set $(N_{G}(u) \cup N_{G}(v)) \setminus \{ u, v \}.$
A graph $H$ is a \defi{minor} of a graph $G$ if $H$ can be obtained from a subgraph of $G$ after a series of edge contractions.
We denote this relation by $\leq.$
Given a set $\Hcal$ of graphs, we use $\Hcal≤G$ in order 
to denote that at least one of the graphs in $\Hcal$ is a minor of $G.$
We refer the reader to~\cite{Diestel2010Book} for any undefined terminology on graphs.

\subsection{Surfaces, decompositions, and renditions}\llabel{subsec_almost_embeddings}

In this subsection, we present all definitions we require that are relevant to drawing and embedding graphs in surfaces as well as more involved definitions such as surface decompositions, societies, and more.

\paragraph{Surfaces.}
Given a pair $(\mathsf{h},\mathsf{c}) \in \mathbb{N} \times [0,2]$ we define $\Sigma^{(\mathsf{h}, \mathsf{c})}$ to be the two-dimensional surface without boundary created from the sphere by adding $\mathsf{h}$ handles and $\mathsf{c}$ crosscaps (for a more detailed definition see~\cite{MoharT01Graphs}).
If $\mathsf{c} = 0,$ the surface $\Sigma^{(\mathsf{h},\mathsf{c})}$ is an \defi{orientable} surface, otherwise it is a \defi{non-orientable} one.
By Dyck's theorem \cite{Dyck1888Beitrage,Francis99ConwayZIP}, two crosscaps are equivalent to a handle in the presence of a (third) crosscap.
This implies that the notation $\Sigma^{(\mathsf{h}, \mathsf{c})}$ is sufficient to denote all two-dimensional surfaces without boundary.

\paragraph{Societies.}
Given a graph $G$ and a set $X = \{x_{1},\ldots, x_{n}\} \subseteq V(G),$ we consider two types of orderings of $X.$
\begin{itemize}
\setlength\itemsep{0pt}
\item a \defi{rotation ordering} $\Omega = \rot{x_{1}, \ldots, x_{n}}$ where we assume that $\rot{x_{1}, \ldots, x_{n}} = \rot{x_{1+i}, \ldots, x_{n+i} },$ for every $i \in [0,n-1].$
Note that $n+i$ is interpreted as $(n-1 \mod n) + i + 1.$

\item a \defi{linear ordering} $\Lambda = \lin{x_{1},\ldots,x_{n}}.$
Given a linear ordering $\Lambda = \lin{x_{1}, \ldots, x_{n}}$ we define its \defi{$i$-position shift} as the linear ordering $\lin{x_{1+i}, \ldots, x_{n+i}}.$
Note that, as before, $n+i$ is interpreted as $(n-1 \mod n) + i + 1.$
\end{itemize}

Given a rotation (resp. linear) ordering $\Omega = \rot{x_{1},\ldots, x_{n}}$ (resp. $\Lambda = \lin{x_{1}, \ldots, x_{n}}$) we define its reverse as $\rev(\Omega) \coloneqq \rot{x_{n},\ldots, x_{1}}$ (resp. $\Lambda \coloneqq \lin{x_{n},\ldots, x_{1}}$).
Given a linear ordering $\Lambda,$ we define $\mathsf{next}_{\Lambda}(x_{i}) = x_{i+1},$ for every $i \in [n-1]$ and $\mathsf{next}_{\Lambda}(x_{n}) \coloneqq x_{1}.$
We also define $\mathsf{previous}_{\Lambda}(x_i) \coloneqq \mathsf{next}_{\mathsf{rev}(\Lambda)}(x_{i}).$
When $\Lambda$ is clear from the context we simply write $\mathsf{next}(x_{i})$ and $\mathsf{previous}(x_{i}).$
Moreover, given two distinct linear orderings $\Lambda = \lin{x_{1}, \ldots, x_{n}}$ and $\Lambda' = \lin{y_{1}, \ldots, y_{m}},$ we define their \defi{concatenation} as the linear ordering $\Lambda \oplus \Lambda' = \lin{x_{1}, \ldots, x_{n}, y_{1}, \ldots, y_{m}}.$

We define the \defi{shifts} of a linear ordering $\Lambda$ as the set containing every $i$-position shift of $\Lambda,$ for $i \in [n].$
Observe that, given a rotation ordering $\Omega = \rot{x_{1},\ldots, x_{n}}$ we can extract $n$ 
different linear orderings depending on the choice of the first vertex.
Let $\Lambda$ be any of these linear orderings.
Then all these linear orderings, which are the shifts of $\Lambda,$ correspond to a single rotation ordering that we call the \defi{rotation ordering of $\Lambda$}.

\medskip
A \defi{linear} (resp. \defi{rotation}) \defi{society} is a pair $\lin{G,\Lambda }$ (resp. $\rot{G,\Omega}$) where $G$ is a graph and $\Lambda $ (resp. $\Omega$) is a linear (resp. rotation) ordering of a subset of the vertices of $G.$
The \defi{rotation society} of a linear society $\lin{G, \Lambda}$ is the rotation society $\rot{G, \Omega},$ where $\Omega$ is the rotation ordering of $\Lambda.$
We denote by $V(\Lambda)$ (resp. $V(\Omega)$) the set of vertices on that correspond to elements of $\Lambda$ (resp. $\Omega$).
We also use $|\Lambda| \coloneqq |V(\Lambda)|$ (resp. $|\Omega| \coloneqq |V(\Omega)|$).
From this point forward we always use the notation $\lin{G, \Lambda}$ (resp. $\rot{G, \Omega}$) to denote linear (resp. rotation) societies.
Whenever we use the term \defi{society} we will be referring to a linear society.
On the other hand, we will explicitly mention when we are dealing with a rotation society.

A \defi{subdivision} of a linear (resp. rotation) society $\lin{G,\Lambda }$ (resp. $\rot{G,\Omega}$) is every linear (resp. rotation) society $\lin{M, \Lambda}$ (resp. $\rot{M, \Omega}$) where $M$ is a subdivision of $G.$

Two societies $\lin{G,\Lambda }$ and $\lin{G',\Lambda '}$ are \defi{isomorphic} if there is an isomorphism $\psi \coloneqq V(G) \to V(G')$ from $G$ to $G'$ such that $\psi(\Lambda) = \Lambda',$ i.e., if $\Lambda = \lin{x_{1}, \ldots, x_{r}},$ it holds that $\Lambda' = \lin{\psi(x_{1}), \ldots, \psi(x_{r})}.$

Given a linear ordering $\Lambda$ and a set $S \subseteq V(\Lambda),$ we define $\Lambda \cap S$ (resp. $\Lambda \setminus S$) as the linear ordering $\Lambda'$ obtained from $\Lambda$ if we remove all elements that do not (resp. do) belong in $S.$
Given a linear society $\lin{G, \Lambda}$ and a set $S \subseteq V(G),$ we define $\lin{G, \Lambda} - S$ as the linear ordering $\lin{G - S, \Lambda \setminus S}.$\llabel{society_minus}

A \defi{cross} in a society $\lin{G, \Lambda}$ is a pair $(P_1,P_2)$ of disjoint paths\footnote{Two paths are \emph{disjoint} when their vertex sets are disjoint.} in $G$ such that $P_i$ has endpoints $s_i, t_i \in V(\Lambda)$ and is otherwise disjoint from $V(\Lambda),$ and the vertices $s_1, s_2, t_1, t_2$ occur in $\Lambda $ in the order listed.

\paragraph{Drawing a graph in a surface.}\llabel{many_draw}
Let $\Sigma$ be a surface. 
If $\Sigma$ has a boundary we denote it by $\bd(\Sigma)$ and we refer to $\Sigma \setminus \bd(\Sigma)$ as the \defi{interior} of $\Sigma.$
A \defi{$\Sigma$-drawing} (with crossings) is a triple $\Gamma=(U,V,E)$ such that
\begin{itemize}[itemsep=-2pt] 
\setlength\itemsep{0em}
\item $V$ and $E$ are finite, 
\item $V\subseteq U\subseteq \Sigma$ and $\bd(\Sigma)\cap \Gamma\subseteq V,$ 
\item $V\cup\bigcup_{e\in E}e=U$ and $V\cap (\bigcup_{e\in E}e)=\emptyset,$ 
\item for every $e\in E,$ either $e=h((0,1)),$ where $h\colon[0,1]_{\mathbb{R}}\to U$ is a homeomorphism onto its image with $h(0),h(1)\in V,$ or $e = h(\Sbbb^{2} - (1, 0)),$ where $h : \Sbbb^{2} \to U$ is a homeomorphism onto its image with $h(0,1) \in V$ (where $\Sbbb^{2}=\{(x,y)\in\Rbbb^2\mid x^2+y^2=1\}$), and
\item if $e,e'\in E$ are distinct, then $|e\cap e'|$ is finite.
\end{itemize}
We call the set $V$ the \defi{nodes} and the set $E$ the \defi{arcs} of $\Gamma.$
If $G$ is a graph and $\Gamma = (U,V,E)$ is a $\Sigma$-drawing with crossings such that the nodes $V$ and the arcs $E$ correspond to the vertices $V(G)$ and the edges $E(G)$ respectively, we say that $\Gamma$ is a \defi{drawing} of $G$ in $\Sigma$ (possibly with crossings) or alternatively a $\Sigma$-drawing of $G.$
Two distinct arcs in $E$ \defi{cross} if they have a common point.
If no two arcs in $E$ cross, we say that $\Gamma$ is a $\Sigma$-\defi{drawing} of $G$ \defi{without crossings} or, alternatively, a $\Sigma$-\defi{embedding} of $G.$
In this last case, the connected components of $\Sigma\setminus U,$ are the \defi{faces} of $\Gamma.$
  
\paragraph{Surface decompositions.}
Let $\Sigma$ be a surface, possibly with boundary.
A \defi{closed} (resp. \defi{open}) \defi{disk} of $\Sigma$ is any closed (resp. open) disk $\Delta$ that is a subset of $\Sigma.$
For brevity, whenever we use the term \defi{disk} we mean a closed disk.
A $\Sigma$-\defi{decomposition} of a graph $G$ is a pair $\delta = (\Gamma, \mathcal{D}),$ where $\Gamma = (U,V,E)$ is a $\Sigma$-drawing of $G$ and $\mathcal{D}$ is a collection of disks of $\Sigma$ such that 
\begin{enumerate}
\item the disks in $\mathcal{D}$ have pairwise disjoint interiors, 
\item for every disk $\Delta \in \Dcal,$ $\Delta \cap U \subseteq V,$ i.e., the boundary of $\Sigma$ and of each disk in $\mathcal{D}$ intersects $U$ only in vertices, 
\item if $\Delta_1, \Delta_2 \in \mathcal{D}$ are distinct, then $\Delta_1 \cap \Delta_2 \subseteq V(\Gamma),$ and
\item every edge $\Gamma$ is a subset of the interior of one of the disks in $\mathcal{D}.$
\end{enumerate}
Let $N$ be the set of all nodes of $\Gamma$ that do not belong to the interior of any disk in $\mathcal{D}.$
We refer to the elements of $N$ as the \defi{nodes} of $\delta.$ 
If $\Delta \in \mathcal{D},$ then we refer to the set $\Delta - N$ as a \defi{cell} of $\delta.$
We denote the set of nodes of $\delta$ by $N(\delta)$ and the set of cells of $\delta$ by $C(\delta).$ 

Given a cell $c \in C(\delta),$ we define the \defi{disk of $c$} as the disk $\Delta_{c} \coloneqq \bd(c) \cup c.$
For a cell $c \in C(\delta),$ the set of nodes of $\delta$ that belong to $\Delta_{c}$ is denoted by $\widetilde{c}.$
Thus the cells $c$ of $\delta$ such that $\widetilde{c} \neq \emptyset,$ form the hyperedges of a hypergraph with vertex set $N(\delta),$ where $\widetilde{c}$ is the set of vertices incident with $c.$
For a cell $c \in C(\delta),$ we define the graph $\sigma_{\delta}(c),$ or $\sigma(c)$ if $\delta$ is clear from the context, to be the subgraph of $G$ consisting of all vertices and edges that are drawn in $\Delta_{c}.$
We define $\pi_{\delta} \colon N(\delta) \to V(G)$ to be the mapping that assigns to every node in $N(\delta)$ the corresponding vertex of $G.$
We also define the set $\ground(\delta) \coloneqq \pi_{\delta}(N(\delta))$ of \defi{ground vertices} in $\delta.$
If $\pi_{\rho}(p) = x$ then we say that $p$ is \defi{the point of} $x$ in $\delta.$
Similarly, if $X$ is a set of ground vertices then we refer to the points of the vertices of $X$ as the points of $X$ in $\delta.$

A cell $c \in C(\delta)$ is called a \defi{vortex} if $|\widetilde{c}| \geq 4$ and a \defi{flap} otherwise.
We call a cell $c$ \defi{trivial} if $\sigma_{\delta}(c)$ is the graph on two vertices and one edge.
We also partition $C(\delta)$ into the sets $C_{\mathsf{f}}(\delta)$ and $C_{\mathsf{v}}(\delta),$ containing the flaps and the vortices of $\delta$ respectively. 
We say that $\delta$ is \defi{vortex-free} if $C_{\mathsf{v}}(\delta) = \emptyset,$ i.e., if no cell in $C(\delta)$ is a vortex.

\paragraph{Surface decompositions of subgraphs.}

Let $\Sigma$ be a surface and $\delta = (\Gamma, \mathcal{D})$ be a \defi{$\Sigma$-decomposition} of a graph $G,$ where $\Gamma = (U,V,E).$
Also let $H$ be a subgraph of $G.$
We define $\delta \cap H$ to be the following $\Sigma$-decomposition $(\Gamma', \Dcal')$ of $H.$
\begin{itemize}
\item We obtain $\Gamma' = (U', V', E')$ from $\Gamma = (U, V, E)$ by defining $V'$ (resp. $E'$) from $V$ (resp. from $E$) by removing every node (resp. arc) that does not correspond to a vertex (resp. edge) in $H.$
Also we define $U' \coloneqq V' \cup \bigcup_{e \in E'} e.$
\item Let $N^-$ be the set of all nodes of $\delta$ that have been removed.
For every $x \in N^-$ let $D_{x}$ be an open disk containing $x$ that does not intersect $U'.$ We define $\Dcal'$ as follows. From every $D \in \Dcal,$ define $D' \in \Dcal'$ as $\Dcal' \coloneqq D \setminus \bigcup_{x\in N^{^-}} D_{x}.$
\end{itemize}

Notice that the above update to the disks of $\delta$ is necessary in order to obtain the third property of the definition of a $\Sigma$-decomposition.
In this way, we certify the existence of an injection between the disks of $\delta$ and those of $\delta \cap H,$ which only become smaller.

\paragraph{$\delta$-aligned disks.}
Let $\delta = (\Gamma, \mathcal{D}),$ $\Gamma = (U,V,E),$ be a $\Sigma$-decomposition of a graph $G.$
We call a disk $\Delta$ of $\Sigma$ $\delta$-\defi{aligned} if $U \cap \bd(\Delta) \subseteq N(\delta).$
Clearly, for each $c \in C(\delta),$ $\Delta_c$ is a $\delta$-aligned disk. Given a $\delta$-aligned disk $\Delta,$ we define $B^{\delta}_{\Delta} = \pi_{\delta}(\bd(\Delta)\cap N(\delta)).$
When $\delta$ is clear from the context, we may write $B_{\Delta}$ instead of $B^{\delta}_{\Delta}.$

Given an arc-wise connected set $\Delta \subseteq \Sigma$ such that $U \cap \Delta \subseteq N,$ we use $G \cap \Delta$ to denote the subgraph of $G$ consisting of the vertices and edges of $G$ that are drawn in $\Delta.$
Note that, in particular, the above definition applies in the case that $\Delta$ is a $\delta$-aligned disk of $\Sigma.$
We also define $\Gamma\cap \Delta \coloneqq (U \cap \Delta, V \cap \Delta, \{ e \in E \mid e \subseteq \Delta \})$ and observe that $\Gamma \cap \Delta$ is a drawing of $G \cap \Delta$ in $\Delta.$

Given that $\Delta$ is a $\delta$-aligned disk we define the rotation ordering  $\Omega_{\Delta}$ of $B_\Delta$ as follows.
If $\Sigma$ is an orientable surface, we define $\Omega_{\Delta}$ as the rotation ordering of the vertices of $B_{\Delta},$ when traversing along $\bd(\Delta)$ in the counter-clockwise direction.
In case $\Sigma$ is a non-orientable surface, there is no distinction between the counter-clockwise or clockwise ordering of the disks of $\Sigma.$
Therefore, we always assume that $\delta$ is accompanied with a \defi{rotation tie-breaker} that picks one of the two possible rotations of $\Delta.$
Then the rotation ordering $\Omega_{\Delta}$ is defined by following the rotation direction indicated by the rotation tie-breaker.
Given some $y\in B_{\Delta},$ we define $\Lambda _{\Delta,y}$ as the linear ordering obtained by $\Omega_{\Delta}$ by fixing $y$ as the starting vertex.
Given a $\delta$-aligned disk $\Delta,$ a \defi{$\Delta$-society} of $\delta$ is a society $\lin{G \cap \Delta, \Lambda _{\Delta,y}},$ for some $y \in B_{\Delta}.$
For every $c \in C(\delta),$ we denote $G_{c} \coloneqq G \cap \Delta_{c}$ and $\Omega_c = \Omega_{\Delta_{c}}.$ 
A \defi{$c$-society} in $\delta$ is any $\Delta_{c}$-society, where $c \in C(\delta).$
Moreover, in the case that $c$ is a vortex cell, we call any $c$-society of $\delta$ a \defi{vortex society of $c$} in $\delta.$

\paragraph{Renditions.}\llabel{renditions_def}
A \defi{rendition} of a society $\lin{G, \Lambda}$ in a disk $\Delta$ is a $\Delta$-decomposition $\rho$ of $G$ such that $\lin{G, \Lambda}$ is a $\Delta$-society of $\rho.$
A \defi{cylindrical rendition} $(\Gamma, \Dcal, c_{0})$ of a society $\lin{G, \Lambda}$ around a cell $c_{0}$ is a rendition $\rho = (\Gamma, \Dcal)$ of $G$ such that all cells of $\rho$
that are different than $c_{0}$ are flap cells.

Given a $\Sigma$-decomposition $\delta$ of a graph $G$ and a $\delta$-aligned disk $\Delta$ of $\Sigma,$ we denote by $\delta \cap \Delta$ the rendition $(\Gamma \cap \Delta, \{ \Delta_{c} \in \mathcal{D} \mid c \subseteq \Delta \})$ of the society $\lin{G \cap {\Delta}, \Lambda}$ in $\Delta.$
Moreover, we assume that the rotation tie-breaker of $\delta \cap \Delta$ is chosen so that the rotation of all disk in $\Delta$ agree with the orientation of $\Delta.$

\paragraph{Nests.} 
Let $\rho = (\Gamma, \mathcal{D})$ be a rendition of a society $\lin{G, \Lambda}$ in a disk $\Delta$ and let $\Delta^* \subseteq \Delta$ be an arc-wise connected set.
An \defi{$s$-nest in $\rho$ around $\Delta^*$} is a set $\mathcal{C} = \{C_1, \dots, C_s \}$ of pairwise disjoint $\rho$-grounded cycles where, for $i \in [s],$ the trace of $C_i$ bounds a closed disk $\Delta_i$ in such a way that $\Delta^* \subseteq \Delta_1 \subsetneq \Delta_2 \subsetneq \dots \subsetneq \Delta_s \subseteq \Delta,$ $\bd(\Delta_1) \cap \Delta^* = \emptyset,$ and $\bd(\Delta_s) \cap \Delta = \emptyset.$
We call $\lin{\Delta_{1},\ldots,\Delta_{s}}$ the \defi{disk sequence} of the $s$-nest $\Ccal$ and we say that, for $i \in[s]$ $\Delta_{i}$ \defi{corresponds} to the cycle $C_{i}.$

\paragraph{Rural societies.} We say that a society $\lin{G, \Lambda}$ is a \defi{vortex society} if $|\Lambda| ≥ 4$ and a \defi{simple society} otherwise.
The next proposition is a restatement of the celebrated ``two paths theorem'' that gives an alternative characterization of cross-free societies.
Several proofs of this result have appeared, see \cite{Jung70eine,RobertsonS90GMIX,shiloach1980polynomial,thomassen19802}. 
For a recent proof see \cite[Theorem 1.3]{KawarabayashiTW18anew}.

\begin{proposition}\llabel{thm_two_paths} A linear society $\lin{G, \Lambda}$ has no cross if and only if it has a vortex-free rendition in a disk.
\end{proposition}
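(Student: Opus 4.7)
The plan is to derive both directions from the classical two-paths theorem of Jung, Seymour, Shiloach, and Thomassen~\cite{Jung70eine,RobertsonS90GMIX,shiloach1980polynomial,thomassen19802}; in particular, I would invoke the formulation of Kawarabayashi, Thomas, and Wollan~\cite[Theorem~1.3]{KawarabayashiTW18anew}, which already packages the equivalence in the language of vortex-free renditions of societies.

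For the easy direction $(\Leftarrow)$, assume that $\lin{G,\Lambda}$ has a vortex-free rendition $(\Gamma,\Dcal)$ in a disk $\Delta$, and suppose towards a contradiction that there is a cross $(P_1,P_2)$ with endpoints $s_1,s_2,t_1,t_2 \in V(\Lambda)$ appearing in this order along $\Lambda$. The plan is to convert $P_1$ and $P_2$ into two disjoint topological arcs in $\Delta$ joining $s_1$ to $t_1$ and $s_2$ to $t_2$ respectively, which is impossible by the Jordan curve theorem since these four points lie on $\bd(\Delta)$ in the prescribed cyclic order. The redrawing is cell-by-cell: whenever $P_i$ traverses a flap $c$ through two nodes of $\widetilde{c}$, I replace the corresponding subpath by an arc in the interior of $\Delta_c$. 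Vertex-disjointness of $P_1,P_2$ forces them to touch together at most $|\widetilde{c}| \leq 3$ ground nodes of $c$, which leaves enough room inside $\Delta_c$ to route the two subarcs without crossing; gluing at the common ground points yields the two disjoint curves in $\Delta$.

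For the hard direction $(\Rightarrow)$, I would augment $\lin{G,\Lambda}$ to a graph $G^{+}$ by attaching, externally to $G$, a cycle $C$ whose vertices interleave those of $V(\Lambda)$ in the order dictated by $\Lambda$. The absence of a cross in $\lin{G,\Lambda}$ then translates into the non-existence, in $G^{+}$, of two vertex-disjoint paths whose endpoints are four vertices of $C$ arranged in the ``interleaved'' cyclic pattern. The two-paths theorem then delivers a drawing of $G^{+}$ in which $C$ bounds a face, up to separations of order at most three; each maximal subgraph attached to the rest of $G^{+}$ through such a $3$-separator is encapsulated in a single flap cell $c$ with $\widetilde{c}$ equal to the separator, while the remaining piece is drawn flatly in $\Delta$. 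This data is then the required vortex-free rendition.

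The main obstacle is exactly this last step: translating the $3$-separation decomposition produced by the two-paths theorem into the axiomatic data of a $\Sigma$-decomposition that additionally respects the linear ordering $\Lambda$ along $\bd(\Delta)$, including a careful case analysis for separators of orders $1$, $2$, and $3$ (and for degenerate situations such as parallel edges between separator vertices). Rather than reproducing this bookkeeping, the plan is to appeal directly to~\cite[Theorem~1.3]{KawarabayashiTW18anew}, which is stated in exactly this language and therefore gives the proposition immediately.
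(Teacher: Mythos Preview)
Your proposal is correct and aligns with the paper's treatment: the paper does not prove this proposition at all but states it as a restatement of the classical two-paths theorem, citing the same references you do and pointing specifically to \cite[Theorem~1.3]{KawarabayashiTW18anew} for a proof in this language. Your additional sketch of both directions is sound (and more than the paper offers), and your ultimate appeal to that same reference matches the paper exactly.
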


A linear society $\lin{G,\Lambda }$ that has no cross is called \defi{rural}.  
Observe that simple societies are trivially rural.
Given a $\Sigma$-decomposition $\delta$ of a graph $G,$ we say that a cell $c$ of $\delta$ (resp. disk $\Delta$ of $\Sigma$) is \defi{flat} if some --actually any-- $c$-society (resp. $\Delta$-society) of $\delta$ is rural.
 
\paragraph{Grounded and bound graphs.}
Let $\delta$ be a $\Sigma$-decomposition of a graph $G.$ 
Let $Q \subseteq G$ be either a cycle or a path that uses no edge of $\cupall \{ \sigma(c) \mid c \in C_{\mathsf{v}}(\delta) \}.$
We say that $Q$ is $\delta$-\defi{grounded} if either $Q$ is a non-trivial path with both endpoints in $\pi_{\delta}(N(\delta))$ or $Q$ is a cycle that contains edges of $\sigma(c_{1})$ and $\sigma(c_{2})$ for at least two distinct cells $c_{1}, c_{2} \in C(\delta).$
A $2$-connected subgraph $H$ of $G$ is said to be $\delta$-\defi{grounded} if every cycle in $H$ is grounded in $\delta.$
We also say that a subgraph $G'$ of $G$ is $\delta$-\defi{bound}\llabel{bound_def} if there is a path in $G$ between a vertex of $G'$ and a ground vertex of $\delta.$

\paragraph{Disk-embeddable societies.}
A linear society $\lin{G, \Lambda}$ is \defi{disk-embeddable} if it has a rendition in some disk $\Delta$ where all cells are trivial.

\paragraph{Traces.}
Let $\delta$ be a $\Sigma$-decomposition of a graph $G$ in a surface $\Sigma.$ 
For every cell $c \in C(\delta)$ with $|\widetilde{c}| = 2$ we select one of the components of $\bd(c) - \widetilde{c}.$
This selection is called a \defi{trace tie-breaker} in $\delta,$ and we assume every $\Sigma$-decomposition to come equipped with a tie-breaker.
If $Q$ is $\delta$-grounded, we define the \defi{trace} of $Q$ as follows.
Let $P_{1}, \dots, P_{t}$ be distinct maximal subpaths of $Q$ such that $P_{i}$ is a subgraph of $\sigma(c)$ for some cell $c.$
Fix an index $i.$
The maximality of $P_{i}$ implies that its endpoints are $\pi_\delta(n_{1})$ and $\pi_\delta(n_{2})$ for distinct nodes $n_{1}, n_{2} \in N(\delta).$
If $|\widetilde{c}| = 2,$ define $L_{i}$ to be the component of $\bd(c) - \{ n_{1}, n_{2} \}$ selected by the tie-breaker, and if $|\widetilde{c}| = 3,$ define $L_{i}$ to be the component of $\bd(c) - \{ n_{1}, n_{2} \}$ that is disjoint from $\widetilde{c}.$
Finally, we define $L'_{i}$ by slightly pushing $L_{i}$ to make it disjoint from all cells in $C(\delta).$
We define such a curve $L'_{i}$ for all $i$ while ensuring that the curves intersect only at a common endpoint.
The \defi{trace} of $Q,$ denoted by $\trace_{\delta}(Q)$ or simply $\trace(Q)$ when $\delta$ is clear from the context, is defined to be $\bigcup_{i \in [t]} L'_{i}.$
So the trace of a cycle is the homeomorphic image of the unit circle and the trace of a path is an arc in $\Sigma$ with both endpoints in $N(\delta).$

We say that a cycle $Q$ of $G$ is \defi{$\delta$-contractible} if $\Sigma \setminus \trace(Q)$ contains at least one open disk.
Let $B \subseteq N(\delta).$
Given a $\delta$-contractible cycle $C$ of $G$ whose trace is disjoint from $B,$ we call the disk of $\Sigma,$ if it exists, that is bounded by $\trace(C)$ and is disjoint from $B,$ the $B$-\defi{avoiding disk} of $C.$
Notice that the $B$-avoiding disk of $C$ does not always exist, but if it exists, it is unique.

\paragraph{Linkages.}\llabel{linkages_def}
Let $G$ be a graph.
A \defi{linkage} in $G$ is a set $\Lcal = \{L_{1}, \ldots, L_{r}\}$ of pairwise vertex-disjoint paths.
In a slight abuse of notation, if $\mathcal{L}$ is a linkage, we use $V(\mathcal{L})$ and $E(\mathcal{L})$ to denote $\bigcup_{L\in\mathcal{L}} V(L)$ and $\bigcup_{L\in\mathcal{L}}E(L)$ respectively.
Given two sets $A$ and $B,$ we say that a linkage $\mathcal{L}$ is an $A$-$B$-\defi{linkage} if every path in $\mathcal{L}$ has one endpoint in $A$ and one endpoint in $B.$

\paragraph{Segments in a society.}
Let $\lin{G, \Lambda}$ be a society where $\Lambda = \lin{y_{1}, \ldots, y_{r}}.$
A \defi{segment} of $\Lambda$ is a set $S \subseteq V(\Lambda)$ such that there do not exist $s_1, s_2 \in S$ and $t_1,t_2 \in V(\Lambda ) \setminus S$ such that $s_1,t_1,s_2,t_2$ occur in the rotation ordering of $\Lambda$ in the order listed.
A vertex $s \in S$ is an \defi{endpoint} of the segment $S$ if there is a vertex $t \in V(\Lambda) \setminus S$ which immediately precedes or immediately succeeds $s$ in the rotation ordering of $\Lambda.$
Notice that for each $\lin{i,j} \in |\Lambda| \times |\Lambda|$ we may define a segment of the society $\lin{G,\Lambda}$ that starts with $y_{i}$ and finishes with $y_{j}.$
We call this segment the \defi{$(i,j)$-segment} of $\lin{G, \Lambda}$ and denote it by $y_i \Lambda y_j.$

\paragraph{Transactions in a society.}
Let $\lin{G,\Lambda }$ be a society. 
A \defi{transaction} in $\lin{G, \Lambda}$ is an $A$-$B$-linkage for disjoint segments $A, B$ of $\Lambda.$ 
We define the \defi{depth} of $\lin{G, \Lambda}$ as the maximum order of a transaction in $\lin{G, \Lambda}.$

Given a $\Sigma$ decomposition $\delta$ of a graph $G,$ if $\Delta$ is a $\delta$-aligned disk of $\Sigma$ (resp. a cell $c$ of $\delta$), then the \defi{depth} of $\Delta$ (resp. $c$) is the depth of some --actually any-- $\Delta$-society (resp. $c$-society).

Let $\mathcal{T}$ be a transaction in a society $\lin{G, \Lambda}.$ 
We say that $\mathcal{T}$ is \defi{planar} if no two members of $\mathcal{T}$ form a cross in $\lin{G,\Lambda }.$ 
An element $P\in\mathcal{T}$ is \defi{peripheral} if there exists a segment $X$ of $\Lambda $ containing both endpoints of $P$ and no endpoint of another path in $\mathcal{T}.$ 
A transaction is \defi{crooked} if it has no peripheral element.

Let $\rho$ be a rendition of a society $\langle G, \Lambda \rangle$ in a disk $\Delta,$ let $\Delta^* \subseteq \Delta$ be an arcwise connected set and let $\mathcal{C} = \{C_1, \dots, C_s\}$ be an $s$-nest in $\rho$ around $\Delta^*.$
We say that a family of pairwise vertex-disjoint paths $\Pcal$ in $G$ is a \defi{radial linkage} if each path in $\mathcal{P}$ has at least one endpoint in $V(\Lambda)$ and the other endpoint of $\mathcal{P}$ is drawn in $\Delta^*.$
Moreover, we say that $\mathcal{P}$ is \defi{orthogonal} to $\mathcal{C}$ if for every $P \in \mathcal{P}$ and every $i \in [s],$ $C_i \cap P$ consists of a single component.
Similarly, if $\mathcal{P}$ is a transaction in $\langle G, \Lambda \rangle$ then $\mathcal{P}$ is said to be \defi{orthogonal} to $\mathcal{C}$ if for every $i \in [s]$ and every $P \in \mathcal{P},$ $C_i \cap P$ consists of exactly two components.

\subsection{Fibers and folios of linear societies}

In this subsection, we define the notion of fibers as a means to formalize a form of ``rooted'' subdivision of a given society.
Moreover, we define the notion of $d$-folio which is a set that contains all different non-isomorphic fibers that represent some society.

\paragraph{Fibers.}
A triple $\mu = (M,T,\Lambda )$  is a \defi{fiber} if  

\begin{enumerate}
\item $\lin{M,\Lambda}$ is a linear society,
\item $T \subseteq V(M),$
\item for every vertex $x$ of $V(M) \setminus T$ it holds that 
\begin{itemize}
\item if $x \not \in V(\Lambda),$ then $x$ has degree two in $M$ and 
\item if $x\in V(\Lambda),$ then $x$ has degree one or two in $M.$
\end{itemize}
\end{enumerate}

The \defi{detail} of $\mu$ is defined as $\mathsf{detail}(\mu) \coloneqq |T|$ and the \defi{boundary size} of $\mu$ is $|B|.$

We define $\mathsf{dissolve}(\mu)$ as the society $\lin{H,\Lambda}$ where $H$ is the graph obtained from $M$ after contracting all edges with at least one endpoint not in $T \cup V(\Lambda )$ (while contracting, vertices in $T \cup V(\Lambda)$ prevail).
Notice that, in this case, the society $\lin{M, \Lambda}$ is a subdivision of the society $\mathsf{dissolve}(\mu).$

We say that two fibers $\mu_1, \mu_2$ are \defi{equivalent} if the societies $\mathsf{dissolve}(\mu_1)$ and $\mathsf{dissolve}(\mu_2)$ are isomorphic.

A fiber $\mu = (M, T, \Lambda)$ is a \defi{fiber of} $\lin{G, \Lambda'}$ if $M$ is a subgraph of $G$ where $V(M) \cap V(\Lambda') = V(\Lambda)$ and $\Lambda \subseteq \Lambda'$ (i.e., $\Lambda$ is a sub-sequence of $\Lambda'$).

\paragraph{Folio of a society.} 

Let $d$ be a positive integer.
We define the \defi{$d$-folio} of a linear society $\lin{G, \Lambda}$ as 
\begin{eqnarray*}
d\text{-}\mathsf{folio}(G,\Lambda )& = &\{ \mathsf{dissolve}(\mu) \mid \mu \text{~is a fiber of $\lin{G,\Lambda'},$ of detail at most $d,$ where $\Lambda'$ is a shift of $\Lambda$} \}.
\end{eqnarray*}

Let $\delta$ be a $\Sigma$-decomposition of a graph $G$ in a surface $\Sigma.$
We define the \defi{$d$-folio} of $\delta$ as $$d\folio(\delta) \coloneqq \cupall \big\{ d\text{-}\mathsf{folio}(G_c, \Lambda _{c,x}) \mid \text{$c \in C_{\mathsf{f}}(\delta)$ and $x \in B_{\Delta_{c}}$} \big\}.$$

We note that the size of the $d$-folio of $\delta$ is upper bounded by an exponential function that depends on the detail $d,$ which is essentially the number of different graphs we can have on $d$ vertices.

\begin{observation}\llabel{obs_size_dfolio} There exists a function $f_{\ref{obs_size_dfolio}}\colon\Nbbb \to \Nbbb$ such that, for every $d \in \Nbbb_{\geq 1},$ for every $\Sigma$-decomposition $\delta$ of a graph in a surface $\Sigma,$ $$|d\text{-}\mathsf{folio}(\delta)| \leq f_{\ref{obs_size_dfolio}}(d).$$
Moreover,
$$f_{\ref{obs_size_dfolio}}(d) = 2^{\Ocal(d^{2})}.$$
\end{observation}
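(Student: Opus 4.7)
The plan is to establish the bound via an elementary counting argument that relies on two structural features of the definition. First, I would observe that $d\text{-}\mathsf{folio}(\delta)$ aggregates only over flap cells $c \in C_{\mathsf{f}}(\delta)$, and by the definition of a flap we have $|\widetilde{c}| \leq 3$. Consequently, every $c$-society $\langle G_c, \Lambda_{c,x} \rangle$ has boundary of size at most $3$, so any fiber $\mu = (M, T, \Lambda)$ extracted from such a society (or from any of its shifts) satisfies $|V(\Lambda)| \leq 3$. Combined with the detail constraint $|T| \leq d$, this implies that the underlying graph $H$ of $\mathsf{dissolve}(\mu) = \langle H, \Lambda \rangle$ has at most $d + 3$ vertices, since exactly the vertices of $T \cup V(\Lambda)$ survive the contraction process.

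Second, I would invoke the equivalence of fibers defined in the excerpt (two fibers are equivalent when their dissolutions are isomorphic), which makes it natural to regard $d\text{-}\mathsf{folio}(\delta)$ as a set of isomorphism classes of societies. Under this identification the cardinality of $d\text{-}\mathsf{folio}(\delta)$ is bounded by the number of isomorphism classes of societies $\langle H, \Lambda \rangle$ with $|V(H)| \leq d+3$ and $|V(\Lambda)| \leq 3$. I would then upper bound this count in three factors: the number of simple graphs on $d+3$ labeled vertices is at most $2^{\binom{d+3}{2}}$; the number of ordered choices of a boundary of size at most $3$ from $V(H)$ is at most $1 + (d+3) + (d+3)(d+2) + (d+3)(d+2)(d+1) \leq 6(d+3)^3$; and the number of possible values for $|V(H)|$ is at most $d+4$. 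Multiplying these factors yields
\[
|d\text{-}\mathsf{folio}(\delta)| \;\leq\; (d+4) \cdot 2^{\binom{d+3}{2}} \cdot 6(d+3)^3 \;=\; 2^{\mathcal{O}(d^2)},
\]
which gives the promised dependence $f_{\ref{obs_size_dfolio}}(d) = 2^{\mathcal{O}(d^2)}$.

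There is no substantial technical obstacle here: the only point requiring care is the implicit identification of isomorphic dissolved fibers within the union $\cupall$ defining $d\text{-}\mathsf{folio}(\delta)$. Without it, the bound would fail, because $\delta$ may have arbitrarily many cells contributing isomorphic copies; with it, the fact that each dissolved fiber is ``small''  (at most $d+3$ vertices and at most $3$ boundary elements) immediately caps the total count at a function that depends only on $d$. All remaining work is a routine binomial estimate.
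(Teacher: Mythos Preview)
Your argument is correct and matches the paper's intended reasoning: the paper treats this as a bare observation, prefacing it only with the remark that the bound ``is essentially the number of different graphs we can have on $d$ vertices,'' and you have simply spelled out that count carefully (using the flap bound $|\widetilde{c}|\le 3$ to cap $|V(\Lambda)|$ and hence $|V(H)|\le d+3$). Your explicit discussion of why the set must be read up to isomorphism is a helpful clarification the paper leaves implicit.
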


\subsection{$t$-segments}\llabel{subsec_segments}

In this subsection, we introduce various types of wall-like structures that we uniformly call $t$-segments.
The different types of $t$-segments will collectively define the notion of a $t$-walloid which will serve as the underlying infrastructure, in its various evolving forms, throughout the refinement steps that our $\Sigma$-decomposition will go through in the proofs spanning the next sections.

\paragraph{Grids and (cylindrical) walls.}
An \defi{$(n \times m)$-grid} is the graph with vertex set $[n]\times[m]$ and edge set \[\{(i,j)(i,j+1) \mid i\in[n],j\in[m-1]\}\cup\{(i,j)(i+1,j) \mid i\in[n-1],j\in[m]\}.\]
We call the path where vertices appear as $(i,1),(i,2),\dots, (i,m)$ the \defi{$i$th row} and the path where vertices appear as $(1,j),(2,j),\dots, (n,j)$ the \defi{$j$th column} of the grid.

Let  $z, t \in \Nbbb_{≥3}.$
An \defi{elementary} $(z,t)$-\defi{wall} is obtained from the $(2z \times t)$-grid
by removing a maximal induced matching $M$ that does not contain any edge of its horizontal paths.
A $(z,t)$-\defi{wall} is any graph obtained from an elementary $(z,t)$-wall after arbitrarily subdividing edges.
Notice that a $(z,t)$-wall $W$ can be seen as the union of $t$ horizontal paths and $z$ vertical paths that are defined as follows.

The $t$ \defi{horizontal paths} $P_{1}, \dots, P_{t},$ ordered (and visualized) from top to bottom
are the subdivisions of the $t$ rows of the original $(2z \times t)$-grid.
We call $i,$ the \defi{index} of the path $P_{i}.$
We refer to $P_{1}$ (resp. $P_{t}$) as the \defi{top} (resp. \defi{bottom}) path of $W.$
The \defi{top} (resp. \defi{bottom}) \defi{boundary vertices} of $W$ are vertices of the original $(2z \times t)$-grid that belong in the top (resp. bottom) path of $W$ and are incident to edges of the maximal induced matching $M.$
Notice that there are $z$ top and $z$ bottom boundary vertices.
The $z$ \defi{vertical paths} $Q_{1}, \dots, Q_{z},$ ordered (and visualized) from left to right are the pairwise disjoint paths whose union meets all vertices of the original $(2z \times t)$-grid, and where each $Q_{i}$ has one endpoint in the top and the other in the bottom boundary vertices of $W.$

For $i \in [z]$ the $i$-th \defi{top} (resp. \defi{bottom}) \defi{boundary vertex} of $W$ is the single vertex of $P_{1} \cap Q_{i}$ (resp. $P_{t} \cap Q_{i}$).
For $i \in [t]$ the $i$-th \defi{leftmost} (resp. \defi{rightmost}) \defi{boundary vertex} of $W$ is the single vertex of $P_{i} \cap Q_{1}$ (resp. $P_{i} \cap Q_{z}$).
Notice that all top, bottom, leftmost, and rightmost boundary vertices are vertices of degree at most two in $W.$
Also, notice that $P_{1} \cup Q_{1} \cup P_{t} \cup Q_{z}$ is a cycle, which we call the \defi{perimeter} of $W.$

Let $s \in \Nbbb_{\geq 3}.$ 
 An $(t, s)$-cylindrical wall is obtained from a $(t, s)$-wall after adding $s$ pairwise disjoint non-trivial paths $P_{1}^*, \ldots, P_{s}^*$ whose internal vertices are not in the $(t, s)$-wall we start from and where for each $i \in [t],$ $P_{i}^*$ connects the $i$-th leftmost boundary vertex with the $i$-th rightmost boundary vertex of $(t, s)$-wall we start from.
Notice that a $(t, s)$-cylindrical wall, has neither leftmost nor rightmost boundary vertices, however, it has top (resp. bottom) boundary vertices that are the top (resp. bottom) boundary vertices of the original $(t,s)$-wall plus the path $P_{1}^{*}$ (resp. $P_{s}^{*}$).
Also, the $s$ horizontal paths of the $(t,s)$-wall together with the newly added paths, define $s$ vertex disjoint cycles $C_{1}, \dots, C_{s},$ ordered from top to bottom.
We call the cycle $C_{1}$ (resp. $C_{s}$) the \defi{inner} (resp. \defi{outer}) \defi{cycle} of the $(t,s)$-cylindrical wall and they are visualized as such.

\paragraph{Segments.}

We refer to a graph as a $t$-\defi{segment} if it belongs to one of the following categories.

\medskip
\noindent

\noindent 
\textbf{$(z,t)$-wall segment.} Given $z$ and $t \in \Nbbb_{≥3},$ an (elementary) $(z,t)$-\defi{wall segment} $W$ is an (elementary) $(z, t)$-wall $\widetilde{W}.$

\medskip
\noindent 
\textbf{$t$-handle segment.} Given a $t \in \Nbbb_{≥3},$ we define an (elementary)  $t$-\defi{handle segment} $W$ by considering an (elementary)  $(4t, t)$-wall $\widetilde{W}$ whose top boundary vertices are $v_{1}, \ldots, v_{t}, v'_{1},\ldots, v_{t}', u_{1}, \ldots, u_{t}, u'_{1}, \ldots, u_{t}'$ in left to right order, adding the edges in $\{v_i u_{t-i+1}, v'_i u'_{t-i+1} \mid i \in [t] \},$ and, (in the non-elementary version) subdividing them an arbitrary number of times.
Notice that the paths created by possibly subdividing these edges form a $\{v_{1}, \ldots, v_{t}\}$-$\{ u_{1},\ldots, u_{t}\}$-linkage and a $\{v_{1}', \ldots, v_{t}'\}$-$\{u_{1}', \ldots, u_{t}\}'$-linkage.
We call the former linkage the \defi{leftmost rainbow}, while the latter the \defi{rightmost rainbow} of the handle segment.
We define the \defi{leftmost} (resp. \defi{rightmost}) \defi{enclosure} of $W$ as the cycle of $W$ whose degree-3 vertices in $W$ appear in left to right order as
$$\text{$v_{1},\ldots,v_{t},u_{1},\ldots,u_{t}$ (resp. $v_{1}',\ldots,v_{t}',u_{1}',\ldots,u_{t}'$}).$$

\medskip
\noindent
\textbf{$t$-crosscap segment.} Given a $t \in \Nbbb_{≥3},$ we define an (elementary) $t$-\defi{crosscap segment} $W$ by considering an (elementary) $(4t, t)$-wall $\widetilde{W}$ whose top boundary vertices are $v_{1}, \ldots, v_{2t}, u_{1}, \ldots, u_{2t}$ in left to right order, adding the edges in $\{v_i u_{2t+i} \mid i \in [2t] \},$ and (in the non-elementary version) subdividing these edges an arbitrary number of times.
Notice that the paths created by possibly subdividing these edges form a $\{v_{1}, \ldots, v_{2t}\}$-$\{u_{1}, \ldots, u_{2t}\}$-linkage and we refer to it as the \defi{rainbow} of the handle segment.
We define the \defi{enclosure} of $W$ as the cycle of $W$ whose degree-3 vertices in $W$
appear in left-to-right order as
$$v_{1}, \ldots, v_{2t}, u_{2t}, \ldots, u_{1}.$$

\medskip
\noindent
\textbf{$(t,b)$-flower segment.} Given a society $\lin{H,\Lambda }$ where $\mathsf{dec}({H, \Lambda}) \coloneqq \lin{\lin{H_{1},\Lambda_{1}}, \ldots, \lin{H_{q},\Lambda_{q}}}$ and a $b \in \Nbbb_{\geq 1},$ an (elementary) $(t,b,H,\Lambda)$-\defi{flower segment} is defined as follows.
Consider an (elementary) $(2t + b \cdot |\Lambda|, t)$-wall $\widetilde{W}$ whose top boundary vertices are, in left to right order,
$$t_{1},\ldots,v_{t}, T_{1}^{1},\ldots,T_{1}^{b}, ~\ldots~, T_{q}^{1},\ldots,T_{q}^{b},t_{1},\ldots,t_{t}$$ 
where for $(j,i) \in [q] \times [b],$ $T_{i}^{j} = \lin{t_{1}^{i,j},\ldots, t_{m_{i,j}}^{i,j}}$ where $m_{i,j}=|T_{i}^{j}|.$
Then, for each $j \in [q],$ consider $b$ societies $\lin{M_j^{i}, \Lambda_j^{i}}, i \in [b],$ such that each $\lin{M_j^{i}, \Lambda_j^{i}}$ is either equal to $\lin{H_j, \Lambda_j}$ (in the case of an elementary flower) or is a subdivision of $\lin{H_j, \Lambda_j}.$
Then, for $j \in [q],$ for $i \in [b],$ and for every $p \in [m_{i,j}]$ add an edge between $t_{p}^{i,j}$ and the $p$-th vertex of $\Lambda_{j}^{i}.$

Notice that the paths created by (possibly) subdividing these edges form a $\{v_{1}, \ldots, v_{t}\}$-$\{u_{1}, \ldots, u_{t}\}$-linkage and we refer to it as the \defi{rainbow} of the $(t, b, H, \Lambda)$-flower segment.
The \defi{internal vertices of} the $(t, b, H, \Lambda)$-flower segment are those that are not vertices of $\widetilde{W}.$
We refer to $\lin{H, \Lambda}$ as the \defi{society} of the $(t,b,H,\Lambda)$-flower segment we just defined.
Also, its \defi{inner cycle} consists of the subdivided edge $v_t u_1$ and the $(v_t, u_1)$-path that is a subpath of the top path of $\widehat{W}.$

\medskip
\noindent
\textbf{$(t,b,h)$-parterre segment.} Given a simple society $\lin{H,\Lambda }$ and $b,h\in \Nbbb_{\geq 1},$ a $(t,b,h,H,\Lambda )$-\defi{parterre segment} is defined as follows. Consider a sequence $F_{1}, \ldots, F_{h}$ of disjoint $(t, b, H, \Lambda )$-flower segments. 
Then, for $i \in [h]$ and $j \in [t],$ add an edge between the $j$-th rightmost boundary vertex of $S_{i}$ and the $j$-th leftmost boundary vertex of $S_{i}$ and subdivide this edge an arbitrary number of times. 
We refer to $F_{1}, \ldots, F_{h}$ as the \defi{flowers} and to $\lin{H,\Lambda}$ as the \defi{society} of the defined $(t,b,h,H,\Lambda)$-parterre segment.
Also, we refer to the rainbows and inner cycles of its flowers as the \defi{rainbows}, and \defi{inner cycles} of the parterre segment.
Notice that a $(t,b,h,H,\Lambda)$-parterre segment has $h$ many flowers, rainbows, and inner cycles.

\medskip
\noindent
\textbf{$(t,s)$-Thicket segment.} Given  $t, s \in \Nbbb_{\geq 3}$ we define a \defi{$(t,s)$-thicket segment} by considering the disjoint union of an $(t, t)$-wall $\widetilde{W},$ whose top boundary vertices are $v_{1},\ldots,v_{t}$ in left to right order, and a $(t, s+2)$-cylindrical wall $\widehat{W}$ whose bottom boundary vertices are $u_{1}, \ldots, u_{t}$ in cyclic order.
Then, we add the edges in $\{ v_i u_i \mid i \in [t] \}$ and we subdivide them an arbitrary number of times.
We denote by $P_{1}, \ldots, P_{t}$ the paths obtained by the previous subdivisions.

We refer to the inner (resp. outer) cycle of the cylindrical wall $\widehat{W}$ as the \defi{inner} (resp. \defi{outer}) \defi{cycle} of the $(t,s)$-thicket segment.
Let $\mathcal{R} = \{R_{1}, \ldots, R_{t}\}$ be the linkage of $\widehat{W}$ that consists of its vertical paths in left to right order and let $\Ccal = \{C_{1}, \ldots, C_{s} \}$ be the cycles of $\widehat{W}$ minus its outer and inner cycle ordered so that $C_{1}$ is the cycle succeeding the inner cycle of $\widehat{W}$ while $C_{s}$ is the cycle preceding the outer cycle of $\widehat{W}.$
Notice that each path $R_{i}$ in $\Rcal$ is a path connecting the $i$-th top boundary vertex of $\widehat{W}$ with the $t$-th bottom boundary vertex of $\widehat{W},$ say $u_{i}.$
We now enhance the paths $\mathcal{R} = \{R_{1}, \ldots, R_{t}\}$ by updating each $R_{i} \coloneqq R_{i} \cup P_{i}.$
We call the pair $(\Ccal, \mathcal{R})$ the \defi{railed nest} of the $(t,s)$-thicket segment.

\medskip
Notice that in the definition of any of the different types of a $t$-segment $W,$ we start by considering a wall $\widetilde{W}$ with $t$ horizontal paths.
We call this wall the \defi{base wall} of $W.$
The leftmost boundary vertices as well as the rightmost boundary vertices of $\widetilde{W}$ form the \defi{left boundary} and the \defi{right boundary} of the $t$-segment $W.$

\paragraph{Host and guest walloids.}
Given a sequence $\Wcal = \lin{W_{1}, \ldots, W_{l}}$ of $t$-segments, we define the \defi{cylindrical concatenation} of $\Wcal$ as the graph $W$ obtained from $\cupall \Wcal,$ by adding, for $i \in [l]$ and $j \in [t],$ an edge between the $j$-th rightmost boundary vertex of $W_{i}$ and the $j$-th leftmost boundary vertex of $W_{i+1},$ where $W_{l+1} = W_{1},$ and then subdividing these edges an arbitrary number of times.
We define two types of cylindrical concatenations of $t$-segments and we refer to them as \defi{$t$-walloids}.
Note that $W$ contains as a subgraph the cylindrical concatenation of the base walls of all segments from $\mathcal{W}.$
We refer to this cylindrical wall as the \defi{base cylinder} of $W.$

Let $t, s \in \Nbbb_{\geq 3},$ $b, h \in \Nbbb_{\geq 1},$ $\Sigma = \Sigma^{(\mathsf{h}, \mathsf{c})}$ be a surface, and $\ell, \ell' \in \Nbbb.$
Let $\Wcal = \rot{W_{1}, \ldots, W_{\mathsf{h}+\mathsf{c}+\ell+\ell'+1}}$ be a sequence of $t$-segments where 
\begin{itemize}
\item $W_{1}$ is a $(t, t)$-wall segment,
\item $W_{2}, \ldots, W_{\mathsf{h} + 1}$ are $t$-handle segments, 
\item $W_{\mathsf{h} + 2}, \ldots, W_{\mathsf{h}+\mathsf{c}+1}$ are $t$-crosscap segments, 
\item $W_{\mathsf{h} + \mathsf{c} + 2}, \ldots, W_{\mathsf{h} + \mathsf{c} + \ell + 1}$ are $(t, b, h)$-parterre segments, and 
\item $W_{\mathsf{h} + \mathsf{c} + \ell + + 2}, \ldots, W_{\mathsf{h} + \mathsf{c} + \ell + \ell' + 1}$ are $(t, s)$-thicket segments.
\end{itemize}

We call the wall-like graph obtained by the cylindrical concatenation $W$ of $\Wcal$ a $(t, b, h, s, \Sigma)$-\defi{host walloid}.
In the above construction, if instead of $(t, b, h)$-parterre segments and/or $(t, s)$-thicket segments we only consider (elementary) $(t,b)$-flower segments, then we define an (elementary) $(t, b, \Sigma)$-\defi{guest walloid}.
Notice that for host walloids we insist that the societies of their parterre segments are\textsl{simple} societies.
This is not the case for an (elementary) guest walloid as the societies of their flowers are assumed to be simple.
Moreover, we refer to the $t$-segments of $\Wcal$ as the \defi{$t$-segments} of $W.$
We denote by $\overline{W}$ the graph obtained from $W$ if we remove all internal vertices of its flowers.
Clearly, if $W$ does not have any parterre or flower segments, then $W = \overline{W}.$

Notice that the essential difference between the elementary and non-elementary version of a $(t, b, \Sigma)$-guest walloid is that the elementary version is uniquely defined while the non-elementary one is a subdivision of the former.

Let $W$ be a $(t, b, h, s, \Sigma)$-host walloid (resp. $(t,b,\Sigma)$-guest walloid) as above that has $\ell$ $(t,b,h)$-parterre (resp. $(t,b)$-flower) segments and $\ell'$ (resp. $\ell'=0$) $(t,s)$-thicket segments. 
For $i \in [\ell]$ (resp. $i \in [\ell']$) we refer to $W_{\mathsf{h} + \mathsf{c} + 1 + i}$ (resp. $W_{\mathsf{h} + \mathsf{c} + \ell + + 1 + i}$) as the $i$-th $(t,b,h)$-parterre (resp. $(t,b)$-flower) segment (resp. $i$-th $(t,s)$-thicket segment) of $W.$

\paragraph{Classification of facial cycles.}
Let $W$ be a $(t, b, h, s, \Sigma)$-host walloid or an (elementary) $(t,b,\Sigma)$-guest walloid.
Notice that $\overline{W}$ is (the subdivision of) a 3-regular $\Sigma^{(\mathsf{h}, \mathsf{c})}$-embeddable graph.
Consider a $\Sigma^{(\mathsf{h}, \mathsf{c})}$-embedding of $\overline{W}.$
We call \defi{bricks} the facial cycles that have exactly six degree-3 vertices of $\overline{W}.$

Among the facial cycles of the $\Sigma^{(\mathsf{h},\mathsf{c})}$-embedding of $\widetilde{W}$ that are not bricks, $2 \mathsf{h}$ many (resp. $\mathsf{c}$ many) of them correspond to the enclosures of the $t$-handle (resp. $t$-crosscap) segments, $h\ell$ many of them correspond to the inner cycles of its $(t,b)$-flower segments, which we call the \defi{flower cycles} of $W,$ and $\ell'$ of them correspond to the inner cycles of the $(t,s)$-cylindrical walls of the $\ell'$ many $(t, s)$-thicket segments, which we call the \defi{thicket cycles} of $W.$

In the $\Sigma^{(\mathsf{h}, \mathsf{c})}$-embedding of $\overline{W},$ two facial cycles remain unclassified.
One of the two corresponds to the outer cycle of the base cylinder of $W$ which we call the \defi{simple cycle} of $W,$ denoted by $C^{\mathsf{si}}.$
We call the only remaining unclassified facial cycle, the \defi{exceptional cycle} of $W,$ denoted by $C^{\mathsf{ex}},$ which is the only facial cycle that contains the endpoints of every top path of all base walls of the segments of $\Wcal.$

Let $\widetilde{W}$ denote the base cylinder of $W.$
A cycle of $\overline{W}$ is an \defi{enclosure} if it is an enclosure of some of its $t$-handle or its $t$-crosscap segments or if it is the inner cycle of $\widetilde{W},$ which we call the \defi{big enclosure} of $W.$ 
Notice that $W$ has $2\mathsf{h} + \mathsf{c} + 1$ enclosures.
A cycle $C$ of $\overline{W}$ is a \defi{fence} if it is either a facial cycle of $\overline{W}$ or if there exists an edge separation $(E_1, E_2)$ of $\overline{W},$ such that
\begin{itemize}
\item $V(C)$ is the set of vertices that are endpoints of edges in both $E_{1}$ and $E_{2},$  
\item $E_2 \setminus E_1 \neq \emptyset,$ $E_1 \setminus E_2 \neq \emptyset,$ and  
\item $E(C^{\mathsf{ex}}) \subseteq E_2.$
\end{itemize}

Notice that for every fence $C$ such an edge separation $(E_{1}, E_{2})$ is uniquely defined.
We say that a fence $C'$ of $\overline{W}$ is \defi{inside} the fence $C$ if $E(C') \subseteq E_{1}.$
A brick of $W$ is a \defi{brick} of a fence $C$ if it is inside $C.$
It is easy to see that, given a $\Sigma^{(\mathsf{h}, \mathsf{c})}$-embedding of $\overline{W},$ the fences of $\overline{W}$ are exactly the non-contractible cycles of $\overline{W}.$

\subsection{Meadows, gardens, and orchards.}

In this subsection, we build on the definitions of the previous subsection and present various notions that aim to formalize all the different intermediate states and all properties that our $\Sigma$-decomposition will attain throughout the refinement steps in the proofs of the sections that follow.

\medskip
Let $t, s \in \Nbbb_{≥3},$ $b, h \in \Nbbb_{\geq 1},$ $\Sigma$ be a surface, and $\mathsf{h} \in \Nbbb$ and $\mathsf{c} \in [0, 2]$ such that $\Sigma = \Sigma^{(\mathsf{h}, \mathsf{c})}.$
Additionally, let $\delta$ be a $\Sigma$-decomposition of a graph $G$ and $W$ be a $\delta$-grounded $(t, b, h, s, \Sigma)$-host walloid such that
\begin{itemize}
\item for every $\delta$-aligned disk $\Delta$ such that $|\bd(\Delta) \cap N(\delta)| \leq 3,$ $\Delta$ or its complement contains a single cell of $\delta,$
\item every ground vertex of $\delta$ is in the same connected component of $G$ as the one containing $W,$ and
\item all other connected components of $G$ are drawn in the interior 
of a cell of $\delta$ without boundary.
\end{itemize}

We say that the triple $(G, \delta, W)$ is
\begin{itemize}
\item a $(t, \Sigma)$-\defi{meadow} if $W$ has no $(t, b, h)$-parterre or $(t, s)$-thicket segments and $\delta$ has a single vortex $c$ that is the unique vortex inside the $\trace(C^{\mathsf{si}})$-avoiding disk of $\trace(C^{\mathsf{ex}}).$

\item a $(t, b, h, \Sigma)$-\defi{garden} if it has no $(t, s)$-thicket segments and $\delta$ has a single vortex $c$ that is the unique vortex that is inside the $\trace(C^{\mathsf{si}})$-avoiding disk of $\trace(C^{\mathsf{ex}}).$ 
Moreover the $(t, b, h)$-parterre segments of a garden have pairwise different societies.

\item a $(t, b, h, s, \Sigma)$-\defi{orchard} if $W$ has as many $(t, s)$-thickets as the vortices of $\delta$ and each vortex of $\delta$ is inside the $\trace(C^{\mathsf{si}})$-avoiding disk of the trace of the inner cycle of a $(t, s)$-thicket.
As before $(t, b, h)$-parterre segments of a garden have pairwise different societies.

A \defi{thicket society} of $(G,\delta,W)$ is a society $\langle H,\Lambda\rangle$ where $\Lambda$ is a linearization of a cyclic ordering of those vertices of $\Lambda$ corresponding to the nodes contained in the trace of the outer cycle $C_{s+1}$ of a $(t,s)$-thicket segment of $W$ and $H$ is the subgraph of $G$ drawn in the $\mathsf{trace}(C^{\mathsf{si}})$-avoiding disk of $\mathsf{trace}(C_{s + 1}).$

If $\delta$ has a single vortex then we say that $(G, \delta, W)$ is a \defi{single}-\defi{thicket} $(t, b, h, s, \Sigma)$-orchard.
\end{itemize}

A $\Sigma$-decomposition $\delta$ of $G$ with vortices $c_{1}, \ldots, c_{\ell}$ is $(x, y)$-\defi{fertile} if there are positive integers $x_{1}, \ldots, x_{\ell}$ where $\sum_{i \in [\ell]} x_i = x$ and such that any vortex society of $\delta,$ 
has a rendition with $x_{i}$ vortices, each of depth at most $y.$
We also say that $\delta$ is $(z, p)$-\defi{ripe} if it has at most $z$ vortices, each of depth at most $p.$ 
Notice that if $\delta$ is $(z,p)$-ripe, then it is also $(z,p)$-fertile.

The $\delta$-\defi{influence} of a fence $C,$ denoted by $\delta\text{-}\mathsf{influence}(C),$ is the union of all simple cells of $\delta$ that contain at least one edge of $C$ and the cells of $\delta$ whose disk is a subset of the $\trace(C^{\mathsf{si}}) \cap N(\delta)$-avoiding disk of the trace of $C.$
Note that a cell can belong to the influence of at most three of the fences of $W.$
Moreover, disjoint fences have disjoint influences.

Let $d \in \Nbbb_{\geq 1}.$
The $d$-folio of a fence $C$ of $W$ is the union of the $d$-folios of all $c$-societies, where $c$ is a cell in the $\delta$-influence of $C,$ i.e.,
 $$d\text{-}\mathsf{folio}(C) \coloneqq \{ d\text{-}\mathsf{folio}(G_{c}, \Lambda_{c, x}) \mid \text{$c \in \delta\text{-}\mathsf{influence}(C)$ and $x \in B_{\Delta_{c}}$} \}.$$
A fence of $W$ is $d$-\defi{homogeneous} if all its bricks have the same (in terms of society isomorphism) $d$-folio.
We moreover say that the triple $(G, \delta, W)$ is
\begin{itemize}
\item $d$-\defi{plowed} if all its $2\mathsf{h} + \mathsf{c} + 1$ enclosures are $d$-homogeneous,
\item $(x,y)$-\defi{fertile} if $\delta$ is $(x,y)$-fertile, 
\item $(z, p)$-\defi{ripe} if $\delta$ is $(z, p)$-ripe and every thicket society of $(G, \delta, W)$ has depth at most $2s + p,$ and
\item $d$-\defi{blooming} if every linear society $\lin{H, \Lambda}$ in $d\text{-}\mathsf{folio}(\delta)$ is the linear society of a $(t, b)$-parterre segment of $W.$ 
\end{itemize}

\section{Preliminary results}

In this section, we present two independent results that will be of use for proofs in the following sections.
In the first subsection, we prove a homogeneity lemma for walls and a mono-dimensional variant for ladders.
In the second subsection, we prove a result that allows us to control to some extent the behavior of a ``minimal'' linkage traversing multiple nests.

\subsection{Finding a homogeneous subwall}

In this subsection we show how given a large enough wall whose bricks come equipped with a subset from a fixed set of colors, we can always find an arbitrarily large subwall of it where all bricks are accompanied by the same color set.
We moreover observe how the same arguments give us a mono-dimensional variant applied to an object we call a ladder.

\medskip
Let $\zeta \in \Nbbb_{\geq 1},$ $z, t \in \Nbbb_{\geq 3},$ and $W$ be a $(z, t)$-wall.
Assume that every brick $B$ of $W$ is assigned a subset of $[\zeta]$ which we call the $\zeta$-\defi{palette} of $B,$ denoted by $\zeta\text{-}\mathsf{palette}(B).$ We prove the following lemma.

\begin{lemma}\llabel{lem_homogeneity_2d} There exists a function $f_{\ref{lem_homogeneity_2d}} : \Nbbb^{2} \to \Nbbb$ and an algorithm that, given $\zeta \in \Nbbb_{\geq 1},$  $z, t \in \mathbb{N}_{\geq 3},$ and an $(f_{\ref{lem_homogeneity_2d}}(z, \zeta), f_{\ref{lem_homogeneity_2d}}(t, \zeta))$-wall $W,$ outputs a $(z, t)$-wall $W'$ that is a subwall of $W$ and such that every brick of $W'$ has the same $\zeta$-palette in time
$${\Ocal(z^{2^{\zeta}} \cdot t^{2^{\zeta}})}.$$
Moreover
$$f_{\ref{lem_homogeneity_2d}}(x, \zeta) = x^{2^{\zeta}}.$$
\end{lemma}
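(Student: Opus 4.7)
The plan is to establish the bound by induction on $\zeta$, peeling off one color at a time.

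\textbf{Base case} ($\zeta=0$): the only palette available is $\emptyset,$ so every brick of any $(z,t)$-wall trivially has the same $\zeta$-palette, and $f_{\ref{lem_homogeneity_2d}}(x,0)=x$ suffices.

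\textbf{Inductive step.} Assume the statement for $\zeta-1$ with $f_{\ref{lem_homogeneity_2d}}(x,\zeta-1)=x^{2^{\zeta-1}},$ and observe that $f_{\ref{lem_homogeneity_2d}}(x,\zeta)=x^{2^{\zeta}}=\bigl(x^{2^{\zeta-1}}\bigr)^{2}=f_{\ref{lem_homogeneity_2d}}(x,\zeta-1)^{2}.$ Given an $\bigl(f_{\ref{lem_homogeneity_2d}}(z,\zeta),f_{\ref{lem_homogeneity_2d}}(t,\zeta)\bigr)$-wall $W,$ assign each brick $B$ a binary label recording whether $\zeta\in \zeta\text{-}\mathsf{palette}(B),$ turning $W$ into a $2$-coloured wall. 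The crux is the following sub-claim: every $(a^{2},b^{2})$-wall whose bricks are $2$-coloured contains an $(a,b)$-subwall that is monochromatic. Applying the sub-claim with $a=f_{\ref{lem_homogeneity_2d}}(z,\zeta-1)$ and $b=f_{\ref{lem_homogeneity_2d}}(t,\zeta-1)$ produces a subwall $W'$ of these dimensions on which every brick agrees on the presence of color $\zeta.$ On $W'$ the effective palettes lie in $2^{[\zeta-1]},$ so the inductive hypothesis yields a $(z,t)$-subwall $W''$ whose bricks share a common $(\zeta-1)$-palette; combined with the shared decision on color $\zeta$ inherited from $W',$ the bricks of $W''$ have identical $\zeta$-palettes, as required.

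\textbf{Proof of the sub-claim (the main obstacle).} I would argue by a two-stage pigeonhole on columns and rows of bricks. Partition the columns of bricks of the $(a^{2},b^{2})$-wall into groups of $a$ consecutive columns, and partition the rows of bricks into groups of $b$ consecutive rows; this yields roughly $a\times b$ column-groups and row-groups. For each (column-group, row-group) pair, record the majority color in the corresponding block. A single pigeonhole over the resulting binary $a\times b$ array provides a column-group and a row-group whose intersection block has many bricks of the majority color; refining inside this block by iterating the same pigeonhole trick inside the $a\times b$ grid of bricks (trading dimension for monochromaticity each time) isolates $a$ consecutive columns and $b$ consecutive rows of bricks that are all the same color. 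Because the selection respects the column-row alignment of the original wall, the selected bricks are exactly the bricks of a genuine $(a,b)$-subwall.

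\textbf{Algorithm and complexity.} Every step of the reduction is constructive: binary labels of bricks are computed in one pass, and the pigeonhole arguments are realized by counting. The $\zeta$ outer iterations each contribute a linear scan over (the current) bricks, so the running time is dominated by a constant number of scans of the $\Theta\bigl(z^{2^{\zeta}}\cdot t^{2^{\zeta}}\bigr)$ bricks of $W,$ yielding the claimed bound $\mathcal{O}(z^{2^{\zeta}}\cdot t^{2^{\zeta}}).$ The analogous statement for ladders is obtained by the same induction, working with a single dimension; the sub-claim degenerates to a standard pigeonhole on a $2$-coloured sequence of length $x^{2}$ that contains a monochromatic run of length $x,$ and the iteration yields the same $x^{2^{\zeta}}$ bound.
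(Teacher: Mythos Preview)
Your inductive framework—peel off one colour at a time, squaring the wall dimensions at each step—matches the paper's exactly, and the bound $f(x,\zeta)=x^{2^{\zeta}}$ is right. The gap is entirely in your sub-claim and its proof.

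As you phrase it, the sub-claim asserts that an \emph{arbitrarily} $2$-coloured $(a^{2},b^{2})$-wall contains a monochromatic $(a,b)$-subwall, and in one dimension that every $2$-coloured sequence of length $x^{2}$ contains a monochromatic run of length $x$. Both statements are simply false: a checkerboard colouring (respectively, an alternating sequence) has no monochromatic $2\times 2$ block (respectively, no run of length $2$). Your ``majority plus pigeonhole'' sketch cannot be repaired, because the target statement is wrong for arbitrary $2$-colourings.

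What makes the lemma true is a feature of palettes that you never invoke. When one passes to a subwall whose selected paths are not consecutive, each brick of the subwall encloses many bricks of $W$, and its palette is the \emph{union} of the enclosed palettes (this is how palettes arise in every application, e.g.\ as folios). The paper exploits this monotonicity directly: partition the horizontal and vertical paths into consecutive bundles, let $W'$ be the coarse $(a,b)$-subwall formed by the bundle-boundary paths, and let the $W_{i,j}$ be the fine $(a,b)$-subwalls formed by single bundles. Now either every brick of $W'$ has $\zeta$ in its palette—done, recurse on $W'$—or some brick of $W'$ omits $\zeta$, which by the union rule forces \emph{every} brick of the corresponding $W_{i,j}$ to omit $\zeta$—done, recurse on that $W_{i,j}$. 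The dichotomy is asymmetric and comes for free from monotonicity; no majority or run argument is possible or needed.
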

\begin{proof} Let $\alpha : \Nbbb \to \Nbbb$ be such that $\alpha(\zeta) \coloneqq 2^{\zeta}.$
We define $f_{\ref{lem_homogeneity_2d}}(z, \zeta) \coloneqq z^{\alpha(\zeta)}.$

Let $\zeta \in \Nbbb_{\geq 1},$ $z, t \in \mathbb{N}_{\geq 3},$ and $W$ be a $(z^{\alpha(\zeta)}, t^{\alpha(\zeta)})$-wall.
Let $\Pcal_{1}, \ldots, \Pcal_{t^{\alpha(\zeta-1)}}$ be a grouping of the horizontal paths of $W$ in consecutive bundles of $t^{\alpha(\zeta-1)}$ paths ordered from left to right, where the last path in $\Pcal_{i}$ is the first path of $\Pcal_{i+1}.$
Also let $\Qcal_{1}, \ldots, \Qcal_{z^{\alpha(\zeta-1)}}$ be a grouping of the vertical paths of $W$ in consecutive bundles of $z^{\alpha(\zeta-1)}$ paths ordered from top to bottom, where the last path in $\Qcal_{i}$ is the first path of $\Qcal_{i+1}.$

For every $i \in [t^{\alpha(\zeta-1)}]$ and $j \in [z^{\alpha(\zeta-1)}],$ let $W_{i, j}$ be the graph $\cupall \Pcal'_{i} \cup \Qcal'_{j}$ that is a $(z^{\alpha(\zeta-1)}, t^{\alpha(\zeta-1)})$-wall that is a subwall of $W,$ where $\Pcal'_{i}$ and $\Qcal'_{j}$ are defined as follows.
Let $P_{i}$ (resp. $P'_{i}$) be the topmost (resp. bottommost) path of $W$ in $\Pcal_{i}$ and $Q_{j}$ (resp. $Q'_{j}$) be the leftmost (resp. rightmost) path of $W$ in $\Qcal_{j}.$
Let $\Pcal'_{i}$ (resp. $\Qcal'_{j}$) contain the paths in $\Pcal_{i}$ (resp. $\Qcal_{i}$) cropped to contain only vertices of vertical (resp. horizontal) paths between $Q_{i}$ (resp. $P_{i}$) and $Q'_{i}$ (resp. $P'_{i}$).
There are $z^{\alpha(\zeta-1)} \cdot t^{\alpha(\zeta-1)}$ such subwalls depending on the choices of $i$ and $j.$

We also consider $P$ (resp. $P'$) to be the top (resp. bottom) horizontal path in $W,$ and $Q$ (resp. $Q'$) to be the leftmost (resp. rightmost) vertical path in $W.$
Observe that the graph $P \cup (\bigcap_{i \in [t^{\alpha(\zeta-1)}] } \Pcal_{i}) \cup P' \cup Q \cup (\bigcap_{j \in [z^{\alpha(\zeta-1)}]} \Qcal_{j})\cup Q'$ also forms a $(z^{\alpha(\zeta-1)}, t^{\alpha(\zeta-1)})$-wall $W'$ that is a subwall of $W.$

Now, we assume inductively that for every $(z^{\alpha(\zeta-1)}, t^{\alpha(\zeta-1)})$-wall $W''$ that is a subwall of $W,$ there exists a $(z, t)$-wall that is a subwall of $W''$ such that every brick of that subwall has the same $(\zeta-1)$-palette.
Next, we show that there is $W'' \in \{ W_{i, j} \mid i \in [t^{\alpha(\zeta-1)}], j \in [z^{\alpha(\zeta-1)}]\} \cup \{ W' \}$ such that, either the $\zeta$-palette of every brick of $W''$ contains $\zeta$ or no brick contains $\zeta.$
Notice that from the induction hypothesis this suffices to prove our claim.
To conclude, observe that by definition, either the $\zeta$-palette of every brick of $W'$ contains $\zeta$ or there is a $W'' \in \{ W_{i, j} \mid i \in [t^{\alpha(\zeta-1)}], j \in [z^{\alpha(\zeta-1)}]\}$ such that no brick of $W''$ contains $\zeta.$
\end{proof}

Let $L$ be a subdivision of the $(2 \times t)$-grid, where $t \in \Nbbb_{\geq} 2.$
We call $L$ a $t$-\defi{ladder}.
Notice that $L$ can be seen as the union of $t$ horizontal paths $P_{1}, \ldots, P_{t}$ and two vertical paths $Q_{1}, Q_{2}$ which correspond to the subdivided rows and columns respectively of the original $(2 \times t)$-grid.
We say that a ladder $L'$ is a \defi{subladder} of $L$ if $L'$ is obtained from $L$ by removing the edges of some (maybe none) of its horizontal paths.
Consider a plane-embedding of $L$ obtained from the natural plane-embedding of the $(2 \times t)$-grid.
We call all facial cycles of $L,$ except the cycle that bounds the outer face, the \defi{bricks} of $L.$
For $\zeta \in \Nbbb_{\geq 1},$ we define the $\zeta$-palette of a brick of $L$ as before.

\medskip
We get the following mono-dimensional corollary of \autoref{lem_homogeneity_2d}, by applying the same inductive argument only in the dimension of the horizontal paths. 

\begin{corollary}\llabel{cor_homogeneity_1d} There is an algorithm that, given $\zeta \in \Nbbb_{\geq 1},$ $t \in \mathbb{N}_{\geq 2},$ and an $f_{\ref{lem_homogeneity_2d}}(t, \zeta)$-ladder $L,$ outputs a $t$-ladder $L'$ that is a subladder of $L$ and such that every brick of $L'$ has the same $\zeta$-palette in time $\Ocal(t^{2^{\zeta}}).$
\end{corollary}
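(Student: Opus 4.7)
The plan is to follow the hint provided in the text and essentially mimic the inductive argument in the proof of \autoref{lem_homogeneity_2d}, restricted to a single dimension corresponding to the horizontal paths of the ladder. The role of the grouping of vertical paths in the two-dimensional proof is dropped entirely, since a ladder has only two vertical paths which are already fixed; only the horizontal paths need to be partitioned and selected.

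Formally, set $\alpha(\zeta)\coloneqq 2^{\zeta}$ so that $f_{\ref{lem_homogeneity_2d}}(t,\zeta)=t^{\alpha(\zeta)}$, and proceed by induction on $\zeta$. The base case $\zeta=0$ is trivial as every brick has the empty palette. For the inductive step, given a $t^{\alpha(\zeta)}$-ladder $L$, partition its horizontal paths into $t^{\alpha(\zeta-1)}$ consecutive bundles $\mathcal{P}_1,\ldots,\mathcal{P}_{t^{\alpha(\zeta-1)}}$ of $t^{\alpha(\zeta-1)}$ paths each (where the bottom path of $\mathcal{P}_i$ coincides with the top path of $\mathcal{P}_{i+1}$), and for each $i$ let $L_i$ be the $t^{\alpha(\zeta-1)}$-subladder of $L$ determined by $\mathcal{P}_i$. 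Additionally, let $L^{\star}$ be the $t^{\alpha(\zeta-1)}$-subladder of $L$ obtained by keeping only the topmost and bottommost horizontal path of each bundle $\mathcal{P}_i$.

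The key observation is a simple case distinction on the color $\zeta$: either some bundle $L_i$ has the property that no brick of $L_i$ contains $\zeta$ in its palette, or else in every bundle there is at least one brick containing $\zeta$, in which case the bricks of $L^{\star}$ (each of which is the union of a vertical strip of bricks from a single bundle) can be arranged to contain $\zeta$ in their palette — more carefully, one argues as in the proof of \autoref{lem_homogeneity_2d} that either every brick of $L^{\star}$ has $\zeta$ in its palette, or some $L_i$ has no brick containing $\zeta$. Either way we obtain a $t^{\alpha(\zeta-1)}$-subladder of $L$ in which the membership of $\zeta$ in the palette is constant across all bricks; the inductive hypothesis applied with parameter $\zeta-1$ then yields a $t$-subladder where all bricks agree on the remaining $\zeta-1$ colors, which together with the fixed status of $\zeta$ gives a homogeneous $\zeta$-palette.

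The running-time estimate is straightforward: at each of the $\zeta$ recursion levels one scans the palettes of the at most $t^{\alpha(\zeta)}$ bricks in order to make the case distinction and identify the appropriate subladder, and since the work at the top level dominates, the overall running time is $\mathcal{O}(t^{2^{\zeta}})$ as claimed. There is no serious obstacle in this argument; the only mild subtlety is the careful bookkeeping of which horizontal paths of $L$ become the horizontal paths of $L^{\star}$ versus of the $L_i$'s, but this is handled exactly as in the two-dimensional version of \autoref{lem_homogeneity_2d}, so the proof reduces to invoking that construction verbatim on the horizontal dimension only.
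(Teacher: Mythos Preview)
Your proposal is correct and follows essentially the same approach as the paper, which simply states that the corollary is obtained ``by applying the same inductive argument only in the dimension of the horizontal paths.'' Your write-up spells out precisely this one-dimensional specialization of the proof of \autoref{lem_homogeneity_2d}, and the minor bookkeeping issues you flag (the definition of $L^\star$ versus the $L_i$'s) are indeed handled exactly as in the two-dimensional version.
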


\subsection{Taming a linkage crossing large nests}
\llabel{tamming_more_sec}

In this subsection we prove that for every linkage in a graph $G$ that ``invades'' a number of large enough nests in a $\Sigma$-decomposition of $G$ we can appropriately define a ``minimal'' linkage equivalent to the first with a number of additional information that will be useful to bound the complexity of the extensions of models we have to consider.
We begin with a few necessary definitions related to the unique linkage function $\ell \colon \nn{1}{1}$ which we already discuss in the introduction.

\medskip
Let $\Lcal$ be a linkage in a graph $G.$
The endpoints of the paths of $\Lcal$ are called \defi{terminals}.
The \defi{pattern} of $\Lcal$ is the set of pairs of endpoints of the paths of $\Lcal.$
$\Lcal$ is \defi{equivalent} to a linkage $\Lcal'$ in $G$ if they have the same pattern.

An \defi{LB-pair} of $G$ is a pair $(\Lcal, B)$ where $B$ is a subgraph of $G$ with maximum degree two and $\Lcal$ is a linkage in $G.$
We define $\mathsf{cae}(\Lcal, B) \coloneqq |E(\Lcal) \setminus E(B)|$ to be the number of edges of the paths of $\Lcal$ that are not edges of $B.$

The following proposition, restated in the terminology of this paper, is proved in \cite{GolovachST22Combing}.
It essentially states that if a linkage in a graph $G$ crosses through a subgraph of $G$ of maximum degree two in a complicated way, i.e., creating large treewidth, then there is a way to simplify the linkage.

\begin{proposition}[Lemma 1,  \cite{GolovachST22Combing}]\llabel{prop_cae} Let $G$ be a graph and 
$(\Lcal, B)$ be an LB-pair of $G.$
Then, if $\tw(\Lcal \cup B) > \ell(|\Lcal|),$ $G$ contains a linkage $\Rcal$ such that
\begin{enumerate}
\item $\mathsf{cae}(\Rcal, B) < \mathsf{cae}(\Lcal, B),$
\item $\Rcal$ is equivalent to $\Lcal,$ and
\item $\Rcal \subseteq \Lcal \cup B.$
\end{enumerate}
\end{proposition}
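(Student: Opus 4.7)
The plan is to derive the result from the Robertson--Seymour Unique Linkage Theorem (ULT), which is the very theorem that defines the function $\ell$ appearing in the hypothesis: whenever $(G,\mathcal{P})$ is a linkage instance with $\tw(G)>\ell(|\mathcal{P}|)$, the pattern of $\mathcal{P}$ admits a second, distinct equivalent linkage inside $G$. I would apply this directly to the instance $(\Lcal\cup B,\Lcal)$, whose treewidth exceeds $\ell(|\Lcal|)$ by assumption, and then upgrade the rerouting produced by the ULT to one that strictly reduces $\mathsf{cae}$.

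First I would let $\mathfrak{R}$ denote the set of all linkages $\Rcal\subseteq \Lcal\cup B$ equivalent to $\Lcal$, and pick $\Rcal^{\star}\in\mathfrak{R}$ minimizing $\mathsf{cae}(\cdot,B)$. If $\mathsf{cae}(\Rcal^{\star},B)<\mathsf{cae}(\Lcal,B)$ we are done by setting $\Rcal:=\Rcal^{\star}$. The remaining case is that $\Lcal$ itself is $\mathsf{cae}$-minimal in $\mathfrak{R}$, and the strategy becomes to contradict $\tw(\Lcal\cup B)>\ell(|\Lcal|)$. In this case, applying the ULT to $(\Lcal\cup B,\Lcal)$ yields some $\Lcal'\in\mathfrak{R}$ with $\Lcal'\neq \Lcal$. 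Since both linkages share the same pattern, every vertex of $\Lcal\cup B$ has even degree in the nonempty edge set $D:=E(\Lcal)\triangle E(\Lcal')$, so $D$ decomposes into edge-disjoint cycles $C_{1},\dots,C_{m}$ of $\Lcal\cup B$. For each $i$, the single-cycle swap $\Lcal^{(i)}$ obtained from $\Lcal$ by replacing $E(\Lcal)\cap E(C_{i})$ with $E(C_{i})\setminus E(\Lcal)$ is again a linkage in $\mathfrak{R}$; the extremality of $\Lcal$ forces $\mathsf{cae}(\Lcal^{(i)},B)\geq\mathsf{cae}(\Lcal,B)$, which translates into $|E(C_{i})\cap E(\Lcal')\setminus E(B)|\geq |E(C_{i})\cap E(\Lcal)\setminus E(B)|$ for every $i$.

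The final and hardest step is to leverage these local cycle-wise inequalities, together with an appropriate secondary lexicographic minimality criterion on $\Lcal$, in order to conclude that $\Lcal$ is in fact the unique linkage in $\Lcal\cup B$ with its pattern. Intuitively, the secondary criterion forces each $\mathsf{cae}$-neutral single-cycle swap to be ruled out as well, and iterating this argument across all possible symmetric differences pins $\Lcal$ down uniquely. Once uniqueness is established, the ULT itself gives $\tw(\Lcal\cup B)\leq \ell(|\Lcal|)$, contradicting the hypothesis. I expect this last step---the translation from cycle-wise rigidity to uniqueness and from uniqueness to the contradiction---to be the main obstacle, since it must be carried out with care in the case where $\mathsf{cae}(\Lcal',B)>\mathsf{cae}(\Lcal,B)$, where the rigidity of individual cycles is not automatic and additional arguments exploiting the max-degree-two structure of $B$ and the vertex-disjointness of the paths in $\Lcal$ will be required.
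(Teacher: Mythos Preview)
The paper does not prove this proposition; it is quoted from \cite{GolovachST22Combing} as Lemma~1 there, so there is no in-paper argument to compare your proposal against.

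Your overall plan --- reduce to the Unique Linkage Theorem by arguing that a $\mathsf{cae}$-minimal linkage must be vital --- is the natural one, but the execution goes astray in two places. First, you worry about the case $\mathsf{cae}(\Lcal',B)>\mathsf{cae}(\Lcal,B)$, yet this case cannot occur: since $E(\Lcal')\subseteq E(\Lcal)\cup E(B)$ one has $E(\Lcal')\setminus E(B)\subseteq E(\Lcal)\setminus E(B)$, so $\mathsf{cae}(\Lcal',B)\leq\mathsf{cae}(\Lcal,B)$ for \emph{every} equivalent $\Lcal'\subseteq\Lcal\cup B$. Thus the only problematic situation is the tie $\mathsf{cae}(\Lcal',B)=\mathsf{cae}(\Lcal,B)$, which forces $D=E(\Lcal)\triangle E(\Lcal')\subseteq E(B)$; there your cycle-wise inequalities all read $0\geq 0$ and carry no information. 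Second, your claim that each single-cycle swap $\Lcal^{(i)}$ is again a linkage is false for an arbitrary edge-disjoint cycle decomposition of $D$: at a vertex $v$ with $\deg_{D}(v)=4$ one cycle may carry both edges of $E(\Lcal')\setminus E(\Lcal)$ incident with $v$ while $\deg_{\Lcal}(v)=2$, giving $\deg_{\Lcal^{(i)}}(v)=4$.

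What actually remains --- and for a different reason than you anticipate --- is precisely the tie case. There $D\subseteq E(B)$ and, since $B$ has maximum degree two, $D$ is a nonempty union of cycle-components of $B$ on which $\Lcal$ and $\Lcal'$ have complementary edge-sets. To invoke the ULT one must (i) first suppress the degree-$\leq 2$ vertices of $V(B)\setminus V(\Lcal)$ so that $\Lcal$ spans, and (ii) supply a concrete secondary tie-break that eliminates all such complementary swaps and verify it interacts correctly with (i). Neither step is carried out in your proposal.
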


Given a $\delta$-grounded cycle $C$ of some $\Sigma$-decomposition $\delta$ of some graph, we denote by $G^{-}_{C}$ the graph obtained by the union of the graphs $G_{c},$ where $c \subseteq \Delta_{C}$ and $G_{c} \cap C = \emptyset.$

\medskip
We proceed with the main result of this subsection which gives us our desired minimal linkage with all of the necessary additional properties that we discussed.
The running time follows from standard irrelevant vertex arguments used in order to compute the minimal linkage implied by the lemma.

\begin{lemma}\llabel{lem_taming_linkage_nests} There exist functions $f_{\ref{lem_taming_linkage_nests}}^{1} : \Nbbb \to \Nbbb,$ $f_{\ref{lem_taming_linkage_nests}}^{2} : \Nbbb \to \Nbbb,$ $f_{\ref{lem_taming_linkage_nests}}^{3} : \Nbbb \to \Nbbb,$ and an algorithm that,
\begin{itemize}
\item given $r \in \Nbbb_{\geq 1},$ $s \geq f_{\ref{lem_taming_linkage_nests}}^{1}(r),$
\item a graph $G,$ a linkage $\Lcal$ of order $r$ in $G,$ a surface $\Sigma,$
\item a sequence $(\Ccal^{1}, \ldots, \Ccal^{q})$ where, for every $i \in [q],$ $\Ccal^{i}$ is a set of at least $s$ many cycles of $G,$
\item a sequence $(\Delta_{1}, \ldots, \Delta_{q})$ of pairwise disjoint disks in $\Sigma,$
\item a $\Sigma$-decomposition $\delta$ of $G$ such that
\begin{itemize}
\item for every $i \in [q],$ $(\delta[\Delta_{i}], c_{i}, \Ccal_{i})$ is a $q$-nested cylindrical rendition in $\Delta_{i}$ around a cell $c_{i} \in C(\delta),$
\item $C(\delta) \setminus \{ c_{i} \mid i \in [p] \}$ contains only flap cells, and
\item for every $i \in [p],$ the terminals of $\Lcal$ are either vertices of $G^{-}_{C^{i}_{1}},$ for some $i \in [q],$ or vertices of $G \setminus \bigcup_{i \in [q]} G \cap \Delta_{i},$ and
\end{itemize}
\item a sequence $(\Delta'_{1}, \ldots, \Delta'_{q})$ of arc-wise connected subsets of $(\Delta_{1}, \ldots, \Delta_{q})$ respectively such that, for every $i \in [q],$ the graph $\cupall \Lcal \cap \Delta_{i}$ is drawn in $\Delta'_{i},$
\end{itemize}
outputs a linkage $\Rcal$ such that
\begin{enumerate}
\item\llabel{tame_1} $\Rcal$ is equivalent to $\Lcal,$
\item\llabel{tame_2} $\Rcal$ is a subgraph of the graph obtained from $\cupall \Lcal \cup (\bigcup_{i \in [q]} \cupall \Ccal^{i})$ after removing, for every $i \in [q],$ every vertex and edge of $\cupall \Ccal^{i}$ not drawn in $\Delta'_{i},$ 
\item\llabel{tame_3} for every $i \in [q],$ the number of disjoint subpaths of paths in $\Rcal$ with at least one endpoint in $B_{\Delta_{i}}$ that contain an edge of $G^{-}_{C^{i}_{1}}$ is at most $f_{\ref{lem_taming_linkage_nests}}^{2}(r),$ and
\item\llabel{tame_4} $E(\Rcal) \cap E(G^{-}_{C^{i}_{1}}) \neq \emptyset$ for at most $f_{\ref{lem_taming_linkage_nests}}^{3}(r)$ distinct indices $i \in [p]$
\end{enumerate}
in time
$$2^{\mathsf{poly}(r)} \cdot |G|^{2}.$$
Moreover
$$\textrm{$f_{\ref{lem_taming_linkage_nests}}^{1}(r) = \Ocal(\ell(r)),$ $f_{\ref{lem_taming_linkage_nests}}^{2}(r) = \ell(r),$ and $f_{\ref{lem_taming_linkage_nests}}^{3}(r) = 2^{\Ocal(\ell(r))}.$}$$
\end{lemma}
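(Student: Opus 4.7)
\smallskip
\noindent\textbf{Proof proposal.} The plan is to obtain $\mathcal{R}$ as a ``minimal'' representative of the equivalence class of $\mathcal{L}$ with respect to a carefully chosen witness subgraph $B$, and then to extract the three combinatorial bounds (items \ref{tame_2}--\ref{tame_4}) from the treewidth bound furnished by \autoref{prop_cae}. Set $f_{\ref{lem_taming_linkage_nests}}^1(r)\coloneqq c\cdot \ell(r)$ for a sufficiently large constant $c$, and define
\[
B\coloneqq \bigcup_{i\in[q]}\bigcup_{C\in\mathcal{C}^i}\Big(C\text{ restricted to the part drawn in }\Delta'_i\Big).
\]
Since each $C\in\mathcal{C}^i$ is a $\delta$-grounded cycle, $B$ has maximum degree at most two, so $(\mathcal{L},B)$ is an LB-pair of $G$. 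Among all linkages $\mathcal{R}\subseteq \mathcal{L}\cup B$ equivalent to $\mathcal{L}$, pick one minimizing $\mathsf{cae}(\mathcal{R},B)$; this choice yields items \ref{tame_1} and \ref{tame_2} immediately, and by \autoref{prop_cae} guarantees $\tw(\mathcal{R}\cup B)\leq \ell(r)$.

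For item \ref{tame_3}, fix $i\in[q]$ and consider the subgraph $H_i\coloneqq (\mathcal{R}\cup B)\cap \Delta_i$, whose treewidth is at most $\ell(r)$. The cycles of $\mathcal{C}^i\cap B$ form $s\geq \Omega(\ell(r))$ concentric, pairwise disjoint cycles inside $\Delta_i$, each enclosing $c_i$. Any subpath $P$ of some $R\in\mathcal{R}$ whose endpoints lie on $\bd(\Delta_i)\cap N(\delta)=B_{\Delta_i}$ and which contains an edge of $G^{-}_{C^i_1}$ must cross every cycle $C^i_j$, $j\in[s]$, at least twice. Therefore these subpaths together with the concentric cycles form a subdivision of a large ``cylindrical grid''-like subgraph whose treewidth grows linearly with the number of such subpaths. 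A routine balanced-separator argument then forces this number to be at most $\ell(r)=f_{\ref{lem_taming_linkage_nests}}^2(r)$; choosing the constant $c$ in $f_{\ref{lem_taming_linkage_nests}}^1$ appropriately ensures $s$ is large enough for this argument to go through uniformly over all $i\in[q]$.

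For item \ref{tame_4}, the idea is to bundle the indices $i\in[q]$ for which $\mathcal{R}$ enters the inner disk bounded by $C^i_1$. For each such index $i$, item \ref{tame_3} gives at most $\ell(r)$ entering subpaths, each of which must ``exit'' through $B_{\Delta_i}$ along some specified pair of points, defining a pattern of at most $\ell(r)$ paired endpoints on $\bd(\Delta_i)$. The total number of terminals is $2r$, so the indices contributing non-empty interiors partition the set of ``crossing events'' of the paths of $\mathcal{R}$. By a counting argument on how these events can be consistently matched through disjoint disks while respecting the bounded-treewidth tangle structure (the number of distinct non-crossing matchings of at most $\ell(r)$ chords on a disk is $2^{\mathcal{O}(\ell(r))}$), we conclude that at most $2^{\mathcal{O}(\ell(r))}=f_{\ref{lem_taming_linkage_nests}}^3(r)$ indices $i$ can have $E(\mathcal{R})\cap E(G^{-}_{C^i_1})\neq\emptyset$.

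Algorithmically, minimizing $\mathsf{cae}(\mathcal{R},B)$ is performed via the irrelevant-vertex method: compute a tree decomposition of width $\mathcal{O}(\ell(r))$ of $\mathcal{R}\cup B$ (using the $2^{\mathsf{poly}(r)}\cdot|G|$-time treewidth routine), test whether some vertex is irrelevant to the linkage, reroute via \autoref{prop_cae}, and iterate. Each iteration strictly decreases $\mathsf{cae}(\mathcal{R},B)\leq|E(\mathcal{L})|\leq|G|$, giving an overall running time of $2^{\mathsf{poly}(r)}\cdot|G|^2$. The most delicate step will be item \ref{tame_4}: turning the bounded-treewidth conclusion into a clean $2^{\mathcal{O}(\ell(r))}$ bound on the number of visited disks requires a careful encoding of the global routing of $\mathcal{R}$ as a non-crossing matching inside each $\Delta_i$, and this is where the topological hypothesis that the $\Delta_i$ are pairwise disjoint (and that nothing outside the $\Delta_i$ is constrained) is crucially used.
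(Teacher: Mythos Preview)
Your setup and the treatment of items~\ref{tame_1}--\ref{tame_3} are essentially the paper's: choose $B$ to be the restricted cycle-union, minimise $\mathsf{cae}(\mathcal{R},B)$ over equivalent linkages, invoke \autoref{prop_cae} to get $\tw(\mathcal{R}\cup B)\le\ell(r)$, and then read off the bound on ``crossing'' subpaths from the treewidth bound. (The paper makes this last step precise by quoting the river/mountain/valley lemmas of \cite{GolovachST22Combing}; your cylindrical-grid sketch is the same idea.)

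The genuine gap is in item~\ref{tame_4}. You correctly observe that each visited disk $\Delta_i$ carries a bounded ``pattern'' (which paths of $\mathcal{R}$ enter, in which order, and how their entry/exit points interleave on $\bd(\Delta_i)$), and that there are only $2^{\mathcal{O}(\ell(r))}$ such patterns. But a counting argument alone does not bound the number of visited disks: a single path of $\mathcal{R}$ could, a priori, dip into arbitrarily many disks, each with the same pattern, without any contradiction arising from treewidth or from the disjointness of the $\Delta_i$. What is missing is the \emph{swap} step: if two disks $\Delta_i,\Delta_j$ (neither containing terminals of $\mathcal{R}$) carry the same signature, one uses the nest cycles in each disk to reroute---swapping the innermost portions of two suitably chosen paths inside $\Delta_i$ and undoing the swap inside $\Delta_j$---so as to produce an equivalent linkage with strictly smaller $\mathsf{cae}$, contradicting minimality. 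The paper's proof formalises exactly this: it defines a signature $\mathsf{sig}_{\Lambda_{\mathcal{R}}}(\mathcal{Q}_i,\Lambda_i)$ recording, for each boundary point, which path it lies on and whether it is the ``in'' or ``out'' end, finds two disks with equal signature by pigeonhole, and then exhibits the explicit rerouting through unused nest cycles (which exist because $s\ge f^1(r)$). Your proposal needs this rerouting argument; the pattern count is only the pigeonhole half of it.
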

\begin{proof} We define $f_{\ref{lem_taming_linkage_nests}}^{1}(r) = 3\ell(r) + 2,$ $f_{\ref{lem_taming_linkage_nests}}^{2}(r) = \ell(r),$ and $f_{\ref{lem_taming_linkage_nests}}^{3}(x) = (2r)^{2\ell(r)} \cdot \ell(r) + 2r.$

Let $\mathbf{\Delta} \coloneqq (\Delta'_{1}, \ldots, \Delta'_{q}).$
For brevity, we call $\Lcal$ a $\mathbf{\Delta}$-avoiding linkage.
For every $i \in [q],$ let $\mathbf{C}_{i}$ be the graph obtained from $\cupall \Ccal_{i}$ after removing every vertex and edge not drawn in $\Delta'_{i}.$
First, notice that the pair $(\Lcal, \mathbf{C}_{i})$ is an LB-pair of $G,$ for every $i \in [q].$
Then, consider $\Rcal$ to be a $\mathbf{\Delta}$-avoiding linkage of order $r$ in $G,$ that is equivalent to $\Lcal,$ is a subgraph of $\cupall \Lcal \cup \bigcup_{i \in [q]} \mathbf{C}_{i},$ and is such that for every $i \in [q],$ the quantity $\mathsf{cae}(\Lcal, \mathbf{C}_{i})$ is minimized.
Clearly, by definition $\Rcal$ satisfies properties \ref{tame_1} and \ref{tame_2}.
We argue why $\Rcal$ also satisfies properties \ref{tame_3} and \ref{tame_4}.

\paragraph{Classifying subpaths.}
Consider an index $i \in [q].$
Let $\Rcal_{i}$ be the set containing all $B_{\Delta_{i}}$-$B_{\Delta_{i}}$-paths that are subpaths of the paths in $\Rcal$ and whose vertices and edges belong to $G \cap \Delta_{i}.$
Also, let $\Tcal_{i}$ be the set containing all paths that are subpaths of the paths in $\Rcal$ and whose vertices and edges belong to $G \cap \Delta_{i},$ with one endpoint in $B_{\Delta_{i}}$ and the other being a terminal of $\Rcal$ that is vertex of $G^{-}_{C^{i}_{1}}.$
Notice that the set $\Rcal_{i} \cup \Tcal_{i}$ is a linkage in $G.$
Now, partition $\Rcal_{i}$ into the set $\Rcal'_{i}$ that contains all paths in $\Rcal_{i}$ that contain an edge of $G^{-}_{C^{i}_{1}}$ and the set $\Pcal_{i}^{\mathsf{m}}$ that contains those that do not.
Also, let $\Pcal_{i}^{\mathsf{v}}$ be the linkage containing all $V(C^{i}_{1})$-$V(C^{i}_{1})$-paths that are subpaths of the paths in $\Rcal'_{i} \cup \Tcal_{i}$ that do not intersect $G^{-}_{C^{i}_{1}}.$
Finally, let $\Pcal_{i}^{\mathsf{r}}$ be the linkage containing all minimal $B_{\Delta_{i}}$-$V(C^{i}_{1})$-paths that are subpaths of the paths in $\Rcal'_{i} \cup \Tcal_{i}.$

\paragraph{Bounds from combing.}
Consider an index $i \in [q].$
Let $t_{i} \coloneqq 3 \tw(\Rcal \cup \mathbf{C}_{i}) / 2.$
Let $(C^{i}_{1}, \ldots, C^{i}_{s_{i}}),$ where $s_{i} \geq s,$ be the sequence of cycles of the nest $\Ccal^{i}.$
First by \autoref{prop_cae}, we have that for every $i \in [q],$ $\tw(\Rcal \cup \mathbf{C}_{i}) \leq \ell(r).$
Moreover, in the terminology used in \cite{GolovachST22Combing}, the linkage $\Pcal_{i}^{\mathsf{r}}$ contains all $\mathbf{C}_{i}$-\textsl{rivers}, $\Pcal_{i}^{\mathsf{v}}$ contains $\mathbf{C}_{i}$-\textsl{valleys}, while the linkage $\Pcal_{i}^{\mathsf{m}}$ contains $\mathbf{C}_{i}$-\textsl{mountains} of $\Rcal.$
Moreover, by an appropriate application of Lemmas 4 and 7, from \cite{GolovachST22Combing}, we have that $|\Pcal_{i}^{\mathsf{v}}| \leq \tw(\Rcal \cup \mathbf{C}_{i}) \leq \ell(r),$ that $\Pcal_{i}^{\mathsf{v}}$ may intersect only the first $t_{i} \leq 3 \ell(r) / 2$ cycles of $\Ccal^{i},$ while $\Pcal_{i}^{\mathsf{m}}$ may intersect only the last $t_{i} \leq 3 \ell(r) / 2$ cycles of $\Ccal^{i}.$
Moreover, notice that for each path in $\Rcal'_{i} \cup \Tcal_{i},$ there is at least one path in $\Pcal_{i}^{\mathsf{r}}$ that is a subpath of it.
This implies that $|\Rcal'_{i} \cup \Tcal_{i}| \leq \ell(r).$
Then $\Rcal$ also satisfies property \ref{tame_3}.

\paragraph{Bound on the number of vortices.}
If $q \leq f_{\ref{lem_taming_linkage_nests}}^{3}(r)$ then we can immediately conclude.
Hence assume that $q > f_{\ref{lem_taming_linkage_nests}}^{3}(r).$
First, we can easily observe that the number of distinct indices $i \in [q]$ for which $\Tcal_{i} \neq \emptyset$ is at most $2r$ since $\Rcal$ hast at most $2r$ many terminals.
Let $Q \subseteq [q]$ be the set of indices for which $\Tcal_{i} \neq \emptyset,$ $i \in Q.$
Now, consider an index $i \in [q] \setminus Q.$
Let $\Lambda_{i}$ be the sub-sequence of $\Lambda_{\Delta_{i}, y},$ for a choice of $y \in V_{\Delta_{i}},$ such that the set of maximal subpaths, say $\Pcal_{i},$ of the cycles in $\Ccal_{i}$ that connect vertices of the paths in $\Rcal'_{i}$ and that also contain at least one internal vertex that is a vertex of a path in $\Rcal'_{i},$ are subgraphs of $\mathbf{C}_{i},$ and such that $B_{\Lambda_{i}}$ contains exactly the endpoints of the paths in $\Rcal'_{i}.$
Notice that, by the assumptions for $\Delta'_{i},$ such a $\Lambda_{i}$ must exist.
Hence $\lin{\Qcal_{i} \coloneqq \Rcal'_{i} \cup \Pcal_{i}, \Lambda_{i}}$ is a linear society where $b_{i} \coloneqq |\Lambda_{i}| \leq 2\ell(r),$ $r_{i} \coloneqq |\Rcal'_{i}| = \ell(r),$ and $\Pcal_{i} \leq |\Ccal_{i}|.$

Now, fix a linear ordering $\Lambda_{\Rcal} \coloneqq \lin{(s_{1}, t_{1}), \ldots, (s_{r}, t_{r})}$ of the terminals of the paths in $\Rcal.$
We proceed to define for every $i \in [q] \setminus Q,$ the signature of $\lin{\Qcal_{i}, \Lambda_{i}}$ with respect to $\Lambda_{\Rcal},$ which we denote by $\mathsf{sig}_{\Lambda_{\Rcal}}(\Qcal_{i}, \Lambda_{i}),$ as follows.
We define $$\mathsf{sig}_{\Lambda_{\Rcal}}(\Qcal_{i}, \Lambda_{i}) \coloneqq \lin{x_{1}, \ldots, x_{b_{i}}},$$ where for every $j \in [b_{i}],$ $x_{j}$ equals $s_{n}$ (resp. $t_{n}$) if the $j$-th vertex in $\Lambda_{i},$ say $u,$ is a vertex of the $n$-th path of $\Rcal$ with respect to $\Lambda_{\Rcal},$ say $R,$ and an endpoint of the subpath $R' \in \Rcal'_{i}$ of $R,$ and moreover if the vertex $u$ appears before (resp. after) on $R$ ordered from $s_{n}$ to $t_{n}$ than the other endpoint of $R'.$
Observe that the above definition for $\mathsf{sig}_{\Lambda_{\Lcal}}(\Qcal_{i}, \Lambda_{i}),$ implies that the size of the set $\{ \mathsf{sig}_{\Lambda_{\Lcal}}(\Qcal_{i}, \Lambda_{i}) \mid i \in [q] \}$ is upper bounded by $(2r)^{2\ell(r)}.$

To conclude, assume towards a contradiction, that there are more than $f_{\ref{lem_taming_linkage_nests}}^{3}(r)$ distinct indices $i \in [q]$ such that $E(\Rcal) \cap E(G^{-}_{C^{i}_{1}}) \neq \emptyset.$
This means that there exist $i \neq j \in [q] \setminus Q$ for which $\mathsf{sig}_{\Lambda_{\Rcal}}(\Qcal_{i}, \Lambda_{i}) = \mathsf{sig}_{\Lambda_{\Rcal}}(\Qcal_{j}, \Lambda_{j}).$
W.l.o.g. observe that by a simple parity argument, there exists an index $n \in [b_{i}]$ such that $x_{n} = s_{w}$ and $x_{n+1} = t_{w'},$ for some $w, w' \in [r_{i}].$
Let $R_{w}$ (resp. $R_{w'}$) be the $s_{w}$-$t_{w}$-(resp. $s_{w'}$-$t_{w'}$)-path of $\Rcal.$
Let $I \in \Rcal'_{i}$ (resp. $O \in \Rcal'_{i}$) be the subpath of $R_{w}$ (resp. $R_{w'}$) such that $x_{n}$ (resp. $x_{n+1}$) is an endpoint of $I$ (resp. $O$).
Let $y$ (resp. $z$) be the other endpoint of $I$ (resp. $O$).
Then, let $I_{\mathsf{s}}$ (resp. $I_{\mathsf{t}}$) be the minimal subpath of $I$ whose endpoints are $x_{n}$ (resp. $y$) and a vertex of $V(C^{i}_{1}).$
Symmetrically, let $O_{\mathsf{t}}$ (resp. $O_{\mathsf{s}}$) be the minimal subpath of $O$ whose endpoints are $x_{n+1}$ (resp. $z$) and a vertex of $V_{C^{i}_{1}}.$
Also let $(P_{1}, \ldots, P_{s_{i}})$ be the sequence of the paths in $\Pcal_{i}$ ordered from innermost to outermost, and let $(P'_{1}, \ldots, P'_{s_{i}})$ be the $x_{i}$-$y_{i}$-paths that are minimal subpaths of $P_{i},$ where $u_{i} \in V(I_{\mathsf{s}})$ and $v_{i} \in V(O_{\mathsf{t}}).$
Observe that, by the previous arguments and since the endpoints $x_{n}$ and $x_{n+1}$ are consecutive in $\Lambda_{i},$ $P'_{t_{i} + 1}$ as well as $P'_{s_{i} - t_{i} - 1}$ is not intersected by any other path in $\Rcal_{i}.$
Hence we can obtain a new linkage $\Rcal'$ from $\Rcal$ by swapping the pairs $(s_{w}, t_{w})$ and $(s_{w'}, t_{w})$ of the pattern of $\Rcal$ as follows.
We define $\Rcal' \coloneqq \Rcal \setminus \{ R_{w}, R_{w'} \} \cup \{ S, T \},$ where
$$S \coloneqq s_{w}R_{w}x_{n} \cup x_{n}I_{\mathsf{s}}u_{s_{i} - t_{i} - 1} \cup P'_{s_{i} - t_{i} - 1} \cup v_{s_{i} - t_{i} - 1}I_{\mathsf{t}} \cup yR_{w'}t_{w'}$$
and
$$T \coloneqq s_{w'}R_{w'}z \cup zOv_{t_{i} + 1} \cup P'_{t_{i} + 1} \cup v_{t_{i} + 1}Ox_{n+1} \cup x_{n+1}R_{w'}t_{w'}.$$

Since, $\mathsf{sig}_{\Lambda_{\Lcal}}(\Qcal_{i}, \Lambda_{i}) = \mathsf{sig}_{\Lambda_{\Lcal}}(\Qcal_{j}, \Lambda_{j})$ we can perform the exact same change within $\mathsf{sig}_{\Lambda_{\Lcal}}(\Qcal_{j}, \Lambda_{j})$ and obtain a linkage $\Rcal''$ that is equivalent to $\Rcal$ and we can moreover observe that $\mathsf{cae}(\Rcal''_{i}, \mathbf{C}_{i}) < \mathsf{cae}(\Rcal_{i}, \mathbf{C}_{i})$ and $\mathsf{cae}(\Rcal''_{j}, \mathbf{C}_{j}) < \mathsf{cae}(\Rcal_{j}, \mathbf{C}_{j}),$ which is a contradiction.
Hence $\Rcal$ also satisfies property \ref{tame_4}.
\end{proof}

\section{Plowing a meadow}\llabel{Thidplodowinga}

In this section, we obtain a preliminary version of our $\Sigma$-decomposition in the form of a meadow and our main goal is to prove \autoref{lem_meadow_plowing}.
That is, we show that any meadow with large enough infrastructure can be refined to a meadow that is $d$-plowed.
The first subsection deals with the creation of our preliminary meadow.
The second subsection deals with the proof of \autoref{lem_meadow_plowing}.

\subsection{Creating a meadow}

In this subsection we use the results of \cite{thilikos2023excluding} to create our preliminary meadow.
This preliminary step makes an important case distinction for us as it will have two outcomes, either we find a large clique minor, or we obtain the preliminary meadow which will then be further refined in the following subsections.
Due to this, the only other case where we have to deal with the clique minor is in \autoref{sec_local_structure} where we combine all of the intermediate steps into one central theorem, namely \autoref{thm_local_structure}.

We need a definition to describe what it means for a clique-minor to be ``local'' with respect to some wall.
For this notice that, if a graph $G$ contains $K_t$ as a minor, there exist $t$ pairwise vertex disjoint connected subgraphs $X_1,\dots,X_t$ such that for every pair $i,j\in[t]$ there exists an edge with one end in $X_i$ and the other in $X_j.$
Indeed, every $K_t$-minor in $G$ is witnessed by such a family.
Now let $W$ be a $k$-wall where $k\geq t+1.$
We say that a clique minor is \defi{grasped} by $W$ if there exist sets $\langle X_i | i\in t \rangle$ as above such that for every $i\in[t]$ there exists a pair $(h^i_1,h^i_2)\in[k]\times[k]$ such that the intersection of the $h_1$st row and the $h_2$nd column of $W$ is contained in $X_i.$
Moreover, for $i\neq j$ we have that $h^i_1\neq h^j_1$ and $h^i_2\neq h^j_2.$

\begin{observation}[Meadow creation]\llabel{obs_create_meadow}
    There exist functions $f^1_{\ref{obs_create_meadow}},f^2_{\ref{obs_create_meadow}}\colon\mathbb{N}^3\to\mathbb{N}$ and an algorithm that, given positive integers $g,$ $k,$ $t$ and $h,$ a graph $G,$ and an $f^1_{\ref{obs_create_meadow}}(h,k,t)$-wall $W'$ as input, finds, in time $\mathcal{O}_{h,t}(|G|^3)$ either
    \begin{enumerate}
        \item a $K_{3k^2+k\cdot h}$-minor grasped by $W',$ 
        \item there exist non-negative integers $h'$ and $c'\leq 2$ such that $2h'+c'=g$ and a subdivision $W'$ of a walloid which is a cylindrical concatenation of $h'$ $t$-handle segments and $c'$ $t$-crosscap segments in $G-A$ such that the base cylinder of $W'$ is a subgraph of $W,$ or
        \item a set $A\subseteq V(G)$ of size at most $f_{\ref{obs_create_meadow}}^2(h,k,t)$ and an $(x,y)$-fertile $(t,\Sigma)$-meadow $(G-A,\delta,W)$ where $\Sigma$ is of Euler genus less than $g.$
    \end{enumerate}
Moreover, we have $x\in \mathcal{O}(k^4\cdot h),$ $y\in 2^{\mathsf{poly}_{g}(k)}\cdot t,$ $f^1_{\ref{obs_create_meadow}}(h,k,t)\in \mathsf{poly}_g(k,t),$ and $f^2_{\ref{obs_create_meadow}}(h,k,t)\in 2^{\mathsf{poly}_{g}(k)}\cdot t.$
\end{observation}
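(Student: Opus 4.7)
The plan is to reduce the statement to a direct application of the refined local structure theorem for graphs excluding an arbitrary minor from \cite{thilikos2023excluding} (referenced earlier as \autoref{surfex_lemma}), followed by a purely combinatorial postprocessing step that reshapes the output $\Sigma$-decomposition into a meadow. First, I would feed $G$, the wall $W'$, the genus bound $g$, and clique-size target $3k^2+kh$ into the algorithm of \cite{thilikos2023excluding}, with the depth parameter set to $t$ and breadth parameter chosen so as to accommodate the number of vortices we allow. Setting $f^1_{\ref{obs_create_meadow}}(h,k,t)$ equal to the wall-size required by that algorithm for these parameters, we obtain one of three outcomes: (a) a $K_{3k^2+kh}$-minor grasped by $W'$, which is exactly outcome~(1); (b) a subdivision of a cylindrical concatenation of $h'$ handle segments and $c'$ crosscap segments with $2h'+c'=g$, whose base cylinder is a subwall of $W'$, which gives outcome~(2); or (c) an apex set $A \subseteq V(G)$ of size at most $f^2_{\ref{obs_create_meadow}}(h,k,t)$, a surface $\Sigma$ of Euler genus strictly less than $g$, a $\Sigma$-decomposition $\delta_0$ of $G-A$, and a subdivision $W$ of a $t$-walloid with handle and crosscap segments only, all respected by $\delta_0$, such that $\delta_0$ has at most $\mathcal{O}(k^4h)$ vortices of depth at most $2^{\mathsf{poly}_g(k)}\cdot t$.

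In case (c), I would reshape $\delta_0$ into the required meadow as follows. Let $C^{\mathsf{ex}}$ and $C^{\mathsf{si}}$ be the exceptional and simple cycles of $W$, and let $\Delta^{\mathsf{ex}}$ be the $\mathsf{trace}(C^{\mathsf{si}})$-avoiding disk bounded by $\mathsf{trace}(C^{\mathsf{ex}})$. A crucial property of \cite{thilikos2023excluding} is that every vortex of $\delta_0$ lies inside $\Delta^{\mathsf{ex}}$. I would now define a new decomposition $\delta$ by discarding every disk of $\delta_0$ whose interior meets $\Delta^{\mathsf{ex}}$ and replacing them by a single disk $\Delta_{c^{\star}}$ equal to the closure of $\Delta^{\mathsf{ex}}$, whose boundary nodes are precisely the nodes of $\delta_0$ sitting on $\mathsf{trace}(C^{\mathsf{ex}})$. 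Outside $\Delta^{\mathsf{ex}}$ we keep $\delta_0$ unchanged, and any connected component of $G-A$ drawn entirely inside a discarded cell without boundary is absorbed into the interior of $c^{\star}$. This yields a $\Sigma$-decomposition $\delta$ of $G-A$ with a single vortex $c^{\star}$ situated inside the $\mathsf{trace}(C^{\mathsf{si}})$-avoiding disk of $\mathsf{trace}(C^{\mathsf{ex}})$, so that $(G-A,\delta,W)$ meets all four defining conditions of a $(t,\Sigma)$-meadow; the condition on disks with at most three boundary nodes follows from the corresponding property of $\delta_0$, which is preserved outside $\Delta^{\mathsf{ex}}$.

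Finally, I would check fertility. Any $c^{\star}$-society of $\delta$ has, by construction, a natural rendition obtained by reinserting the original disks of $\delta_0$ that were swallowed into $c^{\star}$; the vortices of this rendition are exactly those original vortices of $\delta_0$, so their number is at most $x \in \mathcal{O}(k^4h)$ and each has depth at most $y \in 2^{\mathsf{poly}_g(k)}\cdot t$. Hence $(G-A,\delta,W)$ is $(x,y)$-fertile, establishing outcome~(3). The stated bounds on $f^1_{\ref{obs_create_meadow}}$, $f^2_{\ref{obs_create_meadow}}$, $x$ and $y$ and the $\mathcal{O}_{h,t}(|G|^3)$ running time are inherited directly from \cite{thilikos2023excluding}, and the postprocessing is linear in the size of the decomposition. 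The main technical delicacy hidden in the plan is to make the reshaping compatible with the trace-of-cycle and rotation-tie-breaker conventions fixed in \autoref{subsec_almost_embeddings}; in particular, in the non-orientable case, one must pick the tie-breaker for $\Delta_{c^{\star}}$ coherently with those inherited from $\delta_0$ on the handle and crosscap enclosures so that $W$ remains $\delta$-grounded.
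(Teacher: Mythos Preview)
Your proposal is correct and follows essentially the same approach as the paper: apply the refined local structure lemma from \cite{thilikos2023excluding} (stated here as \autoref{surfex_lemma}), read off outcomes (1) and (2) directly, and in the remaining case collapse everything inside the $\mathsf{trace}(C^{\mathsf{si}})$-avoiding disk of $\mathsf{trace}(C^{\mathsf{ex}})$ into a single vortex cell to obtain the meadow. Your discussion of fertility and of the tie-breaker coherence is more detailed than the paper's own sketch, which simply notes that the disk ``induces a society that has a rendition in $\delta$ with breadth at most $x$ and depth at most $y$'' and declares it a cell.
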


For this let us first state the technical key result of \cite{thilikos2023excluding} which acts as our point of departure.
Please notice that the following statement is an abbreviated version of Lemma 5.5 from \cite{thilikos2023excluding} where we only state those parts of the conclusion that are relevant to our case.
The main difference to the way the Lemma is stated in \cite{thilikos2023excluding} is that it produces a walloid representing a surface instead of a clique-minor (this walloid is actually found within the clique-minor in the proof of \cite{thilikos2023excluding}).
For our purposes, however, the clique-minor is much more convenient.
Moreover, notice that the first and the second outcome are merged together in the original statement.
For our purposes, it is slightly more convenient to state them separately.

\begin{lemma}[an abbreviated version of Lemma 5.5 from \cite{thilikos2023excluding}]\llabel{surfex_lemma}
    Let $g,$ $k,$ and $q$ be non-negative integers, where $r\geq  16\cdot 12(162^{2g}+4)k$ and $q\geq 6 k,$ and set 
    \begin{eqnarray*}
  t & \coloneqq &  12(162^{2g}+4)k,\\
    M& \coloneqq &    2(162^{2g}+4)q,\\
   \mu & \coloneqq & \left\lfloor \frac{1}{10^5t ^{26}}t^{10^7t^{26}}\cdot M\right\rfloor, \text{~and~}\\
   R & \coloneqq & 49152\cdot t^{24}(3+4M)+\mu. 
    \end{eqnarray*}
  
  \noindent  Let $W$ be an $R$-wall in a graph $G.$
    Then either, 
    \begin{enumerate}
    \item $G$ contains a $K_t$-minor grasped by $W,$
    \item there exist 
    \begin{itemize}
    \item a surface $\Sigma$ of Euler-genus $g'<g,$
    \item a set $A\subseteq V(G)$ of size at most $\mu,$ and 
    \item a $\Sigma $-decomposition $\delta =(\Gamma,\mathcal{D})$ of $G-A$ with the following properties:
    \begin{enumerate}
      \item $\delta $ is of breadth at most $2t^2$ and depth at most $\mu,$
      \item there exist non-negative integers $h'$ and $c'\leq 2$ such that $2h'+c'=g'$ and a subdivision $W_{\Sigma}$ of a walloid which is a cylindrical concatenation of $h$ $q$-handle segments and $c$ $q$-crosscap segments in $G-A$ such that $W_{\Sigma}$ is grounded in $\delta,$
      \item the base cylinder of $W_{\Sigma}$ is a subgraph of $W,$ and
      \item let $X$ be the set of all vertices of $G$ drawn in the interior of cells of $\delta $ and let $G_{\delta}$ be the torso of the set $V(G)\setminus(X\cup A)$ in $G-A,$ then $G_{\delta}$ is a quasi-$4$-connected component of $G-A.$
    \end{enumerate}
    \end{itemize}
    \end{enumerate}
Moreover, there exists an algorithm that returns one of the outcomes above in time $\mathcal{O}_{g,h,k,q}(|G|^3)$
\end{lemma}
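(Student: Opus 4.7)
The plan is straightforward: derive this statement directly from Lemma~5.5 of \cite{thilikos2023excluding}, which is essentially the same result presented here in an abbreviated form. As already noted in the text preceding the statement, the main divergence from the original is that in \cite{thilikos2023excluding} the two outcomes (clique-minor and surface walloid) are merged into a single outcome that always produces the walloid representing the surface. My first step would therefore be to trace the proof of Lemma~5.5 in \cite{thilikos2023excluding} to identify the intermediate stage at which a $K_t$-minor grasped by $W$ is constructed. Halting at that stage yields our first outcome; letting the proof run to completion yields our second outcome, including the $\Sigma$-decomposition $\delta$, the bound on breadth and depth, the apex set $A$, the subdivided handle/crosscap walloid $W_\Sigma$, and the quasi-$4$-connectedness of the torso $G_\delta$.

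Concretely, I would apply the algorithmic version of Lemma~5.5 from \cite{thilikos2023excluding} verbatim to the input $(G,W)$, using exactly the parameter choices $t, M, \mu, R$ stated above (which are copied without modification from the original). No rescaling is needed. The only check left to perform is that the $K_t$-minor produced as an intermediate step is indeed \emph{grasped} by $W$ in the sense we defined, i.e., that the $t$ branch sets can be selected to individually contain a distinct row-column intersection of $W$. This property is essentially immediate from the construction in \cite{thilikos2023excluding}, since the branch sets there are realized by routing through pairwise disjoint row and column paths of the input wall, so one can select one row-column crossing from each branch set with the required distinctness property.

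The running time $\mathcal{O}_{g,h,k,q}(|G|^3)$ is inherited verbatim from the algorithmic version of Lemma~5.5 in \cite{thilikos2023excluding}. Since every ingredient we need is already explicit in that reference, the only actual work here is the expository one of splitting the original outcome cleanly into the two cases required for the refinement pipeline (meadows, plowing, sowing, and so on) developed in the subsequent sections of the present paper. The main obstacle I anticipate is purely notational: carefully aligning the somewhat heavy parameter set $(g,k,q,t,M,\mu,R)$ of \cite{thilikos2023excluding} with the parameters $(h,k,t)$ used elsewhere in this paper, so that the consequences of this lemma (in particular \autoref{obs_create_meadow}) follow with bounds matching the $\mathsf{poly}_g$ and $2^{\mathsf{poly}_g(k)}\cdot t$ expressions stated there.
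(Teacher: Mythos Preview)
Your proposal is correct and matches the paper's approach exactly. The paper does not give a proof of this lemma at all; it is stated as a direct restatement of Lemma~5.5 from \cite{thilikos2023excluding}, with the surrounding text explaining precisely the two cosmetic differences you identified (splitting the merged outcome into the clique-minor case and the structure case, and preferring the $K_t$-minor formulation over the surface-walloid formulation since the latter is found inside the former in the original proof).
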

  
\begin{proof}[Proof sketch of \autoref{obs_create_meadow}]
To see how \autoref{obs_create_meadow} follows from \autoref{surfex_lemma} for an appropriate choice of numbers first notice that the first outcome of \autoref{surfex_lemma} gives the first outcome of \autoref{obs_create_meadow}.
So we only have to deal with the second outcome of \autoref{surfex_lemma}.
In this outcome, we are already given the walloid $W_{\Sigma}.$
In case the Euler-genus of $\Sigma$ is at least $g$ we find the desired walloid for the second outcome of \autoref{obs_create_meadow} within $W_{\Sigma}.$
Hence, we may assume that the Euler-genus of $\Sigma$ is less than $g.$
Let now $C$ denote the exception cycle of $W_{\Sigma}$ and $\Delta$ be the disk bounded by its trace that avoids the simple face of $W_{\Sigma}.$
Notice that $\Delta$ induces a society that has a rendition in $\delta$ with breadth at most $x$ and depth at most $y.$
By declaring this disk a new cell of $\delta$ we obtain the desired meadow.
\end{proof}

\subsection{The meadow plowing lemma}

We are now ready to proceed with the proof of the main result of this section.
This lemma marks our first major refinement of our initial $\Sigma$-decomposition.
We prove that given a meadow with large enough infrastructure, we can maintain a still large infrastructure that has the property that we can divide into large territories (enclosures) that all have the property of being $d$-homogeneous, i.e., that small parts of the model (with respect to $d$) that invade such a territory can be represented anywhere in the area defined by the territory.

\begin{lemma}[meadow plowing]\llabel{lem_meadow_plowing}
There exists a function $f_{\ref{lem_meadow_plowing}}: \Nbbb^3 \to \Nbbb$ and an algorithm that, given $t \in \Nbbb_{\geq 3},$ $d \in \Nbbb_{\geq 1},$ $\gamma \in \Nbbb,$ and an $(x, y)$-fertile $(f_{\ref{lem_meadow_plowing}}(t, d, \gamma), \Sigma)$-meadow $(G, \delta, W),$ where $\Sigma = \Sigma^{\mathsf{h} + \mathsf{c}}$ and $\gamma = \mathsf{h} + \mathsf{c},$ outputs an $(x, y)$-fertile $(t, \Sigma)$-meadow that is $d$-plowed in time
$$(t \gamma)^{2^{2^{\Ocal(d^{2})}}} |G|^{2}.$$
Moreover
$$f_{\ref{lem_meadow_plowing}}(t, d, \gamma) \in (t \gamma)^{2^{2^{\Ocal(d^{2})}}}.$$
\end{lemma}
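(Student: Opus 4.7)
The plan is to turn the input meadow $(G,\delta,W)$ into a $d$-plowed meadow by shrinking its walloid $W$ to a subwalloid $W'$ of order $t$ in which each of the $2\mathsf{h}+\mathsf{c}+1$ enclosures is $d$-homogeneous, without disturbing the single vortex of $\delta$. The central observation is that, by \autoref{obs_size_dfolio}, $|d\text{-}\mathsf{folio}(\delta)|$ is bounded by $\zeta \coloneqq f_{\ref{obs_size_dfolio}}(d) \in 2^{\mathcal{O}(d^{2})}$. Since each brick $B$ of $W$ carries a $d$-folio that is a subset of $d\text{-}\mathsf{folio}(\delta)$, we may view each brick as labeled by a $\zeta$-palette. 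The number of possible palettes is $2^{\zeta} \in 2^{2^{\mathcal{O}(d^{2})}}$, which is precisely what drives the iterated exponentials in $f_{\ref{lem_meadow_plowing}}$.

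First I would set $f_{\ref{lem_meadow_plowing}}(t,d,\gamma) \coloneqq (t(\gamma+1))^{2^{\zeta}}$, so that the base cylinder of the input meadow contains a wall of dimensions $f_{\ref{lem_homogeneity_2d}}(\Omega(t(\gamma+1)),\zeta)$ by $f_{\ref{lem_homogeneity_2d}}(\Omega(t(\gamma+1)),\zeta)$, and each of the $2\mathsf{h}+\mathsf{c}$ handle or crosscap enclosures contains a ladder of length at least $f_{\ref{lem_homogeneity_2d}}(\Omega(t),\zeta)$. Applying \autoref{lem_homogeneity_2d} to the base cylinder produces a $(t(\gamma+1),t(\gamma+1))$-subwall of constant $\zeta$-palette, and applying \autoref{cor_homogeneity_1d} independently to the ladder inside each handle or crosscap enclosure produces a $t$-subladder of constant $\zeta$-palette. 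The factor $(\gamma+1)$ in the size of the homogenized subwall of the base cylinder is what lets us carve out $\gamma+1$ disjoint $(t,t)$-sub-subwalls: one serves as the base of the new big enclosure and one is attached to each of the $\gamma$ homogenized handle or crosscap segments.

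Next I would reassemble these pieces into a new host walloid $W'$, using surviving vertical paths of the base cylinder of $W$ as the cylindrical concatenation edges between the newly chosen segments. By construction every enclosure of $W'$ is a fence whose bricks share a common $\zeta$-palette, hence a common $d$-folio, so $(G,\delta,W')$ is $d$-plowed. The flap cells of $\delta$ drawn between $W$ and $W'$ are absorbed into whichever of the new simple face or the new exceptional face they topologically belong to, and since only flap cells are reshuffled, no new vortex is introduced; the rendition around the original vortex is unchanged, and therefore $(x,y)$-fertility is preserved. As the simple and exceptional faces of $W'$ are nested inside those of $W$ in the correct relative position, the triple $(G,\delta,W')$ satisfies all the axioms of a $(t,\Sigma)$-meadow.

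The main obstacle is the topological bookkeeping in this last step: one must verify that the chosen subwall of the base cylinder and the chosen subladders inside each handle or crosscap enclosure fit together into a legitimate host walloid with the right facial structure, that the $\delta$-influence of each new enclosure is contained in the $\delta$-influence of the corresponding old enclosure so that $\zeta$-palette homogeneity translates to $d$-folio homogeneity, and that the single vortex of $\delta$ still lies inside the trace of the new exceptional cycle on the side avoiding the new simple cycle. These checks are essentially local and routine once the topology is set up correctly. The runtime bound $(t\gamma)^{2^{2^{\mathcal{O}(d^{2})}}}\cdot |G|^{2}$ then follows by summing the cost $(t\gamma)^{2^{\zeta}}$ of the single application of \autoref{lem_homogeneity_2d} with $\gamma$ applications of \autoref{cor_homogeneity_1d}, plus an $\mathcal{O}(|G|^{2})$ preprocessing step to compute the $d$-folios of the bricks of $W$.
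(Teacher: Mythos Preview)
Your proposal is correct and follows essentially the same approach as the paper: colour bricks by their $d$-folio (bounded by $\zeta=f_{\ref{obs_size_dfolio}}(d)$), apply \autoref{lem_homogeneity_2d} to the base cylinder and \autoref{cor_homogeneity_1d} to the ladder-shaped handle/crosscap enclosures, reassemble, and merge the leftover flap cells into the single vortex so that $(x,y)$-fertility survives. The one place where the paper is more explicit than your sketch is the routing step: the homogenized subladders for the handle/crosscap enclosures do not live inside the original enclosures alone but are built from the rainbow paths together with explicitly routed vertical paths through the base cylinder down to the homogenized subwall, and only then is \autoref{cor_homogeneity_1d} applied; you correctly flag this reassembly as the main obstacle, and it is indeed routine once set up.
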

\begin{proof} We define $f_{\ref{lem_meadow_plowing}}(t, d, \gamma) \coloneqq \max \{ f_{\ref{lem_homogeneity_2d}}(4 \cdot f_{\ref{lem_homogeneity_2d}}(2t, f_{\ref{obs_size_dfolio}}(d)) \cdot \gamma + 2t, f_{\ref{obs_size_dfolio}}(d)) / \gamma, 4 \cdot f_{\ref{lem_homogeneity_2d}}(2t, f_{\ref{obs_size_dfolio}}(d)) \cdot \gamma + f_{\ref{lem_homogeneity_2d}}(2t, f_{\ref{obs_size_dfolio}}(d)) \}.$

Let $\Sigma$ be a surface of Euler-genus $2 \mathsf{h} + \mathsf{c},$ $\gamma \coloneqq \mathsf{h} + \mathsf{c},$ $t \in \Nbbb_{\geq 3},$ $d \in \Nbbb_{\geq 1},$ and $(G, \delta, W)$ be an $(x, y)$-fertile $(f_{\ref{lem_meadow_plowing}}(t, d', \gamma), \Sigma)$-meadow, where $d' \coloneqq |d\text{-}\textsf{folio}(\delta)|,$ which by \autoref{obs_size_dfolio} is at most  $f_{\ref{obs_size_dfolio}}(d).$
Our proof works as follows.
We first show how to define a $(t, \Sigma)$-walloid $W'$ that is the cylindrical concatenation of $\mathsf{h}$ many $t$-handle segments and $\mathsf{c}$ many $t$-crosscap segments, as a subgraph of $W,$ with the property that all its $2\mathsf{h} + \mathsf{c} + 1$ enclosures are $d$-homogeneous in $\delta.$
Then, we show how to define a $\Sigma$-decomposition $\delta'$ from $\delta$ such that $(G, \delta', W')$ is the required $(x, y)$-fertile $(t, \Sigma)$-meadow $(G, \delta', W')$ that is $d$-plowed.

\paragraph{Finding a homogeneous subwall.} 
Consider the graph $\widetilde{W}$ which is the base cylinder of $W$ obtained by the cylindrical concatenation of the base walls of its $\gamma$ many $f_{\ref{lem_meadow_plowing}}(t, d, \gamma)$-handle and crosscap segments.
The base wall of each of these is by definition a $(4 \cdot f_{\ref{lem_meadow_plowing}}(t, d, \gamma), f_{\ref{lem_meadow_plowing}}(t, d, \gamma))$-wall.
As a result $\widetilde{W}$ is a $(4 \cdot f_{\ref{lem_meadow_plowing}}(t, d, \gamma) \cdot \gamma, f_{\ref{lem_meadow_plowing}}(t, d, \gamma))$-cylindrical wall.
Then, by definition of $f_{\ref{lem_meadow_plowing}},$ there is a $(f_{\ref{lem_homogeneity_2d}}(4 \cdot f_{\ref{lem_homogeneity_2d}}(2t, d) \cdot \gamma + 2t, d), f_{\ref{lem_homogeneity_2d}}(2t, d))$-wall $W_{2}$ that is a subwall of $\widetilde{W}$ and that is disjoint from the first $4 \cdot f_{\ref{lem_homogeneity_2d}}(2t, d) \cdot \gamma$ cycles of $\widetilde{W}.$
Now, we define the $d'$-palette of every brick $B$ of $W_{2}$ as follows.
Observe that $B$ is a fence of $W$ and as such its $d$-folio is well-defined.
Consider any bijection $g : d\text{-}\mathsf{folio}(\delta) \to [d']$ and let $d'\text{-}\mathsf{palette}(B) = g(d\text{-}\mathsf{folio}(B)).$
Next, we apply \autoref{lem_homogeneity_2d} for $d'$ and $W_{2}$ and we obtain a $(4 \cdot f_{\ref{lem_homogeneity_2d}}(2t, d) \cdot \gamma + 2t, 2t)$-wall $W_{3}$ that is a subwall of $W_{2}$ and such that every brick of $W_{3}$ has the same $d'$-palette, i.e., every brick of $W_{3}$ has the same $d$-folio.

\paragraph{Defining the base cylinder.} Let $b \coloneqq 4 \cdot f_{\ref{lem_homogeneity_2d}}(2t, d) \cdot \gamma + 2t$ and $a \coloneqq 2t.$
Let $P^{2}_{1}, \ldots, P^{2}_{a}$ be the horizontal paths of $W_{2}$ and $Q^{2}_{1}, \ldots, Q^{2}_{b}$ be the vertical paths of $W_{2}.$
We define $t$ pairwise disjoint cycles $\Ccal \coloneqq \{ C_{1}, \ldots, C_{t} \}$ as follows.
For every $i \in [t],$ define $C_{i} \coloneqq P^{2}_{i} \cup P^{2}_{a - i + 1} \cup Q^{2}_{i} \cup Q^{2}_{b - i + 1}.$
Next we define $b' \coloneqq b - 2t$ pairwise disjoint paths $\Pcal \coloneqq \{ P_{1}, \ldots, P_{b'} \}$ that are subpaths of vertical paths in $W_{2}$ as follows.
For every $i \in [t],$ define $P_{i}$ to be the subpath of $Q^{2}_{t + i}$ with one endpoint in $P^{2}_{1}$ and the other in $P^{2}_{t}.$
Observe that the graph $W_{3} \coloneqq \cupall \{ \Pcal, \Qcal \}$ is a $(4 \cdot f_{\ref{lem_homogeneity_2d}}(2t, d) \cdot \gamma, t)$-cylindrical wall that is a subgraph of $W_{2}.$
Moreover, notice that every brick of $W_{3}$ has the same $d$-folio as it is a union of bricks of $W_{2},$ and all such bricks have the same $d$-folio.
\smallskip

\paragraph{Attaching the handle and crosscap linkages.} Let $t' \coloneqq f_{\ref{lem_homogeneity_2d}}(2t, d).$
Let $\Wcal = (W_{1}, \ldots, W_{\gamma})$ be a sequence of $\mathsf{h}$ many handle and $\mathsf{c}$-crosscap segments such that $W$ is the cylindrical concatenation of $\Wcal.$
Additionally, let $\Wcal' = (W'_{1}, \ldots, W'_{\gamma})$ be a sequence of $(4 t', t)$-walls such that $W_{3}$ is the cylindrical concatenation of $\Wcal'.$

For each $i \in [\mathsf{h}],$ let $\Lcal_{i} \coloneqq \{ L_{i, 1}, \ldots, L_{i, t'} \}$ be the first $t'$ paths in left to right order of the leftmost rainbow of $W_{i}.$ Let $u_{i, j}$ and $v_{i,j}$ be the top boundary vertices of the base wall of $W_{i}$ in left to right order that are endpoints of $L_{i, j},$ $j \in [t'].$
Also, let $\Rcal_{i} \coloneqq \{ R_{i, 1}, \ldots R_{i, t'} \}$ be the first $t'$ paths in left to right order of the rightmost rainbow of $W_{i}.$ Let $w_{i, j}$ and $z_{i,j}$ be the top boundary vertices of the base wall of $W_{i}$ in left to right order that are endpoints of $R_{i, j},$ $j \in [t'].$
Additionally, let $u'_{i,1}, \ldots, u'_{i,t'}, w'_{i, 1}, \ldots, w'_{i, t'}, v'_{i, 1}, \ldots v'_{i, t'}, z'_{i, 1}, \ldots, z'_{i, t'}$ be the top boundary vertices of $W'_{i}$ in left to right order.

For each $i \in (\mathsf{h}, \mathsf{c}],$ let $\Tcal_{i} \coloneqq \{ T_{i, 1}, \ldots, T_{i, 2t'} \}$ be the first $2t'$ paths in left to right order of the rainbow of $W_{i}.$ Let $u_{i, j}$ and $v_{i,j}$ be the top boundary vertices of the base wall of $W_{i}$ in left to right order that are endpoints of $T_{i, j},$ $j \in [t'].$
Additionally, let $u'_{i,1}, \ldots, u'_{i,2t'}, v'_{i, 1}, \ldots v'_{i, 2t'}$ be the top boundary vertices of $W'_{i}$ in left to right order.

Our next target is to identify for every $i \in [\mathsf{h}],$ a set $\Ycal_{i} = \Ucal_{i} \cup \Vcal_{i} \cup \Wcal_{i} \cup \Zcal_{i}$ of $4t'$ many paths such that $\Ucal_{i} = \{ U_{i, 1}, \ldots, U_{i, t'} \}$ where, for $j \in [t'],$ $U_{i,j}$ is a $u_{i,j}$-$u'_{i,j}$-path, $\Vcal_{i} = \{ V_{i, 1}, \ldots, V_{i, t'} \}$ where, for $j \in [t'],$ $V_{i,j}$ is a $v_{i,j}$-$v'_{i,j}$-path, $\Wcal_{i} = \{ W_{i, 1}, \ldots, W_{i, t'} \}$ where, for $j \in [t'],$ $W_{i,j}$ is a $w_{i,j}$-$w'_{i,j}$-path, and $\Zcal_{i} = \{ Z_{i, 1}, \ldots, Z_{i, t'} \}$ where, for $j \in [t'],$ $Z_{i,j}$ is a $z_{i,j}$-$z'_{i,j}$-path.
Moreover, to identify for every $i \in (\mathsf{h}, \mathsf{c}],$ a set $\Ycal_{i} = \Ucal_{i} \cup \Vcal_{i}$ of $4t'$ many paths such that $\Ucal_{i} = \{ U_{i, 1}, U_{i, 2t'} \}$ where, for $j \in [2t'],$ $U_{i, j}$ is a $u_{i, j}$-$u'_{i, j}$-path and $\Vcal_{i} = \{ V_{i, 1}, \ldots, V_{i, 2t'} \}$ where, for $j \in [2t'],$ $V_{i, j}$ is a $v_{i, j}$-$v'_{i, j}$ path.
We also want the paths $\{ Y_{1}, \ldots, Y_{4t'\gamma} \}$ in $ \bigcup_{i \in [\gamma]} \{ \Ycal_{i} \}$ to be pairwise disjoint and also internally disjoint from $W_{3}.$

We show how to define the $x$-$y$-path $Y_{i},$ $i \in [4t'\gamma],$ where $x$ and $y$ are the appropriate two vertices corresponding to the path $Y_{i}$ as previously defined.
Consider $C_{j}$ to be the $j$-th cycle of the base cylinder $\widetilde{W}$ of $W$ from top to bottom and $Q_{j}$ to be the $j$-th vertical path of $\widetilde{W}$ from left to right, where $Q_{1}$ is the leftmost vertical path of $W_{1}.$
Also let $\ell$ be minimum such that $C_{\ell+1}$ is not disjoint from $W_{3}.$
Note that $\ell \geq 4t'\gamma.$
Assume w.l.o.g. that $x$ is the topmost vertex of $Q_{i_{x}}$ in $C_{1} \cap Q_{i_{x}}$ while $y$ is the topmost vertex of $Q_{i_{y}}$ in $C_{\ell} \cap Q_{i_{y}}.$
We distinguish three cases.

If $i_{x} = i_{y}$ then $Y_{i}$ is the subpath of $Q_{i_{x}}$ whose endpoints are $x$ and $y.$

If $i_{x} < i_{y}$ then $Y_{i}$ is the union of the subpath of $Q_{i_{x}}$ whose endpoints are $x$ and the topmost vertex of $Q_{i_{x}}$ in $Q_{i_{x}} \cap C_{\ell - i_{x} + 1},$ the subpath of $C_{\ell - i_{x} + 1}$ whose endpoints are the topmost vertex of $Q_{i_{x}}$ in $Q_{i_{x}} \cap C_{\ell - i_{x} + 1}$ and the topmost vertex of $Q_{i_{y}}$ in $Q_{i_{y}} \cap C_{\ell - i_{x} + 1}$ and whose internal vertices that belong to vertical paths are vertices of $Q_{j},$ $j \in (i_{x}, i_{y}),$ and the subpath of $Q_{i_{y}}$ whose endpoints are $y$ and the topmost vertex of $Q_{i_{y}}$ in $Q_{i_{y}} \cap C_{\ell - i_{x} + 1}.$

If $i_{x} > i_{y}$ then $Y_{i}$ is the union of the subpath of $Q_{i_{x}}$ whose endpoints are $x$ and the topmost vertex of $Q_{i_{x}}$ in $Q_{i_{x}} \cap C_{i_{x}},$ the subpath of $C_{i_{x}}$ whose endpoints are the topmost vertex of $Q_{i_{x}}$ in $Q_{i_{x}} \cap C_{i_{x}}$ and the topmost vertex of $Q_{i_{y}}$ in $Q_{i_{y}} \cap C_{i_{x}}$ and whose internal vertices that belong to vertical paths are vertices of $Q_{j},$ $j \in (i_{y}, i_{x}),$ and the subpath of $Q_{i_{y}}$ whose endpoints are $y$ and the topmost vertex of $Q_{i_{y}}$ in $Q_{i_{y}} \cap C_{i_{x}}.$

It is easy to see that the defined set of paths $\Ycal$ are pairwise disjoint and also internally disjoint from $W_{3}.$
Moreover by their definition, the graph $W_{4} \coloneqq \cupall \{ W_{2} \} \cup \Ycal$ is a $(t', \Sigma)$-walloid that is a subgraph of $W$ and that is the cylindrical concatenation of $\Wcal'' \coloneqq \{ W''_{i} \mid i \in [\gamma] \},$ where $W''_{i},$ for $i \in [\mathsf{h}],$ is the $t'$-handle segment obtained by the union of $W'_{i},$ the two rainbows of $W_{i},$ and the linkage $\Ycal_{i},$ and where $W''_{i},$ for $i \in (\mathsf{h}, \mathsf{c}],$ is a $t'$-crosscap segment obtained by the union of $W'_{i},$ the rainbow of $W_{i},$ and the linkage $\Ycal_{i}.$
Moreover, the big enclosure of the base cylinder $\widetilde{W}_{4}$ of $W_{4}$ is $d$-homogeneous, since $\widetilde{W}_{4}$ is the previously defined cylindrical wall $W_{3}$ whose bricks have the same $d$-folio.

\paragraph{Homogenizing them.} For $i \in [\mathsf{h}],$ we define the graph $L_{i}$ as the union of the leftmost enclosure of $W''_{i},$ the leftmost rainbow of $W_{i},$ the linkage $\Ucal_{i},$ and the linkage $\Wcal_{i}.$
Symmetrically, we define the graph $R_{i}$ as the union of the rightmost enclosure of $W''_{i},$ the rightmost rainbow of $W_{i},$ the linkage $\Vcal_{i},$ and the linkage $\Zcal_{i}.$
Similarly, for $i \in (\mathsf{h}, \mathsf{c}],$ we define the graph $S_{i}$ as the union of the enclosure of $W''_{i},$ the rainbow of $W_{i},$ the linkage $\Ucal_{i},$ and the linkage $\Vcal_{i}.$
Observe that the graphs $L_{i}$ and $R_{i}$ are $t'$-ladders where each of their $t'$ many horizontal paths contains a common vertex with two vertical paths of $W_{3}.$
Also the graphs $S_{i}$ are $2t'$-ladders where each of their $2t'$ many horizontal paths contains a common vertex with two vertical paths of $W_{3}.$
Observe that every brick of each of the graphs $L_{i}, R_{i}, S_{i}$ is a fence of $W$ and as such its $d$-folio is defined.
In the same way as in step 1 we can define the $d'$-palette of each of these bricks and then apply \autoref{cor_homogeneity_1d}, to obtain $t$-ladders $L'_{i}, R'_{i}$ and $2t$-ladders $S'_{i}$ whose bricks individually all have the same $d$-folio.
Finally, we can obtain our desired $(t, \Sigma)$-walloid $W_{5}$ by considering a sub-cylindrical wall of $W_{3}$ that contains all cycles of $W_{3}$ but only the vertical paths that contain an endpoint of some horizontal path of some $L'_{i}, R'_{i}, S'_{i},$ union all $L'_{i}, R'_{i}, S'_{i}.$
Notice that all $2\mathsf{h} + \mathsf{c} + 1$ enclosures of $W_{5}$ are now $d$-homogeneous.

\medskip
To conclude it remains to define the desired $\Sigma$-decomposition $\delta' = (\Gamma', \Dcal').$
We obtain $\delta'$ from $\delta$ by considering the same drawing of $G$ in $\Sigma,$ $\Gamma' = \Gamma,$ and defining the cells $\Dcal'$ as follows.
Let $\Delta$ be the $\trace(C^{\mathsf{si}})$-avoiding disk $C^{\mathsf{ex}},$ where $C^{\mathsf{ex}}$ is the exceptional cycle of $W_{5}$ while $C^{\mathsf{si}}$ is the simple cycle of $W_{5}.$
Define $\Dcal'$ to contain all cells of $\delta$ not drawn in $\Delta,$ as well as a new cell that is the closure of all cells of $\delta$ drawn in $\Delta.$
it is easy to observe that $\delta'$ is indeed as required.
\end{proof}

\section{Sowing a garden}\llabel{sec_gardening}

In this section, we obtain the second major refinement of our $\Sigma$-decomposition in the form of a $d$-blooming garden and our main goal is to prove \autoref{lem_garden_creation}.
We show how given a $d$-plowed meadow with enough infrastructure, we can refine it into a garden that is $d$-blooming.

\subsection{Planting the seeds}

In this subsection, we first introduce the notion of a fence at some distance from some other fence in a walloid, which will be useful for several arguments concerning the routing of paths in our walloid in the following proofs.
We then prove a couple of auxiliary results that will serve as intermediate steps towards the proof of \autoref{lem_garden_creation}.

\medskip
Let $(G, \delta, W)$ be a $(t, \Sigma)$-meadow of a graph $G$ in a surface $\Sigma$ for some $t \in \Nbbb_{\geq 3}.$
Let $F$ be a fence of $W.$
We say that $F_{0} \coloneqq F$ is the fence \defi{at distance} $0$ \defi{from} $F$ in $W.$
Given $r \in \Nbbb_{\geq 1},$ we define recursively the fence at distance $r$ from $F$ in $W$ as follows.
If $F_{r-1}$ is disjoint from $C^{\mathsf{si}}$ and $C^{\mathsf{ex}},$ then $F_{r}$ is the unique fence of $W$ whose bricks are exactly the bricks of $F_{r-1}$ union all bricks of $W$ that share a vertex with some brick of $F_{r-1}.$
We call a fence $F$ of $W$ $r$-\defi{internal} if the fence at distance $r$ from $F$ is well-defined.

Given an $1$-internal fence $F$ of $W,$ we define its \defi{internal pegs} (resp. \defi{external pegs}) as its vertices that are incident to edges of $W$ that are not edges of $F$ and that are edges of bricks of $F$ (resp. of bricks that are not bricks of $F$).

\medskip
It follows from the properties of $\delta$ that for every flap cell $c$ of $\delta$ there are $|\widetilde{c}|$ vertex-disjoint paths to any $|\widetilde{c}|$ many distinct vertices not in $\sigma_{\delta}(c).$
This observation allows us to use the following proposition which is a restatement of \cite[Lemma 22]{BasteST19HittingOLD} in the terminology used in this text.

\begin{proposition}[Lemma 22, \cite{BasteST19HittingOLD}]\llabel{prop_cell_paths_to_pegs}
Let $t \in \Nbbb_{\geq 3},$ $\Sigma$ be a surface, and $(G, \delta, W)$ be a $(t, \Sigma)$-meadow of $G.$
For every $2$-internal brick $B$ of $W$ and every flap cell $c$ in the $\delta$-influence of $B,$ $W$ contains $|\widetilde{c}|$ vertex-disjoint paths from $\widetilde{c}$ to the external pegs of the fence $F$ at distance $1$ from $B$ in $W.$
Moreover these paths are drawn inside of the $(\trace(C^{\mathsf{si}}) \cap N(\delta))$-avoiding disk of the trace of the fence at distance $2$ from $B$ in $W.$
\end{proposition}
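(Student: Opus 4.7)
\textbf{Proof Plan for \autoref{prop_cell_paths_to_pegs}.}

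The plan is to combine a Menger-type argument on the flap-attachment structure with an explicit routing inside the wall-like subgraph of $W$ lying between the fences $F_1$ and $F_2$ at distances $1$ and $2$ from $B$. First I would observe that since $c$ is a flap we have $|\widetilde{c}|\in\{1,2,3\}$. By the first defining property of a meadow, every $\delta$-aligned disk $\Delta$ with $|\bd(\Delta)\cap N(\delta)|\leq 3$ has $\Delta$ or its complement containing a single cell. Applied to $\Delta_c$, this means $\sigma_\delta(c)$ attaches to the rest of $G$ exactly through the at most three ground vertices of $\widetilde{c}$, and moreover for any choice of $|\widetilde{c}|$ distinct target vertices outside $\sigma_\delta(c)$ to which $\widetilde{c}$ can be connected, Menger's theorem furnishes a linkage of order $|\widetilde{c}|$ from $\widetilde{c}$ to the targets (this is precisely the observation recorded immediately before the proposition).

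The heart of the proof is therefore to produce a suitable target set on the external pegs of $F_1$ and certify that it is reachable from $\widetilde{c}$ via a linkage confined to the $(\trace(C^{\mathsf{si}})\cap N(\delta))$-avoiding disk $\Delta^\star$ of $\trace(F_2)$. To do this I would work with the subwall $W^\star$ of $W$ consisting of $F_1$ together with all bricks of $W$ sharing at least a vertex with $F_1$ (so $W^\star$ is bounded, within the disk $\Delta^\star$, by $F_2$). Structurally $W^\star$ is a ``disk piece'' of a wall with $F_1$ on its inner boundary: the external pegs of $F_1$ are exactly the vertices where the rails of $W^\star$ emerge from $F_1$ toward $F_2$, and each external peg is joined to $F_1$ by a disjoint path inside one of the bricks of $F_2\setminus F_1$. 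Concretely, since $c$'s disk lies in the $\trace(C^{\mathsf{si}})$-avoiding disk of $\trace(F_1)$ or meets $F_1$ only on a bounded number of cells, each point of $\widetilde{c}$ is drawn inside $\Delta_{F_1}$, and $F_1$ itself furnishes a collection of $|\widetilde{c}|+$ many internally disjoint arcs from $\widetilde{c}$ to distinct external pegs of $F_1$ via the horizontal and vertical paths of the local wall.

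Combining these two ingredients I would assert that there is no vertex cut of size less than $|\widetilde{c}|$ separating $\widetilde{c}$ from the set $P$ of external pegs of $F_1$ inside $\Delta^\star$: any such cut would have to simultaneously (i)~cut the attachment of $\sigma_\delta(c)$, which needs $|\widetilde{c}|$ vertices by the meadow property, and (ii)~cut the internally disjoint paths through $F_1$'s wall structure described above. By Menger applied to the auxiliary graph $G\cap \Delta^\star$ (with $\widetilde{c}$ on one side and $P$ on the other) we then extract a linkage of order $|\widetilde{c}|$ from $\widetilde{c}$ to $P$. Finally, by re-routing along the local wall paths if necessary, the linkage can be taken to use only edges of $W$ outside $\sigma_\delta(c)$, yielding paths inside $\Delta^\star$ as required.

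The main obstacle I anticipate is the \emph{routing} inside $W^\star$: one must carefully produce $|\widetilde{c}|$ pairwise disjoint wall paths from the vicinity of each vertex of $\widetilde{c}$ out to pairwise distinct external pegs of $F_1$, even when $\widetilde{c}$ lies near the boundary of $F_1$ or coincides with pegs of $B$. This is handled by using the $2$-internality assumption: since $F_2$ is well-defined, the layer of bricks between $F_1$ and $F_2$ provides enough ``slack'' to complete any partial wall linkage of order at most $3$ to distinct external pegs without escaping $\Delta^\star$. Once this wall-routing lemma is in hand, the rest of the argument is the standard Menger deduction indicated above.
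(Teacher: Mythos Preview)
The paper does not prove this proposition; it imports it as Lemma~22 of \cite{BasteST19HittingOLD} and only records the preceding one-line observation about $|\widetilde{c}|$-connectivity in $G$. So there is no ``paper's own proof'' to compare against here.

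That said, your plan has a genuine gap. The statement requires the paths to lie in $W$, but your argument applies Menger in $G\cap\Delta^\star$, which produces paths in $G$. The sentence ``by re-routing along the local wall paths if necessary, the linkage can be taken to use only edges of $W$'' is doing all of the work and is not justified: an arbitrary $G$-linkage can wander through the interiors of many flap cells inside $\Delta^\star$, and there is no general mechanism that snaps such a linkage back onto the subdivided $3$-regular graph $W$ while preserving disjointness and the prescribed endpoints. In particular, the vertices of $\widetilde{c}$ need not be vertices of $W$ at all (for a cell lying strictly inside the disk bounded by $\trace(B)$), so even stating the rerouting target precisely requires care.

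The proof in the cited source is not a Menger argument but a direct construction: one case-analyses the position of the flap cell relative to the brick $B$ (which edges of $B$ does $c$ meet, where on the hexagon do the at most three nodes of $\widetilde{c}$ sit), and then explicitly writes down at most three disjoint paths that first reach the boundary cycle of $B$ and then follow horizontal and vertical paths of $W$ out to distinct external pegs of $F_1$. Because $|\widetilde{c}|\le 3$ and the local wall structure around a single brick is fixed and small, this is a finite and short case analysis; the $2$-internality hypothesis is exactly what guarantees the outgoing wall paths exist and stay inside the $F_2$-disk. Your last paragraph identifies this routing as ``the main obstacle'', which is correct, but the Menger layer you put on top of it is neither needed nor sufficient.
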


The following lemma will be useful in order to extract the flower segments needed to obtain the desired garden in \autoref{lem_garden_creation}.

\begin{lemma}\llabel{lem_paths_boundary} Let $t \in \Nbbb_{\geq 3},$ $\Sigma$ be a surface, and $(G, \delta, W)$ be a $(t, \Sigma)$-meadow of $G.$
For every $4$-internal brick $B$ of $W$ and every flap cell $c$ in the $\delta$-influence of $B,$ for every $c$-society $(G_{c}, \Lambda_{c, y}),$ $W$ contains $|\widetilde{c}|$ vertex-disjoint paths from $\widetilde{c}$ to the bottom boundary vertices of a $(10 \times 10)$-wall $W'$ that is a subgraph of $W$ and contains all bricks of the fence at distance $4$ from $B,$ such that the endpoints of these paths not in $\widetilde{c}$ respect the linear ordering $\Lambda_{c, y}.$
\end{lemma}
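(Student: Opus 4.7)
The plan is to bootstrap from \autoref{prop_cell_paths_to_pegs} by routing through the annular wall infrastructure that surrounds $B$ between its fences at distances $1$ and $4$. Write $F_r$ for the fence at distance $r$ from $B$ in $W$.

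First, I apply \autoref{prop_cell_paths_to_pegs} to $B$ (which, being $4$-internal, is in particular $2$-internal) and the flap $c$, obtaining $m \coloneqq |\widetilde{c}|$ vertex-disjoint paths $P_1,\dots,P_m$ from $\widetilde{c}$ to the external pegs of $F_1$, all drawn inside the $(\trace(C^{\mathsf{si}}) \cap N(\delta))$-avoiding disk of $\trace(F_2)$. Since $c$ is a flap we have $m \leq 3$, and the endpoints of the $P_i$ on $F_1$ appear along $\trace(F_1)$ in the cyclic order induced by the rotation $\Omega_c$ of the $c$-society.

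Second, inside the $(\trace(C^{\mathsf{si}}) \cap N(\delta))$-avoiding disk of $\trace(F_4)$---well defined because $B$ is $4$-internal---I extract a $(10 \times 10)$-wall $W' \subseteq W$ whose top cycle lies between $\trace(F_2)$ and $\trace(F_3)$, whose bottom path lies between $\trace(F_3)$ and $\trace(F_4)$, and such that $W'$ covers all bricks of $F_4$. Such a subwall exists because $F_4$ is a bounded hexagonal-like neighbourhood of $B$ in the dual of $W$ (at most on the order of $61$ bricks), a region that embeds in the base wall of the surrounding $t$-segment with room to spare for a $10 \times 10$ subwall whose rows and columns align with those of $W$.

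Third, I extend each $P_i$ outward from its endpoint on $F_1$, through the annular region between $F_1$ and the bottom path of $W'$, to a distinct bottom boundary vertex of $W'$. Using the vertical and horizontal paths of $W$ in this annulus as a routing grid, and since $m \leq 3$ while the annulus contains many more than $m$ vertical paths, these extensions can be chosen mutually vertex-disjoint and disjoint from the $P_j$ with $j \neq i$. To enforce the prescribed linear ordering $\Lambda_{c,y}$, I select the target vertices on the bottom of $W'$ so that they appear left-to-right in the shift of $\Omega_c$ determined by the starting vertex $y$; this is possible because the bottom boundary of $W'$ has $10 \geq m$ candidate vertices and I am free to pick any $m$ of them.

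The main technical obstacle is the ordering bookkeeping. The rotation ordering of $\widetilde{c}$ inherited from the drawing of $\delta$ at $c$ must be transported unchanged through the annular routing and then matched, after the shift prescribed by $y$, with the left-to-right ordering of the bottom of $W'$. Planarity of the routing inside the annulus preserves the cyclic order of the paths, and the freedom in choosing the target bottom vertices converts that rotation ordering into any desired shift, in particular into $\Lambda_{c,y}$. Concatenating each $P_i$ with its outward extension then yields the required $|\widetilde{c}|$ vertex-disjoint paths from $\widetilde{c}$ to the bottom boundary vertices of $W'$ realising $\Lambda_{c,y}$.
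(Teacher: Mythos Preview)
Your overall strategy is the same as the paper's: invoke \autoref{prop_cell_paths_to_pegs} to reach the external pegs of $F_1$, then extend outward through the surrounding wall infrastructure to the bottom boundary of $W'$. Two parts of your write-up need correction, however.

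First, your description of $W'$ is confused. You place its ``top cycle'' between $\trace(F_2)$ and $\trace(F_3)$ and its bottom path between $\trace(F_3)$ and $\trace(F_4)$; but a $(10,10)$-wall has no ``top cycle'', and if $W'$ is to contain all bricks of $F_4$ then its perimeter must enclose $F_4$, not sit strictly inside it. In the paper $W'$ is simply the $10\times 10$ subwall of $W$ centred at $B$, with $F_1,\dots,F_4$ sitting as concentric cycles inside it.

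Second, your routing argument glosses over the mechanism that actually makes it work. Saying that ``the annulus contains many more than $m$ vertical paths'' and that ``planarity preserves the cyclic order'' does not explain why three paths starting at arbitrary external pegs of $F_1$ (which may all lie on the top side) can be brought down to the bottom of $W'$ in the prescribed linear order $\Lambda_{c,y}$ without crossing. What makes this possible is not the abundance of vertical paths but the presence of three nested fences $F_2,F_3,F_4$: each of the at most three paths winds around on its own fence, so the winding portions are pairwise disjoint. The paper carries this out by an explicit case analysis on the location of each peg (top, left, bottom, or right of $F_1$), routing one extension along $F_4$, one along $F_3$, and one along $F_2$, and checking that the resulting bottom endpoints appear in the correct left-to-right order. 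Your sketch would be complete if it identified this concentric-lane mechanism rather than appealing to generic grid routing.
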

\begin{proof} Let $B$ be a $4$-internal brick of $W,$ $c$ be a flap cell in the $\delta$-influence of $B,$ and fix a $c$-society $(G_{c}, \Lambda_{c,y}).$ Moreover, let $F_{1}, \ldots, F_{4}$ denote the fences at distances $1$ up to $5$ from $B$ and $W'$ be a $(10 \times 10)$-wall $W'$ that is a subgraph of $W$ that is a wall and contains all bricks of $F_{4}.$

We argue about how to find the desired paths when $|\widetilde{c}| = 3.$ Then, the other cases follow immediately.
W.l.o.g. we assume that the rotation tie-breaker gives $c$ the counter-clockwise rotation.
Let $\Lambda_{c, y} = \lin{x, y, z}.$
First apply \autoref{prop_cell_paths_to_pegs} for $B$ and $c,$ and let $P_{x}, P_{y}, P_{z}$ be the implied paths where $x', y', z'$ are their respective endpoints in $F_{1}.$
Observe that $F_{1}$ has $12$ external pegs which can be partitioned in four sets $T, L, B, R,$ where $T$ contains the $4$ top external pegs, $L$ contains the $2$ leftmost external pegs, $B$ contains the $4$ bottom pegs, and $R$ contains the $2$ rightmost pegs.

We begin visiting the external pegs of $F$ in counter-clockwise ordering starting from $z'.$
Respecting this ordering the next vertex to appear is $x'$ and then $y'.$
We proceed to distinguish cases depending on where the vertices $x',$ $y',$ and $z'$ are on $F_{1},$ and define three paths $P_{x'}, P_{y'}, P_{z'}$ as follows.

\medskip
\noindent
\textbf{Case distinction for $z'.$}

\smallskip
$z' \in T.$ 
We define the path $P_{z'}$ starting from $z',$ moving upwards towards $F_{4}$ following the vertical path of $W'$ that contains $z',$ following $F_{4}$ in clockwise direction until we reach a vertex of $F_{4}$ that belongs to the rightmost possible vertical path of $W',$ and then following this vertical path downwards to its bottom boundary vertex of $W'.$ 

\medskip
\noindent
\textbf{Case distinction for $x'$ and $y',$ when $z' \in T.$}

\smallskip
$x' \in T$ (resp. $y' \in T$). 
We define the path $P_{x'}$ (resp. $P_{y'}$) starting from $x'$ (resp. $y'$), moving upwards towards $F_{3}$ (resp. $F_{2}$) following the vertical path of $W'$ that contains $x'$ (resp. $y'$), following $F_{3}$ (resp. $F_{2}$) in counter-clockwise direction until we reach a vertex of $F_{3}$ (resp. $F_{2}$) that belongs to the leftmost possible vertical path of $W',$ and then following this vertical path downwards to its bottom boundary vertex of $W'.$

\smallskip
$x' \in L$ (resp. $y' \in L$).
We define the path $P_{x'}$ (resp. $P_{y'}$) starting from $x'$ (resp. $y'$), moving to the left towards $F_{3}$ (resp. $F_{2}$) following the horizontal path of $W'$ that contains $x'$ (resp. $y'$), and then following the vertical path that contains the vertex of $F_{3}$ (resp. $F_{2}$) we encountered downwards to its bottom boundary vertex of $W'.$

\smallskip
$x' \in B$ (resp. $y' \in B$).
We define the path $P_{x'}$ (resp. $P_{y'}$) starting from $x'$ (resp. $y'$), moving downwards towards $F_{3}$ (resp. $F_{2}$) following the vertical path of $W'$ that contains $x'$ (resp. $y'$), and then following the vertical path that contains the vertex of $F_{3}$ (resp. $F_{2}$) we encountered downwards to its bottom boundary vertex of $W'.$

\smallskip
$x' \in R$ (resp. $y' \in R$).
We define the path $P_{x'}$ (resp. $P_{y'}$) starting from $x'$ (resp. $y'$), moving to the right towards $F_{2}$ (resp. $F_{3}$) following the horizontal path of $W'$ that contains $x'$ (resp. $y'$), and then following the vertical path that contains the vertex of $F_{2}$ (resp. $F_{3}$) we encountered downwards to its bottom boundary vertex of $W'.$

\medskip
We skip the remaining cases in favor of brevity and note that they are similar to the first.
Now, observe that, by the specifications of \autoref{prop_cell_paths_to_pegs} and by definition, the paths $P_{x} \cup P_{x'},$ $P_{y} \cup P_{y'},$ and $P_{z} \cup P_{z'}$ are indeed vertex-disjoint with one endpoint in $\widetilde{c}$ and the other being a bottom boundary vertex of $W',$ respecting the order of $\Lambda_{c, v},$ ordered left to right.
\end{proof}

\subsection{The garden creation lemma}

We require one more auxiliary lemma that shows how to further refine our garden by extracting a parterre segment that represents a linear society of the $d$-folio of our $\Sigma$-decomposition which is not yet represented by any other parterre segment.

\begin{lemma}\llabel{lem_blooming_one_step}
There exists a function $f_{\ref{lem_blooming_one_step}} \colon \Nbbb^{3} \to \Nbbb$ and an algorithm that, given $t \in \Nbbb_{≥3},$ $b, h, d \in \Nbbb_{≥1},$
\begin{itemize}
\item an $(x,y)$-fertile $d$-plowed $(f_{\ref{lem_blooming_one_step}}(t, b, h), b, h, \Sigma)$-garden $(G,\delta,W),$ and
\item a linear society $\lin{H, \Lambda} \in d\text{-}\mathsf{folio}(\delta),$
\end{itemize}
outputs an $(x,y)$-fertile $d$-plowed $(t, b, h, \Sigma)$-garden $(G,\delta',W')$ such that
\begin{itemize}
\item the set of linear societies of the parterre segments of $W'$ contains $\lin{H, \Lambda},$
\item is a strict superset of the set of linear societies of the parterre segments of $W,$ and
\item the $d$-folio of $\delta'$ is the same as the $d$-folio of $\delta$
\end{itemize}
in time
$$\Ocal(h (t + b) |G|^{2}).$$
Moreover
$$f_{\ref{lem_blooming_one_step}}(t, b, h) \in \Ocal(h (t + b)).$$
\end{lemma}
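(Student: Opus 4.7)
The idea is to find, inside the big enclosure of the base cylinder of $W$, enough vertex-disjoint witnesses of the society $\langle H,\Lambda\rangle$ to build a full $(t,b,H,\Lambda)$-parterre segment $P^\star$, and to splice $P^\star$ into $W$ in place of a narrow strip of the base cylinder. First I would pick any flap cell $c_0$ of $\delta$ with $\langle H,\Lambda\rangle\in d\text{-}\mathsf{folio}(G_{c_0},\Lambda_{c_0,y})$; such a $c_0$ exists by hypothesis. Let $B_0$ be the brick of $W$ whose $\delta$-influence contains $c_0$; up to symmetry I may assume that $B_0$ lies in the big enclosure (handle and crosscap enclosures are handled analogously). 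Because $(G,\delta,W)$ is $d$-plowed, the big enclosure is $d$-homogeneous, so \emph{every} brick of it has a flap cell in its influence carrying a fiber that dissolves to $\langle H,\Lambda\rangle$.

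Writing $\mathsf{dec}(H,\Lambda)=\langle\langle H_1,\Lambda_1\rangle,\ldots,\langle H_q,\Lambda_q\rangle\rangle$, I would then select $b\cdot h\cdot q$ bricks $B_{k,i,j}$, $k\in[h]$, $i\in[b]$, $j\in[q]$, arranged in $h$ consecutive groups of $b\cdot q$ bricks along one row of the big enclosure, pairwise at $W$-distance more than $10$; the hypothesis $f_{\ref{lem_blooming_one_step}}(t,b,h)\in\mathcal{O}(h(t+b))$ is precisely what makes room for the $h$ flowers of width $2t+b|\Lambda|$ plus the $4$-internality buffer demanded by \autoref{lem_paths_boundary}. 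For every chosen brick $B_{k,i,j}$ I would pick a flap cell $c_{k,i,j}$ in its influence realizing $\langle H_j,\Lambda_j\rangle$ and apply \autoref{lem_paths_boundary} to produce $|\widetilde{c_{k,i,j}}|$ vertex-disjoint paths in $W$ from $\widetilde{c_{k,i,j}}$ to the bottom boundary vertices of a $(10\times 10)$-subwall around $B_{k,i,j}$, respecting the linear ordering $\Lambda_j$ via the appropriate choice of starting vertex on the boundary of $c_{k,i,j}$. The well-separation of the chosen bricks makes all these path systems mutually disjoint. Combining each routed path system with the corresponding cell graph and with a horizontal strip of the base cylinder of width $2t+b|\Lambda|$ yields, for each $k\in[h]$, a subdivision of a $(t,b,H,\Lambda)$-flower segment, and concatenating the $h$ flowers along an existing row of the base cylinder produces a subdivision $P^\star$ of a $(t,b,h,H,\Lambda)$-parterre segment inside $W$. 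The new walloid $W'$ is then the cylindrical concatenation of the segments of $W$ with $P^\star$ inserted after the existing parterres.

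Finally I would define $\delta'=(\Gamma,\mathcal{D}')$ from $\delta=(\Gamma,\mathcal{D})$ by keeping the same $\Sigma$-drawing and the same collection of flap cells, but recomputing which cells lie in the $\trace(C^{\mathsf{si}})$-avoiding disk of the \emph{new} exceptional cycle of $W'$. The unique vortex $c$ of $\delta$ is untouched and, by construction, lies in this new disk, so $(G,\delta',W')$ is a $(t,b,h,\Sigma)$-garden; it is $(x,y)$-fertile since $c$ is preserved, and $d\text{-}\mathsf{folio}(\delta')=d\text{-}\mathsf{folio}(\delta)$ since no flap cell is created or destroyed. The new parterre $P^\star$ lies inside the old big enclosure, so the $2\mathsf{h}+\mathsf{c}+1$ enclosures of $W'$ inherit $d$-homogeneity from those of $W$, whence $(G,\delta',W')$ is again $d$-plowed; since $\langle H,\Lambda\rangle$ was not already a parterre society of $W$ (otherwise the ``strict superset'' conclusion would be vacuous), the societies of $W'$'s parterres form the desired strict superset. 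The main obstacle I expect is the coherent choice of rotation tie-breakers and starting vertices across all $b\cdot h\cdot q$ routed path systems, so that the assembled graph is a genuine subdivision of the prescribed parterre segment and not just a topologically similar one; this is handled by fixing a global tie-breaker compatible with that of $\delta$ and picking, for each brick, the cyclic shift of its boundary society that agrees with the left-to-right order along the top boundary of the forthcoming flower, which $d$-homogeneity makes simultaneously available. The running time is dominated by $\mathcal{O}(h(t+b))$ applications of the routing procedure, each performed in near-linear time on a subwall of $W$, yielding the claimed $\mathcal{O}(h(t+b)|G|^{2})$ bound.
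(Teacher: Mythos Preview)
Your plan matches the paper's: use $d$-homogeneity of the enclosure containing the witnessing cell to find many well-separated bricks realizing $\langle H,\Lambda\rangle$, route each via \autoref{lem_paths_boundary} to the base wall, assemble the parterre, and update the decomposition. There is, however, a structural gap. You build $P^\star$ as a $(t,b,h,H,\Lambda)$-parterre segment and then declare $W'$ to be ``the cylindrical concatenation of the segments of $W$ with $P^\star$ inserted after the existing parterres''; but the segments of $W$ all have order $f_{\ref{lem_blooming_one_step}}(t,b,h)\gg t$, so this concatenation is ill-typed, and you never explain how the order drops to $t$. The paper handles this by first passing to the walloid $W_r$ obtained from $W$ by deleting the fences closest to the exceptional cycle (which uniformly shrinks every handle, crosscap, and existing parterre segment to order $t$) and only then attaching the new parterre to the resulting top boundary. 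This step forces the exceptional cycle of $W'$ to move, which is what makes the redefinition of $\delta'$ non-trivial: the paper merges all cells of $\delta$ that now lie in the new exceptional disk into a single enlarged vortex. Your claims that ``the vortex $c$ is untouched'' and ``no flap cell is created or destroyed'' are therefore false for the $\delta'$ the paper actually constructs; fertility survives because the new vortex absorbed only flat cells, and $d\text{-}\mathsf{folio}(\delta')=d\text{-}\mathsf{folio}(\delta)$ follows from $d$-homogeneity (every absorbed society still appears in a surviving brick), not from invariance of the cell collection.

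One further remark: the handle and crosscap enclosure cases are not a symmetric relabelling of the big-enclosure case. When the witnessing cell lives inside a handle or crosscap enclosure, the paper routes the extracted flowers through the rainbow linkage of that segment together with subpaths of the deleted fences; the geometry differs enough that the paper gives separate constructions (with figures) for each. The big-enclosure case you sketch is the simplest of the three.
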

\begin{proof} We define $f_{\ref{lem_blooming_one_step}}(t, b, h) \coloneqq t + 6 + h \cdot (2t + 10 \cdot 3b).$

Assume that the Euler-genus of $\Sigma$ is $2\mathsf{h} + \mathsf{c}.$
Let $W$ be the cylindrical contatenation of a sequence $\Wcal = (W_{1}, W^{\mathsf{h}}_{1}, \ldots, W^{\mathsf{h}}_{\mathsf{h}}, W^{\mathsf{c}}_{1}, \ldots, W^{\mathsf{c}}_{\mathsf{c}}, W^{\mathsf{p}}_{1}, \ldots, W^{\mathsf{p}}_{\ell}),$ where $W_{1}$ is a $(f_{\ref{lem_blooming_one_step}}(t, b, h), f_{\ref{lem_blooming_one_step}}(t, b, h))$-wall segment, $W^{\mathsf{h}}_{1}, \ldots, W^{\mathsf{h}}_{\mathsf{h}}$ are $f_{\ref{lem_blooming_one_step}}(t, b, h)$-handle segments, $W^{\mathsf{c}}_{1}, \ldots, W^{\mathsf{c}}_{\mathsf{c}}$ are $f_{\ref{lem_blooming_one_step}}(t, b, h)$-crosscap segments, and $W^{\mathsf{p}}_{1}, \ldots, W^{\mathsf{p}}_{\ell}$ are $(f_{\ref{lem_blooming_one_step}}(t, b, h), h)$-parterre segments.

Let $\lin{H, \Lambda}$ be the given linear society in $d\text{-}\mathsf{folio}(\delta).$
By definition $\lin{H, \Lambda} = \dissolve(\mu),$ where $\mu$ is a fiber of a $c$-society $\lin{G_{c}, \Lambda_{c, x}}$ of some cell $c \in C_{\mathsf{f}}(\delta)$ of detail at most $d,$ where $\Lambda$ is a shift of $\Lambda_{c, x}.$
First, notice that $c$ belongs in the $\delta$-influence of at least one of the enclosures of $W.$
Moreover, since $(G, \delta, W)$ is $d$-plowed, i.e., all $2\mathsf{h} + \mathsf{c} + 1$ enclosures of $W$ are $d$-homogeneous, the $\delta$-influence of every brick of an enclosure whose $\delta$-influence contains $c,$ contains a cell $c' \in C_{\mathsf{f}}(\delta)$ such that a $c'$-society of $\delta$ has the same $d$-folio as $\lin{G_{c}, \Lambda_{c, x}}.$

The proof proceeds by demonstrating how to extract a $(t, b, h, H, \Lambda)$-parterre segment depending on which enclosure's $\delta$-influence contains $c$ and then showing how to obtain a new walloid $W'$ and a new $\Sigma$-decomposition $\delta'$ from $W$ and $\delta$ respectively, that also contains this new parterre segment, in a way that maintains all the desired properties.

First, let us observe the following.
Let $t' \coloneqq t + 6 + h \cdot (2t + 10b),$ and $F_{1} \coloneqq C^{\mathsf{ex}}.$
For every fence $F_{r},$ $r  \in [t'],$ at distance $r$ from $C^{\mathsf{ex}},$ we can naturally define a $(t' - r, \Sigma)$-walloid $W_{r}$ with the same number of handle and crosscap segments and with the same number of parterre segments whose linear societies are the same, where only their order changes.
Indeed, let $W_{r}$ be the graph obtained by the union of the cycles of the base cylindrical wall of $W$ that $F_{1}, \ldots, F_{r}$ do not intersect, the paths of the rainbows all handles, crosscap, and parterre segments that $F_{1}, \ldots, F_{r}$ do not intersect, the vertical paths of $W$ that $F_{1}, \ldots, F_{r}$ do not intersect, all linear societies of the parterre segments, and the maximal subpaths of all vertical paths of the walloid's wall segment whose endpoints lie in cycles that $F_{1}, \ldots, F_{r}$ do not intersect.

Observe that $F_{1}, \ldots, F_{r}$ intersect the cycles $C_{1}, \ldots, C_{r}$ of the base cylindrical wall of $W,$ the first $r$ paths of both rainbows of handle segments, the first $r$ and the last $r$ paths of rainbows of crosscap segments, the first $r$ paths of rainbows of parterre segments, as well as all vertical paths that are not disjoint from the former rainbow paths.
Also, note that the linear society of each parterre segment is unaffected as we simply extend the boundary with paths towards the new top boundary vertices of the new base cylindrical wall of $W'.$
Finally, note that the number of vertical paths in the wall segment of $W_{r}$ is the same as in $W.$

\begin{figure}[ht]
  \begin{center}
  \scalebox{0.991}{\includegraphics{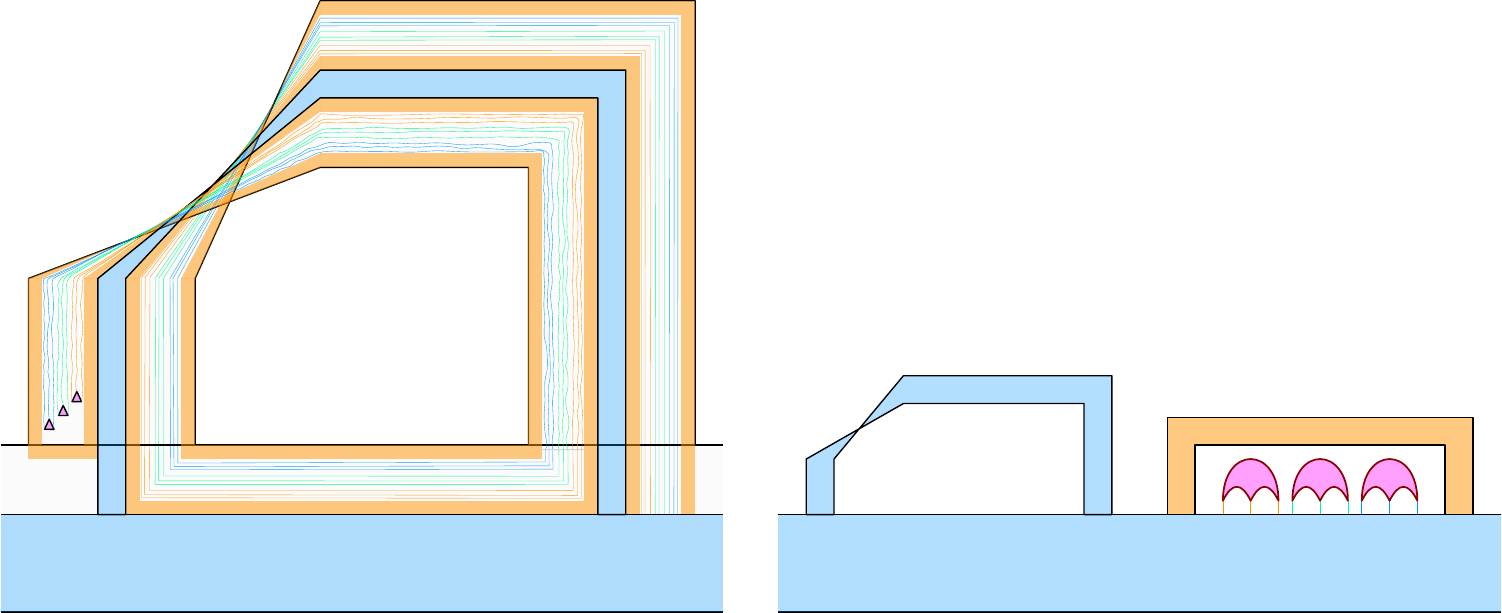}}
  \end{center}
    \caption{Extracting a flower from a crosscap segment.}
  \llabel{fig_garden_crosscap_reverse}
\end{figure}

\paragraph{Extracting from a crosscap enclosure.} 
Let $p \coloneqq 2t + 10b$ and $q \coloneqq hp.$
Consider the fences $\Fcal \coloneqq \{ F_{1} \ldots F_{hp + 6} \}$ at distance $0$ up to $hp + 5$ from $F_{1}.$
Let $W^{\mathsf{c}} \coloneqq W^{\mathsf{c}}_{i},$ for a choice of $i$ such that $c$ is in the $\delta$-influence of the enclosure of $W^{\mathsf{c}}_{i}.$
Let $P_{1}, \ldots, P_{t'}$ be the horizontal and $Q_{1}, \ldots, Q_{4t'}$ be the vertical paths of the base wall of $W^{\mathsf{c}}$ in top to bottom and left to right order respectively.
Moreover, let $\Rcal = \{ R_{1}, \ldots, R_{2t'} \}$ be the rainbow of $W^{\mathsf{c}}$ in left to right order.
Observe that $P_{i},$ $i \in [hp + 5]$ is a subpath of $F_{i},$ and that $R_{i}$ and $R_{2t' - i + 1}$ are also subpaths of $F_{i}.$

Let $\Bcal_{1}, \ldots, \Bcal_{h}$ be $h$ many sets each containing a sequence of $b$ many bricks, say $\Bcal_{i} = (B^{i}_{1}, \ldots, B^{i}_{b})$ of $W$ ordered from the leftmost to the rightmost (respecting the ordering of $\Rcal$) obtained as follows.
For every $\Bcal_{i},$ $i \in [h],$ we reserve a bundle of $p$ many consecutive cycles of $\Fcal$ namely $\Fcal^{i}  \coloneqq \{ F_{6 + (i - 1) \cdot p + 1}, \ldots, F_{6 + i \cdot p} \}.$
We choose $B^{i}_{j},$ $j \in [b]$ as the brick of the enclosure of $W^{\mathsf{c}}$ such that $B^{i}_{j} \cap R_{6 + (i - 1) \cdot p + 10 \cdot (j - 1) + 5} \neq \emptyset$ and $B^{i}_{j} \cap R_{6 + (i - 1) \cdot p + 10 \cdot (j - 1) + 6} \neq \emptyset$ (see \autoref{fig_garden_crosscap_reverse}).

Observe that by definition for every brick $B^{i}_{j},$ $i \in [h],$ $j \in [b],$ there is a $(10 \times 10)$-wall $W^{i}_{j}$ that contains all bricks of the fence at distance $4$ from $B^{i}_{j}$ and such that it is disjoint from any $W^{i'}_{j'},$ $i \neq i' \in [h]$ and $j \neq j' \in [b],$ and it is disjoint from the first $t$ and the last $t$ cycles in $\Fcal^{i}.$
Moreover exactly $2$ subpaths of each of the horizontal paths $P_{1}, \ldots, P_{5}$ correspond to horizontal paths of $W^{i}_{j}.$ We assume that the bottom horizontal path of $W^{i}_{j}$ is the rightmost such subpath of $P_{5}.$

Now for every $i \in [h],$ $j \in [b],$ we are in the position to apply \autoref{lem_paths_boundary} to the brick $B^{i}_{j},$ for a $c^{i}_{j}$-society of a cell $c^{i}_{j} \in C_{\mathsf{f}}(\delta)$ whose $d$-torso is the same as that of $\lin{G_{c}, \Lambda_{c, x}},$ and obtain as a result a set $\Pcal^{i}_{j}$ of $|\widetilde{c}^{i}_{j}|$ many vertex-disjoint paths from $\widetilde{c}^{i}_{j}$ to the bottom boundary vertices of $W^{i}_{j},$ such that the endpoints of these paths not in $\widetilde{c}^{i}_{j}$ respect the linear ordering of the chosen $c^{i}_{j}$-society.
Let $\Lambda^{i}_{j}$ be a left-to-right linear ordering of these endpoints.
Observe that , by definition, the linear society $\lin{G_{c^{i}_{j}} \cup \bigcup \Pcal^{i}_{j}, \Lambda^{i}_{j}}$ has a fiber $\mu_{2}$ of detail $\leq d$ such that $\dissolve(\mu_{2}) = \lin{H, \Lambda}.$

Now for every $i \in [h],$ we define a linkage $\Lcal_{i}$ of order $t$ that will serve as the rainbow of the $i$-th flower of the parterre segment we are trying to define.
Let $\Fcal^{i}_{2}$ be the set of $2t$ subpaths of the cycles in $\Fcal^{i}$ whose one endpoint is the leftmost top boundary vertex of the base wall of $W^{\mathsf{c}}$ that is visited by the respective cycle, and whose other endpoint is the rightmost top boundary vertex of the base wall of the rightmost segment of $W$ that is not its wall segment.
Notice that the paths in $\Fcal^{i}_{2}$ are made up of $2$ bundles of $t$ consecutive paths.
Now we define $\Lcal_{i}$ by connecting the leftmost endpoint of each path of one bundle to the leftmost endpoint of a path in the other bundle, using the $2t$ vertical paths that intersect the paths of the two bundles and the $t$ horizontal paths $P_{6}, \ldots, P_{t + 5}$ of the base wall $W^{\mathsf{c}}.$
Notice that there is a unique pairing of paths between these two bundles that can give us a linkage of order $t.$

Next, notice that for every $i \in [h],$ $j \in [b],$ each endpoint of a path in $\Pcal^{i}_{j}$ not in $\widetilde{c}^{i}_{j}$ is a vertex of a distinct cycle in $\Fcal^{i}.$
Then, we define a new set of paths $\Tcal^{i}_{j}$ as an extension of each path $P \in \Pcal^{i}_{j}$ by considering the union of $P$ with the subpath of that cycle that connects the endpoint of $P$ not in $\widetilde{c}^{i}_{j}$ to the rightmost top boundary vertex visited by this cycle, of the base wall of the rightmost segment of $W$ that is not its wall segment.
Notice that the paths in $\bigcup_{i \in [h], j \in [b]} \{ \Tcal^{i}_{j} \} \cup \bigcup_{i \in [h]} \{ \Lcal^{i} \}$ are pairwise vertex-disjoint by construction.

Now, observe that $t' - |\Fcal| = t.$
Now, to define our desired $(t, \Sigma)$-walloid $W',$ we consider the $(t, \Sigma)$-walloid $W_{|\Fcal|}$ union the linkage $\Lcal_{i},$ the graphs $G_{c^{i}_{j}},$ the linkages $\Tcal^{i}_{j},$ $j \in [b],$ and all vertical paths of $W$ that are intersected by the linkages $\Lcal_{i}$ and $\Tcal^{i}_{j}.$
Moreover, observe that throughout this case the linkages we define do not use any horizontal or vertical path of the wall segment of $W.$
Hence we moreover have to remove vertices not in horizontal paths from $|\Fcal|$ many arbitrarily chosen vertical paths of the wall segment in $W_{|\Fcal|}$ to obtain the correct order in $W'.$

\begin{figure}[ht]
  \begin{center}
  \scalebox{1.04}{\includegraphics{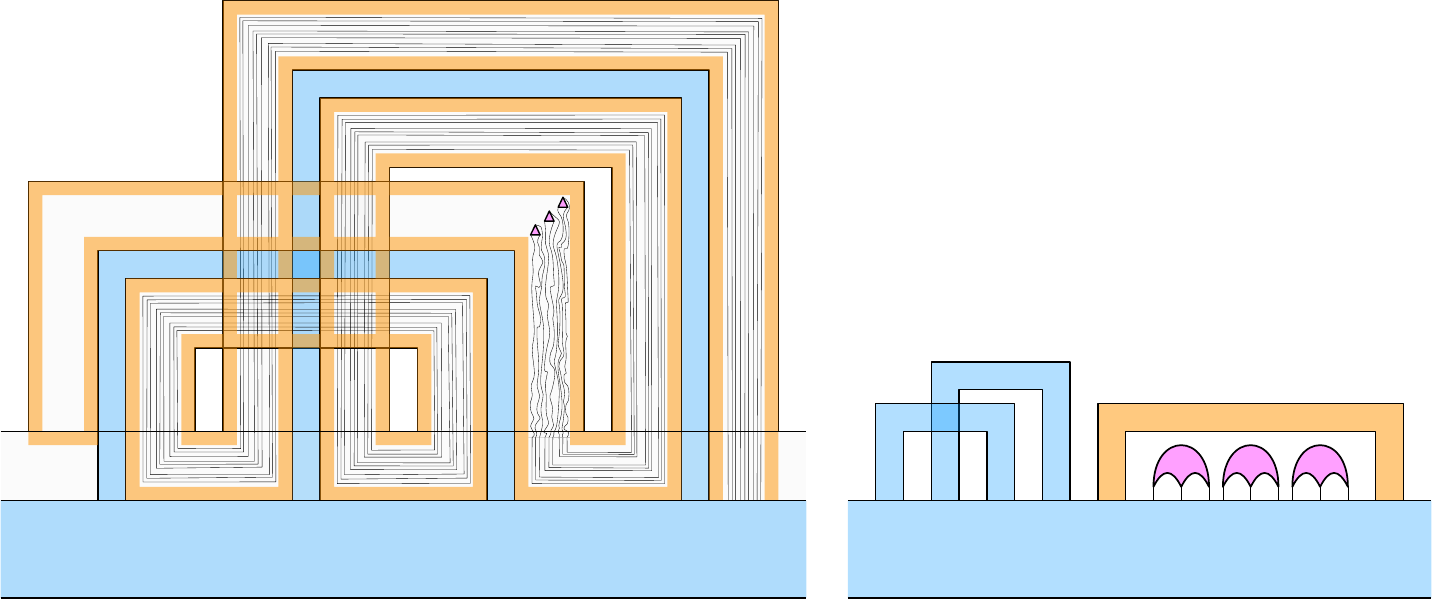}}
  \end{center}
    \caption{Extracting a flower segment from a handle segment.}
  \llabel{fig_garden_handle}
\end{figure}

\paragraph{Extracting from a handle enclosure.} In this case, we have to argue how we can extract our desired $(t, b, h, H, \Lambda)$-parterre segment either from the leftmost enclosure or the rightmost enclosure of a handle segment.
We omit the proof of this case as it is very similar to how we extract from a crosscap enclosure. See \autoref{fig_garden_handle} for an illustration of how we can extract a single flower segment from a handle enclosure.

\paragraph{Extracting from the big enclosure.} 
Observe that the bricks of the big enclosure of $W$ are the bricks of the base cylinder $\widetilde{W}$ of $W.$
Clearly $W_{1}$ is a subgraph of $\widetilde{W}.$
 and let $P_{1}, \ldots, P_{t'}$ be the horizontal and $Q_{1}, \ldots, Q_{t'}$ be the vertical paths of $W_{1}.$

Let $\Bcal_{1}, \ldots, \Bcal_{h}$ be $h$ many sets each containing a sequence of $b$ many bricks, say $\Bcal_{i} = (B_{1}^{i}, \ldots, B_{b}^{h}),$ of $W$ ordered from the leftmost to the rightmost (respecting the ordering of the vertical paths of $W_{1}$) obtained as follows.
For every $\Bcal_{i},$ $i \in [h],$ we reserve a bundle of $p$ consecutive vertical paths of $W,$ from $Q_{(i - 1) \cdot p + 1}$ up to $Q_{i \cdot p}.$
We choose $B^{i}_{j},$ $j \in [b]$ as the brick such that $B^{i}_{j} \cap P_{t + 5} \neq \emptyset$ and $B^{i}_{j} \cap P_{t + 6} \neq \emptyset$ and moreover $B^{i}_{j} \cap Q_{(i - 1) \cdot p + 10 \cdot (j - 1) + 5} \neq \emptyset$ and $B^{i}_{j} \cap Q_{(i - 1) \cdot p + 10 \cdot (j - 1) + 6} \neq \emptyset.$

Observe that by definition for every brick $B^{i}_{j},$ $i \in [h],$ $j \in [b],$ there is a $(10 \times 10)$-wall $W^{i}_{j}$ that contains all bricks of the fence at distance $4$ from $B^{i}_{j}$ and such that, it is disjoint from any $W^{i'}_{j'},$ $i \neq i' \in [h]$ and $j \neq j' \in [b],$ it is disjoint from $P_{1}, \ldots, P_{t},$ and it is disjoint from the first $t$ and the last $t$ vertical paths of the bundle of paths we reserved for $\Bcal_{i}.$
Moreover, the bottom boundary vertices of each of the walls $W^{i}_{j},$ are top boundary vertices of the wall $W'_{1}$ obtained by the union of all horizontal paths of $W_{1}$ except $P_{1}, \ldots, P_{t + 9}$ and the maximal subpaths of all vertical paths of $W_{1}$ whose endpoints lie in $P_{t + 10}$ and $P_{t'}.$

Now for every $i \in [h],$ $j \in [b],$ we are in the position to apply \autoref{lem_paths_boundary} to the brick $B^{i}_{j},$ for a $c^{i}_{j}$-society of a cell $c^{i}_{j} \in C_{\mathsf{f}}(\delta)$ whose $d$-torso is the same as that of $\lin{G_{c}, \Lambda_{c, x}},$ and obtain as a result a set $\Pcal^{i}_{j}$ of $|\widetilde{c}^{i}_{j}|$ many vertex-disjoint paths from $\widetilde{c}^{i}_{j}$ to the bottom boundary vertices of $W^{i}_{j},$ such that the endpoints of these paths not in $\widetilde{c}^{i}_{j}$ respect the linear ordering of the chosen $c^{i}_{j}$-society.
Let $\Lambda^{i}_{j}$ be a left-to-right linear ordering of these endpoints.
Observe that, by definition, the linear society $\lin{G_{c^{i}_{j}} \cup \bigcup \Pcal^{i}_{j}, \Lambda^{i}_{j}}$ has a fiber $\mu_{2}$ of detail $\leq d$ such that $\dissolve(\mu_{2}) = \lin{H, \Lambda}.$
Also for every $i \in [h],$ we define a linkage $\Lcal_{i}$ of order $t$ connecting the $2t$ many top boundary vertices of $W'_{1}$ which are intersected by the first and last $t$ vertical paths reserved for $\Bcal_{i}$ by using the intersection between these vertical paths and the first $t$ horizontal paths of $W_{1}.$
Notice that we can define each $\Lcal_{i}$ to be disjoint from any $\Lcal_{j},$ $j \neq i \in [h].$

Now, to define our new $(t, \Sigma)$-walloid $W',$ we consider the $(t, \Sigma)$-walloid $W_{|\Fcal|}$ union the linkage $\Lcal_{i},$ the graphs $G_{c^{i}_{j}},$ the linkages $\Pcal^{i}_{j},$ $j \in [b],$ and all subpaths of vertical paths $W$ that join the linkages $\Lcal_{i}$ and $\Tcal^{i}_{j}$ to top boundary vertices of the base cylindrical wall of $W_{|\Fcal|}.$
Moreover, observe that in this case, the linkages we define use exactly the first $q$ many vertical paths of the wall segment of $W.$
Hence we moreover have to remove vertices not in horizontal paths from $6$ (distinct from the first $q$) chosen vertical paths of the wall segment in $W_{|\Fcal|}$ to fix its order in $W'$ (see \autoref{fig_garden_base} for an illustration on how to find one flower segment).
Note that the previous arguments give us a new $(t, b, h, H, \Lambda)$-parterre segment in $W'$ with the linear ordering of $\Lambda$ intact.

\medskip
It remains to define the desired $\Sigma$-decomposition $\delta' = (\Gamma', \Dcal').$
We obtain $\delta'$ from $\delta$ by considering the same drawing of $G$ in $\Sigma,$ $\Gamma' = \Gamma,$ and defining the cells $\Dcal'$ as follows.
Let $\Delta$ be the $\trace(C^{\mathsf{si}})$-avoiding disk of $C^{\mathsf{ex}},$ where $C^{\mathsf{ex}}$ is the exceptional cycle of $W'$ while $C^{\mathsf{si}}$ is the simple cycle of $W'.$
Define $\Dcal'$ to contain all cells of $\delta$ not drawn in $\Delta,$ as well as a new cell that is the closure of all cells of $\delta$ drawn in $\Delta.$
Observe that the big enclosure of $W'$ is a fence of $W$ contained in the big enclosure of $W$ and as such remains $d$-homogeneous.
Moreover the remaining $2\mathsf{h} + \mathsf{c}$ enclosures of $W'$ that correspond to the enclosures of its handle and crosscap segments can be seen to also be $d$-homogeneous as they contain all bricks of their corresponding handle and crosscap enclosure in $W$ as well as bricks that were, in $W,$ contained in its big enclosure, and all such bricks had the same $d$-folio.
Additionally, since the new unique vortex cell of $\delta$ only grew by the addition of flap cells of $\delta,$ implies that $(G, \delta', W')$ remains $(x, y)$-fertile, and the previous fact combined with the fact that all enclosures of $W$ are $d$-homogeneous, also implies that the $d$-folio of $\delta'$ is the same as the $d$-folio of $\delta.$
Finally, by construction, it is easy to see that the set of linear societies of the parterre segments of $W'$ contains $\lin{H, \Lambda}$ and is a superset of the set of linear societies of the parterre segments of $W.$

Note that in the way we extract flowers with the previous argument each copy of a linear society within the flowers contains one copy for each linear society in $\mathsf{dec}(H, \Lambda).$
If we replace $b$ with $3b$ we can deal with this issue by keeping from each linear society only the copy of the component of the decomposition we want in the correct order.
\end{proof}

\begin{figure}[ht]
  \begin{center}
  \scalebox{1.04}{\includegraphics{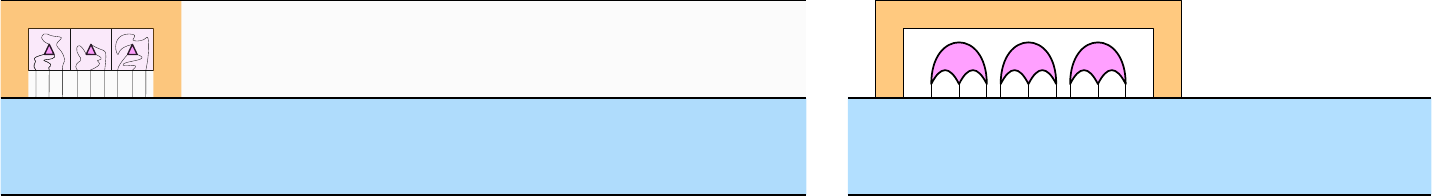}}
  \end{center}
    \caption{Extracting a flower segment from a wall segment.}
  \llabel{fig_garden_base}
\end{figure}

We are now in the position to prove the main result of this section, which is an iterative application of \autoref{lem_blooming_one_step}.
Our goal is to represent each linear society of the $d$-folio of our $\Sigma$-decomposition by a parterre segment containing many enough copies of the linear society it represents.

\begin{lemma}[Garden creation]\llabel{lem_garden_creation}
There exists a function $f_{\ref{lem_garden_creation}} \colon \Nbbb^4 \to \Nbbb$ and an algorithm that, given $t \in \Nbbb_{\geq 3},$ $b, h, d \in \Nbbb_{≥1},$ and an $(x,y)$-fertile $(f_{\ref{lem_garden_creation}}(t, b, h, d), \Sigma)$-meadow $(G,\delta,W)$ that is $d$-plowed, outputs an $(x, y)$-fertile $(t, b, h, \Sigma)$-garden that is $d$-blooming in time
$$h(t + b) 2^{\Ocal(d^{2})} \cdot |G|^{2}.$$
Moreover
$$f_{\ref{lem_garden_creation}}(t, b, h, d) \in h(t + b) 2^{\Ocal(d^{2})}.$$
\end{lemma}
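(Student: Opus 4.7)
The plan is to iteratively apply \autoref{lem_blooming_one_step}, using one invocation per element of $d\text{-}\mathsf{folio}(\delta)$ that is not yet represented by a parterre segment. By definition, the input $(t',\Sigma)$-meadow with $t'=f_{\ref{lem_garden_creation}}(t,b,h,d)$ can be viewed as a $(t',b,h,\Sigma)$-garden with no parterre segments (it is vacuously $(x,y)$-fertile, $d$-plowed, and its $d$-folio is that of $\delta$). By \autoref{obs_size_dfolio}, $|d\text{-}\mathsf{folio}(\delta)| \leq f_{\ref{obs_size_dfolio}}(d) = 2^{\Ocal(d^{2})},$ which upper-bounds the number of iterations to be performed.

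Concretely, I would first enumerate $d\text{-}\mathsf{folio}(\delta) = \{\langle H_{1},\Lambda_{1}\rangle,\ldots,\langle H_{N'},\Lambda_{N'}\rangle\}$ with $N' \leq N \coloneqq f_{\ref{obs_size_dfolio}}(d),$ and then define a decreasing sequence of infrastructure sizes by $t_{N'}\coloneqq t$ and $t_{i-1}\coloneqq f_{\ref{lem_blooming_one_step}}(t_{i},b,h)$ for $i \in [N'].$ Since $f_{\ref{lem_blooming_one_step}}(t,b,h)\in \Ocal(h(t+b)),$ unrolling this recursion yields a value of $t_{0}$ of the form $h(t+b)\cdot 2^{\Ocal(d^{2})},$ which I would take as $f_{\ref{lem_garden_creation}}(t,b,h,d).$

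The iterative procedure then maintains, as a loop invariant, an $(x,y)$-fertile, $d$-plowed $(t_{i-1},b,h,\Sigma)$-garden $(G,\delta_{i-1},W_{i-1})$ whose $d$-folio still equals $d\text{-}\mathsf{folio}(\delta).$ At step $i,$ if $\langle H_{i},\Lambda_{i}\rangle$ already appears as the society of a parterre segment of $W_{i-1},$ I simply move on (after harmlessly shrinking the infrastructure to $t_{i}$); otherwise I invoke \autoref{lem_blooming_one_step} with target society $\langle H_{i},\Lambda_{i}\rangle$ on $(G,\delta_{i-1},W_{i-1}),$ producing a $(t_{i},b,h,\Sigma)$-garden $(G,\delta_{i},W_{i})$ satisfying the same invariants and whose set of parterre societies strictly grows by $\langle H_{i},\Lambda_{i}\rangle.$ The three properties preserved by \autoref{lem_blooming_one_step}, namely $(x,y)$-fertility, being $d$-plowed, and invariance of the $d$-folio, are precisely the hypotheses needed for the next invocation, so the iteration is well-posed.

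After $N'$ steps every member of $d\text{-}\mathsf{folio}(\delta)$ is realised as the society of some parterre segment of $W_{N'},$ so $(G,\delta_{N'},W_{N'})$ is the desired $d$-blooming $(t,b,h,\Sigma)$-garden. The running time is dominated by the $N'$ calls to \autoref{lem_blooming_one_step}, each of cost $\Ocal(h(t_{i-1}+b)|G|^{2}),$ summing to the claimed $h(t+b)\cdot 2^{\Ocal(d^{2})}\cdot|G|^{2}.$ The only real obstacle is the parameter bookkeeping, namely choosing the $t_{i}$ sequence so that $t_{0}$ matches the stated form and verifying that all of fertility, the $d$-plowed property, and the identity of the $d$-folio propagate cleanly through the iteration; no new structural ingredient beyond \autoref{lem_blooming_one_step} and \autoref{obs_size_dfolio} is required.
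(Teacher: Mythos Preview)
Your proposal is correct and follows essentially the same approach as the paper: view the plowed meadow as a garden with no parterre segments, then iteratively apply \autoref{lem_blooming_one_step} once per unrepresented society in $d\text{-}\mathsf{folio}(\delta)$, using \autoref{obs_size_dfolio} to bound the number of iterations by $2^{\Ocal(d^{2})}$. Your explicit sequence $t_{i-1}=f_{\ref{lem_blooming_one_step}}(t_i,b,h)$ is slightly more careful bookkeeping than the paper's (which simply sets $f_{\ref{lem_garden_creation}}=f_{\ref{lem_blooming_one_step}}(t,b,h)\cdot f_{\ref{obs_size_dfolio}}(d)$), but the content is identical.
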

\begin{proof} We define $f_{\ref{lem_garden_creation}}(t, b, h, d) \coloneqq f_{\ref{lem_blooming_one_step}}(t, b, h) \cdot f_{\ref{obs_size_dfolio}}(d).$

The proof of the lemma is an iterative application of \autoref{lem_blooming_one_step} applied each time to a linear society of the $d$-folio of the corresponding $\Sigma$-decomposition that is not represented by the parterre segments of the corresponding walloid.
First observe that by definition $(G, \delta, W)$ can be seen as an $(x, y)$-fertile $d$-plowed $(f_{\ref{lem_garden_creation}}(t, b, h, d), b, h, \Sigma)$-garden with no parterre segments.

\medskip
Let $(G, \delta_{0}, W_{0}) = (G, \delta, W)$ and $i = 0.$
Iteratively apply \autoref{lem_blooming_one_step} on $(G, \delta_{i}, W_{i})$ with a linear society $\lin{H, \Lambda} \in d\text{-}\mathsf{folio}(\delta_{i})$ that is not the linear society of any parterre segment of $W_{i}.$
Let $(G, \delta_{i+1}, W_{i+1})$ be the resulting garden. Set $i \coloneqq i + 1,$ and repeat.

\medskip
Observe that the above procedure will repeat at most $d' \coloneqq |d\text{-}\mathsf{folio}(\delta)|$ times, since the $d$-folio of any of the intermediate $\Sigma$-decompositions is the same.
Then, we conclude with the $(G, \delta_{d'}, W_{d'})$ $(t, b, h, \Sigma)$-garden, which by the specifications of \autoref{lem_blooming_one_step} has all the desired properties.
\end{proof}

\section{Growing an orchard}
\llabel{ripe_obt_orch}

The goal of this subsection is to prove \autoref{ripe_orchard}.
That is, we show that any $d$-blooming garden with big enough infrastructure can be transformed into an orchard that is both $d$-blooming and ripe.
This step marks the last refinement of our $\Sigma$-decomposition before the harvest can begin.

\begin{observation}[Garden seen as a single thicket orchard]\llabel{obs_single_thicket_orchard}
There is an algorithm that, given an $s\in\mathbb{N}$ and a $(x,y)$-fertile and $d$-blooming
$(t+1+s,b,h,d,\Sigma)$-garden $(G,\delta,W),$ outputs an $(x,y)$-fertile and $d$-blooming single-thicket $(t,b,h,s,d,\Sigma)$-orchard $(G,\delta,W)$ in time $\mathcal{O}(t\cdot s\cdot(b+h)|G|).$
\end{observation}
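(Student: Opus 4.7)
The observation asserts that the very same triple $(G,\delta,W)$ serves as both a $(t{+}1{+}s,b,h,\Sigma)$-garden and a single-thicket $(t,b,h,s,\Sigma)$-orchard; the algorithm is therefore a relabeling of the walloid $W$ rather than a modification of $G$ or $\delta.$ In particular, $(x,y)$-fertility and the $d$-folio of $\delta$ are preserved automatically once the relabeling is made, and the work reduces to exhibiting the thicket segment hidden inside the extra ``$1{+}s$'' cycles of the garden's base cylinder.

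My plan is the following. Let $C_1,\ldots,C_{t+1+s}$ be the horizontal cycles of the base cylinder of $W,$ ordered from the simple cycle $C^{\mathsf{si}}=C_1$ to the exceptional cycle $C^{\mathsf{ex}}=C_{t+1+s};$ by definition of a garden, the unique vortex $c$ of $\delta$ lies in the $\mathsf{trace}(C^{\mathsf{si}})$-avoiding disk of $\mathsf{trace}(C^{\mathsf{ex}}).$ I first fix $t$ vertical paths of the wall segment $W_1$ and, together with the $s{+}2$ topmost cycles $C_{t},\ldots,C_{t+1+s},$ read off a $(t,s{+}2)$-cylindrical wall $\widehat{W}$ whose outer cycle is $C_t,$ whose nest consists of $C_{t+1},\ldots,C_{t+s},$ and whose inner cycle is $C^{\mathsf{ex}}$ bounding $c.$ I then designate a $(t,t)$-subwall $\widetilde{W}$ of $W_1$ drawn ``below'' $\widehat{W}$ (using the same $t$ vertical paths, extended down to $C_1$), and declare the subpaths of those vertical paths lying between the top of $\widetilde{W}$ and the outer cycle of $\widehat{W}$ to be the subdivided connector paths $P_1,\dots,P_t$ of the new thicket $T.$ The remaining segments of $W$ (handle, crosscap, parterre) are shrunk in the obvious way by keeping only the bottom $t$ horizontal paths of each base wall and retaining all rainbows and flower gadgets attached to the top boundary vertices of the new, shorter base wall; together with the base wall of $T,$ they provide a base cylinder of height $t$ formed by $C_1,\ldots,C_t.$

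Two things remain to be checked. First, the resulting walloid $W'$ is a cylindrical concatenation of the prescribed segments at parameter $t$ together with the thicket segment $T,$ which follows by direct inspection once the relabeling has been specified on each segment. Second, the single vortex $c$ lies inside the disk bounded by the inner cycle of $\widehat{W},$ which is exactly $C^{\mathsf{ex}}$ of the garden, and therefore $(G,\delta,W')$ satisfies the definition of a single-thicket $(t,b,h,s,\Sigma)$-orchard. Since the flaps and the unique vortex of $\delta$ are untouched, $(x,y)$-fertility carries over and $d\text{-}\mathsf{folio}(\delta)$ is the same, so every linear society therein is still represented by a parterre segment of $W';$ the orchard is $d$-blooming. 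The running time $\mathcal{O}(t\cdot s\cdot(b{+}h)|G|)$ accounts for traversing the $O(ts)$ cycles and the $O(bh)$ flowers of the parterre segments while updating incidences. The only delicate point is that, in the formal definition, a thicket segment is the \emph{disjoint} union of $\widetilde{W}$ and $\widehat{W}$ joined through the $P_i$'s; this is exactly why the ``$+1$'' slack in the garden parameter $t{+}1{+}s$ is present, as it supplies the single horizontal separation layer needed so that the subdivided connectors $P_i$ appear as genuine non-trivial paths rather than forced vertex identifications between the base wall and the cylindrical wall of $T.$
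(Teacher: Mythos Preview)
The paper does not prove this observation; it is stated as self-evident bookkeeping and the accompanying footnote merely warns that ``a bit of the infrastructure'' must be sacrificed. Your sketch captures the intended mechanism: read the innermost layers of the garden's base cylinder, together with $t$ rails taken from the wall segment, as the cylindrical wall and railed nest of a single thicket surrounding the garden's unique vortex. That is indeed the content of the observation.

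Two points deserve correction. First, the identification $C^{\mathsf{ex}}=C_{t+1+s}$ is wrong in general. By definition $C^{\mathsf{ex}}$ is the facial cycle bounding the exceptional face; whenever the garden has handle, crosscap, or parterre segments, $C^{\mathsf{ex}}$ runs \emph{over} their rainbows and flower gadgets rather than along the innermost horizontal cycle of the base cylinder (the latter is the ``big enclosure''). This slip does not damage your argument, since every cycle of the base cylinder still bounds a $C^{\mathsf{si}}$-avoiding disk containing the vortex, which is all the orchard definition requires of the thicket's inner cycle.

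Second, your new walloid is missing its mandatory $(t,t)$ wall segment: you convert the garden's entire wall segment $W_1$ into the thicket segment and then list only ``handle, crosscap, parterre'' among the remaining shrunk segments. A host walloid, however, must begin with a wall segment in addition to its thicket segments. The fix is to split the $t{+}1{+}s$ columns of the garden's $W_1$ into two blocks of $t$ columns each---one furnishing the orchard's wall segment, the other the thicket's base wall $\widetilde{W}$---and this is exactly the infrastructure sacrifice alluded to in the paper's footnote (and implicitly uses that $s$ is large relative to $t$ in the paper's single application of the observation). With this amendment your construction goes through.
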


\begin{lemma}[Bounding the depth of a orchard]\llabel{ripe_orchard}
There exist functions $f^1_{\ref{ripe_orchard}},f^2_{\ref{ripe_orchard}},f^3_{\ref{ripe_orchard}},f^4_{\ref{ripe_orchard}}\colon\mathbb{N}^7\to\mathbb{N}$ and an algorithm that, given an $(x,y)$-fertile and $d$-blooming single-thicket $(t',b,h,s',\Sigma)$-orchard $(G,\delta,W)$ where 
\begin{align*}
    t'&\geq f^1_{\ref{ripe_orchard}}(t^*,b,h,s^*,x,y,d)\text{ and}\\
    s'&\geq f^2_{\ref{ripe_orchard}}(t^*,b,h,s^*,x,y,d),
\end{align*}
outputs a $(q,p)$-ripe and $d$-blooming $(t^*,b,h,s^*,\Sigma)$-orchard in time $2^{{t^*}^{\mathcal{O}_h(1)}}|G|^3$ where
\begin{align*}
    q&\leq f^3_{\ref{ripe_orchard}}(t^*,b,h,s^*,x,y,d)\text{ and}\\
    p&\leq f^4_{\ref{ripe_orchard}}(t^*,b,h,s^*,x,y,d).
\end{align*}
\end{lemma}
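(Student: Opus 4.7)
The plan is to iteratively split thicket segments of the given single-thicket orchard by exploiting large transactions in their thicket societies, until all remaining thickets are shallow (in which case the orchard is ripe) or until enough parterres have been created to exhaust the $d$-folio. Throughout this process I maintain, as an invariant, an $(x,y)$-fertile and $d$-blooming $(t_i,b,h,s_i,\Sigma)$-orchard where $t_i$ and $s_i$ shrink by a controlled amount per iteration, the number of thickets is bounded by $x$ plus the number of parterres already extracted, and the infrastructure of the original walloid is still large enough to absorb the next operation. The base case $i=0$ is delivered by \autoref{obs_single_thicket_orchard} after appropriately partitioning $t'$ and $s'.$

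The single iteration, applied to an orchard $(G,\delta_i,W_i)$ that is not yet $(q,p)$-ripe, proceeds as follows. Pick a thicket segment $T$ whose thicket society $\langle H,\Lambda\rangle$ has depth exceeding $2s_i+p$ and extract a crooked transaction $\mathcal{T}$ orthogonal to the railed nest $(\mathcal{C},\mathcal{R})$ of $T,$ using a variant of the standard transaction-untangling argument. Because $\delta_i$ is $(x,y)$-fertile, only at most $x$ of the real vortices sit inside $T,$ so a sufficiently large sub-transaction of $\mathcal{T}$ opens a new flat disk $\Delta^*$ inside $T$ that is disjoint from every real vortex. I then treat this disk as a small auxiliary $(t_i,\Sigma^*)$-meadow and apply \autoref{lem_meadow_plowing} and \autoref{lem_garden_creation} (through \autoref{lem_blooming_one_step}) to plow it and append a parterre segment to $W_i$ for every linear society that appears in the $d$-folio of $\Delta^*$ but is not yet represented. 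Finally, using the paths of $\mathcal{T}$ together with the cycles of $\mathcal{C}$ and the rails $\mathcal{R},$ I cut $T$ into at most two strictly smaller thicket segments, producing a new orchard $(G,\delta_{i+1},W_{i+1})$ with the invariant preserved and parameters $t_{i+1}\geq t_i - c$ and $s_{i+1}\geq s_i - c'$ for some constants $c,c'$ depending only on $t^*,b,h,s^*.$

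The termination argument is two-tiered. First, an event where a new linear society is added to a parterre can happen at most $f_{\ref{obs_size_dfolio}}(d)=2^{\mathcal{O}(d^2)}$ times, after which the $d$-folio is saturated and all future iterations only split thickets without creating new parterres. Second, between two parterre-creation events the total number of thickets can grow by at most $x$ (bounded by $(x,y)$-fertility), and each pure split strictly decreases the well-founded measure $\sum_T \mathsf{depth}(\langle H_T,\Lambda_T\rangle),$ so after at most $N(x,y,d)\cdot x \cdot f_{\ref{obs_size_dfolio}}(d)$ iterations the loop halts with a $(q,p)$-ripe orchard for $q\in\mathcal{O}(x\cdot 2^{\mathcal{O}(d^2)})$ and $p$ bounded in terms of the initial depth $y$ and the splitting constant. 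The main obstacle is the bookkeeping of infrastructure: each split and parterre extraction consumes a slice of the nest and the wall, and since the number of iterations is exponential in $d$ and linear in $x,$ the starting parameters $t',s'$ must be at least exponential in these quantities; concretely I expect $f^1_{\ref{ripe_orchard}}, f^2_{\ref{ripe_orchard}} \in (t^*+s^*+b+h+y)^{2^{\mathcal{O}(d^2)}\cdot x}$ and $f^3_{\ref{ripe_orchard}},f^4_{\ref{ripe_orchard}}$ of comparable order. The $|G|^3$ factor in the running time comes from invoking disjoint-paths-style subroutines (via the unique linkage function $\ell$) to detect and route the crooked transactions inside each thicket at each iteration.
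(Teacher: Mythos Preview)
Your overall strategy---iteratively split deep thickets using large transactions, extract parterres for new folio elements, and repeat until all thickets are shallow---matches the paper's approach. However, your termination argument has a genuine gap, and you are missing a structural mechanism that the paper relies on essentially.

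The measure $\sum_T \mathsf{depth}(\langle H_T,\Lambda_T\rangle)$ does not strictly decrease under a split. When a transaction cuts a thicket into two, each of the two resulting thicket societies can have depth nearly as large as the original, so the sum may grow. The paper does \emph{not} use depth as a progress measure. Instead, it formalises a dichotomy through the extraction lemma (\autoref{lemma_extraction_from_th}): every large cultivated transaction either (i) yields a ``productive'' split---one that enlarges $d\text{-}\mathsf{folio}(\delta)$ or increases the number of thickets whose society carries a cross---or (ii) yields only a \emph{pruning}, an operation that pushes at least one edge of $G$ out of the innermost region of some thicket. Outcome (i) can occur at most $z^*=x+\mathsf{d}^\star(\mathsf{d}^\star hb+1)2^{\mathsf{d}^\star}$ times in total (crosses are bounded by $x$ via $(x,y)$-fertility; folio growth is bounded by $\mathsf{d}^\star$; the pigeonhole bound on flat thickets caps the rest). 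Outcome (ii) can occur at most $\mathcal{O}(|E(G)|)$ times because each pruning strictly shrinks what lies inside some nest cycle. This two-tier counting is what terminates the process; your depth-sum argument does not.

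Relatedly, you assume you can always find a crooked transaction that is orthogonal to the nest and opens a flat disk. The paper explicitly avoids crooked transactions here: it works with \emph{barren} transactions (planar, exposed, orthogonal, with isolated separating rural strip), and the ``exposed'' property---that the transaction actually reaches the inner vortex region---is not automatic. \autoref{exposure} shows that a large transaction either contains a large exposed subtransaction or yields a pruning; without this step you cannot guarantee that your split makes any structural progress rather than merely rearranging the periphery of the nest. Your infrastructure bookkeeping (constant loss $c,c'$ per iteration) is also too optimistic: because the number of productive splits is $z^*$ and each split must leave enough nest and rails for all subsequent ones, the paper's functions $\mathsf{t}(t^*,z)$ and $\mathsf{s}(s^*,t^*,z)$ are defined recursively in $z$ and blow up accordingly, which is why $f^1_{\ref{ripe_orchard}},f^2_{\ref{ripe_orchard}}$ end up with the stated tower-type dependence on $d$.
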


The proof of \autoref{ripe_orchard} will be an inductive one via a sequence of lemmas which we derive in the following subsections.

\subsection{Developing the tools}\llabel{transaction_prelims}

Before we begin, we need some preliminary results on transactions in societies.

\paragraph{Strip societies.}
Since our goal is to refine an $(x,y)$-fertile blooming garden, or better an $(x,y)$-fertile orchard with a single thicket, it suffices to deal with societies for which we already know that there exists a rendition in a disk with at most $x$ vortices, each of depth at most $y.$
This immediately allows us to deduce that any large enough transaction in such a society contains a still large subtransaction which is planar and satisfies a number of additional properties.
\smallskip

Let $G$ be a graph and $H$ be a subgraph of $G.$
An \defi{$H$-bridge} is either an edge $e$ with both endpoints in $H$ such that $e\notin E(H),$ or a subgraph of $G$ formed by a component $K$ of $G-V(H)$ along with all edges of $G$ with one endpoint in $V(K)$ and the other endpoint in $V(H).$
In the first case, we call the endpoints of $e$ the \defi{attachments} of the bridge, and in the second case the \defi{attachments} of the bridge are those vertices that do not belong to $K.$
\smallskip

Let $\langle G,\Lambda\rangle$ be a society and $\mathcal{P}$ be a planar transaction of order at least two in $\langle G,\Lambda\rangle.$
The \defi{$\mathcal{P}$-strip society of $\langle G,\Lambda\rangle$} is defined as follows.
Let $\mathcal{P}=\{ P_1,P_2,\dots,P_{\ell}\}$ be ordered\footnote{We assume here that neither the segment $a_1\Lambda a_{\ell}$ nor the segment $b_{\ell}\Lambda b_1$ contains both endpoints of $\Lambda.$ If this is not the case we may slightly rotate $\Lambda$ to obtain a new linear ordering $\Lambda'$ neither segment contains both endpoints and use $\Lambda'$ to derive the $\mathcal{P}$-strip society of $\langle G,\Lambda\rangle.$} such that for each $i\in[\ell],$ $P_i$ has the endpoints $a_i$ and $b_i$ and $a_1,a_2,\dots,a_{\ell},b_{\ell},b_{\ell-1},\dots,b_1$ appear in $\Lambda$ in the order listed.
We denote by $H$ the subgraph of $G$ defined by the union of the paths in $\mathcal{P}$ together with all vertices of $\Lambda$ and by $H'$ the subgraph of $H$ consisting of $\mathcal{P}$ together with the vertices in $V(a_1\Lambda a_{\ell})\cup V(b_{\ell}\Lambda b_1).$
We consider the set $\mathcal{B}$ of all $H$-bridges of $G$ with at least one vertex in $V(H')\setminus V(P_1\cup P_{\ell}).$
For each $B\in\mathcal{B}$ let $B'$ be obtained from $B$ by deleting all attachments that do not belong to $V(H').$
Finally, let us denote by $G_1$ the graph defined as the union of $H'$ and all $B'$ where $B\in\mathcal{B}$ and let $\Lambda_1\coloneqq a_1\Lambda a_{\ell}\oplus b_{\ell}\Lambda b_1.$
The resulting society $\langle G_1,\Lambda_1\rangle$ is the desired $\mathcal{P}$-strip society of $\langle G,\Lambda\rangle.$
The paths $P_1$ and $P_{\ell}$ are called the \defi{boundary paths} of $\langle G_1,\Lambda_1\rangle.$

We say that the $\mathcal{P}$-strip society $\langle G_1,\Lambda_1\rangle$ of $\langle G,\Lambda\rangle$ is \defi{isolated} if no edge of $G$ has one endpoint in $V(G_1)\setminus V(P_1\cup P_{\ell})$ and the other endpoint in $V(G)\setminus V(G_1).$

Moreover, an isolated $\mathcal{P}$-strip society is separating if after deleting any non-boundary path of $\mathcal{P}$ there does not exist a path from one of the two resulting segments of $\Lambda$ to the other.

\begin{lemma}\llabel{isolatedstrips}
Let $t,x\geq 1,$ $p, s, y\geq 2,$ and $b,h$ be positive integers.
Moreover, let $(G,\delta,W)$ be an $(x,y)$-fertile $(t,b,h,s,\Sigma)$-orchard and let $\langle H,\Lambda\rangle$ be a thicket society of $(G,\delta,W).$
Then every transaction in $\langle H,\Lambda\rangle$ of order at least $(x+1)(2xy+p)$ contains a planar transaction $\mathcal{P}$ of order $p$ such that the $\mathcal{P}$-strip society of $\langle H,\Lambda\rangle$ is isolated, separating, and rural. 
\end{lemma}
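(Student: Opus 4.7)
The plan is to reduce to a rural (vortex-free) setting using the $(x,y)$-fertility assumption, and then carve out an isolated, separating block of $p$ consecutive paths from the resulting planar sub-transaction. By $(x,y)$-fertility, the thicket society $\langle H,\Lambda\rangle$ admits a rendition $\rho$ in a disk $\Delta$ with at most $x$ vortex cells $c_1,\dots,c_{x'}$, each of depth at most $y$; I would work inside this fixed rendition throughout.

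The first step is to pass from the given transaction $\mathcal{T}$ of order $N \coloneqq (x+1)(2xy+p)$ to a sub-transaction $\mathcal{T}_1$ of order at least $x(2xy+p)+p$ whose paths are drawn entirely in the flat part $\Delta\setminus \bigcup_i c_i$. The key fact is that each vortex $c_i$ has depth at most $y$, so any transaction of the vortex society of $c_i$ has order at most $y$; restricting the paths of $\mathcal{T}$ to $c_i$, the resulting subpath-linkage contributes at most $2y$ ``essential'' crossings of $c_i$ (the factor of $2$ distinguishing entry and exit along the two segments of $\Lambda_{c_i}$ involved). A standard rerouting of such essential crossers through the peripheral structure of each vortex, followed by discarding them, removes at most $2xy$ paths of $\mathcal{T}$ in total and produces $\mathcal{T}_1$. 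Because every path of $\mathcal{T}_1$ lives in the topological disk $\Delta\setminus\bigcup_i c_i$ with endpoints on $\bd(\Delta)\subset\Lambda$, a crossing endpoint pattern on $\Lambda$ would force two paths to cross in the drawing, which is impossible; hence $\mathcal{T}_1$ is a planar transaction of $\langle H,\Lambda\rangle$.

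The second step is to extract the desired block. Order the paths of $\mathcal{T}_1$ as $P_1,\dots,P_M$ ($M\geq x(2xy+p)+p$) according to the planar ordering, and choose $\mathcal{P}=\{P_{j+1},\dots,P_{j+p}\}$ to be a block of $p$ consecutive paths such that at least $xy$ paths of $\mathcal{T}_1$ lie on each side of $\mathcal{P}$; this is possible because $M\geq 2xy+p$. The $\mathcal{P}$-strip society is then drawn in a subdisk of $\Delta$ that contains no vortex cell of $\rho$, so by \autoref{thm_two_paths} it is \emph{rural}. Any edge of $G$ leading from a non-boundary vertex of the strip to a vertex outside it would have to cross one of the flanking paths in this flat drawing and cannot exist, so the strip is \emph{isolated}. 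And since, in the flat planar picture, any connection inside $G_1$ between the two segments of $\Lambda_1$ has to traverse some $P_i$ with $1<i<p$, deleting any such non-boundary path of $\mathcal{P}$ disconnects the two segments, giving the \emph{separating} property.

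The hardest part will be the first step: turning the depth bound of each vortex into the clean count of at most $2xy$ paths to discard, while maintaining vertex-disjointness of all the remaining rerouted paths and keeping their endpoints on $\Lambda$. This requires tracking how a single path of $\mathcal{T}$ may interact with several vortices simultaneously and carefully scheduling the reroutings vortex by vortex, so that the rerouting around $c_i$ does not undo the work done around $c_{i'}$ for $i'\neq i$. Once the reduction to the flat part is in place, the rurality of the final strip is immediate from \autoref{thm_two_paths}, and the isolated and separating properties are topological consequences of the planar flat rendition and the choice of $\mathcal{P}$ with flanking buffers on both sides.
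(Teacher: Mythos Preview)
Your first step contains a genuine gap: the depth-$y$ bound on a vortex $c_i$ limits the order of any transaction in the vortex society of $c_i$, but it does \emph{not} bound the number of paths of $\mathcal{T}$ touching $c_i$. For instance, a vortex of depth~$1$ can be traversed by arbitrarily many paths of $\mathcal{T}$ if each one enters at some boundary node $v_{2k-1}$ and exits at the adjacent node $v_{2k}$; these subpaths induce no transaction of order larger than~$1$ in the vortex society, yet every one of them touches $c_i$. Consequently, the claim that discarding at most $2xy$ paths leaves a vortex-free sub-transaction is false, and your proposed ``rerouting'' does not save it: the graph need not provide any edges along $\bd(c_i)$ to route through, and rerouting through the flat part will in general collide with the other paths of $\mathcal{T}$. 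Even granting step~1, step~2 is also incomplete: having the paths of $\mathcal{P}$ avoid all vortices does not prevent a vortex from sitting \emph{between} two consecutive paths of $\mathcal{P}$ in the drawing, in which case the strip society contains that vortex and is neither rural nor isolated.

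The paper's proof avoids both issues via a pigeonhole argument rather than a direct count. It first observes that any sub-transaction in which one path crosses all others has order at most $xy$: crosses between disjoint paths must occur inside some vortex, and within a single vortex one path can cross at most $y$ others by the depth bound. It then partitions the transaction into $x{+}1$ blocks along the segment $I$, each consisting of $p$ ``middle'' paths flanked by buffers of $xy$ paths. The buffers together with the crossing bound ensure that distinct blocks cannot share a vortex; since there are only $x$ vortices, one of the $x{+}1$ blocks $\mathcal{L}_i$ has no path using any vortex at all. This pigeonhole step is the essential idea you are missing.
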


\begin{proof}
Since $(G,\delta,W)$ is $(x,y)$-fertile there exists a rendition $\rho$ of $\langle H,\Lambda\rangle$ in the disk with at most $x$ vortices, each of depth at most $y.$

Let $\mathcal{L}$ be a transaction of order at least $(x+1)(2y+p)$ in $\langle H,\Lambda\rangle.$
Then there exist two disjoint segments, say $I$ and $J$ such that every path in $\mathcal{L}$ has one endpoint in $I$ and the other in $J.$
Let us assume that $I$ and $J$ are chosen minimally with this property and let $\mathcal{L}=\{ L_1,L_2,\dots,L_{\ell}\}$ be ordered with respect to the occurrence of the endpoint $u_i$ of $L_i$ in $I.$

Suppose there exists a transaction $\mathcal{L}'\subseteq\mathcal{L}$ such that there exists a path $P$ in $\mathcal{L}'$ which crosses all other paths in $\mathcal{L}'$
Suppose $|\mathcal{L}'|>xy$ then there must exist a transaction $\mathcal{L}''\subseteq \mathcal{L}'$ of order at least $y+1$ and a vortex $c$ of $\rho$ such that $\mathcal{L}''$ induces a transaction of order at least $y+1$ in the vortex society of $c.$
Since $c$ is of depth at most $y$ this is impossible and thus $|\mathcal{L}'|\leq xy.$

We partition $I$ into $x+1$ consecutive segments $I_1,I_2,\dots,I_{x+1}$ each containing at least $2xy+p$ endpoints of members of $\mathcal{L}.$
For each $i\in[x+1]$ there exists a family $\mathcal{L}_i$ containing $p$ consecutive paths such that $I_i$ has at least $xy$ other endpoints on either side.
Notice that, by the observation above, it is not possible that a path from $\mathcal{L}_i$ crosses a path from $\mathcal{L}$ which does not have an endpoint in $I_i.$
In particular, this means that for any choice of $i\neq j\in[x+1]$ there does not exist a vortex of $\rho$ which contains an edge of a path in $\mathcal{L}_i$ and an edge of a path of $\mathcal{L}_j.$
Therefore, there must exist $i\in[x+1]$ such that no path in $\mathcal{L}_i$ has an edge or a vertex that belongs to a vortex of $\rho.$
It follows that $\mathcal{L}_i$ is isolated, separating, and rural as desired.
\end{proof}

\paragraph{Exposed transactions.}

To ensure we are actually making progress with our constructions we require our transactions to run through the vortex region of our thicket.

\medskip

Let $x,y,t,b,h,$ and $s$ be positive integers and $(G,\delta,W)$ be an $(x,y)$-fertile $(t,b,h,s,\Sigma)$-orchard.
Moreover, let $\lin{H, \Lambda}$ be a thicket society of a $(t, s)$-thicket segment of $(G, \delta, W)$ with nest $\mathcal{C} = \{ C_{0}, \ldots, C_{s+1} \}$ and cylindrical rendition $\rho \coloneqq \delta \cap \Delta,$ where $\Delta$ is the $\trace_{\delta}(C^{\mathsf{si}})$-avoiding disk of $\trace_{\delta}(C_{s + 1}).$
A transaction $\mathcal{P}$ in $\lin{H, \Lambda}$ is said to be \defi{exposed} if the $\trace{\rho}(C_{s+1})$-avoiding disk of $\trace_{\rho}(C_{0})$ intersects every path in $\mathcal{P}.$

Please note that our definition of exposed transactions differs slightly from the definition on \cite{kawarabayashi2020quickly}.
Indeed, we do not require the presence of a vortex at all for a transaction to be exposed, it suffices that the transaction crosses through the disk defined by the innermost cycle of the nest.

\paragraph{Pruning of an orchard.}

In \cite{kawarabayashi2020quickly} it was ensured to always find an exposed transaction because Kawarabayashi et al. only considered crooked transactions which are forced to traverse the vortex in their setting.
For us, however, since our transactions are not necessarily crooked and we do not require our cylindrical renditions to be ``around'' a vortex, this is not the case.
We get around this by using a trick from \cite{thilikos2022killing} that allows us to either find, in every large enough transaction, either a big exposed transaction or a refinement of our orchard that reduced the part of $G$ that belongs to our thicket society $\langle H,\Lambda\rangle.$

Let $x,y,t,b,h,$ and $s$ be positive integers and $(G,\delta,W)$ be an $(x,y)$-fertile $(t,b,h,s,\Sigma)$-orchard.
An $(x,y)$-fertile $(t,b,h,s,\Sigma)$-orchard $(G,\delta,W')$ is said to be a \defi{pruning} of $(G,\delta,W)$ if
\begin{itemize}
    \item all handle, crosscap, and parterre segments of $W$ equal the handle, crosscap, and parterre segments of $W',$
    \item if $W$ has $\ell$ thickets, then there exists $j\in[\ell]$ such that $W'$ also has $\ell$ thickets and all thickets of $W,$ expect for the $j$th thicket, equal the thickets of $W'$ except for the $j$th thicket, and
    \item if $\langle H,\Lambda\rangle$ is a thicket society of the $j$-th thicket of $W$ with nest $\mathcal{C} = \{ C_0, C_1,\dots, C_{s+1}\}$ and cylindrical rendition $\rho \coloneqq \delta \cap \Delta,$ where $\Delta$ is the $\trace_{\delta}(C^{\mathsf{si}})$-avoiding disk of $\trace_{\delta}(C_{s + 1}),$ and $\langle H', \Lambda' \rangle$ is a thicket society of the $j$-th thicket of $W'$ with nest $\mathcal{C} = \{ C'_0, C'_1, \dots, C'_{s+1} \}$ and cylindrical rendition $\rho' \coloneqq \delta \cap \Delta',$ where $\Delta'$ is the $\trace_{\delta}(C^{\mathsf{si}})$-avoiding disk of $\trace_{\delta}(C'_{s + 1}),$ then
    \begin{itemize}
    \item either $\Delta'$ is properly contained in $\Delta$ or
    \item there exists $i \in [s]$ such that
        \begin{itemize}    
        \item if $\Delta_i$ is the $\mathsf{trace}_{\rho}(C_{s+1})$-avoiding disk of $\mathsf{trace}_{\rho}(C_i),$ $\langle H_i,\Lambda_i\rangle$ is a $\Delta_i$-society in $\rho,$ and
        \item $\Delta_i'$ is the $\mathsf{trace}_{\rho'}(C'_{s+1})$-avoiding disk of $\mathsf{trace}_{\rho'}(C'_i),$ $\langle H'_i,\Lambda'_i\rangle$ is a $\Delta'_i$-society in $\rho',$
        \end{itemize}
        then either $H'_i-V(C_i')\subsetneq H_i-V(C_i)$ or $E(H'_i)\subsetneq E(H_i).$
    \end{itemize}
\end{itemize}

Hence, the pruning of an orchard is essentially achieved by ``pushing'' one of the cycles of the nest of one of the thickets either a bit closer to the untamed area in the center of the thicket or by extending the nest into the untamed area, thereby taming a part of the actual vortex inside the thicket.

\begin{lemma}\llabel{exposure}
Let $p,x,y,t,b,h,$ and $s$ be positive integers and $(G,\delta,W)$ be an $(x,y)$-fertile $(t,b,h,s,\Sigma)$-orchard.
Let $\langle H,\Lambda\rangle$ be a thicket society of $(G,\delta,W)$ and $\mathcal{P}$ be a transaction of order at least $2s+p$ in $\langle H,\Lambda\rangle.$
Then there exists
\begin{itemize}
    \item an exposed transaction $\mathcal{Q}\subseteq \mathcal{P}$ of order $p,$ or
    \item a pruning of $(G,\delta,W).$
\end{itemize}
Moreover, there exists an algorithm that finds one of the two outcomes in time $\mathcal{O}(|G|).$
\end{lemma}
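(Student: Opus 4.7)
The plan is to perform a dichotomy on how many paths of $\mathcal{P}$ reach the innermost disk of the nest. Let $\rho = \delta \cap \Delta$ be the cylindrical rendition of $\langle H,\Lambda\rangle,$ let $\mathcal{C} = \{C_0,\ldots,C_{s+1}\}$ be its nest, and for $j \in \{0,\ldots,s+1\}$ let $\Delta_j$ denote the $\trace_\rho(C_{s+1})$-avoiding disk of $\trace_\rho(C_j).$ In a single linear-time scan over $\mathcal{P}$ I would split $\mathcal{P}$ into the set $\mathcal{P}_{\mathsf{e}}$ of paths meeting $\Delta_0$ (the exposed ones) and its complement $\mathcal{P}_{\mathsf{n}} = \mathcal{P}\setminus\mathcal{P}_{\mathsf{e}}.$ If $|\mathcal{P}_{\mathsf{e}}|\geq p,$ any $p$ elements of $\mathcal{P}_{\mathsf{e}}$ provide the desired exposed subtransaction $\mathcal{Q}$ and we stop.

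Otherwise, since $|\mathcal{P}|\geq 2s+p,$ we have $|\mathcal{P}_{\mathsf{n}}|>2s,$ and the task reduces to constructing a pruning of $(G,\delta,W).$ For each $P\in\mathcal{P}_{\mathsf{n}}$ I would compute $j(P) \coloneqq \min\{j\in\{1,\ldots,s+1\}\mid P\cap\Delta_j\neq\emptyset\};$ note that $j(P)=s+1$ precisely when $P$ avoids every $\Delta_j$ with $j\leq s.$ Since $|\mathcal{P}_{\mathsf{n}}|>2s\geq s+1,$ the pigeonhole principle yields some $j^*\in\{1,\ldots,s+1\}$ attained by at least two non-exposed paths. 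If $j^*=s+1,$ both candidates lie in the outermost annulus $\Delta\setminus\overline{\Delta_s};$ one of them, together with the appropriate arc of $C_{s+1}$ between its endpoints, bounds a sub-disk $D\subsetneq\Delta$ disjoint from $\trace_\rho(C^{\mathsf{si}})$ and from the rails of the thicket, and setting $\Delta'=\overline{\Delta\setminus D}$ delivers a pruning of the first kind. If instead $j^*\leq s,$ pick any $P\in\mathcal{P}_{\mathsf{n}}$ with $j(P)=j^*$ and let $P'$ be the subpath of $P$ lying inside $\Delta_{j^*};$ since $P'$ has both endpoints on $C_{j^*}$ and stays in the annulus $\Delta_{j^*}\setminus\Delta_{j^*-1},$ splicing $P'$ with the correct arc of $C_{j^*}$ produces a new $\rho$-grounded cycle $C'_{j^*}$ still enclosing $\Delta_{j^*-1}$ but bounding a strictly smaller region than $C_{j^*},$ which certifies $H'_{j^*}-V(C'_{j^*})\subsetneq H_{j^*}-V(C_{j^*}),$ i.e., a pruning of the second kind.

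The hard part will be verifying that the new cycle $C'_{s+1}$ or $C'_{j^*}$ is indeed $\rho$-grounded and compatible with the remaining walloid infrastructure of $W,$ in particular with the rails of the $(t,s)$-thicket segment and with the adjacent handle, crosscap, and parterre segments. The slack of $2s$ in the order of $\mathcal{P}$ is precisely what ensures enough candidate paths per level $j^*$ to select one whose chord avoids the finitely many vertices pinned by these infrastructural constraints; all the surgery stays within $\Delta$ and can be carried out in $\mathcal{O}(|G|)$ time after the initial classification pass.
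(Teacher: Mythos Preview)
Your high-level plan matches the paper's: classify paths by whether they reach $\Delta_0$, return the exposed ones if there are enough, and otherwise use the surplus of non-exposed paths to push one of the nest cycles inward. The gap is in the step where you pick a single path $P$ with $j(P)=j^*\leq s$ and splice a subpath $P'\subseteq P\cap\Delta_{j^*}$ into $C_{j^*}$. Nothing forbids $P$ from merely \emph{touching} $C_{j^*}$: $P\cap\Delta_{j^*}$ may consist of isolated vertices on $C_{j^*}$ or a subpath of $C_{j^*}$, in which case $P'$ contributes no edge outside $C_{j^*}$ and your splice returns $C_{j^*}$ unchanged, so no pruning results. Your pigeonhole on depth levels yields only two paths at the same $j^*$, and two disjoint paths can both just touch $C_{j^*}$ without contradiction, so the slack of $2s$ does not rescue this.

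The paper evades the issue by first observing that the non-exposed paths form a \emph{planar} transaction (they avoid the unique vortex), so that their ``small sides''---the disk bounded by each path together with an arc of $\bd(\Delta)$ missing $\Delta^*$---are pairwise nested or disjoint and hence fall into at most two equivalence classes, one of size $\geq s+1$. For nested paths $Q_1,\dots,Q_{s+1}$, if $Q_k$ touches $\Delta_j$ then $Q_{k+1}$, whose small side contains $Q_k$, is forced to enter $\text{int}(\Delta_j)$ strictly (otherwise its small side would miss $\text{int}(\Delta_j)$ and could not contain the point where $Q_k$ meets $C_j$). With $s+1$ nested paths against $s$ cycles this guarantees some $Q_{k+1}$ that strictly enters $\Delta_{j}$ while avoiding $\Delta_{j-1}$, yielding the nondegenerate chord $L$ you need. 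Your depth-only pigeonhole misses this nesting argument.

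Your treatment of the rails is also too optimistic. You assert that the $2s$ slack gives ``enough candidate paths per level $j^*$ to select one whose chord avoids \dots\ the rails,'' but pigeonhole supplies only two paths at one level, not many at every level, and there is no reason either avoids all $t$ rail paths. The paper does not try to avoid the rails: after replacing $C_i$ by the new cycle $C'$, it further absorbs into $C'$ any $V(C_i)$-$V(C_i)$-subpaths of rail paths lying on the wrong side, and then reroutes at most one remaining rail along an arc of the resulting $C''$ to restore orthogonality. This explicit rerouting is what makes the new thicket a valid segment of $W'$; it cannot be replaced by a counting argument.
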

\begin{proof}

Let $\{ C_{0}, \ldots, C_{s+1} \}$ be the nest paired with the $(t, s)$-thicket segment that corresponds to $\lin{H, \Lambda},$ $\rho \coloneqq \delta \cap \Delta$ be the cylindrical rendition of $\lin{H, \Lambda},$ where $\Delta$ is the $\trace_{\delta}(C^{\mathsf{si}})$-avoiding disk of $\trace_{\delta}(C_{s + 1}),$ and $\mathcal{C} = \{ C_{1}, \ldots, C_{s} \}.$
If $\mathcal{P}$ contains an exposed transaction of order $p$ we are immediately done.
Moreover, we can check in linear time for the existence of such a transaction by checking for each path in $\mathcal{P}$ individually if it is exposed.

First, observe that we may assume that every path of $\mathcal{P}$ is intersected by the $\trace_{\rho}(C_{s+1})$-avoiding disk of $\trace{\rho}(C_{s}).$
Indeed, if there is a path $P$ which violates this condition then we can define a $\delta$-grounded cycle $C'_{s + 1}$ by replacing a subpath of $C_{s+1}$ whose endpoints are the same as the endpoints of a minimal $V(C_{s+1})$-$V(C_{s+1})$-subpath of $P$ in a way that defines a cycle whose trace in $\delta$ bounds a disk $\Delta'$ which contains $\trace_{\delta}(C_{0}).$
It is then easy to see that $\Delta'$ is properly contained in $\Delta$ and by replacing our working thicket with a thicket whose thicket society corresponds to any $\Delta'$-society in $\delta$ yields a pruning of $(G, \delta, W).$

Hence, we may assume that there exists a linkage $\mathcal{Q}\subseteq \mathcal{P}$ of order $2s+1$ such that \textsl{no} path of $\mathcal{Q}$ is exposed and every path is intersected by the $\trace_{\rho}(C_{s+1})$-avoiding disk of $\trace_{\rho}(C_{s}).$
Let $\Delta^{*}$ denote the $\trace_{\rho}(C_{s+1})$-avoiding disk of $\trace_{\rho}(C_{0}).$
It follows that $\mathcal{Q}$ must be a planar transaction and no path in $\mathcal{Q}$ intersects the $\Delta^{*}.$
Notice that each member $Q$ of $\mathcal{Q}$ naturally separates $\Delta$ into two disks, exactly one of which intersects $\Delta^*$ (in fact this disk must contain $\Delta^*$).
Let us call the other disk the \defi{small side of $Q$}.
Given two members of $\mathcal{Q}$ then either the small side of one is contained in the small side of the other, or their small sides are disjoint.
It is straightforward to see that if we say that two members are \defi{equivalent} if their small sides intersect, this indeed defines an equivalence relation on $\mathcal{Q}.$
Moreover, there are exactly two equivalence classes, one of which, call it $\mathcal{Q}',$ must contain at least $s+1$ members.
Since $|\mathcal{C}|=s$ there must exist some $i\in[s]$ and some subpath $L$ of some path in $\mathcal{Q}'$ such that
\begin{itemize}
    \item both endpoints of $L$ belong to $V(C_i),$
    \item $L$ is internally disjoint from $\bigcup_{i\in[s]}V(C_i),$
    \item $L$ contains at least one edge that does not belong to $C_i,$ and
    \item $L$ is drawn in the $\mathsf{trace}(C_i)$-disk that is disjoint from the nodes corresponding to the vertices of $\Lambda.$
\end{itemize}
Notice that $C_i\cup L$ contains a unique cycle $C'$ different from $C_i$ whose trace separates $\Delta^*$ from the nodes corresponding to the vertices of $\Lambda.$
Moreover, the $\Delta^*$-containing disk $\Delta''$ of $\mathsf{trace}(C')$ is properly contained in the $\Delta^*$-containing disk of $\mathsf{trace}(C_i).$
In particular, there exists an edge of $C_i$ which does not belong to $C_i',$ and there exists an edge that belongs to $C_i'$ but not to $C_i$ and therefore, the graph drawn on $\Delta''$ after deleting the vertices of $C_i'$ is properly contained in the graph drawn on $\Delta'$ after deleting the vertices of $C_i.$

We would now like to define the new nest $\mathcal{C}'\coloneqq \{ C_1,\dots,C_{i-1},C',C_{i+1},\dots,C_s\}.$
However, to fully obtain the desired pruning of $(G,\delta,W)$ we need to use the radial linkage $\mathcal{R}$ of the thicket in $W$ to obtain a radial linkage $\mathcal{R}'$ of the same order which is orthogonal to $\mathcal{C}'.$
First, we are going to change the cycle $C'$ once more.
Consider $C''$ to be the cycle obtained from $C'$ after iteratively replacing a subpath of $C',$ in the same way as above, with any $V(C_{i})$-$V(C_{i})$ subpath $R'$ of path in $\mathcal{R}$ that satisfies the same four properties as $L$ above.
It follows that there can be at most one path in $\mathcal{R},$ say $R,$ that is not orthogonal to $C''$ after performing these replacements.
To conclude, now we are going to change the path $R.$
Let us traverse along $C''$ in the clockwise direction.
We may define $x_R$ to be the first vertex of $L$ on $R$ and $y_R$ to be the last such vertex.
Then there exists a unique $x_R$-$y_R$-subpath $R''$ of $R$ and a unique $x_R$-$y_R$ subpath $L'$ of $C'$ such that replacing $R''$ by $L'$ results in a new path that is now orthogonal to $C''.$
This defines our desired path $R'$ and allows us to define the nest $\mathcal{C}' \coloneqq \{ C_1,\dots,C_{i-1},C'',C_{i+1},\dots,C_s\}$ and the radial linkage $\mathcal{R}' = \mathcal{R} \cup \{ R' \} \setminus \{ R \}$ together with the new walloid $W'$ to obtain our desired pruning of $(G,\delta,W).$
\end{proof}

Notice that whenever we find a pruning, some edge of $G$ crosses over one of the cycles in some nest in some thicket of $(G,\delta,W).$
Since we are in a graph excluding some minor, there are at most $\mathcal{O}(|G|)$ many edges.
Moreover, the total number of thickets is upper bounded by some constant depending only on the parameters which also determine the size of the excluded minor.
Indeed, similarly, the size of each nest of each thicket can also be considered to be a constant with the same arguments.
This means that after finding at most $\mathcal{O}(|G|)$ many big enough transactions on the thicket societies we have either found a big exposed transaction or may conclude that all thicket societies are of bounded depth.

Notice that, in the end, we want to guarantee that not only each thicket society is of bounded depth, but within every thicket is a vortex which is also of bounded depth, and this vortex is surrounded by a large nest.
The tools we develop in the upcoming subsections will allow us to achieve just that.

\paragraph{Orthogonality.}

So far we know that we can always either find a pruning of our blooming orchard, or we find an exposed planar transaction whose strip society is isolated, separating, and rural.
An important ingredient that we will need to develop the tools in the upcoming subsections is that, in the latter case above, we should also be able to make an exposed planar transaction orthogonal, by possibly slightly altering the nest that comes with some thicket society.
For this purpose we develop the following tool that builds on ideas from \cite{PaulPTW24Delineating}.

\begin{lemma}\label{orthogonal_transaction} Let $x,y,t,b,h,$ and $s$ be positive integers and $(G,\delta,W)$ be an $(x,y)$-fertile $(t,b,h,s,\Sigma)$-orchard.
Let $\langle H, \Lambda \rangle$ be a thicket society of $(G, \delta, W)$ with railed nest $(\mathcal{C}, \Rcal)$ and $\rho \coloneqq \delta \cap \Delta$ be the cylindrical rendition of $\lin{H, \Lambda}$ around $c_{\rho}$ where $\Delta$ is the disk bounded by the $\trace(C^{\mathsf{si}})$-avoiding disk of the outer cycle of the  $(t, s)$-thicket segment of $(G, \delta, W)$ that corresponds to $\lin{H, \Lambda}.$
Let $\Pcal$ be a planar exposed transaction of order $s \cdot (p + 2)$ in $\lin{H, \Omega}$ such that the $\Pcal$-strip society of $\lin{H, \Lambda}$ is isolated, separating, and rural.
Then there exists
\begin{itemize}
\item an $s$-nest $\mathcal{C}'$ in $\delta \cap \Delta$ around the same arcwise connected set as $\mathcal{C},$
\item a radial linkage $\mathcal{R}'$ that is orthogonal to $\mathcal{C}'$ and has the same endpoints as $\mathcal{R}$ on $B_{\Lambda},$ and
\item a planar exposed transaction $\mathcal{Q}$ of order $p$ in $\lin{H, \Omega}$ that is orthogonal to $\mathcal{C}'$ such that the $\Qcal$-strip society of $\lin{H, \Lambda}$ is isolated, separating, and rural.
\end{itemize}
Moreover, there exists an algorithm that finds the outcome above in time $(\poly(s \cdot p) + \poly(t)) \cdot |G|.$
\end{lemma}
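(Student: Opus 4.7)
The argument combines the planarity of the strip society of $\mathcal{P}$ with a surplus of transaction paths that serves as a reserve for rerouting both the nest cycles and the radial linkage.

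Since the $\mathcal{P}$-strip society $\langle H_1,\Lambda_1\rangle$ of $\langle H,\Lambda\rangle$ is isolated, separating, and rural, \autoref{thm_two_paths} provides a vortex-free rendition $\rho_1$ of $\langle H_1,\Lambda_1\rangle$ in a disk $\Delta_1.$ Because $\mathcal{P}$ is exposed, every path of $\mathcal{P}$ meets the central disk of $\Delta$ bounded by $\mathsf{trace}_{\rho}(C_0),$ and therefore traverses every nest cycle $C_1,\ldots,C_s$ at least twice within the planar picture supplied by $\rho_1.$ I would enumerate the paths of $\mathcal{P}$ as $P_1,\ldots,P_{s(p+2)}$ in the order their endpoints occur along $\Lambda_1$ and split them into $p$ consecutive blocks of $s+2$ paths each. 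From each block I would pick a ``middle'' path as the corresponding element $Q_j$ of the output transaction $\mathcal{Q};$ the other $s+1$ paths in each block form a pool of \emph{reserve} paths. Since $\mathcal{Q}\subseteq\mathcal{P},$ the $\mathcal{Q}$-strip society is obtained from the $\mathcal{P}$-strip by keeping $\mathcal{Q}$ and absorbing all bridges attached between consecutive members of $\mathcal{Q},$ and a routine check shows that ``isolated'', ``separating'', and ``rural'' are inherited.

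The core of the argument produces $\mathcal{C}'$ and $\mathcal{R}'$ by iterating the classical ``first/last crossing'' surgery inside the planar rendition $\rho_1.$ I would process the cycles in the order $C_1,\ldots,C_s$ (innermost to outermost). Assume orthogonal cycles $C_1',\ldots,C_{i-1}'$ have been built. To produce $C_i'$ from $C_i,$ for every $Q_j$ with $|C_i\cap Q_j|>2$ I would take the portion of $Q_j$ lying strictly inside the disk bounded by $C_i$ (which exists since $\mathcal{P}$ is exposed) and shortcut $C_i$ along it, cutting away the excess crossings with $Q_j.$ Choosing the detour strictly inside $C_i$ guarantees that no new crossings with the outer cycles $C_{i+1},\ldots,C_s$ are introduced, and when a detour threatens to disturb a previously processed inner cycle $C_k'$ ($k<i$) I would swap in one of the $s+1$ reserve paths of the current block, which is free precisely because there are more reserve paths per block than cycles to be protected. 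After $\mathcal{C}'$ is finalized, the radial linkage $\mathcal{R}'$ is constructed analogously: for each $R\in\mathcal{R}$ and each $C_i'$ I would perform a single first/last-crossing shortcut along $C_i'$ to make $R\cap C_i'$ a single component; since these surgeries modify $R$ only on its inner portion, the endpoint of $R$ on $B_{\Lambda}$ is preserved.

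The principal obstacle is the bookkeeping needed to keep every surgery mutually compatible: a modification of one cycle must not break the orthogonality of cycles already processed, must not destroy the planarity or exposure of $\mathcal{Q},$ and must not interfere with the later modifications of $\mathcal{R}.$ This is handled by assigning disjoint \emph{reserve corridors} to different cycles and by exploiting the nested topology of $\rho_1,$ so that reserve arcs used to reroute $C_i$ stay inside the annular region between $C_{i-1}'$ and $C_{i+1}$ and never reach the parts of the disk where other cycles live. Verifying this annular containment relies crucially on the planar rendition of the rural $\mathcal{P}$-strip, and is the most delicate part of the proof. The stated running time $(\mathsf{poly}(sp)+\mathsf{poly}(t))\cdot|G|$ follows because each surgery is a local replacement inside a planar graph of linear size, and the total number of surgeries is polynomial in $sp$ and in the order of the nest and radial linkage.
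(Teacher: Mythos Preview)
Your high-level plan is sound: exploit the rural rendition of the $\mathcal{P}$-strip to perform rerouting of the nest, the radial linkage, and the transaction. However, the core mechanism you propose (local ``first/last crossing'' surgery of each $C_i$ along each chosen $Q_j$, with leftover paths as ``reserves'') is not the paper's approach and, as written, has a genuine gap.

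The concrete problem is the reserve-path step. When a shortcut of $C_i$ along a subpath of $Q_j$ dips into the disk of some already-fixed $C_k'$ ($k<i$), you say you would ``swap in one of the $s+1$ reserve paths of the current block.'' But it is never explained what object is being swapped, into what, or why this eliminates the conflict. A reserve path $P$ from the same block can intersect $C_k'$ just as badly as $Q_j$ does; there is no a~priori reason the $j$th block contains any path whose inner excursion stays between $C_{i-1}'$ and $C_{i+1}$. The ``annular containment'' you flag as the most delicate step is exactly the missing argument, and nothing in the proposal supplies it. (Minor note: the block arithmetic is also off, since $p\cdot(s+2)\neq s\cdot(p+2)$ in general.)

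The paper avoids this dependency problem by reversing the roles. Rather than fixing $\mathcal{Q}$ first and forcing each cycle to become orthogonal to it, the paper first normalizes the \emph{entire} nest relative to \emph{all} of $\mathcal{P}$: it defines, for each cycle $C_i$ and each path $P_j$, the notion of a \emph{bend} of $C_i$ at $P_j$ (a $V(P_j)$-$V(P_j)$ subpath of $C_i$), classifies bends as shrinking/expanding and loose/tight, and then iterates a single local rule---replace any loose expanding bend $B$ by the corresponding subpath $P_B$ of the transaction path---until no loose expanding bend remains. A short induction on the number of cycles then yields a global structural fact: every remaining bend of $C_i$ has \emph{height} at most $s-i+1$ (it cannot reach more than $s$ transaction paths away), and each $C_i$ has exactly two pillars. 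Only \emph{after} this normalization is the transaction $\mathcal{Q}$ chosen, by discarding the first and last $s$ paths of $\mathcal{P}$ and then keeping one path out of every $s$; the height bound guarantees that no bend of any $C_i'$ can touch two of the surviving paths, so a single further pass of shortcuts makes $\mathcal{Q}$ orthogonal. The radial linkage is straightened by a separate argument using ``retrogressions'' after the nest has been fixed.

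So the paper's key idea you are missing is the height bound on bends once all loose expanding bends are removed; this is what replaces your unexplained reserve mechanism and turns the bookkeeping into a clean induction.
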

\begin{proof} It is straightforward to obtain a rendition $\rho^{*}$ of $\lin{H, \Omega}$ by combining a vortex-free rendition of the $\Pcal$-strip society of $\lin{H, \Lambda}$ and $\delta \cap \Delta$ such that $c_{\rho}$ is divided into a set $\Ucal$ of vortex cells in $\rho^{*}.$
Note that, since $\Pcal$ is exposed it follows that $|\Ucal| \geq 2.$
Moreover, we can define $\rho^{*}$ as a \emph{restriction} of $\delta \cap \Delta,$ i.e., so that every cell of $\rho^{*}$ that is not contained in $c_{\rho}$ is a cell of $\delta \cap \Delta.$
For what follows, we work with the rendition $\rho^{*}.$
Notice that in $\rho^{*}$ the drawing of the paths in $\Pcal$ never cross one another and this is crucial for the arguments that follow.

Let $q \coloneqq s \cdot (p + 2).$
Let $\Lambda_{\Pcal}$ be an ordering of the paths in $\Pcal$ such that while traversing the rotation ordering of $\Lambda$ we first encounter one of the two endpoints for all paths in $\Pcal$ and then all others.
Let $P_{1}, \ldots, P_{s}$ be the paths in $\Pcal$ respecting the ordering $\Lambda_{\Pcal}.$
Moreover, for every cycle $C \in \mathcal{C}$ we define the \emph{interior} (resp. \emph{exterior}) of $C$ as the disk bounded by the trace of $C$ that contains (resp. avoids) $B_{\Lambda}.$

Now, for $i \in [s]$ and $j \in [q],$ let $P$ be a $V(P_{j})$-$V(P_{j})$-path $B$ that is a subpath of $C_{i}.$
Let $P_{B}$ be the subpath of $P_{i}$ that shares its endpoints with $B.$
Let $\Delta_{B}$ be the disk bounded by the trace of the cycle $B \cup P_{B}$ that does not fully either the trace of $C_{1}$ or $B_{\Lambda}.$
We call $B$ a \emph{bend} of $C_{i}$ at $P_{j}$ if $\Delta_{B}$ does not intersect the drawing of $C_{i} \setminus B.$
Moreover, we call a bend $B$ \emph{shrinking} if $\Delta_{B}$ is a subset of the interior of $C_{i}$ and \emph{expanding} otherwise.
We call an expanding bend $B$ \emph{loose} if $\Delta_{B}$ does not intersect the drawing of any cycle in $\Ccal$ that is not $C_{i}$ and $\emph{tight}$ if any expanding bend $B'$ of $C_{i}$ where $B'$ is a subpath of $B$ is not loose.

We also define the \emph{height} of a bend $B$ as the non-negative integer $r + 1,$ where $r$ is the maximum value such that $B$ intersects at least one of $P_{j + r}$ or $P_{j - r}$ and $j + r \leq s$ or $j - r \geq 1$ (if $j + r$ exceeds $q$ we consider $P_{j + r} = P_{q}$ and if $j - r$ drops below $1$ we similarly assume that $P_{j - r} = P_{1}$).
Finally, we call any minimal $V(P_{1})$-$V(P_{q})$ path contained in a cycle $C \in \Ccal$ a \emph{pillar} of $C.$

First, notice that an expanding bend $B$ of a cycle $C \in \Ccal$ that is neither loose nor tight must contain a subpath $B'$ that is a loose expanding bend of $C.$
Moreover, if $B$ is any expanding bend of a cycle $C \in \Ccal,$ then the graph obtained by replacing the subpath $B$ of $C$ with the path $P_{B}$ defines a cycle $C'$ and the interior of $C'$ strictly contains the interior of $C,$ hence the name expanding.
In the case of a shrinking bend the opposite happens, particularly we obtain a cycle with strictly smaller interior.

Additionally, notice that for every $C \in \Ccal,$ since $\Pcal$ is an exposed transaction, $C$ contains at least one pillar and since $C$ is a cycle, it must contain an even number of pairwise disjoint pillars.

We proceed to prove the following statement by an induction on the number of cycles in $\Ccal$: If no cycle in $\Ccal$ contains a loose expanding bend then for every $i \in [s],$
\begin{enumerate}
\item every expanding bend of $C_{i}$ has height at most $s - i + 1,$
\item $C_{i}$ contains exactly two disjoint pillars $R_{i},$ $i \in [2],$ and
\item every shrinking bend of $C_{i}$ that is a subpath of $R_{i},$ $i \in [2],$ has height at most $s - i + 1.$
\end{enumerate}

Now, let $k \in [1, s-1]$ and $\Ccal^{k}$ be the subset of $\Ccal$ that consists of its $k$ outermost cycles.
Assume that the statement above holds for $\Ccal^{k}.$
We prove that it also holds for $\Ccal^{k + 1}.$
Assume that no cycle in $\Ccal^{k + 1}$ contains a loose expanding bend.
Let $B$ be any tight expanding bend of $C_{s - k}$ at $P \in \Pcal$ which is the innermost cycle contained in $\Ccal^{k+1}$ and the only cycle of $\Ccal^{k+1}$ not contained in $\Ccal^{k}.$
Notice that any subpath $B'$ of $B$ whose endpoints are the endpoints of a maximal subpath of a path in $\Pcal$ that is drawn in $\Delta_{B}$ is also an expanding bend of $C_{s - k}$ where $\Delta_{B'} \subseteq \Delta_{B}.$
Since $B$ is tight, any such subpath $B'$ must also be tight.
This implies that the disk $\Delta_{B'}$ for any such subpath $B'$ intersects the drawing of some other cycle of $\Ccal.$
Since $B$ is expanding and hence $\Delta_{B}$ is not a subset of the interior of $C_{s - k},$ it follows that $\Delta_{B'}$ intersects the drawing of cycles only in $\Ccal^{k}.$
In fact, it follows that any such intersection corresponds to an expanding bend of some cycle in $\Ccal^{k}$ at $P$ that is contained in $\Delta_{P}.$
Then, by assumption, if all these expanding bends are tight, it must be that the maximum height among all of them is $k.$
Hence, it must be that the height of $B$ is $k + 1.$

Moreover, it is straightforward to observe that $C_{s - k}$ contains exactly two disjoint pillars.
Indeed, if there were at least four disjoint pillars in $C_{s - k}$ then it is easy to see that there would exist an expanding bend of $C_{s - k}$ on either $P_{1}$ and/or $P_{q}$ whose height would be larger than $k + 1,$ which would imply that it is a loose expanding bend, which by assumption cannot exist.

Finally, it is also straightforward to observe that the existence of a shrinking bend of $C_{s - k}$ that is a subpath of either of the two pillars of $C_{s - k}$ of height more than $k + 1,$ implies the existence of an expanding bend of the same height, which as we proved cannot exist.

\medskip
We proceed with the description of an algorithm that runs in time $(\poly(s \cdot p) + \poly(t)) \cdot |G|$ that computes the desired $s$-nest $\mathcal{C}'$ along with the desired railed linkage $\mathcal{C}'$ that is orthogonal to $\mathcal{C}',$ and moreover computes the desired planar exposed transaction $\mathcal{Q}$ of order $p$ that is orthogonal to the new nest $\mathcal{C}'.$

\paragraph{Step 1:} We obtain our new set of cycles $\mathcal{C}' \coloneqq \langle C'_{1}, \ldots, C'_{s} \rangle,$ where $C'_{i},$ $i \in [s],$ is obtained from $C_{i}$ by iteratively applying whenever applicable the following two rules.

\begin{itemize}
\item[\textbf{1:}] As long as there exists a path $R \in \mathcal{R},$ some $i \in [s-1]$ and a subpath $R' \subseteq R$ such that $R'$ has both endpoints in $C_i$ and is otherwise disjoint from all cycles of $\mathcal{C}$ and moreover $R'$ is drawn entirely in the exterior of $C_{i},$ we update $C_{i}$ to the unique cycle $C'$ contained in $C_{i} \cup R'$ different from $C_{i}$ whose trace separates the interior of $C_{1}$ from $B_{\Lambda}.$
We call such a subpath $R'$ of $R$ as above, a $C_{i}$-\emph{expanding} path.
Now, observe that, since $\rho^{*}$ is a restriction of $\delta \cap \Delta,$ $C'$ is also grounded in $\delta \cap \Delta.$
Moreover, the interior of $C'$ properly contains the interior of $C_{i}$ (before the update).
\item[\textbf{2:}] As long as there exists a loose expanding bend $B$ of a cycle $C \in \mathcal{C}$ we replace $B$ by $P_{B}$ as defined above.
Now, observe that, since $\rho^{*}$ is a restriction of $\delta \cap \Delta,$ the updated cycle remains grounded in $\delta \cap \Delta.$
\end{itemize}
Since for both rules above, whenever we update a cycle of our nest, it is guaranteed that the interior of the updated cycle strictly contains the interior of its predecessor, the above procedure will terminate in at most $\poly(s \cdot p) \cdot |G|$ many steps and the resulting nest $\mathcal{C}'$ will satisfy the following two properties.
There is no loose expanding bend of any cycle in $\mathcal{C}',$ which implies that $\mathcal{C}'$ satisfies the three properties above, and there is no $C$-expanding subpath of any path in $\mathcal{R}$ for any cycle $C \in \mathcal{C}'.$

\paragraph{Step 2:}

Now, update $\mathcal{R}$ to contain the minimal $V(\Lambda)$-$V(C'_{1})$ subpath of each path in $\mathcal{R}.$

It follows from the previous step that any subpath $R'$ of any path $R \in \mathcal{R}$ such that $R'$ has both endpoints on $C'_i$ for $i \in [2, s - 1]$ and is otherwise disjoint from $C'_i$ must either intersect $C'_{i+1}$ or it cannot be drawn in the exterior of $C'_i.$
In the case that $R'$ has both endpoints on $C'_{s}$ it can only be that $R'$ is not drawn in the exterior of $C'_{s}.$
From now on we call such a subpath $R'$ of a path $R \in \mathcal{R}$ a \defi{retrogression} of $R.$

Let the paths in $\mathcal{R}=\{ R_1,R_2,\dots,R_t\}$ be ordered according to the occurrence of their endpoints on the rotation ordering of $\Lambda.$
Moreover, assume there exists an $i \in [t]$ and a $j \in [2, s]$ such that $R_i$ has a retrogression $R'$ with both endpoints on $C'_j.$
Notice that there exists a subpath $L_{R'}$ of $C'_j$ that shares its endpoints with $R'$ which is internally disjoint from $R_i.$
This is because otherwise, we could find a subpath of $R_i$ which witnesses that we are in the previous case.
Let $\Delta_{R'}$ be the disk bounded by the trace of $R'\cup L_{R'}$ which avoids the trace of $C_1.$
We assume that $R'$ is chosen maximally with the property of being a retrogression.

Suppose there exists some path $Q,$ possibly $Q=R,$ in $\mathcal{R}$ whose drawing intersects the interior of $\Delta_{R'}$ in a node or arc distinct from those of $R'.$
Then, this intersection of $Q$ with $\Delta_{R'}$ must belong to a maximal retrogression $Q'$ of $Q$ and there must exist $j' \in [2, s],$ $j' > j,$ such that both endpoints belong to $C_{j'}.$
Hence, there must exist some maximal retrogression $P'$ of some path $P' \in \mathcal{R}$ such that no other part of $\mathcal{R}$ intersects the corresponding disk $\Delta_P'.$
Thus, by replacing $P'$ with $L_{P'},$ we obtain a new radial linkage still with the same endpoints as those of $\mathcal{R},$ but with strictly less retrogressions.
This means that, after at most $E(\mathcal{R})$ many such refinement steps, we must have found a radial linkage $\mathcal{R}'$ which is orthogonal to $\mathcal{C}'.$
This step can be performed in time $\poly(t) \cdot |G|.$

\paragraph{Step 3:}

We now define a transaction $\Pcal' \subseteq \Pcal$ of order $p$ by skipping the first and last $s$ paths in $\Pcal$ and then choosing the first path from every bundle of $s$ many consecutive paths from the remaining paths of $\Pcal.$
By the arguments above the height of every expanding bend of every cycle in $\Ccal'$ at $P_{1}$ or $P_{s}$ is at most $s$ and therefore by definition no such bend can intersect any of the paths in $\Pcal'.$
It follows that any other bend of any cycle must be a bend that is a subpath of a pillar of the given cycle.
By definition of $\Pcal',$ this implies that any such bend of any cycle can intersect at most one path in $\Pcal'.$
This implies that the transaction $\Pcal'$ which is a planar and exposed transaction in $(H, \Omega)$ is almost orthogonal to our new set of cycles $\Ccal'.$

We need to make one more adjustment to each of the paths in $\Pcal'$ to finally make them orthogonal to $\Ccal'.$
We define the final transaction $\Qcal$ where each path $Q \in \Qcal$ is a path obtained from a path in $P \in \Pcal'$ that shares the same endpoints with $P$ and is defined by starting from one of the two endpoints of $P$ and while moving towards the other endpoint of $P$ taking any possible ``shortcut'', that is by greedily following along any bend of any pillar of a cycle in $\Ccal'$ at $P$ that appears, until we reach the other endpoint of $P.$
Since as we already observed any bend of any pillar of a cycle can intersect at most one path in $\Pcal',$ it is implied that $\Qcal$ is a linkage.
Moreover, by construction, it now follows that $\Qcal$ is orthogonal to $\Ccal'$ as desired and that the paths in $\Qcal$ are grounded in $\rho^{*}$ which also implies that the $\Qcal$-strip society of $\lin{H, \Lambda}$ is isolated, separating, and rural.
Finally, since the shortcuts we take are subpaths of pillars it follows that $\Qcal$ contains at least one edge drawn in the interior of $C'_{1}$ and therefore is exposed (with respect to $\delta \cap \Delta$).
\end{proof}

\paragraph{Rooting transactions.}

Recall that every thicket comes with a railed nest $(\mathcal{C},\mathcal{R})$ where $\mathcal{R}$ is orthogonal to $\mathcal{C}.$
Moreover, $\mathcal{R}$ acts as our connection of the thicket to the cylindrical wall of the walloid of our orchard.
While refining, and possibly splitting, the thicket we wish to maintain the existence of such a radial linkage and we need to make sure that new flowers that potentially need to be extracted from the thicket can be routed through the paths in $\mathcal{R}.$

\medskip
To this end we prove the following lemma.
For a visualization, see \autoref{omitted_coterminality}.

\begin{figure}[ht]
\begin{center}
\scalebox{1.1}{\includegraphics{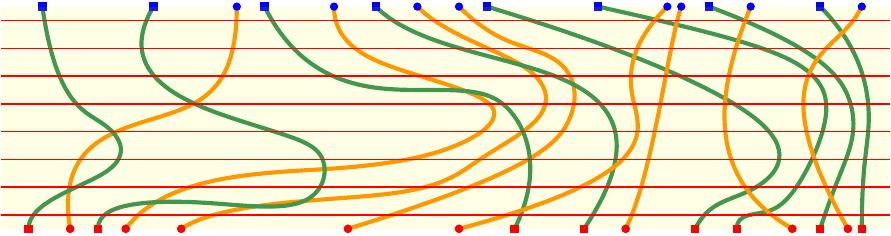}}
\end{center}
    \caption{A visualization of \autoref{easy_consequence_of_coterminality}. The cycles in $\Ccal$ are red, the paths in $\Lcal$ are green, and the paths in $\Lcal'$ are orange. The vertices in  $X\cup Y$ are red and the vertices in $X'\cup Y'$ are blue.}
  \llabel{omitted_coterminality}
\end{figure}

\begin{lemma}\llabel{easy_consequence_of_coterminality}
Assume that $G$ is a graph embedded in an annulus $A$ such that $G$ is the union of 
\begin{itemize}
\item $r$-cycles $\Ccal = \{ C_{1}, \ldots, C_{r} \}$ drawn in its interior such that no disk of $A$ bounds some of the cycles in $\Ccal,$
\item an $X$-$Y$-linkage $\Lcal = \{L_{1},\ldots,L_{r}\},$ and a $X'$-$Y'$-linkage $\Lcal'=\{L_{1}', \ldots, L_{r}'\},$
\end{itemize}
such that the vertices of $X \cup Y$ are drawn in one of the two boundaries of $A,$ the vertices of $X' \cup Y'$ are drawn in the other, and both $\Lcal$ and $\Lcal'$ are orthogonal to $C.$
Then $G$ contains an orthogonal $X$-$Y'$-linkage of order $r.$

\medskip
Moreover, there exists an algorithm that finds the outcome above in time $\poly(r) \cdot |G|.$
\end{lemma}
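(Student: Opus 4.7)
The plan is to use a single cycle of $\mathcal{C}$ as a ``gluing device'' that lets us concatenate initial segments of paths in $\mathcal{L}$ with terminal segments of paths in $\mathcal{L}'$. I would first fix an arbitrary $C\in\mathcal{C}$ and invoke the orthogonality hypothesis to obtain, for every $i\in[r]$, a single connected subpath $Q_i=L_i\cap C$ of $C$, and analogously a single connected subpath $Q'_i=L'_i\cap C$. Denote by $x_i\in X$ the endpoint of $L_i$ on one boundary of $A$, and by $y'_i\in Y'$ the endpoint of $L'_i$ on the other. Choosing $u_i\in V(Q_i)$ to be the first vertex of $L_i\cap C$ encountered when traversing $L_i$ from $x_i$, and symmetrically $u'_i\in V(Q'_i)$ for $L'_i$ from $y'_i$, places two distinguished vertex sets $U=\{u_i\}_{i\in[r]}$ and $U'=\{u'_i\}_{i\in[r]}$ on $C$.

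The heart of the argument is to construct a non-crossing perfect matching $\sigma\colon[r]\to[r]$ between $U$ and $U'$. The cyclic order of the $u_i$'s on $C$ must coincide with the cyclic order of the $x_i$'s on the boundary of $A$ containing $X\cup Y$, because the initial segments $L_i[x_i,u_i]$ live in the sub-annulus of $A$ bounded by $C$ and that boundary, form a pairwise disjoint planar family there, and thus transfer the cyclic order from the boundary to $C$. The same reasoning matches the cyclic order of the $u'_i$'s on $C$ with that of the $y'_i$'s on the opposite boundary. These two compatible cyclic orders on $C$ determine a unique non-crossing perfect matching $\sigma$ together with, for each matched pair $(u_i,u'_{\sigma(i)})$, a subarc $A_i\subseteq C$ joining them chosen so that the family $\{A_i\}_{i\in[r]}$ is internally disjoint. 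The candidate linkage is then
\[
\mathcal{M}\coloneqq \Bigl\{ L_i[x_i,u_i]\cup A_i\cup L'_{\sigma(i)}[u'_{\sigma(i)},y'_{\sigma(i)}] \;\Big|\; i\in[r] \Bigr\}.
\]

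The main obstacle will be checking that $\mathcal{M}$ is simultaneously a linkage and orthogonal to $\mathcal{C}$. Pairwise disjointness is immediate on each of the $\mathcal{L}$- and $\mathcal{L}'$-sides, and on $C$ it follows from the internal disjointness of the arcs $A_i$ guaranteed by the non-crossing choice of $\sigma$. Orthogonality to $C$ holds because every $P\in\mathcal{M}$ meets $C$ precisely along $A_i$, a single subpath; orthogonality to any other cycle $C'\in\mathcal{C}\setminus\{C\}$ holds because $P\cap C'$ lies entirely inside either the initial $L_i$-segment or the terminal $L'_{\sigma(i)}$-segment, each of which contributes a single subpath by the orthogonality of the originally given linkages. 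The algorithmic claim follows easily: identifying the subpaths $Q_i,Q'_i$ and reading off the cyclic orders takes time linear in $|G|$, computing $\sigma$ and the arcs $A_i$ is a routine $\poly(r)$ procedure, and the concatenations contribute only linear overhead, yielding the stated $\poly(r)\cdot|G|$ running time.
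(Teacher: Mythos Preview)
Your approach has a genuine gap in the construction of the arcs $A_i$ on $C$. You claim that the non-crossing matching $\sigma$ together with the cyclic orders ``determine $\ldots$ a subarc $A_i\subseteq C$ joining them chosen so that the family $\{A_i\}_{i\in[r]}$ is internally disjoint''. But even internal disjointness among the $A_i$ is not enough: for $\mathcal{M}$ to be a linkage you also need that no $A_i$ passes through any $u_j$ (for $j\neq i$) or $u'_k$ (for $k\neq\sigma(i)$), since those vertices already belong to the initial segment $L_j[x_j,u_j]$ or the terminal segment $L'_k[u'_k,y'_k]$ of another path in $\mathcal{M}$. This stronger requirement cannot always be met. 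Take $r=3$ and suppose the cyclic order on $C$ is $u_1,u_2,u_3,u'_1,u'_2,u'_3$; such a configuration is easily realised when $\mathcal{L}$ and $\mathcal{L}'$ are two ``parallel bundles'' hitting $C$ on opposite sides. Then any arc of $C$ from $u_2$ to some $u'_j$ necessarily contains at least one of $u_1,u_3,u'_k$ in its interior, so $M_2$ shares a vertex with another $M_\ell$, and $\mathcal{M}$ is not a linkage. In fact, the only ``clean'' $u$--$u'$ arcs on $C$ here are $u_3$--$u'_1$ and $u'_3$--$u_1$, so no valid triple of arcs exists at all.

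The paper sidesteps this obstruction by not attempting an explicit splice on a single cycle. It first observes that for any $S\subseteq V(G)$ with $|S|<r$ at least one cycle, one path of $\mathcal{L}$, and one path of $\mathcal{L}'$ survive, and by orthogonality these three together contain an $X$--$Y'$ path; Menger's theorem then yields an $X$--$Y'$ linkage $\mathcal{R}$ of order $r$. Orthogonality of $\mathcal{R}$ is obtained in a second step by the retrogression-removal rerouting from Step~2 of the proof of \autoref{orthogonal_transaction}, using that the orthogonality of $\mathcal{L}$ and $\mathcal{L}'$ rules out $C$-expanding subpaths in $\mathcal{R}$. A constructive fix of your idea would have to spread the splicing over several cycles of $\mathcal{C}$ rather than a single one.
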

\begin{proof} Notice that for every set $S \subseteq V(G)$ such that $|S| < r,$ there exists at least one cycle, one path in $\Lcal,$ and one path in $\Lcal'$ fully contained in $G - S.$
By applying Menger's theorem, this implies that there exists an $X$-$Y'$ linkage $\mathcal{R}$ in $G$ of order $r.$
Moreover, in the terminology of the proof of \autoref{orthogonal_transaction}, since $\mathcal{L}$ and $\mathcal{L}'$ are orthogonal to $\mathcal{C},$ we can immediately deduce that there are no $C$-expanding paths of any path in $\mathcal{R}$ for any cycle $C \in \mathcal{C}.$
Then, by applying the arguments of step two in the proof of \autoref{orthogonal_transaction}, we can make $\mathcal{R}$ orthogonal to $\mathcal{C}.$
\end{proof}

\autoref{easy_consequence_of_coterminality} allows us to root any given orthogonal transaction on our orthogonal radial linkage.
To formalize this we borrow the following notion from \cite{kawarabayashi2020quickly}.

\medskip
Let $\rho$ be a rendition of a society $\langle G, \Lambda \rangle$ in a disk $\Delta$ together with a nest $\mathcal{C}=\{ C_1,C_2,\dots,C_s\}$ around an arcwise connected set $\Delta^*\subseteq \Delta.$
We say that a linkage $\mathcal{L}$ is \defi{coterminal} with a linkage $\mathcal{R}$ \defi{up to level $C_i$} if there exists a subset $\mathcal{R}'\subseteq \mathcal{R}$ such that $\mathcal{L}\cap H = \mathcal{R}'\cap H$ where $H$ is the graph drawn in the $\Delta^*$-avoiding disk of $\mathsf{trace}(C_i).$

\subsection{Cultivating a transaction}\llabel{transaction_cultivation}

With the tools developed in the previous section, we know that we can always either find a pruning of our blooming orchard, or we find an exposed planar transaction that is isolated, separating, and rural which because of \autoref{orthogonal_transaction} we can also assume to be orthogonal to the nest of our thicket.
The next step is to apply the ideas of plowing to the transaction to make sure that the area covered by it is homogeneous with respect to its folio of detail $d$ which would then allow us to extract additional flowers and parterre from it.

\medskip
Let $d,x,y,t,b,h,$ and $s$ be positive integers and $(G,\delta,W)$ be an $(x,y)$-fertile $(t,b,h,s,\Sigma)$-orchard.
Moreover, let $\langle H,\Lambda \rangle$ be a thicket society with nest $\mathcal{C} = \{ C_1,\dots,C_s\}$ of $(G,\delta,W).$
We say that a transaction in $\langle H,\Lambda\rangle$ is \defi{barren} if it is planar, exposed, and orthogonal to $\mathcal{C},$ and the $\Pcal$-strip society of $\lin{H, \Lambda}$ is isolated, separating, and rural.
Let $\mathcal{P}$ be a barren transaction in $\langle H, \Lambda \rangle.$
A \defi{parcel} of $\mathcal{P}$ is a cycle $C$ consisting of two vertex-disjoint subpaths of $C_1,$ say $X_1$ and $X_2,$ and a shortest $V(C_{1})$-$V(C_{1})$ subpath from two distinct paths $P_{1} \neq P_{2} \in \mathcal{P},$ such that $C_1$ contains an $X_1$-$X_2$-subpath of $P_i$ for each $i \in [2],$ and $C_1$ intersects no other member of $\mathcal{P}.$
Notice that all but the boundary paths of a barren transaction belong to exactly two parcels and the boundary paths belong to exactly one parcel each.
Let $d$ be a positive integer.
The \defi{$d$-folio of a parcel $O$ of $\mathcal{P}$} is the union of the $d$-folios of all vortex-societies of all cells from the $\delta$-influence of $O.$
A transaction $\mathcal{P}$ is said to be \defi{$d$-cultivated} if it is barren and all of its parcels have the same $d$-folio.

\begin{lemma}\llabel{lem_cultivate_transaction}
Let $d,x,y,t,b,h,p,$ and $s$ be positive integers.
Let $(G,\delta,W)$ be a $d$-blooming and $(x,y)$-fertile $(t,b,h,s,\Sigma)$-orchard, let $\langle H,\Lambda\rangle$ be one of its thicket societies together with its nest $\mathcal{C}=\{ C_1,C_2,\dots,C_s\}.$
There exists a function $f_{\ref{lem_cultivate_transaction}}\colon \mathbb{N}^2\to\mathbb{N}$ such that for  every barren transaction $\mathcal{P}$ of order $f_{\ref{lem_cultivate_transaction}}(p,d)$ in $\langle H,\Lambda\rangle$ there exists a $d$-cultivated transaction $\mathcal{Q}\subseteq\mathcal{P}$ of order $p.$

Moreover, $f_{\ref{lem_cultivate_transaction}}(p,d)\in p^{2^{2^{\mathcal{O}(d^2)}}}.$
\end{lemma}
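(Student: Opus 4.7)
The plan is to treat the planar exposed orthogonal transaction $\mathcal{P}$ together with $C_1$ as a one-dimensional ladder whose rungs are the members of $\mathcal{P}$ and whose bricks are the parcels, and then to invoke the mono-dimensional homogeneity result, \autoref{cor_homogeneity_1d}. Concretely, I would set
$$f_{\ref{lem_cultivate_transaction}}(p,d)\coloneqq f_{\ref{lem_homogeneity_2d}}(p,f_{\ref{obs_size_dfolio}}(d))+2,$$
so that, by \autoref{obs_size_dfolio} and the definition of $f_{\ref{lem_homogeneity_2d}}$, we obtain $f_{\ref{lem_cultivate_transaction}}(p,d)\in p^{2^{2^{\mathcal{O}(d^{2})}}}$.

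Let me enumerate the paths of $\mathcal{P}=\{P_{1},\ldots,P_{\ell}\}$ in the order induced by their endpoints along $\Lambda$. Since $\mathcal{P}$ is planar, exposed, and orthogonal to $\mathcal{C}$, every $P_{i}$ meets $C_{1}$ in exactly two points, so the region between two consecutive members $P_{i},P_{i+1}$ is the cycle bounding the $i$-th parcel $O_{i}$, as defined in the statement. Thus $\mathcal{P}\cup C_{1}$ (restricted to its realization in the disk delivered by $\delta\cap\Delta$) defines, after suppressing degree-two vertices, an $\ell$-ladder $L$ whose two long rails are the two arcs of $C_{1}$ traced out between the first and last parcel, and whose rungs are the $V(C_{1})$-$V(C_{1})$ subpaths of the $P_{i}$'s; the bricks of $L$ are precisely the parcels $O_{1},\ldots,O_{\ell-1}$. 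Next, I assign each parcel a palette. Set $\mathcal{F}\coloneqq d\text{-}\mathsf{folio}(\delta)$, fix a bijection $g\colon\mathcal{F}\to[\zeta]$ with $\zeta\coloneqq|\mathcal{F}|\leq f_{\ref{obs_size_dfolio}}(d)$, and define $\zeta\text{-}\mathsf{palette}(O_{i})\coloneqq g\bigl(\bigcup\{d\text{-}\mathsf{folio}(G_{c},\Lambda_{c,x})\mid c\in\delta\text{-}\mathsf{influence}(O_{i})\cap C_{\mathsf{f}}(\delta)\}\bigr)$. This is well-defined because the $d$-blooming assumption on $(G,\delta,W)$ guarantees that every relevant linear society arising inside a parcel is already an element of $\mathcal{F}$.

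I then apply \autoref{cor_homogeneity_1d} with this palette to the $f_{\ref{lem_homogeneity_2d}}(p,\zeta)$-ladder $L$. It returns a subladder $L'$ with $p-1$ bricks, all carrying the same $\zeta$-palette $S\subseteq[\zeta]$. Since a subladder is obtained by deleting the edges of some rungs, $L'$ corresponds to choosing a subset $\mathcal{Q}\subseteq\mathcal{P}$ of size $p$; the bricks of $L'$ are exactly the parcels of $\mathcal{Q}$, and each such parcel is the cycle obtained by amalgamating a consecutive block of parcels of $\mathcal{P}$ (along with the discarded rungs lying strictly between them). Its $d$-folio is the union of the $d$-folios of the amalgamated parcels of $\mathcal{P}$, all of which have the same palette $S$; hence every parcel of $\mathcal{Q}$ shares the $d$-folio $g^{-1}(S)$. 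Thus $\mathcal{Q}$ is homogeneous in the sense required by the definition of $d$-cultivated.

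It remains to argue that $\mathcal{Q}$ is still barren. Planarity, exposure and orthogonality with respect to $\mathcal{C}$ are inherited from $\mathcal{P}$ trivially, since $\mathcal{Q}\subseteq\mathcal{P}$ and each of these properties is path-local. The delicate point, which I expect to be the main obstacle, is that the $\mathcal{Q}$-strip society differs from the $\mathcal{P}$-strip society: it contains, in addition, the (rural) slices of $\mathcal{P}$-strip lying between surviving adjacent members of $\mathcal{Q}$. One needs to show that isolation, separation, and rurality of the $\mathcal{P}$-strip transfer to the $\mathcal{Q}$-strip. Isolation is immediate from the fact that the $\mathcal{P}$-strip is already isolated, so the new strip has no edges leaving it to the rest of $G$. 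For rurality, the vortex-free rendition of the $\mathcal{P}$-strip in a disk (guaranteed by \autoref{thm_two_paths}) automatically restricts to a vortex-free rendition of the $\mathcal{Q}$-strip in the same disk after forgetting the discarded rungs; thus no cross appears. Separation follows from the analogous argument applied path by path: removing a non-boundary path of $\mathcal{Q}$ still forces any path between the two segments of $\Lambda$ to cross one of the discarded paths of $\mathcal{P}$, contradicting the separating property of the $\mathcal{P}$-strip. Altogether, $\mathcal{Q}$ is a barren, $d$-folio-homogeneous transaction of order $p$, namely a $d$-cultivated subtransaction of $\mathcal{P}$.
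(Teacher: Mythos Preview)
Your approach---treat the parcels as the bricks of a ladder built from $C_1$ and the members of $\mathcal{P}$, then apply the one-dimensional homogeneity of \autoref{cor_homogeneity_1d}---is exactly the paper's approach, and your verification that $\mathcal{Q}$ remains barren is more careful than the paper's (which simply asserts the conclusion). The only genuine issue is your choice of colour set and its justification.

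You set $\mathcal{F}=d\text{-}\mathsf{folio}(\delta)$ and claim that $d$-blooming forces every society occurring in a parcel's $d$-folio to lie in $\mathcal{F}$. This is wrong on two counts. First, $d$-blooming only asserts that every society already in $d\text{-}\mathsf{folio}(\delta)$ is represented by a parterre segment of $W$; it is a statement about completeness of the parterres, not about $\mathcal{F}$ absorbing all possible societies. Second, and more substantially, the parcels extend into the vortex cell $c_0$ of $\delta$: since $\mathcal{P}$ is exposed, the ``inner'' arcs of each parcel cycle use edges drawn inside $c_0$, so the parcel cycle is not $\delta$-grounded and its influence must be read with respect to the refined rendition obtained by merging $\delta$ with a vortex-free rendition of the $\mathcal{P}$-strip society. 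The new flap cells created this way inside the former vortex can carry societies of detail at most $d$ that are \emph{not} in $d\text{-}\mathsf{folio}(\delta)$; indeed \autoref{lemma_extraction_from_th} is built precisely around the set $\mathcal{O}$ of such new societies. Under your palette, two parcels with identical contributions from flap cells of $\delta$ but different contributions from these new cells would receive the same colour, so your output $\mathcal{Q}$ need not be $d$-cultivated. The fix is what the paper does: take the colour set to be all isomorphism classes of linear societies of detail at most $d$, whose cardinality is bounded by $f_{\ref{obs_size_dfolio}}(d)$ via \autoref{obs_size_dfolio}, independently of $\delta$. With this single change your argument and the paper's coincide.
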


\begin{proof}
By \autoref{obs_size_dfolio} we know that the folio of any parcel of any barren transaction in $\langle H,\Lambda\rangle$ has at most $f_{\ref{obs_size_dfolio}}(d)\in 2^{\mathcal{O}(d^2)}$ members.
Moreover, recall that $f_{\ref{lem_homogeneity_2d}}(z_1,z_2)= z_1^{2^{z_1}}.$
We set $f_{\ref{lem_cultivate_transaction}}(d,p)\coloneqq f_{\ref{lem_homogeneity_2d}}(p,f_{\ref{obs_size_dfolio}}(d))$ and thus meet the upper bound claimed in the assertion.

Let $P_1$ and $P_2$ be the two outer paths of $\mathcal{P}$ and for each $i\in[2]$ let $P_i'$ be the shortest $V(C_1)$-$V(C_1)$-subpath of $P_i$ with at least one edge not in $C_1.$
Then there exist two vertex-disjoint subpaths $L_1$ and $L_2$ of $C_1$ such that each $L_i$ has one endpoint in common with $P_1'$ and the other endpoint in common with $P_2',$ and every path $P\in\mathcal{P}$ contains a unique $V(L_1)$-$V(L_2)$-subpath $P'.$
It follows that $J\coloneqq L_1\cup L_2\cup \bigcup_{P\in\mathcal{P}P}$ is an $f_{\ref{lem_cultivate_transaction}}(d,p)$-ladder.
Moreover, the bricks of $J$ are in a one-to-one correspondence with the parcels of $\mathcal{P}.$
If we now enumerate all possible $f_{\ref{obs_size_dfolio}}(d)$ members of a $d$-folio of some parcel of $\mathcal{P},$ assign distinct numbers to all of them and then assign to each brick of $J$ the set of all numbers corresponding to the members of the folio of the corresponding parcel of $\mathcal{P}$ we obtain an $f_{\ref{lem_cultivate_transaction}}(d,p)$-ladder where each brick is assigned a $f_{\ref{lem_cultivate_transaction}}(d,p)$-palette.
This allows us to call \autoref{cor_homogeneity_1d} to obtain a $p$-ladder $J'\subseteq J$ where all bricks have the same $f_{\ref{lem_cultivate_transaction}}(d,p)$-palette.
Notice that each of the $p$ horizontal paths of $J'$ is, in fact, a subpath $P'$ of some path in $\mathcal{P}.$
Let $\mathcal{Q}$ be the collection of all paths $Q\in\mathcal{P}$ such that $Q'$ is a horizontal path of $J'.$
It follows that $\mathcal{Q}$ is $d$-cultivated and of order $p$ as desired.
\end{proof}

\subsection{Greenhouses}\llabel{greenhouses}

We are now able to always find large cultivated transactions.
Next, we need an auxiliary tool that will help us refine a thicket and extract additional parterre.
Toward this goal we define an intermediate structure.

\medskip
Let $p\geq t\geq 3$ be integers.
A \defi{$(t,p)$-greenhouse segment} is a $(2t+1,p)$-wall $W$ that is the union of a $(t,p)$-wall $W_1,$ called the \defi{base}, whose $t$ rows are exactly the first $t$ rows of $W,$ a $(t+1,p)$-wall $W_2$ whose $t$ rows are exactly the rows $t+1$ up to $2t+1$ of $W,$ and a $(2,p)$ wall $Q$ whose two rows are exactly the rows $t$ and $t+1$ of $W.$
We call the bricks of $Q$ the \defi{plots} of $W.$

If $G$ is a graph, $\delta$ is a $\Sigma$-decomposition of $G,$ $d$ is a positive integer, and $W$ is a $(t,p)$-greenhouse segment, we say that $W$ is \defi{$d$-homogeneous} if the $d$-folios of all plots of $W$ are the same.

We will use the greenhouse segments as a storage unit for societies we have found deep within a segment and which will be turned into flowers of their respective parterre segments in a second step.
For this reason, greenhouses only appear as an intermediate structure within this subsection.

\begin{figure}[ht]
    \centering
    \scalebox{0.13}{\includegraphics{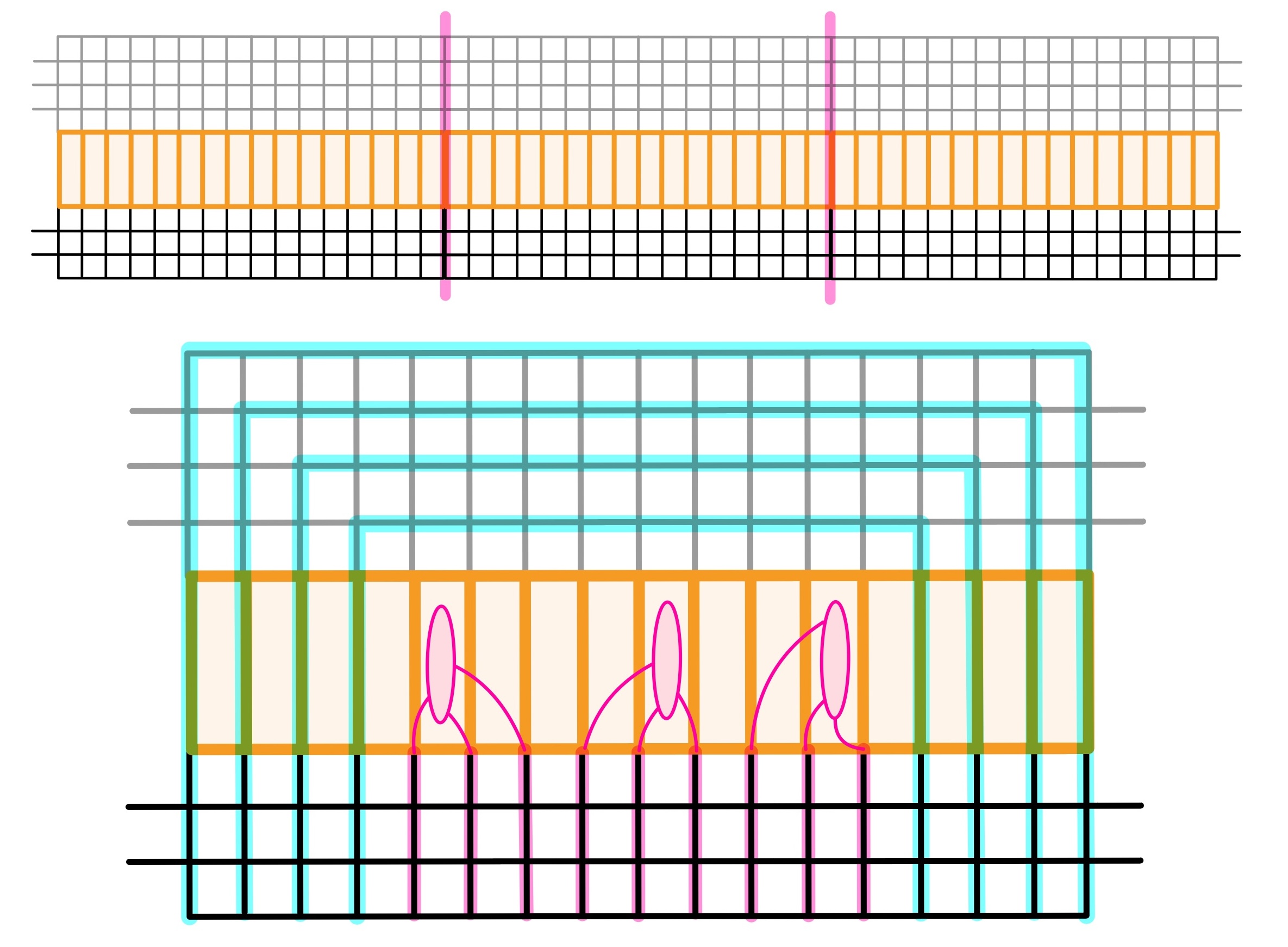}}
    \caption{Two illustrations for the proof of \autoref{lemma_planting_a_greenhause}: a greenhouse subdivided into a small sequence of pieces and one of those pieces transformed into a flower segment.}
    \llabel{fig_greenhouse_2}
\end{figure}

\begin{lemma}\llabel{lemma_planting_a_greenhause}
Let $b,d,h,p,t$ be positive integers where $t\geq 3.$
Let $G$ be a graph $\delta$ be a $\Sigma$-decomposition of $G,$ $W$ be a $d$-homogeneous $(t+10,p)$-greenhouse segment in $G,$ and let $X$ be the perimeter of $W.$
Assume further that $p\geq d^*\cdot h(2t+10bd)$ where $d^*$ is the size of the $d$-folio of the plots of $W.$
Then there exists a sequence $\langle W_1,\dots,W_{d^{*}}\rangle$ of $(t,b,h)$-parterre segments such that
\begin{itemize}
    \item for every $i\in[d^*]$ $W_i$ is fully contained in the graph drawn in the influence of $X,$
    \item for every $i\in[d^*]$ the wall $\widetilde{W_i}$ is a subwall of the base of $W$ and its rows coincide with the rows of $W,$
    \item for every simple society $\langle H,\Lambda\rangle$ in the $d$-folio of the plots there exists some $i\in[d^*]$ such that $W_i$ is a $(t,b,h,H,\Lambda)$-parterre segment.
\end{itemize}
\end{lemma}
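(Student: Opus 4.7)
The plan is to mirror the flower-extraction arguments from the proof of \autoref{lem_blooming_one_step}, specialized to the simpler geometry of a greenhouse. First I would fix an enumeration $\langle H^1,\Lambda^1\rangle,\ldots,\langle H^{d^{*}},\Lambda^{d^{*}}\rangle$ of the $d^{*}$ members of the common $d$-folio of the plots of $W$, and partition the $p$ plots of $W$ into $d^{*}$ consecutive blocks $\mathcal{B}_1,\ldots,\mathcal{B}_{d^{*}}$, each of width at least $h(2t+10bd)$ plots. Within each block $\mathcal{B}_i$ I would further reserve $h$ consecutive sub-blocks of $2t+10bd$ plots, each meant to host one of the $h$ flowers of the parterre segment $W_i$ corresponding to $\langle H^i,\Lambda^i\rangle$.

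Inside each such sub-block I would reserve the leftmost and rightmost $t$ plots for the construction of the flower's rainbow, and in the remaining $10bd$ middle plots pick $b$ plots $B^{i}_{j,1},\ldots,B^{i}_{j,b}$ spaced at least $10d$ plots apart. Since $W$ is $d$-homogeneous, the $\delta$-influence of each chosen plot contains a cell $c$ admitting a $c$-society whose $d$-folio contains $\langle H^i,\Lambda^i\rangle$. Applying \autoref{lem_paths_boundary} to each such plot, with the surrounding $(10\times 10)$-subwall taken from the portion of $W$ lying above the plots (that is, inside $W_2$), yields $|\widetilde{c}|$ vertex-disjoint paths from $\widetilde{c}$ to the bottom-boundary vertices of that subwall that respect the order of a $c$-society. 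Together with $G_c$, these paths realize a linear society admitting a fiber of detail at most $d$ that dissolves to $\langle H^i,\Lambda^i\rangle$. The extra factor $d$ in the width $10bd$ plays the role of the factor $3$ in the final remark of the proof of \autoref{lem_blooming_one_step}: it buys one copy of each of the at most $|\mathsf{dec}(H^i,\Lambda^i)|\le d$ components of the decomposition, from which we keep exactly one per designated position in the correct cyclic order while discarding the rest.

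Finally, the rainbow of each flower would be assembled from the vertical paths of $W_1$ corresponding to the reserved $t$ plots on either side of the sub-block together with the top $t$ horizontal paths of $W_1$, and the $h$ resulting flowers inside a block $\mathcal{B}_i$ would be linked into a single $(t,b,h,H^i,\Lambda^i)$-parterre segment $W_i$ by subdividing the appropriate rightmost-to-leftmost boundary edges using further horizontal subpaths of $W_1$. By construction, each base wall $\widetilde{W}_i$ is a subwall of $W_1$ whose rows coincide with the rows of $W$; every routing path added on top lies in the $\delta$-influence of the perimeter $X$ (being drawn inside $W_2$ or inside cells attached to plots of $W$); and different blocks $\mathcal{B}_i$ are pairwise vertex-disjoint. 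The main obstacle, as in \autoref{lem_blooming_one_step}, is the simultaneous control of the cyclic order of the decomposition components across all $b$ copies within each flower, which is resolved exactly as there and is precisely what motivates the $10bd$-plot budget per flower; everything else is careful bookkeeping of rows and columns within the greenhouse.
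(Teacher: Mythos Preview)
Your approach is essentially the same as the paper's: enumerate the $d^*$ folio members, partition the plots into $d^*$ blocks, subdivide each block into $h$ pieces of width $2t+10bd$, and in each piece reserve $2t$ columns for the rainbow and group the remaining $10bd$ columns into $b$ groups of $10d$ from which petals are extracted via \autoref{lem_paths_boundary}, exactly as in \autoref{lem_blooming_one_step}. One small inaccuracy: the $(10\times 10)$-subwall required by \autoref{lem_paths_boundary} must contain the fence at distance $4$ from the chosen plot, so it necessarily straddles the plot row and extends into both $W_1$ and $W_2$ rather than lying entirely in $W_2$; this is precisely why the statement starts from a $(t+10,p)$-greenhouse rather than a $(t,p)$-greenhouse, and the paper's proof routes the rainbow through the $(t+2)$nd row of $W^i_j$ (i.e.\ into $W_2$) rather than through the top rows of $W_1$.
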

\begin{proof}
Let $\mathcal{O}=\{ O_1,O_2,\dots,O_{d^*}\}$ be the collection of all societies that belong to the $d$-folio of the plots of $W.$
Let us begin by partitioning $W$ into a sequence $\langle W'_1,\dots,W'_{d^*}\rangle$ of $(h(2t+10bd),t+10)$-walls.
Fix some $i\in[d^*]$ and now partition $W'_i$ further into a sequence $\langle W^i_1,\dots,W^i_h\rangle$ of $(2t+10bd)$-walls.
It suffices to show that each $W^i_j$ can be turned into a $(t,b,O_i)$-flower segment since this means we can find, for each $i\in[d^*]$ a sequence of $h$ disjoint such flower segments, resulting in a $(t,b,h,O_i)$-parterre segment as desired.

Notice that each $W^i_j$ individually can be seen as a $(t,2t+10bd)$-greenhouse segment.
Since $W$ is $d$-homogeneous, it follows that $W^i_j$ is so as well since every plot of $W^i_j$ is a plot of $W.$
We may now find a linkage $\mathcal{P}$ starting at the first vertex of each of the first $t$ columns of $W^i_j,$ then following these columns into the $t+2$nd row, from there heading to the last $t$ columns of $W^i_j$ and following these down to the bottom row.
See \autoref{fig_greenhouse_2} for an illustration.

This linkage intersects only $2t$ plots of $W^i_j$ and will serve as the rainbow of our flower segment.
To create the petals we group the remaining $bd$ columns into $b$ groups, each containing $10d$ columns.
We may then use the same arguments as in the proof of \autoref{lem_blooming_one_step} and \autoref{lem_paths_boundary} to pick first a plot of each group which is the center of a $(10,10)$-subwall of $W^i_j$ that does not intersect any other group nor $\mathcal{P}.$
Within the influence of this plot, we know we find a cell $c$ whose vortex society contains $O_i$ as a minor.
Now \autoref{lem_paths_boundary} allows us to link the boundary vertices of $c$ to the columns of the group we are currently handling.
Again, see \autoref{fig_greenhouse_2} for an illustration.
Since all groups can be handled individually this indeed results in the desired flower segment. 
\end{proof}

\subsection{Swapping a thicket and a parterre}\llabel{subsec_swapping}

A last issue we have to deal with before we can jump into the proof of our main tool is the following.
Suppose the thicket society of, at least, the first thicket is already of bounded depth.
This means that there is nothing more to be extracted from that thicket and we have to start treating some thicket to the right of it.
However, whenever we extract a parterre segment by first pulling out a part of some $d$-homogeneous transaction in a thicket society of the $i$th thicket and then using \autoref{lemma_planting_a_greenhause}, the parterre will be ``in between'' thicket segments with respect to the linear ordering of the segments.
As we demand all parterre segments of an orchard to occur in order without being interrupted by other segments, we need a way to move this parterre to the ``left'' of all thicket segments.
This will cost us a bit of the infrastructure of our thickets, but since we will only extract, and then move, a bounded number of parterre segments in the end, this becomes just a matter of large enough numbers.

\begin{lemma}\llabel{lemma_moving_a_parterre}
Let $d,x,y,t_0,t,b,h,s_0,s,$ and $\ell$ be positive integers where
\begin{align*}
   s &\geq s_0 + \ell \cdot h(2t_0+bd)\text{ and}\\
   t &\geq t_0 + 2\cdot\ell\cdot h(2t_0+bd)
\end{align*}
Now let, $(G,\delta,W)$ be obtained from a $d$-blooming and $(x,y)$-fertile $(t,b,h,s)$-orchard $(G,\delta^*,W^*)$ by replacing the $i$th $(t,s)$-thicket segment with a sequence $\langle W_1,\dots,W_{\ell},W_{\ell+1},W_{\ell+2}\rangle$ where $W_i,$ for each $i\in [\ell]$ is a $(t_0,b,h)$-parterre segment and both $W_{\ell+1}$ and $W_{\ell+2}$ are $(t_0,s_0)$-thicket segments where we allow either $W_{\ell+2},$ or both $W_{\ell+1}$ and $W_{\ell+2}$ to be empty.
Moreover, assume that every member of $d\text{-}\mathsf{folio}(\delta')$ is either a society of some parterre segment of $\delta,$ or a society of $W_i$ for some $i\in[\ell].$
Then there exists a $d$-blooming $(x,y)$-fertile $(t_0,b,h,s_0)$-orchard $(D,\delta',W')$ such that $d\text{-}\mathsf{folio}(\delta')=d\text{-}\mathsf{folio}(\delta^*)$ and the number of thicket segments of $(D,\delta',W')$ is exactly the number of thicket segments of $(G,\delta,W)$ minus one plus the number of non-empty members of $\{ W_{\ell+1},W_{\ell+2} \}.$
\end{lemma}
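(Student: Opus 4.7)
The strategy is to rearrange the cyclic order of segments of $W$ so that the parterres $W_1,\dots,W_\ell$ appear in the parterre section of the new walloid $W'$, i.e., between the existing parterres of $W^*$ and the first thicket segment. Since $W_1,\dots,W_\ell$ are subgraphs of $G$ physically located where the $i$th thicket of $W^*$ originally sat, we cannot displace them; instead, we rewire the cyclic connections between segments using the surplus infrastructure guaranteed by the hypotheses $t \geq t_0 + 2\ell h(2t_0+bd)$ and $s \geq s_0 + \ell h(2t_0+bd)$. Throughout, the drawing of $G$ in $\Sigma$ and the cells of $\delta$ will be kept unchanged, so that $\delta'=\delta$ as a $\Sigma$-decomposition and only the walloid is redefined; this means $(x,y)$-fertility is immediate and $d\text{-}\mathsf{folio}(\delta')=d\text{-}\mathsf{folio}(\delta^*)$.

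The construction has three steps. First, I would select a $t_0$-subcylinder of the base cylinder of $W$ to serve as the base cylinder of $W'$, leaving $t - t_0 \geq 2\ell h(2t_0+bd)$ spare horizontal paths that decompose into $2\ell$ disjoint bundles of $h(2t_0+bd)$ paths each. Second, for each $j\in[\ell]$ I would use two such bundles to build detour linkages of order $t_0$: one realizing the new cylindrical-concatenation connection between $W_j$'s left boundary and its intended predecessor in the new cyclic order (the last original parterre for $j=1$, or $W_{j-1}$ otherwise), and a second one realizing the new connection between $W_j$'s right boundary and its intended successor ($W_{j+1}$, or the left boundary of the first original thicket for $j=\ell$). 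These detour linkages must cross the thicket segments $T_1,\dots,T_{i-1}$ that physically separate $W_j$ from the parterre section; each crossing is performed by consuming one of the $s-s_0 \geq \ell h(2t_0+bd)$ spare nest cycles of the crossed thicket, which provides a ring of arcs going around the thicket's vortex without disturbing the retained $s_0$-nest, together with a matching bundle of rails. Third, shrink $T_1,\dots,T_{i-1}$ and $W_{\ell+1},W_{\ell+2}$ to $(t_0,s_0)$-thicket segments by retaining $t_0$ rails (chosen to align with the chosen $t_0$-subcylinder) and $s_0$ nest cycles, discarding the excess to the detours.

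Once $W'$ is declared, verifying that $(D,\delta',W')$ is a $(t_0,b,h,s_0)$-orchard that is $(x,y)$-fertile and $d$-blooming is straightforward. Vortex cells are unchanged and each lies inside the $\mathsf{trace}(C^{\mathsf{si}})$-avoiding disk of the inner cycle of precisely one of the new thicket segments: each vortex that was inside $T_1,\dots,T_{i-1}$ remains in the corresponding shrunk thicket, and the vortices inside the original $i$th thicket region now sit inside the shrunken $W_{\ell+1}\cup W_{\ell+2}$. The $d$-blooming property follows from the hypothesis that every member of $d\text{-}\mathsf{folio}(\delta')$ is a society of some parterre of $\delta^*$ or of some $W_j$ with $j\in[\ell]$, combined with the fact that all of these parterre segments have been explicitly preserved as parterres of $W'$. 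The thicket count in $W'$ equals the thicket count of $W^*$ minus one (we replaced the $i$th thicket) plus the number of non-empty members of $\{W_{\ell+1},W_{\ell+2}\}$, matching the stated count.

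The principal difficulty is to choose the detour linkages pairwise vertex-disjoint, orthogonal to the retained nests, and compatible with the segment definitions (relabelled rainbows, rails and left/right boundary vertices must still satisfy the wall-like structure required of a host walloid). This is exactly what the bounds on $t$ and $s$ are tailored to accommodate: each of the $2\ell$ detours uses $t_0$ parallel paths on the base cylinder (fitting within the $2\ell h(2t_0+bd)$ spare rows) and exactly one nest cycle per crossed thicket (fitting within the $\ell h(2t_0+bd)$ spare cycles of each thicket). Orthogonality of the new radial linkages with the retained nests can be enforced using the same orthogonalization argument as in \autoref{orthogonal_transaction}, and the re-attachment of the detours to the parterre boundaries follows the same template as the rainbow-extraction construction in the proofs of \autoref{lem_blooming_one_step} and \autoref{lem_paths_boundary}.
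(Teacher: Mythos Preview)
Your proposal follows the same overall strategy as the paper: re-route the cyclical concatenation using the spare infrastructure guaranteed by the inequalities on $t$ and $s$, while keeping the $\Sigma$-decomposition fixed. The paper's execution differs in granularity: rather than building parterre-level detour linkages between left/right boundaries, it reduces to moving a \emph{single flower segment} past a \emph{single adjacent thicket} and then inducts on $\ell$ and on the number of intervening thickets. Each such move routes the (at most $2t_0+bd$) terminals of the flower on the exceptional cycle through the thicket via $2t_0+bd$ rails on each side and $2t_0+bd$ outermost nest cycles, while sacrificing $2t_0+bd$ cycles of the base cylinder near $C^{\mathsf{ex}}$. This makes the bookkeeping transparent: the total cost is exactly $\ell h(2t_0+bd)$ nest cycles per involved thicket, $2\ell h(2t_0+bd)$ rails per involved thicket, and $\ell h(2t_0+bd)$ cycles of $W$, matching the hypotheses on the nose.

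One point in your accounting does not work as stated: a detour consisting of $t_0$ parallel paths cannot cross a thicket ``around the vortex'' using a single nest cycle, since $t_0$ pairwise disjoint arcs on one cycle cannot link two bundles of $t_0$ rail-endpoints (the arcs would nest or overlap). You would need $t_0$ nest cycles per crossing, or alternatively route the detour entirely through the spare rows of the base cylinder without touching the nest at all. Either fix is compatible with the available budget, so this is not a fatal gap, but it is exactly the kind of detail that the paper's flower-by-flower reduction handles cleanly. Note also that of your $2\ell$ declared detours, all but three are trivial (connecting $W_j$ to its already-adjacent neighbour $W_{j\pm 1}$); the essential work lies only in the splices last-parterre$\to W_1$, $W_\ell\to T_1$, and $T_{i-1}\to W_{\ell+1}$.
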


The proof of \autoref{lemma_moving_a_parterre} is mostly technical but not very difficult.
We therefore choose to present a rather informal description of the operation instead of a full formalization.
A sketch of the core part of the re-routing necessary is depicted in \autoref{fig_greenhouse_3}. 

\begin{proof}

\begin{figure}[ht]
    \centering
    \scalebox{0.12}{\includegraphics{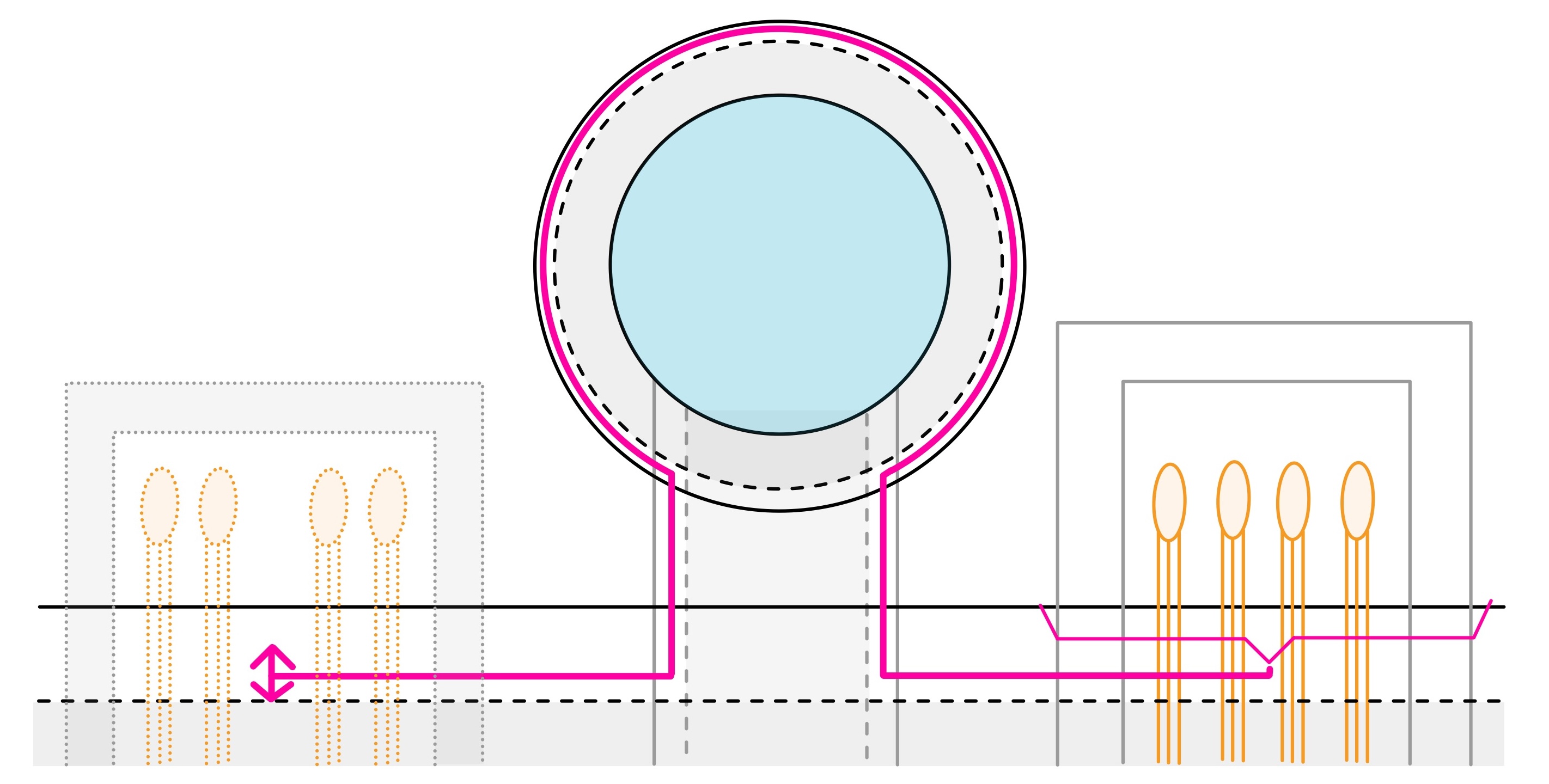}}
    \caption{A sketch on how to move a flower segment from the ``right'' of a thicket segment to its ``left'' in the proof of \autoref{lemma_moving_a_parterre}.}
    \llabel{fig_greenhouse_3}
\end{figure}

Let $C$ denote the exceptional cycle of $W.$
For each $i\in[\ell],$ $W_i$ is a $(t_0,b,h)$-parterre segment.
Hence, $W_i$ consists of $h$ $(t_0,b)$-flower segments in the sequence $\langle W^i_1,\dots, W^i_h\rangle.$

Moreover, for each $i\in[\ell]$ and $j\in[b],$ each flower of $W^i_j$ has at most $d,$ indeed, at most $3,$ vertices in common with $C$ while the rainbow of $W^i_j$ meets $2t_0$ additional vertices of $C.$
Let us call these vertices the \defi{terminals} of $W^i_j.$
To prove out statement it suffices to show that the parterre segments may be ``moved'' sequentially from one side of a single thicket to the other.
If this is true, then, by using the same arguments, we can move the parterre segment to ``the other side'' of any number of consecutive thicket segments since we only have to sacrifice the infrastructure locally.
Moreover, if this works for a single parterre segment, then by induction on $\ell$ it follows that we may repeat the procedure for each of them.

Indeed, to move a single parterre segment, it suffices, by the same arguments, to show that we may ``move'' its flower segments individually, one after the other.
To this end, all we have to prove is that a single flower segment may be ``moved''.

We allocate the following part of the infrastructure to make our routing:
Starting from $C$ we select $2t_0+db$ many consecutive cycles from the concentric cycles of $W.$
From the $(i-1)$st thicket we allocate the first $2t_0+db$ paths from its radial linkage together with the last $2t_0+db$ paths of its radial linkage.
Moreover, we reserve the outermost $2t_0+db$ cycles of its nest.

Let $C'$ denote the new exceptional cycle of $W'$ in case $W'$ is obtained from $W$ by forgetting the first $2t_0+db$ consecutive cycles starting from $C.$

Observe that each part of the infrastructure allocated is large enough to host $2t_0+db$ disjoint paths, which is at least the number of terminals from our flower segment.
By following along the vertical paths of $W$ and the allocated infrastructure, we can now find a linkage from the terminals of our flower segment to the intersection of the first $2t_0+db$ paths of the radial linkage of the $(i-1)$st thicket segment such that
\begin{enumerate}
    \item these paths stay within the allocated infrastructure except for their endpoints, and
    \item the endpoint of each path belongs to $C'.$
\end{enumerate}
See \autoref{fig_greenhouse_2} for an illustration of how those paths are to be chosen.

Notice that this operation reduces the infrastructure by at most $2t_0+db$ many cycles in the thicket and $W$ and at most $2(2t_0+db)$ many paths from its radial linkage.
Moreover, the operation results, after forgetting the used infrastructure and adjusting the numbers accordingly, in the flower segment to now be on ``the other side'' of the $(i-1)$st thicket segment.

Since we perform this operation $h$ times for each parterre segment and we have $\ell$ parterre segments in total, the total amount of lost infrastructure is at most $\ell \cdot h(2t_0+bd)$ cycles from the nest of each involved thicket segment, at most $2\cdot\ell\cdot h(2t_0+bd)$ paths from the radial linkage of each involved thicket segment, and at most $\ell\cdot h(2t_0+bd)$ cycles of $W.$
By our choice of numbers this implies that the nests of each thicket segment, as well as the walloid $W$ itself, each have at least $t_0$ cycles left.
Moreover, each thicket segment has at least $t_0$ paths in its radial linkage that remain untouched and at least $s_0$ cycles in its nest that have not been used.
Thus, our claim follows.
\end{proof}

\subsection{The extraction lemma}\llabel{thicket_extraction}

We are now ready to deal with our most technical tool that will facilitate most of the technicalities of the proof of \autoref{ripe_orchard}.

\medskip

Let $x,y,t,b,h,$ and $s$ be positive integers and $(G,\delta,W)$ be an $(x,y)$-fertile $(t,b,h,s,\Sigma)$-orchard.
Moreover, let $\lin{H, \Lambda}$ be a thicket society of a $(t, s)$-thicket segment of $(G, \delta, W)$ with nest $\mathcal{C} = \{ C_{0}, \ldots, C_{s+1} \}.$
Also, consider $\rho$ to be the cylindrical rendition of $\lin{H, \Lambda}$ obtained from $\delta \cap \Delta$ by declaring the interior of the $\trace_{\delta}(C^{\mathsf{si}})$-avoiding disk of $\trace_{\delta}(C_{0})$ the unique vortex $c_{0}$ of $\rho,$ where $\Delta$ is the $\trace_{\delta}(C^{\mathsf{si}})$-avoiding disk of $\trace_{\delta}(C_{s + 1}).$
Moreover, let $\mathcal{P}$ be a barren transaction in $\langle G,\Lambda\rangle.$
Notice that there exists a rendition $\rho'$ of $\langle H, \Lambda \rangle$ in $\Delta$ which can be obtained by combining $\rho$ with a vortex-free rendition of the strip society of $\mathcal{P}$ such that $c_0$ is divided into two vortices, $c_1$ and $c_2.$
We call these two vortices the \defi{residual vortices} of $\rho$ and $\mathcal{P}.$

\begin{lemma}\llabel{lemma_extraction_from_th}
Let $d,x,y,t_0,b,h,\ell,p,$ and $s_0$ be positive integers where
\begin{align*}
t\coloneqq& \ell\cdot h(2t_0+10bd)+2t_0+2,\\
s\coloneqq& s_0+4t+2t_0+3,\text{ and}\\
p\coloneqq& 3t + 2s + 2.
\end{align*}
Let $(G,\delta,W)$ be a $d$-blooming and $(x,y)$-fertile $(t,b,h,s,\Sigma)$-orchard, $\langle H,\Lambda\rangle$ be one of its thicket societies together with its nest $\mathcal{C}=\{C_{0}, C_1, \dots, C_s, C_{s+1} \},$ and $\mathcal{P}$ be a $d$-cultivated transaction of order $p$ in $\langle H,\Lambda\rangle.$

We assume that either $\langle H,\Lambda\rangle$ has a cross, or its folio contains at least one society that is not a society of $d\text{-}\mathsf{folio}(\delta).$
Let $\mathcal{O}$ be the collection of societies of detail $d$ that belong to the folio of the parcels of $\mathcal{P}$ but not to $d$-$\mathsf{folio}(\delta),$ and assume $|\mathcal{O}|\leq \ell.$

Let $\delta^*=(\Gamma^*,\mathcal{D}^*)$ be a $\Sigma$-decomposition of $G$ obtained by combining $\delta$ with a vortex-free rendition of the strip society of $\mathcal{Q}$ and let $c_1,$ $c_2$ be the residual vortices of $\delta$ and $\mathcal{Q}.$
Then,
\begin{enumerate}
    \item if 
    \begin{itemize}
        \item $\mathcal{O}\neq\emptyset,$ or
        \item $\mathcal{O}=\emptyset$ but for each $i \in [2]$ there either exists a simple society $O_i \notin d\text{-}\mathsf{folio}(\delta)$ such that $O_i$ belongs to the $d$-folio of a vortex society of $c_{i}$ or there exists a cross on the vortex society of $c_i,$
    \end{itemize}
    there exists a $d$-blooming and $(x,y)$-fertile $(t_0,b,h,s_0,\Sigma)$-orchard $(G,\delta',W')$ where $\delta'=(\Gamma',\mathcal{D}'),$ $\Gamma'\subseteq \Gamma^*,$ and every cell of $\delta'$ that is not a cell of $\delta^*$ is a vortex.
    Moreover, every vortex of $\delta'$ is either a vortex of $\delta$ or contained in one of $c_1$ or $c_2.$
    \item Otherwise, there exists a pruning of $(G,\delta,W).$
\end{enumerate}
Moreover, there exists an algorithm that finds one of the two outcomes in time $\mathcal{O}(|G|^2).$
\end{lemma}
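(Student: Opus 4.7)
The plan is to use the planar, exposed, orthogonal, and $d$-cultivated transaction $\mathcal{P}$ to split the current thicket into (up to) two new thickets while simultaneously extracting, from the area covered by $\mathcal{P}$, one new parterre segment for each society in $\mathcal{O}$; if the area covered by $\mathcal{P}$ turns out to contribute no new folio content whatsoever, we will instead use $\mathcal{P}$ to tighten one of the cycles of the nest and produce a pruning.

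First I would build, from a bundle of consecutive paths of $\mathcal{P}$ together with the adjacent segments of the innermost cycle $C_1$ of the nest, a $(t_0, p')$-greenhouse segment whose plots are exactly the corresponding parcels of $\mathcal{P}$, where $p'$ is chosen so that $p' \ge |\mathcal{O}| \cdot h(2t_0 + 10bd)$. The cultivation of $\mathcal{P}$ guarantees that this greenhouse is $d$-homogeneous, and so $|\mathcal{O}|$ disjoint applications of \autoref{lemma_planting_a_greenhause} to $|\mathcal{O}|$ disjoint sub-greenhouses produce one $(t_0,b,h)$-parterre segment for each society in $\mathcal{O}$. I would then appeal to \autoref{lemma_moving_a_parterre} to slide these freshly extracted parterre segments from their position ``inside'' the old thicket over to the proper position in the circular order of segments; the numerical conditions on $t$ and $s$ in the statement are exactly what is needed so that, after the cost of this move, at least $t_0$ cycles of $W$, $t_0$ radial paths, and $s_0$ nest cycles survive in each new thicket.

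Next, I would carve $c_0$ into the two residual vortices $c_1, c_2$: because $\mathcal{P}$ is orthogonal to $\mathcal{C}$ and exposed, the two boundary paths of $\mathcal{P}$ together with suitable arcs of $C_1$ and of the remaining nest cycles bound two disks whose interiors contain $c_1$ and $c_2$ respectively. To turn each of these into a bona fide $(t_0, s_0)$-thicket segment, I need fresh orthogonal radial linkages of order $t_0$ going from the base cylinder of $W$ into the new innermost nests; for this I apply \autoref{easy_consequence_of_coterminality} inside the annulus between an inner bundle of cycles of $\mathcal{C}$ and an outer bundle, using (on one side) the relevant portion of the original radial linkage of the thicket and (on the other side) the sub-transaction of $\mathcal{P}$ reserved for this purpose. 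The resulting $\Sigma$-decomposition $\delta'$ is the one obtained from $\delta^*$ by declaring the two new disks as single cells $c_1, c_2$; by construction every new cell is either a cell of $\delta^*$ or a vortex contained in one of $c_1, c_2$, as required.

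The case analysis on $\mathcal{O}$ and the residual vortices is the hard part. When $\mathcal{O} \neq \emptyset$, the extraction above is legitimate regardless of what happens inside $c_1, c_2$. When $\mathcal{O} = \emptyset$ but at least one of $c_1, c_2$ hosts a new simple society of detail $\le d$ or a cross, I still need to carry out the split so that the hypothesis ``$\lin{H,\Lambda}$ has a cross or its folio contains a society not in $d\text{-}\mathsf{folio}(\delta)$'' can be harvested in a later iteration, and this is where the two-vortex outcome is essential. The remaining sub-case, namely $\mathcal{O} = \emptyset$ together with neither residual vortex providing a new detail-$d$ society nor a cross, is the one in which $\mathcal{P}$ is genuinely ``wasted'': here the rurality, isolation, and separating property of the $\mathcal{P}$-strip society let me replace a subpath of some cycle of $\mathcal{C}$ by a suitable subpath of a boundary path of $\mathcal{P}$ to get a cycle whose trace bounds a strictly smaller disk containing the original vortex, which is precisely a pruning of $(G,\delta,W)$ in the sense defined earlier. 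The main technical obstacle will be the bookkeeping: keeping the newly built radial linkages, the new nests, the extracted parterres, and the surviving parts of the old infrastructure mutually consistent inside a single $\Sigma$-decomposition, while checking that the $d$-folio of $\delta'$ equals that of $\delta^*$ and that every new vortex lies inside $c_1$ or $c_2$. The quadratic running time follows from a constant number of Menger-type applications and from the linear cost of constructing the greenhouse, each of which runs in $\mathcal{O}(|G|)$.
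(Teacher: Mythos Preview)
Your overall architecture matches the paper's: partition $\mathcal{P}$, build a greenhouse from a central strip, extract parterres via \autoref{lemma_planting_a_greenhause} and \autoref{lemma_moving_a_parterre}, use \autoref{easy_consequence_of_coterminality} to obtain new radial linkages, and carve the vortex into two residual pieces. The auxiliary lemmas you invoke are exactly the ones the paper uses.

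There is, however, a genuine error in your case analysis. You place ``$\mathcal{O}=\emptyset$ and \emph{at least one} of $c_1,c_2$ promising'' in the split outcome and reserve pruning for ``$\mathcal{O}=\emptyset$ and \emph{neither} promising''. But the lemma's hypothesis that $\langle H,\Lambda\rangle$ has a cross or a new folio society makes the ``neither promising'' case vacuous, so your pruning argument is never exercised. The case that actually requires a pruning is ``$\mathcal{O}=\emptyset$ and \emph{exactly one} of $c_1,c_2$ is promising''; this is precisely the ``otherwise'' clause of the lemma, and you have not established a pruning there. Carrying out the split in that case produces an orchard, not a pruning, and hence does not prove what the lemma asserts. (Downstream in \autoref{ripe_orchard} this distinction is essential: the split consumes a level of the recursion counter while a pruning does not, so substituting one for the other breaks the induction.)

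Your pruning mechanism is also too thin for the correct case. Because $\mathcal{P}$ is exposed, every path of $\mathcal{P}$ enters the interior of $c_0$; you cannot simply ``replace a subpath of some cycle of $\mathcal{C}$ by a subpath of a boundary path of $\mathcal{P}$'' as in \autoref{exposure}, since the resulting curve passes through the vortex and is not $\delta$-grounded. What the paper does in its Step~5 is, assuming w.l.o.g.\ $c_2$ is not promising, first absorb $c_2$ into a vortex-free rendition; then it rebuilds the \emph{entire} nest $\{C_i^*\}_{i\in[0,s+1]}$ from $C_i\cup P_{t+2+i}$ (one path of $\mathcal{S}_1$ per cycle), and manufactures a fresh radial linkage from $\mathcal{T}_1$ made coterminal with $\mathcal{R}$ via \autoref{easy_consequence_of_coterminality}. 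The resulting thicket excludes all of $\mathcal{T}_2$, which is what certifies the strict shrink required by the pruning definition. A single cycle replacement does not yield an $s$-nest orthogonal to a $t$-radial-linkage, so your description would not produce a valid orchard of the same parameters.

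A smaller point: the greenhouse needs $2t_0+1$ horizontal paths, so it is built from a bundle of nest cycles together with $\mathcal{Q}$, not from $C_1$ alone; the paper uses the cycles $C_1,\dots,C_{s_0+2t+2t_0+1}$ for this. This is easy to fix once you make the five-way partition $\mathcal{T}_1,\mathcal{S}_1,\mathcal{Q},\mathcal{S}_2,\mathcal{T}_2$ explicit.
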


\begin{proof}
We break the proof into five steps as follows.
During the proof we keep one bit of information called the \defi{flag} which will determine which of the two outcomes of the lemma we end up in.
In case the bit is $1,$ i.\@ e.\@~ the \defi{flag is set}, we find the first outcome, otherwise, we will end up with a pruning of $(G,\delta,W).$
In the beginning, our bit equals $0$ and the flag is not set.
\begin{description}
    \item[Step 1] This step is to establish the general setup. We divide $\mathcal{P}$ into five transactions (see \autoref{fig_extraction_1}) which will be used for different parts of the construction later on.
    \item[Step 2] We then check if $\mathcal{O}\neq \emptyset.$
    If this is true we set the flag and introduce an intermediate setup that will allow us to extract a large greenhouse.
    Otherwise, the flag remains unset and we move on.
    \item[Step 3] The transaction $\mathcal{P}$ defines two new ``vortex regions''.
    For each of them, we test if it either \textsl{a)} has a cross, or \textsl{b)} its folio contains a society that does neither belong to the folio of $\delta,$ nor to the folio of the parcels of $\mathcal{P}.$
    If for each of the two regions one of \textsl{a)} or \textsl{b)} holds and the flag is unset, we set the flag.
    Otherwise, we do not touch the flag and move on.
    \item[Step 4] Now, if the flag has been set in one of the previous steps we now extract a greenhouse, which will be converted in a sequence of parterre by \autoref{lemma_planting_a_greenhause}, and two new thickets from the original thicket segment.
    This yields the first outcome of the assertion and our case distinction terminates here.
    \item[Step 5] This step is only reached if the flag is not set at any point.
    By our assumptions on $\langle H,\Lambda\rangle$ this implies that exactly one of the two vortex regions satisfies \textsl{a)} or \textsl{b)} from \textbf{Step 3} and every society of the folio of the parcels of $\mathcal{P}$ belongs to $d\text{-}\mathsf{folio}(\delta).$
    In this case, we make use of $\mathcal{P}$ to find a pruning of $(G,\delta,W).$
\end{description}

    \begin{figure}[h]
        \centering
        \scalebox{1}{
        \begin{tikzpicture}[scale=1]
    
            \pgfdeclarelayer{background}
            \pgfdeclarelayer{foreground}
                
            \pgfsetlayers{background,main,foreground}
                
            \begin{pgfonlayer}{main}
            \node (C) [v:ghost] {};
    
                \pgftext{\includegraphics[width=6cm]{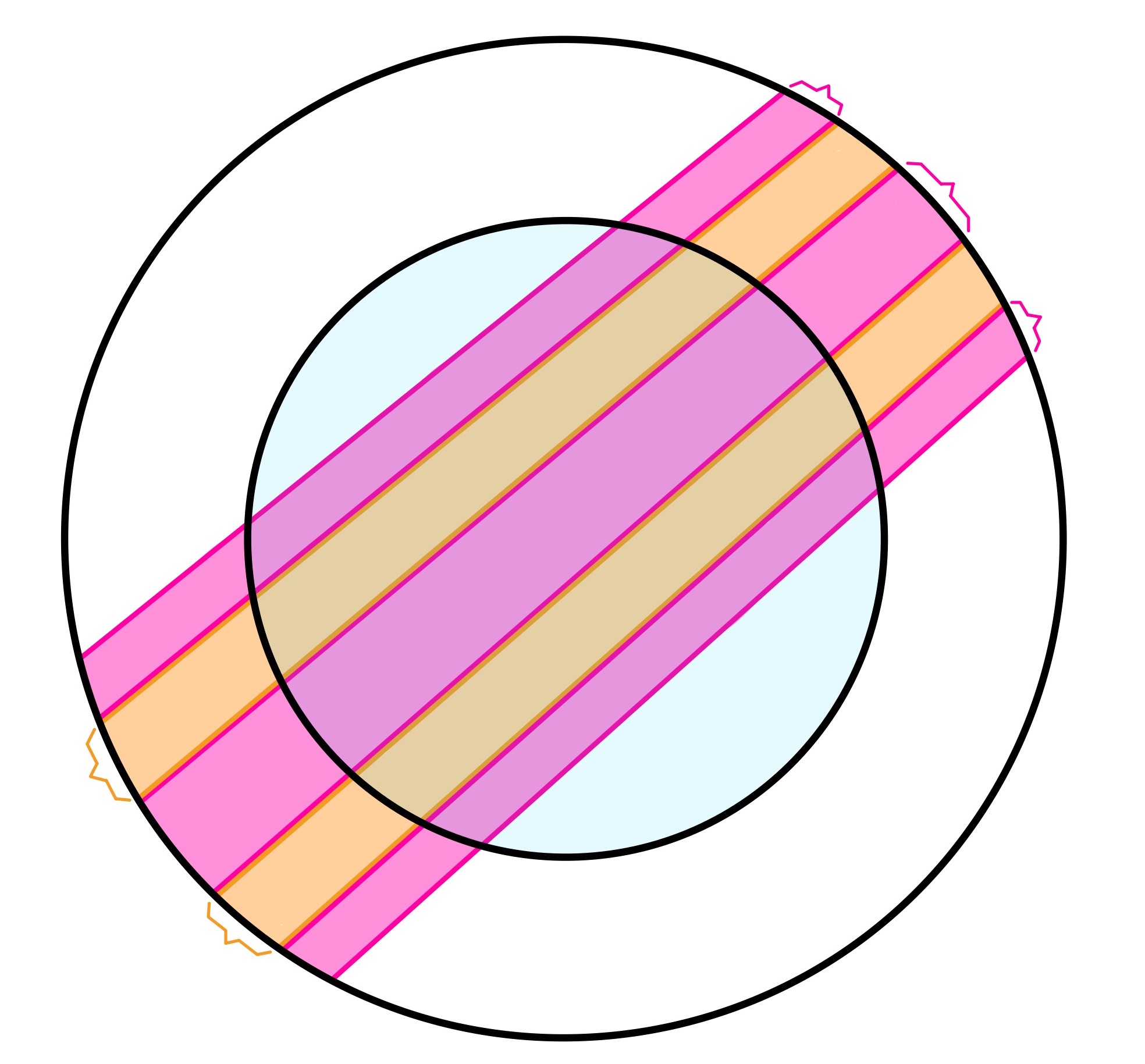}} at (C.center);

                \node (t1) [v:ghost,position=59.5:28.6mm from C] {$t$};
                \node (t2) [v:ghost,position=42:28.6mm from C] {$t$};
                \node (t3) [v:ghost,position=25:28.6mm from C] {$t$};

                \node (s1) [v:ghost,position=207.5:30mm from C] {$s$};
                \node (s2) [v:ghost,position=230:30mm from C] {$s$};

                \node (T1) [v:ghost,position=84:19mm from C] {$\mathcal{T}_1$};
                \node (T2) [v:ghost,position=257:20mm from C] {$\mathcal{T}_2$};

                \node (Q) [v:ghost,position=180:1.5mm from C] {$\mathcal{Q}$};

                \node (S1) [v:ghost,position=180:9.5mm from C] {$\mathcal{S}_1$};
                \node (S2) [v:ghost,position=0:6mm from C] {$\mathcal{S}_2$};

            \end{pgfonlayer}{main}
            
            \begin{pgfonlayer}{foreground}
            \end{pgfonlayer}{foreground}
    
            \begin{pgfonlayer}{background}
            \end{pgfonlayer}{background}
            
        \end{tikzpicture}
        }
        \caption{The partition of the transaction $\mathcal{P}$ into five smaller transaction in the proof of \autoref{lemma_extraction_from_th}.}
        \llabel{fig_extraction_1}
    \end{figure}

We now dive directly into discussing the details of each step.

\paragraph{Step 1: Setup}

Let $\mathcal{R}$ be the radial linkage of $\langle H,\Lambda\rangle$ and $\mathcal{C}.$
Let $I_1$ and $I_2$ be the two minimal segments of $\Lambda$ such that $\mathcal{P}$ is a $V(I_1)$-$V(I_2)$-linkage.
Moreover, by possibly shifting $\Lambda$ slightly, we may assume that every point of $I_1$ appears before any point of $I_2$ in $\Lambda.$
Let $x_1,x_2,\dots,x_{p}$ be the endpoints of the paths in $\mathcal{P}$ in $V(I_1)$ ordered according to their appearance in $\Lambda,$ and $y_1,y_2,\dots,y_p$ be the endpoints of the path in $\mathcal{P}$ in $V(I_2).$
We may number $\mathcal{P}=\{ P_1,P_2,\dots,P_p\}$ such that $P_i$ has endpoints $x_i$ and $y_i$ for each $i\in[p].$
With this, we are ready to define the five transactions that provide the framework for all following construction steps.
\begin{align*}
\mathcal{T}_1 & \coloneqq \{ P_2,\dots, P_{t+1}\}\\
\mathcal{S}_1 & \coloneqq \{ P_{t+2},\dots,P_{t+s+1}\}\\
\mathcal{Q} & \coloneqq \{ P_{t+s+2},\dots,P_{2t+s+1} \}\\
\mathcal{S}_2 & \coloneqq \{ P_{2t+s+2},\dots,P_{2t+2s+1}\}\\
\mathcal{T}_2 & \coloneqq \{ P_{2t+2s+2},\dots,P_{3t+2s+1} \}
\end{align*}
Notice that, with $\mathcal{P}$ being $d$-cultivated, each of the five transactions above is also $d$ cultivated and any member of the $d$-folio of a parcel of one of these transactions belongs to the $d$-folio of every parcel of any of these five transactions.
See \autoref{fig_extraction_1} for an illustration.

We also define two disks $\Delta_1$ and $\Delta_2.$
Let $U_1$ be the cycle consisting of a subpath of $C_{s_0+2t+2t_0+2}$ together with a subpath of $P_{t+s+1}$ whose $C_s$-avoiding disk also avoids the paths in $\mathcal{Q}.$
Similarly, let $U_2$ be the cycle consisting of a subpath of $C_{s_0+2t+2t_0+2}$ together with a subpath of $P_{2t+s+2}$ whose $C_s$-avoiding disk also avoids the paths in $\mathcal{Q}.$
The, for each $i\in[2],$ we define $\Delta_i$ to be the $C_s$-avoiding disk bounded by the trace of $U_i.$
The two disks $\Delta_i$ are our candidates for the new thickets.

Finally, notice that $\langle H,\Lambda \rangle$ has two renditions in the disk:
\begin{itemize}
\item $\delta_H\coloneqq \delta\cap H$ where $\delta_H$ has a single vortex cell, say $c_0$ and the vortex society of $c_H$ has a rendition in the disk with at most $x$ vortices all of which have depth at most $y.$
\item a rendition $\rho$ such that every cell of $\delta_H$ except $c_0$ is also a cell of $\rho$ and, moreover, $\rho$ contains a rendition of the strip society of $\mathcal{P}$ and $\rho$ has exactly two vortex cells, denoted by $c_1$ and $c_2,$ such that 
\begin{itemize}
\item for each $i\in[2],$ $c_i$ is fully contained in $\Delta_i,$ and
\item there exist non-negative integers $x_1$ and $x_2$ such that $x_1+x_2=x$ and the vortex society of $c_{i}$ has a rendition in the disk with at most $x_{i}$ vortices all of which have depth at most $y.$
\end{itemize}
\end{itemize}

At the moment $c_1$ and $c_2$ contain much more than what they need to contain in the end to form the residual vortices.
That is, at this stage they also contain the entire infrastructure that will eventually form their respective nest.
This will be adjusted in \textbf{Step 4} and \textbf{Step 5}.

\paragraph{Step 2: Preparing the greenhouse}

In case $\mathcal{O}=\emptyset$ we completely skip this step and move on to \textbf{Step 3}.

So for the following let us assume that $\mathcal{O}\neq\emptyset.$
In this case, we \textbf{set the flag}.
Moreover, we construct a certain part of the infrastructure which can easily be turned into a $(t,p)$-greenhouse later.
To this end, we start by selecting a linkage $\mathcal{Q}'$ as follows.
For each $Q\in\mathcal{Q}$ let $Q'$ be the shortest subpath of $Q$ which starts in $V(I_1),$ contains an edge which is completely drawn inside $c_0,$ and ends on $C_{s_0+2t+2t_0+1}.$
Then $\mathcal{Q}' \coloneqq\{ Q' \mid Q\in\mathcal{Q} \}.$
Notice that, with respect to $\mathcal{C}'\coloneqq\{ C_{s_0+2t+2t_0+2},C_{s_0+2t+2t_0+3},\dots,C_s\},$ $\mathcal{Q}'$ is an orthogonal radial linkage.
Moreover, the drawing of any path in $\mathcal{Q}'$ is disjoint from $\Delta_1\cup \Delta_2.$

Now, we may select two paths $T_{s_0 + 2t + 2t_0 + 1}$ and $T'_{s_0+ 2t + 2t_0 + 1}$ as follows:
Let $q^+_1$ and $q^+_2$ be the endpoint of $P_{t+s+2}'$ and $P_{2t+s+1}'$ on $C_{s_0 + 2t + 2t_0 + 1}$ respectively.
Then $C_{s_0+2t+2t_0+1}$ contains a unique $q^+_1$-$q^+_2$-subpath which contains all endpoints of $\mathcal{Q}'$ on $C_{s_0+2t+2t_0+1}.$
Let $T_{s_0+2t+2t_0+1}$ be this subpath.
Similarly, let $q^-_1$ and $q^-_2$ be the first vertex of $P_{t+s+2}'$ and $P_{2t+s+1}'$ on $C_{s_0+2t+2t_0+1}$ respectively which is encountered when traversing these paths starting in $V(I_1).$
Then $C_{s_0+2t+2t_0+1}$ contains a unique $q_1^{-}$-$q_2^{-}$-subpath which does not contain any endpoint of $\mathcal{Q}'$ on $C_{s_0+2t+2t_0+1}.$
Let $T'_{s_0+ 2t + 2t_0 + 1}$ be this subpath.
Now notice that there exists a $\delta$-aligned disk $\Delta'$ which intersects the drawing of $G$ exactly in the nodes corresponding to $V(T_{s_0+2t+2t_0+1}) \cup V(P_{t+s+2}')\cup V(P_{2t+s+1}')\cup V(T'_{s_0+2t+2t_0+1}).$
Let $G'$ be the subgraph of $G$ drawn in the disk $\Delta'.$

Then, for every $j \in  [s_0+2t+2t_0+1],$ $C_j\cap G'$ contains two disjoint $V(P_{t+s+2}')$-$V(P_{2t+s+1}')$-subpaths, say $T_{j}$ and $T_{j}'.$
Now, observe that $\cupall \mathcal{Q} \cup\{ T_j \cup T'_{j} \mid j \in [s_0+2t+2t_0+1] \}$ contains a $(2t + 1, t)$-wall $W_Q$ together with a linkage $\mathcal{Q}''$ of order $p$ which links the bottom row of $W_Q$ to $V(I_1)$ such that each path in $\mathcal{Q}''$ contains an edge which is drawn inside $c_0,$ for each column of $W_Q$ there is a unique member of $\mathcal{Q}''$ which has an endpoint in this column, and moreover $W_Q$ is chosen to avoid the cycles $\{C_{t+1}, \dots, C_{s}\}.$
Finally, notice that for every plot $X$ of $\mathcal{Q}$ there exists a $\delta$-aligned disk whose boundary intersects the drawing of $\rho$ only in vertices of $V(I_1),$ the bottom row of $W_Q,$ and two consecutive members of $\mathcal{Q}''$ such that $X$ is contained in this disk.

With this our preparations are complete.
See \autoref{fig_extraction_2} for an illustration of some of the objects defined above.
Notice that, if we made $\mathcal{Q}''$ coterminal with $\mathcal{R}$ within the partial nest $\mathcal{C}',$ we would immediately obtain a $(t, t)$-greenhouse.

\paragraph{Step 3: Preparing the ``split''}

For each $i\in[2],$ let $\langle H_i,\Lambda_i \rangle$ be a vortex society of $c_i$ and let $\mathcal{O}_i\coloneqq d\text{-}\mathsf{folio}(G,\Lambda)\setminus d\text{-}\mathsf{folio}(\delta).$
We say that $c_i$ is \defi{promising} if $\langle H_i,\Lambda_i\rangle$ has a cross or $\mathcal{O}_i\neq\emptyset.$

If there is $i\in[2]$ such that $c_i$ is not promising, we move on to the next step.
Otherwise, $c_i$ is promising for both $i\in[2].$
In this case, we \textbf{set the flag} (if it has not been set yet) and move on to the next step.

\begin{figure}[h]
\centering
\scalebox{1}{
\begin{tikzpicture}[scale=1]
    
            \pgfdeclarelayer{background}
            \pgfdeclarelayer{foreground}
                
            \pgfsetlayers{background,main,foreground}
                
            \begin{pgfonlayer}{main}
            \node (C) [v:ghost] {};
    
                \pgftext{\includegraphics[width=11cm]{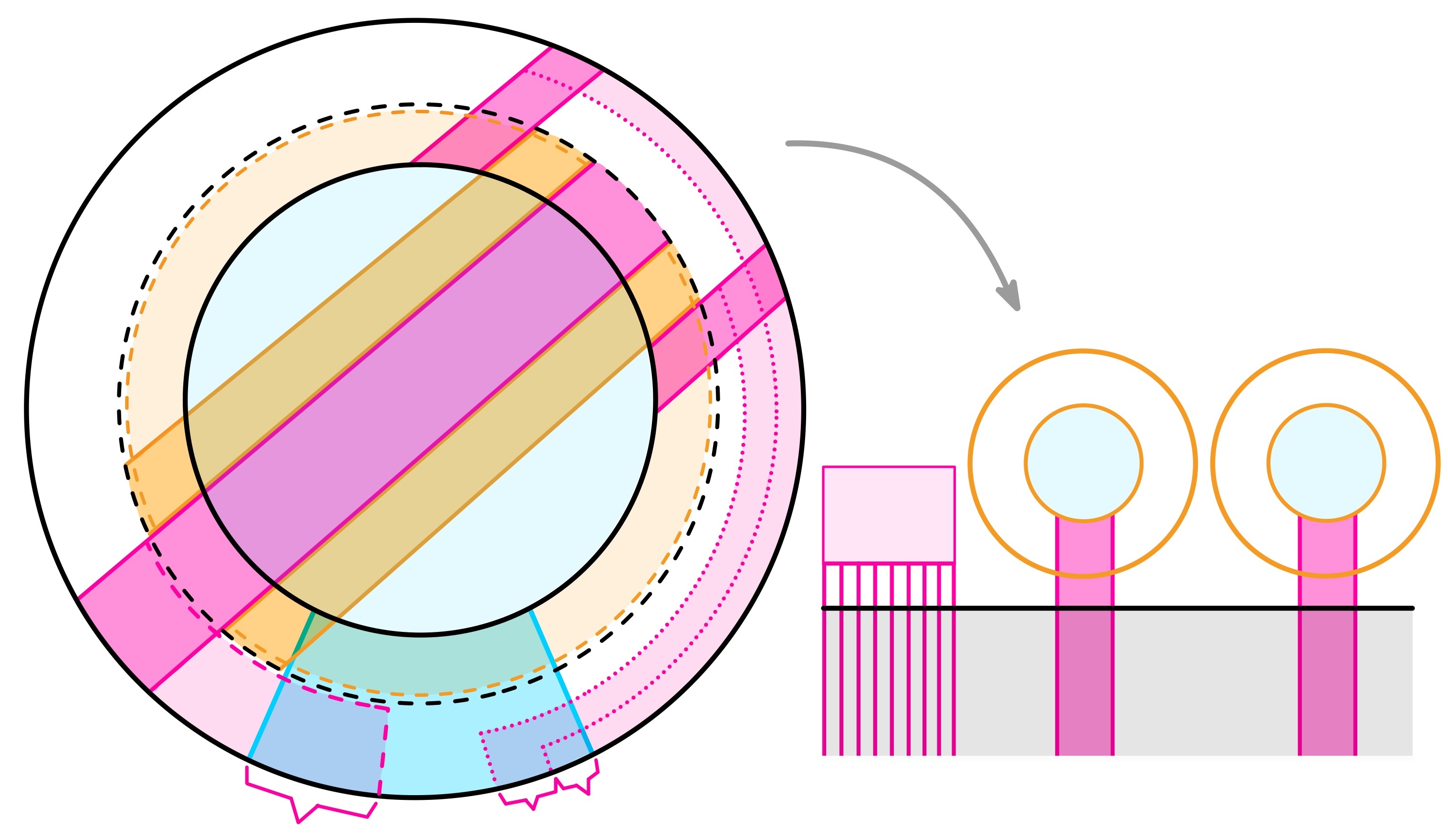}} at (C.center);

                \node (R') [v:ghost,position=200:46.5mm from C] {$\mathcal{R}'$};
                \node (R*_1) [v:ghost,position=208:44mm from C] {$\mathcal{R}^*$};
                \node (R*_2) [v:ghost,position=250:23.5mm from C] {$\mathcal{R}^*$};
                \node (R) [v:ghost,position=229:33.5mm from C] {$\mathcal{R}$};
                \node (T_1') [v:ghost,position=126:33mm from C] {$\mathcal{T}_1'$};
                \node (T_2') [v:ghost,position=62:13.6mm from C] {$\mathcal{T}_2'$};

            \end{pgfonlayer}{main}
            
            \begin{pgfonlayer}{foreground}
            \end{pgfonlayer}{foreground}
    
            \begin{pgfonlayer}{background}
            \end{pgfonlayer}{background}
            
        \end{tikzpicture}
        }
\caption{The ``splitting'' of a thicket into one large greenhouse and two smaller thickets in the proof of \autoref{lemma_extraction_from_th}.}
\llabel{fig_extraction_2}
\end{figure}

\paragraph{Step 4: The Flag has been set}
If we reach this point and the flag has not been set so far, we directly move on to \textbf{Step 5}.
Otherwise, the flag has been set which means that at least one of the following is true.
\begin{enumerate}
    \item $\mathcal{O}\neq\emptyset.$
    \item $c_i$ is promising for both $i\in[2].$
\end{enumerate}
Let $I\subseteq [2]$ be the maximal set such that $c_i$ is promising for every $i\in I.$

We now fix the following radial linkages which will be used to connect the newly constructed objects to a large portion of the original walloid $W.$

For every $Q\in\mathcal{Q}''$ as constructed in \textbf{Step 2} let $Q'$ be the shortest $V(I_1)$-$V(C_{t+1})$-subpath of $Q.$
We set
\begin{align*}
    \mathcal{Q}'''\coloneqq \{ Q' \mid Q\in\mathcal{Q}'' \}.
\end{align*}
For every $T\in\mathcal{T}_1\cup \mathcal{T}_2$ let $T'$ be the shortest $V(I_2)$-$V(C_{t+1})$-subpath of $T.$
We then set
\begin{align*}
    \mathcal{T}_1'&\coloneqq \{ P_i' \mid i\in[2,t_0+1] \}\text{ and}\\
    \mathcal{T}_2'&\coloneqq \{ P_i' \mid i\in[3t+2s+1-t_0,3t+2s+1]\}.
\end{align*}
In total this means that $\mathcal{R}'\coloneqq \mathcal{Q}'''\cup\mathcal{T}_1'\cup\mathcal{T}_2'$ is an orthogonal radial linkage of order $2t_0+t.$ 
We now call upon \autoref{easy_consequence_of_coterminality} on the (partial) nest $\{C_{t + 1},\dots,C_s \}$ to obtain an orthogonal radial linkage $\mathcal{R}^*$ from $\mathcal{R}'$ such that $\mathcal{R}^*$ is coterminal with $\mathcal{R}$ up to level $C_{2t_0+2t+1}$ and $V(\mathcal{R}^*)\subseteq V(\mathcal{R}) \cup V(\mathcal{R}')\cup \bigcup_{i\in[t + 1, 2t_{0}+2t]}V(C_i).$
Moreover, the endpoints of the paths in $\mathcal{R}^*$ which do not belong to $V(\Lambda)$ coincide with the endpoints of the paths in $\mathcal{R}'$ which do not belong to $V(I_1)\cup V(I_2).$
See \autoref{fig_extraction_2} for an illustration.
For each $i\in I$ let
\begin{align*}
    \mathcal{T}''_i\coloneqq \{ R\in\mathcal{R}^* \mid R-V(\Lambda)\text{ shares an endpoint with some path from }\mathcal{T}_i' \}.
\end{align*}

Next, we need to create new nests for each of the future thicket segments arising from $c_i,$ $i\in I.$
We will define, for each $i \in [I],$ a cycle family $\mathcal{C}_i'$ of $s_{0} + 2$ many concentric cycles ``around'' $c_{i}.$

Fix some point $x_1$ in the interior of $c_1$ which belongs to $c_0$ and is disjoint from the disk defined by the strip society of $\mathcal{P}.$
For each $j \in [s_{0} + 2],$ let $C^1_{j}$ be the cycle contained in $C_{t + 1 + j} \cup P_{t + 1 + j}$ which is fully drawn within $\Delta_1$ and which separates $x_1$ from the trace of $C_s.$
Then
\begin{align*}
    \mathcal{C}_1'\coloneqq \{ C^1_j \mid j \in [s_{0} + 2] \}.
\end{align*}
Similarly, we fix some point $x_2$ in the interior of $c_2$ which belongs to $c_0,$ is fully drawn within $\Delta_2,$ and does not belong to the disk defined by the strip society of $\mathcal{P}.$
Then, for each $j \in [s_{0} + 2],$ let $C^2_{j}$ be the cycle contained in $C_{t + 1 + j} \cup P_{2t+s+1+j}$ whose trace separates $x_2$ from the trace of $C_s.$
Then 
\begin{align*}
    \mathcal{C}_2' \coloneqq \{ C^2_j \mid j \in [s_{0} + 2] \}.
\end{align*}

Next, notice that since $\mathcal{T}''_{i}$ is an railed linkage that is orthogonal to the (partial) nest $\{C_{t + 1}, \dots,C_s \},$ it must also be orthogonal to $\mathcal{C}_i'.$

We now select, for every $i\in I,$ $t_0$ members of $\mathcal{T}_i'',$ excluding its first and last path, and collect them into the radial linkage $\mathcal{T}_i^*.$
Similarly, we set $\mathcal{C}_i^*\coloneqq \{ C^i_2, \dots, C^i_{s_0+1}\}.$
Finally, we select $c_i^*$ to be the cell obtained from the disk bounded by the trace of $C^i_1.$
We know that for each $i\in [2] \setminus I,$ the vortex society of $c_i$ has a vortex-free rendition in the disk and every member of its $d$-folio is also a member of the $d$-folio of $\delta.$
It follows that we may augment $\rho$ to be a rendition $\rho^*$ of $\langle H,\Lambda\rangle$ in the disk where exactly $\{ c_i^* \mid i\in I\}$ are the vortex cells of $\rho^*.$
Moreover, if $c_0$ had a rendition in the disk with at most $x$ many vortices, each of depth at most $y,$ then it follows from our construction that there exist non-negative integers $x_1$ and $x_2$ such that $x_1+x_2=x$ and for each $i\in I,$ the vortex society of $c^*_i$ has a rendition in the disk with at most $x_i$ many vortices, each of depth at most $y.$

What remains is to describe the final step of the construction.
That is, we have to describe how to obtain the desired $(x,y)$-fertile $(t_0,b,h,s_0,\Sigma)$-orchard.
To do this, we extend the radial linkage $\mathcal{R}^*$ through the circles of our walloid $W$ all the way down to its simple face.
The infrastructure described above yields, a large greenhouse and for each $i\in I$ a new thicket segment with a nest and radial linkage of order $t_0.$
Now, \autoref{lemma_planting_a_greenhause} allows us to transform the greenhouse into a sequence of $(t_0,b,h)$-parterre segments such that for each member of $\mathcal{O}$ there exists one such parterre segment.
Together with the (up to two) new thicket segments we have now reached the situation of \autoref{lemma_moving_a_parterre}.
By applying \autoref{lemma_moving_a_parterre} and then forgetting about some of the excess infrastructure of the walloid $W$ and its segments, we finally obtain a $(x,y)$-fertile $(t_0,b,h,s_0,\Sigma)$-orchard $(G,\rho^*,W^*)$ which has the desired properties by construction.

\paragraph{Step 5: Pruning}
All that is left is to discuss the case where the flag was not set.

In the case where neither $c_1$ not $c_2$ is promising it follows that the vortex society of $c_0$ does not contain a cross, nor does its $d$-folio contain a society that does not belong to $d\text{-}\mathsf{folio}(\delta).$
This, however, would mean that the entire thicket could have been forgotten to begin with and thus we may ignore this case.

The only remaining option is that exactly one of $c_1$ or $c_2$ is promising.
Without loss of generality we may assume that $c_2$ is not promising.

For every path $T$ in $\mathcal{T}_1$ let $T'$ be a shortest $V(I_2)$-$V(C_1)$-subpath of $T.$
We set
\begin{align*}
    \mathcal{T}\coloneqq \{T' \mid T\in\mathcal{T}_1 \}.
\end{align*}
Then $\mathcal{T}$ is a radial linkage of order $t$ that is orthogonal to $\mathcal{C}.$
We then apply \autoref{easy_consequence_of_coterminality} to make $\mathcal{T}$ coterminal with $\mathcal{R}$ up to level $t+1.$
Let $\mathcal{L}$ be the resulting orthogonal radial linkage.

Next we define our new nest.
This construction is similar to the construction from \textbf{Step 4}.
We begin by fixing a point $x_1$ in the interior of $c_1\cap c_0$ which avoids the disk induced by the strip society of $\mathcal{P}.$
Now, for each $i \in [0, s+1],$ let $C_i^*$ be the cycle in $C_i \cup P_{t+2+i}$ whose trace in $\delta$ separates $x_1$ from the trace of $C_{s+1}$ in $\delta.$
Moreover, let $\mathcal{C}^*\coloneqq \{ C_i^* \mid i \in [s]\}.$

Now, notice that the radial linkage $\mathcal{L}$ must also be orthogonal to the new nest $\mathcal{C}^{*}.$
Let $c_0^*$ be a cell obtained from the disk bounded by the trace of $C_0^*$ which contains the point $x_1.$
It follows that $\langle H,\Lambda\rangle$ has a rendition $\rho^*$ in the disk where $c_0^*$ is its only vortex cell, and $d\text{-}\mathsf{folio}(\rho^*)\subseteq d\text{-}\mathsf{folio}(\delta).$

With this we may augment $\delta$ to a $\Sigma$-decomposition $\delta'$ of $G$ by using $\rho^*$ and adjust $W$ by replacing the thicket with nest $\mathcal{C}$ and radial linkage $\mathcal{R}$ with a thicket segment with nest $\mathcal{C}^*$ and radial linkage $\mathcal{L},$ thereby obtaining a new walloid $W'.$
It is now straightforward to verify that $(G,\delta',W')$ is indeed a pruning of $(G,\delta,W)$ since no path in $\mathcal{T}_2$ (except its first one), which is an exposed transaction on $\langle H,\Lambda\rangle,$ belongs to any of the thickets of $(G,\delta',W').$
With this, our proof is complete.
\end{proof}

\subsection{The proof of \autoref{ripe_orchard}}

We introduce the following convention for the rest of this section.
Let us define the \defi{$d$-folio number} to be
\begin{align*}
    \mathsf{d}^{\star}\coloneqq f_{\ref{obs_size_dfolio}}(d).
\end{align*}
This is purely for convenience in the proof of \autoref{ripe_orchard} as we need to have a lot of discussion around the maximum number of different members of $d\text{-}\mathsf{folio}(\delta).$
By \autoref{obs_size_dfolio} this number is upper bounded by $\mathsf{d}^{\star}.$

Before we begin with the proof of \autoref{ripe_orchard}, we also fix three recursive functions $\mathsf{s},$ $\mathsf{t},$ and $\mathsf{p}.$
These functions represent the recursive application of \autoref{lemma_extraction_from_th} and ensure that the infrastructure of the final walloid will have order $t^*,$ while its thickets are surrounded by nests of order $s^*$ for any choice of $t^*$ and $s^*.$
Let $s^*,$ $t^*,$ and $z$ be positive integers and $h,b,d\in\mathbb{N}.$
\begin{align*}
   \mathsf{t}(t^*,0) &\coloneqq t^*\\
   \mathsf{t}(t^*,z) &\coloneqq \mathsf{d}^{\star} \cdot h\cdot\big( 2\mathsf{t}(t^*,z-1) + 10bd \big) + 2\mathsf{t}(t^*,z-1)+2\\
   ~&~\\
   \mathsf{s}(s^*,t^*,0) &\coloneqq s^*+6t^*+12\\
   \mathsf{s}(s^*,t^*,z) &\coloneqq s^*+1 + \mathsf{s}(s^*,t^*,z-1) + 4\mathsf{t}(t^*,z) + 2\mathsf{t}(t^*,z-1) + 3\\
\end{align*}
Please notice that the three functions above also depend on $h,b$ and, in particular on $d$ as even $\mathsf{d}^*$ depends on $d.$
However, we consider these numbers to be fixed and wish to highlight mostly the dependency on $s^*$ and $t^*$ as these are the numbers that will mostly be modulated in later parts of the proof of our main theorem.
From the recursive definitions of the functions above, one can derive the following observations on their order:
\begin{align*}
    \mathsf{t}(t^*,z) &\in (\mathsf{d}^{\star}\cdot h)^{\mathcal{O}(z)} + \mathcal{O}(zbd) + t^*\text{ and}\\
    \mathsf{s}(s^*,t^*,z) &\in (\mathsf{d}^{\star}\cdot h)^{\mathcal{O}(z)} + \mathcal{O}\big(z(s^*+bd) \big) + t^*.
\end{align*}
Moreover, whenever we start with an orchard that is $(x,y)$-fertile, the following upper bound $z^*$ on the maximum value $z$ may attain will follow from our proof.
\begin{align*}
    z^* \coloneqq x+\mathsf{d}^{\star}\cdot (\mathsf{d}^{\star}\cdot hb+1)2^{\mathsf{d}^{\star}}
\end{align*}
We also fix\footnote{Recall that $\mathsf{d}^{\star}$ is not a precise value but hides some constant within its $\mathcal{O}$-notation.} a value $p^*$ which will act as an upper bound on the depth of the vortices of $(G,\delta,W)$ as follows.
\begin{align*}
    p^* \coloneqq x^2y + (t^*\cdot s^*\cdot (h+1)^{x+hb})^{2^{\mathsf{d}^{\star}}}
\end{align*}
\smallskip

Based on these functions let us briefly provide explicit bounds for the four functions from the statement of \autoref{ripe_orchard}.
\begin{align*}
f^1_{\ref{ripe_orchard}}(t^*,b,h,s^*,x,y,d) &\coloneqq \mathsf{t}(t^*,z^*) \in t^*+h^{x+hb\cdot 2^{2^{\mathcal{O}(d^2)}}},\\
f^2_{\ref{ripe_orchard}}(t^*,b,h,s^*,x,y,d) &\coloneqq \mathsf{s}(s^*,t^*,z^*) \in t^*+ s^*\cdot(x+hb)2^{2^{\mathcal{O}(d^2)}} + h^{x+hb\cdot 2^{2^{\mathcal{O}(d^2)}}},\\
f^3_{\ref{ripe_orchard}}(t^*,b,h,s^*,x,y,d) &\in x + hb2^{2^{\mathcal{O}(d^2)}} \text{, and}\\
f^4_{\ref{ripe_orchard}}(t^*,b,h,s^*,x,y,d) &\in x^2y + (t^*\cdot s^*\cdot (h+1)^{x+hb})^{2^{2^{\mathcal{O}(d^2)}}}
\end{align*}
Please note that we do not derive these bounds explicitly in the proof below.
The bounds we provide will all be in terms of $z^*,$ $\mathsf{t},$ and $\mathsf{s},$ however, the bounds above then follow from the respective bounds on these functions.

With this, we are ready for the proof of \autoref{ripe_orchard}.

\begin{proof}[Proof of \autoref{ripe_orchard}]
We prove the following slightly stronger claim by induction on $z.$

\paragraph{Inductive claim.}
There exists an algorithm that, given an $(x,y)$-fertile and $d$-blooming single-thicket $(\mathsf{t}(t^*,z),b,h,\mathsf{s}(s^*,t^*,z),\Sigma)$-orchard $(G,\delta,W),$ outputs a $(z^*,p^*)$-ripe and $d$-blooming $(t^*,b,h,s^*,\Sigma)$-orchard in time $\mathcal{O}(|G|^3).$
\medskip

Here $z$ represents an upper bound on the number of times \autoref{lemma_extraction_from_th} can yield the first outcome before, in the resulting orchard $(G,\delta,W),$ $d\text{-}\mathsf{folio}(\delta)$ contains all possible societies of detail $d$ and $\delta$ has $x$ vortices, each with a cross on its society.
The two main points here are that, for an $(x,y)$-fertile orchard, repeated applications of \autoref{lemma_extraction_from_th} can never produce strictly more than $x$ thickets such that the thicket society of each of them has a cross.
Moreover, any thicket whose thicket society does not have a cross must contain some detail-$d$-society in its folio that does not belong to $d\text{-}\mathsf{folio}(\delta)$ since otherwise we could simply discard the thicket as it does not pose an obstacle to embeddability.

Under these principles, there are three situations under which an application of \autoref{lemma_extraction_from_th} does not yield a pruning (which is a situation we will handle online).
\begin{enumerate}
    \item the outcome increases the number of thickets with a cross on their respective thicket societies,
    \item the outcome produces a parterre segment for some detail-$d$-society that did not belong to $d\text{-}\mathsf{folio}(\delta)$ before, or
    \item the outcome increases the number of ``flat'' thickets whose societies contain some detail-$d$-society as a minor which does not belong to $d\text{-}\mathsf{folio}(\delta).$
\end{enumerate}
As discussed above, the first case can only occur $x$ times.
Moreover, the second case can occur at most $\mathsf{d}^{\star}$ times since this is an upper bound on the total number of non-isomorphic societies of detail $d.$
For the third case, we will see that whenever the total number of ``flat'' thickets goes above a certain threshold we are able to apply a pigeonhole argument to turn some of them into parterre segments and thereby increase $|d\text{-}\mathsf{folio}(\delta)|.$
For this reason, also the third case can only occur a bounded number of times.

\paragraph{Base of the induction.}
For the base of the induction, we consider the case $z=0.$
This case reflects the situation where $(G,\delta,W)$ has $x$ thickets which have a cross on their thicket society and where $d\text{-}\mathsf{folio}(\delta)$ contains all societies of detail $d.$
In this situation, by the discussion above, there does not exist a thicket that does not have a cross on its society.
Let $\langle H,\Lambda\rangle$ be the thicket society of some thicket of $(G,\delta,W).$
We know that $(G,\delta,W)$ is an $(x,y)$-fertile and $d$-blooming $(t^*,b,h,s^*+6t^*+12,\Sigma)$-orchard.
What remains to show is that it can be turned into a $(z^*,p^*)$-ripe and $d$-blooming $(t^*,b,h,s^*+6t^*+12,\Sigma)$-orchard.

Let $\mathcal{C}=\{ C_1,\dots,C_{s^*+6t^*+12}\}$ be the nest of $\langle H,\Lambda\rangle$ and let $\Delta$ be the $C_{s^*+6t^*+11}$ avoiding disk defined by the trace of $C_{6t^*+12}.$
Moreover, let $\langle H',\Lambda'\rangle$ be a society defined by $\Delta.$
Notice that $\langle H',\Lambda'\rangle$ has a nested cylindrical rendition $\rho$ in $\Delta$ with a single vortex, say $c_0,$ and nest $\mathcal{C}'\coloneqq \{ C_{1},\dots,C_{6t^*+11}\}.$
Moreover, by cropping the radial linkage of $\langle H,\Lambda\rangle$ we obtain a radial linkage $\mathcal{R}'$ of order $t^*$ for $\langle H',\Lambda'\rangle.$

If $\langle H',\Lambda'\rangle$ is of depth at most
\begin{align*}
    & 12t^*+24 + (x+1)\big( 2xy + (6t^*+12)((15t^*+26)^{2^{\mathsf{d}^{\star}}} + 2) \big)-1\\
    =~ & 2s' + (x+1)\big( 2xy + s'((3t^* + 2s' +2)^{2^{\mathsf{d}^{\star}}} + 2)  \big)-1 
\end{align*}
where $s'\coloneqq 6t^*+12=|{\mathcal{C}}'|,$ we are done.
Hence, we may assume that there exists a transaction $\mathcal{P}_0$ of order $12t^*+24 + (x+1)\big( 2xy + (6t^*+12)((15t^*+26)^{2^{\mathsf{d}^{\star}}} + 2) \big)$ in $\langle H',\Lambda'\rangle.$

By \autoref{exposure}, there is either a \textbf{pruning} of $(G,\delta,W),$ or there exists a transaction $\mathcal{P}_1\subseteq\mathcal{P}_0$ of order $(x+1)\big( 2xy + (6t^*+12)((15t^*+26)^{2^{\mathsf{d}^{\star}}} + 2) \big)$ which is exposed. 

By \autoref{isolatedstrips}, $\mathcal{P}_1$ must contain a planar transaction $\mathcal{P}_2$ of order $(6t^*+12)((15t^*+26)^{2^{\mathsf{d}^{\star}}} + 2)$ whose strip society is isolated, separating, and rural.

Then, by \autoref{orthogonal_transaction}, we can assume that (up to possibly slightly changing the cycles in $\mathcal{C}'$ and the paths in the radial linkage $\mathcal{R}'$) that there exists a barren transaction $\mathcal{P}'_{2}$ of order $(15t^*+26)^{2^{\mathsf{d}^{\star}}}.$

Next we apply \autoref{lem_cultivate_transaction} to obtain a trasaction $\mathcal{P}_3\subseteq \mathcal{P}'_2$ which is $d$-cultivated and of order $15t^*+26 = 3t^* + 2s' +2$ where $s' = |\mathcal{C}'|.$

This finally allows us to use \autoref{lemma_extraction_from_th} on $\langle H',\Lambda'\rangle$ and $\mathcal{P}_3.$
Notice that, since $(G,\delta,W)$ has exactly $x$ thickets which are not flat and $d\text{-}\mathsf{folio}(\delta)$ contains all linear societies of detail at most $d,$ the first outcome of \autoref{lemma_extraction_from_th} is impossible.
Thus, we must find a \textbf{pruning} of $(G,\delta,W)$ in time $\mathcal{O}(|G|^2).$

Notice that every time we find a pruning as the outcome of one of the above steps, at least one of the cycles of $\mathcal{C}'$ is ``pushed'' slightly closer to the vortex.
That means, at least one edge of $G$ is ``pushed'' further to the outside, or fully outside, of a thicket in each of these occurrences.
Since there are at most $|E(G)|$ many edges and only $6t^*+12$ many cycles in $\mathcal{C}',$ we cannot find a pruning more than $(6t^*+12)\cdot |E(G)|$ times.
As a result, and since we may assume that $G$ excludes a graph of bounded size as a minor which means $|E(G)|\in \mathcal{O}(|G|),$ after $\mathcal{O}(|G|^3)$ many iterations of the procedure above the only possible case left is that $\langle H', \Lambda' \rangle$ is of depth at most
\begin{align*}
    12t^*+24 + (x+1)\big( 2xy + (6t^*+12)((15t^*+26)^{2^{\mathsf{d}^{\star}}} + 2) \big)-1. 
\end{align*}
Since there are only $x$ thickets, these arguments establish the base of the induction.

\paragraph{The inductive step.}
The arguments that bounded the number of times we may encounter a pruning in the base of our induction still apply when we find a pruning in the inductive step.
In essence, we cannot encounter a pruning more than $\mathcal{O}(|G|)$ in total and thus, by iterating the arguments enough times, we must always encounter an outcome that is not a pruning.
To avoid repetition, we continue to properly mark all places in the arguments were pruning might occur, but we will always assume that in these cases we encounter one of the non-pruning outcomes.
Formally, we can establish such a situation by assuming that $(G,\delta,W)$ is a minimal counterexample to out inductive claim with respect to improvements via pruning.
Please note that, according to \autoref{exposure}, it is imperative that we perform all possible improvements via pruning, if we are to also guarantee in the end that not only the vortex societies within thickets have bounded depth but also their entire thicket societies.

From here on we may assume that $z\geq 1.$

\paragraph{Step 1: No useless thickets.}
First, we may assume that for every thicket segment $\widehat{W}$ whose society does not have a cross, there exists some society of detail at most $d$ which is a minor of the thicket society of $\widehat{W}$ but does not belong to $d\text{-}\mathsf{folio}(d).$
We may assume this since any thicket that does not meet this criterion can safely be forgotten. 

\paragraph{Step 2: A bounded number of thickets.}
Next, let us assume that there exist more than $(\mathsf{d}^{\star}\cdot hb+1)2^{\mathsf{d}^{\star}}$ many thickets whose thicket society does not contain a cross.
Since there are at most $\mathsf{d}^{\star}$ many societies of detail at most $d$ by \autoref{obs_size_dfolio} we may assign to each thicket the $d$-folio of its thicket society.
Since there are at most $2^{\mathsf{d}^{\star}}$ many possible such $d$-folios, there must be at least $\mathsf{d}^{\star}\cdot hb+1$ many thickets with the same $d$-folio, say $\mathcal{F}'.$
Let $\mathcal{F}\coloneqq \mathcal{F}'\setminus d\text{-}\mathsf{folio}(\delta).$
Notice that $|\mathcal{F}|\geq 1$ by the assumption from \textbf{Step 1}.
Moreover, $z\geq |\mathcal{F}|$ must hold since we at least allow one ``split'' for each possible society of detail at most $d.$

Now, for each member $\langle H,\Lambda\rangle$ of $\mathcal{F}$ we wish to produce $h$ many $(\mathsf{t}(t^*,z-|\mathcal{F}|),b,H,\Lambda)$-flower segments which then can be gathered into a single $(\mathsf{t}(t^*,z-|\mathcal{F}|),b,h,H,\Lambda)$-parterre segment.
To achieve this, the amount of infrastructure of our orchard we must sacrifice is at most
\begin{align*}
    r\coloneqq |\mathcal{F}| \cdot h \cdot (2\mathsf{t}(t^*,z-|\mathcal{F}|) + bd)
\end{align*}
cycles from the nest and at most $2r$ paths from the radial linkage of every thicket.
By doing so, similar arguments as those for \autoref{lemma_moving_a_parterre} allow us to produce a new orchard whose walloid is of order $\mathsf{t}(t^*,z-|\mathcal{F}|)$ and whose $d$-folio equals $d\text{-}\mathsf{folio}(\delta)\cup \mathcal{F}.$
Since these arguments are essentially the same, we omit the discussion on how to move and grow the extracted flower segments through the thickets.
Notice that the following two inequalities follow directly from the recursive definitions of $\mathsf{s}$ and $\mathsf{t}$ for every pair $z\geq z'$ of positive integers.
\begin{align}
  \mathsf{t}(t^*,z-z') + 2(z'h(bd + \mathsf{t}(t^*,z-z'))) &\leq \mathsf{t}(t^*,z)\llabel{equ_t_bound}\\
  \mathsf{s}(s^*,t^*,z-z') + z'h(bd + \mathsf{t}(t^*,z-z')) &\leq \mathsf{s}(s^*,t^*,z)
  \llabel{equ_s_bound}
\end{align}

Therefore, if there are at least $x+(\mathsf{d}^{\star}\cdot hb+1)2^{\mathsf{d}^{\star}}+1$ many thicket segments, under the assumption from \textbf{Step 1}, we may first extract $|\mathcal{F}|\geq 1$ many new parterre segments and then use the induction hypothesis to complete the proof.
Thus, from now on we may assume that there are at most $x+(\mathsf{d}^{\star}\cdot hb+1)2^{\mathsf{d}^{\star}}$ many thicket segments in $(G,\delta,W)$ as desired.

\paragraph{Step 3: Splitting a thicket}
What follows is, essentially, a second iteration of the steps from the base of our induction.
The only two changes that occur are: 
\begin{enumerate}
\item The numbers are now bigger and depend directly on the functions $\mathsf{t}$ and $\mathsf{s}$ and
\item we may now also encounter the first outcome of \autoref{lemma_extraction_from_th} which will lead to a proper ``split'' of the thicket segment.
\end{enumerate}

We begin by selecting $\langle H,\Lambda\rangle$ to be the thicket society of some thicket of $(G,\delta,W)$ as before.

Let $s'\coloneqq \mathsf{s}(s^*,t^*,z-1)+4\mathsf{t}(t^*,z)+2\mathsf{t}(t^*,z-1)+3,$ let $\mathcal{C}=\{ C_1,\dots,C_{s^*+1+s'}\}$ be the nest of $\langle H,\Lambda\rangle$ and let $\Delta$ be the $C_{s^*+1+s'}$ avoiding disk defined by the trace of $C_{s'+1}.$
Moreover, let $\langle H',\Lambda'\rangle$ be a society defined by $\Delta.$
Notice that $\langle H',\Lambda'\rangle$ has a nested cylindrical rendition $\rho$ in $\Delta$ with a single vortex, say $c_0,$ and nest $\mathcal{C}'\coloneqq \{ C_{1},\dots,C_{s'}\}.$
Moreover, by cropping the radial linkage of $\langle H,\Lambda\rangle$ we obtain a radial linkage $\mathcal{R}'$ of order $\mathsf{t}(t^*,z)$ for $\langle H',\Lambda'\rangle.$

If $\langle H',\Lambda'\rangle$ is of depth at most
\begin{align*}
    & 2s' + (x+1)\big( 2xy + s'((3\mathsf{t}(t^*,z)+2s' +2)^{2^{\mathsf{d}^{\star}}}+2) \big)-1 
\end{align*}
We may, by transforming $\Delta$ into a cell of $\delta$ and declare it the vortex of our current thicket segment, obtain a thicket segment with an $s^*$-nested cylindrical rendition and a vortex of depth at most $2s' + (x+1)\big( 2xy + s'((3\mathsf{t}(t^*,z)+2s' +2)^{2^{\mathsf{d}^{\star}}}+2) \big)-1 $ for this segment.
In case this is the outcome for all thicket segments we are done.

Hence, we may assume that there exists a transaction $\mathcal{P}_0$ of order $2s' + (x+1)\big( 2xy + s'((3\mathsf{t}(t^*,z)+2s' +2)^{2^{\mathsf{d}^{\star}}}+2) \big)$ in $\langle H',\Lambda'\rangle.$

By \autoref{exposure}, there is either a \textbf{pruning} of $(G,\delta,W),$ or there exists a transaction $\mathcal{P}_1\subseteq\mathcal{P}_0$ of order $(x+1)\big( 2xy + s'((3\mathsf{t}(t^*,z)+2s' +2)^{2^{\mathsf{d}^{\star}}}+2) \big)$ which is exposed. 

By \autoref{isolatedstrips}, $\mathcal{P}_1$ must contain a planar transaction $\mathcal{P}_2$ of order $s'((3\mathsf{t}(t^*,z)+2s' +2)^{2^{\mathsf{d}^{\star}}}+2)$ whose strip society is isolated, separating, and rural.

Then, by \autoref{orthogonal_transaction}, we can assume that (up to possibly slightly changing the cycles in $\mathcal{C}'$ and the paths in the radial linkage $\mathcal{R}'$) that there exists a barren transaction $\mathcal{P}'_{2}$ of order $(3\mathsf{t}(t^*,z)+2s' +2)^{2^{\mathsf{d}^{\star}}}.$

Next we apply \autoref{lem_cultivate_transaction} to obtain a transaction $\mathcal{P}_3\subseteq \mathcal{P}_2$ which is $d$-cultivated and of order $3\mathsf{t}(t^*,z)+2s' +2.$
Recall that $s' = |\mathcal{C}'|.$
We are now ready to call upon \autoref{lemma_extraction_from_th} for $\langle H',\Lambda'\rangle$ and $\mathcal{P}_3.$
The second outcome of \autoref{lemma_extraction_from_th} is a \textbf{pruning} of $(G,\delta,W).$
Hence, we may assume that \autoref{lemma_extraction_from_th} returns the first outcome.
Let us discuss the numbers involved to see that we may now complete the proof by using the induction hypothesis.

For the purpose of applying \autoref{lemma_extraction_from_th} let us set
\begin{align*}
    t_0 &\coloneqq \mathsf{t}(t^*,z-1)\text{, and}\\
    s_0 &\coloneqq \mathsf{s}(t^*,z-1).
\end{align*}
It then follows from the recursive definition of $\mathsf{s}$ and $\mathsf{t}$ that the size of the nest, the radial linkage of $\langle H,\Lambda\rangle,$ and the transaction $\mathcal{P}_3$ are large enough such that \autoref{lemma_extraction_from_th} outputs an $(x,y)$-fertile $(\mathsf{t}(t^*,z-1),b,h,\mathsf{s}(s^*,t^*,z-1),\Sigma)$-orchard $(G,\delta',W')$ where $d\text{-}\mathsf{folio}(\delta)\subseteq d\text{-}\mathsf{folio}(\delta')$ and one of the following holds:
\begin{itemize} 
    \item $d\text{-}\mathsf{folio}(\delta)\subsetneq d\text{-}\mathsf{folio}(\delta'),$ or
    \item $W'$ has exactly one thicket segment more than $W.$
\end{itemize}
In both cases, the total number of times we are allowed to ``split'' the vortex has been reduced by at least one.
Hence, by applying the induction hypothesis for $z-1$ to $(G,\delta',W')$ the proof is complete.
\end{proof}

\section{Harvesting}\llabel{sec_harvest_crops}

The goal of this section is to prove the two lemmata of \autoref{main_harvesting_lemmata}, namely \autoref{my_crops_findslo} and \autoref{harvesting_lemma}.
The former shows how we can control the behavior of an extension, invading a well-insulated area of a $\Sigma$-decomposition, i.e., an area containing many concentric cycles around a vortex cell of the decomposition.
The latter shows how we can exploit the structure of such a controlled extension in order to show that whenever an extension that invades a well-insulated area cannot be hit with a bounded number of vertices within this area, then we can find a large packing of the invading part of the extension along the boundary of this area. 

\subsection{Lines in cylindrical renditions}
\llabel{lines_cylindrical}

In this subsection, we examine more closely the structure of minimal separators between disjoint segments of the society corresponding to a cylindrical rendition.
In particular, we show that their structure in the flat part of the cylindrical rendition defines a clear frontier in terms of what we call a \textsl{line}.

\medskip
We need the following proposition on the structure of a low-depth society. 

\begin{proposition}[\cite{RobertsonS90GMIX}]\llabel{take_from_GM_IX}
Let $\lin{G, \Lambda}$ be a linear society where $\Lambda = \lin{v_{1},\ldots,v_{l}}.$
If $\lin{G, \Lambda}$ has depth at most $w,$ then there is a sequence $\Bcal = \lin{B_i \mid i \in [l]}$ of subsets of $G$ such that 
\begin{enumerate}
\item $\cupall \Bcal = V(G),$
\item for every edge $xy \in E(G)$ there is some $i \in [l]$ such that $\{ x, y \} \subseteq B_{i},$
\item for every $i \in [l],$ $y_{i} \in B_{i},$
\item for every $i, j, h \in [l],$ if $i ≤ j ≤ h,$ then $X_{i} \cap X_{h} \subseteq X_{j},$ and 
\item\llabel{all_disj_bound} for every $i, j \in [l],$ if $i \neq j,$ then $|B_{i} \cap B_{i+1}| ≤ w.$
\end{enumerate}
 \end{proposition}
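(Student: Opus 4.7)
The plan is to establish, under the depth bound, a sequence of small separators between prefixes and suffixes of $\Lambda$, and then convert them into the desired path-decomposition-like family of bags. This is a classical structural statement from the Graph Minor series, and the argument proceeds in three conceptual stages: producing small separators, making them compatible, and reading off the bags.

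First, I would use the depth assumption together with Menger's theorem to produce, for each $i \in [l-1]$, a vertex set $S_i \subseteq V(G) \setminus V(\Lambda)$ with $|S_i| \le w$ that separates the segment $A_i \coloneqq \{v_1,\ldots,v_i\}$ from the segment $B_i \coloneqq \{v_{i+1},\ldots,v_l\}$; extend by $S_0 = S_l = \emptyset$. This is immediate from the definition of depth, since a minimum $A_i$-$B_i$-separator of size larger than $w$ would, by Menger, yield a transaction of order greater than $w$. Note that the separator may be chosen disjoint from $V(\Lambda)$ because $A_i \cup B_i = V(\Lambda)$ and any $v_j$ lying in a min separator can be moved off via an internal-vertex version of Menger.

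Next, I would upgrade the $S_i$'s to a compatible family. For each $i$, let $L_i$ denote the union of the components of $G - S_i$ that meet $A_i$. A single arbitrary choice of separators need not yield $L_1 \subseteq L_2 \subseteq \cdots \subseteq L_{l-1}$; the key step is to use the standard submodular exchange argument for minimum cuts: whenever $S_i$ and $S_{i+1}$ are chosen with incompatible left sides, one can exchange portions of them to produce replacement separators of size at most $w$ whose left sides are nested. Iterating this exchange over all indices yields separators $S_0,S_1,\ldots,S_l$ with $L_0 \subseteq L_1 \subseteq \cdots \subseteq L_l$. Given this nestedness, for each $x \in V(G) \setminus \bigcup_j S_j$, let $\phi(x) \coloneqq \min\{i : x \in L_i\}$, which is well defined since $L_l = V(G)$, and set
\[
B_i \coloneqq S_{i-1} \cup S_i \cup \{x \in V(G) \setminus \textstyle\bigcup_j S_j : \phi(x) = i\}.
\]

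Finally, I would verify the five properties. Property (1) is immediate as every vertex is either in some $S_j$ or assigned a unique $\phi$-value. Property (3) uses $v_i \notin S_j$ for all $j$ (by the choice of separators) and the fact that $v_i \in L_j$ iff $j \ge i$, giving $\phi(v_i) = i$. Property (5) is clear since $B_i \cap B_{i+1} \subseteq S_i$. Property (4) is an easy consequence of the monotone structure of $\phi$ and the nesting of the $L_j$'s, combined with the fact that each $S_j$ contributes only to $B_j$ and $B_{j+1}$. Property (2) is the delicate one: for an edge $xy$ with $\phi(x) \le \phi(y)$, either both endpoints land in the same $B_i$, or else there is some separator $S_{j-1}$ with $x \in L_{j-1}$ and $y \notin L_{j-1}$, forcing $y \in S_{j-1}$ and hence placing both endpoints in $B_j$. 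The main obstacle in making this rigorous is carrying out the submodular exchange uniformly across all $l-1$ separator pairs so that nestedness holds simultaneously; this is handled by a global argument, for instance processing the $S_i$'s in order while maintaining nestedness with all previously chosen separators, which is possible because the exchange operation never increases $|S_i|$ and preserves the nestedness achieved in prior steps.
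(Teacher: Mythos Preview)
Your overall approach—obtain small prefix/suffix separators via Menger, make them nested via a submodular exchange, and read off bags—is exactly the standard argument and matches the paper's one-line description (``builds $\Bcal$ by successively looking for separators along prefixes of the linear society $\Lambda$''). There is, however, a genuine error: the claim that the separators $S_i$ may be chosen disjoint from $V(\Lambda)$ is false. If, say, $v_iv_{i+1}\in E(G)$, then \emph{every} $A_i$--$B_i$ separator must contain $v_i$ or $v_{i+1}$; no ``internal-vertex version of Menger'' can avoid this when the terminal sets themselves are adjacent.

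This single mistake propagates through your verification. Your argument for property (3) explicitly uses $v_i\notin S_j$ to conclude $\phi(v_i)=i$; once some $v_i$ lies in a separator $S_j$ with $j\notin\{i-1,i\}$, your bag definition need not place $v_i$ in $B_i$. Your verification of (2) likewise only treats vertices with well-defined $\phi$-values and does not handle edges incident to separator vertices. Even (5) is not immediate from your definition, since $B_i\cap B_{i+1}$ may a priori pick up vertices of $S_{i-1}\cap S_{i+1}$. The fix is standard: allow $S_i\cap V(\Lambda)\neq\emptyset$, work with the nested sets $L_i^+ \coloneqq L_i\cup S_i$ (made monotone by the exchange argument), and define $B_i\coloneqq L_i^+\cap (V(G)\setminus L_{i-1})$. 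Then $v_i\in B_i$ follows because $v_i\in A_i$ forces $v_i\in L_i^+$ while $v_i\in B_{i-1}$ (the segment) forces $v_i\notin L_{i-1}$, and the remaining properties go through cleanly.
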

 
The proof of \autoref{take_from_GM_IX} builds $\Bcal$ by successively looking for separators along prefixes of the linear society $\Lambda$ (see also \cite[Theorem 12.2]{kawarabayashi2020quickly}). This requires a linear number of calls of a min-flow algorithm for flow, each of size bounded by the depth $w.$ So we can compute $\Bcal$ in time $\Ocal(w \cdot |G|^{2}).$

\paragraph{Fibers of a rendition.}
Let $\rho = (\Gamma, \mathcal{D},c)$ be a cylindrical rendition of a linear society $\lin{G,\Lambda}$ in a disk $\Delta$ around a cell $c.$ 
A \defi{fiber} of $\rho$ is any fiber $\mu = (M, T, \widehat{\Lambda})$ of $\lin{G,\Lambda}.$ 
Whenever we work with a fiber of a rendition $\rho,$ we assume that they are drawn in the closed disk $\Delta$ as indicated by the $\Delta$-decomposition $\rho \cap M$
of $\lin{G,\Lambda},$ where $\mathsf{dissolve}(\mu) = \lin{H, \widehat{\Lambda}}.$

\paragraph{Combined depth of a cylindrical rendition.}
Let $\rho = (\Gamma, \mathcal{D},c)$ be a cylindrical rendition of a linear society $\lin{G,\Lambda}$ in a disk $\Delta$ around a cell $c.$
We define the \defi{combined depth} of $\rho$ as the maximum between the depth of $\lin{G, \Lambda}$ and the depth of a vortex society of $c$ in $\rho$ and we denote it by $\mathsf{cdepth}(\rho).$ 
This allows us to assume that $\rho$ is accompanied with a sequence $\Bcal_{c} = \lin{B_i \mid i \in [l]}$ of subsets of $G_{c}$ for which \autoref{take_from_GM_IX} holds for every vortex society of $c$ in $\rho$ where $w ≤ \mathsf{cdepth}(\rho).$

\paragraph{Lines.} 
A \defi{$\Delta$-line} is a line $L$ of $\Delta$ in $\rho$ connecting (and containing) two distinct points of $B_\Delta,$ with the additional property that $L$ does not meet a flap cell of $\rho$ or a boundary point of a vortex cell that is not the point of a vertex, i.e., 
$$\cupall \{ c \mid c \in C_{\mathsf{f}}(\rho) \} \cap L = \emptyset \ \ \text{and} \ \bigcup_{c\in C_{\mathsf{v}}(\rho)}(\bd(c) - \widetilde{c})\cap L = \emptyset.$$
The vertex set of a $\Delta$-line $L$ is $\pi_{\rho}(L\cap N(\delta))$ and we denote it by $V_{\rho}(L)$ or simply $V(L)$ when $\rho$ is clear from the context.
Given two distinct vertices $y, y' \in V(\Lambda)$ we use $L_{y,y'}$ to denote the $\Delta$-line of $\Delta$ such that $V_{\rho}(L_{y,y'}) \subseteq B_{\Delta},$ starting at the point of $y,$ finishing at the point of $y',$ and following the rotation ordering of $\Delta.$
We also use $\Lambda_{y,y'}$ for the consecutive sub-sequence of $\Lambda$ that starts with $y$ and finishes with $y'.$

\medskip
The following lemma gives us more information about the structure of a minimum size separator between disjoint segments of a linear society corresponding to a cylindrical rendition in a disk $\Delta$ in the form of a $\Delta$-line of minimum length.
The fact that we see this separator as a line is important for the arguments of the rest of the proofs of this section.

\begin{lemma}\llabel{lem_line_sep}
Let $\rho = (\Gamma, \mathcal{D}, c)$ be a cylindrical rendition of a linear society $\lin{G, \Lambda}$ in a disk $\Delta$ around a cell $c$ and let $x$ and $y$ be two vertices in $\Lambda$ where $x$ appears before $y$ in $\Lambda.$
Then there is a $\Delta$-line $L$ of $\Delta$ between $x$ and $y$ such that
\begin{enumerate}
\item\llabel{lin_sep_p1} $|V_{\rho}(L)| \leq \mathsf{cdepth}(\rho) + 2,$
\item\llabel{lin_sep_p2} $L \cap \bd(\Delta_{c})$ contains at most two points, and
\item\llabel{lin_sep_p3} there is no $\Delta$-line $L'$ of $\Delta$ between $x$ and $y$ such that $|V_{\rho}(L')| < |V_{\rho}(L)|.$
\end{enumerate}
\end{lemma}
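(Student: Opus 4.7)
The proof will produce a $\Delta$-line satisfying all three properties by first minimizing the number of ground vertices on the line, and then verifying the two non-trivial upper bounds.

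First I would argue that a minimum is well defined. Note that any $\Delta$-line $L$ avoids every flap cell, and meets the boundary of the vortex $\Delta_c$ only at vertex points. Hence every node that lies on $L$ must sit either on $\bd(\Delta)$ or on $\bd(\Delta_c),$ and so $V_\rho(L) \subseteq V(\Lambda) \cup V(\Lambda_c),$ a finite set. Therefore the set of possible vertex sets of $\Delta$-lines from $x$ to $y$ is finite, so we may choose $L$ with $|V_\rho(L)|$ minimum, which immediately gives item (\ref{lin_sep_p3}).

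For item (\ref{lin_sep_p1}) I would exhibit a $\Delta$-line $L_0$ with $|V_\rho(L_0)|\le \mathsf{cdepth}(\rho)+2.$ Since $\mathsf{depth}(\lin{G,\Lambda})\le \mathsf{cdepth}(\rho),$ applying \autoref{take_from_GM_IX} to $\lin{G,\Lambda}$ rotated so that $x$ is the first element yields a sequence $\Bcal=\lin{B_k\mid k\in[l]}$ of subsets of $V(G)$ with $|B_k\cap B_{k+1}|\le \mathsf{cdepth}(\rho).$ If $y$ occupies position $j$ in this rotated order, then by property~\ref{all_disj_bound} of \autoref{take_from_GM_IX} the set $S\coloneqq (B_{j-1}\cap B_j)\cup\{x,y\}$ has size at most $\mathsf{cdepth}(\rho)+2$ and separates in $G$ the two open segments of $\Lambda$ delimited by $x$ and $y.$ I would then realize this separator as the vertex set of a $\Delta$-line by tracing a simple curve from the point of $x$ to the point of $y$ in $\Delta,$ visiting exactly the points of the vertices of $S$ in an order consistent with the drawing, routing around the flap cells in the outside region $\Delta\setminus \cupall \mathcal{D},$ and using the interior of $\Delta_c$ as a single ``shortcut'' that enters at one vertex of $V(\Lambda_c)\cap S$ and exits at another (or does not cross $\Delta_c$ at all, if $S\cap V(\Lambda_c)\le 1$). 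By construction $V_\rho(L_0)\subseteq S,$ so $|V_\rho(L)|\le |V_\rho(L_0)|\le \mathsf{cdepth}(\rho)+2.$

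For item (\ref{lin_sep_p2}) I would argue by contradiction against the minimality of $L.$ Suppose $|L\cap \bd(\Delta_c)|\ge 3$ and let $u_1,u_2,u_3$ be three such points enumerated in the order in which $L$ visits them. Since the closed disk $\Delta_c$ is simply connected, I can replace the sub-arc of $L$ between $u_1$ and $u_3$ by a simple arc $\gamma\subseteq \Delta_c$ whose only intersection with $\bd(\Delta_c)$ consists of the two endpoints $u_1$ and $u_3$ and which in particular misses $u_2.$ The resulting curve $L'$ is a $\Delta$-line (it still avoids flap cells and its intersection with $\bd(\Delta_c)$ is contained in the vertex points of $\Delta_c$), and $V_\rho(L')\subseteq V_\rho(L)\setminus\{\pi_\rho(u_2)\},$ contradicting the choice of $L.$

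The delicate step is item (\ref{lin_sep_p1}): translating the purely combinatorial separator $S$ produced by \autoref{take_from_GM_IX} into the vertex set of an actual curve in $\Delta.$ The key point is that, because the line is allowed to freely cross the interior of the vortex cell while constrained to touch its boundary only at vertex points, the vortex serves as a topological ``bridge'' that lets the curve visit every vertex of $S\cap V(\Lambda_c)$ on at most two boundary points. Verifying that such a planar realization of $S$ is always possible, and that no extra ground vertices are unavoidably collected along the way, is the main technical content of the proof.
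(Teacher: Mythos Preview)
The step you yourself flag as delicate---realizing the separator $S$ as the vertex set of a $\Delta$-line $L_0$---is where the argument fails. The set $S=(B_{j-1}\cap B_j)\cup\{x,y\}$ you extract from \autoref{take_from_GM_IX} consists of arbitrary vertices of $G,$ and some of them may be drawn in the interior of a flap cell, where no $\Delta$-line is allowed to pass. So you cannot in general ``visit exactly the points of the vertices of $S$''. Dropping the non-node vertices from $S$ need not leave you with anything that separates, and even when $S$ happens to consist of nodes, threading a simple arc through exactly those points while staying in $(\Delta\setminus\cupall\mathcal{D})\cup\Delta_c$ is a genuine planarity statement that you do not prove: the white space need not be connected, nor need it reach every node. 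The paper circumvents both obstacles at once. It first passes to the plane graph $G'$ obtained by replacing the interior of every flap by a clique on its boundary nodes, applies Menger \emph{there} to get a minimum separator $S\subseteq \pi_\rho(N(\rho)),$ and then uses planarity of $G'$ to read off a simple closed curve through exactly $S$; a subarc of that curve, extended along $\bd(\Delta)$ to $x$ and $y,$ is the desired line. Since every $\Delta$-line conversely yields a separator in $G',$ this single construction delivers all three properties simultaneously, rather than arguing about an abstract minimum.

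Two smaller points. Your claim that every node on a $\Delta$-line lies on $\bd(\Delta)\cup\bd(\Delta_c)$ is not true in general: a line may pass through a node shared only by flap cells whenever there is uncovered space around it. (The finiteness you need holds trivially anyway, since $V_\rho(L)\subseteq\pi_\rho(N(\rho)).$) And in your shortcut argument for item~(\ref{lin_sep_p2}), take $u_1$ and $u_3$ to be the \emph{first} and \emph{last} intersections of $L$ with $\bd(\Delta_c)$; otherwise the remaining pieces of $L$ may themselves enter $\Delta_c$ and meet the replacement arc $\gamma.$
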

\begin{proof}
Let $G'$ be the graph obtained from $G$ after removing every vertex and edge drawn in every cell of $\rho$ and then making the vertices in $\widetilde{c}'$ pairwise-adjacent, for every cell $c'$ of $\rho$ that is not $c.$
Also we define $\Gamma'$ to be the drawing of $G'$ in $\Delta$ obtained from $\Gamma$ by drawing the newly added edges in a way so that they do not cross, inside their corresponding cell of $\rho.$
Clearly $\Gamma'$ is a $\Delta$-embedding (and also a plane embedding) of $G'.$
Moreover, $\rho' = (\Gamma', \Dcal)$ is a cylindrical rendition of the linear society $\lin{G', \Lambda}$ in $\Delta$ around $c$ such that $\mathsf{cdepth}(\rho') \leq \mathsf{cdepth}(\rho).$

Let $\Tcal$ be a maximum order transaction between $\Lambda_{xy}$ and $\overline{\Lambda}_{xy}$ in $\rho'.$
By definition $|\Tcal| \leq \mathsf{cdepth}(\rho).$
Let $S \subseteq V(G')$ be a minimum $\Lambda_{xy}$-$\overline{\Lambda}_{xy}$ separator in $G'.$
By Menger's theorem, $|S| \leq \mathsf{cdepth}(\rho).$

Since $\Gamma'$ is a plane embedding of $G'$ and $S$ is a minimum $\Lambda_{xy}$-$\overline{\Lambda}_{xy}$ separator, it is implied that there exists a simple closed curve $N$ in the plane, that intersects $G'$ only in vertices and that intersects exactly the vertices in $S.$
By definition, $N$ must intersect the boundary of $\Delta$ in at least two points.
Also, since $\Delta_{c}$ is a subset of a face of $\Gamma,$ we can assume that $N$ intersects the boundary of $\Delta_{c}$ in at most two points.
Let $L$ be the shortest line between two points of $\bd(\Delta)$ that is a subset of $N$ and that is drawn in $\Delta.$
Since $N$ separates $\Lambda_{xy}$ from its complement it must be that $L$ intersects $\bd(\Delta)$ in points drawn between the point of $x$ and the point of its previous vertex on $\Lambda$ and the point of $y$ and the point of its next vertex on $\Lambda.$
Hence, we can extend $L$ so that its endpoints are the points corresponding to $x$ and $y$ respectively without intersecting any other vertices or edges of $G',$ by possibly increasing the length of $L$ by at most two vertices.

Now, observe that by definition $L$ is a $\Delta$-line in $\rho'$ that satisfies properties \ref{lin_sep_p1}, \ref{lin_sep_p2}, and \ref{lin_sep_p3} in $\rho'.$
We argue that $L$ also satisfies properties \ref{lin_sep_p1}, \ref{lin_sep_p2}, and \ref{lin_sep_p3} in $\rho.$
This is indeed the case, since by definition of $\rho'$ any line of $\Delta$ in $\rho$ must also be a line of $\Delta$ in $\rho'$ with the same vertex set and vice versa. 
\end{proof}

\subsection{Pseudo-disks in cylindrical renditions}
\label{pse_cylindr_rend}

In this subsection, we introduce the concept of a pseudo-disk of a cylindrical rendition to facilitate the procedure of locating a fiber of a linear society, within the area defined by a segment of the linear society corresponding to the rendition and the line that corresponds to the separator of this segment, as demonstrated in the previous subsection.

\paragraph{Pseudo-disks.} Let $\rho$ be a cylindrical rendition of a linear society $\lin{G, \Lambda}$ in a disk $\Delta.$
A \defi{pseudo-disk} of $\rho$ is a set $\Psi \subseteq \Delta$ such that
\begin{enumerate}
\item $\Psi$ is the union of the $\Delta$-line $L_{y, y'},$ a $\Delta$-line $L^{\text{top}}$ between $y$ and $y'$ for some $y, y' \in V_{\rho}(\Lambda)$ where $y$ appears before $y'$ in $\Lambda,$ and the arc-wise connected components of $\Delta - (L_{y, y'} \cup L^{\text{top}})$ whose points in $\Delta$ can reach $L_{yy'}$ without intersecting $L^{\text{top}},$
\item $\Lambda_{\Psi} \coloneqq \Lambda_{y,y'} - V_{\rho}({L}^{\text{top}})$ is non-empty,
\item $L^{\rm top}\cap \bd(\Delta_{c})$ contains at most two points, and
\item there is no $\Delta$-line $L$ between $y$ and $y'$ such that $|V_{\rho}(L)| < |V_{\rho}(L^{\text{top}})|.$
\end{enumerate}

We define the \defi{boundary} of $\Psi$ as $\bd(\Psi) \coloneqq L_{yy'} \cup L^{\text{top}}.$
We call the $\Delta$-line $L^{\text{top}}$ the \defi{top-line} of $\Psi$ and we call $y$ the \defi{starting} and $y'$ the \defi{finishing} vertex of $\Psi.$
We also define \defi{$\mathsf{last}(\Psi) \coloneqq y'$}.
Let $G_{\Psi}$ be the subgraph of $G$ whose vertices and edges are drawn in $\Psi$ minus $L^{\text{top}}.$
We call $\lin{G_{\Psi},\Lambda_{\Psi}}$ the \defi{society} of $\Psi,$ we call the set $V(\Lambda_{\Psi})$ the \defi{base boundary} of $\Psi,$ we call the set $V_{\rho}(L^{\text{top}})$ the \defi{line boundary} of $\Psi.$
Assume that $L^{\text{top}}\cap\bd(c)$ consists of two distinct points $v$ and $v'$ where $y,v,v',y'$ appear in this order in $L^{\text{top}}$ and let $\Lambda^c=\Lambda_{\Delta_{c},v}.$
Then the \defi{vortex boundary} of $\Psi$ is the set
$$(B_{v}\cap B_{\mathsf{next}_{\Lambda^c}(v)})\cup(B_{v'}\cap B_{\mathsf{previous}_{\Lambda^c}(v')})$$
where $B_{\mathsf{next}_{\Lambda^c}(v)}$ and $B_{\mathsf{previous}_{\Lambda^c}(v')}$ are members of $\Bcal^{c}$ and that has size at most $2 \cdot \mathsf{cdepth}(\rho).$
If $|L^{\text{top}} \cap \bd(c)| ≤ 1,$ then the vortex boundary of $\Psi$ is empty.
The \defi{top boundary} of $\Psi$ is the union of its line boundary and its vortex boundary and it is denoted by $V_{\rho}(\Psi).$

\medskip
An \defi{$\lin{H,\widehat{\Lambda}}$-fiber of $\rho$ inside $\Psi$} is an $\lin{H,\widehat{\Lambda}}$-fiber of $\rho$ that is also a fiber of $\lin{G_{\Psi}, \Lambda_{\Psi}}.$
An \defi{$\lin{H,\widehat{\Lambda}}$-fiber of $\rho$ outside $\Psi$} is an $\lin{H,\widehat{\Lambda}}$-fiber $(M, T, \widehat{\Lambda})$ of $\rho$ where none of the points of $V(M)$ are in $\Psi.$

\paragraph{Chains.} 
Let $\rho$ be a cylindrical rendition of a linear society $\lin{G, \Lambda}.$
A sequence $\mathbf{\Psi}=\lin{\Psi_1,\ldots \Psi_{r}}$ of pseudo-disks of $\rho$ is called a \defi{chain} of $\rho$ if 
\begin{itemize}
\item $\Psi_1, \ldots, \Psi_{r}$ are pairwise internally-disjoint,
\item the starting vertex of $\Psi_1$ is the starting vertex of $\Lambda,$ and
\item for every $i \in [r-1],$ if $y'_{i}$ is the finishing vertex of $\Psi_{i},$ $y'_{i}$ is the starting vertex of $\Psi_{i+1}.$
\end{itemize}

The \defi{starting} vertex of $\mathbf{\Psi}$ is the starting vertex of $\Psi_{1},$ i.e., the starting vertex of $\Lambda.$
The \defi{finishing} vertex of $\mathbf{\Psi}$ is the finishing vertex of $\Psi_{r}$ if $\mathbf{\Psi} \neq \lin{},$ and the last vertex of $\Lambda$ otherwise.
An \defi{$\lin{H,\widehat{\Lambda}}$-fiber of $\rho$ outside $\mathbf{\Psi}$} is an $\lin{H,\widehat{\Lambda}}$-fiber of $\rho$ that is outside $\Psi_{i},$ for every $i \in [r],$ if $\mathbf{\Psi} \neq \lin{},$ and an $\lin{H,\widehat{\Lambda}}$-fiber of $\rho$ where $V(M)$ does not contain the starting and finishing vertex of $\mathbf{\Psi}$ otherwise.

\medskip
The following lemma permits us to associate pairs of boundary vertices of $\lin{G,\Lambda}$ to pseudo-disks of some suitably chosen cylindrical rendition of it.

\begin{lemma}\llabel{lem_pseudodisk}
Let $\rho = (\Gamma, \mathcal{D},c)$ be a cylindrical rendition of a linear society $\lin{G,\Lambda}$ in a disk $\Delta$ around a cell $c,$ $\mathbf{\Psi}$ be a chain of $\rho$ with starting vertex $x$ and finishing vertex $y,$ and $z$ be a vertex of $\Lambda$ that is not in the consecutive sub-sequence of $\Lambda$ with first vertex $\mathsf{next}_{\Lambda}(x)$ and last vertex $y.$
Then $\rho$ has a pseudo-disk ${\Psi},$ starting from the finishing vertex of $\mathbf{\Psi}$ and finishing at $z,$ that is internally-disjoint from every $\Psi' \in \mathbf{\Psi},$ and where $|V_{\rho}(\Psi)| ≤ 3 \cdot \mathsf{cdepth}(\rho) + 2.$
\end{lemma}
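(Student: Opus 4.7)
The plan is to reduce to \autoref{lem_line_sep} after cutting away the pseudo-disks of $\mathbf{\Psi}$ from $\Delta$. First, set
\[
\Delta^{\star} \coloneqq \Delta \setminus \bigcup_{\Psi' \in \mathbf{\Psi}} \big(\Psi' \setminus \bd(\Psi')\big).
\]
Since the members of $\mathbf{\Psi}$ are pairwise internally disjoint and their base boundaries cover a consecutive portion $L_{x,y}$ of $\bd(\Delta)$, the set $\Delta^{\star}$ is homeomorphic to a closed disk. Moreover, the vertex $y = \mathsf{last}(\Psi_{r})$ (where $\Psi_{r}$ is the last member of $\mathbf{\Psi}$) and the vertex $z$ both lie on $\bd(\Delta^{\star}) \cap \bd(\Delta)$, because $z$ avoids the sub-sequence of $\Lambda$ from $\mathsf{next}_{\Lambda}(x)$ to $y$ and thus cannot be drawn inside any pseudo-disk of $\mathbf{\Psi}$. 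If $\mathbf{\Psi} = \lin{}$, we simply take $\Delta^{\star} = \Delta$ and start from $x$.

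Next, I would form a cylindrical rendition $\rho^{\star}$ of a restricted society $\lin{G^{\star}, \Lambda^{\star}}$ in $\Delta^{\star}$ around the same vortex cell $c$, where $\Lambda^{\star}$ is the linear ordering obtained from $\Lambda$ by restricting to vertices drawn on $\bd(\Delta^{\star})$, and $G^{\star}$ is the subgraph of $G$ drawn in $\Delta^{\star}$, with the flap cells and vortex cell inherited from $\rho$. A standard argument shows that $\mathsf{cdepth}(\rho^{\star}) \leq \mathsf{cdepth}(\rho)$: any transaction in $\lin{G^{\star}, \Lambda^{\star}}$ or in a vortex society of $c$ in $\rho^{\star}$ is also a transaction in the corresponding society of $\rho$, since removing points of $\Delta$ does not introduce new paths.

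Now apply \autoref{lem_line_sep} to $\rho^{\star}$, the disk $\Delta^{\star}$, and the pair of vertices $y, z$ on its boundary. This yields a $\Delta^{\star}$-line $L^{\text{top}}$ between $y$ and $z$ with $|V_{\rho^{\star}}(L^{\text{top}})| \leq \mathsf{cdepth}(\rho^{\star}) + 2 \leq \mathsf{cdepth}(\rho) + 2$, whose intersection with $\bd(\Delta_{c})$ consists of at most two points. Define $\Psi$ as the pseudo-disk of $\rho$ whose top-line is $L^{\text{top}}$ and whose base is the $\Delta$-line $L_{y, z}$ running along $\bd(\Delta)$. By the choice of $z$, the set $\Lambda_{\Psi} = \Lambda_{y,z} \setminus V_{\rho}(L^{\text{top}})$ is non-empty, so $\Psi$ is a valid pseudo-disk. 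Since $L^{\text{top}} \subseteq \Delta^{\star}$ and $L_{y,z}$ is a subarc of $\bd(\Delta)$ disjoint from the base boundaries of all $\Psi' \in \mathbf{\Psi}$, the pseudo-disk $\Psi$ is internally disjoint from every $\Psi' \in \mathbf{\Psi}$.

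Finally, bound $|V_{\rho}(\Psi)|$: the line boundary contributes $|V_{\rho}(L^{\text{top}})| \leq \mathsf{cdepth}(\rho) + 2$, and the vortex boundary contributes at most $2 \cdot \mathsf{cdepth}(\rho)$ by its definition (at most two sets from $\Bcal^{c}$, each of size at most $\mathsf{cdepth}(\rho)$). Summing gives $|V_{\rho}(\Psi)| \leq 3 \cdot \mathsf{cdepth}(\rho) + 2$, as required. The main technical step is verifying that the cut-and-restrict operation preserves both the cylindrical rendition structure and the combined depth bound; once this is in place, the lemma follows directly from \autoref{lem_line_sep}.
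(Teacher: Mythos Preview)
Your approach takes a detour that introduces a genuine gap. The definition of a pseudo-disk of $\rho$ requires property~(4): there must be no $\Delta$-line $L'$ between $y$ and $z$ with $|V_\rho(L')| < |V_\rho(L^{\text{top}})|$. You obtain $L^{\text{top}}$ by applying \autoref{lem_line_sep} inside the restricted disk $\Delta^{\star}$, so your line is only minimal among $\Delta^{\star}$-lines, not among all $\Delta$-lines. A $\Delta$-line that passes through the interior of some $\Psi' \in \mathbf{\Psi}$ could a priori have fewer vertices than your $L^{\text{top}}$, in which case the $\Psi$ you define is not a pseudo-disk of $\rho$ at all. You never address this.

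The paper avoids the whole cut-and-restrict construction: it applies \autoref{lem_line_sep} directly in $\rho$ to get a $\Delta$-line $L$ between $y$ and $z$ that is already minimal (so property~(4) is automatic), and then argues that $L$ cannot enter the interior of any $\Psi' \in \mathbf{\Psi}$. The argument is short: if $L$ entered and exited $\Psi'$, it would cross the top-line $L^{\text{top}}_{\Psi'}$ of $\Psi'$ in at least two points, and swapping the subarc of $L^{\text{top}}_{\Psi'}$ between those points for the corresponding subarc of $L$ would (by minimality of $L$) produce a strictly shorter $\Delta$-line between the endpoints of $\Psi'$, contradicting property~(4) for $\Psi'$. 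This is precisely the interplay of the two minimality conditions that your restriction approach hides. Your proposed $\rho^{\star}$ also has loose ends (the vortex cell $c$ may be cut by top-lines of members of $\mathbf{\Psi}$, and $\bd(\Delta^{\star})$ carries vertices not in $V(\Lambda)$, so ``restricting $\Lambda$'' does not give a society on $\bd(\Delta^{\star})$), but even if these were patched, the missing check of property~(4) would remain.
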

\begin{proof}
First, we apply \autoref{lem_line_sep} to obtain a $\Delta$-line $L$ between $y$ and $z$ such that $|V_{\rho}(L)| \leq \mathsf{cdepth}(\rho) + 2,$ $L \cap \bd(\Delta_{c})$ contains at most two points, and such that there is no $\Delta$-line $L'$ between $y$ and $z$ such that $|V_{\rho}(L')| < |V_{\rho}(L)|.$

Next, we argue that $L$ does not contain any internal point of any of the pseudo-disks of $\mathbf{\Psi}.$
Assume towards contradiction that it is not the case.
Let $\Psi' \in \mathbf{\Psi}$ such that $L$ contains an internal point of $\Psi.$
Let $L^{\textrm{top}}$ be the top-line of $\Psi'.$
This particularly implies that $L$ and $L^{\text{top}}$ intersect in at least in two points.
Let $L'$ be a {subline} of $L$ whose endpoints are points of $L^{\text{top}}.$
Let $L^{\text{top}}_{2}$ be the corresponding subline of $L^{\text{top}}.$
Consider the $\Delta$-line $L''$ obtained by replacing the subline $L^{\text{top}}_{2}$ of $L^{\text{top}}$ by $L'.$
Now, observe that by the minimality of $L$ guaranteed by the specifications of \autoref{lem_line_sep}, it must be that $|L''|$ is strictly smaller the line boundary of $\Psi'$ which contradicts the definition of $\Psi'.$

Then, we define the desired pseudo-disk $\Psi$ as the union of $L \cup L_{yz}$ and the arc-wise connected components of $\Delta - (L \cup L_{yz})$ whose points in $\Delta$ can reach $L_{yz}$ without intersecting $L.$
\end{proof}

\subsection{Critical pseudo-disks and fibers}
\llabel{Critical_pseudo_disks}

In this subsection we define the notion of a critical pseudo-disk which will be used to find fibers of a given linear society within pseudo-disks, in a way that will allow us to either find many disjoint, in a sequential fashion along the boundary of the linear society corresponding to our cylindrical rendition or hit all of them by removing a small number of vertices which will be the frontiers of the corresponding critical pseudo-disks in which we locate our fibers.

\paragraph{Critical pseudo-disks.}
Let $\rho = (\Gamma, \mathcal{D},c)$ be a cylindrical rendition of a linear society $\lin{G,\Lambda}$ in a disk $\Delta$ around a cell $c$ and let $\lin{H, \widehat{\Lambda}}$ be some linear society.
A pseudo-disk $\Psi$ of $\rho$ is an \defi{$\lin{H, \widehat{\Lambda}}$-critical pseudo-disk} if there is a pseudo-disk $\widetilde{\Psi}$ of $\rho$ where
\begin{enumerate}
\item the starting vertex of $\widetilde{\Psi}$ is the same as $\Psi$ and its finishing vertex is the previous of the finishing vertex of $\Psi$ in $\Lambda,$
\item $\widetilde{\Psi}\subseteq\Psi,$
\item there is an $\lin{H,\widehat{\Lambda}}$-fiber of $\rho$ inside $\Psi,$
\item  there is no $\lin{H,\widehat{\Lambda}}$-fiber of $\rho$ inside $\widetilde{\Psi},$ and
\item $|V_{\rho}(\Psi)|,|V_{\rho}(\widetilde{\Psi})| \leq 3 \cdot \mathsf{cdepth}(\rho) + 2.$ 
\end{enumerate}

\begin{figure}[ht]
\begin{center}
\scalebox{1.1}{\includegraphics{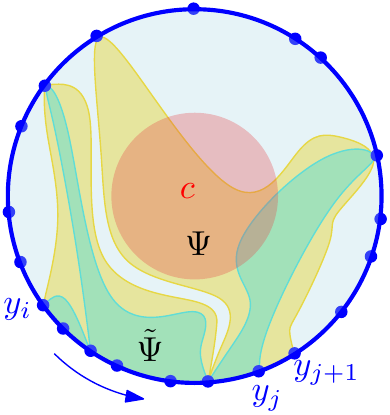}}
\end{center}
 \caption{The two pseudo-disks of the definition of an $\lin{H,\widehat{\Lambda}}$-critical pseudodisk.}
 \llabel{two_pseudo_disks_critical}
\end{figure}

Whenever we consider some $\lin{H,\widehat{\Lambda}}$-critical pseudo-disk $\Psi$ of $\rho,$ we accompany it with a pseudo-disk $\widetilde{\Psi},$ as above. See \autoref{two_pseudo_disks_critical} for a visualization of the notion of a $\lin{H, \widehat{\Lambda}}$-critical pseudo-disk.
The \defi{frontier} of $\Psi$ is the set $\mathsf{frontier}_{\rho}(\Psi) \coloneqq V_{\rho}(\Psi)\cup V_{\rho}(\widetilde{\Psi}).$ 
The \defi{debris} of $\Psi,$ denoted by $\mathsf{debris}_{\rho}(\Psi)$ is the set of vertices of $G-\mathsf{frontier}_{\rho}(\Psi)$ that are not $\rho$-bound.
{Notice} that $\partial_{G}(\mathsf{debris}_{\rho}(\Psi)) \subseteq \mathsf{frontier}_{\rho}(\Psi).$
Therefore, $|\partial_{G}(\mathsf{debris}_{\rho}(\Psi))| ≤ 6 \cdot \mathsf{depth}(\rho) + {3}.$

We define $$\mathsf{frontier}_{\rho}(\mathbf{\Psi}) \coloneqq \bigcup_{i\in [|\mathbf{\Psi}|]} \mathsf{frontier}_{\rho}(\mathbf{\Psi}_{i})$$
and $$\mathsf{debris}_{\rho}(\mathbf{\Psi}) \coloneqq \bigcup_{i \in [|\mathbf{\Psi}|]} \mathsf{debris}_{\rho}(\mathbf{\Psi}_{i}).$$
It is easy to observe that, as all pseudo-disks in $\mathbf{\Psi}$ are pairwise internally disjoint, it holds that
\begin{eqnarray}
|\partial_{G}(\mathsf{debris}_{\rho}(\mathbf{\Psi}))| & ≤ & (6 \cdot \mathsf{depth}(\rho) + {3}) \cdot |\mathbf{\Psi}|\llabel{debris_bound_all}
\end{eqnarray}

\begin{lemma}\llabel{pseudodissks}
There is an algorithm that, given
\begin{itemize}
\item a cylindrical rendition $\rho = (\Gamma, \mathcal{D}, c)$ of a linear society $\lin{G,\Lambda}$ in a disk $\Delta$ around a cell $c,$ where $w \coloneqq \mathsf{cdepth}(\rho),$
\item a linear society $\lin{H,\widehat{\Lambda}},$
\item a chain $\mathbf{\Psi}$ of $\rho,$ and
\item a pseudo-disk $\Psi$ of $\rho$ starting from the finishing vertex of $\mathbf{\Psi}$ and finishing at the starting vertex of $\mathbf{\Psi}$ that is internally-disjoint from every $\Psi'' \in \mathbf{\Psi}$ where $|V_{\rho}(\Psi)| ≤ 3w,$
\end{itemize}
if there is an $\lin{H,\widehat{\Lambda}}$-fiber of $\rho$ outside $\mathbf{\Psi}$ that is also a fiber of $\lin{G_{\Psi}, \Lambda_{\Psi}},$ outputs an $\lin{H,\widehat{\Lambda}}$-critical pseudo-disk $\Psi'$ of $\rho$ such that $\mathbf{\Psi} \oplus \lin{\Psi'}$ is a chain of $\rho,$
in time
$$\mathcal{O}_{|H|}(w \cdot |G|^3).$$
\end{lemma}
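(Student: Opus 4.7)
My plan is to locate the critical pseudo-disk by a one-dimensional sweep of candidate finishing vertices along $\Lambda$. Let $y$ denote the finishing vertex of $\mathbf{\Psi}$, and let $u_1,u_2,\ldots,u_m$ be the vertices of $V(\Lambda)$ lying strictly between $y$ and the starting vertex of $\mathbf{\Psi}$, listed in the order induced by $\Lambda$. For each $i\in[m]$, I invoke \autoref{lem_pseudodisk} on the chain $\mathbf{\Psi}$ with target $u_i$, obtaining a pseudo-disk $\Psi^{(i)}$ of $\rho$ from $y$ to $u_i$ that is internally disjoint from $\mathbf{\Psi}$ and satisfies $|V_\rho(\Psi^{(i)})|\le 3w+2$. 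I then test whether $\Psi^{(i)}$ contains an $\lin{H,\widehat{\Lambda}}$-fiber of $\rho$: since $|H|$ is a constant depending only on $|H|$, this reduces to enumerating the at most $|G|^{|\widehat{\Lambda}|}$ injections of $V(\widehat{\Lambda})$ into $V_\rho(\Lambda_{\Psi^{(i)}})$ and running a constant-size rooted subdivision-containment check inside the subgraph drawn in $\Psi^{(i)}$, which can be implemented in $\mathcal{O}_{|H|}(|G|^2)$ time per pseudo-disk.

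Let $i^*$ be the smallest index for which the test succeeds; such an index exists because the hypothesis guarantees an $\lin{H,\widehat{\Lambda}}$-fiber of $\rho$ outside $\mathbf{\Psi}$ drawn inside $\Psi$, and this fiber must eventually be captured by some $\Psi^{(i)}$. I set $\Psi'\coloneqq\Psi^{(i^*)}$. To produce the witness $\widetilde{\Psi'}$ demanded by the definition of critical pseudo-disk, I let $u^-\coloneqq\mathsf{previous}_\Lambda(u_{i^*})$ and apply \autoref{lem_line_sep} to obtain a minimum-vertex $\Delta$-line $\widetilde{L}$ from $y$ to $u^-$ in $\rho$, of size at most $w+2$. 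Since $u^-$ lies on $L_{y,u_{i^*}}$ between $y$ and $u_{i^*}$, an uncrossing argument applied to the top-line of $\Psi'$ and $\widetilde{L}$, using the submodularity of minimum $\Lambda$-separators, lets me select $\widetilde{L}$ to lie inside $\Psi'$ without increasing its vertex count. This defines a valid pseudo-disk $\widetilde{\Psi'}$ of $\rho$ with $\widetilde{\Psi'}\subseteq\Psi'$ and $|V_\rho(\widetilde{\Psi'})|\le 3w+2$.

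The hard part is to guarantee that $\widetilde{\Psi'}$ is itself fiber-free, which is the condition needed to certify criticality. This does \emph{not} follow immediately from the minimality of $i^*$, because $\widetilde{\Psi'}$ and the independently constructed $\Psi^{(i^*-1)}$ are a priori distinct pseudo-disks sharing only the finishing vertex $u^-$, and a fiber might exist in one but not the other. To close this gap, I refine the sweep so that each $\Psi^{(i)}$ is chosen canonically as the \emph{outermost} bounded pseudo-disk from $y$ to $u_i$ — i.e., built from the top-line that is the farthest minimum $\Delta$-line from $\Lambda_{y,u_i}$. A standard uncrossing argument for minimum cuts then shows that every bounded pseudo-disk of $\rho$ from $y$ to $u_i$ is contained in this canonical $\Psi^{(i)}$. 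In particular, $\widetilde{\Psi'}\subseteq\Psi^{(i^*-1)}$, so the fiber-freeness of $\Psi^{(i^*-1)}$ transfers to $\widetilde{\Psi'}$, completing the verification that $\Psi'$ is an $\lin{H,\widehat{\Lambda}}$-critical pseudo-disk and that $\mathbf{\Psi}\oplus\langle\Psi'\rangle$ is a chain.

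For the running time, the sweep has $\mathcal{O}(|V(\Lambda)|)=\mathcal{O}(|G|)$ iterations. Each iteration invokes \autoref{lem_pseudodisk} once (whose cost is dominated by a min-cut computation of value $\mathcal{O}(w)$, giving $\mathcal{O}(w\cdot|G|^2)$) and performs a fiber-containment test at cost $\mathcal{O}_{|H|}(|G|^2)$. Summing over all iterations yields the claimed bound of $\mathcal{O}_{|H|}(w\cdot|G|^3)$.
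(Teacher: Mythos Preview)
Your sweep-and-minimize approach is exactly what the paper does: for each candidate finishing vertex $z$ along $\Lambda$, build the pseudo-disk $\Psi_{xz}$ via \autoref{lem_pseudodisk}, test for a fiber, take the first $z$ that succeeds, and declare $\Psi_{xz'}$ (with $z'$ the predecessor of $z$) as the certifying witness $\widetilde{\Psi}$. The paper's proof is terser than yours and simply asserts that $\Psi_{xz'}$ certifies criticality, without explicitly verifying the containment condition $\widetilde{\Psi}\subseteq\Psi$ from the definition of critical pseudo-disk. Your observation that the independently constructed $\Psi^{(i^*-1)}$ and a separately chosen $\widetilde{\Psi'}\subseteq\Psi'$ are a priori different objects, and your proposed resolution via always selecting the \emph{outermost} minimum-vertex top-line so that the family $(\Psi^{(i)})_i$ becomes nested by submodularity of minimum separators, is a legitimate tightening of what the paper writes down.

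One correction on the running-time justification: ``enumerating the at most $|G|^{|\widehat{\Lambda}|}$ injections'' does not yield $\mathcal{O}_{|H|}(|G|^2)$, since $|\widehat{\Lambda}|$ sits in the exponent of $|G|$. The paper instead invokes the quadratic-time minor/disjoint-paths algorithm of Kawarabayashi--Kobayashi--Reed for the per-step fiber-containment check, which legitimately gives $\mathcal{O}_{|H|}(|G|^2)$; your final bound of $\mathcal{O}_{|H|}(w\cdot|G|^3)$ is correct, but your sketched route to it needs this substitution. Also note the paper precomputes the linear decomposition of \autoref{take_from_GM_IX} once in $\mathcal{O}(w\cdot|G|^2)$ and reads off all the top-boundaries from it, rather than re-invoking \autoref{lem_pseudodisk} at every step of the sweep; this does not change the stated bound but is cleaner.
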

\begin{proof} By the assumptions of the lemma there exists an $\lin{H,\widehat{\Lambda}}$-fiber of $\rho$ outside $\mathbf{\Psi}$ that is a fiber of $\lin{G_{\Psi}, \Lambda_{\Psi}}.$
Let $x$ be the finishing and $y$ be the starting vertex of $\mathbf{\Psi}$ respectively.
Also, let $\Lambda_{xy}$ be the maximal sub-sequence of $\Lambda$ with first vertex $x$ and last vertex $y$ such that the point of any other vertex of $\Lambda_{xy}$ is not in $\cupall \mathbf{\Psi}.$
Notice that since $\Psi$ is internally disjoint from every $\Psi'' \in \mathbf{\Psi},$ it must be that every vertex of the top-line of any $\Psi'' \in \mathbf{\Psi},$ that is also a vertex of $\Lambda_{\Psi},$ must also be a vertex of the top-line of $\Psi.$
This implies that $\Lambda_{xy} = \Lambda_{\Psi}.$

Now, for every vertex $z$ of $\Lambda_{xy}$ distinct from $y,$ let $\Psi_{xz}$ be the pseudo-disk of $\rho$ given by \autoref{lem_pseudodisk}.
Moreover, let $\Psi_{xy} \coloneqq \Psi.$
Now, consider a vertex $z$ of $\Lambda_{xy}$ such that the consecutive sub-sequence $\Lambda_{xz}$ of $\Lambda_{xy}$ with first vertex $x$ and last vertex $z$ is minimal in length so that there is an $\lin{H,\widehat{\Lambda}}$-fiber of $\rho$ outside $\mathbf{\Psi}$ that is a fiber of $\lin{G_{\Psi_{xz}}, \Lambda_{\Psi_{xz}}}.$
Clearly, the existence of $\Psi$ certifies that such a vertex $z$ exists.
Now let $z'$ be the previous vertex from $z$ on $\Lambda_{xy}.$
Now observe that since $z$ is minimal with this property it means that there is no $\lin{H,\widehat{\Lambda}}$-fiber of $\rho$ outside $\mathbf{\Psi}$ that is a fiber of $\lin{G_{\Psi_{xz'}}, \Lambda_{\Psi_{xz'}}}.$
Then the pseudo-disk $\Psi_{xz}$ is an $\lin{H,\widehat{\Lambda}}$-critical pseudo-disk certified by the pseudo-disk $\Psi_{xz'}.$
Also by the specifications of \autoref{lem_pseudodisk} (or by the assumptions of this lemma in case $z = y$) we have that $\mathbf{\Psi} \oplus \lin{\Psi'}$ is a chain of $\rho.$

In order to compute $\Psi,$ we start by computing a decomposition as the one in  \autoref{take_from_GM_IX}, which as we already briefly discussed, can be done in time $\Ocal(w \cdot |G|^2).$
This gives us all the top boundaries that correspond to the pseudo-disks we define by calling upon \autoref{lem_pseudodisk}.
To finally compute the critical pseudo-disk $\Psi$ we iteratively apply at most $\Ocal(|G|)$ times the minor-checking algorithm of \cite{KawarabayashiKR12Thedisjoint} that runs in time $\Ocal_{|H|}(|G|^2).$
\end{proof}

Critical pseudo-disks will allow us to find ``minimal packings'' of fibers along the boundary of linear societies.
The ``criticality'' in their definition is important in order to show an Erd\H{o}s-P{\'o}sa duality for fibers within bounded-depth societies.
This is the subject of the following subsection.

\subsection{Harvesting fibers}
\llabel{harvesting_fibers}

In this subsection, we show how to combine all the partial results in the previous subsection in order to either find many sequential fibers of a given linear society or hit all of them with a small number of vertices found within the disk of the given cylindrical rendition.

\paragraph{$k$-fold multiplications and shifts.}
Let $\lin{H,\widehat{\Lambda}}$ be a linear society and consider that $\mathsf{dec}({H,\widehat{\Lambda}}) =  \lin{ \lin{H_{1}, \widehat{\Lambda}_{1}}, \ldots, \lin{H_{q}, \widehat{\Lambda}_{q}} }.$
For $k \in \Nbbb_{≥1},$ we define the $k$-\defi{fold multiplication} of $\lin{H,\widehat{\Lambda}},$ denoted by $\lin{H,\widehat{\Lambda}}^{(k)}$ to be the linear society where 
$$\mathsf{dec}(\lin{H,\widehat{\Lambda}}^{(k)})=\lin{\lin{H_1^1,\widehat{\Lambda}_1^1},\ldots,\lin{H_1^k,\widehat{\Lambda}_1^k},\ \ldots\ ,\lin{H^1_q,\widehat{\Lambda}^1_q},\ldots,\lin{H^k_q,\widehat{\Lambda}^k_q}}$$
and where, for $(i,j) \in [k]\times[q],$ $\lin{H_j^{i}, \widehat{\Lambda}_j^{i}}$ is isomorphic to $\lin{H_j,\widehat{\Lambda}_j}.$

\paragraph{Corridors and $\Delta$-compact sets.}
Let $\rho = (\Gamma, \mathcal{D}, c)$ be a cylindrical rendition of a linear society $\lin{G,\Lambda}$ in a disk $\Delta$ around a cell $c.$
Let $\Psi$ be a pseudo-disk of $\rho$ with starting vertex $y$ and finishing vertex $y'$ whose top line is $L^{\text{top}}$ and assume that $\Psi$ is a closed disk of $\Delta.$
Then we define $\widehat{\Psi}$ as the set containing the interior of the set $\Psi \setminus c$ and all internal points of $L_{y,y'}.$
We call any subset of $\Delta$ that is a set $\widehat{\Psi}$ as above a \defi{corridor} of $\rho$ and its \defi{frontier} is the set of all vertices of $G$ whose points belong in $\bd(\Psi)$ but not in the interior of $L_{y,y'} \cup \Delta_{c}.$
The union $K$ of a finite set of pairwise internally disjoint corridors of $\Delta$ is called a \defi{corridor system} of $\rho$ and the union of the frontiers of the corridors of this set is the \defi{frontier} of $K.$

\medskip
We say that a vertex set $S\subseteq V(G)$ is $\Delta$-\defi{compact} if there is a corridor system $K$ of $\rho$ such that 
\begin{itemize}
\item none of the points of $S$ belongs in $K$ and 
\item the frontier of $K$ is a subset of $S.$
\end{itemize}
Moreover, we assume any $\Delta$-compact set $S$ to be \defi{accompanied} by such a corridor system of $\rho.$

\begin{observation}\llabel{obs:compact_union}
Let $\rho = (\Gamma, \mathcal{D},c)$ be a cylindrical rendition of a linear society $\lin{G,\Lambda}$ in a disk $\Delta$ around a cell $c.$
If $S$ and $S'$ are two $\Delta$-compact sets of $G$ then $S \cup S'$ is also a $\Delta$-compact set of $G.$
\end{observation}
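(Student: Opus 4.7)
The plan is to construct a corridor system for $S \cup S'$ by refining the ones accompanying $S$ and $S'$. Let $K$ and $K'$ be corridor systems of $\rho$ witnessing $\Delta$-compactness of $S$ and $S'$, respectively. The natural candidate is the open set $K \cap K'$, whose connected components in $\Delta$ will serve as the corridors of the new system. The idea is that $K \cap K'$ is ``small enough'' to avoid all of $S \cup S'$ while ``large enough'' to still be decomposable into genuine corridors.

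Two of the three required properties are essentially automatic. First, no point of $S \cup S'$ lies in $K \cap K'$: a vertex of $S$ avoids $K$ by hypothesis, and a vertex of $S'$ avoids $K'$. Second, the topological boundary of $K \cap K'$ inside $\Delta$ is contained in the union of the topological boundaries of $K$ and of $K'$, so any vertex of $G$ appearing on this boundary lies either in the frontier of $K$ (hence in $S$) or in the frontier of $K'$ (hence in $S'$). Therefore the frontier of the refined system will be contained in $S \cup S'$.

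The substantive point to verify is that each connected component $C$ of $K \cap K'$ is genuinely a corridor, i.e., equals $\widehat{\Psi_C}$ for some pseudo-disk $\Psi_C$. The starting and finishing vertices of $\Psi_C$ arise as the extreme vertices of the portion of $\bd(\Delta)$ lying in the closure of $C$, and the candidate top line is the concatenation of subarcs of top lines of corridors of $K$ and $K'$, together with at most two subarcs of $\bd(\Delta_{c})$. The first three conditions in the pseudo-disk definition (being built from a $\Delta$-line, non-emptiness of $\Lambda_{\Psi_C}$, and at most two intersection points with $\bd(\Delta_{c})$) follow from planarity of $\rho$ and the corresponding properties of $K$ and $K'$; the delicate condition is the minimality condition (4).

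The main obstacle is handling this minimality. If the candidate top line fails to be a minimum $\Delta$-line between its endpoints, we substitute it for a minimum one, whose existence is guaranteed by \autoref{lem_line_sep}; the key point is that any strictly shorter line would already contradict the minimality of the top lines of the corridors in $K$ or $K'$ whose pieces it would be replacing, so the substitute top line still has all its vertices inside the frontier of $K \cup K'$ and hence inside $S \cup S'$. After this replacement one must also verify that the refined corridors remain pairwise internally disjoint, which follows by iterating the construction component by component and using that replacements can only shrink the corresponding corridor. Once this bookkeeping is carried out, the three defining conditions of $\Delta$-compactness are met by the resulting corridor system, and $S \cup S'$ is $\Delta$-compact.
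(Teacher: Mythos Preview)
The paper states this as an observation without proof, presumably because the authors regard the intersection $K\cap K'$ as the evident witnessing corridor system; your two ``automatic'' verifications (no point of $S\cup S'$ lies in $K\cap K'$, and boundary vertices of $K\cap K'$ lie in the union of the two frontiers) are correct and match this intent.

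Your handling of pseudo-disk condition~(4), however, does not close. The claim that a strictly shorter $\Delta$-line between the new endpoints $y,y'$ ``would already contradict the minimality of the top lines of the corridors in $K$ or $K'$'' is unjustified: those top lines are minimal only between \emph{their own} starting and finishing vertices, which in general differ from $y,y'$, and a minimum $y$--$y'$ line need not decompose into pieces comparable to the segments of $L_1^{\text{top}}$ and $L_2^{\text{top}}$ it is meant to replace. Hence the substitute minimum line may pass through vertices belonging to neither frontier, and there is no reason its vertex set lies in $S\cup S'$. Worse, replacing the candidate top line by a minimum one can \emph{enlarge} the resulting pseudo-disk (whenever the minimum line lies farther from $\bd(\Delta)$ than the candidate), so neither your ``replacements can only shrink'' claim for pairwise internal disjointness nor the avoidance of points of $S\cup S'$ survives the substitution. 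One honest repair is to observe that where the observation is actually invoked (to feed a corridor system into \autoref{my_crops_findslo}), the minimality clause plays no role---only the rerouting region matters---so dropping condition~(4) from the corridor definition costs nothing downstream; but as written your argument has a gap.
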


The notion of a corridor and its frontier is important in our proofs as when the outcome of our Erd\H{o}s-P{\'o}sa duality is a ``cover'' $S,$ then we require that this cover does not contain vertices drawn on the interior of the corridor as well as that all vertices of the boundary of the corridor belong in $S.$
That way, the covers that we consider give us ``valuable space'' for rerouting the fibers that are not intersected by $S$ in a way that we can bound the number of vertices on the boundary.

\medskip
The algorithm that extracts the Erd\H{o}s-P{\'o}sa duality is heavily based on the notion of a critical pseudo-disk in bounded depth societies and is the following.

\begin{tabbing}
~~~~~\=~~~~~\=~~~~~\=~~~~~\=~~~~~\=~~~~~\=\\

{\bf Algorithm} $\mathbf{find\_all}(\Delta,G,\Lambda,\rho,k, \lin{H,\widehat{\Lambda}})$\\

{\sf Input}: \= A cylindrical rendition $\rho = (\Gamma, \mathcal{D},c_0)$ of a linear society $\lin{G,\Lambda}$ in a disk $\Delta,$ \\
\> an integer $k\in\Nbbb,$ \\
\> a linear society $\lin{H,\widehat{\Lambda}}$ where $\mathsf{dec}({H,\widehat{\Lambda}}) \coloneqq \lin{\lin{H_{1},\widehat{\Lambda}_{1}},\ldots,\lin{H_{q},\widehat{\Lambda}_{q}}}.$ \\
{\sf Output}: \= either a $\lin{H,\widehat{\Lambda}}^{(k)}$-fiber of $\rho$ or \\
\> a set $S \subseteq V(G)$ such that $\lin{G, \Lambda}-S$ has no $\lin{H,\widehat{\Lambda}}$-fiber
\\ 
$\mathbf{\Psi}=\lin{}$\\
use \autoref{lem_pseudodisk} to find a pseudo-disk $\Psi$ of $\rho$ starting from the finishing vertex of $\mathbf{\Psi}$\\
and finishing at the starting vertex of $\mathbf{\Psi}.$\\
for \= $i\in[q]$\\
\> $\ell=0,$ \\
\> while \= $\ell≤k$ and $\lin{G,\Lambda}\setminus\mathbf{\Psi}$ has an $\lin{H_i,\widehat{\Lambda}_i}$-fiber that is also a fiber of $\lin{G_{\Psi}, \Lambda_{\Psi}}$\\ 
\> \>use \= \autoref{pseudodissks} to find an $\lin{H_i,\widehat{\Lambda}_i}$-critical pseudo-disk $\Psi'$ of $\rho$ such that\\
\> \>$\mathbf{\Psi}\oplus\lin{\Psi'}$ is a chain of $\rho'.$\\
\> \>$\mathbf{\Psi}\coloneqq\mathbf{\Psi}\oplus\lin{\Psi'}$\\
\> \>$\ell\coloneqq\ell+1$\\
\> \>Set $\Psi$ to the pseudo-disk given \autoref{lem_pseudodisk} of $\rho$ starting from the finishing vertex of\\
\> \>$\mathbf{\Psi}$ and finishing at the starting vertex of $\mathbf{\Psi}.$\\
\> if \= $\ell<k$\> then output $\mathsf{frontier}_{\rho}(\mathbf{\Psi}) \cup V_{\rho}(\Psi)$ and stop\\
\> \>\> otherwise use the last $k$ $\lin{H,\widehat{\Lambda}}$-critical pseudo-disks of $\mathbf{\Psi}$ in order to find a\\
\> \>\> $\lin{H_{i},\widehat{\Lambda}_{i}}^{(k)}$-fiber of $\rho.$\\
concatenate every $\lin{H_{i}^1,\widehat{\Lambda}_{i}^1}^{(k)}$-fiber of $\rho,$ for $i\in[q],$ in order to build a $\lin{H,\widehat{\Lambda}}^{(k)}$-fiber of $\rho.$
\end{tabbing}

Notice that whenever the output of the previous algorithm is a ``cover'' set $S,$ it is important that $S$ is $\Delta$-compact.
This is indeed the case since the separators extracted are the boundaries of the pseudo-disks found in the algorithm.

\begin{lemma}[Harvesting lemma]\llabel{func_sharvsp}
There exists a function $f_{\ref{func_sharvsp}}:\nn{3}{1}$ and an algorithm that, given
\begin{itemize}
\item a cylindrical rendition $\rho = (\Gamma, \mathcal{D},c)$ of a linear society $\lin{G,\Lambda}$ in a disk $\Delta$ around a cell $c,$ where $w \coloneqq \mathsf{cdepth}(\rho),$ or
\item a linear society $\lin{H,\widehat{\Lambda}},$ and
\item a $k\in\Nbbb_{≥1},$
\end{itemize}
either outputs
\begin{itemize}
\item an $\lin{H,{\Lambda}'}^{(k)}$-fiber of $\lin{G,\Lambda},$ for a shift $\Lambda'$ of $\widehat{\Lambda},$ or
\item a $\Delta$-compact set $S\subseteq V(G),$ where $|S| ≤ f_{\ref{func_sharvsp}}(k, w, |H|)$ and $\lin{G,\Lambda} - S$ has no $\lin{H, \Lambda'}$-fiber, for any shift $\Lambda'$ of $\widehat{\Lambda},$
\end{itemize}
in time
$$\Ocal_{|H|}(kw \cdot |G|^3 ).$$
Moreover
$$f_{\ref{func_sharvsp}}(k,w, |H|)=\Ocal_{|H|}(kw).$$
\end{lemma}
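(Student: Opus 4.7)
The plan is to analyze the algorithm $\mathbf{find\_all}$ already presented above and show that it produces the required dichotomy within the claimed bounds. Write $\mathsf{dec}(H,\widehat{\Lambda}) = \lin{\lin{H_1,\widehat{\Lambda}_1}, \dots, \lin{H_q,\widehat{\Lambda}_q}}$ and note that $q \leq |H|$. For each component index $i \in [q]$, the algorithm greedily extends a chain $\mathbf{\Psi}$ by invoking \autoref{pseudodissks} on $\lin{H_i,\widehat{\Lambda}_i}$. The termination condition of the inner while loop asserts that either we succeed $k$ times for component $i$ and proceed to $i+1$, or no further $\lin{H_i,\widehat{\Lambda}_i}$-fiber can be placed in the remaining pseudo-disk $\Psi$, in which case we output $S \coloneqq \mathsf{frontier}_\rho(\mathbf{\Psi}) \cup V_\rho(\Psi)$.

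In the packing case, for each $i \in [q]$ the chain $\mathbf{\Psi}$ contains $k$ pairwise internally disjoint $\lin{H_i,\widehat{\Lambda}_i\rangle}$-critical pseudo-disks, each containing an $\lin{H_i,\widehat{\Lambda}_i}$-fiber of $\rho$ by the definition of criticality. Placing these fibers in their chain order around $\bd(\Delta)$ yields an $\lin{H,\Lambda'}^{(k)}$-fiber of $\rho$, where $\Lambda'$ is the shift of $\widehat{\Lambda}$ determined by the cyclic position on $\bd(\Delta)$ where the first pseudo-disk starts. The internal disjointness of the pseudo-disks ensures the constructed fiber is well-defined, while the ordering of components within each critical pseudo-disk matches the prescribed ordering in $\mathsf{dec}(H,\widehat{\Lambda})^{(k)}$.

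In the covering case, we must argue two things: (i) $S$ is $\Delta$-compact, and (ii) removing $S$ destroys all $\lin{H,\Lambda'}$-fibers for every shift $\Lambda'$ of $\widehat{\Lambda}$. For (i), the corridor system is formed by the interiors of the pseudo-disks in $\mathbf{\Psi}$ together with $\Psi$, with the cell $c$ removed; their frontiers lie in $S$ by construction, and we may invoke \autoref{obs:compact_union} to combine them. For (ii), by the failure of the while loop at some component $i^\star$, there is no $\lin{H_{i^\star},\widehat{\Lambda}_{i^\star}}$-fiber of $\rho$ outside $\mathbf{\Psi}$ that also fits inside the terminal pseudo-disk $\Psi$; since $\Psi$ together with $\mathbf{\Psi}$ covers $\Delta$ and their frontiers lie in $S$, no such fiber survives in $\lin{G,\Lambda} - S$. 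Any shift $\Lambda'$ of $\widehat{\Lambda}$ induces a cyclic reordering of the same components $\{\lin{H_i,\widehat{\Lambda}_i} \mid i\in[q]\}$, so every $\lin{H,\Lambda'}$-fiber of $\rho$ contains an $\lin{H_{i^\star},\widehat{\Lambda}_{i^\star}}$-subfiber, which must be hit by $S$.

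The quantitative bounds are routine bookkeeping. Each $\lin{H_i,\widehat{\Lambda}_i}$-critical pseudo-disk contributes at most $(3w+2) + (3w+2) = 6w+4$ vertices to the frontier, at most $q(k-1) \leq |H| \cdot k$ such objects are created before the algorithm aborts, and the final pseudo-disk $\Psi$ adds at most $3w+2$ further vertices, giving $|S| \in \Ocal(|H| \cdot k \cdot w) = \Ocal_{|H|}(kw)$ as required. Each iteration of the inner loop invokes \autoref{pseudodissks} in time $\Ocal_{|H|}(w \cdot |G|^3)$, and the loop runs $\Ocal(|H| \cdot k)$ times, yielding a total running time of $\Ocal_{|H|}(kw \cdot |G|^3)$. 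The main obstacle I anticipate is verifying (ii) carefully: one must rigorously show that the multiset of components in $\mathsf{dec}(H,\widehat{\Lambda})$ is invariant under shifts of $\widehat{\Lambda}$, so that any $\lin{H,\Lambda'}$-fiber decomposes into $\lin{H_i,\widehat{\Lambda}_i}$-subfibers and is therefore caught by the covering set extracted for the single component $i^\star$ at which the algorithm aborted.
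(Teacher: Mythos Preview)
Your anticipated obstacle is not merely an obstacle: it is false, and it breaks the covering argument. The decomposition $\mathsf{dec}(H,\widehat{\Lambda})$ is \emph{not} invariant under shifts of $\widehat{\Lambda}$. Take $H=H_1\sqcup H_2$ with $\widehat{\Lambda}=\lin{a_1,a_2,b_1,b_2}$ where $a_i\in V(H_1)$ and $b_i\in V(H_2)$. Then $\mathsf{dec}(H,\widehat{\Lambda})$ has two pieces $\lin{H_1,\lin{a_1,a_2}}$ and $\lin{H_2,\lin{b_1,b_2}}$, but for the shift $\Lambda'=\lin{a_2,b_1,b_2,a_1}$ the point $p_1$ lies between $a_1$ and $a_2$, and no splitting disk through $p_1$ can separate the two components (any second boundary point forces $a_1,a_2$ into different pieces of $\Delta\setminus\Delta'$). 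Hence $\mathsf{dec}(H,\Lambda')=\lin{\lin{H,\Lambda'}}$ is a single piece. In particular an $\lin{H,\Lambda'}$-fiber need not contain an $\lin{H_{i^\star},\widehat{\Lambda}_{i^\star}}$-subfiber in your sense, so the cover you extract for $\widehat{\Lambda}$ need not hit it.

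The paper's proof closes this gap in the simplest possible way: it runs $\mathbf{find\_all}$ once for \emph{each} of the at most $|\widehat{\Lambda}|\le |H|$ shifts $\Lambda_1,\dots,\Lambda_q$, and in the covering case sets $S=\bigcup_i S_i$. Each $S_i$ is $\Delta$-compact, so $S$ is by \autoref{obs:compact_union}; the extra factor of $|H|$ is why the explicit bound is $k(3w+2)|H|^2$ rather than $k(3w+2)|H|$. The hitting argument for a fixed shift is also more delicate than what you wrote: one shows by an ordering/pigeonhole argument that if the components $1,\dots,l$ of the fiber are placed into pseudo-disks of $\mathbf{\Psi}_i$ in chain order, then some component $j'$ must land in a pseudo-disk from the $j'$-th batch (the batch of $\lin{H_{j'},\Lambda^i_{j'}}$-critical pseudo-disks), whose frontier therefore hits it. Your version asserts only that the component $i^\star$ is blocked in the terminal $\Psi$, which does not by itself rule out all orderings of the sub-fibers inside $\mathbf{\Psi}$.
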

\begin{proof} We define $f_{\ref{func_sharvsp}}(k, w, |H|) \coloneqq k \cdot (3w + 2) \cdot |H|^{2}.$  

Let $\Lambda_{1}, \ldots, \Lambda_{q}$ be the shifts of $\widehat{\Lambda},$ where $q = |\widehat{\Lambda}| \leq |H|.$
For every $i \in [q],$ we execute the algorithm $\mathbf{find\_all}(\Delta, G, \Lambda, \rho, k, \lin{H, \Lambda_{i}}).$
If for some $i \in [q]$ the output is an $\lin{H, \Lambda_{i}}^{(k)}$-fiber of $\rho$ we can conclude.
Hence, we can assume that for every $i \in [q],$ the output of the corresponding execution is a set $S_{i} = \mathsf{frontier}_{\rho}(\mathbf{\Psi}_{i}) \cup V_{\rho}(\Psi_{i}),$ where $\mathbf{\Psi}_{i}$ is a chain of $\rho$ and $\Psi_{i}$ is a pseudo-disk of $\rho$ starting from the finishing vertex of $\mathbf{\Psi}_{i},$ finishing at the starting vertex of $\mathbf{\Psi}_{i},$ and that is internally-disjoint from every pseudo-disk of $\mathbf{\Psi}_{i}.$
First, observe that by the definition of $\Delta$-compact sets, for every $i \in [q],$ the set $S_{i}$ is $\Delta$-compact.
We define $S \coloneqq \bigcup_{i \in [q]} S_{i}.$
Then, \autoref{obs:compact_union} implies that $S$ is $\Delta$-compact, as required.

Also, note that for fixed $i \in [q],$ $\mathbf{\Psi_{i}}$ can contain at most $k \cdot |H|$ critical pseudo-disks of $\rho,$ since $|\mathsf{dec}(H, \Lambda_{i})| \leq |H|,$ and for each such critical pseudo-disk by definition we have that its frontier has size at most $3w + 2.$ Hence in total $|S_{i}| \leq k \cdot (3w + 2) \cdot |H|$ and thus $|S| \leq k \cdot (3w + 2) \cdot |H|^{2}$ as required.

It remains to show that $\lin{G, \Lambda} - S$ has no $\lin{H, \Lambda_{i}}$-fiber, for any $i \in [q].$
Assume towards contradiction that $\lin{G, \Lambda} - S$ has an $\lin{H, \Lambda_{i}}$-fiber, for some $i \in [q],$ and that $\mathsf{dec}(H, \Lambda_{i}) \coloneqq \lin{ \lin{H_{1}, \Lambda^{i}_{1}}, \ldots, \lin{H_{q}, \Lambda^{i}_{d_{i}}} }.$
Then, it must be that $\lin{G, \Lambda} - S$ has a $\lin{H,  \Lambda^{i}_{j}}$-fiber, for every $j \in [d_{i}],$ such that all vertices of $\Lambda^{i}_{j}$ appear before the vertices of $\Lambda^{i}_{j'}$ in $\Lambda,$ where $j < j' \in [d_{i}]$ and moreover such that these fibers are pairwise-disjoint.

First, observe that since the algorithm outputted the set $\mathsf{frontier}_{\rho}(\mathbf{\Psi}_{i}) \cup V_{\rho}(\Psi_{i})$ it is implied that there is an $l \in [d_{i}]$ such that if there is a $\lin{H, \Lambda^{i}_{l}}$-fiber outside $\mathbf{\Psi}_{i},$ this fiber is not a fiber of $\lin{G_{\Psi_{i}}, \Lambda_{\Psi_{i}}},$ implying that any such fiber is intersected by $V_{\rho}(\Psi_{i}).$
So, our only hope to find a fiber for each $\lin{H,  \Lambda^{i}_{j}},$ $j \in [l],$ in the correct order, is within $\mathbf{\Psi}_{i}.$
Now, observe that by the way the chain $\mathbf{\Psi}_{i}$ of $\rho$ is defined, for any sequence $\Psi_{1}, \ldots, \Psi_{l}$ of pseudo-disks of $\mathbf{\Psi}_{i},$ respecting the order in $\mathbf{\Psi}_{i},$ such that for every $j \in [l],$ there is an $\lin{H,  \Lambda^{i}_{j}}$-fiber of $\rho$ inside $\Psi_{j},$ there must be an index $j' \in [l]$ such that $\Psi_{j'}$ is a $\lin{H,  \Lambda^{i}_{j'}}$-critical pseudo-disk of $\rho,$ which implies that the set $\mathsf{frontier}_{\rho}(\mathbf{\Psi}_{i})$ intersects this fiber.
This contradicts the existence of the sequence of fibers we require in their correct order within $\mathbf{\Psi}_{i},$ and this concludes the proof. 

The algorithm above assumes that we have at hand all separators from a decomposition as the one in \autoref{take_from_GM_IX}.
As we already briefly explained, this can be done in time $\Ocal(w \cdot |G|^2)$ and it gives us frontiers of the corresponding pseudo-disks we compute by calling upon \autoref{pseudodissks}.
The total running time is dominated by the time required to find a $\lin{H_i,\widehat{\Lambda}_i}$-critical pseudo-disk $\Psi'$ of $\rho$ in the while-loop of algorithm \textbf{find\_all}.
This is done, using \autoref{pseudodissks}, in time $\Ocal_{|H|}(w \cdot |G|^3)$ time and is applied $\Ocal(k)$ many times.
\end{proof}

\subsection{Main harvesting lemmata}\llabel{main_harvesting_lemmata}

In this subsection, we conclude with the proofs of the main lemmata of the entire section.
An important notion for our proofs is the one of an extension of some graph $Z$ in a graph $G.$

\paragraph{Extensions.} \llabel{def_extensions}
Given a graph $M$ and a set $T \subseteq V(M)$ we call $(M, T)$ a \defi{subdivision pair} if every vertex not in $T$ has degree two in $M.$
Given a subdivision pair $(M, T)$ we define $\mathsf{dissolve}(M, T)$ to be the graph obtained from $M$ after dissolving all vertices in $V(M) \setminus T.$
We define the \defi{detail} of a subdivision pair $(M, T)$ to be the number of edges of $\mathsf{dissolve}(M, T).$
Consider two graphs $Z$ and $G.$
We say that a subdivision pair $(M, T)$ is an \defi{extension of $Z$ in $G$} if
\begin{enumerate}
\item $M$ is a connected subgraph of $G,$
\item every vertex of $V(M) \setminus T$ has degree two, and
\item $\mathsf{dissolve}(M, T)$ contains $Z$ as a minor.
\end{enumerate}

It is important to take into account that the graph $M$ of an extension is always connected.
Also the vertices of $V(M)\setminus T$ can be seen as subdivision vertices of paths joining the ``terminals'' in $T.$
By considering a minimal subgraph $M$ satisfying the above properties, we can obtain the following observation.

\begin{observation}\llabel{observation_how_to_find}
Let $Z$ and $G$ be two graphs.
If $Z$ is a minor of $G,$ then there exists an extension of $Z$ in $G$ that has detail $\Ocal(|Z|^2).$
\end{observation}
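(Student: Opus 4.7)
My plan is to construct the desired extension directly from any minor model of $Z$ in $G,$ trimming each branch set down to a Steiner tree spanning its attachment points. First, I would fix a minor model $\{X_v \mid v\in V(Z)\}$ of $Z$ in $G$ together with representative edges $e_{uv}\in E(G)$ for every $uv\in E(Z),$ and write $A_v\subseteq V(X_v)$ for the set of endpoints in $X_v$ of the edges $e_{uv}$ for $u\in N_Z(v).$ Since $Z$ is simple, $|A_v|\le \deg_Z(v).$

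Next, I would replace each $X_v$ by an inclusion-minimal subtree $T_v$ of $G[X_v]$ that spans $A_v,$ and let $M_0$ be the subgraph of $G$ formed by the union of all $T_v$'s together with the edges $e_{uv}.$ This subgraph still admits $Z$ as a minor via the branch-set assignment $v\mapsto V(T_v)$ and the same representative edges, because each $T_v$ is connected, the $T_v$'s remain pairwise disjoint, and $T_v$ contains both endpoints in $X_v$ of every $e_{uv}.$ If $Z$ has $c$ connected components, then $M_0$ has $c$ components as well; working inside a connected component of $G$ that witnesses the minor model, I would add $c-1$ paths in $G$ between distinct components of $M_0,$ chosen pairwise internally disjoint from $M_0$ and from each other (for instance, iteratively as shortest paths avoiding the previously chosen edges), to produce a connected supergraph $M\subseteq G.$ Finally I would take $T$ to be the union of all attachment vertices $A_v,$ all vertices of degree at least three in each $T_v,$ and the endpoints of the connecting paths. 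A direct check shows that every remaining vertex of $M$ has degree two in $M,$ so $(M,T)$ is a valid extension of $Z$ in $G.$

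For the detail bound, the key combinatorial input is the elementary fact that a tree with $\ell$ leaves whose internal vertices all have degree at least three contains at most $\ell-2$ such internal vertices, and therefore at most $2\ell-3$ edges. Applied to the $\dissolve$d version of each $T_v,$ this yields at most $2\deg_Z(v)-3$ edges per branch set (and $0$ when $\deg_Z(v)\le 1$), summing to at most $\sum_{v\in V(Z)} 2\deg_Z(v)=4|E(Z)|$ edges. Adding the $|E(Z)|$ representative edges and the at most $c-1\le |V(Z)|$ edges coming from the connecting paths (each contributing a single edge after dissolving), the total number of edges of $\dissolve(M,T)$ is at most $5|E(Z)|+|V(Z)|=\Ocal(|Z|^2),$ as required.

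The only nontrivial point I anticipate is the disconnected case: I must guarantee that the entire minor model lies within one connected component of $G$ so that the $c-1$ connecting paths actually exist. This is innocuous — simply restrict attention to the component of $G$ witnessing the minor — but it is the only ingredient that does not follow immediately from applying the Steiner-tree pruning branch set by branch set.
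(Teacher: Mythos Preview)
Your construction is correct and makes explicit precisely what the paper leaves implicit: the paper merely remarks ``by considering a minimal subgraph $M$ satisfying the above properties'' and states the observation without proof. Your Steiner-tree pruning of each branch set, followed by the standard leaf-versus-internal-vertex count for trees, is the natural way to flesh this out and gives the claimed $\mathcal{O}(|Z|^2)$ bound on the detail.

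Two small remarks. First, your parenthetical ``for instance, iteratively as shortest paths avoiding the previously chosen \emph{edges}'' does not by itself guarantee internal vertex-disjointness; what you want is to iteratively add a shortest path in $G$ between two current components of $M_0$, which is automatically internally disjoint from the current graph. This is a cosmetic fix. Second, your dismissal of the disconnected case is slightly too quick: if $Z$ is disconnected, a minor model of $Z$ may genuinely span several components of $G$, and then no connected $M\subseteq G$ can contain $Z$ as a minor, so no extension exists at all. This is a defect of the observation as literally stated rather than of your argument; in every place the paper uses the observation, the ambient graph is (implicitly) the connected component carrying the relevant wall, so the issue never arises.
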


We define the function $\mathsf{h} \colon \Nbbb\to\Nbbb$ so that for every graph $Z$ that is a minor of a graph $G,$ $\mathsf{h}(|Z|)$ is the bound on the detail of the minimal extension given by \autoref{observation_how_to_find}.

An extension $(M,T)$ of a graph $Z$ in a graph $G$ is \defi{$\delta$-bound} if $M$ is $\delta$-bound. For the purposes of our local structure theorem we are interested in ``covering'' the  $\delta$-bound extensions of some obstruction $Z\in\obs(\Hcal).$

\paragraph{Disk collection of a $\Sigma$-decomposition.}
Let $\delta$ be a $\Sigma$-decomposition of a graph $G.$
A \defi{disk collection of $\delta$} is a set $\mathbf{\Delta}$ of $|\Ccal_{\mathsf{v}}(\delta)|$ pairwise-disjoint $\delta$-aligned disks of $\Sigma$ where every disk $\Delta$ of $\mathbf{\Delta}$ contains a vortex $c^{\Delta}$ of $\delta.$

We assume that a disk collection $\mathbf{\Delta}$ is accompanied with a \defi{starting vertex assignment function} $y_{\mathbf{\Delta}} \colon \mathbf{\Delta} \to V(G)$ mapping $\Delta \in \mathbf{\Delta}$ to a vertex $y_{\mathbf{\Delta}}(\Delta).$
For brevity, we use the term \defi{svaf} instead of the longer one ``starting vertex assignment function''.
We also define $\Lambda^{(\Delta)} \coloneqq \Lambda_{\Delta, y_{\mathbf{\Delta}}(\Delta)},$ i.e., $y_{\mathbf{\Delta}}(\Delta)$ ``fixes'' the starting vertex of a linear ordering of the vertices on the boundary of $\Delta.$

Hence, for every $\Delta \in \mathbf{\Delta},$ $\rho^{\Delta} \coloneqq \delta \cap \Delta$ is a cylindrical rendition of the society $\lin{G\cap \Delta,\Lambda^{(\Delta)}}.$ around a vortex cell $c^{\Delta}.$
Moreover, we say that a disk $\Delta \in \mathbf{\Delta}$ is \defi{$q$-insulated} if $\rho^{\Delta}$ is a $q$-\defi{nested} cylindrical rendition  around the vortex $c^{\Delta}.$

We call the triple $(\delta,\mathbf{\Delta},y_{\mathbf{\Delta}})$ a \defi{$\Sigma$-disk-collection triple} of $G$ and moreover \defi{$q$-insulated} if every disk $\Delta \in \mathbf{\Delta}$ is {$q$-insulated}.

We say that $(M,T)$ is \defi{$\mathbf{\Delta}$-respecting} if at least one of the points of $V(M)$ is drawn in the closure of $\Sigma \setminus \cupall \mathbf{\Delta}.$

\paragraph{Covers from harvesting.}
Let $k, d \in \Nbbb,$ and  let $(\delta,\mathbf{\Delta},y_{\mathbf{\Delta}})$ be a {$\Sigma$-disk-collection triple} of a graph $G.$
Consider a disk $\Delta \in \mathbf{\Delta}$ and for every linear society $\lin{H, \widehat{\Lambda}}$ of detail at most $d,$ apply \autoref{func_sharvsp} to $\rho^{\Delta},$ $\lin{H, \widehat{\Lambda}},$ and $k.$
Let $S^{\Delta}$ be the union of all sets that correspond to the second outcome of all previous applications of \autoref{func_sharvsp}.
We define a \defi{$(d, k)$-cover of $(\delta, \mathbf{\Delta}, y_{\mathbf{\Delta}})$} as the set $\bigcup_{\Delta \in \mathbf{\Delta}} S^{\Delta}.$

Notice that, using \autoref{func_sharvsp}, we get the following observation on the time it takes to compute a $(d, k)$-cover of $(\delta, \mathbf{\Delta}, y_{\mathbf{\Delta}}).$
\begin{observation}\llabel{how_to_find_fast_cover}
There is an algorithm that, given $k, d \in \Nbbb_{≥1}$ and a $\Sigma$-disk-collection triple $(\delta, \mathbf{\Delta},y_{\mathbf{\Delta}})$  of a graph $G,$ outputs a $(d, k)$-cover $S$ of $(\delta, \mathbf{\Delta}, y_{\mathbf{\Delta}})$ in time $\Ocal_{d}(w\cdot k\cdot |\mathbf{\Delta}|\cdot |G|^3 ),$ where $w \coloneqq \max\{\mathsf{cdepth}(\rho^{\Delta}) \mid \Delta \in \mathbf{\Delta}\}.$
Moreover $|S| ≤ \Ocal_{d}(w \cdot k\cdot |\mathbf{\Delta}|).$
\end{observation}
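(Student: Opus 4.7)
The plan is to unfold the definition of a $(d,k)$-cover and then apply \autoref{func_sharvsp} as a black box, combined with a pigeonhole count on the number of isomorphism classes of linear societies of detail at most $d.$ The algorithm simply iterates, for each disk $\Delta \in \mathbf{\Delta},$ through a pre-computed list of representatives of all isomorphism classes of linear societies $\lin{H,\widehat{\Lambda}}$ of detail at most $d,$ invokes the harvesting algorithm of \autoref{func_sharvsp} on the cylindrical rendition $\rho^{\Delta}$ with parameters $\lin{H,\widehat{\Lambda}}$ and $k,$ discards those calls returning a fiber, and accumulates into $S$ the $\Delta$-compact sets returned in the alternative outcome.

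The first observation I would make is that the number of non-isomorphic linear societies $\lin{H,\widehat{\Lambda}}$ of detail at most $d$ is bounded by a function $N_d$ that depends only on $d;$ in fact an $\Ocal(d^2)$-bit encoding of the underlying graph together with the linear ordering shows $N_d \in 2^{\Ocal(d^2)},$ exactly as in \autoref{obs_size_dfolio}. In particular every representative satisfies $|H| \leq \Ocal(d),$ which will let us replace the $\Ocal_{|H|}$ dependencies in \autoref{func_sharvsp} by $\Ocal_d.$ The list of representatives can be enumerated in time depending only on $d,$ and is therefore absorbed into the $\Ocal_d$ constant.

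For the size bound, each call to \autoref{func_sharvsp} contributes at most $f_{\ref{func_sharvsp}}(k,w,|H|) = \Ocal_{|H|}(kw) = \Ocal_d(kw)$ vertices to $S^{\Delta}$ whenever its second outcome is returned, and nothing otherwise. Summing over the $N_d$ societies yields $|S^{\Delta}| \leq \Ocal_d(kw),$ and summing over the $|\mathbf{\Delta}|$ disks yields $|S| = \big|\bigcup_{\Delta \in \mathbf{\Delta}} S^{\Delta}\big| \leq \Ocal_d(kw \cdot |\mathbf{\Delta}|),$ as required. For the running time, a single invocation of \autoref{func_sharvsp} on $\rho^{\Delta}$ costs $\Ocal_{|H|}(kw \cdot |G|^3) = \Ocal_d(kw \cdot |G|^3)$ because $\mathsf{cdepth}(\rho^{\Delta}) \leq w$ by definition of $w;$ performing $N_d$ such calls per disk and then ranging over all $|\mathbf{\Delta}|$ disks gives the advertised bound of $\Ocal_d(w \cdot k \cdot |\mathbf{\Delta}| \cdot |G|^3).$

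There is no genuine obstacle in this statement: once one recognises that ``detail at most $d$'' forces both $|H| \leq \Ocal(d)$ and the count of isomorphism classes to be $\Ocal_d(1),$ everything reduces to straightforward bookkeeping on top of the harvesting lemma. The only point that warrants a single sentence of justification in the final write-up is the implicit convention that the iteration ``for every linear society of detail at most $d$'' is understood up to isomorphism, so that $N_d$ is finite and the $\Ocal_d$ notation is meaningful.
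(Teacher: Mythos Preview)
Your proposal is correct and follows exactly the approach the paper intends: the paper itself gives no formal proof for this observation, simply noting that it follows from \autoref{func_sharvsp} applied to each disk and each linear society of detail at most $d$. Your unpacking of the $\Ocal_d$ dependency via the finiteness of isomorphism classes (and the fact that $|H|$ is itself $\Ocal(d)$) is precisely the bookkeeping the paper leaves implicit.
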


\paragraph{Crops.}
Let $(\delta,\mathbf{\Delta},y_{\mathbf{\Delta}})$ be a {disk-collection triple} of a graph $G,$ $(M, T)$ be a subdivision pair where $M$ is a subgraph of $G,$ and consider a disk $\Delta \in \mathbf{\Delta}.$
Let $M'$ be the union of all paths in $M \cap \Delta$ connecting a vertex of $B_{\Delta}$ with a vertex whose point is in $\Delta_{c^{\Delta}}$ and whose internal vertices are not in $B_{\Delta}.$
We say that $(M, T)$ \defi{has a crop in $\Delta$} if $M'$ is a non-empty graph and, if this is the case, we define $\mathsf{crop}(M, T, \Delta)$ as the fiber $(M', T \cap V(M'), \widehat{\Lambda})$ of $\rho^{\Delta}$ where $\widehat{\Lambda} \subseteq \Lambda^{(\Delta)}.$ 
For an example of the crop operation, see~\autoref{example_of_crops}.

\begin{figure}[ht]
\begin{center}
\scalebox{1.1}{\includegraphics{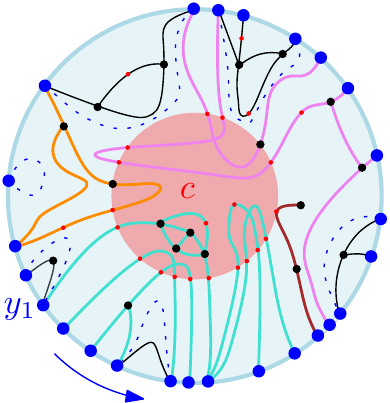}}
\end{center}
\caption{A disk $\Delta$ containing some vortex $c$ in is interior and a fiber $(M,T,\Lambda)$ of $\lin{G\cap \Delta,\Lambda}.$ We depict by red the vertices not in $T.$ 
If $M'$ is the subgraph of $M$ obtained after removing the black edges and the resulting set $S$ of isolated vertices, then $(M',T-S,\Lambda-S)={\sf crop}(M,T,\widehat{\Lambda}).$  The subgraphs formed by the black edges are excluded because of the blue-dotted $\Delta$-lines.
}
\llabel{example_of_crops}
\end{figure}

Intuitively the operation of cropping filters out parts of the subdivision pair that invade the disks of our disk-collection, that do not reach all the way to the vortex cells of the renditions associated to the disk-collection.
The ``forgotten'' parts have the nice property that they use only flat parts of $\delta.$
Exploiting this fact, we shall later see that all forgotten parts can be reconstructed as all such flat parts are sufficiently represented in the parterre segments of our walloid.
Hence, in the final proofs of this section, we only deal with the essential part of the extension of $Z$ which is what remains after cropping.
Our goal is to sufficiently represent all essential parts in the form of flowers extracted from the thickets of our walloid.

Notice that the parts of an extension that survive the crop operation and are not covered by a $(d, k)$-cover $S$ must invade the vortices only through the corridors corresponding to $S.$
The space provided by the corridors allows for the application of the rerouting of \autoref{lem_taming_linkage_nests} (based on the {vital linkage theorem}), which implies that if the graph $M$ of an extension invades the vortex of a disk $\Delta,$ then an equivalent extension will invade it in a way that only a bounded number of vertices of $M$ intersect the boundary of $\Delta.$
Therefore we may assume that the restriction of $M$ in $\Delta$ is ``represented'' by an alternative fiber of bounded detail.
This is proved in \autoref{my_crops_findslo}.
With such a ``representation'' lemma at hand we can later argue, in \autoref{harvesting_lemma}, that if $S$ does not hit such a fiber, then a big enough packing of this fiber exists in $G-S.$

\begin{lemma}[Representation lemma]\llabel{my_crops_findslo}
There exist functions $f_{\ref{my_crops_findslo}}^{1} \colon \nn{1}{1},$ $f_{\ref{my_crops_findslo}}^{2} \colon \nn{1}{1},$ and $f_{\ref{my_crops_findslo}}^{3} \colon \nn{1}{1}$ such that,
\begin{itemize}
\item for every graph $Z,$
\item every $q$-insulated $\Sigma$-disk-collection triple $(\delta,\mathbf{\Delta},y_{\mathbf{\Delta}})$ of a graph $G,$ for some $q \geq f_{\ref{my_crops_findslo}}^{1}(|Z|),$
\end{itemize}
if,
\begin{itemize}
\item for every $\Delta \in \mathbf{\Delta},$ $\Psi^{\Delta}$ is a corridor system of $\rho^{\Delta},$
\item $\Psi^+ \coloneqq \cupall \{\Psi^{\Delta} \cup \Delta_{c^{\Delta}} \mid \Delta \in \mathbf{\Delta} \},$ and
\item $(M,T)$ is a $\delta$-bound $\mathbf{\Delta}$-respecting extension of $Z$ in $G' \coloneqq G - \pi_{\delta}((\cupall \mathbf{\Delta}) \setminus \Psi^+ ),$
\end{itemize}
then there exists a $\delta$-bound $\mathbf{\Delta}$-respecting extension $(M', T')$ of $Z$ in $G'$ where 
\begin{itemize}
\item $(M', T')$ has a crop in $\Delta$ for at most $f_{\ref{my_crops_findslo}}^{2}(|Z|)$ many distinct disks $\Delta \in \mathbf{\Delta},$
\item for every $\Delta \in \mathbf{\Delta},$ if $(M', T')$ has a crop in $\Delta,$ then $\mathsf{crop}(M', T', \Delta)$ has detail $f_{\ref{my_crops_findslo}}^{3}(|Z|),$ and
\item all edges in $E(M') \setminus E(M)$ are drawn inside $\Psi = \cupall \{ \Psi^{\Delta} \mid \Delta \in \mathbf{\Delta} \}.$
\end{itemize}
Moreover
$$\textrm{$f_{\ref{my_crops_findslo}}^{1}(|Z|) = \Ocal(\ell(\mathsf{h}(|Z|))),$ $f_{\ref{my_crops_findslo}}^{2}(|Z|) = 2^{\Ocal(\ell(\mathsf{h}(|Z|)))},$ and $f_{\ref{my_crops_findslo}}^{3}(|Z|) = \ell(\mathsf{h}(Z)).$}$$
\end{lemma}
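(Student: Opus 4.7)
The plan is to encode the extension $(M,T)$ as a linkage on at most $\mathsf{h}(|Z|)$ paths and apply \autoref{lem_taming_linkage_nests} with the $q$-insulated nests of $(\delta,\mathbf{\Delta},y_{\mathbf{\Delta}})$ to obtain an equivalent, well-controlled linkage from which the desired $(M',T')$ can be read off. By \autoref{observation_how_to_find} we may assume $\mathsf{dissolve}(M,T)$ has at most $\mathsf{h}(|Z|)$ edges; splitting $M$ at the vertices of $T$ then yields a linkage $\Lcal$ of order $r\le \mathsf{h}(|Z|).$ Since $M\subseteq G',$ every terminal of $\Lcal$ lies either outside $\cupall\mathbf{\Delta}$ or inside $\Psi^{\Delta}\cup \Delta_{c^{\Delta}}$ for some $\Delta\in\mathbf{\Delta}.$

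Before invoking the taming lemma, I would migrate every terminal $t$ that sits in the ``corridor'' part $\Psi^{\Delta}\setminus\Delta_{c^{\Delta}}$ of some disk into an acceptable position, i.e.\ either into the vortex cell $\Delta_{c^{\Delta}}$ (hence into $G^{-}_{C^{\Delta}_{1}}$ by the $q$-insulation) or outside $\Delta.$ Concretely, since $(M,T)$ is $\mathbf{\Delta}$-respecting and $M$ is connected, for each such $t$ at least one incident path of $\Lcal$ either reaches $\Delta_{c^{\Delta}}$ or escapes $\Delta$; using the $q$ concentric nest cycles of $\Delta,$ which are drawn inside $\Psi^{\Delta},$ as transit corridors, one rearranges $M$ locally around $t$ so that the degree-$\ge 3$ junction at $t$ is transplanted to an acceptable location while the rest of $\mathsf{dissolve}(M,T)$ is preserved. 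This produces a revised extension $(M,T^{*})$ with $|T^{*}|\le 2\mathsf{h}(|Z|)$ and a linkage $\Lcal'$ of order $r'\le 2\mathsf{h}(|Z|)$ satisfying the terminal hypothesis of \autoref{lem_taming_linkage_nests}.

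Setting $f^{1}_{\ref{my_crops_findslo}}(|Z|)\coloneqq f^{1}_{\ref{lem_taming_linkage_nests}}(2\mathsf{h}(|Z|))\in \Ocal(\ell(\mathsf{h}(|Z|))),$ I would then apply \autoref{lem_taming_linkage_nests} to $\Lcal'$ with the nests $\Ccal^{\Delta}$ of the $q$-insulated cylindrical renditions and with arcwise connected subsets $\Delta'\coloneqq \Psi^{\Delta}\cup\Delta_{c^{\Delta}}$ (which contain the drawing of $\Lcal'$ inside $\Delta$ by definition of $G'$). The output is an equivalent linkage $\Rcal$ contained in $\cupall\Lcal'$ together with nest-cycle arcs drawn inside the corridor regions, such that, for every $\Delta,$ at most $f^{2}_{\ref{lem_taming_linkage_nests}}(r')=\ell(r')$ disjoint subpaths of $\Rcal$ with an endpoint in $B_{\Delta}$ carry an edge of $G^{-}_{C^{\Delta}_{1}},$ and at most $f^{3}_{\ref{lem_taming_linkage_nests}}(r')=2^{\Ocal(\ell(r'))}$ disks satisfy $E(\Rcal)\cap E(G^{-}_{C^{\Delta}_{1}})\neq\emptyset.$ Defining $M'\coloneqq \cupall\Rcal$ and $T'$ as the terminal set of $\Lcal',$ pattern-equivalence of $\Rcal$ and $\Lcal'$ ensures $\mathsf{dissolve}(M',T')\supseteq Z,$ so $(M',T')$ is an extension of $Z$; all new edges of $M'$ lie inside $\Psi=\cupall\{\Psi^{\Delta}:\Delta\in\mathbf{\Delta}\}$ by the containment conclusion of \autoref{lem_taming_linkage_nests}; and the two bounds above translate directly into $f^{2}_{\ref{my_crops_findslo}}(|Z|)\in 2^{\Ocal(\ell(\mathsf{h}(|Z|)))}$ disks hosting a crop and $f^{3}_{\ref{my_crops_findslo}}(|Z|)\in \Ocal(\ell(\mathsf{h}(|Z|)))$ for the detail of each crop. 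Finally, $(M',T')$ inherits $\delta$-boundedness and $\mathbf{\Delta}$-respectingness from $(M,T^{*})$ since $\Rcal$ uses the same ``outside-disk'' terminals as $\Lcal'.$

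The principal obstacle is the terminal-migration step: moving a degree-$\ge 3$ terminal $t$ out of the corridor $\Psi^{\Delta}\setminus \Delta_{c^{\Delta}}$ without violating the subdivision-pair property at $t$ or breaking the minor model of $Z$ requires a careful local surgery that routes the incident $M$-paths through the $q$ concentric nest cycles until each endpoint reaches an acceptable location, all the while keeping the paths internally vertex-disjoint. Verifying that such a local modification preserves $Z\le\mathsf{dissolve}(M,T^{*})$ and inflates the linkage order by at most a factor of two is the only non-routine part of the argument; once it is in place, the rest is a direct application of \autoref{lem_taming_linkage_nests} and a bookkeeping translation of its quantitative conclusions into the language of crops and extensions.
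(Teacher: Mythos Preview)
Your overall plan—encode $(M,T)$ as a linkage of order at most $\mathsf{h}(|Z|)$ and invoke \autoref{lem_taming_linkage_nests}—is exactly what the paper does. The difference lies entirely in how you propose to satisfy the terminal hypothesis of the taming lemma, and here the paper takes a much simpler route that completely bypasses your ``principal obstacle.''

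Rather than migrating badly-placed terminals, the paper exploits the fact that each nest has far more cycles than there are terminals. With $d=\mathsf{h}(|Z|)$ there are at most $3d$ vertices to worry about (the vertices of $T$ together with the $2|\Lcal|\le 2d$ endpoints of $\Lcal$). Setting $f^{1}_{\ref{my_crops_findslo}}(|Z|)=(3d+1)(3\ell(d)+2)$, the pigeonhole principle yields, inside each $\Delta\in\mathbf{\Delta}$, a bundle $\Ccal^{\Delta}_{2}$ of $p=3\ell(d)+2\ge f^{1}_{\ref{lem_taming_linkage_nests}}(d)$ consecutive nest cycles whose annulus contains none of these vertices. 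One then applies \autoref{lem_taming_linkage_nests} with these sub-nests in place of the full nests; every terminal of $\Lcal$ now automatically sits either inside the innermost sub-nest cycle (hence in $G^{-}_{C_{1}}$) or outside the outermost one, so the hypothesis is met for free and $T'=T$ unchanged.

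Your migration step, by contrast, is not clearly feasible as stated: the nest cycles restricted to $G'$ live only in $\Psi^{\Delta}\cup\Delta_{c^{\Delta}}$ and may be badly fragmented there, so it is not evident how to reroute the several paths incident to a misplaced degree-$\ge 3$ terminal while keeping them pairwise disjoint and preserving $Z\le\mathsf{dissolve}(M,T^{*})$. The paper's pigeonhole trick costs nothing beyond enlarging $f^{1}_{\ref{my_crops_findslo}}$ by a factor of $\Theta(d)$ and makes the entire terminal-placement issue disappear; you should replace the migration paragraph with this argument.
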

\begin{proof} We define $f_{\ref{my_crops_findslo}}^{1}(|Z|) \coloneqq (3\mathsf{h}(|Z|) + 1) (3\ell(\mathsf{h}(|Z|)) + 2),$ $f_{\ref{my_crops_findslo}}^{2}(|Z|) \coloneqq f_{\ref{lem_taming_linkage_nests}}^{3}(\mathsf{h}(|Z|)),$ and $f_{\ref{my_crops_findslo}}^{3}(|Z|) \coloneqq f_{\ref{lem_taming_linkage_nests}}^{2}(\mathsf{h}(|Z|))).$

To begin, we assume that the detail of $(M, T)$ is at most $d \coloneqq \mathsf{h}(|Z|)$ and that no two vertices in $T$ are adjacent in $M.$
Indeed, the former property is implied by \autoref{observation_how_to_find} and can be achieved by removing vertices of $M$ as long as the conditions that $M$ is connected, $\mathsf{dissolve}(M, T)$ contains $Z$ as a minor, $(M, T)$ is $\delta$-bound, and $\Delta$-respecting are satisfied.
For the latter, if two vertices in $T$ are adjacent in $M,$ we simply subdivide this edge once, without affecting any of the desired properties.

The fact that $(M, T)$ is $\mathbf{\Delta}$-respecting implies that $M$ is not entirely drawn in the interior of some $\Delta \in \mathbf{\Delta}$ without intersecting its boundary $B_{\Delta}.$

Also, notice that by definition of an extension, the graph $M - T$ is a graph whose connected components are paths.
Let $\Lcal$ be a linkage in $G'$ that is the set of connected components of $M - T.$
Observe that since the detail of $(M, T)$ is at most $d,$ we also have that the order of $\Lcal,$ $r \coloneqq |\Lcal|,$ is at most $d.$

Now, the proof is essentially a careful application of \autoref{lem_taming_linkage_nests}.
Assume that $\mathbf{\Delta} = \{ \Delta_{1}, \ldots, \Delta_{b} \},$ for some $b \in \Nbbb.$
Now, for every $i \in [b],$ we define the following.
Let $\Ccal^{i}$ be the nest around $c^{\Delta_{i}}$ in $\rho^{\Delta_{i}}.$
Since by assumption, $|\Ccal^{i}| = q \geq (3d + 1)(3\ell(r) + 2),$ by the pigeonhole principle, there is a subset $\Ccal^{i}_{2} = \{ C^{i}_{1}, \ldots, C^{i}_{p}\} \subseteq \Ccal^{i}$ of $p \coloneqq 3\ell(\Ocal(r)) + 2$ many consecutive cycles of $\Ccal^{i}$ such that, no vertex in $T$ and no terminal vertex of $\Lcal$ is drawn in the disk $\Delta_{C_{p}}$ minus every point of this disk drawn in the interior of the disk $\Delta_{C^{i}_{1}}.$
Let $\rho_{2}^{\Delta_{i}}$ be the $p$-nested cylindrical rendition obtained from $\rho^{\Delta_{i}}$ by considering only the subset $\Ccal^{i}_{2}$ of the original nest $\Ccal^{i}.$
For every $i \in [b],$ we define $\Delta'_{i} \coloneqq \Psi^{\Delta_{i}} \cup \Delta_{c^{\Delta_{i}}}.$
Note that $\Delta'_{i}$ is an arcwise connected set and that $M \cap \Delta_{i}$ is drawn in $\Delta'_{i}.$

We are now in the position to apply \autoref{lem_taming_linkage_nests} for the graph $G,$ the linkage $\Lcal$ in $G,$ the $\Sigma$-decomposition $\delta$ of $G,$ the sequence $(\rho^{\Delta_{1}}_{2}, \ldots, \rho^{\Delta_{b}}_{2})$ of $p$-insulated disks of $\delta,$ and the arc-wise connected sets $\Delta'_{1}, \ldots, \Delta'_{b}$ that are subsets of $\Delta_{1}, \ldots, \Delta_{b}$ respectively, with the property necessary to apply that lemma, that is every vertex and edge of $\cupall \Lcal \cap \Delta_{i}$ is drawn in $\Delta'_{i},$ in order to obtain a linkage $\Rcal$ with the following properties.
\begin{enumerate}
\item $\Rcal$ is equivalent to $\Lcal,$
\item $\Rcal$ is a subgraph of the graph $\cupall \Lcal \cup (\bigcup_{i \in [b]} (\cupall \Ccal^{i}_{2} \cap G')),$
\item\llabel{linkprop3} for every $i \in [b],$ the number of disjoint subpaths of paths in $\Rcal$ with at least one endpoint in $B_{\Delta_{i}}$ that contains an edge of drawn in the interior of the disk $\Delta_{C^{i}_{1}}$ is at most $f^{2}_{\ref{lem_taming_linkage_nests}}(r),$ and
\item\llabel{linkprop4} a path in $\Rcal$ contains an edge drawn in the interior of the disk $\Delta_{C^{i}_{1}}$ for at most $f^{3}_{\ref{lem_taming_linkage_nests}}(r)$ distinct indices $i \in [b].$
\end{enumerate}

To conclude, we define the desired extension of $Z$ in $G'$ as the pair $(M', T')$ where $M'$ is obtained from $M$ by replacing every path in $M'$ that corresponds to a path in $\Lcal$ by the corresponding path having the same terminals in $\Rcal$ and $T' = T.$
Clearly, since $\Rcal$ is equivalent to $\Lcal$ and is a subgraph of ~$\cupall \Lcal \cup (\bigcup_{i \in [r]} (\cupall \Ccal^{i}_{2} \cap G')),$ and as such intersects no vertex in $T,$ it is implied that $(M', T')$ is indeed a $\delta$-bound extension of $Z$ in $G'.$
Moreover, all edges in $E(M') \setminus E(M)$ are drawn inside $\Psi$ as required, since $\Rcal$ is a subgraph of  ~$\cupall \Lcal \cup (\bigcup_{i \in [r]} (\cupall \Ccal^{i}_{2} \cap G'))$ and by definition the graph $\bigcup_{i \in [r]} (\cupall \Ccal^{i}_{2} \cap G')$ is drawn inside $\Psi.$

Additionally, the fact that $\Rcal$ satisfies property \ref{linkprop4} above, combined with the definition of crop, implies that $(M', T')$ has a crop in $\Delta$ for at most $f_{\ref{my_crops_findslo}}^{2}(|Z|)$ many distinct disks $\Delta \in \mathbf{\Delta}.$
Finally, the fact that $\Rcal$ satisfies property \ref{linkprop3} above, again combined with the definition of crop, implies that for every $\Delta \in \mathbf{\Delta},$ if $(M', T')$ has a crop in $\Delta,$ 
then $\mathsf{crop}(M', T', \Delta)$ has detail $f_{\ref{my_crops_findslo}}^{3}(|Z|).$ 
This follows from the fact that, for every $i \in [r],$ every path of $M'$ that connects a vertex of $B_{\Delta_{i}}$ with a vertex whose point is in $\Delta_{c^{\Delta_{i}}}$ must contain a subpath of a path in $\Rcal$ that satisfies \ref{linkprop3}.
Since the number of such paths is bounded, this bounds the detail of any crop of $(M', T')$ in $\Delta_{i}.$
\end{proof}

We are now in the position to prove the main lemma of this section which can be seen as a form of a local Erd\H{o}s-P{\'o}sa duality inside a collection of bounded depth disks.
It essentially implies that for every graph on $h$ vertices, an $(\alpha,k)$-cover of a $\Sigma$-disk-collection triple provides such a duality for all their extensions.
The key condition is that $\alpha$ is bounded by a function in $h,$ depending on the unique linkage function $\ell.$

\begin{lemma}[Harvesting lemma]\llabel{harvesting_lemma}
Let $k \in \Nbbb,$ $Z$ be a graph on $h$ vertices, and $(\delta,\mathbf{\Delta},y_{\mathbf{\Delta}})$ be a $q$-insulated $\Sigma$-disk-collection triple of a graph $G,$ where $q \geq f_{\ref{my_crops_findslo}}^{1}(h).$
If $S$ is the $(f_{\ref{my_crops_findslo}}^{3}(h), k)$-cover of $(\delta, \mathbf{\Delta}, y_{\mathbf{\Delta}})$ and $(M,T)$ is a $\delta$-bound extension of $Z$ in $G' \coloneqq G - S,$ then there exists a $\delta$-bound and $\mathbf{\Delta}$-respecting extension $(M',T')$ of $Z$ in $G'$ such that
\begin{itemize}
\item $|T'|≤ \mathsf{h}(h)$ and
\item for every $\Delta \in \mathbf{\Delta},$ if $(M',T')$ has a crop in $\Delta$
and $\mu = \mathsf{crop}(M', T', \Delta),$ then
\begin{itemize}
\item $\lin{G\cap \Delta, \Lambda^{(\Delta)}}$ has an $\lin{H, {\Lambda}'}^{(k)}$-fiber, where $\lin{H, {\Lambda}'}$ is a shift of $\lin{H,\widehat{\Lambda}} \coloneqq \mathsf{dissolve}(\mu),$ and
\item $|V({\Lambda}')|≤f_{\ref{my_crops_findslo}}^{3}(h).$
\end{itemize}
\end{itemize}
\end{lemma}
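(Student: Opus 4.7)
The plan is to combine the Representation Lemma (\autoref{my_crops_findslo}) with the defining property of the $(f^3_{\ref{my_crops_findslo}}(h), k)$-cover $S.$ Recall that $S = \bigcup_{\Delta \in \mathbf{\Delta}} S^\Delta,$ where each $S^\Delta$ is obtained by calling \autoref{func_sharvsp} on $\rho^\Delta$ for every linear society $\lin{H, \widehat{\Lambda}}$ of detail at most $f^3_{\ref{my_crops_findslo}}(h);$ the calls returning a hitting set contribute $\Delta$-compact sets to $S^\Delta.$ Combining the accompanying corridor systems, one obtains, for each $\Delta,$ a corridor system $\Psi^\Delta$ of $\rho^\Delta$ chosen maximally so that $\pi_\delta(\Delta \setminus (\Psi^\Delta \cup \Delta_{c^\Delta})) \subseteq S^\Delta.$ Setting $\Psi^+ \coloneqq \bigcup_\Delta (\Psi^\Delta \cup \Delta_{c^\Delta}),$ this gives $G' = G - S \subseteq G - \pi_\delta((\cupall \mathbf{\Delta}) \setminus \Psi^+),$ placing us exactly in the setting of \autoref{my_crops_findslo}.

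First I would preprocess $(M, T)$ using \autoref{observation_how_to_find} to reduce to a $\delta$-bound subextension of detail at most $\mathsf{h}(h),$ so $|T| \leq \mathsf{h}(h);$ $\delta$-boundness passes to subextensions because $M$ is connected and any vertex of the subextension is linked through $M$ to the ground vertex certified by the original extension. Then I enforce $\mathbf{\Delta}$-respectingness: in the pathological case where $M$ is drawn entirely inside a single disk $\Delta_0,$ I extend $M$ by a minimal $G'$-path to a vertex outside $\cupall \mathbf{\Delta},$ routed through the corridor system $\Psi^{\Delta_0}$ (whose interior avoids $S$) and then along one of the $q$-nest cycles out to $B_{\Delta_0}$ and beyond. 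This adds at most two new terminals, well within $\mathsf{h}(h)$ after re-minimization.

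Next I would invoke \autoref{my_crops_findslo} on the resulting $(M, T).$ The hypothesis $q \geq f^1_{\ref{my_crops_findslo}}(h)$ is exactly what is given. The lemma returns $(M', T')$ which is $\delta$-bound and $\mathbf{\Delta}$-respecting with $T' = T$ (so $|T'| \leq \mathsf{h}(h)$), has crops in at most $f^2_{\ref{my_crops_findslo}}(h)$ disks, each crop of detail at most $f^3_{\ref{my_crops_findslo}}(h),$ and with all edges of $E(M') \setminus E(M)$ drawn inside $\bigcup_\Delta \Psi^\Delta.$ Because the corridor interiors miss $S,$ we still have $(M', T') \subseteq G - S = G'.$

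Finally I harvest. For each $\Delta$ with a crop $\mu = \mathsf{crop}(M', T', \Delta),$ set $\lin{H, \widehat{\Lambda}} \coloneqq \mathsf{dissolve}(\mu).$ The detail bound on the crop places $\lin{H, \widehat{\Lambda}}$ among the societies for which the construction of $S^\Delta$ invoked \autoref{func_sharvsp} on $(\rho^\Delta, \lin{H, \widehat{\Lambda}}, k).$ Its first outcome is precisely the desired $\lin{H, \Lambda'}^{(k)}$-fiber of $\lin{G \cap \Delta, \Lambda^{(\Delta)}},$ and we are done. Its second outcome would have contributed to $S^\Delta$ a hitting set for every $\lin{H, \Lambda''}$-fiber of $\lin{G \cap \Delta, \Lambda^{(\Delta)}}$ over shifts $\Lambda''$ of $\widehat{\Lambda}$---contradicting $\mu \subseteq M' \subseteq G' \subseteq G - S^\Delta,$ since $\mu$ is itself such a fiber. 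The bound $|V(\Lambda')| = |V(\widehat{\Lambda})| \leq f^3_{\ref{my_crops_findslo}}(h)$ follows from the detail of the crop, as $|V(\widehat{\Lambda})|$ counts the $B_\Delta$-endpoints of the at most $f^3_{\ref{my_crops_findslo}}(h)$ boundary-to-vortex subpaths making up the crop. The main technical obstacle is the $\mathbf{\Delta}$-respecting reduction in Step 1, which relies on the corridor infrastructure and the $q$-insulation to provide safe rerouting space; a secondary subtlety is combining the corridor systems from the distinct invocations of \autoref{func_sharvsp} into a single $\Psi^\Delta$ for which $\pi_\delta(\Delta \setminus \Psi^+) \subseteq S$ holds.
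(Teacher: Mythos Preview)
Your approach is essentially the paper's: extract the corridor systems from the $\Delta$-compact structure of $S$ (the paper calls them $K^\Delta$ and uses \autoref{obs:compact_union} for their combination), invoke \autoref{my_crops_findslo}, and then use the dichotomy of \autoref{func_sharvsp} to force the packing outcome for each crop. The paper's ordering differs only cosmetically—it treats the $\mathbf{\Delta}$-respecting case first and handles the non-respecting case at the end by extending $M$ with a single path to $B_\Delta$ and adding one terminal before reducing to the main case (your pre-minimization via \autoref{observation_how_to_find} is redundant, as \autoref{my_crops_findslo} performs it internally while preserving $\mathbf{\Delta}$-respectingness).
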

\begin{proof} Let $d \coloneqq f_{\ref{my_crops_findslo}}^{3}(h).$
Firstly, by definition of the $(d, k)$-cover of $(\delta, \Delta, y_{\mathbf{\Delta}}),$ $S = \bigcup_{\Delta \in \mathbf{\Delta}} S_{\Delta},$ where for every $\Delta \in \mathbf{\Delta},$ $S^{\Delta}$ is the union of $\Delta$-compact sets obtained by the application of \autoref{func_sharvsp} to every linear society of detail at most $d.$
As a result, by \autoref{obs:compact_union}, each $S^{\Delta},$ $\Delta \in \mathbf{\Delta},$ is itself a $\Delta$-compact set and as such it is accompanied by a corridor system $K^{\Delta}$ of $\rho^{\Delta}.$
Let $$K^{+} = \cupall \{ K^{\Delta} \cup \Delta_{c^{\Delta}} \mid i \in [r] \}.$$
Now, notice that since, for every $\Delta \in \mathbf{\Delta},$ every $\Delta$-compact set $S^{\Delta}$ is accompanied by the corridor system $K^{\Delta},$ by definition it must be that $(M, T)$ is also an extension of $Z$ in the graph $G'' \coloneqq G - \pi_{\delta}((\cupall \mathbf{\Delta}) \setminus K^{+}).$

Assume that it is the case that $(M, T)$ is also $\mathbf{\Delta}$-respecting.
Then, we are in the position to apply \autoref{my_crops_findslo} on $Z,$ $\delta,$ $\mathbf{\Delta},$ $K^{\Delta},$ for every $\Delta \in \mathbf{\Delta},$ and $(M, T),$ in order to obtain a $\delta$-bound and $\mathbf{\Delta}$-respecting extension $(M', T')$ of $Z$ in $G''$ with the following additional properties.
\begin{enumerate}
\item\llabel{harvprop1} $(M', T')$ has a crop in $\Delta$ for at most $f_{\ref{my_crops_findslo}}^{2}(h)$ many distinct disks $\Delta \in \mathbf{\Delta},$
\item\llabel{harvprop2} for every $\Delta \in \mathbf{\Delta},$ if $(M', T')$ has a crop in $\Delta,$ then $\mathsf{crop}(M', T', \Delta, \Delta)$ has detail $f_{\ref{my_crops_findslo}}^{3}(h),$ and
\item\llabel{harvprop3} all edges in $E(M') \setminus E(M)$ are drawn inside $K = \cupall \{ K^{\Delta} \mid \Delta \in \mathbf{\Delta} \}.$
\end{enumerate}

The fact that $(M', T')$ satisfies property \ref{harvprop3} says that $M'$ differs from $M$ only on vertices and edges drawn within the corridor systems accompanying each $\Delta$-compact set $S^{\Delta},$ for every $\Delta \in \mathbf{\Delta}.$
In particular, by definition of compact sets, this implies that $(M', T')$ is also an extension of $Z$ in $G'.$
This combined with the fact that $(M', T')$ satisfies property \ref{harvprop2} implies the following.
Assume that for a given $\Delta \in \mathbf{\Delta},$ $(M', T')$ has a crop in $\Delta.$
This implies that $\mu^{\Delta} \coloneqq \mathsf{crop}(M', T', \Delta)$ is a $\mathsf{dissolve}(\mu^{\Delta})$-fiber of $\rho^{\Delta}$ of detail at most $f_{\ref{my_crops_findslo}}^{3}(|Z|).$
Assume that $\lin{H, \widehat{\Lambda}} = \mathsf{dissolve}(\mu^{\Delta}).$
As we previously observed, $\mu^{\Delta}$ is a $\lin{H, \widehat{\Lambda}}$-fiber of the graph $G'$ and more specifically it is also 
a $\lin{H, \widehat{\Lambda}}$-fiber of $\lin{G \cap \Delta, \Lambda^{(\Delta)}} - S^{\Delta}.$
Hence, an application of \autoref{func_sharvsp} on $\rho^{\Delta},$ $\lin{H, \widehat{\Lambda}},$ and $k,$ must output an $\lin{H, \Lambda'}^{(k)}$-fiber of $\lin{G \cap \Delta, \Lambda^{(\Delta)}}$ as required.

It remains to handle the case where $(M, T)$ is not $\mathbf{\Delta}$-respecting.
By definition, this implies that $M$ is drawn in the interior of exactly one of the disks say $\Delta \in \mathbf{\Delta}.$
However, the fact that $(M, T)$ is $\delta$-bound, implies that there is a path $P$ in $G$ that is internally disjoint from $M$ whose endpoints are a vertex of $M,$ say $x$ and a vertex of $B_{\Delta}.$
We then define $M'' \coloneqq M \cup P$ and $T'' \coloneqq T \cup \{ x \}.$
Clearly, $(M'', T'')$ is a $\delta$-bound and $\mathbf{\Delta}$-respecting extension of $Z$ in $G'$ and we can now reduce to the previous case.
\end{proof}

\section{Plucking flowers from thickets}\llabel{extr_flowers_thickets}

The main outcome of \autoref{sec_harvest_crops}, namely \autoref{harvesting_lemma}, states a type of local Erd\H{o}s-P{\'o}sa duality confined inside a collection of bounded depth disks of our $\Sigma$-decomposition.
In this section, we advance towards our goal by first applying that lemma to the $\Sigma$-disk-collection $(\delta,\mathbf{\Delta}, y_{\mathbf{\Delta}})$ that naturally corresponds to the thickets of our refined orchard, for a given graph $Z.$
If none of the $(f_{\ref{my_crops_findslo}}^{3}(|Z|), k)$-covers, that correspond to the disks of $(\delta,\mathbf{\Delta}, y_{\mathbf{\Delta}}),$ obtained from the application of \autoref{harvesting_lemma}, intersect the graph $M$ of an extension of $Z,$ then we can find a sequence of many copies of the crop of $\Delta \cap M$ inside $\Delta,$ for every $\Delta \in \mathbf{\Delta}.$
We, then proceed with a series of successive transformations of these sequences that, in the end, will allow us to see every thicket as a flower that will be used in order to certify the existence of a guest walloid, as required for the proof of our local structure theorem in \autoref{sec_local_structure}.

\subsection{Certifying flatness}\label{certif_flat}

As we already mentioned, we use \autoref{harvesting_lemma} in order to extract 
a ``packing'' of fibers rooted on a collection of disks around the vortices of our $\Sigma$-decomposition $\delta.$ 
This collection will later permit us to see the thickets of our orchard as the flowers of a guest walloid.
Recall that in an orchard all flaps of $\delta$ are already represented by its flower segments, so the goal of applying \autoref{harvesting_lemma} for the purposes of this section, is in order to guarantee that we can also sufficiently represent the parts of an extension that invade the vortices of our $\Sigma$-decomposition.
To make the interaction 
between the vortex-invading part of an extension of some graph $Z$ with its flat part more precise,
we first give a way to formally certify flatness.

\paragraph{Flatness certifying complements.}
Let $Z$ be a graph and $\text{\bf \mu} = \lin{\mu^{1}, \ldots, \mu^{z}}$ be a sequence of fibers where $\mu^{i} = (M^i, T^i, \Lambda^i), i\in[z].$

A $\Sigma$-\defi{flat} $Z$-\defi{certifying complement} of $\text{\bf \mu}$ is a tuple $\bfl{C} =({\Sigma}^{\mathsf{c}}, {M}^{\mathsf{c}},{T}^{\mathsf{c}}, {\delta}^{\mathsf{c}},Y)$ where
\begin{enumerate}
\item ${\Sigma}^{\mathsf{c}}$ is the closure of $\Sigma \setminus \cupall \mathbf{\Delta}$ where $\mathbf{\Delta} = \{\Delta^{1},\ldots,\Delta^{z}\}$ is a collection of pairwise-disjoint disks of $\Sigma,$ all oriented counter-clockwise,

\item $({M}^{\mathsf{c}}, {T}^{\mathsf{c}})$ is a pair, where ${M}^{\mathsf{c}}$ is a graph, ${T}^{\mathsf{c}} \subseteq V({M}^{\mathsf{c}})$ and a vertex $x \in V(M^{\mathsf{c}}) \setminus {T}^{\mathsf{c}}$ has degree two if $x \not \in \bigcup_{i\in[z]} V(\Lambda^i),$ otherwise it has  degree zero or one in $M^{\mathsf{c}}.$

\item ${\delta}^{\mathsf{c}}$ is a vortex-free ${\Sigma}^{\mathsf{c}}$-decomposition of ${M}^{\mathsf{c}},$

\item for every $i\in[z],$ $V(M^{i}) \cap V({M}^{\mathsf{c}}) = V(\Lambda^i)$ and the points, relatively to $\delta^{\mathsf{c}},$ of all vertices of $V(\Lambda^i)$ are on the same boundary of ${\Sigma}^{\mathsf{c}},$

\item for every $i\in[z]$ and every $y \in V(\Lambda^{i}) \setminus T^{i}$ it holds that $\deg_{M^i}(y) + \deg_{M^{c}}(y) = 2,$ 

\item $Y = \{y^1,\ldots,y^{z}\},$ where for $i\in[z],$ the ordering of the vertices of $V(M^{i})$ whose points are on the boundary of ${\Sigma}^{\mathsf{c}},$ when starting from $y^i$ and following the clockwise ordering of the boundary of ${\Sigma}^{\mathsf{c}}$ where they belong, is the same as the ordering of $\Lambda^i$ (following the counterclockwise ordering of $\bd(\Delta_i)$), and

\item\llabel{condition_extension_all_union} $({M}^{\mathsf{c}}\cup\bigcup_{i\in[z]}M^{i},T^{\mathsf{c}}\cup\bigcup_{i\in[z]}T^{i})$ is an extension of $Z.$
We call this extension the $(\bfl{C},\text{\bf \mu})$-\defi{certifying} extension of $Z.$
\end{enumerate}

The \defi{detail} of the pair $(\mathbf{C}, \text{\bf \mu})$ is the number of terminals of the extension of $Z$ in condition \ref{condition_extension_all_union} above.
Notice that a bound on the detail of $\mathbf{C}$ implies also a bound on the total number of terminals of the fibers of $\text{\bf \mu}.$
We also define
$$d\folio(\bfl{C}) = d\folio(\delta^{\mathsf{c}}).$$

We say that a society $\lin{H, \Lambda}$ is \defi{prime} if $\mathsf{dec}(H,\Lambda) = \lin{\lin{H, \Lambda}}.$
Also, we say that a fiber $\mu = (M, T, \Lambda)$ is \defi{prime} if $\mathsf{dissolve}(\mu)$ is prime.
Given two disjoint fibers $\mu_{1} = (M_{1}, T_{1}, \Lambda_{1})$ and $\mu_{2} = (M_{2}, T_{2}, \Lambda_{2}),$ we say that they are \defi{disjoint} if $V(M_{1}) \cap V(M_{2}) = \emptyset.$
Given that $\mu_{1}$ and $\mu_{2}$ are disjoint, we define $\mu_{1} \oplus \mu_{2} = (M_{1} \cup M_{2}, T_{1} \cup T_{2}, \Lambda_{1} \oplus \Lambda_{2}).$

\begin{figure}[ht]
\begin{center}
\scalebox{1}{\includegraphics{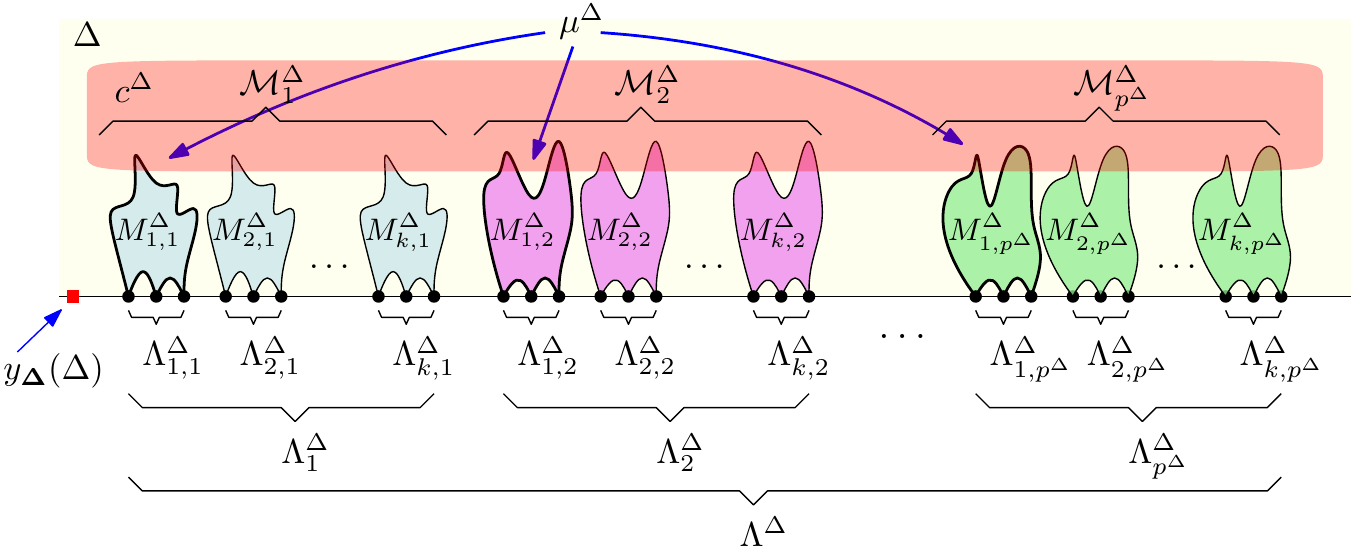}}
\end{center}
    \caption{A visualization of \autoref{internal_certification_without_thickets} for one, say $\Delta,$  of the disks of $\mathbf{\Delta}'.$ The fiber $\mu^\Delta$ consists of the first prime fiber of each of the
    $p^{\Delta}$ groups of pairwise equivalent fibers.
    Each fiber intersects the disk $\Delta_{c^{\Delta}}$ of the vortex $c^{\Delta}$ because it is a result of the cropping operation. 
    }
  \llabel{first_pic_of_many_fibers}
\end{figure}

The following lemma is a consequence of \autoref{func_sharvsp} and the definition of a $\Sigma$-flat $Z$-certifying complement (for a visualization, see \autoref{first_pic_of_many_fibers}).

\begin{lemma}\llabel{internal_certification_without_thickets}
There exists a function $f_{\ref{internal_certification_without_thickets}} \colon \nn{4}{1}$ and an algorithm that, given
\begin{itemize}
\item $k, h, q \in \Nbbb_{≥1},$ $z, w\in \Nbbb,$
\item a graph $G,$
\item a $q$-insulated $\Sigma$-disk-collection triple $( \delta, \mathbf{\Delta}, y_{\mathbf{\Delta}} )$ of $G,$ where
\begin{itemize}
\item $q \geq f_{\ref{my_crops_findslo}}^{1}(h),$  \item $|\mathbf{\Delta}|≤z,$  and 
\item for every disk $\Delta \in \mathbf{\Delta},$ $\rho^{\Delta}$ has combined depth $≤w,$
\end{itemize}
\end{itemize}
outputs a set $S,$ where $|S|≤f_{\ref{internal_certification_without_thickets}}(z,w,k,h),$
such that for every graph $Z$ on $≤h$ vertices, either 
$G-S$ contains no $\delta$-bound extension of $Z$
or there exists a $\mathbf{\Delta}' \subseteq \mathbf{\Delta}$ and a set $\text{\bf \mu}\coloneqq\{\mu^{\Delta}\mid \Delta\in\mathbf{\Delta'}\}$ of fibers such that
\begin{itemize}
\item for every $\Delta\in\mathbf{\Delta}'$ there exists a $p^{\Delta}\in\Nbbb,$ where 
\begin{itemize}
\item for every $j\in [p^{\Delta}],$ there exists a set $\Mcal_j^{\Delta}\coloneqq\{\mu_{1,j}^{\Delta},\ldots,\mu_{k,j}^{\Delta}\}$ of $k$ pairwise disjoint and pairwise equivalent prime fibers, where 
\begin{itemize}
\item for every $i\in[k],$ $\mu_{i,j}^{\Delta} \coloneqq (M_{i,j}^{\Delta}, T_{i,j}^{\Delta}, \Lambda_{i,j}^{\Delta})$ and $M_{i,j}^{\Delta}$ has a vertex drawn in $\Delta_{c^{\Delta}},$ and
\item $\Lambda_{j}^{\Delta}\coloneqq\Lambda_{1,j}^{\Delta}\oplus\cdots \oplus \Lambda_{k,j}^{\Delta},$
\end{itemize}
\item $\Lambda^{\Delta} \coloneqq \Lambda_{1}^{\Delta} \oplus \cdots \oplus \Lambda_{p^{\Delta}}^{\Delta}$ and $\Lambda^{\Delta} \subseteq \Lambda^{(\Delta)},$ and  
\item $\mu^{\Delta} = \mu_{1,1}^{\Delta} \oplus \ldots, \oplus \mu_{1,p^{\Delta}}^{\Delta}$ is a fiber of $(G\cap \Delta,\Lambda^{(\Delta)}),$
\end{itemize}
\item $\text{\bf \mu}$ has a $\Sigma$-flat $Z$-certifying complement $\bfl{C}$ where $d\folio(\bfl{C}) \subseteq d\folio(\delta),$ for every $d \in \Nbbb,$ and 
\item the boundary of each of the fibers in $\text{\bf \mu}$ has at most $f_{\ref{my_crops_findslo}}^{3}(h)$ vertices, $|\mathbf{\Delta}'| ≤ f_{\ref{my_crops_findslo}}^{2}(h),$ and the detail of $(\bfl{C}, \text{\bf \mu})$ is at most $\mathsf{h}(h),$
\end{itemize}
in time
$$\Ocal_{h}(zwk \cdot |G|^3).$$
Moreover
$$f_{\ref{internal_certification_without_thickets}}(z, w, k, h) = \Ocal_{h}(zwk).$$
\end{lemma}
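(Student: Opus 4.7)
The plan is to directly define $S$ as the $(d,k)$-cover of $(\delta,\mathbf{\Delta},y_{\mathbf{\Delta}})$ for $d\coloneqq f_{\ref{my_crops_findslo}}^{3}(h)$ and then show that whenever $G-S$ still admits a $\delta$-bound extension $(M,T)$ of some $Z$ on at most $h$ vertices, the $\Sigma$-flat $Z$-certifying complement together with its packed fibers can be read off from \autoref{harvesting_lemma}. First I would compute $S$ via \autoref{how_to_find_fast_cover}. This gives, in time $\mathcal{O}_{h}(zwk\cdot|G|^3),$ a set of size $\mathcal{O}_{h}(zwk),$ matching the bound on $f_{\ref{internal_certification_without_thickets}}.$ Note that the construction of $S$ is independent of the graph $Z$: it depends only on the detail cutoff $d=f_{\ref{my_crops_findslo}}^{3}(h),$ which is the same for every $Z$ of size at most $h.$ This is crucial because the statement asks for a single $S$ that works simultaneously for every such $Z.$

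Assuming $G-S$ has a $\delta$-bound extension of $Z,$ I would invoke \autoref{harvesting_lemma} with the cover $S$ to obtain a $\delta$-bound and $\mathbf{\Delta}$-respecting extension $(M',T')$ of $Z$ in $G-S$ with $|T'|\le \mathsf{h}(h),$ such that $(M',T')$ has a crop in at most $f_{\ref{my_crops_findslo}}^{2}(h)$ disks and, for every such disk $\Delta,$ if $\mu^{\Delta}\coloneqq\mathsf{crop}(M',T',\Delta)$ and $\langle H^{\Delta},\widehat{\Lambda}^{\Delta}\rangle\coloneqq\mathsf{dissolve}(\mu^{\Delta}),$ then $\langle G\cap\Delta,\Lambda^{(\Delta)}\rangle$ hosts an $\langle H^{\Delta},\Lambda'\rangle^{(k)}$-fiber with $|V(\Lambda')|\le d.$ I take $\mathbf{\Delta}'$ to be exactly the set of disks where $(M',T')$ has a crop. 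Writing $\mathsf{dec}(H^{\Delta},\widehat{\Lambda}^{\Delta})=\langle\langle H^{\Delta}_{1},\widehat{\Lambda}^{\Delta}_{1}\rangle,\ldots,\langle H^{\Delta}_{p^{\Delta}},\widehat{\Lambda}^{\Delta}_{p^{\Delta}}\rangle\rangle,$ the $k$-fold multiplication supplied by the harvesting lemma yields, for every $j\in[p^{\Delta}],$ exactly $k$ pairwise disjoint, pairwise equivalent prime fibers $\mu^{\Delta}_{1,j},\ldots,\mu^{\Delta}_{k,j}$ of $\langle G\cap\Delta,\Lambda^{(\Delta)}\rangle,$ each equivalent to $\langle H^{\Delta}_{j},\widehat{\Lambda}^{\Delta}_{j}\rangle.$ Since the fibers come out of a fiber of $\langle G\cap\Delta,\Lambda^{(\Delta)}\rangle$ that has a vertex in $\Delta_{c^{\Delta}}$ (the crop invades the vortex by definition of $\mathsf{crop}$), the vertex-in-vortex condition on each $M^{\Delta}_{i,j}$ is automatic.

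Next I would build the flat certifying complement $\mathbf{C}=(\Sigma^{\mathsf{c}},M^{\mathsf{c}},T^{\mathsf{c}},\delta^{\mathsf{c}},Y).$ Let $\Sigma^{\mathsf{c}}$ be the closure of $\Sigma\setminus\bigcup\mathbf{\Delta}',$ let $M^{\mathsf{c}}$ be the subgraph of $M'$ drawn in $\Sigma^{\mathsf{c}},$ $T^{\mathsf{c}}\coloneqq T'\cap V(M^{\mathsf{c}}),$ and let $\delta^{\mathsf{c}}$ be the restriction of $\delta$ to $\Sigma^{\mathsf{c}}$ intersected with $M^{\mathsf{c}}.$ Because $(M',T')$ is $\mathbf{\Delta}$-respecting and every vortex of $\delta$ lives inside a disk of $\mathbf{\Delta},$ $\delta^{\mathsf{c}}$ is vortex-free; every linear society that appears as a $c$-society of a flap of $\delta^{\mathsf{c}}$ is a sub-society of some $c$-society of a flap of $\delta,$ so every fiber of detail at most $d$ of $\delta^{\mathsf{c}}$ dissolves into a member of $d\mathsf{folio}(\delta),$ giving $d\mathsf{folio}(\mathbf{C})\subseteq d\mathsf{folio}(\delta)$ for every $d.$ The vertices $Y=\{y^{\Delta}\mid\Delta\in\mathbf{\Delta}'\}$ are chosen as the vertex at which $\Lambda^{(\Delta)}$ starts (dictated by $y_{\mathbf{\Delta}}$), so the boundary–orientation condition in the definition of a $\Sigma$-flat $Z$-certifying complement follows from the fact that $\Lambda^{(\Delta)}$ is the counterclockwise ordering of $\bd(\Delta)$ starting at $y_{\mathbf{\Delta}}(\Delta).$ Finally, $M^{\mathsf{c}}\cup\bigcup_{\Delta\in\mathbf{\Delta}'}\bigcup_{j}M^{\Delta}_{1,j}$ equals $M'$ minus the parts of crops discarded by keeping only one copy per prime component, so this graph together with $T^{\mathsf{c}}\cup\bigcup T^{\Delta}_{1,j}$ is an extension of $Z,$ as required. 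The detail bound $\mathsf{h}(h)$ on $(\mathbf{C},\text{\bf\mu})$ is inherited from $|T'|\le\mathsf{h}(h),$ and the boundary size bound $f_{\ref{my_crops_findslo}}^{3}(h)$ comes directly from \autoref{harvesting_lemma}.

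The main technical point I expect to need care with is the verification that $M^{\mathsf{c}}$ really has a vortex-free rendition in $\Sigma^{\mathsf{c}}$ and that the concatenation of the orderings $\Lambda^{\Delta}=\Lambda^{\Delta}_{1}\oplus\cdots\oplus\Lambda^{\Delta}_{p^{\Delta}}$ obtained from the prime decomposition still forms a sub-sequence of $\Lambda^{(\Delta)}.$ Both follow from the fact that the harvesting procedure arranges the $k$-fold copies consecutively along the boundary of $\Delta$ in the order dictated by $\mathsf{dec}(H^{\Delta},\widehat{\Lambda}^{\Delta}),$ and that the parts of $M'$ outside the disks touch each $\Delta$ only on the vertices of $\bigcup_{i,j}V(\Lambda^{\Delta}_{i,j}),$ so gluing the flat part to the union of the chosen representatives reconstructs a connected subgraph of $M'$ containing $Z$ as a minor. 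Everything else is a matter of bookkeeping with the bounds provided by \autoref{how_to_find_fast_cover} and \autoref{harvesting_lemma}.
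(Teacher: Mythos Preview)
Your overall architecture is the same as the paper's: take $S$ to be the $(f_{\ref{my_crops_findslo}}^{3}(h),k)$-cover, invoke \autoref{harvesting_lemma} to get a tame extension $(M',T')$, split it into the outside part $M^{\mathsf{c}}$ and the crops, and use the $k$-fold fibers guaranteed by the harvest as the packages. The bounds and running time are handled correctly.

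There is, however, a genuine gap. You write that ``the vertex-in-vortex condition on each $M^{\Delta}_{i,j}$ is automatic'' because the crop invades the vortex. This is false. The crop $\overline{\mu}^{\Delta}=\mathsf{crop}(M',T',\Delta)$ indeed touches $\Delta_{c^{\Delta}}$, but the $\langle H^{\Delta},\Lambda'\rangle^{(k)}$-fiber supplied by \autoref{harvesting_lemma} is \emph{some} fiber of $\langle G\cap\Delta,\Lambda^{(\Delta)}\rangle$ realising the $k$-fold multiplication, produced by \autoref{func_sharvsp}; nothing forces its prime constituents to enter the vortex. A prime component $\mu_{i,j}^{\Delta}$ may perfectly well sit in the flat annulus between $\bd(\Delta)$ and $\Delta_{c^{\Delta}}$. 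The paper deals with this explicitly: whenever some $M_{i,j}^{\Delta}$ avoids $\Delta_{c^{\Delta}}$, that entire $j$-th package is deleted from $\text{\bf\mu}$ and its (flat) representative $M_{1,j}^{\Delta}$ is absorbed into the complement $\bfl{C}$ by enlarging $\Sigma^{\mathsf{c}}$, $M^{\mathsf{c}}$, and $\delta^{\mathsf{c}}$ accordingly. Without this step the conclusion of the lemma does not hold.

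A related imprecision: you assert that $M^{\mathsf{c}}\cup\bigcup_{\Delta,j}M^{\Delta}_{1,j}$ ``equals $M'$ minus the parts of crops discarded''. It does not; the fibers $\mu_{1,j}^{\Delta}$ are different subgraphs of $G\cap\Delta$ than the crop components of $M'$, and they meet $\bd(\Delta)$ at different vertices. The paper first shows that $\bfl{C}$ certifies the \emph{original} crop fibers $\overline{\text{\bf\mu}}$, and then uses that $\mathsf{dissolve}(\mu^{\Delta})$ is a shift of $\mathsf{dissolve}(\overline{\mu}^{\Delta})$ to transfer the certification to $\text{\bf\mu}$ by adjusting the starting-vertex set $Y$. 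You should make this two-step argument explicit rather than claiming the union is literally a piece of $M'$.
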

\begin{proof}
Let $S$ be the $(f_{\ref{my_crops_findslo}}^{3}({\sf h}(h)), k)$-cover of $(\delta,\mathbf{\Delta}, y_{\mathbf{\Delta}}),$ where $\delta=(\Gamma,\Dcal).$ As each disk in $\mathbf{\Delta}$ has  {combined depth}  $≤w,$ from \autoref{func_sharvsp}
we have that $|S|≤f_{\ref{func_sharvsp}}(k, w,f_{\ref{my_crops_findslo}}^{3}({\sf h}(h))))\cdot z.$ 
Also, from \autoref{how_to_find_fast_cover}
$S$ can be found in time  $\Ocal_{h}(k\cdot w\cdot  z\cdot |G|^3 )$-time.

We claim that the lemma holds if we set $f_{\ref{internal_certification_without_thickets}}(z,w,k,h)=f_{\ref{func_sharvsp}}(k, w,f_{\ref{my_crops_findslo}}^{3}({\sf h}(h)), k))\cdot z.$
Suppose that $Z$ is a graph on at most $h$ vertices 
such that $G'\coloneqq G-S$ contains some $\delta$-bound extension $(M,T)$ of $Z,$ as the one in \autoref{observation_how_to_find}, that has detail bounded by ${\sf h}(h).$ Then, 
consider a  $\delta$-bound  and  $\mathbf{\Delta}$-respecting extension $(M',T')$ of $Z$ in $G'$ where $|T'|≤{\sf h}(h)$ as the one of  \autoref{harvesting_lemma}. Let also $\mathbf{\Delta}'\subseteq \mathbf{\Delta}$
be the set of the disks of $\mathbf{\Delta}$
where $(M',T')$ has a crop and, for every $\Delta\in  \mathbf{\Delta}',$ let 
$\overline{\mu}^{\Delta}=\mathsf{crop}(M', T', \Delta).$ From \autoref{my_crops_findslo}, $|\mathbf{\Delta}'|≤f_{\ref{my_crops_findslo}}^{2}({\sf h}(h)).$

We next claim that $\overline{\text{\bf \mu}}\coloneqq\{\overline{\mu}^{\Delta}\mid \Delta\in\mathbf{\Delta}'\}$ has a $\Sigma$-flat  $Z$-certifying complement
$\bfl{C}.$
Indeed, this complement is the tuple $\bfl{C}=({\Sigma}^{\mathsf{c}},{M}^{\mathsf{c}},{T}^{\mathsf{c}},{\delta}^{\mathsf{c}},Y)$ where 
\begin{enumerate}
\item  ${\Sigma}^{\mathsf{c}}$ is the closure of $\Sigma\setminus \cupall \mathbf{\Delta}',$ 
\item ${M}^{\mathsf{c}}\coloneqq {M'\cap {\Sigma}^{\mathsf{c}}}$ and ${T}^{\mathsf{c}}\coloneqq T'\cap {\Sigma}^{\mathsf{c}}$ (as $(M',T')$ is $\mathbf{\Delta}$-respecting, $M^{\mathsf{c}}$ is non-empty.),
\item $\delta^{\mathsf{c}}= (\Gamma\cap {\Sigma}^{\mathsf{c}}, \{\Delta_{c} \in \mathcal{D} \mid \Delta_{c} \subseteq {\Sigma}^{\mathsf{c}} \})\cap M^{\mathsf{c}},$ i.e., we first take the restriction of $\delta$ inside ${\Sigma}^{\mathsf{c}}$ and then we consider the rendition induced by  $M^{\mathsf{c}},$ and 
\item $Y=\{y_{\mathbf{\Delta}}(\Delta)\mid \Delta\in\mathbf{\Delta}'\}.$
\end{enumerate} 
Notice that  $d\folio(\bfl{C})=d\folio(\delta^{\mathsf{c}})\subseteq d\folio(\delta),$ for every $d\in\Nbbb.$ Also $|T'|≤{\sf h}(h)$ implies 
directly that $(\mathbf{C},\overline{\text{\bf \mu}})$ has detail $≤{\sf h}(h).$

From \autoref{harvesting_lemma}, $\mathsf{dissolve}(\overline{\mu}^{\Delta})$ has a shift $\lin{H^\Delta, {\Lambda}^{\prime\Delta}}$ such that  $\lin{G \cap \Delta, \Lambda^{(\Delta)}}$ contains  an $\lin{H^{\Delta}, {\Lambda}^{\prime\Delta}}^{(k)}$-fiber $\widetilde{\mu},$
where $|V({\Lambda}^{\prime\Delta})|≤f_{\ref{my_crops_findslo}}^{3}(h).$
Let $${\sf dec}(\lin{H^{\Delta}, {\Lambda}^{\prime\Delta}})=\lin{\lin{H_{1}^{\Delta}, {\Lambda}^{\prime\Delta}_1},\ldots,\lin{H_{p^{\Delta}}^{\Delta}, {\Lambda}_{p^{\Delta}}^{\prime\Delta}}}.$$
For every $j\in[p^{\Delta}],$
the fibers  $\mu_{1,j}^{\Delta},\ldots,\mu_{k,j}^{\Delta}$ are defined 
as the pairwise equivalent prime fibers such that  ${\sf dissolve}(\mu_{i,j}^{\Delta})=\lin{H_{j}^{\Delta}, {\Lambda}^{\prime\Delta}_j}$ and where 
\begin{eqnarray}
\widetilde{\mu} &  = & \mu_{1,1}^{\Delta}\oplus\cdots\oplus\mu_{k,1}^{\Delta}\ \oplus\ \ \cdots\ \ \oplus\ \ \mu_{1,j}^{\Delta}\oplus\cdots\oplus\mu_{k,j}^{\Delta}\ \ \oplus\ \ \cdots\ \ \oplus\ \mu_{1,p^{\Delta}}^{\Delta}\oplus\cdots\oplus\mu_{k,p^{\Delta}}^{\Delta}.\llabel{many_fibers_all_together}
\end{eqnarray}
Let $\mu_{i,j}^{\Delta}\coloneqq(M_{i,j}^{\Delta},T_{i,j}^{\Delta},\Lambda_{i,j}^{\Delta}).$ If some $j\in[p^{\Delta}],$ $M_{i,j}^{\Delta}$ has no vertex drawn in 
$\Delta_{c^{\Delta}},$ this means that $M_{i,j}^{\Delta}$ is drawn 
in a $\delta$-aligned closed disk $\Delta_{j}$ of  $\Delta'$ that does not intersect the vortex disk $\Delta_{c^{\Delta}}$ and where $\bd(\Delta_{j})\cap\bd(\Delta)$ is a line   where all the boundary vertices of $M_{i,j}^{\Delta}$ are drawn. We update $\overline{\mu}$ by eliminating $\mu_{1,j}^{\Delta}\oplus\cdots\oplus\mu_{k,j}^{\Delta}$ in \eqref{many_fibers_all_together} and update $\bfl{C}$ by enhancing it with  the ``flat'' drawing of $M_{i,j}^{\Delta}$  by setting 
$\Sigma^{\mathsf{c}}\coloneqq \Sigma^{\mathsf{c}}\cup\Delta_{j},$
$M^{\mathsf{c}}\coloneqq M^{\mathsf{c}}\cup M_{i,j}^{\Delta},$ $T^{\mathsf{c}}\coloneqq T^{\mathsf{c}}\cup T_{i,j}^{\Delta}$
and update $\delta^{\mathsf{c}}$ as in step 3 above.
By applying these enhancements, we {guarantee} that 
every graph $M_{i,j}^{\Delta}$ must have  some vertex drawn in $\Delta_{c^{\Delta}}.$ 

We next observe that  $\mu^{\Delta}=\mu_{1,1}^{\Delta}\oplus\ldots,\oplus\mu_{1,p^{\Delta}}^{\Delta}$ is created by concatenating the first, i.e., 
$\mu_{1,j}^{\Delta},$ fiber from each of the above $p^{\Delta}$ groups  of $k$ pairwise equivalent fibers 
each. Notice also that for every $\Delta\in\mathbf{\Delta},$ ${\sf dissolve}(\mu^{\Delta})$ is a shift of 
${\sf dissolve}(\overline{\mu}^{\Delta})$ and that $\mu^{\Delta}$ as well as $\overline{\mu}^{\Delta}$
have a boundary of  $|V({\Lambda}^{\prime\Delta})|≤f_{\ref{my_crops_findslo}}^{3}(h)$ vertices. This implies that  $\bfl{C}'=({\Sigma}^{\mathsf{c}},{M}^{\mathsf{c}},{T}^{\mathsf{c}},{\delta}^{\mathsf{c}},Y')$
is  a $\Sigma$-flat $Z$-certifying complement of ${\text{\bf \mu}}\coloneqq \{{\mu}^{\Delta}\mid \Delta\in\mathbf{\Delta}'\}$
where $Y'$ is a suitable update of $Y$ according to to the shifts required for transforming  $\overline{\mu}^{\Delta}$ to ${\mu}^{\Delta},$ for each $\Delta\in\mathbf{\Delta}'.$
Clearly, it holds that $d\folio(\bfl{C}')=d\folio(\bfl{C})\subseteq d\folio(\delta),$ for every $d\in\Nbbb.$
\end{proof}

Notice that in the proof above, the $Z$-certifying complement $\bfl{C}$ is constructed based on the way that $(M', T')$ is drawn outside of the disks of $\mathbf{\Delta},$ according to $\delta.$
Moving on, we will successively derive several collections $\text{\bf \mu}$ of fibers whose $\Sigma$-flat $Z$-certifying complements will be constructed based on the previous $\Sigma$-flat $Z$-certifying complements and not explicitly from the $\Sigma$-decomposition $\delta.$

We refer to each set $\Mcal_{j}^{\Delta},$ $j\in[p^{\Delta}]$ as in the conclusion of \autoref{internal_certification_without_thickets}, as the $j$-\defi{th package} (of fibers) in the disk $\Delta\in \mathbf{\Delta}'.$
While $\mu^{\Delta}$ is built by concatenating the first fiber of each $j$-th package, one might 
instead use any other in $\Mcal_{j}^{\Delta},$ as they are all equivalent.
We refer to the sets $\Mcal_{1}^{\Delta}, \ldots, \Mcal_{p^{\Delta}}^{\Delta}$ as \defi{packages} of $\text{\bf \mu}.$

\subsection{Flatness certifying complements of thickets}

Our next step is to apply \autoref{internal_certification_without_thickets} in the thickets of a $(t, b, h, s, \Sigma)$-orchard.
In this subsection we consider such a thicket and we explain how the packages of the fibers in the output of \autoref{internal_certification_without_thickets} can be seen as fibers rooted on the boundary of a thicket society of our given orchard.

\paragraph{Thickets and their renditions.}

Let $(G,\delta,W)$ be a $(t,b,h,s,\Sigma)$-orchard and let $c \in\Ccal_{\mathsf{f}}(\delta)$ be the vortex of one of the thickets of $W$ whose railed nest is $(\Ccal, \Rcal).$

Let also $\Delta$ be the disk bounded by the trace of the outer cycle $C_{s+1}$ of the thicket containing the vortex $c.$
We define $\mathbf{\Delta}$ as the set of all such disks corresponding to the thickets of $(G,\delta,W)$ and we refer to it as the \defi{disk  collection} of $(G,\delta,W).$
For each $\Delta \in \mathbf{\Delta},$ we denote its corresponding railed nest by $(\Ccal^{\Delta}, \Rcal^{\Delta}),$ where $\Ccal^{\Delta} = \{C_{0}^{\Delta},\ldots,C_{s+1}^{\Delta}\}$ and $\Rcal^{\Delta} = \{R_{1}^{\Delta}, \ldots, R_{t}^{\Delta}\}.$
Also,
\begin{itemize}
\item for $i \in [t],$ we update $R_{i}^{\Delta}$ to be the unique connected component of $R_{i}^{\Delta} \cap \Delta$ that has one endpoint in $V(C_{0}^{\Delta})$ and we call the other endpoint $\widehat{y}_{i}^{\Delta}$ (that is a vertex of $B_{\Delta}$),
\item we define $\widehat{\Lambda}^{\Delta} \coloneqq \lin{\widehat{y}_{1}^{\Delta},\ldots,\widehat{y}_{t}^{\Delta}},$ and 
\item we update $\Ccal^{\Delta}\coloneqq \{C_{1}^\Delta,\ldots,C_{s}^{\Delta}\},$
\end{itemize}

We refer to $\widehat{\mathbf{\Lambda}} = \{\widehat{\Lambda}^{\Delta} \mid \Delta \in \mathbf{\Delta}\}$ as the \defi{boundary} collection of the $(t,b,h,s,\Sigma)$-orchard $(G,\delta,W).$
We also use $\widehat{y}^{\Delta}$ as a simpler alternative for the first vertex of $\widehat{\Lambda}^{\Delta},$ i.e., the vertex $\widehat{y}_{1}^{\Delta}.$

\medskip
We now explain how to see the fibers in $\{\mu^{\Delta}\mid \Delta\in{\mathbf{\Delta}}'\}$ in the output of \autoref{internal_certification_without_thickets} as fibers whose boundaries are in $B_{\Delta}.$
This is done in \autoref{extending_internal_Certification_in_thickets} below. 
We stress that the constants $h',$ $k',$ and $t'$ in the statement of \autoref{extending_internal_Certification_in_thickets} are irrelevant as they are not used in neither its conclusion nor its proof.

\begin{lemma}\llabel{extending_internal_Certification_in_thickets}
There exists a function $f_{\ref{rooted_certification_into_flower}} \colon \nn{5}{1}$ and an algorithm that, given
\begin{itemize}
\item $k, h \in \Nbbb_{≥ 1}$ and 
\item a $(z,p)$-ripe and $d'$-blooming $(t',k',h',s,\Sigma)$-orchard $(G,\delta,W),$ where $s ≥ f_{\ref{my_crops_findslo}}^{1}(h)+1,$ 
\end{itemize}
outputs a set $S,$ where $|S| ≤ f_{\ref{extending_internal_Certification_in_thickets}}(z,p,k,h,s)$ such that for every graph $Z$ on at most $h$ vertices, either $G - S$ contains no $\delta$-bound extension of $Z$ or there is a subset ${\mathbf{\Delta}}'$ of the disk collection $\mathbf{\Delta}$ of $(G,\delta,W),$ a svaf $y_{\mathbf{\Delta}'},$ and a set $\text{\bf \mu} \coloneqq \{\mu^{\Delta}\mid \Delta\in{\mathbf{\Delta}}'\}$ of fibers such that  
\begin{itemize}
\item for every $\Delta\in{\mathbf{\Delta}'}$ there exists some $p^{\Delta}\in\Nbbb$ where
\begin{itemize}
\item for every $j\in [p^{\Delta}],$ there exists a $j$ package $\Mcal_j^{\Delta}\coloneqq \{\mu_{1,j}^{\Delta},\ldots,\mu_{k,j}^{\Delta}\}$ of $k$ pairwise disjoint and pairwise equivalent prime fibers, where 
\begin{itemize}
\item for every $i \in [k],$ $\mu_{i,j}^{\Delta} \coloneqq (M_{i,j}^{\Delta}, T_{i,j}^{\Delta}, \Lambda_{i,j}^{\Delta})$ and $M_{i,j}^{\Delta}$ has some vertex drawn in $\Delta_{c^{\Delta}}$ and
\item $\Lambda_{j}^{\Delta}\coloneqq \Lambda_{1,j}^{\Delta}\oplus\cdots \oplus \Lambda_{k,j}^{\Delta},$
\end{itemize}
\item $\Lambda^{\Delta}\coloneqq \Lambda_{1}^{\Delta}\oplus\cdots\oplus\Lambda_{p^{\Delta}}^{\Delta}$ and $\Lambda^{\Delta}\subseteq \Lambda^{(\Delta)},$ and  
\item  $\mu^{\Delta}=\mu_{1,1}^{\Delta}\oplus\ldots,\oplus\mu_{1,p^{\Delta}}^{\Delta}$ is a fiber of $(G\cap \Delta,\Lambda^{(\Delta)}),$
\end{itemize}
\item $\text{\bf \mu}$ has a $\Sigma$-flat $Z$-certifying complement $\bfl{C}$ where $d\folio(\bfl{C})\subseteq d\folio(\delta),$ for every $d\in\Nbbb,$ and 
\item the boundary of each of the fibers in $\text{\bf \mu}$ has at most $f_{\ref{my_crops_findslo}}^{3}(h)$ vertices, $|\mathbf{\Delta}'| ≤ f_{\ref{my_crops_findslo}}^{2}(|Z|),$ and the detail of $(\bfl{C},\text{\bf \mu})$ is at most ${\sf h}(h).$
\end{itemize}
Moreover, the above algorithm runs in time $\Ocal_{h}({k} \cdot (s+p) \cdot  z \cdot |G|^3 )$ and $f_{\ref{extending_internal_Certification_in_thickets}}(z, p, k, h, s) = {\Ocal_{h}(z \cdot k \cdot (s +p))}.$
\end{lemma}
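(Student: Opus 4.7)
The plan is to obtain \autoref{extending_internal_Certification_in_thickets} as a direct reduction to \autoref{internal_certification_without_thickets}. The main point is that a $(z,p)$-ripe and $d'$-blooming $(t',k',h',s,\Sigma)$-orchard naturally produces a $\Sigma$-disk-collection triple whose insulation and combined-depth parameters are exactly what the inner lemma wants, so the bulk of the work is already done and only careful parameter bookkeeping is required.

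First, I would construct the triple. From $(G,\delta,W)$, let $\mathbf{\Delta}$ be the disk collection of the orchard, i.e.\ the collection of disks bounded by the traces of the outer cycles $C^{\Delta}_{s+1}$ of each thicket segment. By the definition of a $(t,b,h,s,\Sigma)$-orchard, each such $\Delta$ contains exactly one vortex $c^{\Delta}$ of $\delta$, which sits in the $\mathsf{trace}(C^{\mathsf{si}})$-avoiding disk of the inner cycle of the corresponding thicket; this gives us $|\mathbf{\Delta}|\leq z$ by the $(z,p)$-ripeness assumption. For the starting vertex assignment I would take $y_{\mathbf{\Delta}}(\Delta)\coloneqq \widehat{y}^{\Delta}$, i.e.\ the endpoint on $B_{\Delta}$ of the first rail of the thicket. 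This determines $\Lambda^{(\Delta)}=\Lambda_{\Delta,\widehat{y}^{\Delta}}$, and the (updated) railed nest $\Ccal^{\Delta}=\{C^{\Delta}_{1},\ldots,C^{\Delta}_{s}\}$ of the thicket is, by construction, an $s$-nest of pairwise disjoint $\delta$-grounded cycles around $c^{\Delta}$. Hence the triple $(\delta,\mathbf{\Delta},y_{\mathbf{\Delta}})$ is $s$-insulated, and because $s\geq f^{1}_{\ref{my_crops_findslo}}(h)+1$ the insulation hypothesis of \autoref{internal_certification_without_thickets} is satisfied.

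Next, I would bound the combined depth of each rendition $\rho^{\Delta}=\delta\cap\Delta$. By $(z,p)$-ripeness, any vortex society of $c^{\Delta}$ has depth at most $p$, and any thicket society $\langle G\cap\Delta,\Lambda^{(\Delta)}\rangle$ has depth at most $2s+p$; thus $\mathsf{cdepth}(\rho^{\Delta})\leq 2s+p\eqqcolon w$. I can then invoke \autoref{internal_certification_without_thickets} with parameters $k$, $h$, $q=s$, $z$, and $w=2s+p$, applied to the triple $(\delta,\mathbf{\Delta},y_{\mathbf{\Delta}})$. This yields a set $S$ of size at most $f_{\ref{internal_certification_without_thickets}}(z,2s+p,k,h)=\Ocal_{h}(zk(s+p))$ such that, for every graph $Z$ on at most $h$ vertices, either $G-S$ contains no $\delta$-bound extension of $Z$, or there are subsets $\mathbf{\Delta}'\subseteq\mathbf{\Delta}$ and a family $\text{\bf \mu}=\{\mu^{\Delta}\mid \Delta\in\mathbf{\Delta}'\}$ of fibers, organized into packages of $k$ pairwise equivalent prime fibers, together with a $\Sigma$-flat $Z$-certifying complement $\mathbf{C}$ with $d\folio(\mathbf{C})\subseteq d\folio(\delta)$ and all the size bounds on $|\mathbf{\Delta}'|$, on the boundary size of each fiber, and on the detail of $(\mathbf{C},\text{\bf \mu})$ stated in \autoref{internal_certification_without_thickets}. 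Each package fiber is by construction a subfiber of $\langle G\cap\Delta,\Lambda^{(\Delta)}\rangle$, and its boundary $\Lambda^{\Delta}$ is a subordering of $\Lambda^{(\Delta)}$, which matches the conclusion asked for in \autoref{extending_internal_Certification_in_thickets}.

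Setting $f_{\ref{extending_internal_Certification_in_thickets}}(z,p,k,h,s)\coloneqq f_{\ref{internal_certification_without_thickets}}(z,2s+p,k,h)=\Ocal_{h}(zk(s+p))$ meets the claimed size bound on $S$, and the runtime $\Ocal_{h}(zwk\cdot|G|^{3})=\Ocal_{h}(zk(s+p)\cdot|G|^{3})$ follows directly from the runtime in \autoref{internal_certification_without_thickets}. Since essentially all the combinatorial content has been packaged in the earlier lemma, I do not anticipate a genuine obstacle; the only care needed is to match the orchard's ``thicket-side'' parameters ($s$-sized railed nest, depth of the vortex cell, depth of the thicket society) with the ``disk-collection-side'' parameters ($q$-insulation and $\mathsf{cdepth}$), which is precisely what the two ripeness conditions are designed to give.
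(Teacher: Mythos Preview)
Your proposal is correct and follows essentially the same approach as the paper: set $w\coloneqq 2s+p$, observe that the orchard's disk collection with the railed nests gives an $s$-insulated $\Sigma$-disk-collection triple whose renditions have combined depth at most $w$ by $(z,p)$-ripeness, and then apply \autoref{internal_certification_without_thickets} directly with $q=s$. The paper's proof is slightly less specific about the choice of svaf (it just picks an arbitrary one rather than $\widehat{y}^{\Delta}$), but otherwise the two arguments are identical.
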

\begin{proof}
We prove the lemma for the following quantities.
\begin{itemize}
\item $w \coloneqq 2s + p$ and
\item $f_{\ref{extending_internal_Certification_in_thickets}}(z,p,k,h,s) \coloneqq f_{\ref{internal_certification_without_thickets}}(z, w, k, h).$
\end{itemize}

We pick some svaf $y_{\mathbf{\Delta}}$ and we consider the $\Sigma$-disk-collection triple $(\delta, \mathbf{\Delta}, y_{\mathbf{\Delta}})$ of $G.$
This gives rise to some linear ordering $\Lambda^{(\Delta)}$ for every $\Delta \in \mathbf{\Delta}.$
Let $\Delta \in \mathbf{\Delta}$ and let $(\Ccal^{\Delta}, \Rcal^{\Delta})$ be the corresponding railed nest.
We also consider the $\delta$-aligned disks $\Delta_{1},\ldots,\Delta_{s}$  where $\bd(\Delta_{i})$ is the $\trace_{\delta}(C^{\mathsf{si}})$-avoiding disk of the $\delta$-trace of the cycle $C_{i}^{\Delta},$ $i\in[s],$ $\Delta_{c^{\Delta}}$ is a subset of the interior of $\Delta_{1},$ $\Delta_{1} \subseteq \cdots \subseteq \Delta_{s},$ and $\Delta_{s}$ is a subset of the interior of $\Delta.$
Also consider the $s$-nested cylindrical rendition $\rho^{\Delta} = \delta \cap \Delta$ of the linear society $\lin{G \cap \Delta,\Lambda^{(\Delta)}}$ around the cell $c_{\Delta}.$
Observe that $\lin{G \cap \Delta, \Lambda^{(\Delta)}}$ corresponds to a thicket society of the $(t, s)$-thicket that corresponds to $\Delta,$ and that since $(G, \delta, W)$ is $(z, p)$-ripe, $\rho^{\Delta}$ has combined depth at most $w.$

We are now in a position to apply \autoref{internal_certification_without_thickets} for the $q$-insulated $\Sigma$-disk-collection triple $(\delta, \mathbf{\Delta}, y_{\mathbf{\Delta}})$ of $G,$ for $q \coloneqq s,$ and $k.$
This takes time $\Ocal_{h}(k \cdot w\cdot  z\cdot |G|^3) = \Ocal_{h}(k \cdot  z \cdot (s+p) \cdot |G|^3).$
Given that $S$ is the output of the application of \autoref{internal_certification_without_thickets} above, we have that $|S| \leq f_{\ref{internal_certification_without_thickets}}(z, w, k, h) = f_{\ref{extending_internal_Certification_in_thickets}}(z, p, k, h, s).$
Moreover, it holds that for every graph $Z$ on at most $h$ vertices, either $G - S$ contains no $\delta$-bound extension of $Z$ or there is a set $\mathbf{\Delta}' \subseteq \mathbf{\Delta}$ and a set $\text{\bf \mu} \coloneqq \{ \mu^{\Delta} \mid \Delta \in \mathbf{\Delta}' \}$ of fibers such that
\begin{enumerate}   
\item for every $\Delta \in \mathbf{\Delta}',$ there exists some $p^{\Delta} \in \Nbbb$ where
\begin{itemize}
\item for every $j \in [p^{\Delta}]$ there exists a $j$ package $\Mcal_j^{\Delta} \coloneqq \{\mu_{1,j}^{\Delta}, \ldots, \mu_{k, j}^{\Delta} \}$ of $k$ pairwise disjoint and pairwise equivalent prime fibers, where 
\begin{itemize}
\item for every $i \in [k],$ $\mu_{i,j}^{\Delta} \coloneqq (M_{i,j}^{\Delta}, T_{i,j}^{\Delta}, \Lambda_{i,j}^{\Delta})$ and $M_{i,j}^{\Delta}$ has some vertex drawn in $\Delta_{c^{\Delta}}$ and 
\item $\Lambda_{j}^{\Delta} \coloneqq \Lambda_{1,j}^{\Delta} \oplus \cdots \oplus \Lambda_{k,j}^{\Delta},$
\end{itemize}
\item $\Lambda^{\Delta} \coloneqq \Lambda_{1}^{\Delta} \oplus \cdots \oplus \Lambda_{p^{\Delta}}^{\Delta}$ and $\Lambda^{\Delta} \subseteq \Lambda^{(\Delta)},$ and 
\item $\mu^{\Delta} = \mu_{1,1}^{\Delta} \oplus \ldots, \oplus \mu_{1,p^{\Delta}}^{\Delta}$ is a fiber of $(G \cap \Delta, \Lambda^{({\Delta})}),$
\end{itemize}
\item $\text{\bf \mu}$ has a $Z$-certifying complement $\bfl{C}$ where $d\folio(\bfl{C})\subseteq d\folio(\delta),$ for every $d\in\Nbbb,$ and 
\item the boundary of each of the fibers in $\text{\bf \mu}$ has at most $f_{\ref{my_crops_findslo}}^{3}(h)$ vertices, $|\mathbf{\Delta}'| ≤ f_{\ref{my_crops_findslo}}^{2}(|Z|),$ and the detail of $(\bfl{C}, \text{\bf \mu})$ is at most ${\sf h}(h).$
\end{enumerate}
exactly as desired and we can conclude.
\end{proof}

\subsection{A variant of the combing lemma}\label{Variant_combing}

We need the following variant of the Combing lemma in \cite{GolovachST22Combing}.
The statement is slightly different (and simpler) than the one of  \cite[Theorem 5]{GolovachST22Combing} and is suitable for the needs 
of the final result of this subsection.
The proof uses the same ideas and the bulk of them have already been used for the results of \autoref{tamming_more_sec}.
This lemma will be useful for the proof of the main result of this section, \autoref{rooted_certification_into_flower}, that is proven in  \autoref{the_main_extr_flowers_final}.

\begin{lemma}
\llabel{combing_variant}
There exists two functions $f^1_{\ref{combing_variant}}:\nn{1}{1}$ and  $f^2_{\ref{combing_variant}}:\nn{1}{1}$ such that the following holds: Let  $r\coloneqq 2\overline{r}+1,$ let   
 $(M,T)$ be an extension {of detail $≤h$}, 
and   let $G$
be the graph obtained by the union of $M,$
with the cycles of some collection $\Ccal=\{C_{1},\ldots,C_{d}\}$ of pairwise disjoint nested cycles and the paths of some linkage $\Lcal=\{P_{1},\ldots,P_{a}\}$
such that $a\coloneqq f^1_{\ref{combing_variant}}(h),$ $d\coloneqq f^2_{\ref{combing_variant}}(h)+r$ and is odd, and $G$ has a $\Sigma^{(0,0)}$-decomposition  $\delta$ and there is an annulus $A\subseteq \Sigma^{(0,0)}$
with the following properties
\begin{itemize}
\item  If $\Delta_{\rm in}$ and $\Delta_{\rm out}$ are the closures of  the two connected components of $ \Sigma^{0,0}\setminus A,$
then $\Delta_{\rm in}$ and $\Delta_{\rm out}$ are both $\delta$-alinged. 
\item  For every $i\in[d],$ the boundary $\bd(\Delta_{\rm out}),$ as well as the traces of the cycles $C_{1},\ldots,C_{i-1},$
are subsets of the one, say $\Delta_{i},$ of the two open disk bounded by  $\trace_{\delta}(C_{i})$
while the boundary $\bd(\Delta_{\rm in}),$ as well as the traces of the cycles $C_{i+1},\ldots,C_{d},$ are subsets of the other. We also set $\Delta_{0}\coloneqq \Delta_{\rm out}$ and 
$\Delta_{d+1}\coloneqq A\cup\delta_{\rm out}.$ 
\item For every $i\in[a],$ one, say $y_{i}^{\rm out},$ of the   endpoints of $P_{i}$ is drawn in $\bd(\Delta_{\rm out})$
and the other, say $y_{i}^{\rm in},$ is drawn   in $\bd(\Delta_{\rm in})$ and all other vertices of 
$P$ are drawn in the interior of $A.$
\item the vertices $y_{1}^{\rm out},\ldots,y_{a}^{\rm out}$ are drawn in this order on the boundary of $\Delta_{\rm out}.$ 
\item $M$ and $\cupall\Lcal$ are vertex disjoint.
\item None of the vertices in $T$ is drawn in $A.$
 \item the linkage $\Lcal$ is orthogonal to $\Ccal.$
 \item $\delta$ has only two vortices $c^{\rm out}$ and $c^{\rm in}$
 where $\Delta_{c^{\rm out}}=\Delta^{\rm out}$ and  $\Delta_{c^{\rm in}}=\Delta^{\rm in}.$ 
\end{itemize}
For every $l,l'\in[0,d+1]$ where $l<l',$  we use notation  $A_{l,l'}$ for the closure of $\Delta_{l}\setminus\Delta_{l'}$ and we denote by   ${\Lcal}_{l,l'}$ be the linkage consisting of 
every connected component of $\cupall \Lcal\cap A_{l,l'}$ that has  endpoints drawn in different boundaries of $A_{l,l'}.$

Then there exists an extension $(M',T)$ where, given that $q\coloneqq f^2_{\ref{combing_variant}}(h)/2$ 
\begin{itemize}
\item ${\sf dissolve}(M',T)={\sf dissolve}(M,T),$  
\item  $M'\cap(\Delta_{\rm in}\cup\Delta_{\rm out})\subseteq M\cap(\Delta_{\rm in}\cup\Delta_{\rm out}),$ and 
\item $M'\cap A_{z+1,z+r}\subseteq \Lcal_{{q}+1,{q}+r}.$
\end{itemize} \end{lemma}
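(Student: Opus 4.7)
I will view the extension $(M,T)$ as encoding a linkage $\mathcal{M}$ whose members are the maximal paths of $M$ between vertices of $T$; since $(M,T)$ has detail at most $h$, we have $|\mathcal{M}| \leq h$. Because $T$ avoids the annulus $A$ and $\mathcal{M}$ is vertex-disjoint from $\cupall\mathcal{L}$, every path of $\mathcal{M}$ that meets $A$ enters and leaves through $\bd(\Delta_{\rm out}) \cup \bd(\Delta_{\rm in})$. I form the LB-pair $(\mathcal{M},B)$ with $B \coloneqq \cupall(\mathcal{C} \cup \mathcal{L})$, and note that $B \subseteq A \cup \bd A$, so any linkage $\mathcal{R} \subseteq \mathcal{M} \cup B$ satisfies $\mathcal{R} \cap (\Delta_{\rm in} \cup \Delta_{\rm out}) \subseteq \mathcal{M} \cap (\Delta_{\rm in} \cup \Delta_{\rm out})$. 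This will be the source of property~(2).

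Combing: I iteratively apply \autoref{prop_cae} to $(\mathcal{M},B)$ as long as $\mathsf{tw}(\mathcal{R} \cup B) > \ell(|\mathcal{M}|)$, each time obtaining an equivalent linkage $\mathcal{R}' \subseteq \mathcal{M} \cup B$ with strictly smaller $\mathsf{cae}(\cdot,B)$. After finitely many iterations I reach a linkage $\mathcal{R}$ equivalent to $\mathcal{M}$, with $\mathcal{R} \subseteq \mathcal{M} \cup B$ and $\mathsf{tw}(\mathcal{R} \cup B) \leq \ell(h)$. Applying the mountains/valleys/rivers classification of Golovach, Stamoulis and Thilikos (Lemmas~4 and~7 of \cite{GolovachST22Combing}) to the maximal subpaths of $\mathcal{R}$ inside $A$, I obtain at most $\ell(h)$ \emph{valleys} with both endpoints on $\bd(\Delta_{\rm out})$, whose interaction with $\mathcal{C}$ is confined to the first $t_0 \coloneqq 3\ell(h)/2$ cycles; at most $\ell(h)$ \emph{mountains} with both endpoints on $\bd(\Delta_{\rm in})$, similarly confined to the last $t_0$ cycles; and at most $|\mathcal{R}| \leq h$ \emph{rivers} crossing $A$.

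Isolating the central strip: I set $f^2_{\ref{combing_variant}}(h) \coloneqq 2t_0$ so that $q = t_0$, and $f^1_{\ref{combing_variant}}(h)$ large enough to host one river for every path of $\mathcal{M}$ and to surpass the threshold of \autoref{prop_cae}. By the depth bounds above, neither valleys nor mountains intersect the middle annulus $A_{q+1,q+r}$, so $\mathcal{R} \cap A_{q+1,q+r}$ consists solely of subpaths of rivers. Each such subpath lies inside $B \cap A_{q+1,q+r}$ because $\mathcal{M}$ is vertex-disjoint from $\cupall\mathcal{L}$ and any river that leaves the rails inside $A$ must immediately rejoin them via an orthogonal cycle crossing.

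\emph{The main obstacle} is the residual possibility that a river traverses the central strip using cycle segments of $C_{q+1},\ldots,C_{q+r}$ rather than exclusively rail segments of $\mathcal{L}_{q+1,q+r}$. I handle this by a final straightening step, following the strategy used in the proof of \autoref{lem_taming_linkage_nests}: by orthogonality of $\mathcal{L}$ to $\mathcal{C}$, every cycle detour of a river inside $A_{q+1,q+r}$ is a $V(P_i)$-$V(P_j)$-subpath of some cycle and can be exchanged for a subpath of a single rail together with a portion of an outer (or inner) cycle lying outside $A_{q+1,q+r}$; iterating this exchange while minimizing $\mathsf{cae}(\mathcal{R},\cupall\mathcal{L})$ subject to $\mathsf{cae}(\mathcal{R},B)$ remaining minimum, I arrive at a linkage whose restriction to $A_{q+1,q+r}$ is contained in $\mathcal{L}_{q+1,q+r}$. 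Finally, defining $M'$ by replacing each path of $\mathcal{M}$ inside $M$ by the corresponding combed path of $\mathcal{R}$, and keeping the terminal set $T$ unchanged, yields an extension $(M',T)$ with $\mathsf{dissolve}(M',T) = \mathsf{dissolve}(M,T)$ that satisfies the required properties.
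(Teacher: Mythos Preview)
Your overall strategy (linkage reduction, combing, mountains/valleys/rivers, then forcing rivers onto rails in a central strip) matches the paper's. However, there are two genuine gaps.

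\textbf{The LB-pair is invalid.} You take $B=\cupall(\mathcal{C}\cup\mathcal{L})$, but an LB-pair requires $B$ to have maximum degree two. Since $\mathcal{L}$ is orthogonal to $\mathcal{C}$, the rails and cycles share vertices, and at those vertices the degree in $\mathcal{C}\cup\mathcal{L}$ is three. So \autoref{prop_cae} does not apply, and in fact $\mathsf{tw}(B)$ itself is already of order $\min(a,d)$, so the conclusion $\mathsf{tw}(\mathcal{R}\cup B)\leq\ell(h)$ cannot hold. The paper takes $B=\cupall\mathcal{C}$ only; the rails are used later, purely as routing infrastructure.

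\textbf{The ``final straightening step'' is where the real work is, and your version does not go through.} Your secondary minimisation of $\mathsf{cae}(\mathcal{R},\cupall\mathcal{L})$ and the claimed local exchanges do not explain why the displaced cycle segments (now pushed outside $A_{q+1,q+r}$) avoid the other rivers, nor why distinct rivers end up on distinct rails. With only $f^2(h)=3\ell(h)$ cycles there is simply not enough room. The paper's construction is explicit and global rather than local: it sets $b\coloneqq\tfrac{3}{2}\ell(h)+1$ and $f^2(h)=2ab=\mathcal{O}(\ell(h)^2)$, then routes the $i$-th river along $C_{ib}$ horizontally, down a dedicated rail of $\mathcal{L}$ through the entire central strip, and back along $C_{d-ib-1}$. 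The $b$-spacing between levels is exactly what guarantees that the $i$-th horizontal path avoids the surviving portions of rivers $1,\ldots,i-1$ (since all $(i-1)b$-mountains have height $<b$). Your linear choice of $f^2$ does not provide this separation, and the hand-waved exchange argument does not substitute for it.
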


\begin{figure}[ht]
\begin{center}
\scalebox{.93}{\includegraphics{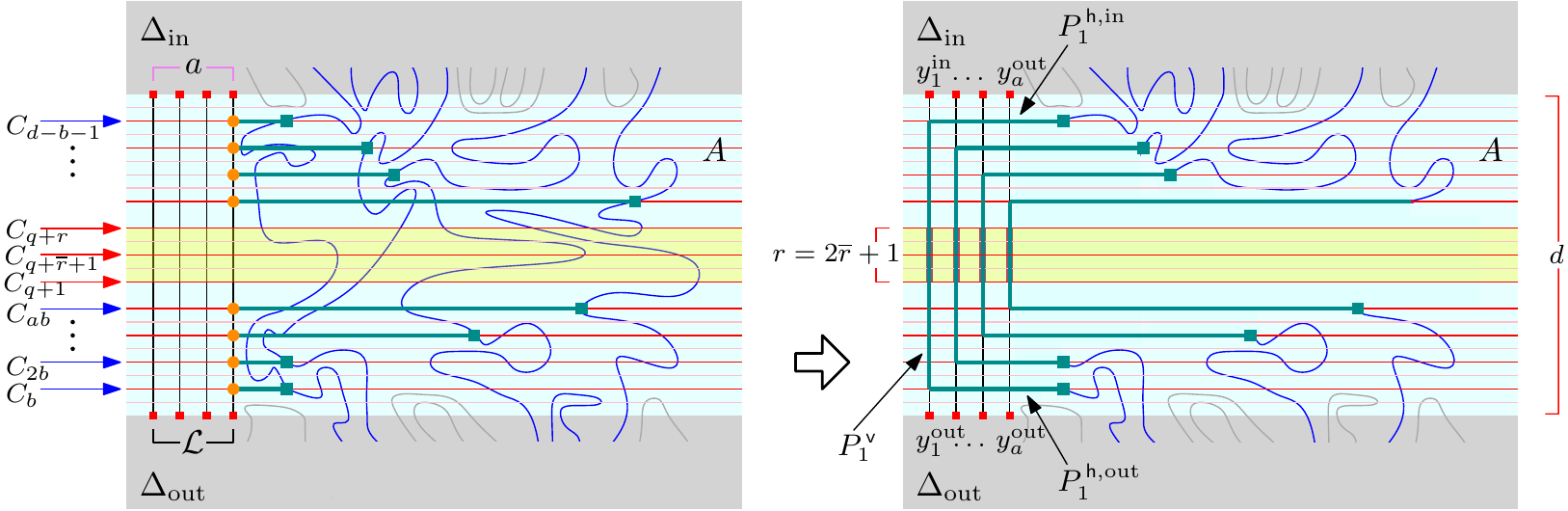}}
\end{center}
    \caption{A visualization of the proof of \autoref{combing_variant}. The rivers of $\Lcal_{G}$
    are blue while the $0$-mountains and the $(d+1)$-valleys are grey. The cycles in $\Ccal$ are red and the substitution paths $P_{i}^{\,{\sf h},{\rm out}},$ $P_{i}^{\,{\sf h},{\rm in}},$ and  $P_{i}^{\,{\sf v}},$ for $ i\in[a]$ are green. The annulus $A_{z+1,z+r}$ is light green.}
  \llabel{new_comb_special}
\end{figure}
 
\begin{proof} 
Let $a$ be the smallest even number that is no smaller than $\ell(h),$ where $\ell \colon \mathbb{N} \to \mathbb{N}$ is the linkage function (see \autoref{tamming_more_sec}).
Let also $b\coloneqq \frac{3}{2}a+1,$ $d\coloneqq 2ab+r.$ We prove the lemma for $f^1_{\ref{combing_variant}}(h)\coloneqq a$ and  $f^{2}_{\ref{combing_variant}}(h)\coloneqq 2ab.$ Observe that $q=ab.$
Also keep in mind that $f^1_{\ref{combing_variant}}(h)=\Ocal(\ell(h))$ and $f^2_{\ref{combing_variant}}(h)=\Ocal((\ell(h))^{2}).$

Notice first that, as none  of the vertices in $T$ is drawn in $A,$
we may subdivide once all neighbors in $G$ of the terminals in $T,$ remove the terminals and by considering their neighbors as new terminals create a linkage $\Lcal_{G}$ of size {$≤h$}.
Notice that the result follows if instead we prove it for the graph $\cupall\Lcal_{G}$: in the result of the theorem we may apply the reverse transformation, that is add back the terminal vertices and the removed edges and then dissolve the subdivision vertices.
Therefore we assume that $G=\cupall\Lcal_{G}$ for some linkage $\Lcal_{G}.$ 

Observe that all components of  
$G\cap A$  are paths that, in turn, are all disjoint from $\cupall\Lcal.$ 
Notice also that for  every cell $c$ of $\delta$ where  $c\subseteq A,$
it holds that $\sigma_{\delta}(c)$ is either a path between two of the vertices in $\pi_{\delta}(\widetilde{c})$ or three paths from a vertex 
of $\sigma_{\delta}(c)-\pi_{\delta}(\widetilde{c})$  to the three vertices of $\pi_{\delta}(\widetilde{c}).$ This permits us to see the part of $G$ that is drawn inside $A$ as a planar embedding.
Next observe that $(\Lcal_{G},B)$ is an LB-pair of $G,$ where $B=\cupall\Ccal.$ 
Among all linkages that are equivalent to $\Lcal_{G}$ we assume that  $\Lcal_{G}$  minimizes the quantity
$\mathsf{cae}(\Lcal_{G}, B) \coloneqq |E(\Lcal) \setminus E(B)|.$
By \autoref{prop_cae} we know that $\tw(\Lcal_{G}\cup B)≤\ell(h)=a.$

Similarly to what we did in \autoref{tamming_more_sec}, we may classify the connected components of $G\cap A,$ to rivers, mountains, and 
valleys: A \defi{river} is a path whose endpoints are drawn in different 
boundaries of $A.$
For every $i\in [0,d]$ an {$i$-mountain} is a connected component of $G\cap A_{i,d+1}$ and for every $i\in[1,d+1]$ an {$i$-valley} is a connected component of $G\cap A_{0,i}.$
Also the \defi{height} (resp. \defi{depth}) of an $i$-mountain ($i$-valey) $P$ is 
 the maximum $j$ such that $C_{i+j-1}$ (resp. $C_{i-j+1})$ intersects $P.$ 
 As we already argued 
in \autoref{tamming_more_sec}, because of \cite[Lemma 4]{GolovachST22Combing}, there are no more than $\tw(\Lcal_{G}\cup B)≤a$ rivers. we set $b\coloneqq \frac{3}{2}a+1.$
Moreover, because of  \cite[Lemma 7]{GolovachST22Combing},
the height (resp. depth) of  an $l$-valey (resp.   $l$-mountain) 
is never more than $\frac{3}{2}\tw(\Lcal_{G}\cup B)<b.$ 
With these bounds at hand, we may reroute the 
 $≤a$ rivers by reproducing  a simpler version of the proof of  \cite[Theorem 5]{GolovachST22Combing}. Let $R_{1},\ldots,R_{q}$ be the rivers, for $q\leq a.$
For $i=1,\ldots,q,$ let $x_{\rm out}^i$ be the last vertex of $C_{i b}$ that  $R_{i}$ meets when traversed from its endpoint in $B_{\Delta_{\rm out}}$ to its endpoint  in $B_{\Delta_{\rm in}}.$ Symmetrically, 
for $i=1,\ldots,q,$ let $x_{\rm in}^i$ is the last vertex of $C_{d-ib-1}$ that  $R_{i}$ meets when traversed from its endpoint in $B_{\Delta_{\rm in}}$ to its endpoint  in $B_{\Delta_{\rm out}}.$  We then discard from each $P_{i}$
the subpath with endpoints $x_{\rm out}^{i}$ and $x_{\rm in}^{i}$ and replace each of the removed paths by the union of three paths: two ``horizontal'' paths $P_{i}^{\,{\sf h},{\rm out}},$ $P_{i}^{\,{\sf h},{\rm in}}$ and one ``vertical'' $P_{i}^{\,{\sf v}},$ as indicated in \autoref{new_comb_special}.
The fact that, for $i\in[q]$ the horizontal path $P_{i}^{\,{\sf h},{\rm out}},$ does not intersect the surviving portions of the paths $P_{1},\ldots,P_{i-1}$ follows by the fact that 
all $(i-1)b$-mountains have height $<b$ and the symmetric argument holds for the horizontal path $P_{i}^{\,{\sf h},{\rm in}}.$ It is now easy to see that the above substitution creates a linkage that is equivalent to $\Lcal_{G}$ does not
change outside $A,$ and moreover, it traverses the annulus $A_{z+1,z+r}$ by exclusively using the paths in $\Lcal_{z+1,z+r},$ as required.
\end{proof}

\subsection{Plucking a flower}\llabel{the_main_extr_flowers_final}

We have now gathered all the tools that we need for the proof of the final result of this subsection.
The statement is similar to the one of \autoref{extending_internal_Certification_in_thickets} however, we now need to ``root''  the fibers in $\{\mu^{\Delta}\mid \Delta\in{\mathbf{\Delta}}'\}$ not only to vertices of the boundaries are in $B_{\Delta}$ (this is already done in \autoref{extending_internal_Certification_in_thickets}) but to the particular vertices of this boundary that are crossed by the linkage $\Rcal^{\Delta}$ of the thicket. i..e, the linear ordering $V(\widehat{\Lambda}^{\Delta}).$
This rerouting is important as it permits us to eventually see each thicket as a flower.

Similarly to the case of  \autoref{extending_internal_Certification_in_thickets}, constants $h'$ and  $k'$ in the statement of \autoref{rooted_certification_into_flower} are irrelevant, as they are not used neither in its conclusion nor in its proof.
However, now we need to give a lower bound, not only for $s,$ but also to the order of the thicket, that is $t = |\Rcal|^{\Delta},$ in order to obtain our rerouting.

\begin{lemma}\llabel{rooted_certification_into_flower}
There exist functions $f^1_{\ref{rooted_certification_into_flower}}, f^2_{\ref{rooted_certification_into_flower}} \colon \nn{2}{1},$ and $f^3_{\ref{rooted_certification_into_flower}} \colon \nn{4}{1}$ and an algorithm that, given
\begin{itemize}
\item $k,h\in \Nbbb_{≥1}$ and
\item a $(z,p)$-ripe  and $d'$-blooming  $(t,k',h',s,\Sigma)$-orchard $(G,\delta,W),$ where $s ≥ f^1_{\ref{rooted_certification_into_flower}}(k,h),$ $t ≥ f^2_{\ref{rooted_certification_into_flower}}(k,h),$
\end{itemize}
outputs a set $S,$ where $|S|≤f^3_{\ref{rooted_certification_into_flower}}(z,p,k,h)$ such that for every graph $Z$ in $h$ vertices, either $G-S$ contains no $\delta$-bound extension of $Z$ or there is some subset $\mathbf{\Delta}'$ of the disk collection $\mathbf{\Delta}$ of $(G,\delta,W)$ and a set $\text{\bf \mu}\coloneqq \{\mu^{\Delta} \mid \Delta \in \mathbf{\Delta'}\}$ of fibers such that
\begin{enumerate}   
\item for every $\Delta\in\mathbf{\Delta}'$ there exists some $p^{\Delta}\in\Nbbb$ where 
\begin{itemize}
\item[i.] for every $j\in [p^{\Delta}],$ there exists a  $j$-package $\Mcal_j^{{\Delta}}\coloneqq \{\mu_{1,j}^{\Delta},\ldots,\mu_{k,j}^{\Delta}\}$ of $k$ pairwise disjoint and pairwise equivalent prime fibers, where 
\begin{itemize}
\item for every $i\in[k],$ $\mu_{i,j}^{\Delta}\coloneqq (M_{i,j}^{\Delta},T_{i,j}^{\Delta},\Lambda_{i,j}^{\Delta})$ and 
\item $\Lambda_{j}^{\Delta}\coloneqq \Lambda_{1,j}^{\Delta}\oplus\cdots \oplus \Lambda_{k,j}^{\Delta},$
\end{itemize}
\item[ii.] $\Lambda^{\Delta}\coloneqq \Lambda_{1}^{\Delta}\oplus\cdots\oplus\Lambda_{p^{\Delta}}^{\Delta},$  $\Lambda^{\Delta}\subseteq \widehat{\Lambda}^{\Delta},$ and $\Lambda^{\Delta}$ and $\widehat{\Lambda}^{\Delta}$ have the same starting vertex $y^{\Delta}.$
\item[iii.]  $\mu^{\Delta}=\mu_{1,1}^{\Delta}\oplus\ldots,\oplus\mu_{1,p^{\Delta}}^{\Delta}$ is a fiber of $(G \cap \Delta,\widehat{\Lambda}^{\Delta}),$
\end{itemize}
\item $\text{\bf \mu}$  has a $\Sigma$-flat $Z$-certifying complement $\bfl{C}$ where $d\folio(\bfl{C})\subseteq d\folio(\delta),$ for every $d\in\Nbbb,$ and 
\item the boundary of each of the fibers in ${\text{\bf \mu}}$ has at most $f_{\ref{my_crops_findslo}}^{3}(h)$ vertices, $|\mathbf{\Delta}'|≤f_{\ref{my_crops_findslo}}^{2}(|Z|),$ and the detail of $(\bfl{C},{\text{\bf \mu}})$ is at most ${\sf h}(h).$
\end{enumerate}
Moreover, the above algorithm runs in $\Ocal_{h}(k\cdot (s+p)\cdot  z\cdot |G|^3 )$-time.
Moreover, $f^1_{\ref{rooted_certification_into_flower}}(k,h)=\Ocal_{h}(k^2),$ $f^2_{\ref{rooted_certification_into_flower}}(k,h)=\Ocal_{h}(k),$ and $f^3_{\ref{rooted_certification_into_flower}}(k,h)= {\Ocal_{h}(zk(k^2+p))}.$
\end{lemma}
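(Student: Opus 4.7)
The plan is to obtain \autoref{rooted_certification_into_flower} by first invoking \autoref{extending_internal_Certification_in_thickets} with an inflated packing parameter and then using the combing lemma \autoref{combing_variant} to reroute each package of fibers ``radially'' through the thicket infrastructure so that its boundary lands on the prescribed vertices $V(\widehat{\Lambda}^{\Delta})$. Concretely, I would set $k' \coloneqq k + c$ for some constant $c = \mathcal{O}_h(1)$ absorbing the fiber loss of the combing step, and apply \autoref{extending_internal_Certification_in_thickets} with parameter $k'$ in place of $k$. This yields a set $S_0$ (which we will take as the desired $S$), a subset $\mathbf{\Delta}'\subseteq \mathbf{\Delta}$ with $|\mathbf{\Delta}'|\leq f_{\ref{my_crops_findslo}}^{2}(h)$, and for every $\Delta\in\mathbf{\Delta}'$ packages $\mathcal{M}_{j}^{\Delta}$, $j\in[p^{\Delta}]$, of $k'$ pairwise disjoint, pairwise equivalent prime fibers whose concatenation $\overline{\mu}^{\Delta}$ is a fiber of $\langle G\cap\Delta,\Lambda^{(\Delta)}\rangle$ rooted on $B_{\Delta}$ but \emph{not yet} on $V(\widehat{\Lambda}^{\Delta})$, together with a $\Sigma$-flat $Z$-certifying complement $\mathbf{C}$ of the collection $\overline{\text{\bf \mu}}\coloneqq\{\overline{\mu}^{\Delta}\mid \Delta\in\mathbf{\Delta}'\}$.

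Fix some $\Delta\in\mathbf{\Delta}'$ and consider the union $M_{\Delta}$ of all graphs $M_{i,j}^{\Delta}$ for $i\in[k']$, $j\in[p^{\Delta}]$, together with the union $T_{\Delta}$ of their terminal sets; since the fibers are pairwise disjoint prime fibers of detail at most $f_{\ref{my_crops_findslo}}^{3}(h)$ each and there are at most $p^{\Delta}\leq \mathsf{h}(h)$ packages, the pair $(M_{\Delta},T_{\Delta})$ is an extension (after appropriate reinterpretation inside the local thicket) of total detail $\mathcal{O}_{h}(k')=\mathcal{O}_{h}(k)$. I would then feed this extension, the nest $\Ccal^{\Delta}=\{C_{1}^{\Delta},\ldots,C_{s}^{\Delta}\}$, and a sub-linkage $\Rcal^{\Delta}_{\star}\subseteq \Rcal^{\Delta}$ of order $a=f^{1}_{\ref{combing_variant}}(\mathcal{O}_h(k))$ to \autoref{combing_variant}, taking the outer and inner boundary disks of the combing annulus to be thin neighbourhoods of $\mathsf{trace}(C_{s}^{\Delta})$ and $\mathsf{trace}(C_{1}^{\Delta})$ respectively. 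The bound $s\geq f^{1}_{\ref{rooted_certification_into_flower}}(k,h)=\mathcal{O}_{h}(k^{2})$ matches the requirement $d\geq f^{2}_{\ref{combing_variant}}(\mathcal{O}_h(k))+r=\mathcal{O}_{h}(k^{2})$ of the combing lemma, and $t\geq f^{2}_{\ref{rooted_certification_into_flower}}(k,h)=\mathcal{O}_{h}(k)$ matches the requirement on $|\Rcal^{\Delta}_{\star}|$. The output is an extension $(M'_{\Delta},T_{\Delta})$ that agrees with $(M_{\Delta},T_{\Delta})$ near the vortex and near $B_{\Delta}$ but, in the ``middle'' annulus between two chosen cycles of $\Ccal^{\Delta}$, uses only sub-paths of $\Rcal^{\Delta}_{\star}$.

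Next I would push this rerouting all the way to the thicket base: since the surviving middle portion of each fiber uses only paths of $\Rcal^{\Delta}_{\star}$, I can extend each of those paths along $\Rcal^{\Delta}$ until it reaches $\widehat{y}_{i}^{\Delta}\in V(\widehat{\Lambda}^{\Delta})$, after shortening the rerouted fiber so that its boundary consists precisely of endpoints in $V(\widehat{\Lambda}^{\Delta})$. The pairwise equivalence within a package is preserved because combing preserves $\mathsf{dissolve}$, and primality is preserved by construction. Fibers whose rerouting collides or is ``absorbed'' are discarded, and the slack $c$ chosen above (sized by the maximum number of collisions per package, which is bounded in $h$) guarantees that at least $k$ prime equivalent fibers $\mu_{1,j}^{\Delta},\ldots,\mu_{k,j}^{\Delta}$ per package $j$ survive, with the concatenation $\mu^{\Delta}=\mu^{\Delta}_{1,1}\oplus\cdots\oplus \mu^{\Delta}_{1,p^{\Delta}}$ being a fiber of $\langle G\cap\Delta,\widehat{\Lambda}^{\Delta}\rangle$ and with a common starting vertex $y^{\Delta}$ chosen via a shift. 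Crucially, the $\Sigma$-flat $Z$-certifying complement $\mathbf{C}$ is untouched by the combing (combing only modifies the part of $M_{\Delta}$ drawn in $\Delta$, leaving $M^{\mathsf{c}}$, $T^{\mathsf{c}}$ and $\delta^{\mathsf{c}}$ intact), and the property $d\text{-}\mathsf{folio}(\mathbf{C})\subseteq d\text{-}\mathsf{folio}(\delta)$ therefore persists; the detail and boundary bounds stated in the conclusion are inherited from \autoref{extending_internal_Certification_in_thickets}.

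The main obstacle, and the place where care is needed, is twofold. First, one must ensure that the combing lemma is legitimately applicable to the ``combined'' extension $(M_\Delta,T_\Delta)$: this requires producing the auxiliary $\Sigma^{(0,0)}$-decomposition with exactly two vortices ($c^{\mathrm{out}}$ and $c^{\mathrm{in}}$) within the annulus, which follows from restricting $\delta$ to the annular region between the two chosen boundary cycles and declaring its two sides as vortices, but verifying the hypothesis that $T_\Delta$ avoids the annulus requires carefully choosing the annulus inside the nest, away from any terminal. Second, the quadratic blow-up in $s$ comes entirely from $f^{2}_{\ref{combing_variak nt}}$ being quadratic in the detail of the extension; since we are forced to handle all $\Ocal_{h}(k)$ fibers of a disk jointly (so that their individual reroutings do not compete for the same paths of $\Rcal^{\Delta}$), the detail fed to combing is $\Ocal_{h}(k)$ and hence $s=\Ocal_{h}(k^{2})$ and $t=\Ocal_{h}(k)$ are the natural bounds, matching the stated $f^1,f^2$. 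The running time $\Ocal_h(k(s+p)z|G|^3)$ is dominated by the single call to \autoref{extending_internal_Certification_in_thickets}; each combing call is linear in $|G|$ and is performed $\Ocal_h(1)$ times.
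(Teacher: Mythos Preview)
Your overall architecture is right — invoke \autoref{extending_internal_Certification_in_thickets} with an inflated parameter and then use \autoref{combing_variant} — but the way you apply the combing lemma does not work, and this is not a detail that can be repaired locally.

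You propose to comb all $\mathcal{O}_h(k)$ fibers inside a disk $\Delta$ at once, as a single extension $(M_\Delta,T_\Delta)$ of detail $\mathcal{O}_h(k)$, using $\mathcal{R}^\Delta_\star\subseteq\mathcal{R}^\Delta$ as the linkage. There are two obstructions. First, the bounds $f^1_{\ref{combing_variant}}$ and $f^2_{\ref{combing_variant}}$ are $\mathcal{O}(\ell(\cdot))$ and $\mathcal{O}(\ell(\cdot)^2)$ in the detail, where $\ell$ is the unique linkage function; since $\ell$ is far from polynomial, feeding detail $\mathcal{O}_h(k)$ yields a nest requirement of order $\ell(\mathcal{O}_h(k))^2$ and a linkage requirement of order $\ell(\mathcal{O}_h(k))$, which is nowhere near the claimed $\mathcal{O}_h(k^2)$ and $\mathcal{O}_h(k)$. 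Second, \autoref{combing_variant} explicitly requires $M$ and $\cupall\mathcal{L}$ to be vertex-disjoint, and there is no reason the fibers $M_{i,j}^\Delta$ avoid the rails $\mathcal{R}^\Delta$.

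The paper's proof resolves both issues with one idea: it inflates by a \emph{multiplicative} factor $\widehat{k}=(a+1)\cdot\gamma(a)\cdot a\cdot k$ (with $a=f^1_{\ref{combing_variant}}(\mathsf{h}(h))=\mathcal{O}_h(1)$) rather than your additive $k+c$, then \emph{discards} $a$ out of every $a{+}1$ consecutive fibers in each package and extracts from the discarded ones a private linkage $\mathcal{L}_{i,j}$ of order $a$ that is disjoint from the surviving fiber $M_{i,j}^\Delta$ and lies ``between'' it and its neighbour. It then applies \autoref{combing_variant} separately to each surviving extension $(M_{i,j},T_{i,j})$, whose detail is at most $\mathsf{h}(h)=\mathcal{O}_h(1)$; this is what keeps $s=\mathcal{O}_h(k^2)$ and $t=\mathcal{O}_h(k)$. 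After this per-fiber combing, the rerouted subpaths are not yet on $\mathcal{R}^\Delta$, so the paper applies \autoref{easy_consequence_of_coterminality} twice in the remaining annular slab to make them coterminal with $\mathcal{R}^\Delta$, and only then extends along $\mathcal{R}^\Delta$ to land on $\widehat{\Lambda}^\Delta$. Two further post-processing steps you omit entirely are also essential: combing may destroy pairwise equivalence within a package (absorbed by the factor $\gamma(a)$, the number of non-isomorphic societies on $\leq a$ vertices), and the rerouted fibers need not be prime (handled by decomposing each via $\mathsf{dec}$ and reorganising the $p^\Delta$ packages into $\sum_j\ell_j$ new packages of $k$ prime fibers, absorbed by the factor $a$). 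None of these can be covered by an additive constant $c$.
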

\begin{proof}
Let also $\gamma:\Nbbb\to\Nbbb$ be a function
such that $\gamma(x)$ is an upper bound to the number of different societies on $≤x$ vertices. Clearly, we may assume that $\gamma(x)=2^{\Ocal(x^2)}.$
We define the following quantities:
\begin{itemize}

\item  $a\coloneqq f^1_{\ref{combing_variant}}({\sf h}(h)),$
\item $\breve{k}=a\cdot k,$
\item $\widetilde{k}\coloneqq \gamma(a)\cdot \breve{k}=\Ocal_{h}(k),$  

\item $f^2_{\ref{rooted_certification_into_flower}}(k,h)=a\cdot \widetilde{k}$
\item  $\widehat{k}\coloneqq (a+1)\cdot \widetilde{k},$
\item  $\overline{r}\coloneqq 1+a\cdot \widetilde{k},$
 
\item  $r=2\overline{r}+1,$
 
\item  $d=f_{\ref{combing_variant}}^{2}({\sf h}(h))+r,$ and 
 
\item  $f^1_{\ref{rooted_certification_into_flower}}({k},h)\coloneqq (d+2)\cdot (\widehat{k}\cdot {\sf h}(h)+1)=\Ocal_{h}(k^2).$
\item $f^3_{\ref{rooted_certification_into_flower}}(z,p,k,h)\coloneqq f_{\ref{extending_internal_Certification_in_thickets}}(z,p,\widehat{k},h,f^1_{\ref{rooted_certification_into_flower}}(\widehat{k},h))=\Ocal_{h}(zk(k^2+p))$
\end{itemize}

We fist apply \autoref{extending_internal_Certification_in_thickets} for {$s\coloneqq f^1_{\ref{rooted_certification_into_flower}}(\widehat{k},h)$},  $h,$ and $k\coloneqq \widehat{k}.$
As a result of this, we obtain 
a set $S,$ where  $|S|≤f_{\ref{extending_internal_Certification_in_thickets}}(z,p,\widehat{k},h,f^1_{\ref{rooted_certification_into_flower}}(\widehat{k},h))$ and 
such that for every graph $Z$ on $≤h$ vertices, if $G-S$ contains a $\delta$-bound extension of $Z$ then there is some subset ${\mathbf{\Delta}}'$ of the disk collection $\mathbf{\Delta}$ of $(G,\delta,W),$ a svaf $y_{\widetilde{\mathbf{\Delta}}},$  
and a set $\text{\bf \mu}\coloneqq \{\mu^{\Delta}\mid \Delta\in{\mathbf{\Delta}}'\}$ of fibers such that  
all the conditions of the lemma are satisfied, 
except from sub-condition \defi{1.ii}, where we only have that
$\Lambda^{\Delta}\subseteq \Lambda^{(\Delta)}.$ 
Let $\text{\bf \mu}\coloneqq \{\mu^{\Delta}\mid \Delta\in\mathbf{\Delta'}\}$ and 
$\bfl{C}=({\Sigma}^{\mathsf{c}},{M}^{\mathsf{c}},{T}^{\mathsf{c}},{\delta}^{\mathsf{c}},Y)$ be such that conditions \defi{2} and \defi{3} are satisfied for $\widehat{k}.$ The algorithm of \autoref{extending_internal_Certification_in_thickets} runs in $\Ocal_{h}(\widehat{k}\cdot (s+p)\cdot  z\cdot |G|^3 )$-time.

Let $\Delta\in\mathbf{\Delta}.$ As we did in the proof of \autoref{extending_internal_Certification_in_thickets} we consider the railed nest  $(\Ccal^{\Delta}, \Rcal^{\Delta})$ corresponding to $\Delta$
and the $\delta$-aligned disks $\Delta_{1},\ldots,\Delta_{s}$  where $\bd(\Delta_{i})$ is the $\delta$-trace of the cycle $C_{i}^{\Delta},$ $i\in[s],$ $\Delta_{c^{\Delta}}$ is a subset of the interior of $\Delta_{1},$ $\Delta_{1}\subseteq\cdots\subseteq\Delta_{s},$
and $\Delta_{s}$ is a subset of the interior of $\Delta.$
Recall that every graph $M_{i,j}^{\Delta}$ contains at least one vertex from $B_{\Delta}$ and at least one vertex drawn in $\Delta_{c^{\Delta}}.$ This implies the existence 
of a $B_{\Delta}$-$B_{c^{\Delta}}$-linkage  $\Lcal$ of $k\cdot p^{\Delta}$ paths in $G \cap \Delta,$ where each path is a subgraph of some $M_{i,j}^{\Delta}.$
Our next observation is that, because the detail of $(\bfl{C},{\text{\bf \mu}})$ is  at most ${\sf h}(h),$  
each $|T_{i,j}^{\Delta}|$ is bounded by ${\sf h}(h).$
As a result of this,  the set $\overline{T}=\bigcup_{(i,j)\in[\widehat{k}]\times[p^{\Delta}]}T_{i,j}^{\Delta}$ has at most $\widehat{k}\cdot {\sf h}(h)$ vertices.
Given  that {$s≥f^1_{\ref{rooted_certification_into_flower}}(k,h)=(d+2)\cdot (\widehat{k}\cdot {\sf h}(h)+1)$}, there is some 
$i'\in [s]$ such that none of the terminals in $\overline{T}$
is drawn in the closure of $\Delta_{i}\setminus\Delta_{i'+d+1}.$
This closure is an annulus containing the drawings of all ${d}$
cycles in $\{C_{i'+1}^{\Delta},\cdots,C_{i'+d-1}^{\Delta}\}.$
For the sake of simplicity, we rename these cycles and we index them in reverse ordering, that is from outside towards inside, that is 
$C_{1}=C_{i'+d-1}^{\Delta},$ \ldots, $C_{d}=C_{i'+1}^{\Delta}.$
Similarly, we set $D_{0}=\Delta_{i'+d}, \ldots, D_{d+1}=\Delta_{i'+2}.$
Resuming we have $d$ cycles drawn in the interior of the annulus 
formed by the closure, denoted by $A^{\Delta},$ of $D_{0}\setminus D_{d+1}.$ Moreover none of the terminals of the fibers in the $j$-packages of $\Delta$ is drawn in $A^{\Delta}.$ 

As  {$\widehat{k}=(a+1)\cdot \widetilde{k}$},
in each $j$-package $\Mcal_j^{{\Delta}}=\{\mu_{1,j}^{\Delta},\ldots,\mu_{\widehat{k},j}^{\Delta}\}$ we discard every $\mu_{i,j}^{\Delta}$ where $i \neq 0\pmod{a+1}$ and we shift the 
indices of the surviving elements so that $\Mcal_j^{{\Delta}}=\{\mu_{1,j}^{\Delta},\ldots,\mu_{\widetilde{k},j}^{\Delta}\}.$ 
Clearly, conditions \defi{1}, \defi{2}, and \defi{3} above hold also if we replace $\widehat{k}$ by $\widetilde{k}.$ We next extract from every discarded 
fiber a path of the  $B_{\Delta}$-$B_{c^{\Delta}}$-linkage  $\Lcal.$
This gives rise to $\widetilde{k}\cdot p^{\Delta}$ sub-linkages $\Lcal_{i,j},[i,j]\in[\widetilde{k}]\times [p^{\Delta}]$ of $\Lcal,$ each containing $a$ paths, such that for every $[i,j]\in[\widetilde{k}]\times [p^{\Delta}],$ the graph $M_{i,j}^{\Delta}$
is drawn ``between'' the paths in $\Lcal_{i,j}$
 and the paths in $\Lcal_{i+1,j},$ where we agree that
 $\Lcal_{\widetilde{k}+1,j}=\Lcal_{1,j+1},$ for $ j\in[p^{\Delta}],$ and $\Lcal_{1,p^{\Delta}+1}=\Lcal_{1,1}$ (see \autoref{many_before_application}).
 Furthermore, we may assume that the the linkages $\Lcal_{i,j},[i,j] \in [\widetilde{k}]\times [p^{\Delta}]$ are orthogonal to the cycles of $\Ccal^{\Delta}$ (up to possibly obtaining a pruning of $(G, \delta, W)$).
 Observe also that, because of Condition 2,
 for each $[i,j]\in[\widetilde{k}]\times [p^{\Delta}]$
 there is an expansion  $(M_{i,j},T_{i,j})$
of $Z$ such that $M_{i,j}^{\Delta}$ is a subgraph of $M_{i,j},$ $T_{i,j}^{\Delta}\subseteq T_{i,j},$  $|T_{i,j}^{\Delta}|≤|T_{i,j}|≤{a},$ and  $M_{i,j}\cap A^{\Delta}=M_{i,j}^{\Delta}\cap A^{\Delta}.$ 

\begin{figure}[ht]
\begin{center}
\scalebox{.74}{\includegraphics{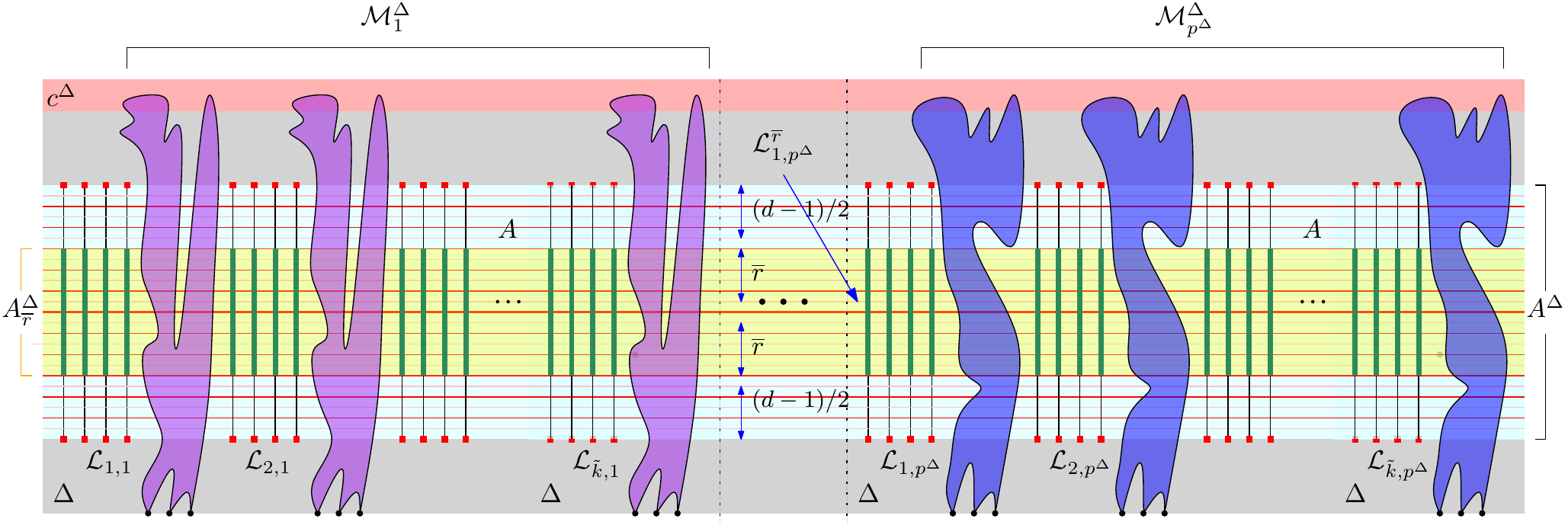}}
\end{center}
    \caption{A visualization of the proof of \autoref{easy_consequence_of_coterminality}
    before the application of \autoref{combing_variant}. The  sub-linkages $\Lcal_{i,j},[i,j]\in[\widetilde{k}]\times [p^{\Delta}]$ are depicted in black and  green.}
  \llabel{many_before_application}
\end{figure}

Recall that  {$d=f^2_{\ref{combing_variant}}({\sf h}(h))+r$}, where {$r=2\overline{r}+1$}.
For every $l\in[0,\ell],$
let $A^{\Delta}_{l}$ be the closure of $\Delta_{l}\setminus\Delta_{d-l+1}$ and let  ${\Lcal}_{i,j}^l$ be the linkage consisting
every connected component of $\cupall \Lcal_{i,j}\cap A^{\Delta}_{l}$ that has  endpoints drawn in different boundaries of $A^{\Delta}_{l}.$
Clearly $A^{\Delta},=A^{\Delta}_{0}.$ We also set $A^{\Delta}_{*}\coloneqq A_{(d-1)/2}^{\Delta}.$

Consider the $\Sigma^{(0,0)}$-decomposition $\delta_{i,j}$
of $M_{i,j}$ obtained by the drawing of the graph 
$G_M^{i,j}\coloneqq M_{i,j}\cup \cupall\{C_{1},\ldots,C_{d}\}\cup\cupall{\Lcal}_{i,j}$ inside $A^{\Delta}$ with two additional vortices, attached to  the boundaries of $A^{\Delta},$  that contain the parts of $M_{i,j}$ that are drawn outside $A^{\Delta}.$
As there are no terminals in $T_{i,j}^{\Delta}$ that are drawn in $A^{\Delta}$
we may apply \autoref{combing_variant} for the terminals in $T_{i,j},$ the graph $G_M,$  the collection of cycles $\Ccal=\{C_{1},\ldots,C_{d}\},$
 the linkage $\Lcal_{i,j},$ the $\Sigma^{(0,0)}$-decomposition $\delta_{i,j},$ and the annulus $A^{\Delta}.$
 As a result of this, we obtain a rerouting 
 of  $M_{i,j}$ where $M_{i,j}\cap A_{\overline{r}}^{\Delta}\subseteq 
 \Lcal_{i,j}^{\overline{r}}$ (see \autoref{bef_sec_a_fig_all_previous} for a visualization of the result of the application of 
   \autoref{combing_variant}).
   By setting $M_{i,j}^{\Delta}\coloneqq M_{i,j}\cap 
   \Delta$ we have an update of all $M_{i,j}^{\Delta}$'s as well.

Recall now that $A_{\overline{r}}^{\Delta}$ is also traversed by the linkage $\Rcal^{\Delta}=\{R_{1}^{\Delta},\ldots,R_{t}^{\Delta}\}$
which is orthogonal to $\Ccal^{\Delta}.$
Let also $\Lcal^{\Delta}=\bigcup_{(i,j)\in[\widetilde{k}]\times[p^{\Delta}]} \Lcal_{i,j}^{\overline{r}}.$
In \autoref{bef_sec_a_fig_all_previous}, the linkage
$\Rcal^{\Delta}$ is drawn in orange, the linkage $\Lcal^{\Delta}$ is depicted in green and the annulus $A_{1}^{\Delta}$ is depicted in light green. Notice also that $|\Lcal^{\Delta}|=q^{\Delta}\cdot \widetilde{k}≤a\cdot \widetilde{k}$ and that $|\Rcal^{\Delta}|=t≥f^2_{\ref{rooted_certification_into_flower}}(k,h)=a\cdot \widetilde{k}.$

Let $A^{\Delta}_{\rm in}$
and $A^{\Delta}_{\rm out}$ be  the closures of the connected components
of $A^{\Delta}_{\overline{r}}\setminus A_{1}^{\Delta}$ as indicated in  \autoref{bef_sec_a_fig_all_previous}, i.e., $A^{\Delta}_{\rm out}$ is ``below'' $A_{1}^{\Delta}$ and $A^{\Delta}_{\rm in}$ is ``above'' $A_{1}^{\Delta}.$
Also we define $\Lcal^{\Delta}_{{\rm out}}$ (resp. $\Lcal^{\Delta}_{{\rm in}}$) as the restriction of $\Lcal^{\Delta}$  in $A^{\Delta}_{\rm out}$ (resp. $A^{\Delta}_{\rm in}$).  Similarly, we define $\Rcal^{\Delta}_{\rm out}$
(resp. $\Rcal^{\Delta}_{\rm in}$) as the restriction of $\Rcal^{\Delta}$ in $A_{\rm out}^{\Delta}$ (resp.  $A_{\rm in}^{\Delta}$).
In \autoref{bef_sec_a_fig_all_previous}, the linkages $\Lcal^{\Delta}_{{\rm out}}$ and 
$\Lcal^{\Delta}_{{\rm in}}$ are depicted in bold green
while the linkages $\Rcal^{\Delta}_{\rm out}$ and $\Rcal^{\Delta}_{\rm in}$ are depicted in bold orange.

\begin{figure}[ht]
\begin{center}
\scalebox{.75}{\includegraphics{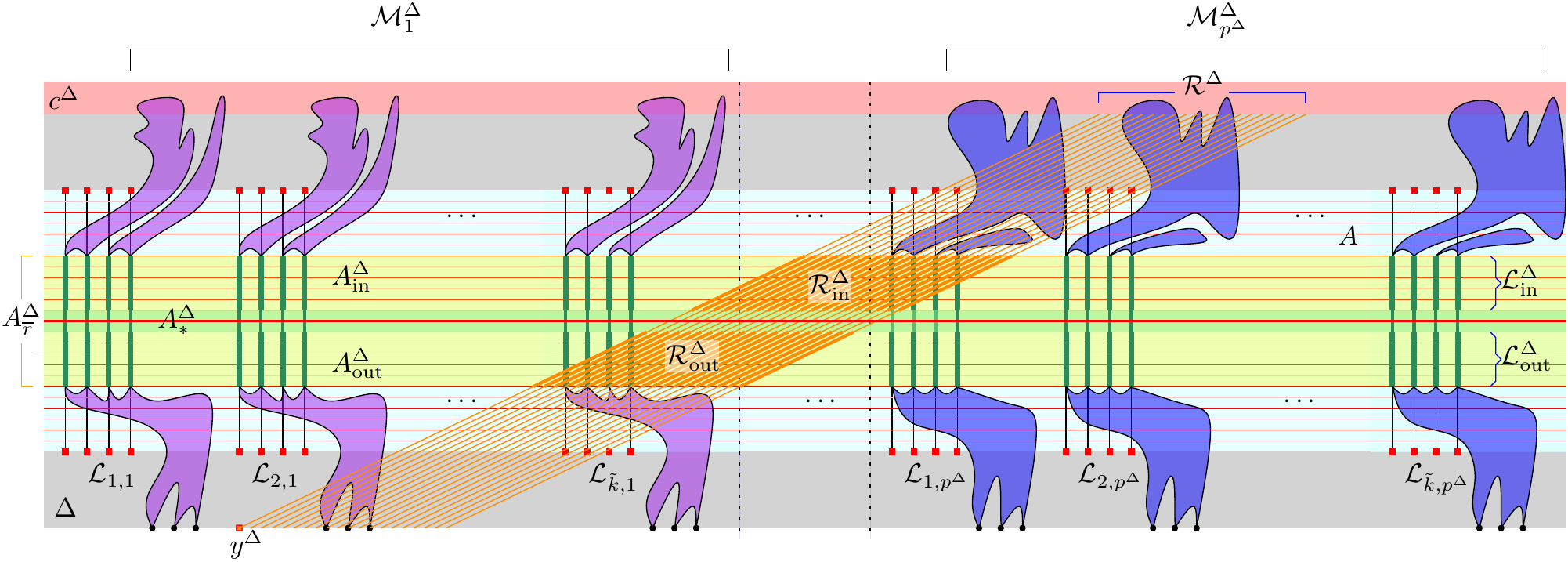}}
\end{center}
    \caption{A visualization of the rerouting 
    after the application of \autoref{combing_variant}. 
    The linkage $\Rcal^{\Delta}$ is depicted in orange.}
  \llabel{bef_sec_a_fig_all_previous}
\end{figure}

Our next step is to partially reroute the 
central part of $\Lcal^{\Delta}$
so that it becomes a subset of $\Rcal^{\Delta}$ inside the annulus $A_{1}^{\Delta}.$
This is done 
by applying \autoref{easy_consequence_of_coterminality}
twice: the first time we apply it in the annulus $A^{\Delta}_{\rm out}$
and for the linkages  $\Lcal^{\Delta}_{{\rm out}}$   and  $\Rcal^{\Delta}_{\rm out}$  
and reroute $\Lcal^{\Delta}_{{\rm out}}$ so that its revised version is a linkage joining the terminals of $\Lcal^{\Delta}_{{\rm out}}$ on the lower boundary of $A_{\overline{r}}^{\Delta}$ to the terminals of $\Rcal_{\rm out}^{\Delta}$ on the  lower boundary of 
$A^{\Delta}_{*}$ and the second time is applied symmetrically  in the annulus $A^{\Delta}_{\rm in}$ and 
reroute $\Lcal^{\Delta}_{{\rm out}}$ so that its revised version is a linkage joining the terminals of $\Lcal^{\Delta}_{{\rm in}}$ on the upper boundary of $A_{\overline{r}}^{\Delta}$ to the terminals of $\Rcal_{\rm in}^{\Delta}$ on the  upper boundary of 
$A_{*}^{\Delta}.$  This, in turn 
implies an update of all $M_{i,j}^{\Delta}$'s as well, so that, for each $(i,j)\in[\widetilde{k}]\times[p^{\Delta}],$  $M_{i,j}^{\Delta}\cap A_{1}^{\Delta}\subseteq A_{1}\cap \cupall \Lcal.$
Given the statement of \autoref{rooted_certification_into_flower}, the above rerouting 
is possible, as $\overline{r}=1+\widetilde{k}\cdot a.$

\begin{figure}[ht]
\begin{center}
\scalebox{.75}{\includegraphics{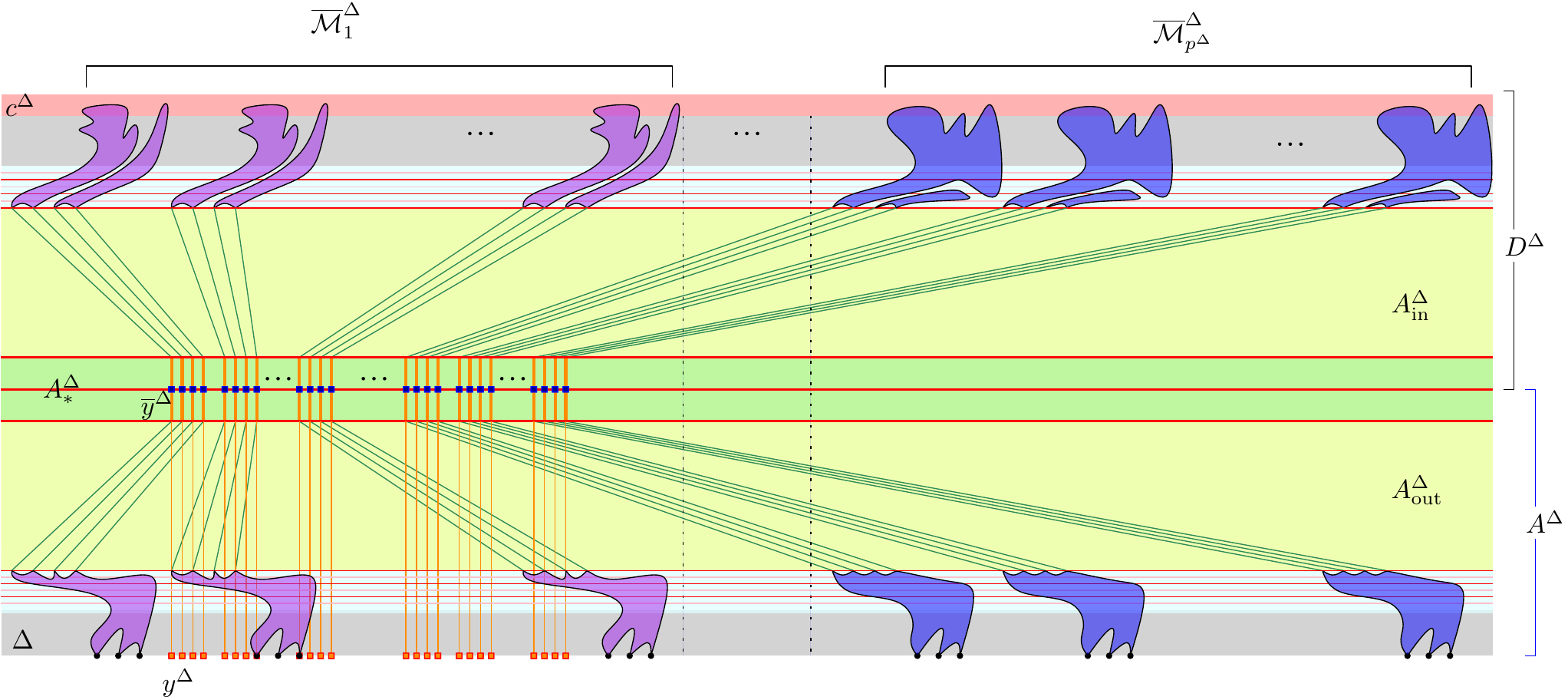}}
\end{center}
    \caption{The result of the rerouting after the duplicate application of \autoref{easy_consequence_of_coterminality}. The blue square vertices represent the boundary $\overline{\Lambda}$ and the red square vertices represent the boundary of $\widehat{\Lambda}.$}
  \llabel{sec_a_fig_all}
\end{figure}

Let $D^{\Delta}$ be the disk whose boundary is the trace of cycle $C_{\overline{r}+1},$ i.e., the disk $D_{(d-1)/2+\overline{r}+1}.$ 
We also use the notation $A^{\Delta}$ for the annulus  that is the closure 
of $\Delta\setminus D^{\Delta}.$
For $(i,j)\in[\widetilde{k}]\times[p^{\Delta}],$ we define  
$\overline{\mu}_{i,j}=(\overline{M}_{i,j}^{\Delta},\overline{T}_{i,j}^{\Delta},\overline{\Lambda}_{i,j}^{\Delta}),$ where 
$\overline{M}_{i,j}^{\Delta}\coloneqq M_{i,j}^{\Delta}\cap D^{\Delta}$ 
$\overline{T}_{i,j}^{\Delta}\coloneqq T_{i,j}^{\Delta}\cap D^{\Delta}$
and  $\overline{\Lambda}_{i,j}$ is 
{naturally} defined by the  intersection of $M_{i,j}^{\Delta}$  with the boundary of  $D^{\Delta}.$ In \autoref{sec_a_fig_all} the vertices of $\overline{\Lambda}^{\Delta}\coloneqq \overline{\Lambda}_{1}^{\Delta}\oplus\cdots\oplus\overline{\Lambda}_{p^{\Delta}}^{\Delta}$
are depicted in blue ).

We have now arrived to the construction of the $j$-packages 
 $\overline{\Mcal}^{\Delta}_{j}\coloneqq \{\overline{\mu}_{1,j}^{\Delta},\ldots,\overline{\mu}_{\widetilde{k},j}^{\Delta}\}$ that are close to what we need.
 A first difference is that the concatenation of the boundaries of the fibers in these $j$-packages are $\overline{\Lambda}^{\Delta}$ and not $\widehat{\Lambda}^{\Delta}$ (denoted be the square orange vertices in \autoref{sec_a_fig_all}). 
Two more differences are that   the 
 $\overline{\mu}_{i,j}^{\Delta}$'s in each $j$-package 
 are not necessarily prime and they are not necessarily pairwise equivalent. We first discard fibers so to enforce the pairwise equivalence.
  For this observe first that as each $\overline{\mu}_{i,j}$ has at most 
 ${\sf h}(h)$ terminals and at most $a$ boundary vertices,
 there are $\gamma(a)$  pairwise non-equivalent graphs in $\overline{\Mcal}^{\Delta}_{j}.$ As $\widetilde{k}= \gamma(a)\cdot \breve{k},$ we may find $\breve{k}$-paiwise equivalent 
 fibers in  $\overline{\Mcal}^{\Delta}_{j}$ and update 
 $\overline{\Mcal}^{\Delta}_{j}$ by discarding all the rest.

For every $\Delta\in\mathbf{\Delta}'$
and every $j\in[p^{\Delta}],$ we  define $\overline{\mu}^{\Delta}\coloneqq \overline{\mu}_{1,1}^{\Delta}\oplus\cdots\oplus\overline{\mu}^{\Delta}_{1,j}$ and 
$\overline{\text{\bf \mu}}=\{\overline{\mu}^{\Delta}\mid \Delta\in\mathbf{\Delta}'\}$ and we claim that
  $\overline{\text{\bf \mu}}$  has a $Z$-certifying complement
$\overline{\bfl{C}}$ where $d\folio(\bfl{C})\subseteq d\folio(\delta),$ for every $d\in\Nbbb.$ Indeed this assignment is now build if in $\bfl{C}=({\Sigma}^{\mathsf{c}},{M}^{\mathsf{c}},{T}^{\mathsf{c}},{\delta}^{\mathsf{c}},Y)$ 
we update 
\begin{itemize}
\item ${\Sigma}^{\mathsf{c}}\coloneqq {\Sigma}^{\mathsf{c}}\cup\bigcup_{\Delta\in\mathbf{\Delta}'}A^{\Delta},$
\item $M^{\mathsf{c}}\coloneqq M^{\mathsf{c}}\cup\bigcup_{j\in[p^{\Delta}]}(M_{1,j}^{\Delta}\cap A^{\Delta}),$  
\item $\delta^{\mathsf{c}}= (\Gamma\cap {\Sigma}^{\mathsf{c}}, \{ \Delta_{c} \in \mathcal{D} \mid \Delta_{c} \subseteq {\Sigma}^{\mathsf{c}} \})\cap M^{\mathsf{c}}$  and 
\item $Y=\{\overline{y}^{\Delta}\mid \Delta\in\mathbf{\Delta}'\}$
where, for $\Delta\in\mathbf{\Delta}',$ $\overline{y}^{\Delta}$ is the first vertex of $\overline{\Lambda}^{\Delta}.$
\end{itemize}
Clearly,  the boundary of each of the fibers in $\overline{\text{\bf \mu}}$ has at most $f_{\ref{my_crops_findslo}}^{3}(h)$ vertices, $|\mathbf{\Delta}'|≤f_{\ref{my_crops_findslo}}^{2}(|Z|),$ and the detail of $(\overline{\bfl{C}},\overline{\text{\bf \mu}})$ is  not bigger than the one of $\text{\bf \mu},$ that is  at most ${\sf h}(h).$ Finally, as no vortex of $\delta$ has been moved in the new $\delta^{\mathsf{c}},$ we also have that $d\folio(\overline{\bfl{C}})\subseteq d\folio(\delta),$ for every $d\in\Nbbb.$

 We next force the remaining missing property of the $j$-packages, that is primality, again to the cost of sacrificing   
 some of the fibers in each $j$-package $\overline{\Mcal}^{\Delta}_{j}.$ For this, we will substitute each $\overline{\Mcal}^{\Delta}_{j}$
 by many others each one containing $k$ prime fibers. 
 For every $(i,j)\in[\breve{k}]\times [p^{\Delta}]$ we define $\lin{H_{i,j},\widehat{\Lambda}_{i,j}}={\sf dissolve}({\overline{\mu}_{i,j}^{\Delta}}),$ and let $$\mathsf{dec}({H_{i,j},\widehat{\Lambda}_{i,j}}) \coloneqq  \lin{\lin{H^{1}_{i,j},\widehat{\Lambda}^{1}_{i,j}},\ldots,\lin{H^{\ell_j}_{i,j},\widehat{\Lambda}^{\ell_j}_{i,j}}}.$$
 We next  partition, for each $(i,j)\in[\breve{k}]\times[p^{\Delta}],$  the fiber  $\overline{\mu}_{i,j}^{\Delta}$ into $\ell_j$ fibers  $\mu_{i,j}^{\Delta,1},\ldots,\mu_{i,j}^{\Delta,\ell_{j}}$ 
so that, for $l\in[\ell_{j}],$ ${\sf dissolve}(\mu_{i,j}^{\Delta,l})=\lin{H^{l}_{i,j},\widehat{\Lambda}^{l}_{i,j}}.$
 
 As, for every $j\in[p^{j}],$ all fibers in $\overline{\Mcal}^{\Delta}_{j}$ are pairwise isomorphic, it follows that 
 for every $j\in[p^{\Delta}]$ and every $l\in[\ell_{j}]$ all fibers 
in $\{\mu_{1,j}^{\Delta,l},\ldots,\mu_{\breve{k},j}^{\Delta,l}\}$ are also pairwise isomorphic. Also the fibers corresponding to the $j$-package  $\overline{M}_{j}^{\Delta}$ are in order:
\begin{eqnarray}
{\mu}_{1,j}^{{\Delta},1},\ldots,{\mu}_{1,j}^{{\Delta},\ell_{j}},\ \ \ldots\ \  ,{\mu}_{{i},j}^{{\Delta},1}\ldots,{\mu}_{{i},j}^{{\Delta},\ell_{j}},\ \ \ldots\ \ , 
{\mu}_{\breve{k},j}^{{\Delta},1},\ldots,{\mu}_{\breve{k},j}^{{\Delta},\ell_{j}}\llabel{what_a_sec}\end{eqnarray}
 We apply the following procedure on the sequence in  \eqref{what_a_sec}:
\begin{quote}
For $l\coloneqq 1,\ldots,\ell_{j},$ we consider the first $k$ 
fibers of the set $\{\mu_{1,j}^{\Delta,l},\ldots,\mu_{\widehat{k},j}^{\Delta,l}\}$ that appear in \eqref{what_a_sec} and up to the position where 
the $k$-th such fiber appears, discard all elements appearing before it that do not belong in $\{\mu_{1,j}^{\Delta,l},\ldots,\mu_{\breve{k},j}^{\Delta,l}\}.$ In the end of this repetition, discard all remaining elements that have not be considered.
\end{quote}
We index the  sequence obtained by the above procedure   as follows:
\begin{eqnarray}
{\mu}_{1,j}^{{\Delta},1},\ldots,{\mu}_{k,j}^{{\Delta},1},\ \ \ldots\ \  ,{\mu}_{{1},j}^{{\Delta},l}\ldots,{\mu}_{{k},j}^{{\Delta},l},\ \ \ldots\ \ , 
{\mu}_{{1},j}^{{\Delta},\ell_{j}},\ldots,{\mu}_{{k},j}^{{\Delta},\ell_{j}}\llabel{before_one_end}\end{eqnarray}
Recall that $\ell_{j}≤a$ and $\breve{k}=a\cdot k,$ therefore 
$\ell_{j}\cdot k≤\breve{k}.$ This implies that for each $l\in[\ell_{j}],$ the set $\{\mu_{1,j}^{\Delta,l},\ldots,\mu_{\widehat{k},j}^{\Delta,l}\}$ has enough elements so to produce the sequence in \eqref{before_one_end}. Moreover, for every $l\in[\ell],$ all fibers 
in $\Mcal_{j}^{\Delta,l}=\{{\mu}_{{1},j}^{{\Delta},l},\ldots,{\mu}_{{k},j}^{{\Delta},l}\}$ are pairwise isomorphic. By applying the above construction for every 
$j=1,\ldots,p^{\Delta},$ we produce a sequence  
This gives rise to the following sequence of $\ell^{\Delta}\coloneqq \sum_{j\in[p^{\Delta}]}\ell_{j}$ packages.
\begin{eqnarray}
\Mcal_{1}^{\Delta,1},\ldots,\Mcal_{1}^{\Delta,\ell_{1}},\ \ \ldots\ \  ,\Mcal_{j}^{\Delta,1},\ldots,\Mcal_{j}^{\Delta,\ell_{l}},\ \ \ldots\ \ , 
\Mcal_{p^{\Delta}}^{\Delta,1},\ldots,\Mcal_{p^{\Delta}}^{\Delta,\ell_{l}}\llabel{many_fibers_in_a_line}\end{eqnarray}
and each of them contains $k$ pairwise isomorphic prime fibers.

Notice that if, after the above rearrangement, we consider the concatenation of the first fibers of each of the packages in  \eqref{many_fibers_in_a_line} we again obtain the same fiber $\overline{\mu}^{\Delta}.$ This implies that the newly defined sequence of packages in \eqref{many_fibers_in_a_line} along with $\overline{\bfl{C}}$ and $\overline{\text{\bf \mu}})$ satisfy all conditions of the conclusion of the lemma, with the difference that, now 
the boundary of each $\overline{\mu}^{\Delta}$ is  $\overline{\Lambda}^{\Delta}$ and not $\widehat{\Lambda}^{\Delta}.$
In order to deal with this, we recall first that  for every $\overline{y}\in V(\overline{\Lambda}^{\Delta})$ 
there is some  $y\in V(\widehat{\Lambda}^{\Delta})$
such that $y$ and $\overline{y}$ are the endpoints of 
some subpath $P_{\overline{y}}$ of  ${\Rcal}^{\Delta}.$
Next, we enhance each fiber $(M^{\Delta},T^{\Delta},\Lambda^{\Delta})$ in the packages of 
\eqref{many_fibers_in_a_line} as follows
\begin{itemize}
\item we set 
$M^{\Delta}=M^{\Delta}\cup\bigcup_{\overline{y}\in V(\overline{\Lambda}^{\Delta})}P_{\overline{y}},$ 
\item  $T^{\Delta}$ remains the same. Notice that for the current  $\overline{\bfl{C}}$ and $\overline{\text{\bf \mu}},$ every vertex 
of $\overline{\mu}^{\Delta}$ is a non-terminal vertex (i.e., a subdivision vertex) of the graph of the  $(\overline{\bfl{C}},\overline{\text{\bf \mu}})$-certifying extension of $Z,$
\item given that $\Lambda^{\Delta}=\lin{\overline{y}^{1},\ldots,\overline{y}^{r}},$ we set $\Lambda^{\Delta}=\lin{{y}^{1},\ldots,{y}^{r}}.$
\end{itemize}

\begin{figure}[ht]
\begin{center}
\scalebox{.96}{\includegraphics{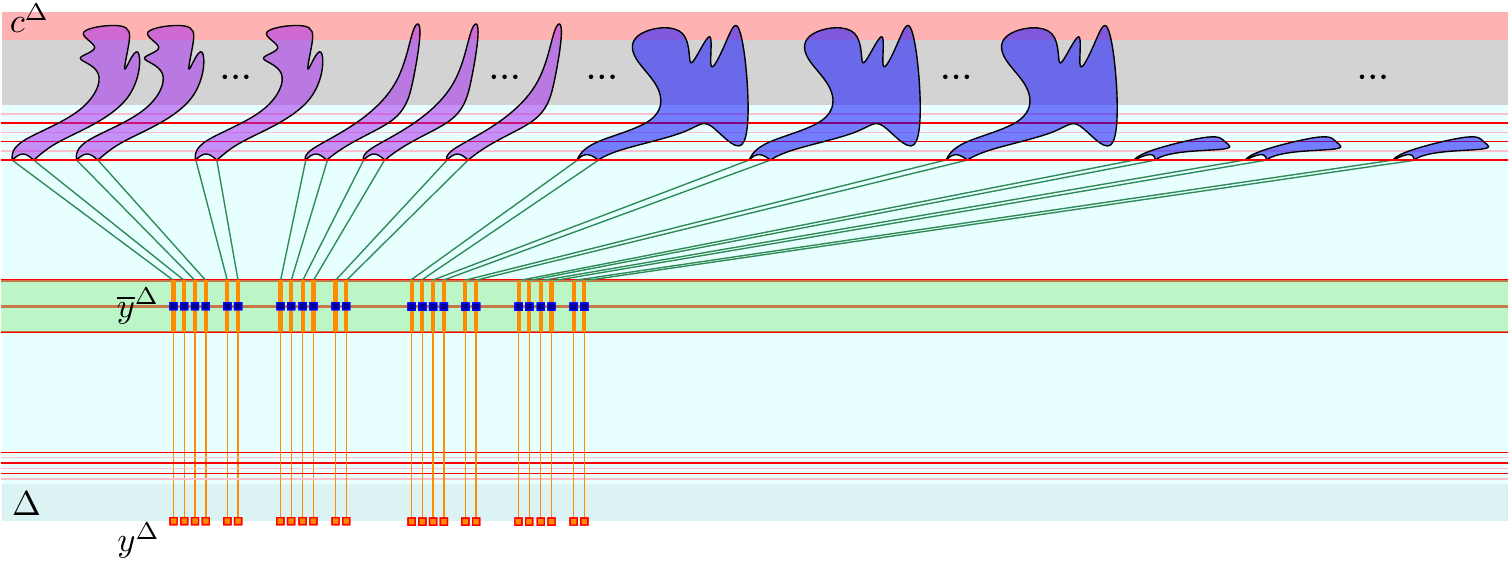}}
\end{center}
    \caption{A visualization of the ``upper part'' of the fibers in the packages in \eqref{many_fibers_in_a_line}. By adding the red lines in them we have the enhanced fibers at the end of the proof of \autoref{easy_consequence_of_coterminality}. The red lines represent the subpaths of the linkage $\Rcal^{\Delta}$ that were used for this enhancement.}
  \llabel{really_one_end}
\end{figure}

In other words, we use the paths in the linkage $\Rcal^{\Delta}$
In order to root the boundary of every fiber in the packages  of \eqref{many_fibers_in_a_line} to vertices of $V(\widehat{\Lambda}^{\Delta}),$
starting with the vertex $y^{\Delta}$ (corresponding to $\overline{y}^{\Delta}$ -- see \autoref{really_one_end}). This yields all sub-conditions of condition   \defi{1}. Also, after  the above enhancement, $\bfl{C}$ remains 
a  $Z$-certifying complement of $\overline{\text{\bf \mu}}=\{\overline{\mu}^{\Delta},\Delta\in\mathbf{\Delta}'\}$ and conditions \defi{2} and \defi{3} hold as well.

All the transformations of this proof can be implemented in linear time.  Therefore the running time of the algorithm is dominated by one of the applications of the algorithm of \autoref{extending_internal_Certification_in_thickets} plus $\Ocal(t)$ and that runs in $\Ocal_{h}(\widehat{k}\cdot (s{+p})\cdot  z\cdot |G|^3)$-time. As $\widehat{k}=\Ocal_{h}(k),$ we obtain the claimed running time.
\end{proof}

Notice that in the statement of \autoref{rooted_certification_into_flower} we do not demand that each $M_{i,j}^{\Delta}$ has some vertex drawn in $\Delta_{c^{\Delta}},$ as we did in \autoref{internal_certification_without_thickets}  or in \autoref{extending_internal_Certification_in_thickets}. This extra feature in  \autoref{extending_internal_Certification_in_thickets} was necessary for the proof of \autoref{rooted_certification_into_flower} as it permitted
us to detect the  sub-linkages $\Lcal_{i,j},[i,j]\in[\widetilde{k}]\times [p^{\Delta}]$ of $\Lcal$ and then apply the combing  \autoref{combing_variant} in other to reroute the fibers produced by \autoref{extending_internal_Certification_in_thickets}.
For the use we make of \autoref{rooted_certification_into_flower} in the next section,  this additional feature is not necessary anymore.
However, we could also further discard all fibers extracted by \autoref{rooted_certification_into_flower} that do not invade their corresponding vortices and update $\overline{\bfl{C}}$ by enhancing it with the ``flat'' drawings of the discarded fibers as we did in  \autoref{internal_certification_without_thickets}. For instance, this extra modification would discard the rightmost package in \autoref{really_one_end} that contains fibers that have flat embeddings away from the vortex (depicted by the red territory). In the visualization of the flower of \autoref{mo_really_more_one_flower_in_thicket} we assume that this rightmost package has been cropped.

\section{The local structure theorem}\llabel{sec_local_structure}

This section is the first key step in unifying most of the previously introduced machinery into a single statement: the local structure theorem.
This theorem takes as input simply a large wall $W$ and integers $k$ and $h$ and outputs either a large clique-minor which is grasped by $W,$ or produces the entire orchard infrastructure necessary for the application of \autoref{rooted_certification_into_flower} together with a small set $S$ and a list of walloids representing all graphs of size at most $h$ which are not fully controlled by $S.$

\subsection{Finding the universal obstruction}
To make the outcome of \autoref{rooted_certification_into_flower} more accessible, we start with an intermediate statement that directly extracts our obstructions from the orchard and its thickets whenever the set $S$ misses a graph $Z$ on at most $h$ vertices.

\begin{lemma}\llabel{lemma_grounded_extensions_to_obstructions}
There exist functions $f^1_{\ref{lemma_grounded_extensions_to_obstructions}},f^2_{\ref{lemma_grounded_extensions_to_obstructions}}\colon \mathbb{N}^2 \to \mathbb{N},$ $f^3_{\ref{lemma_grounded_extensions_to_obstructions}} \colon \mathbb{N}^4 \to \mathbb{N},$ and an algorithm that, given two positive integers $k$ and $h$ and a $(z,p)$-ripe and $h$-blooming $(t,k,{\sf h}(h),s,\Sigma)$-orchard $(G,\delta,W)$ as input, where
\begin{align*}
  s\coloneqq f^1_{\ref{lemma_grounded_extensions_to_obstructions}}(k,h)\text{ and }t\coloneqq f^2_{\ref{lemma_grounded_extensions_to_obstructions}}(k,h),
\end{align*}
outputs  a set $S\subseteq V(G)$ of size at most $f^3_{\ref{lemma_grounded_extensions_to_obstructions}}(z,p,k,h)$ such that for every graph $Z$ on at most $h$ vertices, either
\begin{enumerate}
  \item $G-S$ contains no $\delta$-bound extension of $Z,$ or
  \item there exists $\mathscr{W}^Z=\langle \mathscr{W}^Z_k \mid k\in\mathbb{N}\rangle\in\mathfrak{W}_{\mathsf{excl}(Z)}$ such that $G$ contains a subdivision $W^Z_k$ of $\mathscr{W}^Z_k$ and the base cylinder of $W^Z_k$ is a subgraph of the base cylinder of $W,$
\end{enumerate}
in time
$$\Ocal_{h}(k\cdot (k^2+p)\cdot  z\cdot |G|^3).$$
Moreover
$$\textrm{$f^1_{\ref{lemma_grounded_extensions_to_obstructions}}(k,h)=\Ocal_{h}(k^2),$
{$f^2_{\ref{lemma_grounded_extensions_to_obstructions}}(k,h)=\Ocal_{h}(k^3),$}
and 
$f^3_{\ref{lemma_grounded_extensions_to_obstructions}}(z,p,k,h)=\Ocal_{h}(zk(k^2+p)).$}$$ 
\end{lemma}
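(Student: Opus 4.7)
\smallskip\noindent\textbf{Proof plan.}
The plan is to feed \autoref{rooted_certification_into_flower} as a black box and then translate each of its outcomes into the language of embedding pairs and walloids. I would first choose the parameters so that $s$ and $t$ are simultaneously large enough to satisfy the hypotheses of \autoref{rooted_certification_into_flower} and to host the walloid $\mathscr{W}^{\mathbf{p}}_k$ for any possible embedding pair $\mathbf{p}$ that may arise. Concretely I set $s \coloneqq f^1_{\ref{rooted_certification_into_flower}}(k,h)$ and take $t \coloneqq f^2_{\ref{rooted_certification_into_flower}}(k,h)$ inflated by a factor depending only on $h$ (namely the maximum number of flower segments and the maximum boundary size $f^3_{\ref{my_crops_findslo}}(h)$ that any embedding pair in $\mathfrak{P}_{\mathsf{excl}(Z)}$ with $|Z| \leq h$ can use). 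A single call of \autoref{rooted_certification_into_flower} then produces the set $S$ of size $f^3_{\ref{rooted_certification_into_flower}}(z,p,k,h)$ that works uniformly for all such $Z.$

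Now fix a graph $Z$ on at most $h$ vertices and assume that $G - S$ contains a $\delta$-bound extension of $Z.$ \autoref{rooted_certification_into_flower} then hands us a subset $\mathbf{\Delta}'$ of the disk collection of $(G,\delta,W),$ a family $\text{\bf \mu} = \{\mu^\Delta \mid \Delta \in \mathbf{\Delta}'\}$ of fibers organized into $k$-fold packages $\mathcal{M}^\Delta_j,$ and a $\Sigma$-flat $Z$-certifying complement $\bfl{C} = (\Sigma^{\mathsf{c}}, M^{\mathsf{c}}, T^{\mathsf{c}}, \delta^{\mathsf{c}}, Y)$ with the property that the $(\bfl{C}, \text{\bf \mu})$-certifying extension $M$ of $Z$ has detail at most $\mathsf{h}(h)$ and $d\text{-}\mathsf{folio}(\bfl{C}) \subseteq d\text{-}\mathsf{folio}(\delta)$ for every $d.$ From these data I would read off an embedding pair $\mathbf{p} = (\Sigma, \mathbf{B}) \in \mathfrak{P}_{\mathsf{excl}(Z)}$ as follows. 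The underlying graph of $\mathbf{p}$ is $M$; the subgraph partition $\mathcal{P}$ consists of one piece for each flap cell of $\delta^{\mathsf{c}}$ (the subgraph of $M^{\mathsf{c}}$ drawn there) together with one piece $M^\Delta = \cupall_{j} \mu^\Delta_{1,j}$ for each $\Delta \in \mathbf{\Delta}'$; the embedding of the hypergraph $\mathcal{K}_{\mathcal{P}}$ in $\Sigma$ is the one obtained by gluing $\delta^{\mathsf{c}}$ to the disks in $\mathbf{\Delta}'$; and $\eta$ assigns to each $\Delta$-hyperedge the starting vertex $y^\Delta \in Y.$ The size bound $|M| \leq \mathsf{ext}(|Z|)$ is forced by counting terminals and boundary vertices, which yields a bound of order $2^{\mathcal{O}(\ell(|Z|^2))}$ and fixes the definition of $\mathsf{ext}.$ The flap societies in $\mathbf{B}$ are disk-embeddable by construction of $\bfl{C},$ so only the non-disk-embeddable pieces (necessarily among flap societies with exceptional drawings and the thicket societies $\mu^\Delta$) survive into $\mathbf{B}.$

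The final step is to realize a subdivision of $\mathscr{W}^{\mathbf{p}}_k$ inside $G$ whose base cylinder is a subcylinder of that of $W.$ Recall that $\mathscr{W}^{\mathbf{p}}_k$ is the cylindrical concatenation of a $(4k,k)$-wall segment, $\mathsf{h}$ handle segments, $\mathsf{c}$ crosscap segments, and one flower segment per society in $\mathbf{B}.$ The wall, handle, and crosscap segments are already present as subdivisions in $W$ (with the base cylinder of $W$ hosting the base cylinder of $\mathscr{W}^{\mathbf{p}}_k$). For every flower segment of $\mathscr{W}^{\mathbf{p}}_k$ whose society comes from a flap cell of $\bfl{C},$ the containment $d\text{-}\mathsf{folio}(\bfl{C}) \subseteq d\text{-}\mathsf{folio}(\delta)$ plus the fact that $(G,\delta,W)$ is $h$-blooming guarantees that this society is the society of some $(t,k,\mathsf{h}(h))$-parterre segment of $W,$ and any one of its $\mathsf{h}(h)$ flowers supplies the required $(k,k,H,\Lambda)$-flower segment. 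For every flower segment whose society is a thicket society $\mu^\Delta,$ the $k$-fold packing $\mathcal{M}^\Delta = \bigcup_j \mathcal{M}^\Delta_j$ (with its fibers rooted, by conclusion~1.iii of \autoref{rooted_certification_into_flower}, on $\widehat{\Lambda}^\Delta$) combined with the railed nest of the $(t,s)$-thicket segment yields a subdivision of the required $(k,k,H,\Lambda)$-flower segment by using the rails for the rainbow and the packages as the petals. Since the walloid $\mathscr{W}^{\mathbf{p}}$ is defined only up to the ordering of $\mathbf{B}$ (and any two orderings produce equivalent parametric graphs, cf.~the discussion in \autoref{subsec_swapping}), we are free to arrange the flower segments around the cylinder in whatever order is forced by the geometry of $\delta^{\mathsf{c}}$ and $\mathbf{\Delta}'.$

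The main obstacle will be the bookkeeping in the last step: I must arrange the parterre- and thicket-supplied flower segments in a cyclic order around the base cylinder of $W$ that agrees with the cyclic order in which the corresponding hyperedges of $\mathcal{K}_{\mathcal{P}}$ appear along the simple face of the chosen embedding of $\mathbf{p},$ while respecting the constraint that each parterre of $W$ provides only $\mathsf{h}(h)$ independent flowers (hence the need for multiple parterres representing the same society and for the $h$ in the $h$-blooming hypothesis). This rearrangement is the direct analogue of the parterre-swapping argument of \autoref{lemma_moving_a_parterre}, and the choice of $t = \mathcal{O}_h(k^3)$ absorbs the cost of rerouting. Once the arrangement is fixed, verifying that the resulting subgraph is indeed a subdivision of $\mathscr{W}^{\mathbf{p}}_k$ is routine from the definitions of flower, handle, and crosscap segments, and the running time is dominated by the single call to \autoref{rooted_certification_into_flower}.
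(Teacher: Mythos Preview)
Your proposal follows the same overall approach as the paper: call \autoref{rooted_certification_into_flower}, read an embedding pair $\mathbf{p}=(\Sigma,\mathbf{B})$ off the $\Sigma$-flat $Z$-certifying complement $\bfl{C}$ and the fibers $\text{\bf \mu}$, and then locate each flower segment of $\mathscr{W}^{\mathbf{p}}_k$ either in a parterre of $W$ (for flap societies, using that the orchard is $h$-blooming and $d\text{-}\mathsf{folio}(\bfl{C})\subseteq d\text{-}\mathsf{folio}(\delta)$) or inside a thicket (for the vortex-invading fibers).

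There is one concrete technical point where the paper does something you do not make explicit, and without it your thicket-to-flower step would not quite go through. Before calling \autoref{rooted_certification_into_flower}, the paper strips $k$ outer cycles from the nest of each thicket and an associated linkage (the ``purple edges'' $\mathcal{L}^*$) to form a reduced walloid $W^-$, and only then applies \autoref{rooted_certification_into_flower} to the smaller orchard $(G,\delta,W^-)$. The point is that the packages produced by \autoref{rooted_certification_into_flower} are rooted on $\widehat{\Lambda}^\Delta$, i.e.\ on the rails of $W^-$; to turn them into a genuine $(t',k,H,\Lambda)$-flower segment you still need a $k$-rainbow sitting above those roots, and that rainbow is supplied precisely by the reserved $k$ cycles and the linkage $\mathcal{L}^*$ that were set aside beforehand (see \autoref{mo_really_more_one_flower_in_thicket}). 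Your sentence ``using the rails for the rainbow and the packages as the petals'' conflates two pieces of infrastructure that must be disjoint; the paper's preprocessing is what guarantees this. This is also where the extra factor in $t=f^2_{\ref{lemma_grounded_extensions_to_obstructions}}(k,h)$ comes from.

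Your discussion of cyclic ordering and the need for the swapping argument of \autoref{lemma_moving_a_parterre} is unnecessary here. As you yourself note, $\mathscr{W}^{\mathbf{p}}$ is defined only up to the ordering of $\mathbf{B}$, so you may simply choose that ordering to match the order in which the parterres and thickets already appear in $W$; the paper does exactly this and no rerouting across segments is performed at this stage.
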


\begin{figure}[ht]
\begin{center}
\scalebox{.927}{\includegraphics{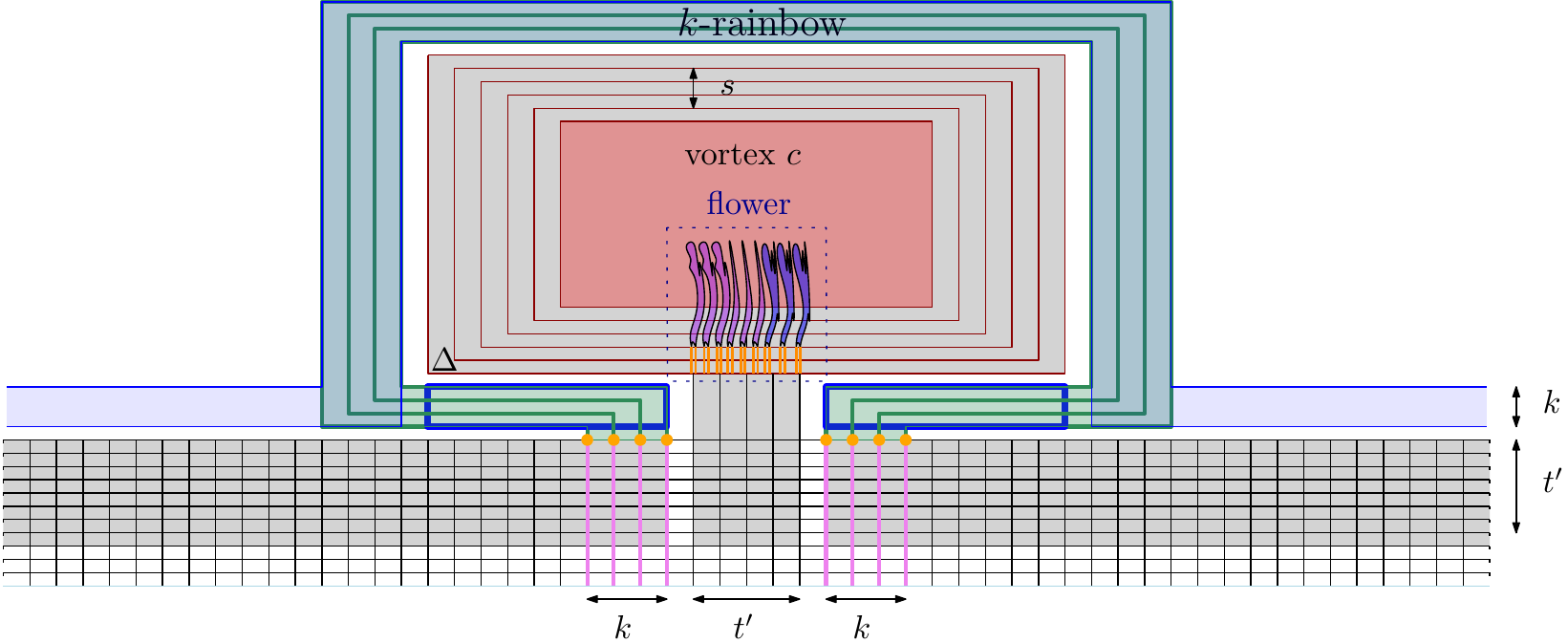}}
\end{center}
    \caption{A visualization of the thicket in the proof of \autoref{lemma_grounded_extensions_to_obstructions} and the position of the flower and the rainbow in it.}
  \llabel{mo_really_more_one_flower_in_thicket}
\end{figure}

\begin{proof}
Let $t'=f^2_{\ref{rooted_certification_into_flower}}(k,{\sf h}(h))\cdot k,$ 
$\overline{t}\coloneqq  t'+2k,$ $s'=f^1_{\ref{rooted_certification_into_flower}}(k,{\sf h}(h))$ and $\overline{s}\coloneqq s'+k.$
We prove the lemma for 
\begin{itemize}
\item  $f^1_{\ref{lemma_grounded_extensions_to_obstructions}}(k,h)\coloneqq f^1_{\ref{rooted_certification_into_flower}}(k,{\sf h}(h))+k,$ \item  $f^2_{\ref{lemma_grounded_extensions_to_obstructions}}(k,h)\coloneqq f^2_{\ref{rooted_certification_into_flower}}(k,{\sf h}(h))\cdot k,$
and 
\item  $f^3_{\ref{lemma_grounded_extensions_to_obstructions}}(z,p,k,h)\coloneqq f^3_{\ref{rooted_certification_into_flower}}(z,p,k,{\sf h}(h)).$
\end{itemize}

Consider a  $(q,p)$-ripe and $h$-blooming $(\overline{t},k,{\sf h}(h),\overline{s},\Sigma)$-orchard $(G,\delta,W).$ Let also $W^{-}$ be the walloid obtained if we first remove 
the $k$ cycles around the exceptional face plus, for each thicket, the parts that are indicated by the two bold blue cycles and the purple edges in  \autoref{mo_really_more_one_flower_in_thicket}. We also use $\Lcal^{*}$ for the linkage corresponding to the purple edges.
This gives rise to a  $(z,p)$-ripe and
 $h$-blooming $({t}',k,{\sf h}(h),s',\Sigma)$-orchard $(D,\delta,W^-).$ 
 It also gives us space to define a linkage of $k$-paths ``above'' each 
of the thickets of $(G,\delta,W).$ In \autoref{mo_really_more_one_flower_in_thicket}, this linkage is visualized as the set of bold green paths joining the orange vertices and we refer to it as the \defi{$k$-rainbow} of this thicket.

 We next apply  \autoref{rooted_certification_into_flower}
on  $(G,\delta,W^-)$ for $k\coloneqq k,$ $s\coloneqq s',$ $t\coloneqq t',$ $h={\sf h}(h),$ and find a 
 set $S,$ where $|S|≤f^3_{\ref{rooted_certification_into_flower}}(z,p,k,{\sf h}(h)) =  {\Ocal_{h}(zk(k^2+p))}$ and such that, for every $\delta$-bound extension of $Z$ in $G-S$  there is some subset $\mathbf{\Delta}'$ of the disk collection $\mathbf{\Delta}$ of $(G,\delta,W)$
and   a set $\text{\bf \mu}\coloneqq \{\mu^{\Delta}\mid \Delta\in\mathbf{\Delta'}\}$ of fibers so that
conditions \defi{1}, \defi{2}, and \defi{3} of \autoref{rooted_certification_into_flower} are satisfied. Let also $\mu^{\Delta}=\mu_{1,1}^{\Delta}\oplus\ldots,\oplus\mu_{1,p^{\Delta}}^{\Delta},$ where, for $j\in[p^{\Delta}],$  $\mu_{1,j}^{\Delta}\coloneqq (M_{1,j}^{\Delta},T_{1,j}^{\Delta},\Lambda_{1,j}^{\Delta}).$ 

Also, again from \autoref{rooted_certification_into_flower}, $S$ can be computed in  time $\Ocal_{h}(k\cdot (s'+p)\cdot  z\cdot |G|^3 )$-time.
As $s'=f^1_{\ref{rooted_certification_into_flower}}(z,p,k,h)$ and $f^1_{\ref{rooted_certification_into_flower}}(k,h)= \Ocal_{h}(k^2),$ the algorithm runs in time $\Ocal_{h}(k\cdot (k^2+p)\cdot  z\cdot |G|^3),$ as claimed.

Let now  $\bfl{C}$ be a  $\Sigma$-flat $Z$-certifying complement
$\bfl{C}=({\Sigma}^{\mathsf{c}},{M}^{\mathsf{c}},{T}^{\mathsf{c}},{\delta}^{\mathsf{c}},Y)$ where $d\folio(\bfl{C})\subseteq d\folio(\delta),$ for every $d\in\Nbbb,$ because of  by Condition \defi{2}. Let also $(Q,T)$ be 
the $(\bfl{C},\text{\bf \mu})$-certifying extension of $Z.$ We define $$M={\sf dissolve}(Q,T\cup\bigcup_{\Delta\in\mathbf{\Delta'}}B_{\Delta}\cap V(Q))$$ 
and keep in mind that $Z$ is a minor of {\sf dissolve}$(Q,T)$ that, in turn, is a minor of $M.$ Moreover, because of Condition \defi{3},  the boundary of each of the fibers in ${\text{\bf \mu}}$ has at most $f_{\ref{my_crops_findslo}}^{3}(h)$ vertices,  $|\mathbf{\Delta}'|≤f_{\ref{my_crops_findslo}}^{2}(|Z|),$ and $|T|={\sf h}(h).$
Therefore, we are now in the position to define the function $\mathsf{ext}(h) \coloneqq f_{\ref{my_crops_findslo}}^{2}(h)\cdot f_{\ref{my_crops_findslo}}^{3}(h)+ \mathsf{h}(h),$ introduced in \autoref{subsec_universal_obstructions}, so that $|M| ≤ \mathsf{ext}(h).$
We call a cell $c\in\Ccal(\delta^{\mathsf{c}})$ \defi{trivial}
if $\sigma_{\delta^{\mathsf{c}}}(c)$ is the graph with one edge joining the two vertices in 
$\pi_{\delta^{\mathsf{c}}}(\widetilde{c}).$ We denote by $\Ccal^*(\delta^{\mathsf{c}})$
the set of non-trivial cells in $\delta^{\mathsf{c}}(c).$
We now build a subgraph partition of $M$ as follows. $$\Pcal=\{\sigma_{\delta^{\mathsf{c}}}(c)\mid c\in \Ccal^*(\delta^{\mathsf{c}})\}\cup\{M_{1,1}^{\Delta}\oplus\cdots\oplus M_{1,p^{\Delta}}^{\Delta}\mid \Delta\in\mathbf{\Delta'} \}.$$
 Consider now the hypergraph $K_{\Pcal}$
 and observe that $\delta^{\mathsf{c}}$ and $\mathbf{\Delta}'$ define a $\Sigma$-embedding $\Gamma$ of $\Kcal_{\Pcal}$ where 
 every edge $e\in \Ecal_{\Pcal}$ is embedded as a disk $\Delta_{e}.$
 Given the triple $(M,\Pcal,\Gamma),$ we also define the function 
 $\eta:\Ecal_{\Pcal}\to V(M)$ as follows: let $e\in \Ecal_{\Pcal}.$
 If $e=\pi_{\delta^{\mathsf{c}}}(\widetilde{c})$ for some $c\in\Ccal^{*}(\delta^{\mathsf{c}}),$ then $|e|≤3$ and we define $\eta(e)$ to be an, {arbitrarily chosen},  vertex of $e.$
 If $e=V(\Lambda_{1}^{\Delta}),$ for some $\Delta\in\mathbf{\Delta}',$ 
 then we define $\eta(e)$ to be the first vertex of $\Lambda_{e,\eta(e)}=\Lambda_{1,1}^{\Delta}\oplus\cdots\oplus\Lambda_{1,p^{\Delta}}^{\Delta}.$  This generates the set of linear societies $$\mathbf{B}=\{\lin{G_{e},\Lambda_{e,\eta(e)}}\mid e\in \Ecal_{\Pcal} \text{~and $(G_{e},\Lambda_{e,\eta(e)})$ is not disk-embeddable}\},$$
 and therefore ${\bf p}\coloneqq (\Sigma,\mathbf{B})$ is an embedding pair.
 Let $\mathscr{W}^{\mathbf{p}}=\lin{\mathscr{W}^{\mathbf{p}}_{k}}_{k\in\Nbbb}$ be the guest walloid generated by ${\bf p}.$
It remains to prove that a subdivision of $\lin{\mathscr{W}^{\mathbf{p}}}_{k\in\Nbbb}$ can be found as a subgraph of $G.$
We examin two cases:
\medskip

\noindent{\em Case 1.}
Suppose that $e=\pi_{\delta^{\mathsf{c}}}(c),$ for some $c\in \Ccal^*(\delta^{\mathsf{c}}),$  where $\sigma_{\delta^{\mathsf{c}}}(c)$ is $\Delta_{c}$-embeddable so that the vertices in $\pi_{\delta^{\mathsf{c}}}(\widetilde{c})$ are drawn in the boundary of $\Delta_{c}.$ This means that the society $\lin{\sigma_{\delta^{\mathsf{c}}}(c),\Lambda_{e,\eta(e)}}$
is not disk embeddable and belongs in $\mathbf{B}$ and also belongs in ${\sf h}(h)\folio(\bfl{C}).$ From Condition \defi{2}
of \autoref{rooted_certification_into_flower} we have that 
$\lin{\sigma_{\delta^{\mathsf{c}}}(c),\Lambda_{e,\eta(e)}}\in  {\sf h}(h)\folio(\delta),$ therefore $W^-,$ anf therefore also $W,$ contains a  $(t',k,{\sf h}(h),\sigma_{\delta^{\mathsf{c}}}(c),\Lambda_{e,\eta(e)})$-{parterre segment}. Notice that societies isomorphic to $\lin{\sigma_{\delta^{\mathsf{c}}}(c),\Lambda_{e,\eta(e)}}$
can appear at most $|\Ccal^*(\delta^{\mathsf{c}})|$ times
and, because  $|\Ccal^*(\delta^{\mathsf{c}})|≤|T|≤{\sf h}(h),$   
the aforementioned $(t',k,{\sf h}(h),\sigma_{\delta^{\mathsf{c}}}(c),\Lambda_{e,\eta(e)})$-{parterre segment} contains enough many 
 $(t',k,\sigma_{\delta^{\mathsf{c}}}(c),\Lambda_{e,\eta(e)})$-{flower segments} to represent all of them.
 We just proved that all flowers of $\mathscr{W}^{\mathbf{p}}_{k}$ generated by cells of $\delta^{\mathsf{c}}$ are present in $W.$ 
\medskip

\noindent{\em Case 2.}
 Let now $e=V(\Lambda^{\Delta})$ for some $\Delta\in\mathbf{\Delta}'.$ Condition \defi{1} of \autoref{rooted_certification_into_flower}  implies that the thicket of 
 $(G,\delta,W)$ corresponding  to $\Delta$ also contains a 
 $(t',k,M_{1,1}^{\Delta}\oplus\cdots\oplus M_{1,p^{\Delta}}^{\Delta},\Lambda_{1,1}^{\Delta}\oplus\cdots\oplus\Lambda_{1,p^{\Delta}}^{\Delta})$-{flower segment}. For this, we may just see the graphs in $ M_{1,1}^{\Delta},\ldots, M_{1,p^{\Delta}}^{\Delta}$ below the $k$-rainbow of the thicket and bring back the edges of the paths in the linkage that we initially discarded from $W,$ for every thicket $\Lcal^*$ (denoted in purple in \autoref{mo_really_more_one_flower_in_thicket}). \medskip
 
 As all $k$-flowers 
 of  $W^{\mathbf{p}_{k}}$ can be found either in the $k$-parterres of 
 $(G,\delta,W)$ or in the thickets of $(G,\delta,W),$ we conclude 
 that $G$ indeed contains a subdivision of $W^{\mathbf{p}}_{k},$ as required.
\end{proof}

\subsection{Bringing everything together}

With \autoref{lemma_grounded_extensions_to_obstructions} at hand, we are finally ready to prove the main result of this section and the most central statement of this paper.

\begin{theorem}[Local structure theorem]
\llabel{thm_local_structure}
There exist functions $f^1_{\ref{thm_local_structure}},f^2_{\ref{thm_local_structure}}\colon \mathbb{N}^2\to\mathbb{N}$ and an algorithm that, given two integers $k$ and $h,$ a graph $G$ and an $f^1_{\ref{thm_local_structure}}(h,k)$-wall $W$ in $G$ finds, in time $\mathcal{O}_{h,k}(|G|^3)$ either
\begin{enumerate}

  \item a $K_{3k^2+k\cdot h}$ minor grasped by $W,$ 
  
  \item there are non-negative integers $h'$ and $c'\leq 2$ such that every graph on at most $h$ vertices can be embedded in every surface of Euler-genus at least $2h'+c'$ and $G$ contains a subdivision of a walloid $W'$ which is the cylindrical concatenation of $h'$ many $k$-handle segments and $c'$ many $k$-crosscap segments such that the base cylinder of $W'$ is a subgraph of $W,$ or

  \item a set $S\subseteq V(G)$ of size at most $f^2_{\ref{thm_local_structure}}(h,k)$ and a list of graphs $\mathcal{Z},$ all of which are of size at most $h,$ such that, if $J$ is the unique component of $G-S$ which contains a full row and a full column of $W,$
  \begin{enumerate}
    \item for every graph $Z$ on at most $h$ vertices, if $Z\notin\mathcal{Z}$ then $J$ is $Z$-minor-free, and
    \item for every graph $Z\in\mathcal{Z}$ there exists an embedding pair $\mathbf{p}\in \mathfrak{P}^{(Z)}$ such that the algorithm outputs a subdivision $W_Z$ of $\mathscr{W}^{\mathbf{p}}_k$ of $G$ where the base cylinder of $W_Z$ is a subgraph of $W.$
  \end{enumerate}
\end{enumerate}
Moreover, we have that
\begin{align*}
    f^1_{\ref{thm_local_structure}}(k,h)&\in~ 2^{k^{\mathcal{O}_h(1)}}\text{ and } f^2_{\ref{thm_local_structure}}(k,h)\in~ 2^{k^{\mathcal{O}_h(1)}}.
\end{align*}
\end{theorem}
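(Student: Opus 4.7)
\medskip
\noindent\textbf{Proof plan.} The plan is to combine sequentially all the refinement machinery developed in \autoref{Thidplodowinga}--\autoref{sec_local_structure} into a single pipeline starting from the wall $W$. The first step will be to choose an integer $g=g(h)$ large enough that every graph on at most $h$ vertices embeds in every surface of Euler-genus at least $g$ (by Ringel--Youngs $g\in\mathcal{O}(h^2)$ suffices), and to apply \autoref{obs_create_meadow} to $W$ with parameters $k$, $g$, and an appropriately chosen $t=t(k,h)$. Its three outcomes match our first two outcomes directly: a $K_{3k^2+kh}$-minor grasped by $W$ realizes case~(1); a subdivision of a cylindrical concatenation of $k$-handle and $k$-crosscap segments representing a surface of Euler-genus $\geq g$ realizes case~(2); otherwise we obtain an apex set $A$ and an $(x,y)$-fertile $(t,\Sigma)$-meadow $(G-A,\delta,W_0)$ where $\Sigma$ has Euler-genus strictly less than $g$, so every graph on at most $h$ vertices embeds in $\Sigma$ \emph{together with} at least one obstruction (namely the fact that its embedding pair in $\mathfrak{P}^{(Z)}$ with the sphere-replaced-by-$\Sigma$ is meaningful).

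In the non-trivial case, I then propagate the meadow through the refinement pipeline by applying \autoref{lem_meadow_plowing} with detail parameter $d:=\mathsf{h}(h)$ to obtain an $(x,y)$-fertile $d$-plowed meadow; then \autoref{lem_garden_creation} with an appropriate choice of $b$ and $h':=\mathsf{h}(h)$ (dictated by the numbers required later in \autoref{lemma_grounded_extensions_to_obstructions}) to produce an $(x,y)$-fertile $d$-blooming $(t_1,b,h',\Sigma)$-garden; then \autoref{obs_single_thicket_orchard} to reinterpret it as a single-thicket orchard; and finally \autoref{ripe_orchard} to obtain a $(z,p)$-ripe $d$-blooming $(t_2,b,h',s,\Sigma)$-orchard $(G-A,\delta',W')$ whose thicket nests are of size $s\geq f^1_{\ref{lemma_grounded_extensions_to_obstructions}}(k,h)$ and whose walloid has order $t_2\geq f^2_{\ref{lemma_grounded_extensions_to_obstructions}}(k,h)$. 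The base cylinder of $W'$ is, by construction, a subgraph of $W$ throughout, since each of \autoref{lem_meadow_plowing}, \autoref{lem_garden_creation}, \autoref{obs_single_thicket_orchard}, and \autoref{ripe_orchard} only ``consumes'' a portion of the previous walloid's infrastructure. The parameters $f^1_{\ref{thm_local_structure}}$ and $f^2_{\ref{thm_local_structure}}$ are then obtained by composing the bounds of each of these lemmata; the final order will be dominated by the doubly exponential ones from \autoref{ripe_orchard}, yielding $2^{k^{\mathcal{O}_h(1)}}$.

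Once the ripe orchard is at hand, the final step is to invoke \autoref{lemma_grounded_extensions_to_obstructions} with parameters $k$ and $h$. It returns a set $S_0\subseteq V(G-A)$ with $|S_0|\leq f^3_{\ref{lemma_grounded_extensions_to_obstructions}}(z,p,k,h)\in 2^{k^{\mathcal{O}_h(1)}}$ such that, for every graph $Z$ on at most $h$ vertices, either $G-A-S_0$ has no $\delta'$-bound extension of $Z$, or $G$ contains a subdivision of some $\mathscr{W}^{Z}_k\in\mathfrak{W}_{\mathsf{excl}(Z)}=\{\mathscr{W}^{\mathbf{p}}\mid\mathbf{p}\in\mathfrak{P}^{(Z)}\}$ whose base cylinder sits in that of $W'$, and thus in $W$. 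I take $S:=A\cup S_0$ and let $\mathcal{Z}$ be the (finite) list of graphs $Z$ on at most $h$ vertices for which the second alternative holds, recording also the embedding pair $\mathbf{p}\in\mathfrak{P}^{(Z)}$ produced; there are at most $2^{\Ocal(h^2)}$ such $Z$ to test, and each test is an application of the minor-checking algorithm, so the total running time stays within $\mathcal{O}_{h,k}(|G|^3)$.

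The last, and delicate, point to settle is the ``majority component'' condition in outcome~(3). Since $|S|\in 2^{k^{\mathcal{O}_h(1)}}$ while $W$ has order $f^1_{\ref{thm_local_structure}}(k,h)$ which I will choose asymptotically larger than $|S|$, a standard tangle/separator argument shows that exactly one component $J$ of $G-S$ contains a full row and a full column of $W$, and in fact all but a bounded initial portion of $W$'s rows/columns lie in $J$. Hence any minor model of a graph $Z$ hosted in $J$ induces a $\delta'$-bound extension in $G-A-S_0$, because such a model reaches the wall infrastructure which grounds it in $\delta'$. Therefore, if $Z\notin\mathcal{Z}$ then $G-A-S_0$ has no $\delta'$-bound extension of $Z$, and consequently $J$ is $Z$-minor-free, giving (3a); the construction above already delivers (3b). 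The main obstacle of the proof will be bookkeeping the nested parameter dependencies so that the infrastructure handed from each lemma to the next is large enough; in particular, the choice of $b$ in the garden has to absorb both the $h$-fold multiplication demanded by the thicket extraction in \autoref{rooted_certification_into_flower} and the number of flowers required per parterre in \autoref{lemma_grounded_extensions_to_obstructions}, while $s$ and $t_2$ must satisfy the lower bounds $f^1_{\ref{lemma_grounded_extensions_to_obstructions}}(k,h)\in\Ocal_h(k^2)$ and $f^2_{\ref{lemma_grounded_extensions_to_obstructions}}(k,h)\in\Ocal_h(k^3)$ respectively, after being degraded by \autoref{ripe_orchard}.
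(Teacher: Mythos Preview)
The proposal is correct and takes essentially the same approach as the paper: the pipeline (meadow creation via \autoref{obs_create_meadow}, plowing via \autoref{lem_meadow_plowing}, garden creation via \autoref{lem_garden_creation}, conversion to a single-thicket orchard via \autoref{obs_single_thicket_orchard}, ripening via \autoref{ripe_orchard}, and finally \autoref{lemma_grounded_extensions_to_obstructions}) is identical, as is the final step of setting $S=A\cup S_0$ and reading off~$\mathcal{Z}$. You even flag the ``majority component'' point explicitly, which the paper handles only implicitly; your justification for (3a) via $\delta'$-boundedness (any $Z$-minor in $J$ is connected through $J$ to ground vertices of $\delta'$, hence yields a $\delta'$-bound extension contradicting the first alternative of \autoref{lemma_grounded_extensions_to_obstructions}) is the right one.
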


\begin{proof}
Let us start by discussing the numbers.
We set $g\coloneqq g(h)$ to be the smallest integer $g$ such that every graph on at most $h$ vertices embeds in \textsl{every} surface of Euler-genus at least $g.$
Observe that, since there are, up to isomorphism, only finitely many graphs on at most $h$ vertices, and thus $g$ is well-defined. 
Indeed, it holds that $g \in \mathsf{poly}(h).$
Similarly, we know that for every extension of a graph on at most $h$ vertices it holds that the graph of the extension has at most $\mathsf{h} \coloneqq \mathsf{h}(h)\in\mathcal{O}(h^2)$ terminals.
Hence, it will suffice to only consider folios of detail $\mathsf{h}.$

We then set
\begin{align*}
  f^1_{\ref{thm_local_structure}}(h,k) \coloneqq~ &  f^1_{\ref{obs_create_meadow}}\bigg( h,k,f_{\ref{lem_meadow_plowing}} \bigg( f_{\ref{lem_garden_creation}}\big(f^1_{\ref{ripe_orchard}}(f^2_{\ref{lemma_grounded_extensions_to_obstructions}}(k,h),k,h,f^1_{\ref{lemma_grounded_extensions_to_obstructions}}(k,h),x,y,\mathsf{h})\\
  &+f^2_{\ref{ripe_orchard}}(f^2_{\ref{lemma_grounded_extensions_to_obstructions}}(k,h),k,h,f^1_{\ref{lemma_grounded_extensions_to_obstructions}}(k,h),x,y,\mathsf{h})+1,k,h,\mathsf{h}\big),h,g\bigg)\bigg)
\end{align*}
and
\begin{align*}
  f^2_{\ref{thm_local_structure}}(h,k) & \coloneqq  f^2_{\ref{obs_create_meadow}}(h,k,k) + f^3_{\ref{lemma_grounded_extensions_to_obstructions}}(q,p,k,h).
\end{align*}
The values for $x,y,q,$ and $p$ are bounded in functions purely depending on $k$ and $h$ as explained below.
\smallskip

As a first step, we want to get the first two outcomes of the assertion out of the way and create a meadow for further refinement.
Calling upon \autoref{obs_create_meadow} yields either
\begin{enumerate}
  \item a $(3k^2+k\cdot h)$-minor grasped by $W,$
  \item a ``$k$-Dyck-walloid\footnote{That is a walloid which is the cylindrical concatenation of only $k$-handle and $k$-crosscap segments.}'' representing a surface of Euler-genus at least $g$ whose base cylinder is a subgraph of $W,$ or
  \item a set $S_1\subseteq V(G)$ of size at most $f^2_{\ref{obs_create_meadow}}(h,k,k)$ and an $(x,y)$-fertile $\big(z_1,\Sigma\big)$-meadow $(G_1,\delta_1,W_1)$ where $\Sigma$ is of Euler-genus less than $g$ where $G_1\coloneqq G-S_1.$
  Here we have
  \begin{align*}
    z_1 =~ f_{\ref{lem_meadow_plowing}} \big(&f_{\ref{lem_garden_creation}}(f^1_{\ref{ripe_orchard}}(f^2_{\ref{lemma_grounded_extensions_to_obstructions}}(k,h),k,h,f^1_{\ref{lemma_grounded_extensions_to_obstructions}}(k,h),x,y,\mathsf{h})\\
    &+f^2_{\ref{ripe_orchard}}(f^2_{\ref{lemma_grounded_extensions_to_obstructions}}(k,h),k,h,f^1_{\ref{lemma_grounded_extensions_to_obstructions}}(k,h),x,y,\mathsf{h})+1,k,h,\mathsf{h}),\mathsf{h},g\big).
  \end{align*}
\end{enumerate}
In the first and second cases, we are immediately done.
In the third case, we obtain the desired meadow and the following bounds on $x$ and $y$:
\begin{align*}
  x & \in \mathcal{O}(k^4\cdot h)\text{ and}\\
  y & \in 2^{k^{\mathcal{O}_h(1)}}.
\end{align*}

Next, we want to plow our meadow.
By using \autoref{lem_meadow_plowing} we obtain an $(x,y)$-fertile $\big(z_2,\Sigma\big)$-meadow $(G_1,\delta_2,W_2)$ which is, in addition, $\mathsf{h}$-plowed.
Moreover,
\begin{align*}
  z_2 =~ f_{\ref{lem_garden_creation}}\big(&f^1_{\ref{ripe_orchard}}(f^2_{\ref{lemma_grounded_extensions_to_obstructions}}(k,h),k,h,f^1_{\ref{lemma_grounded_extensions_to_obstructions}}(k,h),x,y,\mathsf{h})\\
  &+f^2_{\ref{ripe_orchard}}(f^2_{\ref{lemma_grounded_extensions_to_obstructions}}(k,h),k,h,f^1_{\ref{lemma_grounded_extensions_to_obstructions}}(k,h),x,y,\mathsf{h})+1,k,h,\mathsf{h}\big).
\end{align*}

Once we have our plowed meadow, it is time for proper domestication.
We will turn the meadow into a garden using \autoref{lem_garden_creation}.
This returns an $(x,y)$-fertile $\big(z_3,k,h,\Sigma\big)$-garden $(G_1,\delta_3,W_3)$ which is $\mathsf{h}$-blooming.
Here it holds that
\begin{align*}
  z_3 = f^1_{\ref{ripe_orchard}}(f^2_{\ref{lemma_grounded_extensions_to_obstructions}}(k,h),k,h,f^1_{\ref{lemma_grounded_extensions_to_obstructions}}(k,h),x,y,\mathsf{h})+f^2_{\ref{ripe_orchard}}(f^2_{\ref{lemma_grounded_extensions_to_obstructions}}(k,h),k,h,f^1_{\ref{lemma_grounded_extensions_to_obstructions}}(k,h),x,y,\mathsf{h})+1
\end{align*}

From here we want to proceed and create a ripe orchard.
By using \autoref{obs_single_thicket_orchard} we may assume that $(G_1,\delta_3,W_3)$ is in fact an $(x,y)$-fertile and $\mathsf{h}$-blooming single-thicket $\big(z_{4,1},k,h,z_{4,2},\mathsf{h},\Sigma\big)$-orchard.
We then apply \autoref{ripe_orchard} to obtain a $(q,p)$-ripe and $\mathsf{h}$-blooming $\big(f^2_{\ref{lemma_grounded_extensions_to_obstructions}}(k,h),k,h,f^1_{\ref{lemma_grounded_extensions_to_obstructions}}(k,h),\Sigma\big)$-orchard $(G_1,\delta_4,W_4).$
Here we have
\begin{align*}
  z_{4,1} &= f^1_{\ref{ripe_orchard}}(f^2_{\ref{lemma_grounded_extensions_to_obstructions}}(k,h),k,h,f^1_{\ref{lemma_grounded_extensions_to_obstructions}}(k,h),x,y,\mathsf{h}),\\
  z_{4,2} &= f^2_{\ref{ripe_orchard}}(f^2_{\ref{lemma_grounded_extensions_to_obstructions}}(k,h),k,h,f^1_{\ref{lemma_grounded_extensions_to_obstructions}}(k,h),x,y,\mathsf{h}),\\
  q & \leq f^3_{\ref{ripe_orchard}}(f^2_{\ref{lemma_grounded_extensions_to_obstructions}}(k,h),k,h,f^2_{\ref{lemma_grounded_extensions_to_obstructions}}(k,h),x,y,\mathsf{h})\text{, and}\\
  p & \leq f^4_{\ref{ripe_orchard}}(f^2_{\ref{lemma_grounded_extensions_to_obstructions}}(k,h),k,h,f^1_{\ref{lemma_grounded_extensions_to_obstructions}}(k,h),x,y,\mathsf{h}). 
\end{align*}

We now make use of \autoref{lemma_grounded_extensions_to_obstructions}.
This produces a set $S_2\subseteq V(G_1)$ of size at most $f^3_{\ref{lemma_grounded_extensions_to_obstructions}}(q,p,k,h)$ such that $W$ together with $S\coloneqq S_1\cup S_2$ satisfy the third outcome of our assertion as desired.
Notice that the algorithm from \autoref{lemma_grounded_extensions_to_obstructions} partitions the set of all graphs on at most $h$ vertices exactly into the set $\mathcal{Z}$ and those graphs which are fully removed by $S.$
With this, our proof is complete.
\end{proof}

\section{Extensions of graphs in their guest walloids}\llabel{sec_Htw}

This section is dedicated to the proof of the following theorem which is of crucial importance in our quest of proving Erd\H{o}s-P{\'o}sa dualities.
The results of these sections will be used in the proof of \autoref{lemma_half_integral_Brambles} in order to show that the walloids we consider contain large half-integral brambles.

\begin{theorem}\llabel{lemma_walloid_contains_obstruction}
For every graph $Z,$ there exists a function $f^{(Z)}_{\ref{lemma_walloid_contains_obstruction}} \colon\nn{2}{1}$ such that, for every embedding pair $\mathbf{p}$ of $\mathfrak{P}^{(Z)},$ $Z$ is a minor of $\mathscr{W}^{\mathbf{p}}_{f^{(Z)}_{\ref{lemma_walloid_contains_obstruction}}(|Z|)}.$
Moreover
$$f^{(Z)}_{\ref{lemma_walloid_contains_obstruction}}(|Z|) = 2^{\Ocal(\ell(\mathsf{h}(|Z|)))}.$$
\end{theorem}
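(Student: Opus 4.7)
The plan is to read back through the definition of an embedding pair. By construction, any $\mathbf{p}=(\Sigma,\mathbf{B})\in\mathfrak{P}^{(Z)}$ comes from a fixed triple $(M,\Pcal,\Gamma)$ where $M$ is connected, contains $Z$ as a minor, has order at most $\mathsf{ext}(|Z|)=2^{\mathcal{O}(\ell(|Z|^{2}))},$ $\Pcal\in\Pcal(M)$ is a subgraph partition, and $\Gamma$ is a $\Sigma$-embedding of the hypergraph $\Kcal_{\Pcal},$ each hyperedge $e$ being drawn as a disk $\Delta_{e}$ which hosts the linear society $\lin{G_{e},\Lambda_{e,\eta(e)}}.$ The members of $\mathbf{B}$ are precisely the non-disk-embeddable such societies; the disk-embeddable ones (together with the hyperedges of size $\leq 3$) are absorbed into the flat part of the surface. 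So it suffices to show that for $t$ of order $\mathsf{ext}(|Z|)$ the walloid $\mathscr{W}^{\mathbf{p}}_{t}$ contains $M$ as a subdivision (and hence as a minor), which gives the claimed bound $f^{(Z)}_{\ref{lemma_walloid_contains_obstruction}}(|Z|)=2^{\mathcal{O}(\ell(\mathsf{h}(|Z|)))}.$

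To exhibit $M$ inside $\mathscr{W}^{\mathbf{p}}_{t},$ I would use the fact that $\mathscr{W}^{\mathbf{p}}_{t}$ has a canonical $\Sigma$-embedding that mirrors $\Sigma=\Sigma^{(\mathsf{h},\mathsf{c})}$ segment-by-segment: the wall segment and the base cylinder play the role of the underlying sphere, the $\mathsf{h}$ handle segments contribute the handles, the $\mathsf{c}$ crosscap segments contribute the crosscaps, and each $(t,t,H_{i},\Lambda_{i})$-flower segment embeds a faithful copy of its society $\lin{H_{i},\Lambda_{i}}$ inside a small disk of the embedding, with the correct cyclic ordering of $\Lambda_{i}$ on its boundary. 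The construction of the minor model proceeds in three steps: (1) allocate, for each non-disk-embeddable hyperedge $e$ of $\Kcal_{\Pcal},$ one petal of the corresponding flower segment (using the fact that $t\geq |M|$ gives enough width so that each petal is isomorphic to $\lin{G_{e},\Lambda_{e,\eta(e)}}$); (2) realize every disk-embeddable hyperedge of $\Kcal_{\Pcal}$ (in particular every hyperedge of size at most three, since such societies are always disk-embeddable) inside the base cylinder, which is a large planar wall with enough ``free real estate'' away from the handles, crosscaps and flowers; (3) route pairwise vertex-disjoint paths along the rainbows of the handle, crosscap, and flower segments and through the base cylinder so as to connect the pieces in the exact pattern prescribed by $\Gamma.$

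The main obstacle will be step (3): one has to argue that for $t$ large enough the walloid really is a ``thickening'' of $\Gamma,$ so that any $\Sigma$-embedding of a graph on at most $\mathsf{ext}(|Z|)$ vertices can be realized by pairwise disjoint curves that stay within the wall/handle/crosscap/flower infrastructure. This boils down to a capacity check along each topological feature: the leftmost and rightmost rainbows of a handle segment and the rainbow of a crosscap segment each carry $t$ disjoint paths, so $t\geq |M|$ lets us route through every handle and crosscap as many times as $\Gamma$ requires; similarly, the $t$ vertical paths of the base cylinder suffice to realize the planar part of $\Gamma.$ Choosing $t$ to be a sufficiently large linear function of $\mathsf{ext}(|Z|),$ for instance $t=c\cdot\mathsf{ext}(|Z|)$ for some absolute constant $c$ that absorbs the constant numbers of handles/crosscaps and the boundary sizes (both of which are bounded by polynomial functions of $|Z|$), one obtains the desired subdivision of $M,$ and thereby $Z$ as a minor of $\mathscr{W}^{\mathbf{p}}_{t}.$ The resulting bound $f^{(Z)}_{\ref{lemma_walloid_contains_obstruction}}(|Z|)\in 2^{\mathcal{O}(\ell(\mathsf{h}(|Z|)))}$ is inherited directly from the bound on $\mathsf{ext}(|Z|)$ used in the definition of an embedding pair.
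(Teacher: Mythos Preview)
Your approach is essentially the same as the paper's: reconstruct $M$ (and hence $Z$) inside $\mathscr{W}^{\mathbf{p}}_{t}$ by placing the non-disk-embeddable societies into flower petals, realizing the flat parts in the wall infrastructure, and routing the connections through the handle/crosscap rainbows according to the $\Sigma$-embedding $\Gamma$ of $\Kcal_{\Pcal}.$ The paper organizes this differently by introducing an intermediate parametric graph, the \emph{$(t,\mathbf{p})$-design} $\mathscr{D}^{\mathbf{p}}_{t},$ built from a large wall with dedicated ``filaments'' (one pair per handle, one per crosscap, one per society in $\mathbf{B}$), and then proves two separate lemmas: $\mathscr{D}^{\mathbf{p}}_{t}\leq \mathscr{W}^{\mathbf{p}}_{5t}$ and $Z\leq \mathscr{D}^{\mathbf{p}}_{\Ocal(\mathsf{ext}(|Z|)^{4})}.$

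The one place where your sketch is thinner than the paper is exactly your step~(3). The paper makes this step concrete by \emph{cutting the surface open}: for each handle it picks a non-contractible curve $C_{i}^{h}$ that meets $\Gamma$ only in edges, subdivides the crossed edges, and records the cut edges as a set $\Tcal_{i}^{h}$ (and analogously $\Tcal_{i}^{c}$ for crosscaps). After these cuts the remaining hypergraph $H-\Tcal$ is plane-embeddable, so the planar routing can be carried out in the underlying wall by standard disjoint-paths arguments, while the cut edges $\Tcal_{i}^{h},\Tcal_{i}^{c}$ are routed through the corresponding handle/crosscap filaments whose boundary vertices are wired exactly to reconnect the two sides. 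Your ``capacity check'' intuition is correct, but without this explicit surface-cutting reduction to a planar instance the routing step is not yet an argument. Note also that the paper needs $t=\Ocal(\mathsf{ext}(|Z|)^{4})$ rather than linear in $\mathsf{ext}(|Z|)$, because after the cutting the auxiliary hypergraph has $\Ocal(\mathsf{ext}(|Z|)^{2})$ vertices and the planar disjoint-paths routing needs quadratic spacing between terminals; this still sits comfortably inside the claimed $2^{\Ocal(\ell(\mathsf{h}(|Z|)))}$ bound.
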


This theorem can be seen as an extension of the fact (see \cite{RobertsonST94Quickly}) that every planar graph $Z$ is a minor of some large enough grid, whose size only depends on the size of $Z.$
This result has been further extended by Gavoille and Hilaire \cite{gavoille2023minoruniversal}
for the Dyck-walloids\footnote{See \autoref{thm_local_structure} for an explanation of this term.} that are the $\mathscr{W}^{\bf p}$'s corresponding to embedding pairs of the form $\mathbf{p} = (\Sigma, \emptyset).$
\autoref{lemma_walloid_contains_obstruction} is an extension of these results to every embedding pair.

\subsection{Designs of embedding pairs}

In this subsection, we introduce the notion of a \textsl{design} of embedding pairs.
This notion, while similar to that of a walloid (they are in fact equivalent as parametric graphs), its definition will prove more beneficial towards showing \autoref{lemma_walloid_contains_obstruction}.

\paragraph{Filaments.} Let $z, t \in \Nbbb_{\geq 3}$ and consider a $(z, t)$-wall $W.$
Let $P_{1}, \ldots, P_{t}$ and $Q_{1}, \ldots, Q_{z}$ be the horizontal and vertical paths of $W$ respectively.
For every $i \in [t-1]$ and $j \in [z-1],$ let $B^{j}_{i}$ denote the brick of $W$ such that $B^{j}_{i} \cap P_{i} \neq \emptyset,$ $B^{j}_{i} \cap P_{i+1} \neq \emptyset,$ $B^{j}_{i} \cap Q_{j} \neq \emptyset,$ and $B^{j}_{i} \cap Q_{j+1} \neq \emptyset.$
Also, let $Z^{j}_{i}$ be the path $P^{i+1} \cap Q_{i+1}.$

Now, given an $h \in \Nbbb$ and a choice of $i \in [t-1]$ and $j \in [z-1],$ we say that the graph $B^{j}_{i} \cup \Pcal$ is an \defi{$h$-filament} of $W \cup \cupall \Pcal$ if $\Pcal$ is a set of $h$ many pairwise disjoint $Z^{j}_{i}$-$P_{t}$-paths that are orthogonal to the horizontal paths of $W$ and are only allowed to intersect the paths $P_{i'} \cap Q_{j+1},$ for every $i' \in [i+1, t].$
We also refer to the endpoints of the paths in $\Pcal$ in $Z^{j}_{i}$ as the \defi{boundary} vertices of the $h$-filament.
We refer to the set $\Pcal$ as the \defi{paths} of the filament.

\paragraph{Frames.} Let $t \in \Nbbb_{\geq 2},$ $f \in \Nbbb$ and consider an $((f + 1)2t, 2t)$-wall $W.$
Let $\mathbf{P} \coloneqq \{ \Pcal_{1}, \ldots, \Pcal_{f} \}$ be a set of sets of paths and $I \coloneqq \{ h_{1}, \ldots, h_{f} \} \subseteq \Nbbb.$
Moreover, let $t' \coloneqq t/2.$
We say that the graph $W \cup \cupall \mathbf{P}$ is a \defi{$(t, I)$-frame} if for every $i \in [f],$ the graph $F_{i} \coloneqq B^{it'}_{t'} \cup \cupall \Pcal_{i}$ is an $h_{i}$-filament of $W \cup \cupall \mathbf{P}.$
We refer to the set $\{ F_{1}, \ldots, F_{f} \}$ as the \defi{set of filaments} of $W \cup \cupall \mathbf{P}.$

\paragraph{Designs of embedding pairs.} Let $Z$ be a graph and $\mathbf{p} \coloneqq (\Sigma, \mathbf{B})$ be an embedding pair of $\mathfrak{P}^{(Z)}.$
We define the \defi{$(t, \mathbf{p})$-design $\mathscr{D}^{\mathbf{p}}$} to be a parametric graph $\lin{\mathscr{D}^{\mathbf{p}}_{t}}_{t \in \Nbbb_{\geq 2}}$ as follows.
Let $b \coloneqq |\mathbf{B}|,$ $\mathbf{B} = \{ \lin{Z_{1}, \Lambda_{1}}, \ldots, \lin{Z_{b}, \Lambda_{b}} \}$ and $f \coloneqq 2\mathsf{h} + \mathsf{c} + b.$
For every $t \in \Nbbb_{\geq 2},$ $\mathscr{D}^{\mathbf{p}}_{t}$ is obtained from a $(t, I)$-frame $F,$ where $I = \lin{h_{1}, \ldots, h_{f}},$ as follows.
\begin{itemize}
\item For $i \in [2\mathsf{h}],$ $h_{i} \coloneqq t,$
\item for $i \in [2\mathsf{h} + 1, 2\mathsf{h} + \mathsf{c}],$ $h_{i} \coloneqq 2t,$ and
\item for $i \in [2\mathsf{h} + \mathsf{c} + 1, f],$ $h_{i} \coloneqq |\Lambda_{i}|t.$
\end{itemize}
Moreover, for the $i$-th $h_{i}$-filament of $F,$ let $u^{i}_{1}, \ldots, u^{i}_{h_{i}}$ be its boundary vertices in left to right order. Then
\begin{itemize}
\item for odd $i \in [2\mathsf{h}],$ connect $u^{i}_{j}$ with $u^{i+1}_{j},$ for every $j \in [h_{i}],$
\item for $i \in [2\mathsf{h} + 1, 2\mathsf{h} + \mathsf{c}],$ connect $u^{i}_{j}$ with $u^{i}_{t' + j},$ for every $j \in [t'],$ and
\item for $i \in [2\mathsf{h} + \mathsf{c} + 1, f],$ let $\mathsf{dec}(Z_{i}, \Lambda_{i}) = \lin{ \lin{Z^{i}_{1}, \Lambda^{i}_{1}}, \ldots, \lin{Z^{i}_{q_{i}}, \Lambda^{i}_{q_{i}}} }.$
Then, for every $p \in [q_{i}],$ we take $t$ many disjoint copies of $\lin{Z^{i}_{p}, \Lambda^{i}_{p}}$ and connect, for every $p \in [q_{i}],$ every $j \in [t],$ every $l \in |\Lambda^{i}_{p}|,$ the $l$-th vertex of the $j$-th copy of $\Lambda^{i}_{p}$ with $u^{i}_{j'},$ where $j' \coloneqq (\sum_{p' \in [p-1]} |\Lambda_{p'}|)t + |\Lambda_{p}|(j - 1) + l.$
\end{itemize}

The following observation follows easily from the definition of designs.

\begin{observation}\llabel{designs_contain_walloids} Let $Z$ be a graph and $\mathbf{p} \in \mathfrak{P}^{(Z)}$ be an embedding pair of $Z.$
Then, for every $t \in \Nbbb_{\geq 2},$ $\mathscr{W}^{\mathbf{p}}_{t}$ is a minor of $\mathscr{D}^{\mathbf{p}}_{2t}.$
\end{observation}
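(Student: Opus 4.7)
The approach is to build an explicit minor model of $\mathscr{W}^{\mathbf{p}}_{t}$ inside $\mathscr{D}^{\mathbf{p}}_{2t}$ by exploiting the structural correspondence between the two families: a walloid arranges its wall-segment, handle-segments, crosscap-segments, and flower-segments cyclically around a base cylinder, whereas a design hangs the \emph{same} combinatorial pieces as filaments from a large planar wall. Doubling the design's parameter supplies the extra infrastructure required to ``fold'' the planar arrangement back into the cyclic one.

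First, the plan is to locate the base cylindrical wall of $\mathscr{W}^{\mathbf{p}}_{t}$ inside the $((f+1)\cdot 4t,\, 4t)$-wall $W$ of the $(2t,I)$-frame underlying $\mathscr{D}^{\mathbf{p}}_{2t}$, where $f = 2\mathsf{h}+\mathsf{c}+b$. The filaments of this frame are rooted at row $t' = t$, so the rows of $W$ strictly above this row are undisturbed by the filament construction and directly supply the horizontal paths of the walloid's base cylindrical wall. To turn these horizontal paths into the required pairwise disjoint \emph{cycles}, one routes $t$ closing paths through the bottom $3t$ rows of $W$: each closing path descends from the leftmost endpoint of a top row along one of the leftmost $t$ columns of $W$, traverses a dedicated bottom row from left to right, and ascends along one of the rightmost $t$ columns to the right endpoint of the same top row. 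Since the filament attachment columns $\{it \mid i\in[f]\}$ lie well away from the extreme $t$ columns on each side, and the bottom $3t$ rows lie strictly below the filament attachment row, these closing paths are disjoint from the filament infrastructure.

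Next, each filament of $\mathscr{D}^{\mathbf{p}}_{2t}$ is matched with its corresponding segment in $\mathscr{W}^{\mathbf{p}}_{t}$. A walloid handle segment is realized by a pair of consecutive $2t$-filaments together with their cross-filament boundary identifications $u^{2i-1}_{j}\!\leftrightarrow u^{2i}_{j}$: restricting to $t$ of the $2t$ boundary vertices reproduces a rainbow of the walloid handle, and the two paired filaments jointly reproduce the full handle topology. A walloid crosscap segment is realized by the corresponding $4t$-filament whose self-connections yield the crossing pattern characteristic of a non-orientable identification, with the surplus boundary vertices and internal paths of the filament providing the walloid's crosscap thickness after contraction. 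A walloid flower segment $(t, t, Z_i, \Lambda_i)$ is realized by its flower filament, which already carries $2t$ pairwise disjoint copies of every component of $\mathsf{dec}(Z_i, \Lambda_i)$, of which $t$ suffice.

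The main technical obstacle I anticipate is the bookkeeping associated with the cyclic closure: one must verify that the cycle-closing paths, the filament-internal paths, the boundary identifications joining paired or self-paired filaments, and the societies attached to flower filaments are all pairwise disjoint after the contractions defining the minor model. A separate, minor subtlety is a possible mismatch between the design's left-to-right filament ordering and the walloid's cyclic segment ordering; this is resolved by appealing to the fact (used implicitly in \autoref{lemma_moving_a_parterre} and \autoref{subsec_swapping}, and discussed when $\mathscr{W}^{\mathbf{p}}$ is first defined in \autoref{subsec_universal_obstructions}) that $\mathscr{W}^{\mathbf{p}}$ is determined only up to permuting the elements of $\mathbf{B}$, so an ordering matching the filament layout of the design can be chosen freely. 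Once the orderings agree, the disjointness check is a routine verification made tractable precisely because the parameter $2t$ provides twice the resources consumed by each piece of $\mathscr{W}^{\mathbf{p}}_{t}$.
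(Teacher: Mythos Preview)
The paper states this observation without proof, remarking only that it ``follows easily from the definition of designs.'' Your plan—building the base cylinder of $\mathscr{W}^{\mathbf{p}}_{t}$ out of rows of the frame's underlying wall, matching each filament of $\mathscr{D}^{\mathbf{p}}_{2t}$ with the corresponding segment of the walloid, and using the doubled parameter to absorb the overhead of folding a planar wall into a cylinder—is the natural way to make the observation precise and is presumably what the authors have in mind.

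One step needs tightening. You justify disjointness of the cycle-closing paths from the filament infrastructure by saying the bottom $3t$ rows ``lie strictly below the filament attachment row.'' But by definition an $h$-filament consists of the brick $B^{it}_{t}$ together with $h$ paths that run from $Z^{it}_{t}$ (at row $t{+}1$) all the way down to the bottom row $P_{4t}$, intersecting the wall along column $it{+}1$; the filament paths do not stay at the attachment row. Hence a horizontal closing path traversing a bottom row will, as written, meet each filament column. The repair is routine—the filaments occupy only $f$ isolated column-strips, and between any two consecutive filaments there are $t$ unused columns (and $t$ more on each extreme side) through which the closing paths can be locally detoured—but the disjointness does not follow from the reason you state. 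Once this is patched, your outline is a correct expansion of the observation.
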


\subsection{Designs as minors of walloids}

In this subsection, we demonstrate how for a given embedding pair of some graph, we can find a large design of it as a minor of a large enough instance of its corresponding walloid.

\begin{lemma}\llabel{walloids_contain_designs} Let $Z$ be a graph and $\mathbf{p} \in \mathfrak{P}^{(Z)}$ be an embedding pair of $Z.$
Then, for every $t \in \Nbbb_{\geq 2},$ $\mathscr{D}^{\mathbf{p}}_{t}$ is a minor of $\mathscr{W}^{\mathbf{p}}_{5t}.$
\end{lemma}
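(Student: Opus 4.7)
My plan is to exhibit an explicit minor embedding of $\mathscr{D}^{\mathbf{p}}_t$ into $\mathscr{W}^{\mathbf{p}}_{5t}$, exploiting the factor-of-five slack in the parameter to leave room for routing. Recall that $\mathscr{W}^{\mathbf{p}}_{5t}$ is the cylindrical concatenation of one $(5t,5t)$-wall segment, followed by $\mathsf{h}$ $5t$-handle segments, $\mathsf{c}$ $5t$-crosscap segments, and $b=|\mathbf{B}|$ flower segments, one for each society of $\mathbf{B}$. With $f\coloneqq 2\mathsf{h}+\mathsf{c}+b$ (as in the definition of $\mathscr{D}^{\mathbf{p}}$), the base cylinder of $\mathscr{W}^{\mathbf{p}}_{5t}$ is a cylindrical wall of height $5t$ whose width comfortably exceeds $(f+1)\cdot 2t$. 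The first step is to locate an $((f{+}1)2t,2t)$-subwall $W^{\star}$ of this base cylinder that will serve as the frame of the design: I would use the top $2t$ horizontal cycles (leaving the bottom $3t$ as routing slack) and choose the $(f{+}1)2t$ vertical paths so that, for each $i\in[f]$, a block of $2t$ consecutive vertical paths of $W^{\star}$ lies entirely within the base wall of the $i$-th non-wall segment of $\mathscr{W}^{\mathbf{p}}_{5t}$.

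The second step is to realize the $i$-th filament of $\mathscr{D}^{\mathbf{p}}_t$ using the top infrastructure of the corresponding segment. For a handle segment (needed to realize two consecutive filaments $F_{2i-1},F_{2i}$ with identity pairing $u^{2i-1}_j\leftrightarrow u^{2i}_j$), I would pick $t$ out of the $5t$ leftmost-rainbow paths and $t$ out of the $5t$ rightmost-rainbow paths, and show that by a suitable choice of indices the built-in reversing pairings $v_k\leftrightarrow u_{5t-k+1}$ and $v'_k\leftrightarrow u'_{5t-k+1}$ can be arranged to realize the identity matching required by the design. For a crosscap segment (needed to realize a single filament with shift-by-$t'$ pairing $u^i_j\leftrightarrow u^i_{t'+j}$), I would pick $2t$ out of the $10t$ crosscap-rainbow paths, and the design's shift pairing follows by choosing indices respecting the crosscap's own shift-by-$2\cdot 5t$ pairing. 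For a flower segment corresponding to a society $\lin{H_i,\Lambda_i}$, the walloid already contains $5t$ disjoint copies of each component of $\mathsf{dec}(H_i,\Lambda_i)$, so it suffices to retain the first $t$ of them, which is exactly what the $i$-th filament of $\mathscr{D}^{\mathbf{p}}_t$ prescribes.

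The third step is the routing: the selected rainbow paths (or flower attachment points) sit at the very top of the respective segment's base wall, whereas the brick $B^{it'}_{t'}$ of $W^{\star}$ feeding the $i$-th filament lies $3t$ horizontal paths below that top. The $3t$ horizontal paths of slack, together with the $4\cdot 5t$ vertical paths of the segment's base wall, give a ``permutation sub-wall'' inside each segment in which the $h_i\le 2t$ filament paths can be routed disjointly from the correct positions at the base of the brick to the correct selected rainbow/flower-attachment points. Taking the union of $W^{\star}$, these routings, and the retained flower copies produces a subdivision of $\mathscr{D}^{\mathbf{p}}_t$ as a subgraph of $\mathscr{W}^{\mathbf{p}}_{5t}$, which yields the desired minor. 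The main obstacle will be the indexing bookkeeping for the handles, where the design's identity matching must be realized through the walloid's reversing rainbows consistently across all $\mathsf{h}$ handles; however, once the right labelling convention is chosen, this reduces to elementary rearrangement using the abundant slack in the base-wall vertical paths.
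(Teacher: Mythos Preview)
Your overall strategy—build an $((f{+}1)2t,2t)$-wall inside $\mathscr{W}^{\mathbf{p}}_{5t}$ and then realize each filament from the corresponding segment's rainbows or societies—matches the paper's. Where you diverge is the wall construction, and this is precisely where the paper's main idea lives. You take $W^\star$ as a straight subwall of the base cylinder; the paper instead threads the \emph{top $t$ horizontal paths} of its wall $U$ through the rainbows of every handle, crosscap, and flower segment, with only the bottom $t$ horizontal paths lying in the last $t$ cycles of the base cylinder. The payoff is that the filament bricks $B^{it'}_{t'}$ then land at the apex of the rainbow arcs, so the unused rainbow edges (at least $4t$ remain in each rainbow after $t$ are consumed by the horizontal paths) are already incident to the brick; extending one such edge down through unused vertical paths on both sides to the bottom row of $U$ yields, in one stroke, a filament path for $F_{2i-1}$, the required attachment edge, and a filament path for $F_{2i}$, with no separate routing step needed.

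Your routing step, by contrast, has an orientation problem as written: with $W^\star$ in the \emph{top} $2t$ cycles, the brick sits at depth roughly $t/2$ from the top, not $3t$, and your declared $3t$ rows of slack lie \emph{below} $W^\star$, on the wrong side of the rainbows. To get from the bricks up to the rainbows you would have to pass through the upper rows of $W^\star$ itself, or exploit the base-cylinder vertical paths enclosed inside each large brick of $W^\star$—neither of which your sketch addresses. The plan is salvageable (e.g.\ place $W^\star$ in the bottom $2t$ cycles and use those interior vertical paths to escape upward), but the paper's through-the-rainbows wall construction avoids the issue entirely and is the cleaner route to adopt.
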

\begin{proof} Let $\mathbf{p} = (\Sigma, \mathbf{B}),$ let $\mathsf{h}$ and $\mathsf{c}$ be such that $\Sigma = \Sigma^{(\mathsf{h}, \mathsf{c})},$ and assume that $\lin{ \lin{Z_{1}, \Lambda_{1}}, \ldots, \lin{Z_{q}, \Lambda_{q}} }$ are the linear societies contained in $\mathbf{B}$ arranged in the order they appear in $\mathscr{W}^{\mathbf{p}},$ $\lin{Z_{1}, \Lambda_{1}}$ being the linear society of the leftmost flower segment of $\mathscr{W}^{\mathbf{p}},$ while $\lin{Z_{q}, \Lambda_{q}}$ being the linear society of the rightmost flower segment of $\mathscr{W}^{\mathbf{p}}.$
Fix $t \in \Nbbb_{\geq 2}.$
We describe how to find $\mathscr{D}^{\mathbf{p}}_{t}$ as a minor of $\mathscr{W}^{\mathbf{p}}_{5t}.$

\begin{figure}[h]
    \centering
    \scalebox{.95}{
    \begin{tikzpicture}[scale=1]

        \pgfdeclarelayer{background}
        \pgfdeclarelayer{foreground}
            
        \pgfsetlayers{background,main,foreground}
            
        \begin{pgfonlayer}{main}
        \node (C) [v:ghost] {};
	
	\scalebox{0.9}{\pgftext{\includegraphics{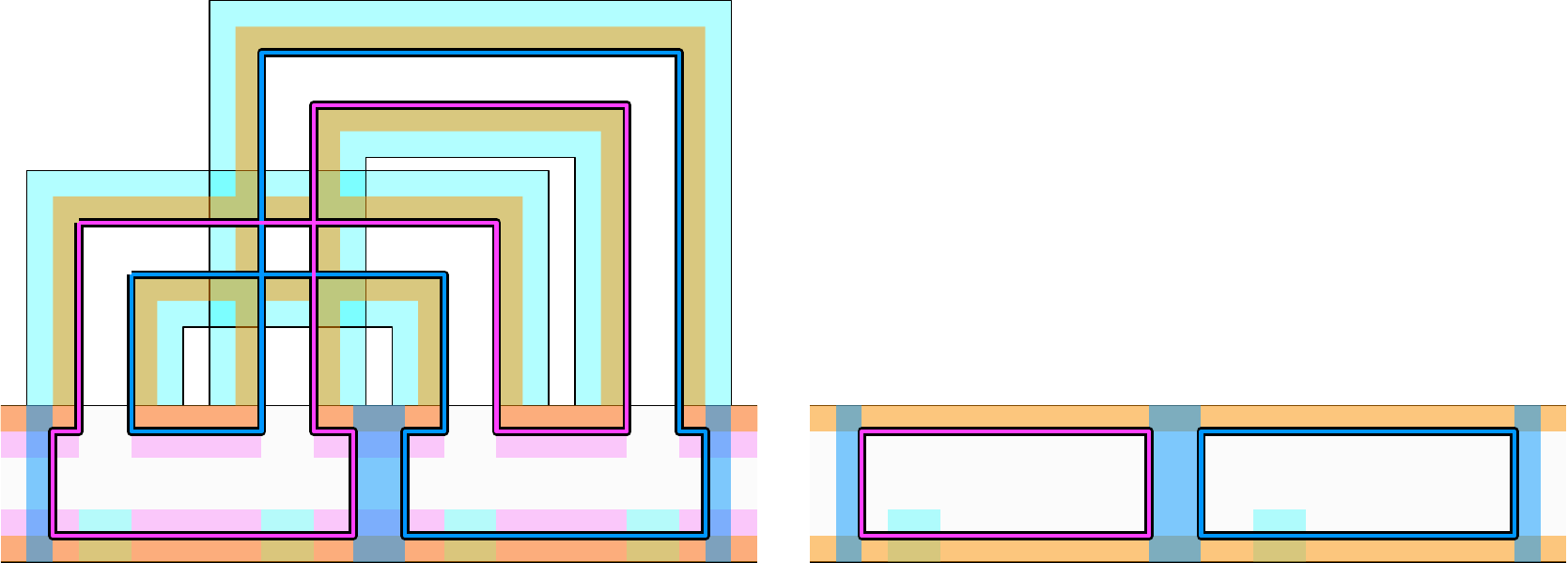}} at (C.center);}

            \node (center) [v:ghost,position=270:19.7mm from C] {};            
            \node (center_2) [v:ghost,position=180:21.6mm from center] {};
            \node (center_1) [v:ghost,position=180:35.4mm from center_2] {};
            
            \node (label_a) [v:ghost,position=167:11.8mm from center_1] {$a$};
            \node (label_b) [v:ghost,position=191:11.8mm from center_1] {$b$};
            \node (label_c) [v:ghost,position=348:13.2mm from center_1] {$c$};
            \node (label_d) [v:ghost,position=11:13.2mm from center_1] {$d$};
            
            \node (label_a_2) [v:ghost,position=167:12.6mm from center_2] {$a'$};
            \node (label_b_2) [v:ghost,position=191:12.6mm from center_2] {$b'$};
            \node (label_c_2) [v:ghost,position=348:11.8mm from center_2] {$c'$};
            \node (label_d_2) [v:ghost,position=11:11.8mm from center_2] {$d'$};
       
        \end{pgfonlayer}{main}
        
        \begin{pgfonlayer}{foreground}
        \end{pgfonlayer}{foreground}

        \begin{pgfonlayer}{background}
        \end{pgfonlayer}{background}
        
    \end{tikzpicture}
    }
    \caption{Finding two $t$-filaments that represent a handle in a $5t$-handle segment in the proof of \autoref{walloids_contain_designs}.}
    \llabel{designs_in_walloids_handle}
\end{figure}

\paragraph{Defining the underlying wall.} We define two sets of $t$ pairwise disjoint paths, say $\Pcal$ and $\Pcal',$ which are subgraphs of $\mathscr{W}^{\mathbf{p}}_{5t}$ as follows.
We define $\Pcal$ as the paths starting from the leftmost boundary vertices of the first handle segment of $\mathscr{W}^{\mathbf{p}}_{4t},$ ending at the rightmost boundary vertices of the last flower segment of $\mathscr{W}^{\mathbf{p}}_{5t},$ where all such endpoints are vertices of the first $t$ cycles of the base cylinder of $\mathscr{W}^{\mathbf{p}}_{5t},$ with the additional property that whenever these paths intersect a handle, crosscap, or flower segment, they use the edges of their corresponding rainbows as depicted in \autoref{designs_in_walloids_handle}, \autoref{designs_in_walloids_crosscap}, and \autoref{designs_in_walloids_flower} respectively.
Moreover, we define $\Pcal'$ as the subpaths of the last $t$ cycles of the base cylinder of $\mathscr{W}^{\mathbf{p}}_{5t}$ starting from the leftmost boundary vertices of the first handle segment of $\mathscr{W}^{\mathbf{p}}_{5t},$ ending at the rightmost boundary vertices of the last flower segment of $\mathscr{W}^{\mathbf{p}}_{5t},$ intersecting all segments of the walloid.
The paths in $\Pcal \cup \Pcal'$ will serve as the horizontal paths of the underlying wall of $\mathscr{D}^{\mathbf{p}}_{t}.$

We proceed with the definition of multiple sets of pairwise disjoint paths, depending on the three different types of segments present in $\mathscr{W}^{\mathbf{p}}_{5t}.$

Let $i \in \mathsf{h}$ and let $W$ be the $i$-th handle segment of $\mathscr{W}^{\mathbf{p}}_{5t}.$
We define three sets, say $\Hcal_{i}^{j},$ $j \in [3],$ of paths that are vertical paths of $\mathscr{W}^{\mathbf{p}}_{5t}$ as follows.
$\Hcal_{i}^{1}$ (resp. $\Hcal_{i}^{3}$) contains the first (resp. last) $t$ many vertical paths of $W$ and $\Hcal_{i}^{2}$ is the set that contains the $t$ many vertical paths of $W$ whose topmost endpoints are the last $t$ among the leftmost endpoints of the rightmost rainbow of $W,$ union the set that contains the $t$ many vertical paths of $W$ whose topmost endpoints are the first $t$ among the rightmost endpoints of the leftmost rainbow of $W.$

Now, let $i \in \mathsf{c}.$
We define two sets, say $\Ccal_{i}^{j},$ $j \in [2],$ of paths that are vertical paths of $\mathscr{W}^{\mathbf{p}}_{5t}$ as follows.
$\Ccal_{i}^{1}$ (resp. $\Ccal_{i}^{2}$) contains the first (resp. last) $t$ many vertical paths of the $i$-th crosscap segment of our walloid.
Finally, let $i \in [b],$ where $b$ is the number of flower segments of our walloid.
As before, we define two sets, say $\Fcal_{i}^{j},$ $j \in [2],$ of paths that are vertical paths of $\mathscr{W}^{\mathbf{p}}_{5t}$ as follows.
$\Fcal_{i}^{1}$ (resp. $\Fcal_{i}^{2}$) contains the first (resp. last) $t$ many vertical paths of the $i$-th flower segment of our walloid.

Clearly, the graph
$$U \coloneqq \cupall (\Pcal \cup \Pcal' \cup \bigcup_{i \in [\mathsf{h}]} (\Hcal_{i}^{1} \cup \Hcal_{i}^{2} \cup \Hcal_{i}^{3}) \cup \bigcup_{i \in [\mathsf{c}]} (\Ccal_{i}^{1} \cup \Ccal_{i}^{2}) \cup \bigcup_{i \in [f]} (\Fcal_{i}^{1} \cup \Fcal_{i}^{2}))$$
is an $((f+1)2t, 2t)$-wall, where we define $f \coloneqq 2\mathsf{h} + \mathsf{c} + b.$
$U$ will serve as the underlying wall of the frame we will define for the design we're looking for.

\begin{figure}[h]
    \centering
    \scalebox{1.03}{
    \begin{tikzpicture}[scale=1]

        \pgfdeclarelayer{background}
        \pgfdeclarelayer{foreground}
            
        \pgfsetlayers{background,main,foreground}
            
        \begin{pgfonlayer}{main}
        \node (C) [v:ghost] {};

	\scalebox{0.9}{\pgftext{\includegraphics{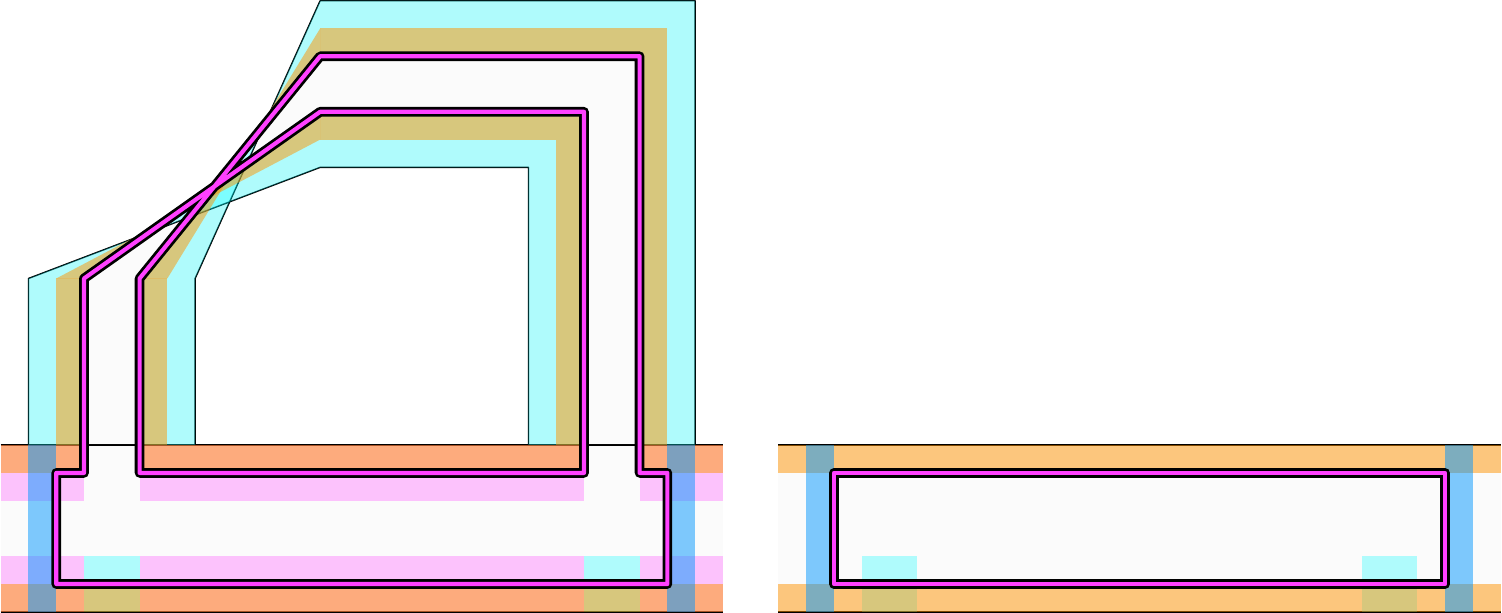}} at (C.center);}
            
        \node (center) [v:ghost,position=270:20.2mm from C] {};
        \node (center_1) [v:ghost,position=180:35.1mm from center] {};
        
            \node (label_a) [v:ghost,position=174:26mm from center_1] {$a$};
            \node (label_b) [v:ghost,position=186:26mm from center_1] {$b$};
            \node (label_c) [v:ghost,position=353:25.2mm from center_1] {$c$};
            \node (label_d) [v:ghost,position=5.8:25.2mm from center_1] {$d$};
       
        \end{pgfonlayer}{main}
        
        \begin{pgfonlayer}{foreground}
        \end{pgfonlayer}{foreground}

        \begin{pgfonlayer}{background}
        \end{pgfonlayer}{background}
        
    \end{tikzpicture}
    }
    \caption{Finding a $2t$-filament that represents a crosscap in a $5t$-crosscap segment in the proof of \autoref{walloids_contain_designs}.}
    \llabel{designs_in_walloids_crosscap}
\end{figure}

\paragraph{Defining the appropriate frame.}
Let $t' \coloneqq t/2.$
We proceed to define a $(t, I)$-frame $F,$ where $I \coloneqq \lin{h_{1}, \ldots, h_{f}},$ from $\mathscr{W}^{\mathbf{p}}_{5t},$ whose underlying wall is $U.$

Let $i \in [2\mathsf{h}]$ be odd and $i' \coloneqq i \ \mathsf{mod} \ 2.$
Notice that by construction, the brick $B^{it'}_{t'}$ (resp. $B^{(i+1)t'}_{t'}$) of $U$ corresponds to a unique cycle of the $i'$-th handle segment of $\mathscr{W}^{\mathbf{p}}_{5t},$ that is the cycle bounded by $abcd$ (resp. $a'b'c'd'$) as depicted in \autoref{designs_in_walloids_handle}.
Moreover, again by construction, there remain at least $t$ edges from each of the rainbow linkages of the $i'$-th handle segment we have not used.
W.l.o.g. we can use $t$ of them from the leftmost rainbow and extend both of their endpoints through their incident vertical paths of $\mathscr{W}^{\mathbf{p}}_{5t}$ until both endpoints are bottom boundary vertices of $U.$
These paths give us the necessary infrastructure to argue that $B^{it'}_{t'}$ and $B^{(i+1)t'}_{t'}$ union these paths produce an $h_{i}$-filament and $h_{i+1}$-filament of $F$ respectively, where $h_{i} \coloneqq h_{i+1} \coloneqq t,$ with the necessary edges between the boundary vertices of the two filaments required by the definition of a $(t, \mathbf{p})$-design.

Let $i \in [2\mathsf{h} + 1, 2\mathsf{h} + \mathsf{c}]$ and $i' \coloneqq i - 2\mathsf{h}.$
In a similar way as before, observe that by construction, the brick $B^{it'}_{t'}$ of $U$ corresponds to a unique cycle of the $i'$-th crosscap segment of $\mathscr{W}^{\mathbf{p}}_{5t},$ that is the cycle bounded by $abcd$ as depicted in \autoref{designs_in_walloids_crosscap}.
As before, there remain at least $2t$ edges from the rainbow linkage of the $i'$-th crosscap segment we have not previously used.
Now, we can use $2t$ of these edges and extend both of their endpoints through their incident vertical paths of $\mathscr{W}^{\mathbf{p}}_{5t}$ until both endpoints are bottom boundary vertices of $U.$
These paths give us the necessary infrastructure to argue that $B^{it'}_{t'}$ union these paths produces an $h_{i}$-filament of $F,$ where $h_{i} \coloneqq 2t,$ with the necessary edges between the boundary vertices of the filament required by the definition of a $(t, \mathbf{p})$-design.

Finally, let $i \in [2\mathsf{h} + \mathsf{c} + 1, f]$ and $i' \coloneqq i - 2\mathsf{h} + \mathsf{c}.$
In a similar way as before, observe that by construction, the brick $B^{it'}_{t'}$ of $U$ corresponds to a unique cycle of the $i'$-th flower segment of $\mathscr{W}^{\mathbf{p}}_{5t},$ that is the cycle bounded by $abcd$ as depicted in \autoref{designs_in_walloids_flower}.
Now, we can simply connect all linear societies $\lin{Z_{i'}, \Lambda_{i'}}$ of the $i'$-th flower to bottom boundary vertices of $U$ by using the vertical paths of $\mathscr{W}^{\mathbf{p}}_{5t}$ whose topmost endpoint is a top boundary vertex of its base cylinder that is adjacent to some vertex of a copy of $\Lambda_{i'}.$
This gives the necessary infrastructure to argue that $B^{it'}_{t'}$ union these paths produces an $h_{i}$-filament of $F,$ where $h_{i} \coloneqq |\Lambda_{i'}|t,$ as required by the definition of a $(t, \mathbf{p})$-design.
\end{proof}

\begin{figure}[h]
    \centering
    \scalebox{1}{
    \begin{tikzpicture}[scale=1]

        \pgfdeclarelayer{background}
        \pgfdeclarelayer{foreground}
            
        \pgfsetlayers{background,main,foreground}
            
        \begin{pgfonlayer}{main}
        \node (C) [v:ghost] {};

	\scalebox{0.9}{\pgftext{\includegraphics{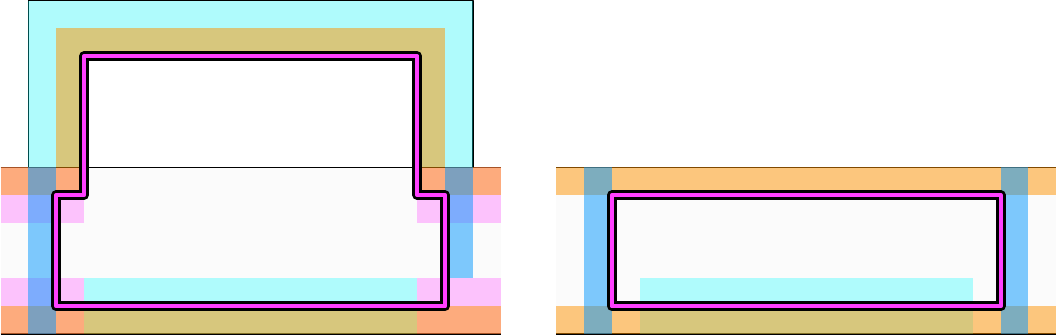}} at (C.center);}
            
        \node (center) [v:ghost,position=270:7.5mm from C] {};
        \node (center_1) [v:ghost,position=180:25mm from center] {};
        
            \node (label_a) [v:ghost,position=170:15.5mm from center_1] {$a$};
            \node (label_b) [v:ghost,position=188:15.5mm from center_1] {$b$};
            \node (label_c) [v:ghost,position=350:15mm from center_1] {$c$};
            \node (label_d) [v:ghost,position=10:15mm from center_1] {$d$};
       
        \end{pgfonlayer}{main}s
        
        \begin{pgfonlayer}{foreground}
        \end{pgfonlayer}{foreground}

        \begin{pgfonlayer}{background}
        \end{pgfonlayer}{background}
        
    \end{tikzpicture}
    }
    \caption{Finding a $|\Lambda|t$-filament representing a $(t,t,Z,\Lambda)$-flower in a $(t,t,Z,\Lambda)$-flower segment in the proof of \autoref{walloids_contain_designs}.}
    \llabel{designs_in_walloids_flower}
\end{figure}

The previous lemma combined with \autoref{designs_contain_walloids} shows that for any embedding pair $\mathbf{p} \in \mathfrak{P}^{(Z)}$ of any graph $Z,$ $\mathscr{D}^{\mathbf{p}} \approx \mathscr{W}^{\mathbf{p}}.$

\subsection{Graphs as minors of their designs}

In this subsection, we demonstrate how for a given embedding pair of some graph $Z,$ we can find $Z$ as a minor of a large enough instance of the design corresponding to its embedding pair, whose size only depends on the size of $Z.$

\begin{lemma}\llabel{designs_contain_graphs} For every graph $Z,$ there exists a function $f^{(Z)}_{\ref{designs_contain_graphs}} \colon \nn{1}{1}$ such that, for every embedding pair $\mathbf{p} \in \mathfrak{P}^{(Z)},$ $Z$ is a minor of $\mathscr{D}^{\mathbf{p}}_{f^{(Z)}_{\ref{designs_contain_graphs}}(|Z|)}.$
Moreover
$$f^{(Z)}_{\ref{designs_contain_graphs}}(|Z|) = 2^{\Ocal(\ell(\mathsf{h}(|Z|)))}.$$
\end{lemma}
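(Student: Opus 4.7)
Let $\mathbf{p}=(\Sigma,\mathbf{B})\in\mathfrak{P}^{(Z)}$ and fix a quadruple $(M,\Pcal,\Gamma,\eta)$ generating $\mathbf{p}$, where $M$ is a connected graph with $Z$ as a minor, $|M|\leq\mathsf{ext}(|Z|)=2^{\Ocal(\ell(\mathsf{h}(|Z|)))}$, $\Pcal\in\Pcal(M)$ is a subgraph partition, and $\Gamma$ is an embedding of the hypergraph $\Kcal_{\Pcal}$ in $\Sigma=\Sigma^{(\mathsf{h},\mathsf{c})}$. Since $Z\leq M$, it suffices to exhibit $M$ as a minor of $\mathscr{D}^{\mathbf{p}}_{t}$ for $t$ large enough; the bound $|M|\leq\mathsf{ext}(|Z|)$ then immediately gives the claimed order of $f^{(Z)}_{\ref{designs_contain_graphs}}$.

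The strategy is to use $\Gamma$ as a blueprint: each handle of $\Sigma$ will be realized by a pair of handle-filaments of $\mathscr{D}^{\mathbf{p}}_{t}$, each crosscap by a single crosscap-filament, and each part $P\in\Pcal$ whose associated society is not disk-embeddable (and thus appears in $\mathbf{B}$) by the corresponding flower-filament. All remaining parts of $\Pcal$ (which are disk-embeddable by construction of $\mathbf{B}$), together with the edges of $\Kcal_{\Pcal}$ identifying the vertices of $V_{\Pcal}$ across distinct parts, are routed through the planar wall substrate of the design.

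Concretely, first I would cut $\Sigma$ along a system of simple curves to obtain a fundamental polygon $\Delta$, chosen so that the handles contribute paired boundary arcs, the crosscaps contribute self-identified boundary arcs, and the cyclic order of these identification arcs around $\partial\Delta$ matches the order of handle- and crosscap-segments in $\mathscr{W}^{\mathbf{p}}$; by the classification of surfaces together with Dyck's theorem (as invoked in \autoref{subsec_universal_obstructions}) such a cutting always exists. Next, I would remove from $\Delta$ one small open disk per society in $\mathbf{B}$, placed on $\partial\Delta$ in the order of the flower-segments of $\mathscr{W}^{\mathbf{p}}$, yielding a planar "polygon with holes" $\Delta^{\circ}$ in which the portion of $\Kcal_{\Pcal}$ corresponding to disk-embeddable parts admits a vortex-free rendition. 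This planar layout then embeds into the wall of $\mathscr{D}^{\mathbf{p}}_{t}$, each hole being placed at the base of its corresponding filament; the paired rainbow-linkages of the handle-filaments, the reversed rainbow of each crosscap-filament, and the societies attached to each flower-filament supply, respectively, the handle-identifications, the crosscap-identifications, and one copy of each non-disk-embeddable society, exactly as required to assemble $M$.

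The hard part will be the routing step: verifying that the planar portion of $M$, together with the paths needed to join each vertex of $V_{\Pcal}$ to all its occurrences on hole-boundaries, can be realized inside an $((f+1)2t)\times 2t$ wall. Since $|E(\Kcal_{\Pcal})|\leq|\Pcal|\leq|E(M)|\leq|M|^{2}$ and $|V_{\Pcal}|\leq|M|$, and each such edge or vertex-identification consumes only a constant number of rows and columns of the wall, a width $t$ polynomial in $|M|$ suffices; combined with $|M|\leq\mathsf{ext}(|Z|)$ this yields the stated bound. A complementary subtlety is that the cyclic order of boundary holes forced by the fixed ordering of segments in $\mathscr{W}^{\mathbf{p}}$ may not initially match the cyclic order given by $\Gamma$; however, since the interior of $\Delta$ is planar and simply connected, the holes can be permuted by an ambient homeomorphism, which only reorders the $\eta$-chosen shifts $\Lambda_{e,\eta(e)}$ of the societies, and this reordering is already absorbed by the definition of $\mathbf{B}$.
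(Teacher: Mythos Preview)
Your proposal is correct and follows essentially the same approach as the paper: cut the surface $\Sigma$ along curves through the handles and crosscaps to obtain a planar region, route the resulting planar portion of $\Kcal_{\Pcal}$ (together with the disk-embeddable parts of $\Pcal$) inside the wall of the design, and realize the non-planar identifications and the non-disk-embeddable societies via the handle-, crosscap-, and flower-filaments respectively. The paper phrases the cutting step slightly differently---it picks one non-contractible cycle per handle and per crosscap that meets $\Gamma$ only in edges of $\Kcal_{\Pcal}$, subdivides at the crossing points, and records the cut-edges as new hyperedges---and it adds an explicit preprocessing step that splits each vertex of $\Kcal_{\Pcal}$ into a short path so that the large hyperedges become pairwise vertex-disjoint; but these are technical refinements of exactly the plan you outline, and the final routing and size bounds are argued in the same way.
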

\begin{proof} We define $f^{(Z)}_{\ref{designs_contain_graphs}}(|Z|) \coloneqq \Ocal(\mathsf{ext}(|Z|)^{4}).$

Let $\mathbf{p} = (\Sigma, \mathbf{B})$ and assume that $\mathbf{p}$ is generated by the triple $(M, \Pcal, \Gamma)$ where, $M$ is a connected graph that contains $Z$ as a minor, where $|M| \leq \mathsf{ext}(|Z|),$ $\Pcal \in \Pcal(M),$ and $\Gamma$ is a $\Sigma$-embedding of $\Kcal_{\Pcal}.$
Let $\mathsf{h}, \mathsf{c} \in \Nbbb$ be such that $\Sigma = \Sigma^{(\mathsf{h}, \mathsf{c})}.$
We treat hyperedges of order two in $\Kcal_{\Pcal}$ as regular edges of a graph and whenever we refer to a hyperedge as an edge we mean a hyperedge of order two.
We first obtain a new hypergraph $H$ from $\Kcal_{\Pcal}$ as follows.
\begin{itemize}
\item For every vertex $u \in V(\Kcal_{\Pcal}),$ let $E_{u}$ denote the set of hyperedges that contain $u.$
We define a hypergraph $H''$ from $\Kcal_{\Pcal}$ by replacing $u$ by a path $P_{u}$ on $|E_{u}|$ many fresh vertices, where every vertex of $P_{u}$ replaces $u$ in exactly one distinct hyperedge in $E_{u}.$
Observe that $H''$ is $\Sigma$-embeddable.
Let $\Gamma'$ be a $\Sigma$-embedding of $H''$ obtained from $\Gamma$ in the obvious way.
\item Next, from $H''$ we define a hypergraph $H'$ by considering, for every $i \in [\mathsf{h}],$ a non-contractible cycle $C_{i}^{h}$ cutting through the $i$-th handle of $\Sigma$ that intersects $\Gamma'$ only in edges of $H''$ and exactly once per edge.
We then subdivide each edge that $C_{i}^{h}$ intersects twice and we consider $\Tcal_{i}^{h}$ to be the unique set of edges incident to the newly introduced vertices. 
Additionally, for every $i \in [\mathsf{c}],$ we consider a non-contractible cycle $C_{i}^{c}$ cutting through the $i$-th crosscap of $\Sigma,$ i.e., a cycle such that one of the two sides is a Moebius strip, that intersects $\Gamma'$ only in edges of $H''$ and exactly twice per edge.
We then subdivide each edge that $C_{i}^{c}$ intersects twice, drawing a vertex at each point of intersection, and we consider $\Tcal_{i}^{c}$ to be the unique set of edges incident to the newly introduced vertices. 
\item Finally, for every $i \in [\mathsf{h}]$ (resp. every $i \in [\mathsf{c}]$), we define the sets $A_{i}$ and $B_{i}$ (resp. the set $S_{i}$) such that $\Tcal_{i}^{h}$ (resp. $\Tcal_{i}^{c}$) is an $A_{i}$-$B_{i}$ (resp. $S_{i}$-$S_{i}$) linkage in $H'.$
We define $H$ from $H'$ by adding $A_{i}$ and $B_{i}$ (resp. $S_{i}$) as hyperedges.
\end{itemize}

Let $\Tcal \coloneqq \bigcup_{i \in [\mathsf{h}]} \Tcal_{i}^{h} \cup \bigcup_{i \in [\mathsf{c}]} \Tcal_{i}^{c}.$
Observe that by construction, hyperedges of order at least three in $H$ are pairwise disjoint, $|H| \leq \Ocal(|\Kcal_{\Pcal}|^{2}) = \Ocal(\mathsf{ext}(|Z|)^{2}),$ and that the hypergraph $H - \Tcal$ is plane-embeddable.
Let
$$\mathbf{B}^{+} \coloneqq \{ \lin{A_{1}, \Lambda^{A}_{1}}, \lin{B_{1}, \Lambda^{B}_{1}} \mid i \in [\mathsf{h}] \} \cup \{ \lin{S_{1}, \Lambda^{S}_{1}}\mid i \in [\mathsf{c}] \} \cup \mathbf{B}$$
which is a set of linear societies each corresponding to a hyperedge of $H$ such that, for every $i \in [\mathsf{h}],$ $V(\Lambda^{A}_{i}) = A_{i},$ $V(\Lambda^{B}_{i}) = B_{i},$ and the $j$-th vertex of $\Lambda^{A}_{i}$ is adjacent (via $\Tcal_{i}^{h}$) the $|\Lambda^{B}_{i}| - j + 1$-th vertex $\Lambda^{B}_{i},$ while for every $i \in \mathsf{c},$ $V(\Lambda^{C}_{i}) = C_{i}$ and the $j$-th vertex of $\Lambda^{C}_{i}$ is adjacent (via $\Tcal_{i}^{c}$) to the $j + |\Lambda^{C}_{i}|/2$-th vertex of $\Lambda^{C}_{i}.$
Moreover, every hyperedge of $H$ that does not correspond to a linear society in $\mathbf{B}^{+}$ is of order two.
Moreover, let $b \coloneqq |\mathbf{B}|$ and let
$$\mathbf{B} = \{ \lin{Z_{i}, \Lambda_{i}} \mid i \in [b] \}.$$
Let $t' \coloneqq f^{(Z)}_{\ref{designs_contain_graphs}}(|Z|).$
We are now in the position to describe how to find $Z$ as a minor of the $(t', \mathbf{p})$-design $\mathscr{D}^{\mathbf{p}}_{t'}.$
 and $f \coloneqq |\mathbf{B}^{+}| = 2\mathsf{h} + \mathsf{c} + b.$
Let $F$ be the underlying $(t', I)$-frame of $\mathscr{D}^{\mathbf{p}}_{t'},$ where $I = \lin{h_{1}, \ldots, h_{f}}.$

We can first observe that by construction, for every odd $i \in [2\mathsf{h}],$ we can map $\Lambda^{A}_{i}$ (resp. $\Lambda^{B}_{i}$) to the boundary vertices of the $i$-th (resp. $i+1$-th) filament of $F$ respecting the linear ordering as well as embed the edges in $\Tcal^{h}_{i}$ by using the edges between the boundary vertices of the two filaments.
Moreover, for every $i \in [2\mathsf{h} + 1, 2\mathsf{h}+\mathsf{c}+1]$ we can map $\Lambda^{C}_{i'},$ where $i' \coloneqq 2\mathsf{h} - i,$ to the boundary vertices of the $i$-th filament of $F$ respecting the linear ordering as well as embed the edges in $\Tcal^{c}_{i}$ using the edges between the boundary vertices of the filament.
Lastly, for every $i \in [2\mathsf{h} + \mathsf{c} + 1, f],$ we can map $\Lambda_{i'},$ where $i' \coloneqq 2\mathsf{h} + \mathsf{c},$ to  boundary vertices of the $i$-th filament of $F$ respecting the linear ordering such that we find $\lin{Z_{i'}, \Lambda_{i'}}$ as a minor within the $i$-th filament of our design.

It remains to show how to appropriately model the edges of $H$ that do not correspond to hyperedges of $\mathbf{B}^{+}.$
This part is implied from standard techniques in the following way.
First, we subdivide all such edges twice.
For every $i \in [f],$ let $\Lambda$ denote the linear society we have associated to the $i$-th filament in the previous step.
We begin by considering a wall $W_{i}$ of order $\Ocal(\mathsf{ext}(|Z|)^{4})$ in our design whose central brick is the brick of the $i$-th filament.
Then, we connect each vertex $v_{i}$ of $\Lambda$ to a set, say $B_{i},$ of boundary vertices of $W_{i}$ that correspond to each neighbour of $v_{i}$ that was created by the previously subdivided edges.
We do this while making sure that the subgraph of $W_{i}$ we have used to make this connection is disjoint from all others and that the vertices in $B_{i}$ are pairwise at distance at least $\max\{ |B_{i}| \mid v_{i} \in \Lambda \} \in \Ocal(\mathsf{ext}(|Z|)^{2})$ (i.e., if they are on the same vertical path of $W_{i}$ there should be at least $\Ocal(\mathsf{ext}(|Z|)^{2})$ horizontal paths between the ones they belong to and symmetrically if they belong on the same horizontal path).
By performing this step we have successfully managed to find a subcubic model for each of the vertices of $\Lambda.$
This can be done via standard routing arguments (similarly to \autoref{lem_paths_boundary} for example).
Now, every edge of $H$ we want to model, can be modelled as a $B_{i}$-$B_{j}$-path for distinct $i, j \in [f].$
Moreover, the collection of all these paths we are looking form a linkage.
However, this is an instance of the disjoint paths problem in a planar graph as certified by $H - \Tcal$ being plane-embeddable.
Hence, again by standard routing arguments, we can show that if we carefully choose the constants hidden in the definition of $f^{(Z)}_{\ref{designs_contain_graphs}}(|Z|)$ we can find the desired paths using the infrastructure of $\mathscr{D}^{\mathbf{p}}_{t'}$ away from the walls $W_{i},$ $i \in [f]$ we have chosen at the previous step.
\end{proof}

We are now ready to proceed with the proof of the main theorem.

\begin{proof}[Proof of \autoref{lemma_walloid_contains_obstruction}]
We define $f_{\ref{lemma_walloid_contains_obstruction}}(|Z|) \coloneqq 5f^{(Z)}_{\ref{designs_contain_graphs}}(|Z|).$

Let $z \coloneqq f^{(Z)}_{\ref{designs_contain_graphs}}(|Z|).$
Then, the proof follows by an application of \autoref{walloids_contain_designs}, in order to find the $(t, \mathbf{p})$-design $\mathscr{D}^{\mathbf{p}}_{z}$ as a minor of $\mathscr{W}_{5z}.$
We conclude by calling upon \autoref{designs_contain_graphs} to find $Z$ as a minor of $\mathscr{D}^{\mathbf{p}}_{z}.$
\end{proof}

\section{Universal obstructions for $\Hcal$-treewidth}
\llabel{sec_obstructions_for_Htw}

In this section, we make a first step in tying together all of the previously established structural theorems to show that \autoref{thm_local_structure} either provides an obstruction to $\mathcal{H}$-treewidth or can be used to produce an object we call a ``$\mathcal{H}$-tree decomposition'' of small width.

We first, in \autoref{subsec_H_tw}, provide a formal definition of $\mathcal{H}$-tree decompositions and their width and then show that the resulting parameter equals $\mathcal{H}\text{-}\mathsf{tw}.$

Then, in \autoref{subsec_H_brambles} we provide an extension of the notion of brambles to the setting of $\mathcal{H}$-treewidth.
These notions build the foundation for both the lower and the upper bound through our universal obstructions.
The proof of \autoref{thm_H_tw_lowerbound} is then presented in \autoref{subsec_lowerbound_H_tw}.

Finally, towards the proof of \autoref{main_grid_general} in \autoref{subsec_upperbound_H_tw} we provide an argument that shows how \autoref{thm_local_structure} can be used to obtain an $\mathcal{H}$-tree decomposition of small width in the absence of any of our universal obstructions.

\subsection{A torso-free definition of $\mathcal{H}$-treewidth}\llabel{subsec_H_tw}

Let us begin by finally introducing a definition of tree-decompositions.

Let $G$ be a graph.
A \defi{tree decomposition} of $G$ is a pair $(T,\beta)$ where $T$ is a tree and $\beta\colon V(T)\to 2^{V(G)}$ such that
\begin{itemize}
    \item $\bigcup_{t\in V(T)}\beta(t)=V(G),$
    \item for every $e\in E(G)$ there exists $t\in V(T)$ such that $e\subseteq \beta(t),$ and
    \item for every $v\in V(G)$ the set $\{ t\in V(T) \mid v\in\beta(t) \}$ induces a connected subgraph of $T.$
\end{itemize}
We refer to the vertices of $T$ as the \defi{nodes} of the tree decomposition and to the sets $\beta(t)$ as its \defi{bags}.
For every $t\in V(T)$ the sets in $\{ \beta(d)\cap \beta(t) \mid dt\in E(T) \}$ are called the \defi{adhesions} of the node $t.$
The \defi{width} of $(T,\beta)$ is defined as $\max_{t\in V(T)}|\beta(t)|-1$ and its \defi{adhesion} is defined as $\max_{t\in V(T)}\max_{A\in \{ \beta(d)\cap \beta(t) ~\mid~ dt\in E(T) \}} |A|.$
The \defi{treewidth} of $G,$ denoted by $\mathsf{tw}(G),$ is defined to be the minimum width over all tree decompositions of $G.$

Now let $\mathcal{H}$ be any proper minor-closed graph class\footnote{Please note that the definition can also be extended to any graph class $\mathcal{H},$ but in this paper we are only interested in minor-closed graph classes.}.
A tree decomposition $(T,\beta)$ for a graph $G$ is called an \defi{$\mathcal{H}$-tree decomposition} if for every leaf $d\in V(T)$ it holds that $G[\beta(d)\setminus A_d]\in \mathcal{H}$ where $A_d$ is the unique adhesion set of $d.$
The \defi{width} of an $\mathcal{H}$-tree decomposition $(T,\beta)$ is defined as
\begin{align*}
     \max ~\{ 0\} &\cup \{ |\beta(t)|-1 ~\!\mid\!~ t\in V(T)\text{ is not a leaf of }T \}\\
     &\cup \{ |A_d|-1 ~\!\mid\!~ d\in V(T)\text{ is a leaf with adhesion set }A_d\}.
\end{align*}
Where we consider $\emptyset$ to be the adhesion set of the node $d$ if $d$ is the unique vertex of $T.$

We begin with the simple observation that $\mathcal{H}$-tree decompositions allow for a purely decomposition-based definition of $\mathcal{H}$-treewidth.

\begin{lemma}\llabel{lemma_Htw_via_decompositions}
Let $\mathcal{H}$ be a proper minor-closed graph class.
Then for every graph $G$ it holds that $\mathcal{H}\text{-}\mathsf{tw}(G)$ is equal to the minimum width over all $\mathcal{H}$-tree decompositions of $G.$
\end{lemma}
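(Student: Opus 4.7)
The plan is to prove the equality via two inequalities between the quantity defined by $\mathcal{H}$-tree decompositions and the parameter $\mathcal{H}\text{-}\mathsf{tw}(G)$ given by equation (\ref{basic_scheme}).

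For the direction that the decomposition-based quantity is at most $\mathcal{H}\text{-}\mathsf{tw}(G),$ I will take a witness $X\subseteq V(G)$ such that every component of $G-X$ belongs to $\mathcal{H}$ and $\mathsf{tw}(\mathsf{torso}(G,X))\leq k,$ together with a tree decomposition $(T,\beta)$ of $\mathsf{torso}(G,X)$ of width $k.$ The key point is that for each component $K$ of $G-X,$ the neighborhood $N_G(V(K))$ is a clique in $\mathsf{torso}(G,X)$ and therefore, by the standard ``cliques live in one bag'' property of tree decompositions, there exists a node $t_K$ of $T$ with $N_G(V(K))\subseteq \beta(t_K).$ I will then attach, for each $K,$ a fresh leaf $d_K$ adjacent to $t_K$ and assign $\beta(d_K)\coloneqq V(K)\cup N_G(V(K)).$ The adhesion at $d_K$ equals $N_G(V(K))$ and has size at most $k+1,$ while $G[\beta(d_K)\setminus N_G(V(K))]=K\in \mathcal{H}.$ Verifying the tree-decomposition axioms for $G$ is then routine, and the width of the resulting $\mathcal{H}$-tree decomposition is at most $k.$

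For the reverse direction, let $(T,\beta)$ be an $\mathcal{H}$-tree decomposition of $G$ of width $k.$ I will set
\begin{align*}
X\coloneqq V(G)\setminus \bigcup_{d\text{ leaf of }T}\!\!\!\bigl(\beta(d)\setminus A_d\bigr),
\end{align*}
where $A_d$ denotes the adhesion of the leaf $d.$ A crucial observation is that a vertex $v\in \beta(d)\setminus A_d$ appears in no other bag, by the connectedness property of the tree decomposition together with the fact that $d$ is a leaf; hence each component of $G-X$ is contained in some set $\beta(d)\setminus A_d$ and therefore belongs to $\mathcal{H}$ (which is closed under subgraphs, since it is minor-closed). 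I then define a tree decomposition $(T,\beta')$ of $\mathsf{torso}(G,X)$ by setting $\beta'(d)\coloneqq A_d$ for every leaf $d$ and $\beta'(t)\coloneqq \beta(t)$ for every non-leaf $t.$ The only non-trivial point to check is that the torso-edges are correctly covered: for a component $K$ of $G-X$ fully contained in $\beta(d)\setminus A_d,$ any vertex of $N_G(V(K))$ belongs to $\beta(d)$ (by the edge axiom applied to an edge between $V(K)$ and $N_G(V(K))$) and also to $X,$ and thus lies in $A_d=\beta'(d);$ consequently the clique $N_G(V(K))$ introduced by the torso operation is contained in $\beta'(d).$ The width of $(T,\beta')$ is at most $k,$ which yields $\mathcal{H}\text{-}\mathsf{tw}(G)\leq k.$

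I do not expect any genuine obstacle in this proof: the whole argument is a careful bookkeeping of the tree-decomposition axioms together with the standard observation about cliques in tree decompositions. The only mildly subtle point is to verify that $X$ is well-defined as a vertex set of $\mathsf{torso}(G,X),$ i.e.\ that $\bigcup_{t\in V(T)}\beta'(t)=X$ and that the connectedness of $\{t\mid v\in\beta'(t)\}$ is preserved for every $v\in X;$ both follow from the observation that vertices of $X$ never appear in a ``leaf-only'' position $\beta(d)\setminus A_d.$
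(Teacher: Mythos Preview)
Your approach is essentially identical to the paper's, and your second direction is complete and matches the paper's argument. There is, however, a small gap in your first direction: after attaching the fresh leaves $d_K$, an old leaf $d$ of $T$ to which no $d_K$ got attached remains a leaf of the new tree, and for such $d$ there is no reason that $G[\beta(d)\setminus A_d]\in\mathcal{H}$ (here $\beta(d)\setminus A_d\subseteq X$, and $G$ restricted to this set can be an arbitrary graph). So the resulting decomposition need not be an $\mathcal{H}$-tree decomposition.

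The paper handles exactly this point: for each such leftover leaf $d$ it introduces a new leaf $d'$ adjacent to $d$ with $\beta(d')\coloneqq\beta(d)$. Then $d$ becomes an internal node, contributing $|\beta(d)|-1\leq k$ to the width, while $d'$ is a leaf with adhesion $A_{d'}=\beta(d)$ and hence $G[\beta(d')\setminus A_{d'}]=G[\emptyset]\in\mathcal{H}$. With this one-line fix your argument goes through and coincides with the paper's.
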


\begin{proof}
We first show that there always exists an $\mathcal{H}$-tree decomposition of width $\mathcal{H}\text{-}\mathsf{tw}(G).$
To see this let $X\subseteq V(G)$ be a set of vertices such that every component of $G-X$ belongs to $\mathcal{H}$ and $\mathsf{tw}(\mathsf{torso}(G,X))$ is minimum over all choices for such a set $X.$
Let $G^+\coloneqq \mathsf{torso}(G,X)$ and let $(T',\beta')$ be a tree decomposition of minimum width for $G^+.$
It follows that the width of $(T',\beta')$ equals $\mathcal{H}\text{-}\mathsf{tw}(G).$

Let $J$ be any component of $G-X,$ then $N(J)$ is a clique of $G^+.$
Hence, there exists a node $t_J\in V(T')$ such that $N(J)\subseteq \beta(t_J).$
Therefore, for every such component $J$ we may introduce a new vertex $d_J$ adjacent to $t_J$ and declare the set $V(J)\cup N(J)$ to be the bag of $d_J.$
Let $(T'',\beta'')$ be the resulting tree decomposition of $G.$
Suppose there exists some leaf $d\in V(T'')$ such that $G[\beta(d)\setminus A_d]\notin\mathcal{H}.$
This means, in particular, that $d\in V(T').$
For each of these leaves $d$ we introduce a new vertex $d'$ adjacent to $d$ and declare its bag to also be $\beta(d)$ which implies that $|\beta(d')|-1=|\beta(d)|-1\leq \mathcal{H}\text{-}\mathsf{tw}(G).$

It follows that the resulting decomposition, let us call it $(T,\beta),$ is now an $\mathcal{H}$-tree decomposition for $G.$
Moreover, its width is exactly the width of $(T',\beta').$
\medskip

For the reverse inequality let $(T,\beta)$ be an $\mathcal{H}$-tree decomposition of $G$ of minimum width, say $k.$
In case $T$ consists of a single vertex we must have $G\in\mathcal{H}$ and $\mathcal{H}\text{-}\mathsf{tw}(G)=0=k.$
Hence, we may assume that $T$ has more than one vertex.
Let $L$ be the set of all leaves of $T.$ 
Then, for every leaf $d\in L$ let $J_d\coloneqq G[\beta(d)\setminus A_d]$ and let $F\coloneqq \bigcup_{d\in L}V(J_d).$
Finally, let $(T',\beta')$ be obtained from $(T,\beta)$ by deleting, for each $d\in L,$ the set $V(J_d)$ from $\beta(d)$ and setting $T'\coloneqq T.$
Notice that for each $d\in L$ and every component $K$ of $J_d$ it holds that $N(K)\subseteq A_d.$
Hence, $(T,\beta')$ is indeed a tree decomposition of $\mathsf{torso}(G,V(G)\setminus F).$
Thus, the $\mathcal{H}$-treewidth of $G$ is at most $k$ as desired.
\end{proof}

Let $\Hcal_{\varnothing}$ be the graph class containing only the empty graph $K_{0}$ and observe that $\obs(\Hcal_{\varnothing})=\{K_{1}\}.$
It is also easy to observe that $\mathcal{H}_{\varnothing}\text{-}\mathsf{tw}$ is the same as $\tw.$ 
According to the next observation, excluding $K_{1}$ defines an equivalent parameter as when excluding any $\leq$-antichain containing some planar graph.

Recall that due to the Grid Theorem of Robertson and Seymour \cite{RobertsonS86GMV}, a proper minor-closed graph class $\mathcal{H}$ has bounded treewidth if and only if $\mathsf{obs}(\mathcal{H})$ contains a planar graph.

\begin{observation}\llabel{obs_Htw_boundedtw}
Let $\mathcal{H}$ be a proper minor-closed graph class.
Then $\mathcal{H}\text{-}\mathsf{tw} \sim \mathsf{tw}$ if and only if $\mathsf{obs}(\mathcal{H})$ contains a planar graph.
\end{observation}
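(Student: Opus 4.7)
The plan is to establish the equivalence by proving both directions separately, using the $\mathcal{H}$-tree decomposition characterization of $\mathcal{H}\text{-}\mathsf{tw}$ provided by \autoref{lemma_Htw_via_decompositions} together with the Grid Theorem of Robertson and Seymour.

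For the forward direction, suppose $\mathsf{obs}(\mathcal{H})$ contains a planar graph. By the Grid Theorem there exists a constant $c = c(\mathcal{H})$ such that $\mathsf{tw}(H)\leq c$ for every $H\in\mathcal{H}$. The inequality $\mathcal{H}\text{-}\mathsf{tw}(G)\leq \mathsf{tw}(G)$ is immediate from the definition by choosing $X=V(G)$ (the torso is $G$ itself, and each component of $G-X$ is empty, hence in $\mathcal{H}$). For the reverse bound $\mathsf{tw}(G)\leq \mathcal{H}\text{-}\mathsf{tw}(G)+c+1$, I would apply \autoref{lemma_Htw_via_decompositions} to obtain an $\mathcal{H}$-tree decomposition $(T,\beta)$ of $G$ of width $k\coloneqq \mathcal{H}\text{-}\mathsf{tw}(G)$. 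For every leaf $d\in V(T)$ with adhesion $A_d$, the graph $G[\beta(d)\setminus A_d]$ lies in $\mathcal{H}$ and thus admits a tree decomposition of width at most $c$; adding $A_d$ to every bag of this auxiliary decomposition yields a tree decomposition of $G[\beta(d)]$ of width at most $c+|A_d|\leq c+k+1$, which we graft onto $(T,\beta)$ at the node $d$. The resulting tree decomposition is a genuine tree decomposition of $G$ (the edges between $A_d$ sit in the neighbouring non-leaf bag, and all other edges of $G[\beta(d)]$ are covered) and has width at most $\max\{k,c+k+1\}=c+k+1$. This gives $\mathsf{tw}\preceq \mathcal{H}\text{-}\mathsf{tw}$.

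For the backward direction, I argue the contrapositive: if $\mathsf{obs}(\mathcal{H})$ contains no planar graph, then $\mathcal{H}\text{-}\mathsf{tw}\not\sim\mathsf{tw}$. The key observation is that every minor of a planar graph is planar, so a planar graph cannot contain any member of $\mathsf{obs}(\mathcal{H})$ as a minor; hence every planar graph belongs to $\mathcal{H}$. In particular, the sequence of $(k\times k)$-grids $\langle G_k\rangle_{k\in\mathbb{N}}$ lies entirely in $\mathcal{H}$. Choosing $X=\emptyset$ in the definition shows $\mathcal{H}\text{-}\mathsf{tw}(G_k)=0$ for every $k$, whereas $\mathsf{tw}(G_k)=k$ is unbounded. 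Consequently no function $f\colon\mathbb{N}\to\mathbb{N}$ can satisfy $\mathsf{tw}(G)\leq f(\mathcal{H}\text{-}\mathsf{tw}(G))$ for all $G$, so $\mathsf{tw}\not\preceq\mathcal{H}\text{-}\mathsf{tw}$ and the equivalence fails.

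There is no real obstacle here; the only thing to be slightly careful about is the edge-coverage condition when grafting the auxiliary decomposition at a leaf, specifically that edges between vertices of $A_d$ need to be covered somewhere. This is handled automatically because the parent node of $d$ in $(T,\beta)$ contains $A_d$ in its bag (as $A_d=\beta(d)\cap\beta(\mathrm{parent}(d))$), so those edges are accounted for even before the modification. With this routine check, the argument is complete.
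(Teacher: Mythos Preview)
Your proof is correct and follows essentially the same approach as the paper: for the forward direction, both you and the paper take an $\mathcal{H}$-tree decomposition of width $k$, replace each leaf $d$ by a bounded-width tree decomposition of $G[\beta(d)\setminus A_d]$ (using the Grid Theorem to bound $\mathsf{tw}$ on $\mathcal{H}$), and add $A_d$ to all new bags. The paper's proof leaves the backward direction implicit, whereas you spell it out via the grids; this is a minor addition rather than a different route.
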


\begin{proof}
To see this, notice that by \autoref{lemma_Htw_via_decompositions} it suffices to consider $\mathcal{H}$-tree decomposition of small width since any tree decomposition of bounded width can easily be turned into a $\mathcal{H}$-tree decomposition by duplicating all leaves.
Let $G$ be some graph with $\mathcal{H}\text{-}\mathsf{tw}(G)\leq k$ and let $(T,\beta)$ be an $\mathcal{H}$-tree decomposition of minimum width of $G.$
Now let $d\in V(T)$ be some leaf with adhesion set $A_d.$
Then $G_d\coloneqq G[\beta(d)\setminus A_d]\in\mathcal{H}$ which means that $\mathsf{tw}(G_d)\leq h_{\mathcal{H}}$ by the Grid Theorem \cite{RobertsonS86GMV}.
We may now replace $d$ by a whole tree decomposition of width at most $h_{\mathcal{H}}$ for $G_d,$ then add $A_d$ to every bag of this decomposition and finally join an arbitrary node of its tree to the ancestor of $d$ in $T.$
The result of performing this operation for every leaf of $T$ is a tree decomposition of width at most $k+h_{\mathcal{H}}$ for $G.$
\end{proof}

\subsection{Strict $\mathcal{H}$-brambles}\llabel{subsec_H_brambles}

To provide good lower bounds for $\mathcal{H}$-treewidth we adapt the notion of brambles to our setting.
The basic ideas can be traced back to the papers by Reed but we adapt the proof ideas of Adler, Gottlob, and Grohe \cite{Adler2007Hypertree}.

Let $\mathcal{H}$ be a proper minor-closed graph class\footnote{Notice that, as in the previous section, we only define these notions for minor-closed graph classes due to our setting, but they work the same way for general graph classes. For \autoref{lemma_bramble_lowerbound} to hold, one should assume just hereditarity though.}.
Let $G$ be a graph.
A \defi{strict $\mathcal{H}$-bramble} in $G$ is a collection $\mathcal{B}$ of connected subgraphs of $G$ such that
\begin{enumerate}
    \item $V(B')\cap V(B)\neq \emptyset,$ and
    \item $\mathcal{B} \cap \mathcal{H} = \emptyset.$
\end{enumerate}
A \defi{hitting set} for $\mathcal{B}$ is a set $S\subseteq V(G)$ such that $V(B)\cap S\neq\emptyset$ for all $B\in\mathcal{B}.$
The \defi{order} of $\mathcal{B}$ is then defined as the minimum size of a hitting set for $\mathcal{B}.$
Finally, we define the \defi{strict $\mathcal{H}$-bramble number} of $G,$ denoted by $\mathcal{H}\text{-}\mathsf{sbn}(G),$ to be the maximum order of a strict $\mathcal{H}$-bramble in $G.$

\medskip
Note that the notion of strict brambles is a special case of the classic and more general notion of brambles that provides an exact min-max duality in relation to the treewidth of a graph (see \cite{SeymourT93Graph}).
The relation between strict brambles and classic brambles, as well as precise dualities for strict brambles, have also been studied in the literature (see \cite{KozawaOY14lower, Reed97Tree, AidunDMYY20Treewidth, LardasPTZ23onstr}).

\begin{lemma}\llabel{lemma_bramble_lowerbound}
Let $\mathcal{H}$ be a proper minor-closed graph class.
Then for every graph $G$ it holds that $\mathcal{H}\text{-}\mathsf{sbn}(G)\leq\mathcal{H}\text{-}\mathsf{tw}(G)+1.$
\end{lemma}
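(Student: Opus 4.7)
The plan is to mimic the classical proof that the bramble number is a lower bound for treewidth (in the style of Reed and of Adler–Gottlob–Grohe), adapted to the leaf-sensitive definition of $\mathcal{H}$-tree decomposition given in \autoref{lemma_Htw_via_decompositions}. So I would fix a graph $G$, let $k = \mathcal{H}\text{-}\mathsf{tw}(G)$, take an $\mathcal{H}$-tree decomposition $(T,\beta)$ of $G$ of width $k$ (which exists by \autoref{lemma_Htw_via_decompositions}), and let $\mathcal{B}$ be an arbitrary strict $\mathcal{H}$-bramble in $G$. The goal is to produce a hitting set for $\mathcal{B}$ of size at most $k+1$.

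First I would record the standard separation property: for every edge $e=st$ of $T$, the adhesion $A_e = \beta(s)\cap\beta(t)$ separates $V_s\setminus A_e$ from $V_t\setminus A_e$ in $G$, where $V_s$ and $V_t$ are the unions of the bags on the two sides of $e$. Consequently, every connected subgraph $B\subseteq G$ that avoids $A_e$ lies entirely in $V_s\setminus A_e$ or entirely in $V_t\setminus A_e$. Because any two elements of $\mathcal{B}$ must share a vertex, it cannot happen that one bramble element lies strictly on the $s$-side and another strictly on the $t$-side. Therefore either (a) \emph{every} element of $\mathcal{B}$ meets $A_e$, in which case $A_e$ is a hitting set of size at most $k+1$ (all adhesions have size $\leq k+1$ by the width definition, including those at leaves) and we are done; or (b) we may orient $e$ toward the unique side of $e$ containing an element of $\mathcal{B}$ strictly. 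Assume case (b) occurs at every edge, so that every edge of $T$ is oriented.

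Next I would locate a sink $t^\star\in V(T)$, i.e.\ a node all of whose incident edges point toward $t^\star$; such a node exists in any finite oriented tree. I claim $\beta(t^\star)$ hits every $B\in\mathcal{B}$. Indeed, if some $B\in\mathcal{B}$ avoided $\beta(t^\star)$, then $B$ would sit inside a single component of $G-\beta(t^\star)$, which by the standard property is contained in some $V_s\setminus A_e$ for a neighbour $s$ of $t^\star$ across the edge $e=st^\star$; but $e$ is oriented toward $t^\star$, so there is a bramble element strictly on the $t^\star$-side of $e$, disjoint from $B$, contradicting the pairwise-intersection property of $\mathcal{B}$.

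The only remaining issue—and the point where the leaf-relaxed definition of $\mathcal{H}$-tree decomposition matters—is to rule out that $t^\star$ is a leaf, because only for non-leaves do we control $|\beta(t^\star)|$ directly. If $t^\star$ were a leaf with adhesion $A_{t^\star}$, the (unique) incident edge $e=st^\star$ would be oriented toward $t^\star$, providing $B\in\mathcal{B}$ with $V(B)\subseteq V_{t^\star}\setminus A_e = \beta(t^\star)\setminus A_{t^\star}$. But $(T,\beta)$ is an $\mathcal{H}$-tree decomposition, so $G[\beta(t^\star)\setminus A_{t^\star}]\in\mathcal{H}$, and since $\mathcal{H}$ is minor-closed it is in particular subgraph-closed; hence $B\in\mathcal{H}$, contradicting the definition of a strict $\mathcal{H}$-bramble. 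So $t^\star$ is an internal node, $|\beta(t^\star)|\leq k+1$, and $\beta(t^\star)$ is a hitting set of the required size, yielding $\mathcal{H}\text{-}\mathsf{sbn}(G)\leq\mathcal{H}\text{-}\mathsf{tw}(G)+1$. The main (very mild) obstacle is precisely this last step: the standard bramble argument would conclude at the sink, but here one must exploit the $\mathcal{H}$-membership of the leaf interior together with minor-closedness to make the sink internal.
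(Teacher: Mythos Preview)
Your proof is correct and follows essentially the same orientation-and-sink argument as the paper. In fact, your explicit treatment of the leaf case---showing the sink cannot be a leaf because that would force a bramble element to be a subgraph of $G[\beta(t^\star)\setminus A_{t^\star}]\in\mathcal{H}$, contradicting $\mathcal{B}\cap\mathcal{H}=\emptyset$---is more careful than the paper's own proof, which concludes at the sink without explicitly ruling out that it is a leaf.
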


\begin{proof}
Let $k\coloneqq \mathcal{H}\text{-}\mathsf{tw}(G)+1$ and suppose towards a contradiction that $G$ contains a strict $\mathcal{H}$-bramble $\mathcal{B}$ of order at least $k+1.$
Notice that for every set $S\subseteq V(G)$ of size at most $k$ there most exist at least one member $B$ of $\mathcal{B}$ such that $S\cap V(B)=\emptyset.$
Moreover, there exists a unique component $K_S$ of $G-S$ such that for every $B\in\mathcal{B}$ that avoids $S$ we have that $B\subseteq K$ since the union of any two members of $\mathcal{B}$ is connected.

Now, notice that for every edge $t_1t_2\in E(T)$ there exists a unique $i\in [2]$ such that the union of the bags in the unique subtree $T_{t_i}$ of $T$ which is rooted at $t_i$ and avoids $t_{3-i}$ contains the vertex set of the component $K_{\beta(t_1)\cap\beta(t_2)}$ as defined above.
This is simply because $|\beta(t_1)\cap\beta(t_2)|\leq k$ while $\mathcal{B}$ is of order at least $k+1.$
Let us call $t_i$ the \defi{dominant} endpoint of $t_1t_2.$
This observation induces an orientation $\vec{T}$ of $T$ where each edge is oriented towards its dominant endpoint.

Suppose there exists a node $t\in V(T)$ such that $t$ is not the dominant endpoint of at least two of its incident edges, say $td_1$ and $td_2.$
Since the unions of the bags of the two trees $T_{d_i}$ must intersect in a subset of $\beta(t)$ it follows that for each $i\in[2],$ the bags in the subtree $T_{d_i}$ must contain the vertex set of some element $B_i$ that avoids $\beta(t).$
Since $V(B_1)\cap V(B_2)\neq\emptyset$ this is impossible and thus for each node of $T$ there exists at most one incident edge for which the node is not dominant.

Thus, there must exist some node $t$ of $T$ which is not dominant for any of its incident edges, i.e., $t$ is a sink of $\vec{T}.$
In this case, none of the subtrees rooted at a neighbor of $t$ can contain the vertex set of a member of $\mathcal{B}$ which avoids $\beta(t).$
Hence, $\beta(t)$ is a hitting set for $\mathcal{B}$ which contradicts our assumptions.
\end{proof}

Similar to strict $\mathcal{H}$-brambles we may define a more local variant which also poses as an obstruction to $\mathcal{H}\text{-}\mathsf{tw}.$

\paragraph{Highly linked sets.}
Let $\Hcal$ be a proper minor-closed class. 
Let also $G$ be a graph, $X\subseteq V(G)$ be a set of vertices, and $k\in \Nbbb.$
We  say that the set $X$ is \defi{$k$-$\Hcal$-linked in $G$} if for every $S\subseteq V(G),$ where $|S|\leq k,$ there exists a component $J_S$ of $G-S$ such that $J_S\not\in\Hcal$ and $|V(J)\cap X|>\frac{2}{3}|X|.$

As before, we associate a graph parameter with the notion of $\mathcal{H}$-linked sets.
For every graph $G$ let $\mathcal{H}$-$\mathsf{link}(G)$ denote the maximum $k$ such that there exists a $k$-$\mathcal{H}$-linked set in $G.$

\begin{lemma}\llabel{lemma_Hlink_leq_Hbramble}
Let $\mathcal{H}$ be a proper minor-closed graph class.
For every graph $G$ it holds that $2\cdot\mathcal{H}\text{-}\mathsf{link}(G)\leq\mathcal{H}\text{-}\mathsf{sbn}(G).$
\end{lemma}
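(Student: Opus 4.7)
The plan is to build a strict $\mathcal{H}$-bramble of order at least $2k$ directly from a $k$-$\mathcal{H}$-linked set $X\subseteq V(G)$, where $k\coloneqq\mathcal{H}\text{-}\mathsf{link}(G)$. The natural candidate is
\[
\mathcal{B}\coloneqq\{B\subseteq G\mid B\text{ connected},\ B\notin\mathcal{H},\ |V(B)\cap X|>\tfrac{2}{3}|X|\}.
\]
Connectedness and $B\notin\mathcal{H}$ are built into the definition, so only pairwise intersection requires argument: for any $B_1,B_2\in\mathcal{B}$ we have $|V(B_1)\cap X|+|V(B_2)\cap X|>\tfrac{4}{3}|X|>|X|$, so inclusion-exclusion forces $V(B_1)\cap V(B_2)\cap X\neq\emptyset$. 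Thus $\mathcal{B}$ is a strict $\mathcal{H}$-bramble.

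For the order bound, I aim to show that any hitting set $T$ for $\mathcal{B}$ has $|T|\geq 2k$. Suppose toward contradiction that $|T|\leq 2k-1$, and partition $T=T_1\sqcup T_2$ with $|T_1|,|T_2|\leq k$. Applying $k$-$\mathcal{H}$-linkage to each $T_i$ produces a component $J_i$ of $G-T_i$ satisfying $J_i\notin\mathcal{H}$ and $|V(J_i)\cap X|>\tfrac{2}{3}|X|$; in particular $J_i\in\mathcal{B}$ and $V(J_i)\cap T_i=\emptyset$. The union $J_1\cup J_2$ is connected (it meets $X$ along the set $V(J_1)\cap V(J_2)\cap X$ of size $>\tfrac{1}{3}|X|$), is not in $\mathcal{H}$ since $\mathcal{H}$, being minor-closed, is closed under taking subgraphs, and still has $|X\cap V(J_1\cup J_2)|>\tfrac{2}{3}|X|$, so $J_1\cup J_2\in\mathcal{B}$; the obstacle is that $V(J_1\cup J_2)$ might still meet $T$ through $V(J_1)\cap T_2$ or $V(J_2)\cap T_1$.

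The heart of the proof is to refine the partition so that the resulting witnesses jointly avoid $T$. My plan is a swap iteration: whenever $v\in V(J_1)\cap T_2$ exists, move $v$ into $T_1$ and correspondingly eject a vertex of $T_1$ absent from $V(J_2)$ into $T_2$ (this is feasible because $|T|\leq 2k-1$ forces $\min(|T_1|,|T_2|)<k$ at every stage, leaving at least one unit of slack). The potential $|V(J_1)\cup V(J_2)|\cap T|$ strictly decreases under this operation, exploiting the uniqueness of the ``large $X$-component'' of $G-T_i'$ that is guaranteed by the strict threshold $>\tfrac{2}{3}$. The iteration therefore terminates at a partition $(T_1^\star,T_2^\star)$ whose associated witnesses $J_1^\star,J_2^\star$ satisfy $V(J_1^\star\cup J_2^\star)\cap T=\emptyset$; then $J_1^\star\cup J_2^\star\in\mathcal{B}$ avoids $T$, contradicting the assumption that $T$ is a hitting set. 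The most delicate point, which I expect to be the main obstacle, is verifying that the swap genuinely decreases the potential and preserves the $|T_i|\leq k$ invariant along the iteration, relying on the fact that the $\tfrac{2}{3}$ threshold pins down the ``large component'' uniquely so that the behaviour of $J_i$ under the swap can be tracked precisely.
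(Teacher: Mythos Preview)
Your bramble $\mathcal{B}$ and the inclusion--exclusion argument for pairwise intersection are correct, and in fact they already yield $\mathcal{H}\text{-}\mathsf{sbn}(G)\geq k+1$ directly (for every $|S|\leq k$ the component $J_S$ lies in $\mathcal{B}$ and avoids $S$). The gap is in the second half: your $\mathcal{B}$ has order \emph{exactly} $k+1$, so the swap iteration cannot push it to $2k$. Since $k=\mathcal{H}\text{-}\mathsf{link}(G)$ is maximal, $X$ is not $(k{+}1)$-$\mathcal{H}$-linked, hence some set $S$ with $|S|=k+1$ has the property that no component of $G-S$ lies outside $\mathcal{H}$ while meeting more than $\tfrac{2}{3}|X|$ vertices of $X$; this $S$ is a hitting set for $\mathcal{B}$. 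For $k\geq 2$ one has $k+1\leq 2k-1$, so your hypothesis ``$|T|\leq 2k-1$'' is satisfied by a genuine hitting set, and no swap procedure can manufacture a bramble element avoiding it. The swap step is also not well-defined as written: the cardinality slack $\min(|T_1|,|T_2|)<k$ does not ensure $T_1\setminus V(J_2)\neq\emptyset$, and after each swap the witnesses $J_i$ must be recomputed from the new partition, so the claimed potential need not decrease.

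The root cause is that the lemma as stated carries the factor of $2$ on the wrong side: with $\mathcal{H}=\{K_0\}$ and $G=K_n$ one computes $\mathcal{H}\text{-}\mathsf{link}(K_n)=\lceil n/3\rceil-1$ and $\mathcal{H}\text{-}\mathsf{sbn}(K_n)=\lceil n/2\rceil$, so $2\,\mathsf{link}>\mathsf{sbn}$ for large $n$. The paper's own argument actually proves the reverse placement: it starts from a $2k$-linked set $X$, takes $\mathcal{B}_X=\{J_S:|S|\leq k\}$, and verifies pairwise intersection via $J_{S_1\cup S_2}\subseteq J_{S_1}\cap J_{S_2}$ (legitimate because $|S_1\cup S_2|\leq 2k$); each $J_S$ avoids $S$, giving $\mathsf{sbn}(G)>k$ whenever $\mathsf{link}(G)\geq 2k$, i.e.\ $\mathcal{H}\text{-}\mathsf{link}(G)\leq 2\,\mathcal{H}\text{-}\mathsf{sbn}(G)$, which is all the downstream corollary needs.
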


\begin{proof}
Let $G$ be a graph and $X$ be a $2k$-$\mathcal{H}$-linked set on $G$ for some positive integer $k.$
We define the following collection of subgraphs:
\begin{align*}
    \mathcal{B}_X\coloneqq \{ J_S \mid S\subseteq V(G)\text{ and }|S|\leq k \}.
\end{align*}
Moreover, we claim that $\mathcal{B}_X$ is a strict $\mathcal{H}$-bramble of order $k$ in $G.$
To see this we simply have to check the axioms of strict $\mathcal{H}$-brambles.

First notice that, by definition of $\mathcal{H}$-linked sets, $B\notin \mathcal{H}$ for all $B\in\mathcal{B}_X.$
Moreover, every such $B$ is connected.
This takes care of the second axiom.

For the first axiom let $J_{S_1},J_{S_2}\in\mathcal{B}_X$ be two distinct members of our collection.
Notice that $|S_1\cup S_2|\leq 2k.$
Thus, there also exists a component $J_{S_1\cup S_2}$ of $G-S_1-S_2$ such that $J_{S_1\cup S_2}\notin\mathcal{H}$ and $|V(J_{S_1\cup S_2} \cap X)|>\frac{2}{3}|X|.$
It follows that $V(J_{S_1\cup S_2}) \cap V(J_{S_i})\neq\emptyset$ for both $i\in[2].$
In fact, this means $J_{S_1\cup S_2}\subseteq J_{S_i}$ for both $i\in[2]$ and therefore $V(J_{S_1})\cap V(J_{S_2})\neq\emptyset$ as desired.

Finally, to see that $\mathcal{B}_X$ is of order $k$ as claimed simply notice that, by definition, for every set $S\subseteq V(G)$ of size at most $k$ there exists an element $J_S\in\mathcal{B}_X$ which fully avoids $S.$
\end{proof}

Finally, we complete the circle by showing that the absence of $k$-$\mathcal{H}$-linked sets implies that $\mathcal{H}\text{-}\mathsf{tw}$ is bounded.

\begin{lemma}\llabel{lemma_Htw_lew_Hlink}
    Let $\mathcal{H}$ be a proper minor-closed graph class.
    For every graph $G$ it holds that $\mathcal{H}\text{-}\mathsf{tw}(G)\leq 4\cdot \mathcal{H}\text{-}\mathsf{link}(G)+4.$
\end{lemma}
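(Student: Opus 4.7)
\medskip
\noindent\textbf{Plan.} The strategy is a recursive balanced-separator construction, mirroring Reed's classical proof that ``no highly linked set implies small treewidth'', modified for the $\mathcal{H}$-tree decomposition setting of \autoref{lemma_Htw_via_decompositions}. Set $k := \mathcal{H}\text{-}\mathsf{link}(G)$. I would construct an $\mathcal{H}$-tree decomposition $(T,\beta)$ of $G$ of width at most $4k+4$ by a recursion whose invariant is a pair $(W,J)$, where $W \subseteq V(G)$ satisfies $|W| \leq 3(k+1) = 3k+3$, and $J$ is a connected component of $G - W$. The recursive call on $(W,J)$ outputs a subtree rooted at a node $r$ that decomposes $G[V(J) \cup (N_G(V(J)) \cap W)]$, with $N_G(V(J)) \cap W$ designated as the adhesion of $r$ to its parent. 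The top-level call starts with $W = \emptyset$ and attaches the subtree for each component of $G$ as a child of a common root with empty bag.

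\medskip
\noindent If $J \in \mathcal{H}$ (base case), I would make $r$ a leaf with bag $V(J) \cup (N_G(V(J)) \cap W)$ and adhesion $N_G(V(J)) \cap W$; the leaf condition $G[\beta(r)\setminus A_r] = J \in \mathcal{H}$ holds and the adhesion has size at most $3k+3$. Otherwise $J \notin \mathcal{H}$ and the key step is to split. I would pad $N_G(V(J)) \cap W$ with vertices from $V(J)$ to produce a set $W^+$ of size exactly $3k+3$. Since $\mathcal{H}\text{-}\mathsf{link}(G)=k$, the set $W^+$ is not $(k+1)$-$\mathcal{H}$-linked in $G$, so there exists $S \subseteq V(G)$ with $|S| \leq k$ such that every component of $G-S$ either belongs to $\mathcal{H}$ or meets $W^+$ in at most $\tfrac{2}{3}|W^+| = 2k+2$ vertices. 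I would set $\beta(r) := W^+ \cup S$, which is a non-leaf bag of size at most $(3k+3)+k = 4k+3 \leq 4k+5$. For each component $J''$ of $G[V(J) \cup W^+] - S$, the new adhesion $W_{J''} := N_G(V(J'')) \cap \beta(r)$ satisfies $W_{J''} \subseteq S \cup (V(J^{*}) \cap W^+)$ where $J^{*}$ is the component of $G-S$ containing $V(J'')$; since $J'' \notin \mathcal{H}$ forces $J^{*} \notin \mathcal{H}$ (using that $\mathcal{H}$ is minor-closed so an $\mathcal{H}$-graph has only $\mathcal{H}$-subgraphs), the balanced property gives $|W_{J''}| \leq (2k+2) + k = 3k+2$, preserving the invariant. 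Recurse on each $(W_{J''}, J'')$.

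\medskip
\noindent The width bound then follows immediately: leaves contribute at most $|A_r|-1 \leq 3k+2$, and non-leaf bags have size at most $4k+3$, contributing at most $4k+2$; taking the maximum yields width at most $4k+4$. The crux of the argument, and its main obstacle, is \emph{termination}, i.e.\ ensuring that $|V(J'')| < |V(J)|$ strictly at every recursive non-base step. This requires $S \cap V(J) \neq \emptyset$: if by contradiction $S \cap V(J) = \emptyset$, then $V(J)$ lies entirely in a single component $J^{*}$ of $G - S$ with $J^{*} \notin \mathcal{H}$, so the balanced-separator property forces $|V(J^{*}) \cap W^{+}| \leq 2k+2$; but $V(J^{*}) \supseteq V(J) \supseteq W^{+} \setminus W$, yielding $|W^{+} \setminus W| \leq 2k+2$, i.e.\ $|W| \geq k+1$. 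The subtle point is that this contradiction is immediate only when $|W| \leq k$; in the regime $|W| > k$ one needs a secondary termination argument, for instance a potential function $(|V(J)|, |V(J) \setminus W^+|)$ in lexicographic order, or a refinement of the invariant (e.g.\ $|W| \leq 2k+2$ with a correspondingly wider bag) to absorb the hard case without violating the $4k+4$ budget. Handling this case analysis cleanly, together with the degenerate situations in which $|V(J)|$ is too small to complete the padding, is the only nontrivial technical burden of the proof.
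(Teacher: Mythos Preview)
Your approach is essentially the paper's: a Reed-style balanced-separator recursion producing an $\mathcal{H}$-tree decomposition, with the same invariant size and bag structure. One minor slip: failure of $(k{+}1)$-$\mathcal{H}$-linkedness only guarantees a separator $S$ with $|S|\leq k{+}1$, not $|S|\leq k$ (the definition of $m$-$\mathcal{H}$-linked quantifies over all $S$ with $|S|\leq m$). The arithmetic still closes within $4k{+}4$.

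On termination, the paper sidesteps your heavier proposed fixes with one clean device. It phrases the induction over pairs $(G,X)$ with $|X|\leq 3k{+}4$ (translating the paper's bound into your $k$), ordered by $|G|-|X|$, and the very first line of the inductive step is: if $|X|<3k{+}4$, pick any $v\in V(G)\setminus X$ and apply the induction hypothesis to $(G,X\cup\{v\})$. Thus the main splitting is only ever executed when $|X|=3k{+}4$ exactly. After splitting, the new boundary $X_{J}=(V(J)\cap X)\cup S$ has size at most $(2k{+}2)+(k{+}1)=3k{+}3<3k{+}4$, so the recursive call immediately falls back into the padding branch and absorbs at least one fresh vertex of $V(J)\setminus X$ before it can split again. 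This forces strict decrease of $|G|-|X|$ regardless of whether $S$ meets $V(J)$; no lexicographic potential or tightened invariant is needed. The cost is only the one extra unit in the invariant ($3k{+}4$ rather than your $3k{+}3$), which still keeps non-leaf bags at size at most $(3k{+}4)+(k{+}1)=4k{+}5$, i.e.\ width $4k{+}4$.
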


\begin{proof}
    Let $k\coloneqq \mathcal{H}\text{-}\mathsf{link}(G)+1,$ then $G$ does not contain a $k$-$\mathcal{H}$-linked set.

    We prove the following claim by induction on $|G|-|X|.$
    The assertion will then follow immediately.

    \paragraph{Inductive Claim.}
    Let $G$ be a graph with no $k$-$\mathcal{H}$-linked set.
    For every set $X\subseteq V(G)$ with $|X|\leq 3k+1$ there exist an $\mathcal{H}$-tree decomposition $(T,\beta)$ of width at most $4k$ and a leaf $d\in V(T)$ such that $X\subseteq \beta(d)$ and $|\beta(d)|\leq 4k.$
    \medskip

    Let $X\subseteq V(G)$ be any set of size at most $3k+1.$
    In case $|X|<3k+1$ we either have that $|G|<3k+1$ in which case we are immediately done, or may pick an arbitrary vertex $v\in V(G)\setminus X$ and apply the induction hypothesis to $X\cup \{ v\}.$

    Hence, we may assume $|X|=3k+1.$
    In the following, we define the desired $\mathcal{H}$-tree decomposition step by step.
    First let us introduce a node $d_0$ and set $\beta(d_0)\coloneqq X.$
    Our goal is to make $d_0$ into a leaf of the final decomposition.
    To meet the requirements of $\mathcal{H}$-tree decompositions this means we should introduce a node $d_1$ adjacent to $d_0$ and also set $\beta(d_1)\coloneqq X.$
    This way we know that $G[\beta(d_0)\setminus (\beta(d_0)\cap\beta(d_1))]\in\mathcal{H}.$
    For each component $G'$ of $G-X$ we now introduce a new vertex $t_{G'}$ adjacent to $d_1.$
    In the following, we describe how to extend this construction using our induction hypothesis.

    Towards this goal, let us fix $G'$ to be the subgraph of $G$ induced by a single component of $G-X$ together with $X.$
    Now, since $X$ is not $k$-$\mathcal{H}$-linked there exists a set $S\subseteq V(G')$ of size at most $k$ such that one of the following is true.
    \begin{description}
        \item[Case 1] There exists a component $J_S$ of $G'-S$ such that $|V(J_S)\cap X|>\frac{2}{3}|X|$ and $J_S\in \mathcal{H},$ or
        \item[Case 2] for every component $J$ of $G'-S$ it holds that $|V(J)\cap X|\leq \frac{2}{3}|X|\leq 2k+\frac{1}{3}\leq 2k.$
    \end{description}
    
    In either case we will set $\beta(t_{G'-X})\coloneqq X\cup S.$
    Notice that this means that $|\beta(t_{G'-X})|\leq 4k+1$ which still fits our bounds.

    \paragraph{Case 1.}
    For each component $J$ of $G'-S$ introduce a vertex $t_J$ adjacent to $t_{G'-X}$ and set $\beta(t_J)\coloneqq (V(J)\cap X)\cup S.$
    Notice that for the component $J_S$ as defined above this means $|\beta(t_{J_S})|\leq 4k+1$ and for each other component $J$ of $G'-S$ we have that $|V(J)\cap X|<\frac{1}{3}|X|\leq k$ implying $|\beta(t_J)|\leq 2k.$
    We then introduce a new node $d_{J_S}$ adjacent to $t_{J_S}$ and set $\beta(d_{J_S})\coloneqq V(J_S)\cup S.$
    By definition of $J_S$ we have that $G[\beta(d_{J_S})\setminus \beta(t_{J_S})]\in\mathcal{H}$ and thus we are fine leaving $d_{J_S}$ as a leaf of our decomposition.
    For every other component $J$ of $G'-S$ we may now apply the induction hypothesis with $X_J\coloneqq S\cup (X\cap V(J))$ to obtain an $\mathcal{H}$-tree decomposition $(T_J,\beta_J)$ of $G[V(J)\cup S]$ of width at most $4k$ such that there exists a leaf $t_{J,S}\in V(T_J)$ with $X_J\subseteq \beta(t_{J,S})$ and $|\beta(t_{J,S})|\leq 4k.$
    We extend out partially defined decomposition with the tree $T_j$ and the bags $\beta_J$ and introduce the edge $t_{J,S}t_J.$
    With this, we are done with the first case.

    \paragraph{Case 2.}
    For each component $J$ of $G'-S$ introduce a vertex $t_J$ adjacent to $t_{G'-X}$ and set $\beta(t_J)\coloneqq (V(J)\cap X)\cup S.$
    This time we may observe that $|\beta(t_J)|\leq k + \frac{2}{3}|X|\leq 3k.$
    For each $J,$ if there exists a vertex $v\in J-X$ we introduce a vertex $t'_J$ adjacent to $t_J,$ pick such a vertex $v$ arbitrarily and set $\beta(t'_J)\coloneqq \beta(t_J)\cup \{ v\}.$
    If there does not exist such a vertex $v$ we still introduce the node $t'_J$ but in this case we have that $V(J)\subseteq X$ and thus we may simply set $\beta(t'_J)\coloneqq \beta(t_J)$ and declare it a leaf.
    In this case, it follows that $G[\beta(t'_J)\setminus\beta(t_J)]\in\mathcal{H}$ and thus $t'_J$ may remain a leaf for the rest of the construction.

    Hence, from now on we may assume that we were able to choose the vertex $v.$
    By calling the induction hypothesis for $G[S\cup V(J)]$ and $X_J\coloneqq \beta(t'_J)$ and then merging our partial decomposition with the resulting decomposition for $G[S\cup V(J)]$ in the same way as we did in \textbf{Case 1}, we also complete this second case and thereby the proof by using \autoref{lemma_Htw_via_decompositions}.
\end{proof}

To conclude, we summarize the results of \autoref{lemma_bramble_lowerbound}, \autoref{lemma_Hlink_leq_Hbramble}, and \autoref{lemma_Htw_lew_Hlink}.

\begin{corollary}\llabel{cor_Htw_equivalences}
Let $\mathcal{H}$ be a proper minor-closed graph class.
Then, for every graph $G$ we have
\begin{align*}
    2\cdot \mathcal{H}\text{-}\mathsf{link}(G)-1 \leq \mathcal{H}\text{-}\mathsf{sbn}(G)-1 \leq \mathcal{H}\text{-}\mathsf{tw}(H) \leq 4\cdot \mathcal{H}\text{-}\mathsf{link}(G)+4.
\end{align*}
\end{corollary}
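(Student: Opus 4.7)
The plan is to observe that this corollary is essentially a one-line rearrangement of the three preceding lemmas, so the main task of the proof is simply to check that the three inequalities in the displayed chain correspond exactly, after rewriting, to the statements of Lemmas \ref{lemma_bramble_lowerbound}, \ref{lemma_Hlink_leq_Hbramble}, and \ref{lemma_Htw_lew_Hlink}. No new argument is needed.

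Concretely, I would read the chain from left to right. The leftmost inequality $2\cdot \mathcal{H}\text{-}\mathsf{link}(G)-1 \leq \mathcal{H}\text{-}\mathsf{sbn}(G)-1$ is obtained by subtracting $1$ from both sides of the inequality $2\cdot \mathcal{H}\text{-}\mathsf{link}(G)\leq \mathcal{H}\text{-}\mathsf{sbn}(G)$ provided by \autoref{lemma_Hlink_leq_Hbramble}. The middle inequality $\mathcal{H}\text{-}\mathsf{sbn}(G)-1 \leq \mathcal{H}\text{-}\mathsf{tw}(G)$ is just the statement $\mathcal{H}\text{-}\mathsf{sbn}(G)\leq \mathcal{H}\text{-}\mathsf{tw}(G)+1$ of \autoref{lemma_bramble_lowerbound}, again rewritten. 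Finally, the rightmost inequality $\mathcal{H}\text{-}\mathsf{tw}(G) \leq 4\cdot \mathcal{H}\text{-}\mathsf{link}(G)+4$ is the conclusion of \autoref{lemma_Htw_lew_Hlink} verbatim.

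Since every piece has been established individually and with explicit constants, the only thing that could go subtly wrong is an off-by-one mismatch between the three normalizations (whether the invariant is ``$+1$'' on the bramble side, or on the treewidth side, etc.). I would therefore spend an extra sentence of the proof explicitly writing the three preceding lemma statements in the exact form in which they get chained. There is no real obstacle here; the value of the corollary is bookkeeping, namely asserting that $\mathcal{H}\text{-}\mathsf{link}$, $\mathcal{H}\text{-}\mathsf{sbn}$, and $\mathcal{H}\text{-}\mathsf{tw}$ are parametrically equivalent with explicit linear gap functions, which will be used downstream (in particular in \autoref{subsec_upperbound_H_tw}) to reduce arguments about $\mathcal{H}\text{-}\mathsf{tw}$ to finding $\mathcal{H}$-linked sets and vice versa.
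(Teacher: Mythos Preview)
Your proposal is correct and matches the paper's approach exactly: the paper introduces the corollary with the sentence ``To conclude, we summarize the results of \autoref{lemma_bramble_lowerbound}, \autoref{lemma_Hlink_leq_Hbramble}, and \autoref{lemma_Htw_lew_Hlink}'' and gives no further proof. Your explicit verification of the three rewritings is precisely the bookkeeping the paper leaves implicit.
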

The above corollary implies that $\mathcal{H}\text{-}\mathsf{link},$ $\mathcal{H}\text{-}\mathsf{sbn}$ and $\mathcal{H}\text{-}\mathsf{link}$ are equivalent with linear gap function.

\subsection{The lower bound}\llabel{subsec_lowerbound_H_tw}
With the lemmas from the two previous subsections, we are now ready to obtain the first link between our walloids and the notion of $\mathcal{H}$-treewidth.
Indeed, we can prove an even stronger statement.
For this purpose, we need a special version of brambles.

Let $\mathcal{H}$ be a proper minor-closed class.
A strict $\mathcal{H}$-bramble $\mathcal{B}$ in a graph $G$ is said to be a \defi{$\nicefrac{1}{2}$-$\mathcal{H}$-bramble} if no vertex of $G$ appears in more than two members of $\mathcal{B}.$
Notice that, if $\mathcal{B}$ is a $\frac{1}{2}$-$\mathcal{H}$-bramble of order $k$ which is not of order $k+1,$ then $|\mathcal{B}|\leq 2k.$

\begin{lemma}\llabel{lemma_half_integral_Brambles}
There exists a function $f_{\ref{lemma_half_integral_Brambles}}\colon\mathbb{N}^2\to\mathbb{N}$ such that for every graph $Z,$ every embedding pair $\mathbf{p}\in \mathfrak{P}^{(Z)}$ and every positive integer $k,$ $\mathscr{W}_{f_{\ref{lemma_half_integral_Brambles}}(|Z|,k)}^{\mathbf{p}}$ contains a $\nicefrac{1}{2}$-$\excl(Z)$-bramble of order $k.$

Moreover, $f_{\ref{lemma_half_integral_Brambles}}(|Z|,k)\in \mathcal{O}_{|Z|}(k).$
\end{lemma}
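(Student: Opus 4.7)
The plan is to construct, inside $\mathscr{W}^{\mathbf{p}}_t$ with $t = C_{|Z|} \cdot k$ for a suitable constant $C_{|Z|}$ depending only on $|Z|$, a family of $2k$ connected subgraphs $B_1,\ldots,B_{2k}$ satisfying: every $B_i$ contains $Z$ as a minor, any two $B_i,B_j$ share a vertex, and no vertex of $\mathscr{W}^{\mathbf{p}}_t$ lies in more than two $B_i$'s. Once such a family is found it is automatically a $\nicefrac{1}{2}$-$\mathsf{excl}(Z)$-bramble, and the half-integrality forces any hitting set $S$ to satisfy $|S|\geq |\mathcal{B}|/2 = k$, so the order is at least $k$, as required.

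The starting point is \autoref{lemma_walloid_contains_obstruction}, which supplies a constant $c_Z = f^{(Z)}_{\ref{lemma_walloid_contains_obstruction}}(|Z|) \in 2^{\mathcal{O}(\ell(\mathsf{h}(|Z|)))}$ such that $\mathscr{W}^{\mathbf{p}}_{c_Z}$ already contains $Z$ as a minor. Tracking through its proof, which passes through designs via \autoref{walloids_contain_designs} and \autoref{designs_contain_graphs}, the witnessing minor model $\mu$ can be chosen to live in $\mathcal{O}(c_Z)$ consecutive columns of the base cylinder together with a bounded portion of every handle, crosscap, and flower segment of $\mathbf{p}$. Setting $t = 6 c_Z \cdot k$, I would then use the cyclic symmetry of the cylindrical concatenation (Subsection 1.5, also implicit in the swapping arguments around \autoref{lem_garden_creation}) to partition the base cylinder into $4k$ cyclically consecutive \textit{slots} of width $\tfrac{3}{2} c_Z$ each, and embed a shifted copy $\mu_i$ of $\mu$ into every odd-indexed slot for $i \in [2k]$. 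At this stage the $\mu_i$'s are $2k$ pairwise disjoint $Z$-minor models, each supported on its own odd-indexed slot plus its own bounded allocation from the handle, crosscap, and flower segments.

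The main step, and the place where the key technical work lies, is to upgrade this disjoint packing into a pairwise-intersecting one without breaking half-integrality. My plan is to enlarge each $\mu_i$ to a connected subgraph $B_i \supseteq \mu_i$ by adding, for every pair $\{i,j\} \subseteq [2k]$, a pair of vertex-disjoint paths joining $\mu_i$ and $\mu_j$ respectively to a designated common vertex $v_{\{i,j\}}$ placed inside one of the even-indexed buffer slots. Each of the $2k$ buffer slots contains $\Theta(k)$ concentric cycles of the base cylinder, each of length $\Theta(k)$, providing $\Theta(k^2)$ usable vertices, which is just enough to host the $\binom{2k}{2} = \Theta(k^2)$ required intersection vertices. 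The delicate part will be to route the paths from each $\mu_i$ to its $\binom{2k-1}{1}$ chosen intersection vertices so that the global accounting keeps every vertex in at most two $B_l$'s; this should be achievable by cutting each horizontal cycle of a buffer slot into $2k-1$ internally disjoint arcs and assigning each arc to one pair $\{i,j\}$ that the cycle helps connect, so that a buffer vertex belongs to at most one $B_i$ via the arc that holds it plus at most one further $B_j$ via the shared endpoint $v_{\{i,j\}}$. Verifying this allocation carefully, and checking that all paths can be realized inside the available radial infrastructure of the base cylinder without creating a triple overlap, is the main obstacle; once done, the construction yields the desired $\nicefrac{1}{2}$-$\mathsf{excl}(Z)$-bramble of size $2k$ and order at least $k$, inside a walloid of parameter $t = \mathcal{O}_{|Z|}(k)$.
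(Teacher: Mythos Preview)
Your overall plan is right, but the second step—manufacturing pairwise intersections by adding tentacle paths from each localized $\mu_i$ to designated meeting vertices $v_{\{i,j\}}$—has a real gap. First, your buffer-slot accounting is off: a buffer slot has width $\tfrac{3}{2}c_Z=\Theta_{|Z|}(1)$ columns, so the portion of each horizontal cycle lying inside it has length $\Theta_{|Z|}(1)$, not $\Theta(k)$; a single buffer slot thus holds $\Theta_{|Z|}(k)$ vertices, not $\Theta(k^2)$. More importantly, your half-integrality bookkeeping only tracks what happens \emph{at} the meeting points and their immediate arcs, and ignores the long connections from $\mu_i$ to those points: each $B_i$ must reach $2k-1$ targets spread around the entire cylinder, so $B_i$ contains a Steiner tree of size $\Omega_{|Z|}(k)$ that threads through many foreign slots, and you give no argument controlling the overlaps of these $2k$ trees. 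The global vertex budget is at best tight ($2k$ trees of size $\Theta_{|Z|}(k)$ inside a wall of $\Theta_{|Z|}(k^2)$ vertices, each usable twice), so any construction along these lines would require an explicit routing scheme that you have not supplied and that your arc-partitioning sentence does not provide.

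The paper avoids this entirely by passing to the design $\mathscr{D}^{\mathbf{p}}_{2k(f_{\ref{designs_contain_graphs}}(|Z|)+1)}$ and using a \emph{stretch-and-shift} construction (\autoref{fig_bramble_from_design}, \autoref{fig_intro_obstructing}): rather than keeping each $Z$-model localized and then grafting on tentacles, one elongates the fixed extension of $Z$ supplied by \autoref{designs_contain_graphs} so that each copy spans the whole design in one direction while occupying only a thin strip in the other, and shifts consecutive copies by one unit. This is the design analogue of the classical cross bramble in a grid: pairwise intersection is automatic because any two elongated copies cross, and half-integrality is immediate because each vertex lies in at most two overlapping strips. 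A quick fix in your own framework that captures the same idea is to take $B_i=\mu_i\cup C_i$, where $C_i$ is the $i$-th horizontal cycle of the base cylinder and each $\mu_i$ is first stretched vertically so that it meets every horizontal cycle; then $B_i\cap B_j\supseteq C_i\cap\mu_j\neq\emptyset$, while each vertex lies in at most one $\mu_l$ and at most one $C_l$.
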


\begin{figure}[h]
    \centering
    \scalebox{1.1}{
    \begin{tikzpicture}[scale=1]

        \pgfdeclarelayer{background}
        \pgfdeclarelayer{foreground}
            
        \pgfsetlayers{background,main,foreground}
            
        \begin{pgfonlayer}{main}
        \node (C) [v:ghost] {};

            \pgftext{\includegraphics[width=12cm]{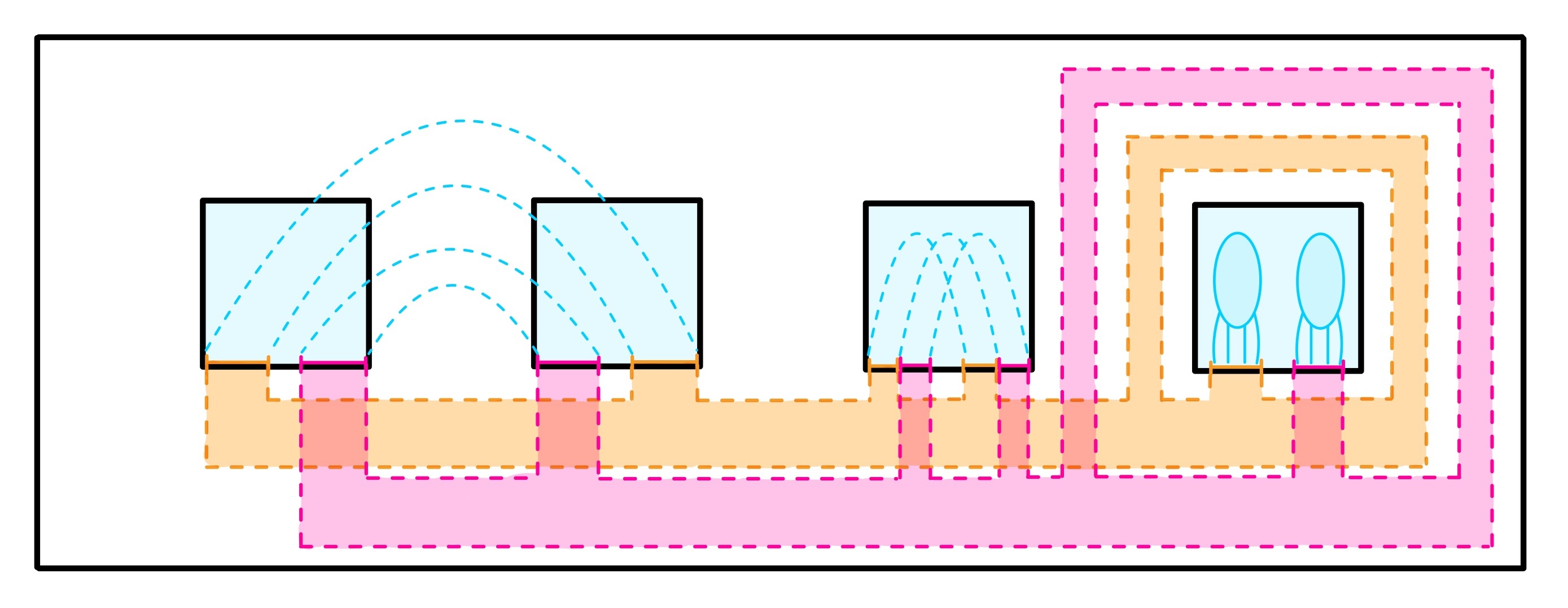}} at (C.center);

        \end{pgfonlayer}{main}
        
        \begin{pgfonlayer}{foreground}
        \end{pgfonlayer}{foreground}

        \begin{pgfonlayer}{background}
        \end{pgfonlayer}{background}
        
    \end{tikzpicture}
    }
    \caption{A sketch for constructing a $\nicefrac{1}{2}$-$\mathsf{excl}(Z)$-bramble in a $\mathbf{p}$-design of large order for some $\mathbf{p}\in\mathfrak{P}^{(Z)}.$}
    \llabel{fig_bramble_from_design}
\end{figure}

\begin{proof}
By \autoref{walloids_contain_designs} we know that for every $t\in\mathbb{N}$ we have that $\mathscr{D}_t^{\mathbf{p}}$ is a minor of $\mathscr{W}^{\mathbf{p}}_{5t}.$
Moreover, $\mathscr{D}^{\mathbf{p}}_{f_{\ref{designs_contain_graphs}}(|Z|)}$ contains $Z$ as a minor by \autoref{designs_contain_graphs}.

Now consider the design $\mathscr{D}^{\mathbf{p}}_{2k(f_{\ref{designs_contain_graphs}}(|Z|)+1)}.$
Similar to how walloids contain half-integral packings of themselves (recall \autoref{fig_half_integral_packing}), we may construct such a half-integral packing of extensions of $Z$ within $\mathscr{D}^{\mathbf{p}}_{2k(f_{\ref{designs_contain_graphs}}(|Z|)+1)}.$
Indeed, by the choice of the order we can fit a half-integral packing of $2k$ copies of extensions of $Z$ into $\mathscr{D}^{\mathbf{p}}_{2k(f_{\ref{designs_contain_graphs}}(|Z|)+1)}$ such that any two of these extensions intersect.
See \autoref{fig_bramble_from_design} for an illustration.
Let $\mathcal{B}$ be the collection of these extensions.
Since the packing is half-integral and $|\mathcal{B}|=2k,$ the order of the resulting bramble is at lest $k.$

The explicit construction of $\mathcal{B}$ is tedious, but it suffices to see that, given a fixed extension of $Z$ embedded in $\mathscr{D}^{\mathbf{p}}_{f_{\ref{designs_contain_graphs}}(|Z|)}$ we can take this extension and ``stretch'' and ``shift'' it slightly as depicted in \autoref{fig_bramble_from_design} to achieve the half-integral packing.
\end{proof}
 
As an immediate consequence of \autoref{lemma_half_integral_Brambles} and \autoref{lemma_bramble_lowerbound}, we obtain that for every proper minor-closed graph class $\mathcal{H}$ the parametric graphs in the family $\mathfrak{W}_{\mathcal{H}}$ act as witnesses for large $\mathcal{H}$-treewidth when found as a minor.
In particular, this immediately implies \autoref{thm_H_tw_lowerbound}.

\begin{corollary}\llabel{cor_walloid_lower_bound}
For every proper minor-closed graph class $\mathcal{H}$ there exists a function $f^{\mathcal{H}}_{\ref{cor_walloid_lower_bound}}\colon\mathbb{N}\to\mathbb{N}$ such that for every positive integer $k$ and every graph $G,$ if there exists $\mathscr{W}=\langle W_k \mid k\in\mathbb{N}\rangle\in\mathfrak{W}_{\mathcal{H}}$ such that $G$ contains $\mathscr{W}_{f^{\mathcal{H}}_{\ref{cor_walloid_lower_bound}}(k)}$ as a minor, then $\mathcal{H}\text{-}\mathsf{tw}(G)\geq k.$
\end{corollary}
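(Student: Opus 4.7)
The plan is to combine the two lemmas in a straightforward way, only paying attention to the constants and to the pullback of a bramble through a minor model. Since $\mathsf{obs}(\mathcal{H})$ is finite by the Robertson--Seymour theorem, I may set
\[
f^{\mathcal{H}}_{\ref{cor_walloid_lower_bound}}(k) \coloneqq \max\bigl\{\, f_{\ref{lemma_half_integral_Brambles}}(|Z|,k+1) \,\bigm|\, Z\in\mathsf{obs}(\mathcal{H}) \,\bigr\},
\]
which is a well-defined function of $k$ (and, given the bound $f_{\ref{lemma_half_integral_Brambles}}(|Z|,k)\in\mathcal{O}_{|Z|}(k)$, is linear in $k$ with a constant that depends on $\mathcal{H}$).

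Now, assume $G$ contains $\mathscr{W}_{f^{\mathcal{H}}_{\ref{cor_walloid_lower_bound}}(k)}$ as a minor, where $\mathscr{W}=\mathscr{W}^{\mathbf{p}}$ for some $\mathbf{p}\in\mathfrak{P}^{(Z)}$ with $Z\in\mathsf{obs}(\mathcal{H})$. The first step is to invoke \autoref{lemma_half_integral_Brambles} on $\mathscr{W}^{\mathbf{p}}_{f^{\mathcal{H}}_{\ref{cor_walloid_lower_bound}}(k)}$ to obtain a $\nicefrac{1}{2}$-$\mathsf{excl}(Z)$-bramble $\mathcal{B}$ of order $k+1$. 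Each member of $\mathcal{B}$ is a connected subgraph that contains $Z$ as a minor; since $Z\in\mathsf{obs}(\mathcal{H})$, no such subgraph belongs to $\mathcal{H}$. Hence $\mathcal{B}\cap\mathcal{H}=\emptyset$, so $\mathcal{B}$ is in particular a strict $\mathcal{H}$-bramble of order $k+1$ in $\mathscr{W}^{\mathbf{p}}_{f^{\mathcal{H}}_{\ref{cor_walloid_lower_bound}}(k)}$ (the half-integrality is no longer needed).

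The second step is to lift $\mathcal{B}$ into $G$ through the minor model. Fix pairwise disjoint connected branch sets $\{X_v\}_{v\in V(\mathscr{W}^{\mathbf{p}}_{f^{\mathcal{H}}_{\ref{cor_walloid_lower_bound}}(k)})}$ in $G$ together with connecting edges witnessing the minor. For each $B\in\mathcal{B}$, let $\tilde B\subseteq G$ be the subgraph formed by $\bigcup_{v\in V(B)} X_v$ together with one connecting edge per edge of $B$; then $\tilde B$ is connected and contains $B$ (and hence $Z$) as a minor, so $\tilde B\notin\mathcal{H}$. Two such $\tilde B_1,\tilde B_2$ intersect whenever $B_1,B_2$ do, since the corresponding branch sets are disjoint. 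Thus $\tilde{\mathcal{B}}\coloneqq\{\tilde B\mid B\in\mathcal{B}\}$ is a strict $\mathcal{H}$-bramble of $G$. Moreover, any hitting set $S\subseteq V(G)$ for $\tilde{\mathcal{B}}$ yields a hitting set of size at most $|S|$ for $\mathcal{B}$ by projecting each vertex $s\in S$ to the unique $v$ with $s\in X_v$, so the order of $\tilde{\mathcal{B}}$ is at least the order of $\mathcal{B}$, i.e.\ at least $k+1$.

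Finally, applying \autoref{lemma_bramble_lowerbound} to $\tilde{\mathcal{B}}$ gives $\mathcal{H}\text{-}\mathsf{sbn}(G)\geq k+1$ and hence $\mathcal{H}\text{-}\mathsf{tw}(G)\geq k$, as required. I do not expect any real obstacle here; the only mild subtleties are (i) choosing $f^{\mathcal{H}}_{\ref{cor_walloid_lower_bound}}$ as a maximum over the finite set $\mathsf{obs}(\mathcal{H})$ so that a single function handles every $\mathscr{W}\in\mathfrak{W}_{\mathcal{H}}$, and (ii) verifying that bramble order does not drop under pullback through a minor model, which is immediate from the disjointness of branch sets.
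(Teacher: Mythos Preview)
Your proposal is correct and follows exactly the approach the paper intends: the corollary is stated as an immediate consequence of \autoref{lemma_half_integral_Brambles} and \autoref{lemma_bramble_lowerbound}, and you have spelled out precisely that combination, including the (routine but necessary) pullback of the strict $\mathcal{H}$-bramble through a minor model and the uniform choice of $f^{\mathcal{H}}_{\ref{cor_walloid_lower_bound}}$ via a maximum over the finite set $\mathsf{obs}(\mathcal{H})$.
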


In particular, this means that every strict $\mathcal{H}$-bramble of large enough order implies the existence of a $\frac{1}{2}$-$\mathcal{H}$-bramble of large order.

\begin{corollary}\llabel{cor_bramble_to_half_int_bramble}
For every proper minor-closed graph class $\mathcal{H}$ there exists a function $f^{\mathcal{H}}_{\ref{cor_bramble_to_half_int_bramble}}\colon\mathbb{N}\to\mathbb{N}$ such that for every positive integer $k$ and every graph $G,$ if $G$ contains a strict $\mathcal{H}$-bramble of order $f_{\ref{cor_bramble_to_half_int_bramble}}(k),$ then $G$ contains $\frac{1}{2}$-$\mathcal{H}$-bramble of order $k.$
\end{corollary}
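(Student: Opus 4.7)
The plan is to chain together the three tools developed earlier in the paper: \autoref{lemma_bramble_lowerbound} (strict $\mathcal{H}$-brambles are a lower bound for $\mathcal{H}\text{-}\mathsf{tw}$), \autoref{main_grid_general} (large $\mathcal{H}\text{-}\mathsf{tw}$ forces a member of $\mathfrak{W}_{\mathcal{H}}$), and \autoref{lemma_half_integral_Brambles} (every walloid in $\mathfrak{W}_{\mathcal{H}}$ contains a large $\nicefrac{1}{2}$-bramble for the corresponding excluded obstruction). Concretely, with $h\coloneqq h(\mathcal{H}),$ I will set
\[
f^{\mathcal{H}}_{\ref{cor_bramble_to_half_int_bramble}}(k) \;\coloneqq\; f_{\ref{main_grid_general}}\!\bigl(h,\, \max_{Z\in\obs(\mathcal{H})} f_{\ref{lemma_half_integral_Brambles}}(|Z|,k)\bigr) + 1
\]
and show that any strict $\mathcal{H}$-bramble in $G$ of this order yields the required $\nicefrac{1}{2}$-$\mathcal{H}$-bramble of order $k.$

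First, I would apply \autoref{lemma_bramble_lowerbound} to conclude $\mathcal{H}\text{-}\mathsf{tw}(G)\geq f_{\ref{main_grid_general}}(h,t)$ where $t\coloneqq \max_{Z\in\obs(\mathcal{H})} f_{\ref{lemma_half_integral_Brambles}}(|Z|,k).$ Then \autoref{main_grid_general} produces some $\mathscr{W}=\mathscr{W}^{\mathbf{p}}\in\mathfrak{W}_{\mathcal{H}},$ with $\mathbf{p}\in\mathfrak{P}^{(Z)}$ for a suitable $Z\in\obs(\mathcal{H}),$ together with a subdivision $W$ of $\mathscr{W}_t$ inside $G.$ Applying \autoref{lemma_half_integral_Brambles} to the embedding pair $\mathbf{p},$ since $t\geq f_{\ref{lemma_half_integral_Brambles}}(|Z|,k),$ yields a $\nicefrac{1}{2}$-$\mathsf{excl}(Z)$-bramble $\mathcal{B}$ of order $k$ in $\mathscr{W}_t.$ Because $Z\in\obs(\mathcal{H})$ and $\mathcal{H}$ is minor-closed, any subgraph containing $Z$ as a minor lies outside $\mathcal{H};$ thus $\mathcal{B}$ is automatically a $\nicefrac{1}{2}$-$\mathcal{H}$-bramble as well.

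The remaining (and only slightly delicate) step is to lift $\mathcal{B}$ from $\mathscr{W}_t$ into the subdivision $W\subseteq G.$ For each $B\in\mathcal{B},$ let $\widetilde{B}$ be the subgraph of $W$ obtained from $B$ by replacing every edge with its corresponding subdivided path. Then $\widetilde{B}$ is connected, $\widetilde{B}\notin \mathcal{H}$ (since $B$ is a minor of $\widetilde{B}$ and $\mathcal{H}$ is minor-closed), and any two lifted members still intersect at their common branch vertices. For the half-integrality condition, branch vertices of $W$ inherit the bound from $\mathcal{B},$ while any internal subdivision vertex on the path of an edge $uv$ is used by $\widetilde{B}$ only if $uv\in E(B),$ which forces $u,v\in V(B)$ and thus occurs for at most two members of $\mathcal{B}.$ Finally, given any hitting set $S\subseteq V(W)$ for $\widetilde{\mathcal{B}},$ replacing each subdivision vertex $s\in S$ lying on the path of an edge $uv\in E(\mathscr{W}_t)$ with an endpoint of $uv$ produces a hitting set of $\mathcal{B}$ in $\mathscr{W}_t$ of size at most $|S|,$ so $|S|\geq k.$

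I expect no major obstacle: the three preceding lemmas do all the heavy lifting, and the only new content is the routine verification that a $\nicefrac{1}{2}$-bramble transfers intact across a topological-minor relation. The mild subtlety to double-check is simply the ``contraction'' argument for the order bound under subdivision, which I have outlined above.
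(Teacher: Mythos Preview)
Your proposal is correct and follows exactly the approach the paper intends: the paper states this corollary without proof, presenting it as an immediate consequence of \autoref{lemma_bramble_lowerbound}, \autoref{main_grid_general}, and \autoref{lemma_half_integral_Brambles}, and you have spelled out precisely that chain (including the observation that a $\nicefrac{1}{2}$-$\mathsf{excl}(Z)$-bramble is automatically a $\nicefrac{1}{2}$-$\mathcal{H}$-bramble when $Z\in\obs(\mathcal{H})$). Your subdivision-lifting argument is a routine detail the paper leaves implicit, and your verification of connectivity, half-integrality, and order under subdivision is sound.
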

 
\subsection{The upper bound}\llabel{subsec_upperbound_H_tw}

We are now ready to prove \autoref{main_grid_general}.
To do this we make use of \autoref{thm_local_structure} to show that any graph which contains a highly $\mathcal{H}$-linked set must also contain a member of $\mathfrak{W}_{\mathcal{H}}$ of large order.

In order to prove this statement we will need some additional notions from the theory of graph minors.

\paragraph{Of walls and well-linked sets.}

Let $G$ be a graph and $X\subseteq V(G)$ be a vertex set.
A set $S\subseteq V(G)$ is said to be a \defi{balanced separator} for $X$ if for every component $J$ of $G-S$ it holds that $|V(J)\cap X|\leq \frac{2}{3}|X|.$
We say that $X$ is \defi{$k$-well-linked} if there does not exist a balanced separator of size at most $k$ for $X.$

It is easy to see that well-linked sets form a special case of our $\mathcal{H}$-linked sets.
Indeed, they are a well-studied object acting as obstructions to treewidth.
Well-linked sets and walls both act as such obstructions and they relate to each other in a specific way which is usually encoded in the language of ``tangles''.
Since we do not have any need for the notion of tangles in this paper we provide a slightly different definition here which can easily be seen to be equivalent by a reader familiar with these concepts.
See Section 3 of \cite{thilikos2023excluding} for an in-depth discussion.

Let $k$ and $r$ be positive integers such that $k\geq r.$
Let $G$ be a graph $X$ a $k$-well-linked set in $G$ and $W$ an $r$-wall in $G.$

First notice that for every set $S\subseteq V(G)$ of order at most $r-1$ there exists a unique component $J_{W,S}$ of $G-S$ which contains an entire row and an entire column of $W.$
Similarly, for every $S\subseteq V(G)$ of order at most $k$ there exists a unique component $J_{X,S}$ such that $|V(J_{X,S})\cap X|>\frac{2}{3}|X|.$
We say that $X$ and $W$ \defi{agree} if for every $S\subseteq V(G)$ of order at most $r-1$ we have that $J_{W,S}\subseteq J_{X,S}.$

\begin{theorem}[\cite{thilikos2023excluding}]
    \llabel{thm_well_linked_to_wall}
Let $k\geq 3$ be an integer.
There exists a universal positive constant $c\in\mathbb{N}$ and an algorithm that, given a graph $G$ and a $(36ck^{20}+3)$-well-linked set $X\subseteq V(G),$ computes in time $\mathcal{O}_k(|G|^2|E(G)|\log(|G|))$ a $k$-wall $W$ such that $X$ and $W$ agree.
\end{theorem}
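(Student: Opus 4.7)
The plan is to reduce this to the Polynomial Grid Theorem (Chuzhoy--Tan) via tangles. A $(36ck^{20}+3)$-well-linked set $X$ is well-known to certify treewidth $\Omega(|X|)$: indeed, the family of all subgraphs of $G$ that meet more than $\tfrac{2}{3}|X|$ vertices of $X$ forms an orientation of a tangle of order $\lceil |X|/3\rceil$, and tangles of order $t$ imply treewidth at least $t-1$. So first I would observe that from $X$ we obtain both (i) a tangle $\mathcal{T}_X$ of order $\Theta(k^{20})$ and (ii) large treewidth.

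Next, I would apply the polynomial grid theorem to extract a $k$-wall $W$ as a topological minor of $G$, using the treewidth bound obtained above. The known polynomial version, which guarantees a $k$-wall from treewidth $k^{\mathcal{O}(1)}$ (with the exponent in $20$ plus a constant after accounting for the subcubic/wall translation and the efficient implementation), accounts for the specific constant $36ck^{20}+3$ in the hypothesis. The algorithmic part of the grid theorem runs in $\mathcal{O}_k(|G|^2 |E(G)|\log|G|)$ time, which is where the running time in the statement comes from.

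The key remaining task is to arrange that the wall $W$ and the well-linked set $X$ \emph{agree}. Any $k$-wall $W$ induces its own tangle $\mathcal{T}_W$ of order $k$, whose orientation on a separator $S$ of order less than $k$ is precisely the component $J_{W,S}$. Thus agreement between $X$ and $W$ is equivalent to the containment $\mathcal{T}_W\subseteq \mathcal{T}_X$ of tangles (restricted to separators of order $<k$). Since $\mathcal{T}_X$ has order much larger than $k$, and since tangles of incomparable orientation on a single separator are easy to distinguish, I would either (a) pick the wall directly inside the ``big side'' $J_{X,S}$ for every separator $S$ of order $<k$ we encounter while running the grid extraction, by always routing the wall construction inside the unique $\mathcal{T}_X$-oriented component, or (b) argue post hoc: if $W$ failed to agree with $X$, then some separator $S$ of size $<k$ would witness $J_{W,S}\neq J_{X,S}$, but then $W\setminus S$ would be mostly contained in a small side, contradicting that $W$ is a $k$-wall together with the fact that $J_{X,S}$ hosts more than $\tfrac{2}{3}|X|$ many $X$-vertices which are themselves highly linked.

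The main obstacle is making step (a) or (b) quantitatively tight so that the polynomial $k^{20}$ in the hypothesis actually suffices; this is where the $36c$ constant and the algorithmic machinery from \cite{thilikos2023excluding} come in, since one must combine the polynomial grid bound with the extra slack required to ``steer'' the wall into the correct side of every small separator, and do so in near-quadratic time rather than by a naive tangle-enumeration which would blow up the dependence on $k$.
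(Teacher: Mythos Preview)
The paper does not prove this theorem; it is imported wholesale from \cite{thilikos2023excluding} and used as a black box. So there is no ``paper's own proof'' to compare against---the statement is a citation, not a result established here.

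That said, your sketch has a genuine gap in option (b). If the wall $W$ returned by a naive application of the grid theorem fails to agree with $X$, you cannot derive a contradiction just from the existence of a separator $S$ with $J_{W,S}\neq J_{X,S}$. A graph can perfectly well contain two incomparable tangles of order $\geq k$: think of two disjoint large grids joined by a single edge. The grid theorem, applied to treewidth alone, could return a wall sitting entirely in the ``wrong'' piece, and nothing about $X$ being highly linked forces $W$ to live on the $X$-side of every small cut. Your sentence ``$J_{X,S}$ hosts more than $\tfrac{2}{3}|X|$ many $X$-vertices which are themselves highly linked'' does not help, because the wall need not intersect $X$ at all.

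Option (a) is the correct direction, but it is not a minor bookkeeping issue: ensuring that the extracted wall induces a truncation of the given tangle $\mathcal{T}_X$ requires running the entire grid-extraction argument \emph{relative to} $\mathcal{T}_X$, so that at every decomposition step one recurses into the side chosen by $\mathcal{T}_X$. This is exactly what the machinery in \cite{thilikos2023excluding} (building on \cite{kawarabayashi2020quickly} and the polynomial grid theorem) provides, and it is where the specific constants and running time originate. Your plan identifies the right ingredients, but the reduction to ``grid theorem plus post-hoc check'' does not go through.
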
 

With this, we may now prove the main tool towards proving \autoref{main_grid_general}.

\begin{lemma}\llabel{lemma_no_Hlinked_sets}
Let $\mathcal{H}$ be a proper minor-closed graph class and let $h\coloneqq h(\mathcal{H})$ the largest size of an obstruction of $\mathcal{H}.$
There exists a function $f_{\ref{lemma_no_Hlinked_sets}}\colon\mathbb{N}^2\to\mathbb{N}$ depending only on $h$ such that, for every graph $G$ and every positive integer $k$ one of the following statements holds
\begin{enumerate}
    \item there exists $\mathscr{W}=\langle \mathscr{W}_t \mid t\in\mathbb{N}\rangle\in\mathfrak{W}_{\mathcal{H}}$ such that $G$ contains a subdivision of $\mathscr{W}_{k},$ or
    \item $\mathcal{H}\text{-}\mathsf{link}(G)\leq f_{\ref{lemma_no_Hlinked_sets}}(h,k).$
\end{enumerate}
\end{lemma}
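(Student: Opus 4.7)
The plan is to find a highly $\mathcal{H}$-linked set $X$, convert it into a large wall $W$ that agrees with $X$ via \autoref{thm_well_linked_to_wall}, and then feed this wall into the local structure theorem \autoref{thm_local_structure}. The three outcomes of \autoref{thm_local_structure} will then either directly produce a subdivision of a member of $\mathfrak{W}_{\mathcal{H}}$ of order $k$, or lead to a contradiction with the assumption that $X$ is highly $\mathcal{H}$-linked. Concretely, set $r \coloneqq f^1_{\ref{thm_local_structure}}(h,k)$ and $s \coloneqq f^2_{\ref{thm_local_structure}}(h,k)$ (by the statement of \autoref{thm_local_structure} we have $s < r$), and define $f_{\ref{lemma_no_Hlinked_sets}}(h,k) \coloneqq \max\{36 c r^{20}+3,\ s\}$ where $c$ is the constant of \autoref{thm_well_linked_to_wall}.

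First I would observe that any $k'$-$\mathcal{H}$-linked set is in particular $k'$-well-linked: the existence of a balanced separator $S$ of size $\leq k'$ would contradict the requirement that \emph{some} component of $G-S$ contain more than $\tfrac{2}{3}|X|$ vertices of $X$. So if $\mathcal{H}\text{-}\mathsf{link}(G) > f_{\ref{lemma_no_Hlinked_sets}}(h,k)$, there exists a set $X$ that is simultaneously $(36 c r^{20}+3)$-well-linked and $s$-$\mathcal{H}$-linked. Applying \autoref{thm_well_linked_to_wall} yields an $r$-wall $W$ that agrees with $X$, and I then call \autoref{thm_local_structure} on $W$ with parameters $h$ and $k$.

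Next I would go through the three outcomes of \autoref{thm_local_structure}. If outcome (1) occurs, $G$ contains a $K_{3k^2+kh}$-minor grasped by $W$; since this clique is large enough to host a subdivision of any walloid on $O(k^2)$ branch vertices (in particular, of $\mathscr{W}^{\mathbf{p}}_k$ for any $\mathbf{p}\in\mathfrak{P}^{(Z)}$ with $Z$ any fixed obstruction of $\mathcal{H}$ of size $\leq h$), we obtain the required subdivision. If outcome (2) occurs, we obtain a subdivision of a cylindrical concatenation of $h'$ handle segments and $c'\leq 2$ crosscap segments such that every graph on $\leq h$ vertices embeds in $\Sigma^{(h',c')}$; then for any $Z\in\mathsf{obs}(\mathcal{H})$, the pair $\mathbf{p}\coloneqq(\Sigma^{(h',c')},\emptyset)$ belongs to $\mathfrak{P}^{(Z)}$ (since $Z$ embeds in $\Sigma^{(h',c')}$, the trivial subgraph partition into single edges witnesses this), and the Dyck-walloid we obtained is precisely $\mathscr{W}^{\mathbf{p}}_k$, which lies in $\mathfrak{W}_{\mathcal{H}}$.

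The interesting case is outcome (3), which yields a set $S\subseteq V(G)$ of size at most $s$ and a list $\mathcal{Z}$ of graphs of size $\leq h$. If some $Z\in\mathcal{Z}\cap\mathsf{obs}(\mathcal{H})$ exists, then \autoref{thm_local_structure} directly gives a subdivision of $\mathscr{W}^{\mathbf{p}}_k$ for some $\mathbf{p}\in\mathfrak{P}^{(Z)}\subseteq\mathfrak{W}_{\mathcal{H}}$ and we are done. Otherwise $\mathcal{Z}\cap\mathsf{obs}(\mathcal{H})=\emptyset$, which means the unique component $J$ of $G-S$ containing a full row and a full column of $W$ is $Z$-minor-free for every $Z\in\mathsf{obs}(\mathcal{H})$, hence $J\in\mathcal{H}$. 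The main (and only) obstacle will be to extract a contradiction here: since $|S|\leq s<r$, agreement of $W$ with $X$ gives $J=J_{W,S}\subseteq J_{X,S}$, and since both are components of $G-S$ this forces $J=J_{X,S}$; but $X$ was chosen $s$-$\mathcal{H}$-linked, so $J_{X,S}\notin\mathcal{H}$, contradicting $J\in\mathcal{H}$. This contradiction rules out case (3) under the non-trivial branch, completing the proof.
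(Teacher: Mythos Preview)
Your overall strategy—observe that an $\mathcal{H}$-linked set is in particular well-linked, extract an agreeing wall via \autoref{thm_well_linked_to_wall}, apply \autoref{thm_local_structure}, and argue that outcome (3) either yields a walloid directly or contradicts $\mathcal{H}$-linkedness—is exactly the paper's approach, and your handling of outcome (3) is correct.

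Your treatment of outcome (2), however, contains an error. You claim $(\Sigma^{(h',c')},\emptyset)\in\mathfrak{P}^{(Z)}$, but by definition an embedding pair requires its surface to lie in $\mathbb{S}^{(Z)}$, i.e.\ to be \emph{contained in} a \emph{minimal} surface where $Z$ embeds. Outcome (2) only tells you that $Z$ embeds in $\Sigma^{(h',c')}$, which means $\Sigma^{(h',c')}$ \emph{contains} some $\Sigma'\in\sobs(\mathbb{S}^{Z})$—the opposite direction. (For instance if $Z$ is planar then $\mathbb{S}^{(Z)}=\{\Sigma^{(0,0)}\}$, yet $\Sigma^{(h',c')}$ may well be a torus.) The fix is easy: pick $\Sigma'\in\sobs(\mathbb{S}^{Z})$ contained in $\Sigma^{(h',c')}$, so that $(\Sigma',\emptyset)\in\mathfrak{P}^{(Z)}$, and observe that the Dyck-walloid you obtained contains a subdivision of $\mathscr{W}^{(\Sigma',\emptyset)}_k$ by deleting the rainbows of the surplus handle/crosscap segments and absorbing their base walls. (The paper's own proof simply skips outcome (2); you at least engage with it.) A smaller issue in outcome (1): your ``$O(k^2)$ branch vertices'' bound does not hold for arbitrary $\mathbf{p}$, since flower segments contribute $\Theta_h(k^2)$ vertices with the hidden constant depending on $\mathsf{ext}(h)$. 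You only need \emph{some} $\mathbf{p}$, so fix a concrete simple one and verify the count, as the paper does.
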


\begin{proof}
Let $c$ be the constant from \autoref{thm_well_linked_to_wall}.
We set $f_{\ref{lemma_no_Hlinked_sets}}(h,k)\coloneqq 36c\cdot f^1_{\ref{thm_local_structure}}(h,k)^{20}+3.$

Now suppose $G$ contains an $f_{\ref{lemma_no_Hlinked_sets}}(h,k)$-$\mathcal{H}$-linked set $X.$
Then, in particular, for every set $S\subseteq V(G)$ of order at most $f_{\ref{lemma_no_Hlinked_sets}}(h,k)$ there exists a unique component $J_{X,S}$ such that $|V(J_{X,S})\cap X|>\frac{2}{3}|X|.$
Hence, $X$ is also $f_{\ref{lemma_no_Hlinked_sets}}(h,k)$-well-linked.

It follows from \autoref{thm_well_linked_to_wall} that there exists a $f^1_{\ref{thm_local_structure}}(h,k)$-wall $W$ which agrees with $X.$
Now, by \autoref{thm_local_structure} one of to things hold.
Either 
\begin{enumerate}
    \item $G$ contains $K_{3k^2+kh}$ as a minor, or 
    \item there exists a set $A\subseteq V(G)$ of size at most $f^2_{\ref{thm_local_structure}}(h,k)<f^1_{\ref{thm_local_structure}}(h,k)<f_{\ref{lemma_no_Hlinked_sets}}(h,k)$ and a set $\mathcal{Z}$ of graphs of size at most $h$ such that the component $J_{X,A}$ is $Z$-minor-free for all graphs on at most $h$ vertices which do not belong to $\mathcal{Z}.$
\end{enumerate}
In the first case, $G$ must contain, as a minor, the walloid of order $k$ which has $k$ copies of $K_h$ in a single flower.
Clearly, this walloid contains a member of $\mathfrak{W}_{\mathcal{H}}$ of order $k$ as desired.

Hence, we may assume we are in the second outcome.
Since $J_{X,A}\notin\mathcal{H}$ and every obstruction of $\mathcal{H}$ has size at most $h,$ there must exist some graph $H\in\mathsf{obs}(\mathcal{H})\cap \mathcal{Z}.$
Therefore, the second outcome of \autoref{thm_local_structure} holds for $H$ which means that there exists $\mathscr{W}^H=\langle \mathscr{W}^H_t \mid t\in\mathbb{N}\rangle\in\mathfrak{W}_{\mathsf{excl}(H)}$ such that $G$ contains a subdivision $W_H$ of $\mathscr{W}^H_k.$
Moreover, the base cylinder of $W_H$ is a subgraph of $W.$
Since $\mathfrak{W}_{\mathsf{excl}(H)}\subseteq \mathfrak{W}_{\mathcal{H}}$ this is the first outcome of our claim.
Hence, if the first outcome does not hold it must be true that $\mathcal{H}\text{-}\mathsf{link}(G)\leq f_{\ref{lemma_no_Hlinked_sets}}(h,k).$
\end{proof}

Please notice that, with standard techniques for finding balanced separators, \autoref{thm_well_linked_to_wall}, and \autoref{thm_local_structure} the procedure described in the proof of \autoref{lemma_Htw_lew_Hlink} yields an algorithm that either finds one of the obstructions, and therefore a large half-integral $\mathcal{H}$-barrier, or an $\mathcal{H}$-tree decomposition of bounded width in time $\mathcal{O}_{h,k}(|G|^4\log|G|).$
Based on this decomposition we can then proceed as discussed in the proof of \autoref{obs_con_apex} to either find a bounded size modulator to $\mathcal{H}$ or a large integral $\mathcal{H}$-barrier.
The purpose of \autoref{univ_apex_all_small} is to make this more explicit.

We are now ready to give a proof of \autoref{main_grid_general}.

\begin{proof}[Proof of \autoref{main_grid_general}]
Let $f_{\ref{main_grid_general}}(h,k)\coloneqq 4\cdot f_{\ref{lemma_no_Hlinked_sets}}(h,k)+8.$
By \autoref{cor_Htw_equivalences} it follows that, if $\mathcal{H}\text{-}\mathsf{tw}(G)\geq f_{\ref{main_grid_general}}(h,k),$ then $\mathcal{H}\text{-}\mathsf{link}(G)\geq f_{\ref{lemma_no_Hlinked_sets}}(h,k)+1.$
This means that the second outcome of \autoref{lemma_no_Hlinked_sets} is impossible and thus, the first outcome must hold as desired. 
\end{proof}

\section{Universal obstructions for $\apex_{\Hcal}$ and  $\Hcal$-\td}
\llabel{univ_apex_all_small}

In this section, we give a proof that $\mathfrak{F}_{\Hcal}^\mathsf{min}$ is a universal obstruction for $\apex_{\Hcal}.$
For this, we proceed in two steps. First, in \autoref{univ_apex_con_discon}, we prove this result 
for the case where all obstructions of $\Hcal$ 
are connected, which is equivalent to the property 
that $\Hcal$ is closed under taking disjoint subgraphs.
Then, using this special case as a starting point, we prove the general result in \autoref{disc_case_obs}.
In \autoref{univ_td_all} we apply similar ideas as those of \autoref{disc_case_obs} in order to find an obstruction set for the parameter of \textsl{elimination distance} to $\Hcal,$ defined by 
Bulian and Dawar in \cite{BulianD16grap}. 
\medskip

The parameter  $\apex_{\Hcal}$ has been extensively studied 
for different instantiations of $\Hcal.$ Given a proper minor-closed 
class $\Hcal$ we define, for every $k\in\Nbbb,$  
\begin{eqnarray}
\Acal\Pcal_{k} &  = & \{G\mid \apex_{\Hcal}(G)≤k\}
\llabel{apex_obs_small}
\end{eqnarray}
In \cite{AdlerGK08comp} it was shown that for every $\Hcal$ 
there is a computable
function $f^{\sf ap}_{\Hcal} \colon \Nbbb\to\Nbbb$
such that for every $k\in\Nbbb,$ if $Z\in\obs(\Acal\Pcal_{k}),$ then $|Z|≤f^{\sf ap}_{\Hcal}(k)$ and this implied that the problem of checking whether $\apex_{\Hcal}(G)≤k$ is constructively in {\sf FPT} when parameterized by $k$ (see also \cite{FellowsL94onse}).
Recently, in \cite{SauST22apicesalg} an explicit bound of  $f^{\sf ap}_{\Hcal}(k)=2^{2^{2^{2^{\poly_{h}(k)}}}}$ was given.
Also, in \cite{SauST21kapiII}, an algorithm checking whether $\apex_{\mathcal{H}}(G)≤k$ was given, running in time $2^{\poly_{h}(k)}|G|^{2}.$

\subsection{Global apex pairs}
\llabel{univ_apex_con_discon}

Given a graph $Z$ we define the \defi{connectivization} of $Z,$ denoted by $\mathsf{conn}(Z),$ as the set of all edge-minimal connected graphs that contains $Z$ as a subgraph. 
Notice that every graph $\mathsf{conn}(Z)$ is the graph $Z$ plus a minimal number of edges that make it connected. Therefore, $|\mathsf{conn}(Z)|=2^{\Ocal(|Z|\cdot \log |Z|)}.$
Given a finite set $\Zcal$ of graphs, we define its \defi{minor-connectivization} as the set $\mathsf{conn}(\Zcal)=\mathsf{min}(\{\mathsf{conn}(Z)\mid Z\in \Zcal\})$ where, given some  set of graphs  $\Acal,$ we use $\mathsf{min}(\Acal)$ for the set of minor-minimal graphs in $\Acal.$
\medskip

Given a proper minor-closed class $\Hcal$ we define its \defi{connectivity closure} to be
\begin{align*}
\Hcal^{(c)}\coloneqq \{G\mid \text{every connected component of } G \text{ belongs to }\Hcal\}.
\end{align*}
The obstructions for $\Hcal^{(c)}$ were described by Bulian and Dawar \cite{BulianD17fixe} as follows.

\begin{proposition}\llabel{b_d_th_conn}
For every proper minor-closed class $\Hcal$ it holds that $\obs(\Hcal^{(c)})=\mathsf{conn}(\obs(\Hcal)).$
\end{proposition}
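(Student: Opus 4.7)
My plan is to prove both inclusions through a single structural lemma: \emph{if $K$ is a connected graph and $Z'\leq K$ is a minor, then some $H\in\mathsf{conn}(Z')$ satisfies $H\leq K$.} To establish it, I would start with a minor model $(B_v)_{v\in V(Z')}$ of $Z'$ in $K$ together with a chosen edge $e_{uv}\in E(K)$ between $B_u$ and $B_v$ for each $uv\in E(Z')$. Contracting every edge inside each branch set reduces $B_v$ to a single vertex $v^*\in V(K)$. Let $K'\subseteq K$ be a minimal connected subgraph containing $V(Z')^*\coloneqq\{v^*:v\in V(Z')\}$, with Steiner set $U\coloneqq V(K')\setminus V(Z')^*$. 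I would then iteratively contract each $u\in U$ into a neighbour in $V(Z')^*$; since contractions preserve connectivity and delete no edge of $E(Z')^*$, the outcome $K''$ is a connected graph on $V(Z')^*$ containing $Z'^*$ as a spanning subgraph. Deleting the redundant non-$E(Z')$ edges — all extra intra-component ones, and all but a spanning tree worth of inter-component ones — finally yields $H\leq K''\leq K$ with $V(H)=V(Z')$, $E(Z')\subseteq E(H)$, $H$ connected, and $|E(H)|=|E(Z')|+c-1$ where $c$ is the number of components of $Z'$; this matches the description of $\mathsf{conn}(Z')$ given right after its definition.

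For the inclusion $\supseteq$, fix $H\in\mathsf{conn}(\obs(\Hcal))$, say $H\in\mathsf{conn}(Z_0)$ for some $Z_0\in\obs(\Hcal)$ with $H$ minor-minimal in $\bigcup_{Z\in\obs(\Hcal)}\mathsf{conn}(Z)$. Since $Z_0\leq H$ and $H$ is connected, $H\notin\Hcal^{(c)}$. If a proper minor $H'<H$ were also outside $\Hcal^{(c)}$, some component $K$ of $H'$ would lie outside $\Hcal$, so would contain some $Z'\in\obs(\Hcal)$ as a minor. Applying the structural lemma to the connected $K$ produces $H''\in\mathsf{conn}(Z')$ with $H''\leq K<H$, contradicting the minimality of $H$. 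Hence every proper minor of $H$ is in $\Hcal^{(c)}$, so $H\in\obs(\Hcal^{(c)})$.

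For the inclusion $\subseteq$, take $H\in\obs(\Hcal^{(c)})$. I first observe that $H$ is connected: if it had at least two components, one of them would be a component $K\notin\Hcal$, and deleting any vertex from any other component would yield a proper minor of $H$ still containing $K$, hence still outside $\Hcal^{(c)}$, contradicting minimality. Thus $H\notin\Hcal$ and some $Z\in\obs(\Hcal)$ satisfies $Z\leq H$; the structural lemma supplies $H''\in\mathsf{conn}(Z)$ with $H''\leq H$. If the inequality were strict, the already-proved $\supseteq$-direction would give $H''\in\obs(\Hcal^{(c)})$, contradicting $H$'s minimality in $\obs(\Hcal^{(c)})$; hence $H''=H$ and $H\in\mathsf{conn}(Z)$. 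Repeating the same trick for any $H^*<H$ lying in $\bigcup_{Z'}\mathsf{conn}(Z')$ shows $H$ is also minor-minimal there, whence $H\in\mathsf{conn}(\obs(\Hcal))$.

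The heart of the proof is the structural lemma, and specifically the passage from a general minor model of $Z'$ in $K$ (where $V(K)$ may be much larger than $V(Z')$) to a minor on the exact vertex set $V(Z')$ that is still a connected supergraph of $Z'$. The naive idea of contracting each Steiner path between $v^*$ and $w^*$ down to a single edge is wrong: doing so identifies the distinct vertices $v$ and $w$ of $V(Z')$. The correct move — absorbing Steiner vertices into adjacent $V(Z')^*$-vertices one at a time — is the step I expect to require the most care to write out rigorously, and once it is in place both inclusions follow formally.
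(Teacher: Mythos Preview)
The paper does not give its own proof of this proposition; it is quoted from Bulian and Dawar. Your overall strategy is sound and matches the natural approach, but there is a genuine slip in your proof of the structural lemma.

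You take $K'$ to be a \emph{minimal} connected subgraph of the branch-set-contracted graph containing $V(Z')^*$. Such a $K'$ is a Steiner tree and will in general not contain the edges of $Z'^*$, so after absorbing the Steiner vertices your $K''$ need not contain $Z'^*$ either. Concretely: let $Z'=K_3$ on $\{a,b,c\}$ and let $K$ be $K_3$ together with an extra vertex $u$ adjacent to $a$ and $b$. One legitimate choice of $K'$ is the path $a\text{--}u\text{--}b\text{--}c$; contracting $u$ into $a$ yields the path $K''=a\text{--}b\text{--}c$, which does not contain the triangle $Z'^*$. Hence the claim ``$K''$ contains $Z'^*$ as a spanning subgraph'' fails, and so does the final deletion step producing $H$.

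The fix is to drop the word ``minimal'': work directly with the full graph $K_1$ obtained from $K$ by contracting the branch sets, and iteratively contract each vertex of $U\coloneqq V(K_1)\setminus V(Z')^*$ into a neighbour in $V(Z')^*$ (such a neighbour exists as long as $U\neq\emptyset$, by connectivity). These contractions preserve every edge between $V(Z')^*$-vertices, so the resulting $K''$ really does contain $Z'^*$, and your deletion step then goes through.

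A smaller point in the $\subseteq$-direction: you invoke the $\supseteq$-direction on $H''\in\mathsf{conn}(Z)$, but $H''$ need not be minor-minimal in $\bigcup_{Z'}\mathsf{conn}(Z')$, so $\supseteq$ does not literally apply to it. The detour is unnecessary anyway: $H''$ is connected and contains $Z\in\obs(\Hcal)$, so $H''\notin\Hcal^{(c)}$; since $H''\leq H$ and $H\in\obs(\Hcal^{(c)})$, minimality of $H$ as an obstruction forces $H''=H$ directly.
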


We define the parameter $\size\colon\gall\to\Nbbb$ such that $\size(G)=|G|.$
Based on this, by replacing $\tw$ with size $\size$ in \eqref{basic_scheme} we define the following parameter:
\begin{align*}
\Hcal\text{-}\size(G) & \coloneqq \min\{k\mid \text{there exists } X\subseteq V(G) \text{ s.\@ t.\@~} |X|\leq k\text{~and~} \text{$G-X\in \Hcal^{(c)}$}\}
\end{align*} 

Observe that $\Hcal\text{-}\size=\apex_{\Hcal^{(c)}}$ if and only if all obstructions of $\Hcal$ are connected.
However, in general $\Hcal\text{-}\size\preceq\apex_{\Hcal}.$ 
Let 
\begin{eqnarray}
\mathfrak{S}_{\Hcal} & \coloneqq  &\mathfrak{W}_{\Hcal}\cup\mathfrak{W}_{\Hcal^{(c)}}^{\varnothing}\llabel{size_obs_one}\\
\mathfrak{S}_{\Hcal}^\mathsf{min} & \coloneqq  & \mathsf{min}(\mathfrak{S}_{\Hcal} ).\nonumber
\end{eqnarray}
Next, we prove that $\mathfrak{S}_{\Hcal}$ is an obstructing set for $\Hcal\text{-}\size.$
We first prove the lower bound.

\begin{observation}
\llabel{easy_obs_gap_}
For every minor-closed class $\Hcal,$ it holds that $\p_{\mathfrak{S}_{\Hcal}}\preceq \Hcal\text{-}\size$ with linear gap function.
\end{observation}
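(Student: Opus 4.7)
The plan is to verify, separately for each of the two families making up $\mathfrak{S}_{\mathcal{H}} = \mathfrak{W}_{\mathcal{H}} \cup \mathfrak{W}_{\mathcal{H}^{(c)}}^{\varnothing}$, that $\mathcal{H}\text{-}\mathsf{size}$ grows at least linearly in the parametric index, and then to combine these bounds using the minor-monotonicity of $\mathcal{H}\text{-}\mathsf{size}$. For minor-monotonicity I would use a direct branch-set argument: if $H$ is a minor of $G$ with branch sets $\{V_v\}_{v\in V(H)}$ and $X$ witnesses $\mathcal{H}\text{-}\mathsf{size}(G)\le k$, then $X' \coloneqq \{v \mid V_v \cap X \neq \emptyset\}$ has $|X'|\le k$, and every component of $H - X'$ is a minor of a component of $G - X$, hence belongs to the minor-closed class $\mathcal{H}$. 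After this it suffices to show a uniform linear lower bound $\mathcal{H}\text{-}\mathsf{size}(\mathscr{G}_t)\ge t/C_{\mathscr{G}}$ for each $\mathscr{G}\in\mathfrak{S}_{\mathcal{H}}$; finiteness of $\mathsf{obs}(\mathcal{H})$ (Robertson--Seymour) and of each $\mathfrak{P}^{(Z)}$ lets me take $C \coloneqq \max\{C_{\mathscr{G}}\mid\mathscr{G}\in\mathfrak{S}_{\mathcal{H}}\}$ and conclude $\mathsf{p}_{\mathfrak{S}_{\mathcal{H}}}(G)\le C\cdot \mathcal{H}\text{-}\mathsf{size}(G)$.

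For the ``disconnected'' family, each element is of the form $\mathscr{W}^{\mathbf{p}_Z}=\langle t\cdot Z\rangle_{t\in\Nbbb}$ with $Z\in\mathsf{obs}(\mathcal{H}^{(c)})=\mathsf{conn}(\mathsf{obs}(\mathcal{H}))$ by \autoref{b_d_th_conn}. Here $Z$ is connected and outside $\mathcal{H}^{(c)}$, so in fact $Z\notin\mathcal{H}$. The graph $t\cdot Z$ has $t$ pairwise vertex-disjoint components each outside $\mathcal{H}$, and any $X$ of size smaller than $t$ must leave at least one copy of $Z$ untouched; this copy survives as a component of $(t\cdot Z)-X$ that is not in $\mathcal{H}$, so $(t\cdot Z)-X\notin\mathcal{H}^{(c)}$. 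This yields $\mathcal{H}\text{-}\mathsf{size}(t\cdot Z)\ge t$ with constant $C_{\mathscr{W}^{\mathbf{p}_Z}}=1$.

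For the walloid family, fix $\mathscr{W}^{\mathbf{p}}\in\mathfrak{W}_{\mathcal{H}}$ with $\mathbf{p}\in\mathfrak{P}^{(Z)}$ and $Z\in\mathsf{obs}(\mathcal{H})$. I would invoke \autoref{lemma_half_integral_Brambles} to obtain, for every $k$, a $\nicefrac{1}{2}$-$\mathsf{excl}(Z)$-bramble $\mathcal{B}$ of order $k$ inside $\mathscr{W}^{\mathbf{p}}_{f_{\ref{lemma_half_integral_Brambles}}(|Z|,k)}$, noting that $f_{\ref{lemma_half_integral_Brambles}}(|Z|,k)=\mathcal{O}_{|Z|}(k)$. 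Since $Z\in\mathsf{obs}(\mathcal{H})$ gives $\mathcal{H}\subseteq\mathsf{excl}(Z)$, every element $B\in\mathcal{B}$ is a connected subgraph of $\mathscr{W}^{\mathbf{p}}_t$ which (being outside $\mathsf{excl}(Z)$) contains $Z$ as a minor and therefore lies outside $\mathcal{H}$ as well; so $\mathcal{B}$ is also a strict $\mathcal{H}$-bramble of order $k$. Any $X\subseteq V(\mathscr{W}^{\mathbf{p}}_t)$ with $|X|<k$ misses some $B\in\mathcal{B}$, which is then contained in a single component $K$ of $\mathscr{W}^{\mathbf{p}}_t-X$; since $B\notin\mathcal{H}$ and $\mathcal{H}$ is subgraph-closed, $K\notin\mathcal{H}$, so $\mathscr{W}^{\mathbf{p}}_t-X\notin\mathcal{H}^{(c)}$. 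This yields $\mathcal{H}\text{-}\mathsf{size}(\mathscr{W}^{\mathbf{p}}_t)\ge k$, hence $C_{\mathscr{W}^{\mathbf{p}}}=\mathcal{O}_{|Z|}(1)$.

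There is no substantial obstacle here: the statement is presented as essentially a book-keeping consequence of earlier results. The only mildly delicate point is that the bramble supplied by \autoref{lemma_half_integral_Brambles} is only \emph{half-integral}, but this is immaterial for the lower bound since the order of a (strict) $\mathcal{H}$-bramble is defined as a minimum hitting-set size irrespective of multiplicities, and any strict $\mathcal{H}$-bramble of order $k$ forces $\mathcal{H}\text{-}\mathsf{size}\ge k$ by the argument above. The other subtlety worth being explicit about in the writeup is the minor-monotonicity of $\mathcal{H}\text{-}\mathsf{size}$, which relies on the fact that components of a minor are minors of components of the host graph, and is not entirely transparent from the definition.
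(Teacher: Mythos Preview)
Your proof is correct and follows essentially the same two-part decomposition as the paper. The only difference is in the walloid part: where you invoke \autoref{lemma_half_integral_Brambles} and argue directly that a strict $\mathcal{H}$-bramble of order $k$ forces $\mathcal{H}\text{-}\mathsf{size}\ge k$, the paper instead cites \autoref{cor_walloid_lower_bound} (which packages the bramble argument into $\p_{\mathfrak{W}_{\Hcal}}\preceq \Hcal\text{-}\tw$) and then uses the trivial inequality $\Hcal\text{-}\tw\leq \Hcal\text{-}\size$. Your route is slightly more self-contained; the paper's is slightly shorter since the corollary is already in hand, but both unwind to the same content.
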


\begin{proof}
Recall that $\p_{\mathfrak{S}_{\Hcal}}=\max\{\p_{\mathfrak{W}_{\Hcal}},\p_{\mathfrak{W}^{\varnothing}_{\Hcal^{(c)}}}\}.$
The inequality $\p_{\mathfrak{W}_{\Hcal^{(c)}}^{\varnothing}}\preceq \Hcal\text{-}\size,$ with linear gap function, follows from \autoref{b_d_th_conn} and the fact that for every $Z\in\mathsf{conn}(\obs(\Hcal)),$ $\Hcal\text{-}\size(k\cdot Z)=k.$
The other inequality $\p_{\mathfrak{W}_{\Hcal}}\preceq \Hcal\text{-}\tw,$ with linear gap function, holds because of \autoref{cor_walloid_lower_bound} and $\Hcal\text{-}\tw\leq \Hcal\text{-}\size.$
\end{proof}

Now we prove the upper bound, i.e., the direction $\Hcal\text{-}\size\preceq \p_{\mathfrak{S}_{\Hcal}}.$

\begin{lemma}
\llabel{obs_con_apex}
There exists a function $f_{\ref{obs_con_apex}}\colon\nn{2}{1}$ such that for proper every minor-closed class $\Hcal,$ where $h(\Hcal)=h,$ every graph $G,$ and every $k\in\Nbbb,$ either $\p_{\mathfrak{S}_{\Hcal}}(G)\geq k$ or $\Hcal\text{-}\size(G)\leq f_{\ref{obs_con_apex}}(h,k),$ where $f_{\ref{obs_con_apex}}(h,k)={2^{k^{\mathcal{O}_h(1)}}}.$
Moreover, there exists an algorithm that, given $\obs(\Hcal),$ $G,$ and $k,$ outputs a certificate of one of the two 
outcomes in time $2^{2^{k^{\mathcal{O}_h(1)}}}|G|^4\log |G|.$ 
\end{lemma}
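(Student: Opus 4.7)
The plan is to combine \autoref{main_grid_general} with a Diestel-style recursion on an $\mathcal{H}$-tree decomposition, exploiting that every member of $\obs(\Hcal^{(c)})$ is connected (by \autoref{b_d_th_conn}).

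First, I would apply \autoref{main_grid_general} to $G$ with parameter $k$. If $\Hcal\text{-}\tw(G)\ge f_{\ref{main_grid_general}}(h,k)$, the algorithm returns a subdivision of $\mathscr{W}_{k}$ for some $\mathscr{W}\in\mathfrak{W}_{\Hcal}$, which certifies $\p_{\mathfrak{W}_{\Hcal}}(G)\ge k$ and hence $\p_{\mathfrak{S}_{\Hcal}}(G)\ge k$. Otherwise $\Hcal\text{-}\tw(G)\le w\coloneqq f_{\ref{main_grid_general}}(h,k)-1\in 2^{k^{\mathcal{O}_{h}(1)}}$, and by (the constructive content of) \autoref{lemma_Htw_via_decompositions} we compute an $\Hcal$-tree decomposition $(T,\beta)$ of $G$ of width $\le w$.

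Second, I would run a Diestel-style induction on $(T,\beta)$ in the spirit of Diestel's proof of \eqref{EP_for_minors}. Because every $Z\in\obs(\Hcal^{(c)})=\mathsf{conn}(\obs(\Hcal))$ is connected, any minor-model of such a $Z$ in $G$ spans a connected subtree of $T$, so it is intersected by every bag on the path between any two of its ``touched'' nodes. Rooting $T$ arbitrarily and processing from the leaves, at each step I locate the deepest node $t^*$ such that $G[\bigcup_{s\in T_{t^*}}\beta(s)]$ still contains a connected $\obs(\Hcal^{(c)})$-obstruction model $M$ (disjoint from the packing $\mathcal{P}$ and the hitting set $X$ built so far). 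Since $G[\beta(d)\setminus A_d]\in\Hcal$ for each leaf $d$, any such $M$ must hit the adhesion of some leaf below $t^*$; in that case either $M$ is localized inside a single leaf bag and is added to $\mathcal{P}$ (and $V(M)$ removed), or $M$ spans several child subtrees and is killed by adding $\beta(t^*)$ (of size $\le w+1$) to $X$. After at most $k-1$ hitting rounds, either $|\mathcal{P}|=k$ (certifying $\p_{\mathfrak{W}^{\varnothing}_{\Hcal^{(c)}}}(G)\ge k$, hence $\p_{\mathfrak{S}_{\Hcal}}(G)\ge k$), or no obstructions remain in $G-X-V(\mathcal{P})$, so $G-X\in\Hcal^{(c)}$ and $|X|\le (w+1)\cdot k\cdot 2^{\mathcal{O}(h\log h)}\in 2^{k^{\mathcal{O}_h(1)}}$ (the last factor absorbing the enumeration over $|\obs(\Hcal^{(c)})|\le 2^{\mathcal{O}(h\log h)}$).

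The main obstacle I anticipate is maintaining the $\Hcal$-tree decomposition invariant after each recursive step: removing $\beta(t^*)$ or $V(M)$ may create new leaves whose bag size is larger than $w+1$ and whose leaf condition is no longer automatic. Since $\Hcal$ is minor-closed (hence hereditary), the leaf condition survives the deletion of vertices inside a leaf bag; the delicate case is when a non-leaf $t^*$ is removed and its former neighbours become leaves, in which case I would absorb the (bounded-size) bags of these newly created leaves into $X$. Because every non-leaf bag has size at most $w+1$ and the number of hitting rounds is controlled by $k$ and $|\obs(\Hcal^{(c)})|$, this keeps $|X|$ within the claimed bound.

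Algorithmically, each detection step uses the almost-linear minor-containment algorithm of \cite{KorhonenPS24minorcont}, applied $2^{\mathcal{O}(h\log h)}$ times per recursive call (once per element of $\obs(\Hcal^{(c)})$). The computation of $(T,\beta)$ via \autoref{main_grid_general} combined with \autoref{lemma_Htw_via_decompositions} dominates the running time at $2^{k^{\mathcal{O}_h(1)}}|G|^4\log|G|$, and the outer Diestel recursion contributes a factor of $2^{2^{k^{\mathcal{O}_h(1)}}}$ through its iteration count, yielding the stated complexity $2^{2^{k^{\mathcal{O}_h(1)}}}|G|^4\log|G|$.
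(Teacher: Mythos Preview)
Your high-level plan is the same as the paper's: invoke \autoref{main_grid_general} to either certify $\p_{\mathfrak{W}_{\Hcal}}(G)\ge k$ or bound $\Hcal\text{-}\tw(G)$, then run a Diestel-style argument on an $\Hcal$-tree decomposition of bounded width, using that every member of $\obs(\Hcal^{(c)})=\mathsf{conn}(\obs(\Hcal))$ is connected. That much is correct.

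However, your implementation of the Diestel step has a genuine gap. You simultaneously build a packing $\mathcal{P}$ and a hitting set $X$, and then assert that after ``at most $k-1$ hitting rounds'' either $|\mathcal{P}|=k$ or ``no obstructions remain in $G-X-V(\mathcal{P})$, so $G-X\in\Hcal^{(c)}$.'' Neither clause is justified. First, nothing you wrote bounds the number of hitting rounds by $k-1$; the models that trigger a hitting round are not argued to be pairwise disjoint (they are disjoint from $X$, not from each other), so you cannot extract a size-$k$ packing from $k$ hitting rounds. Second, the implication is simply false: a model you placed in $\mathcal{P}$ is by construction \emph{not} covered by $X$, so $G-X$ may still contain it; you would only get $G-(X\cup V(\mathcal{P}))\in\Hcal^{(c)}$, and $|V(\mathcal{P})|$ is unbounded since models inside leaf bags can be arbitrarily large. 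Your case split ``$M$ inside a single leaf bag $\Rightarrow$ add to $\mathcal{P}$'' is therefore the wrong move; even in that case you should add the leaf's adhesion $A_{t^*}$ (of size $\le w+1$) to $X$, never $V(M)$.

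The paper's argument sidesteps all of this by never building a packing. It first uses the assumption $\p_{\mathfrak{W}^{\varnothing}_{\Hcal^{(c)}}}(G)<k$ to bound the maximum packing number $\mathsf{mp}(G)\le |\mathsf{conn}(\obs(\Hcal))|\cdot k$, and then proves by induction on $p=\mathsf{mp}(G)$ that $\Hcal\text{-}\size(G)\le (q+1)(2p-1)$: either some edge $tt'$ of $T$ has $\mathsf{mp}\ge 1$ on both sides (so $p_1+p_2\le p$ with $p_i<p$, and one recurses on each side plus the adhesion), or every edge has a side with $\mathsf{mp}=0$, in which case orienting edges toward the nontrivial side yields a sink whose bag (or leaf adhesion) is already a modulator. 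This avoids any need to modify the tree decomposition, so the obstacle you anticipate (``maintaining the $\Hcal$-tree decomposition invariant after each recursive step'') never arises.
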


\begin{proof}
Let $G$ be a graph where $\p_{\mathfrak{S}_{\Hcal}}(G)\leq k.$ 
This implies that $\p_{\mathfrak{W}_{\Hcal}}(G)\leq \p_{\mathfrak{S}_{\Hcal}}(G)\leq k$ and $\p_{\mathfrak{W}_{\Hcal^{(c)}}^{\varnothing}}(G)\leq \p_{\mathfrak{S}_{\Hcal}}(G)\leq k.$

As $\p_{\mathfrak{W}_{\Hcal}}(G)\leq k,$ due to \autoref{main_grid_general}, we have that $\Hcal$-$\tw(G)\leq f_{\ref{main_grid_general}}(h,k).$
Let $q\coloneqq f_{\ref{main_grid_general}}(h,k)$ and let $(T,\beta)$ be an $\mathcal{H}$-tree decomposition
of $G$ of width at most $q.$

Let $\mathsf{mp}(G)$ be the maximum size of a collection $\Ccal$ of pairwise disjoint subgraphs of $G$ such that for every $C\in\Ccal$ there is some graph in $\mathsf{conn}(\obs(\Hcal))$ which is a minor of $C.$
Observe that $\mathsf{mp}(G)\leq |\mathsf{conn}(\obs(\Hcal))|\cdot k,$ otherwise $G$ would contain a $\Hcal^{(c)}$-barrier of size larger than $k$ which would contradict the fact that $\p_{\mathfrak{W}_{\Hcal^{(c)}}^{\varnothing}}(G)\leq k.$

Let $p$ be some positive integer.
We claim that if $\mathsf{mp}(G)\in [p]$ then $\Hcal\text{-}\size(G)\leq |S| \leq (q+1)\cdot (2p-1).$
We prove this by induction on $p.$
Given an edge $tt'$ of $T$ where $T_{1}$ (resp. $T_{2}$) is the connected component of $T-e$ containing $t$ (resp. $t'$), we define 
\begin{eqnarray}
X_{t}\coloneqq \bigcup_{b\in V(T_{1})\setminus\{t\}}\beta(b) & \text{ and } & X_{t'}\coloneqq\bigcup_{b\in V(T_{2})\setminus\{t'\}}\beta(b)\llabel{all_use_sep}
\end{eqnarray}
If, for some edge $tt'\in E(T),$ it holds that $\mathsf{mp}(G[X_{t}\setminus X_{t'}])=p_{1} \geq 1$ and $\mathsf{mp}(G[X_{t'}\setminus X_{t}])=p_{2}\geq 1,$ then we have that $p_{1}+p_{1}\leq p$
and, from the induction hypothesis it follows that $\Hcal\text{-}\size(G)\leq |X_{t}\cap X_{t'}|+\mathsf{mp}(G[X_{t}\setminus X_{t'}])+\mathsf{mp}(G[X_{t'}\setminus X_{t}])\leq (q+1)+(q+1)\cdot (2p_1-1)+(q+1)\cdot (2p_2-1)\leq (q+1)\cdot (2p-1).$

If there exists some edge $tt'\in E(G)$ such that $\mathsf{mp}(G[X_{t}\setminus X_{t'}])=p_{1}=0$  and $\mathsf{mp}(G[X_{t'}\setminus X_{t}])=p_{2}=0,$ we obtain that every component of $G-(X_t\cap X_{t'})$ belongs to $\mathcal{H}$ which would complete our proof.

The only remaining case is where for every edge $tt'$ of $T,$ either $\mathsf{mp}(G[X_{t}\setminus X_{t'}])=p_{1}=0$ or $\mathsf{mp}(G[X_{t'}\setminus X_{t}])=p_{2}=0,$ but not both.

In this case, we orient each $tt'$ towards the component where the value of $\mathsf{mp}$ is positive.
This means that there is a node $t$ of $T$ which is a sink of the resulting orientation of $T.$
Thus, $G-X_{t}$ does not contain any graph of $\mathsf{conn}(\obs(\Hcal))$ as a minor.
Therefore, every component of $G-X_{t}$ belongs to $\mathcal{H}$ and $X_t$ is an $\Hcal^{(c)}$-modulator of $G$ of size $q+1 \leq (q+1)\cdot (2p-1)$ as desired.

Applying the claim above for $p=|\mathsf{conn}(\obs(\Hcal))|\cdot k,$ one may obtains that 
$\Hcal\text{-}\size(G)\leq 2(q+1)\cdot |\mathsf{conn}(\obs(\Hcal))|\cdot k.$
Recall that $|\mathsf{conn}(\obs(\Hcal))|=2^{\Ocal(h^2)}.$
Therefore the lemma holds for some $f_{\ref{obs_con_apex}}(h,k)={2^{k^{\mathcal{O}_h(1)}}}.$
\end{proof}

We just proved that $\p_{\mathfrak{S}_{\Hcal}}\sim \Hcal\text{-}\size$ with a single-exponential gap. This is a first step towards the main result of this section, which is that $\mathfrak{F}_{\Hcal}$ is an obstructing set for $\apex_{\Hcal}.$
This is proved in the next subsection.

\subsection{The disconnected case}
\llabel{disc_case_obs}
Given a graph $Z$ we use $\mathsf{cc}(Z)$ for the set of its components.
Given two graphs $Z_1$ and $Z_2,$ we denote by $Z_{1}+Z_{2}$ the disjoint union of $Z_{1}$ and $Z_{2}.$
Also, given two parametric graphs $\mathscr{G}^{i}=\lin{\mathscr{G}^{i}_{t}\mid t\in\Nbbb}, i\in[2],$ we write $\mathscr{G}^{1}+\mathscr{G}^{2}\coloneqq \lin{\mathscr{G}^{1}_{t}+\mathscr{G}^{2}_{t}\mid t\in\Nbbb}$ for their index-wise disjoint union.
Moreover, if  $\mathfrak{G}^{i},i\in[2]$ are families of parametric graphs, we define the parametric family 
\begin{align*}
\mathfrak{G}^{1}\otimes \mathfrak{G}^{2} \coloneqq \{\mathscr{G}^{1}+\mathscr{G}^{2}\mid (\mathscr{G}^{1},\mathscr{G}^{2})\in \mathfrak{G}^{1}\times \mathfrak{G}^{2}\}.
\end{align*}
Notice that the compositions defined above are commutative and they may be extended in the natural way to $Z_{1}+\cdots+Z_{r},$ to $\mathscr{G}^{1}+\cdots+\mathscr{G}^{r}$ and to 
$\mathfrak{G}^{1}\otimes\cdots\otimes \mathfrak{G}^{r}$ respectively.
\medskip

Let $\Hcal$ be a minor-closed class and let $Z\in\obs(\Hcal).$
Observe that $\mathfrak{F}^{(Z)} = \mathfrak{F}_{\excl(Z)}.$
Given the notation above, the definitions of $\mathfrak{F}_{\Hcal}$ and its related families in $\eqref{first_walloid_formula},$ $\eqref{second_walloid_formula},$ and $\eqref{third_walloid_formula}$ can be rewritten as follows.
\begin{eqnarray}
 \mathfrak{F}_{\excl(Z)}& \coloneqq  & \mathfrak{F}_{\excl(Z_1)}\otimes\cdots\otimes\mathfrak{F}_{\excl(Z_r)}, \text{~where $\mathsf{cc}(Z)=\{Z_{1}, \ldots, Z_{r}\}$}\llabel{disc_case_first}\\
 \mathfrak{F}_{\Hcal}^{\nicefrac{1}{2}} & \coloneqq & \big\{ \mathscr{W} \in \cupall\{ \mathfrak{F}_{\excl(Z)} \mid {Z\in\obs(\Hcal)}\}\mid \text{for every $\mathscr{W}' \in \mathfrak{F}_{\Hcal}^{\varnothing}$ it holds that $\mathscr{W}' \not\lesssim \mathscr{W}$} \big\}\\
 {\mathfrak{F}}_{\Hcal} & \coloneqq & \mathfrak{F}^{\nicefrac{1}{2}}_{\mathcal{H}} \cup \mathfrak{F}_{\Hcal}^{\varnothing} \llabel{disc_case_second}\\
 \mathfrak{F}_{\Hcal}^\mathsf{min}& \coloneqq & \mathsf{min}(\mathfrak{F}_{\Hcal})\llabel{disc_case_third}
\end{eqnarray}

\begin{observation}
\llabel{four_four_univ}
Let $G^1,G^2,$ $F^{1},$ and $F^{2}$ be graphs, then 
\begin{align*}
\barrier_{\excl(F^1+F^2)}(G_{1}+G_{2})  \geq \max\big\{\barrier_{\excl(F^1)}(G_{1}),\barrier_{\excl(F^2)}(G_{2}\big\}.
\end{align*}
\end{observation}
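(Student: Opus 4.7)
The plan is to establish two directional lower bounds, one for each index, and observe that their conjunction is exactly the stated $\max$-inequality; namely, we aim to prove
\[
\barrier_{\excl(F^1+F^2)}(G_1+G_2)\ \geq\ \barrier_{\excl(F^i)}(G_i)\qquad\text{for each } i\in\{1,2\}.
\]
By the symmetric roles of $(F^1,G_1)$ and $(F^2,G_2)$ in the disjoint union, it suffices to verify the case $i=1.$ So I would set $k\coloneqq \barrier_{\excl(F^1)}(G_1)$ and fix a witnessing $\excl(F^1)$-barrier $\Bcal=\{B_1,\ldots,B_k\}$ inside $G_1,$ i.\@e.\@ pairwise vertex-disjoint subgraphs of $G_1,$ each containing $F^1$ as a minor.

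The strategy is to construct a $\excl(F^1+F^2)$-barrier $\Dcal=\{D_1,\ldots,D_k\}$ inside $G_1+G_2$ by enlarging each $B_j$ by an auxiliary subgraph $A_j$ supplying a disjoint $F^2$-minor, and setting $D_j\coloneqq B_j\cup A_j.$ The key elementary fact is that, whenever an $F^1$-minor and an $F^2$-minor live on disjoint vertex sets, their union realises $F^1+F^2$ as a minor; hence each $D_j$ automatically satisfies the obstruction condition, and the only remaining task is to guarantee that $\Dcal$ is pairwise vertex-disjoint. I would fix, once and for all, an $F^1$-model inside each $B_j$ on a vertex set $V(B_j)^{F^1}\subseteq V(B_j)$; the admissible pool of vertices from which to draw $A_j$ is then
\[
\big(V(G_2)\big)\ \cup\ \big(V(B_j)\setminus V(B_j)^{F^1}\big),
\]
together with any part of $V(G_1)\setminus \bigcup_{j'\in[k]}V(B_{j'})$ reachable by the minor operation.

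The main obstacle is the coherent assignment of the $A_j$'s. A priori $G_2$ need not accommodate $k$ pairwise disjoint $F^2$-minors, so one cannot simply draw all $A_j$ from $G_2$; and setting $A_j = A^*$ for a single model $A^*\subseteq G_2$ violates vertex-disjointness across the $D_j$'s. I would address this by exploiting the fact that each fiber $B_j$ is a full witnessing subgraph — typically large enough to host an additional $F^2$-minor internally on vertices of $V(B_j)\setminus V(B_j)^{F^1}$ — and, when this internal capacity is insufficient, by transferring the $F^2$-model into the $G_2$-side through a systematic rerouting argument that splits the single $F^2$-model inside $G_2$ across the $k$ fibers (possibly enlarging each $B_j$ into $G_1\setminus \bigcup_{j'\neq j}V(B_{j'})$ first). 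Once the $A_j$'s are chosen pairwise vertex-disjoint and disjoint from the $V(B_j)^{F^1}$, the verification that $\Dcal$ is a valid $\excl(F^1+F^2)$-barrier of $G_1+G_2$ is immediate. This rerouting/distribution step is the technical crux of the argument, and it is the only point at which the structural content of ``each $B_j$ is a full $F^1$-fiber'' (as opposed to a bare minor model) is actually used.
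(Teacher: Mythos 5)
There is a genuine gap, and it is precisely where you defer to ``a systematic rerouting argument'': no such argument exists, because the statement as written is false. You correctly locate the obstacle --- ``a priori $G_2$ need not accommodate $k$ pairwise disjoint $F^2$-minors'' --- but neither of your proposed repairs is available. The $B_j$ are bare minor models in an arbitrary graph $G_1$ (there is no ``fiber'' structure in this observation; that notion belongs to a different part of the paper and is not a hypothesis here), so a $B_j$ need not contain a single vertex beyond its $F^1$-model; and a lone $F^2$-model in $G_2$ cannot be shared among $k\geq 2$ pairwise vertex-disjoint subgraphs. A concrete counterexample: take $F^1=F^2=K_2$, $G_1=3\cdot K_2$, and $G_2=K_2$. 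Then $\barrier_{\excl(K_2)}(G_1)=3$, but $G_1+G_2=4\cdot K_2$ and $F^1+F^2=2\cdot K_2$; since any subgraph of $4\cdot K_2$ containing a $2\cdot K_2$-minor must consume at least two of the four $K_2$-components, we get $\barrier_{\excl(2\cdot K_2)}(G_1+G_2)=2<3=\max\{\barrier_{\excl(F^1)}(G_1),\barrier_{\excl(F^2)}(G_2)\}$.

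The statement that is true, and that the chain of inequalities in the proof of \autoref{two_sum_univ} actually requires, is the $\min$-version: $\barrier_{\excl(F^1+F^2)}(G_1+G_2)\geq \min\big\{\barrier_{\excl(F^1)}(G_1),\barrier_{\excl(F^2)}(G_2)\big\}.$ This follows from the clean half of your construction alone: set $k$ to the minimum, fix $k$ pairwise disjoint $F^1$-models $B_1,\dots,B_k\subseteq G_1$ and $k$ pairwise disjoint $F^2$-models $C_1,\dots,C_k\subseteq G_2$, and note that $B_1\cup C_1,\dots,B_k\cup C_k$ are pairwise vertex-disjoint subgraphs of $G_1+G_2$ each containing $F^1+F^2$ as a minor, the two models being automatically on disjoint vertex sets since $V(G_1)\cap V(G_2)=\emptyset$. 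When you hit the wall you describe, the productive conclusion was that the $\max$ in the statement cannot be right --- not that a rerouting miracle was needed to save it.
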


\begin{lemma}
\llabel{two_sum_univ}
Let $Z_{1}$ and $Z_{2}$ be two graphs and let $\mathfrak{F}^{1}$ and $\mathfrak{F}^{2}$ be obstructing sets for $\apex_{\excl(Z_{1})}$ and $\apex_{\excl(Z_{2})}$ respectively both with gap function $f.$
Then $\p_{\mathfrak{F}^{1}\otimes \mathfrak{F}^{2}}\preceq \apex_{\excl(Z_{1}+Z_{2})}.$
\end{lemma}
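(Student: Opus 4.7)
The plan is to establish the inequality $\p_{\mathfrak{F}^{1}\otimes \mathfrak{F}^{2}}\preceq \apex_{\excl(Z_{1}+Z_{2})}$ with the same gap function $f$ that witnesses the two hypothesised obstructions. Fix a graph $G$ and set $t\coloneqq \p_{\mathfrak{F}^{1}\otimes \mathfrak{F}^{2}}(G).$ By the definition of $\otimes$ and of $\p_{(\cdot)},$ we may pick parametric graphs $\mathscr{G}^{i}=\lin{\mathscr{G}^{i}_{s}\mid s\in\Nbbb}\in\mathfrak{F}^{i},$ $i\in[2],$ such that $\mathscr{G}^{1}_{t}+\mathscr{G}^{2}_{t}\leq G.$ Fix vertex-disjoint subgraphs $H_{1},H_{2}$ of $G$ which realise a minor model of $\mathscr{G}^{1}_{t}$ and $\mathscr{G}^{2}_{t}$ respectively.

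The core combinatorial step is the following splitting observation. Let $S$ be any $\excl(Z_{1}+Z_{2})$-modulator of $G$ of minimum size and set $S_{i}\coloneqq S\cap V(H_{i}).$ I claim that there exists some $i\in[2]$ for which $H_{i}-S_{i}$ is $Z_{i}$-minor-free. Indeed, if both $H_{1}-S_{1}$ contained $Z_{1}$ as a minor and $H_{2}-S_{2}$ contained $Z_{2}$ as a minor, then the vertex-disjointness of $H_{1}$ and $H_{2}$ would yield a $(Z_{1}+Z_{2})$-minor inside $(H_{1}-S_{1})\cup (H_{2}-S_{2})\subseteq G-S,$ contradicting the choice of $S.$ The argument requires no more than unpacking the definition of disjoint union of minors, so this step is straightforward but central.

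Next I would invoke the minor-monotonicity of $\apex_{\excl(Z_{i})}$: if $H\leq G'$ and $S'$ is an $\excl(Z_{i})$-modulator of $G',$ then contracting the branch sets yields an $\excl(Z_{i})$-modulator of $H$ of size at most $|S'|,$ so $\apex_{\excl(Z_{i})}(H)\leq \apex_{\excl(Z_{i})}(G').$ Combining this with the splitting observation, we obtain that in the case where $H_{i}-S_{i}$ is $Z_{i}$-minor-free we have
\[
|S|\ \geq\ |S_{i}|\ \geq\ \apex_{\excl(Z_{i})}(H_{i})\ \geq\ \apex_{\excl(Z_{i})}(\mathscr{G}^{i}_{t}),
\]
where the last inequality uses $\mathscr{G}^{i}_{t}\leq H_{i}$ and minor-monotonicity again.

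Finally, I would use the hypothesis that $\mathfrak{F}^{i}$ is an obstructing set for $\apex_{\excl(Z_{i})}$ with gap $f,$ which in particular gives $\p_{\mathfrak{F}^{i}}\preceq \apex_{\excl(Z_{i})}$ with that gap. Since $\mathscr{G}^{i}\in\mathfrak{F}^{i},$ we have $\p_{\mathfrak{F}^{i}}(\mathscr{G}^{i}_{t})\geq t,$ hence
\[
t\ \leq\ \p_{\mathfrak{F}^{i}}(\mathscr{G}^{i}_{t})\ \leq\ f\!\left(\apex_{\excl(Z_{i})}(\mathscr{G}^{i}_{t})\right)\ \leq\ f(|S|)\ =\ f\!\left(\apex_{\excl(Z_{1}+Z_{2})}(G)\right).
\]
Since this holds for whichever $i\in[2]$ the splitting observation delivers, it holds unconditionally, and therefore $\p_{\mathfrak{F}^{1}\otimes \mathfrak{F}^{2}}(G)\leq f(\apex_{\excl(Z_{1}+Z_{2})}(G)),$ yielding the claimed parametric inequality. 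The only place where care is required is the splitting observation; apart from that the argument is just an accounting of definitions, so I expect no serious obstacle beyond checking that $f$ is taken to be non-decreasing (which we may do without loss of generality).
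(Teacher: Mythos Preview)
Your proof is correct. The paper's argument is different in presentation: it routes the inequality through barriers, invoking \autoref{four_four_univ} to bound $\barrier_{\excl(Z_1+Z_2)}(\mathscr{F}^1_k+\mathscr{F}^2_k)$ from below and then using $\apex\geq\barrier$. Your argument instead works directly on the modulator side: you take a minimum $\excl(Z_1+Z_2)$-modulator $S$ of $G$, observe that its restriction to one of the two disjoint minor-hosts $H_i$ must already be an $\excl(Z_i)$-modulator there, and then push the bound down to $\mathscr{G}^i_t$ by minor-monotonicity of $\apex_{\excl(Z_i)}$. The two arguments are dual in spirit (packing versus covering), but yours is more self-contained: it does not need the auxiliary barrier observation, and it makes transparent that the same gap function $f$ suffices (modulo the harmless monotonicity assumption you flag). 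Nothing is missing.
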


\begin{proof}
Let $k\in\Nbbb$ and let $\mathscr{F}_{k}^{i}\in\mathscr{F}^{i}\in\mathfrak{F}^{i}, i\in[2].$
If a graph $G$ contains some graph $\mathscr{F}_k^{1}+\mathscr{F}_k^{2}$ as a minor, then $\apex_{\Hcal}(G)\geq \apex_{\Hcal}(\mathscr{F}_k^{1}+\mathscr{F}_k^{2})\geq \barrier_{\Hcal}(\mathscr{F}_k^{1}+\mathscr{F}_k^{2}),$ which, by \autoref{four_four_univ}, is at least $ \max\{\barrier_{\Hcal^1}(\mathscr{F}_{k}^{1}),\barrier_{\Hcal^2}(\mathscr{F}_{k}^{2})\}\geq f(k).$
This immediately implies that $\p_{\mathfrak{F}^{1}\otimes \mathfrak{F}^{2}}\preceq \apex_{\excl(Z_{1}+Z_{2})}.$
\end{proof}

For every proper minor-closed class $\Hcal,$ we define the parameter $\p^{\Hcal}\coloneqq \max\{\apex_{\excl(Z)} \mid Z\in\obs(\Hcal)\}.$
One may immediately observe the following.

\begin{observation}
\llabel{many_to_one_obs}
For every proper minor-closed class $\Hcal,$ it holds that $\p^{\Hcal} \preceq \apex_{\Hcal} \preceq |\obs(\Hcal)| \cdot \p^{\Hcal}.$
\end{observation}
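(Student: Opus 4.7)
The statement splits into two inequalities, both of which I expect to follow from elementary set-theoretic manipulations; the constants are in fact sharp (the identity is the gap function for the first inequality, and the factor $|\obs(\Hcal)|$ is genuinely needed only because a single apex set must kill all obstructions simultaneously).

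For the lower bound $\p^{\Hcal} \preceq \apex_{\Hcal}$, I would simply observe that for every $Z \in \obs(\Hcal)$ the inclusion $\Hcal \subseteq \excl(Z)$ holds: any $H \in \Hcal$ avoids every graph of $\obs(\Hcal)$ as a minor, and in particular avoids $Z$. Consequently, any $\Hcal$-modulator of $G$ is also an $\excl(Z)$-modulator, so $\apex_{\excl(Z)}(G) \leq \apex_{\Hcal}(G)$. Taking the maximum over $Z \in \obs(\Hcal)$ gives $\p^{\Hcal}(G) \leq \apex_{\Hcal}(G)$, i.e., the relation $\preceq$ holds with the identity as gap function.

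For the upper bound $\apex_{\Hcal} \preceq |\obs(\Hcal)| \cdot \p^{\Hcal}$, the plan is to construct an $\Hcal$-modulator by taking the union of optimal $\excl(Z)$-modulators. Concretely, for each $Z \in \obs(\Hcal)$, choose a minimum $\excl(Z)$-modulator $X_Z \subseteq V(G)$, so that $|X_Z| = \apex_{\excl(Z)}(G) \leq \p^{\Hcal}(G)$. Set $X \coloneqq \bigcup_{Z \in \obs(\Hcal)} X_Z$, and note that $|X| \leq |\obs(\Hcal)| \cdot \p^{\Hcal}(G)$. The key observation is that $G - X$ is a subgraph of each $G - X_Z$, and hence excludes every $Z \in \obs(\Hcal)$ as a minor by minor-monotonicity. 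Since $\Hcal$ is precisely the class of graphs excluding every member of $\obs(\Hcal)$ as a minor, this yields $G - X \in \Hcal$, so $X$ is a valid $\Hcal$-modulator and $\apex_{\Hcal}(G) \leq |X| \leq |\obs(\Hcal)| \cdot \p^{\Hcal}(G)$.

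I do not anticipate any genuine obstacle in this proof: both directions reduce to the characterization of a minor-closed class as the intersection of all single-obstruction excluding classes, combined with the monotonicity of the minor-relation under vertex deletion. The only care that needs to be taken is that $|\obs(\Hcal)|$ is finite, which is guaranteed by the Robertson--Seymour theorem and was already recalled in the introduction.
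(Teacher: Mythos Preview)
Your proof is correct and is exactly the elementary argument the paper has in mind; the observation is stated without proof in the paper (prefaced only by ``One may immediately observe the following''), and your two directions---monotonicity of $\apex$ under class inclusion for the lower bound, and taking the union of per-obstruction modulators for the upper bound---constitute the intended justification.
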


An immediate consequence of this observation is, that the parametric family $\mathfrak{F}_{\mathsf{excl}(Z)}$ indeed defines a parameter that acts as a lower bound to $\mathsf{apex}_{\mathsf{excl}(Z)}$ even if $Z$ is not connected.

\begin{corollary}
\llabel{added_rec_obs}
For every proper minor-closed class $\Hcal,$ it holds that $\p_{\mathfrak{F}_{\Hcal}}\preceq\apex_{\Hcal},$ with linear gap.
\end{corollary}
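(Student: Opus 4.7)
The plan is to derive the claimed inequality as a direct consequence of \autoref{obs_apex_lower_bound} together with the elementary fact that $\apex_{\Hcal}$ is minor-monotone for every minor-closed class $\Hcal.$ I would first check monotonicity: given $H\leq G$ via a minor model $\phi\colon V(H)\to 2^{V(G)}$ and an $\Hcal$-modulator $S$ for $G,$ the set $S' \coloneqq \{v\in V(H)\mid V(\phi(v))\cap S\neq \emptyset\}$ satisfies $|S'|\leq |S|,$ and the restriction of $\phi$ to $V(H)\setminus S'$ is a minor model of $H-S'$ inside $G-S\in\Hcal.$ Since $\Hcal$ is minor-closed, this gives $H-S'\in\Hcal,$ so $\apex_{\Hcal}(H)\leq \apex_{\Hcal}(G).$

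The main reduction is then: for every $\mathscr{F}=\langle \mathscr{F}_t\rangle_{t\in\Nbbb}\in \mathfrak{F}_{\Hcal},$ \autoref{obs_apex_lower_bound} asserts $\apex_{\Hcal}(\mathscr{F}_t)=\Omega_h(t).$ Pick any graph $G$ and let $t\coloneqq \p_{\mathscr{F}}(G),$ so $\mathscr{F}_t\leq G.$ By minor-monotonicity, $\apex_{\Hcal}(G)\geq \apex_{\Hcal}(\mathscr{F}_t)\geq c_h\cdot t$ for some positive constant $c_h$ depending only on $h=h(\Hcal),$ and thus $\p_{\mathscr{F}}(G)\leq c_h^{-1}\cdot\apex_{\Hcal}(G).$ Taking the maximum over the finitely many $\mathscr{F}\in \mathfrak{F}_{\Hcal}$ (where uniformity of $c_h$ is justified by replacing $c_h$ with the minimum of its analogues across the finite family) yields $\p_{\mathfrak{F}_{\Hcal}}(G)\leq c_h^{-1}\cdot\apex_{\Hcal}(G),$ i.e., $\p_{\mathfrak{F}_{\Hcal}}\preceq\apex_{\Hcal}$ with linear gap.

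All the substantive work is packaged inside the invocation of \autoref{obs_apex_lower_bound}. Since that observation has not been proven in the excerpt, it would be natural to include a brief justification along the following lines. For $\mathscr{F}\in \mathfrak{F}_{\Hcal}^{\varnothing},$ one has $\mathscr{F}_t=t\cdot Z$ for some $Z\in\obs(\Hcal),$ and $\apex_{\Hcal}(t\cdot Z)\geq t$ holds trivially because any $\Hcal$-modulator must meet each of the $t$ disjoint copies of the obstruction $Z.$ For $\mathscr{F}\in \mathfrak{F}_{\Hcal}^{\nicefrac{1}{2}}$ with $\mathscr{F}=\mathscr{W}^{\mathbf{p}_1}+\cdots+\mathscr{W}^{\mathbf{p}_r}$ where $\{Z_1,\ldots,Z_r\}=\mathsf{cc}(Z)$ for some $Z\in \obs(\Hcal),$ \autoref{lemma_half_integral_Brambles} gives $\apex_{\excl(Z_i)}(\mathscr{W}^{\mathbf{p}_i}_t)=\Omega_h(t)$ (either directly when $\mathbf{p}_i\in \mathfrak{P}^{(Z_i)},$ via the half-integral bramble, which yields a modulator lower bound since any bramble of order $k$ forces $\apex\geq k$; or trivially when $\mathbf{p}_i=\mathbf{p}_{Z_i}$).

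Combining the components, any $\Hcal$-modulator $S$ of $\mathscr{F}_t=G_1+\cdots+G_r$ forbids $Z=Z_1+\cdots+Z_r$ as a minor of $\mathscr{F}_t-S,$ so by the contrapositive there must exist some $i\in[r]$ such that $G_i-(S\cap V(G_i))$ is $Z_i$-minor-free; otherwise the disjoint union of the corresponding minor models realizes $Z$ inside $\mathscr{F}_t-S.$ Hence $|S|\geq |S\cap V(G_i)|\geq \apex_{\excl(Z_i)}(G_i)=\Omega_h(t),$ and by \autoref{many_to_one_obs} this yields $\apex_{\Hcal}(\mathscr{F}_t)\geq \apex_{\excl(Z)}(\mathscr{F}_t)=\Omega_h(t).$ The only mildly subtle point, which I expect to be the main obstacle, is this disconnected-minor argument, since one needs to observe that for disconnected obstructions the natural component-wise assignment of minor models suffices to force the lower bound.
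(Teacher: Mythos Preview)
Your proposal is correct and follows essentially the same route as the paper. The paper factors the argument through \autoref{two_sum_univ} (applied inductively on the number of components of $Z\in\obs(\Hcal)$, using \autoref{easy_obs_gap_} as the connected base case) together with the first inequality of \autoref{many_to_one_obs}; you instead invoke \autoref{obs_apex_lower_bound} directly and then sketch its proof via \autoref{lemma_half_integral_Brambles} and the component-wise modulator argument, which is precisely the content of \autoref{four_four_univ} and \autoref{two_sum_univ} unrolled without passing through $\barrier$.
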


\begin{proof}
By applying \autoref{two_sum_univ} inductively to \eqref{disc_case_first}, while using \autoref{easy_obs_gap_} as the base case, we obtain that for every graph $Z,$ $\p_{\mathfrak{F}_{\excl(Z)}}\preceq \apex_{\excl(Z)}.$
The corollary holds due to the first inequality of \autoref{many_to_one_obs} and the fact that $\p_{\mathfrak{F}_{\Hcal}} =\max\{\p_{\mathfrak{F}_{\excl(Z)}}\mid Z\in \obs(\Hcal)\}.$
\end{proof}

The next lemma is an algorithmic restatement of \autoref{thm_apex_upper_bound} and we conclude this subsection with its proof.

\begin{lemma}
\llabel{apex_disc_obs} There exists a function $f_{\ref{apex_disc_obs}}\colon\nn{2}{1}$ such that for every proper minor-closed graph class $\Hcal,$ where $h=h(\Hcal),$ every graph $G,$ and every $k\in\Nbbb,$ either $\p_{\mathfrak{F}_{\Hcal}}(G)\geq k$ or $\apex_{\Hcal}(G) \leq f_{\ref{apex_disc_obs}}(h,k),$ where $f_{\ref{apex_disc_obs}}(h,k)={2^{k^{\mathcal{O}_h(1)}}}.$
Moreover, there exists an algorithm that, given $\obs(\Hcal),$ $G,$ and $k,$ outputs a certificate of one of the two outcomes in time $2^{2^{k^{\mathcal{O}_h(1)}}}|G|^4\log|G|.$
\end{lemma}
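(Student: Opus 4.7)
The plan is to derive the lemma via two successive reductions: from $\Hcal$ to a single obstruction $Z \in \obs(\Hcal)$, and from arbitrary $Z$ to the connected case already handled in \autoref{univ_apex_con_discon}. For the first reduction, \autoref{many_to_one_obs} gives $\apex_{\Hcal}(G) \leq |\obs(\Hcal)| \cdot \p^{\Hcal}(G)$, and since $|\obs(\Hcal)|$ depends only on $h$ and $\mathfrak{F}_{\excl(Z)} \subseteq \mathfrak{F}_{\Hcal}$ for every $Z \in \obs(\Hcal)$, it suffices to show: for every graph $Z$ with $|Z| \leq h$, if $\p_{\mathfrak{F}_{\excl(Z)}}(G) < k$ then $\apex_{\excl(Z)}(G) \leq g(h,k) = 2^{k^{\mathcal{O}_h(1)}}$, constructively. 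I would prove this claim by induction on the number of connected components $r$ of $Z$.

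The base case $r = 1$ follows directly from \autoref{obs_con_apex}. When $Z$ is connected, the minor relation transfers across components of $G$, so $\excl(Z)^{(c)} = \excl(Z)$ and hence $\apex_{\excl(Z)} = \excl(Z)\text{-}\size$. A direct unpacking of the definitions \eqref{size_obs_one}, \eqref{fourth_walloid_formula}, and \eqref{disc_case_second} then shows $\mathfrak{S}_{\excl(Z)}$ and $\mathfrak{F}_{\excl(Z)}$ generate equivalent parameters: both are $\equiv^*$-equivalent to $\mathfrak{W}_{\excl(Z)} \cup \{\mathscr{W}^{\mathbf{p}_Z}\}$, the former literally and the latter by absorption of $\lesssim$-dominated elements into $\mathscr{W}^{\mathbf{p}_Z}$ via the minimization. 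The single-exponential gap and the algorithm transfer from \autoref{obs_con_apex}.

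For the inductive step, write $Z = Z_1 + Z'$ with $Z_1$ a connected component of $Z$ and $Z'$ having $r-1$ components. The basic lever is the trivial inequality $\apex_{\excl(Z)}(G) \leq \min\{\apex_{\excl(Z_1)}(G), \apex_{\excl(Z')}(G)\}$, valid because a single-component apex set annihilates $Z$ as a minor. Hence if either $\apex_{\excl(Z_1)}(G) \leq g(h,k)$ (via the base case applied to $\excl(Z_1)$) or $\apex_{\excl(Z')}(G) \leq g(h,k)$ (via the induction hypothesis applied to $\excl(Z')$), we are done. Otherwise, combining the base case and induction hypothesis in their contrapositive form yields $\mathbf{p}_1 \in \widehat{\mathfrak{P}}^{(Z_1)}$ and $(\mathbf{p}_2, \ldots, \mathbf{p}_r) \in \widehat{\mathfrak{P}}^{(Z_2)} \times \cdots \times \widehat{\mathfrak{P}}^{(Z_r)}$ with $\mathscr{W}^{\mathbf{p}_1}_N \leq G$ and $\mathscr{W}^{\mathbf{p}_2}_N + \cdots + \mathscr{W}^{\mathbf{p}_r}_N \leq G$ for some large $N = N(h,k)$.

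The main obstacle is then promoting these two a priori vertex-overlapping witnesses to a single \emph{disjoint} occurrence $\mathscr{W}^{\mathbf{p}_1}_k + \cdots + \mathscr{W}^{\mathbf{p}_r}_k \leq G$, which by \eqref{disc_case_first} directly contradicts the hypothesis $\p_{\mathfrak{F}_{\excl(Z)}}(G) < k$. My plan to bridge this gap is a Diestel-style pigeonhole argument that exploits the self-similarity of walloids: for $N$ large relative to $k$ and $|Z|$, a single copy of $\mathscr{W}^{\mathbf{p}_1}_N$ in $G$ contains many pairwise disjoint sub-minors isomorphic to $\mathscr{W}^{\mathbf{p}_1}_k$ (the same kind of packing argument that underlies \autoref{lemma_half_integral_Brambles}), and only a bounded number of them can intersect a fixed copy of $\mathscr{W}^{\mathbf{p}_2}_k + \cdots + \mathscr{W}^{\mathbf{p}_r}_k$ extracted from the $N$-indexed disjoint-union walloid; a symmetric argument handles the other side. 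Because each inductive step multiplies the gap by a constant depending only on $h$, and $r \leq h$ bounds the recursion depth, the overall gap function stays within $2^{k^{\mathcal{O}_h(1)}}$. The algorithmic version follows step-by-step from the constructive content of \autoref{obs_con_apex} and the almost-linear minor-testing algorithm of Korhonen, Pilipczuk, and Stamoulis, giving the claimed $2^{2^{k^{\mathcal{O}_h(1)}}} |G|^4 \log|G|$ runtime.
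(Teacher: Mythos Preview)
Your inductive step contains a genuine gap at the ``self-similarity'' packing argument. You claim that for $N$ large, $\mathscr{W}^{\mathbf{p}_1}_N$ contains many pairwise disjoint sub-minors isomorphic to $\mathscr{W}^{\mathbf{p}_1}_k$. This is false precisely for the walloids that matter most. Take $\mathbf{p}_1 = (\Sigma^{(1,0)}, \emptyset)$, the toroidal embedding pair for $Z_1 = K_5$: then $\mathscr{W}^{\mathbf{p}_1}_N$ embeds in the torus (genus $1$), while for $k \geq f^{(K_5)}_{\ref{lemma_walloid_contains_obstruction}}(5)$ each copy of $\mathscr{W}^{\mathbf{p}_1}_k$ is non-planar; by genus additivity two disjoint copies have genus at least $2$ and cannot sit in a toroidal graph. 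Your reference to \autoref{lemma_half_integral_Brambles} is also a mismatch: that lemma produces \emph{half-integral} brambles, exactly because integral packing fails in these walloids (see \autoref{obs_half_integral_counterexamples}, which asserts $\barrier_{\Hcal}(\mathscr{F}_t)=1$ for all $t$). Concretely, take $Z = K_5 + K_5$ and $G = \mathscr{W}^{(1,0)}_N$: both $\apex_{\excl(K_5)}(G)$ and (by symmetry) $\apex_{\excl(K_5)}(G)$ are large, so your argument enters the ``combine witnesses'' branch; but $G$ contains no $\mathscr{F}_k$ for any $\mathscr{F} \in \mathfrak{F}_{\excl(K_5)} \otimes \mathfrak{F}_{\excl(K_5)}$, and you never reach the (correct) conclusion $\apex_{\excl(Z)}(G) = 0$.

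The paper circumvents this by a different induction scheme. Instead of splitting off one component, it applies the induction hypothesis to \emph{every} $2$-connectivization of $Z$ (graphs obtained by making two components of $Z$ into one via a single edge). Because $\mathfrak{F}_{\excl(Z)} \lesssim^{*} \mathfrak{F}_{\excl(Z')}$ for each $Z' \in \Zcal_2$, a large $\mathfrak{F}_{\excl(Z')}$-witness already yields a large $\mathfrak{F}_{\excl(Z)}$-witness. Otherwise one collects a modulator $S_1$ hitting every $Z'$-minor for all $Z' \in \Zcal_2$; in $G - S_1$ any $Z$-minor must then realise distinct components of $Z$ in distinct connected components of $G - S_1$, making disjointness automatic. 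A subsequent Hall-type matching argument on the bipartite graph between components of $Z$ and components of $G - S_1$ (combined with \autoref{obs_con_apex} applied componentwise to the few ``scarce'' $Z_i$'s) either locates $k \cdot Z$ or a member of $\mathfrak{F}_{I}$ across disjoint components, or produces the modulator. The disjointness you tried to manufacture by pigeonhole is obtained for free once one has paid the price of the $2$-connectivization step.
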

\begin{proof}
It suffices to prove that $\apex_{\Hcal}\preceq \p_{\mathfrak{F}_{\Hcal}}$ since the other direction is already given by \autoref{added_rec_obs}. 

Let $G$ be a graph.
As all graphs in $\obs(\Hcal)$ have at most $h$ vertices, we obtain that $|\obs(\Hcal)|= 2^{\Ocal(h^2)}.$
Due to the second inequality of \autoref{many_to_one_obs}, we have that $\apex_{\Hcal}(G)= 2^{\Ocal(h^2)}\cdot \p^{\Hcal}(G).$
This justifies us to focus on a single graph rather than the entire set $\mathsf{obs}(\mathcal{H}).$

Consider a graph $Z\in\mathsf{obs}(\mathcal{H}).$
In this context, we fix $\mathsf{cc}(Z)=\{Z_{1}, \ldots, Z_{r}\},$ $r\leq h.$
Given a set $I=\{i_{1},\ldots,i_{s}\}\subseteq[r]$ of integers, we write $Z_{I}\coloneqq \bigcup_{i\in I}Z_{i}$ and we write $\mathfrak{F}_{I}= \mathfrak{F}_{\excl(Z_{i_1})}\otimes\cdots\otimes\mathfrak{F}_{\excl(Z_{i_s})}$ for those parametric graphs generated from the selection of components of $Z$ induced by $I.$

Given some integer $p\in[r],$ a \defi{$p$-connectivization} of $Z$ is a graph $Z'$ obtained from $Z$ by selecting a set $I'\subseteq [r]$ with $|I'|=p$ and replacing the components $Z_i,$ $i\in I'$ by a graph from $\mathsf{conn}(\sum_{i\in I'}Z_i).$
We denote by $\mathcal{Z}_p$ the collection of all $p$-connectivizations of $Z.$

Observe that, by definition of $\mathfrak{F}_{\excl(Z)},$ it follows that for every $Z' \in \mathcal{Z}_{2},$ $\mathfrak{F}_{\excl(Z)} \lesssim \mathfrak{F}_{\excl(Z')}.$
Moreover there exists a function $c \coloneqq c(h)$ such that for every $Z' \in \mathcal{Z}_{2},$ $\lin{\mathscr{F}'_{t}}_{t \in \Nbbb} \in \mathfrak{F}_{\excl(Z')}$ there exists a $\lin{\mathscr{F}_{t}}_{t \in \Nbbb} \in \mathfrak{F}_{\excl(Z)}$ such that $\mathscr{F}_{k}$ is a minor of $\mathscr{F}'_{ck}.$

Finally, we set
\begin{align*}
g(1,h,k) &\coloneqq f_{\ref{obs_con_apex}}(h,k)\text{ and}\\
g(r,h,k) &\coloneqq h^2 \cdot g(r-1,h, ck)+ r^3k\cdot f_{\ref{obs_con_apex}}(h,k).
\end{align*}
The function $g(r,h,k)$ is a placeholder for the actual function $f_{\ref{apex_disc_obs}}$ we wish to define to prove the assertion.
However, at the moment it is more convenient to make the dependency on the number of components of $Z$ visible.
To obtain the result we wish for we may simply set $f_{\ref{apex_disc_obs}}(h,k)\coloneqq g(h,h,ck)$ as $h$ is always an upper bound on $r.$

Our proof proceeds by induction on $r.$
That is, we prove the following statement by induction on the number of components of $Z.$

\paragraph{Inductive statement.}
For every positive integer $k,$ every graph $Z$ on at most $h$ vertices with at most $r$ components, and every graph $G$ one of the following hold
\begin{enumerate}
    \item $G$ contains $\mathfrak{F}_k$ as a minor for some $\langle \mathfrak{F}_r\rangle_{r\in\mathbb{N}}\in \mathfrak{F}_{\mathsf{excl}(Z)},$ or
    \item there exists a set $S\subseteq V(G)$ of size at most $g(r,h,k)$ such that $G-S\in\mathsf{excl}(Z).$
\end{enumerate}

\paragraph{Base of the induction.}
The base follows in a straightforward way from \autoref{obs_con_apex}.
This is true since for the base we may consider the case $r=1$ where $g(r,h,k)=f_{\ref{obs_con_apex}}(h,k)$ and $Z$ is connected.
With $Z$ being connected there is no difference between the connectivizations of $Z$ and $Z$ itself and therefore the claim follows.

\paragraph{Inductive step.}
From here on, we may assume that $r \geq 2.$
As a first step, we consider the set $\mathcal{Z}_2$ of all $2$-connectivizations of $Z.$
Notice that every graph $Z' \in \mathcal{Z}_2$ has exactly $r-1$ components.
Moreover, $|\mathcal{Z}_2|\leq |Z|^2\leq h^2.$
For each $Z'\in\mathcal{Z}_2$ we may now call upon the induction hypothesis and obtain either
\begin{itemize}
    \item a set $S_{Z'}$ of size at most $g(r-1,h,ck)$ such that $G-S_{Z'}\in\mathsf{excl}(Z'),$ or
    \item $\mathscr{F}'_{ck}$ as a minor of $G$ for some $\langle \mathscr{F}'_t\rangle_{t\in\mathbb{N}}\in\mathfrak{F}_{\mathsf{excl}(Z')}.$
\end{itemize}
Recall that, for every $\lin{\mathscr{F}'_{t}}_{t \in \Nbbb} \in \mathfrak{F}_{\excl(Z')}$ there exists a $\lin{\mathscr{F}_{t}}_{t \in \Nbbb} \in \mathfrak{F}_{\excl(Z)}$ such that $\mathscr{F}_{k}$ is a minor of $\mathscr{F}'_{ck}.$
Thus, if for any $Z'\in\mathcal{Z}_2$ we obtain the second outcome, we are immediately done.
Hence, we may assume to obtain the set $S_{Z'}$ as above for each $Z'\in\mathcal{Z}.$
Let $S_1\coloneqq \bigcup_{Z'\in\mathcal{Z}_2}S_{Z'}$ and notice that $|S_1|\leq h^2 \cdot g(r-1,h, ck).$

Let $G_1\coloneqq G-S_1.$
Observe that for every minimal subgraph $H$ of $G$ such that $H$ contains $Z$ as a minor it must hold, that any component of $G_1$ may contain at most one component of $H.$
Otherwise, there would exist components $H_1$ and $H_2$ of $H$ such that there exists a path from $H_1$ to $H_2$ in $G_1$ which avoids all other components of $H$ in $G_1.$
Such a path, however, would witness that $G_1$ still contains a member of $\mathcal{Z}_2$ as a minor.

For every $i\in[r]$ let $\mathcal{K}_i$ be the set of all components of $G_1$ that contain the component $Z_i$ of $Z$ as a minor.
Moreover, for any set $\mathcal{K}$ of components of $G_1$ let $\overline{\mathcal{K}}$ be the set of all components of $G_1$ which do not belong to $\mathcal{K}.$
We first distinguish two cases, one of which is easily dealt with, and then we enter, as part of the second case, an iterative construction process that eventually leads to either our obstruction or the desired $\mathsf{excl}(Z)$-modulator.
The cases we distinguish are
\begin{description}
    \item [Case 1:] $|\mathcal{K}_i|\geq r\cdot k$ for every $i\in[r],$ and
    \item [Case 2:] there exists $i_1\in[r]$ such that $|\mathcal{K}_{i_1}|<r\cdot k.$
\end{description}
Notice that, in \textbf{Case 1}, we may greedily select $k$ components from $\mathcal{K}_i$ for every $i$ and thereby obtain $k\cdot Z$ as a minor of $G_1.$
Since $\langle t\cdot Z\rangle_{t\in\mathbb{N}}\in\mathfrak{F}_{\mathsf{excl}(Z)},$ this outcome would complete our proof and thus, we may assume to be in \textbf{Case 2}.
In the following, we will, at times, apply a similar counting argument.
To facilitate our writing, we will simply refer to this argument as the \textbf{few components argument}.

We initialize $I_1\coloneq \{ i_1\}$ and iteratively construct sets $I_q$ and $X_q$ as follows.
\paragraph{Target of our construction.}
For every $q \geq 1$ there exists a set $I_q=\{ i_1,\dots,i_q\}\subseteq I$ such that there exists a set $X_{q}$ of size at most $qr\cdot rk\cdot f_{\ref{obs_con_apex}}(h,k)$  and a set $\mathcal{Q}_q$ of at most $q\cdot r\cdot k$ components of $G_1$ with the following properties
\begin{enumerate}
    \item for every $i\in I_q$ and every component $K\in\mathcal{Q}_{q}$ either $K-X_q$ is $Z_i$-minor-free, or $K$ contains $\mathscr{F}^i_k$ as a minor for some $\langle \mathscr{F}^i_t \rangle_{t\in\mathbb{N}}\in\mathfrak{F}_{\mathsf{excl}(Z_i)},$
    \item for every $i\in I_q$ and every component $K\in\overline{\mathcal{Q}_q}$ it holds that $K$ is $Z_i$-minor-free, and
    \item for every minimal subgraph $H$ of $G_1-X_q$ which contains $Z$ as a minor and every $i\in I_q$ there exists a component $K_i\in\mathcal{Q}_{q}$ such that $H\cap K_i$ is minimal with respect to containing $Z_i$ as a minor.
    Moreover, if $i,j\in I_q$ are distinct, then so are $K_i$ and $K_j.$
\end{enumerate}
Once we complete this construction, we obtain that $I_r=I.$
During this construction, we stop whenever we reach a point where $G_1-X_q$ is $Z$-minor-free, or contains our desired obstruction.
For the second outcome of this stopping condition we will employ a small argument based on Hall's Theorem, this argument will finally also complete the case $q=r.$

We proceed with this construction by induction on $q$ and, as the base, show how to obtain the set $X_1.$

\paragraph{Base of our construction.}
We set $\mathcal{Q}_1\coloneqq \mathcal{K}_{i_1}.$
Notice that the second and third conditions immediately hold by the definition of $\mathcal{K}_{i_1}$ since no other component of $G_1$ can contain $Z_{i_1}$ as a minor and no $Z$-minor can realize two of its components in the same component of $G_1.$
For each of the at most $r\cdot k$ members of $\mathcal{K}_{i_1}$ we may apply \autoref{obs_con_apex} for $k$ and the graph $Z_{i_1}.$
Let $X_1$ be the union of all hitting sets, of size at most $f_{\ref{obs_con_apex}}(h,k)$ each, that are returned this way.
It follows that $I_1$ and $X_1$ also satisfy the first condition of our construction.

\paragraph{The inductive step of our construction.}
Let us now assume that for some $q\in[r-1]$ we have already constructed the sets $I_q,$ $\mathcal{Q}_q,$ and $X_q$ as above.

First, let us introduce the \textbf{matching argument}.
Let $B_q$ be the bipartite graph with $I_q$ as one of the two color classes and $\mathcal{Q}_{q}$ as the other color class.
There exists an edge $iK \in E(B_q)$ if and only if the component $K\in\mathcal{Q}_{q}$ still contains $Z_i$ as a minor after deleting $X_q.$
Notice that, by the second condition of our construction, this means that $K$ must contain $\mathscr{F}^i_k$ as a minor for some $\langle \mathscr{F}^i_t \rangle_{t\in\mathbb{N}}\in\mathfrak{F}_{\mathsf{excl}(Z_i)}.$
Now, if $B_q$ does not have a matching that covers the entire set $I_q,$ then, by Hall's Theorem, there exists a set $I'\subseteq I_q$ such that $|N_{B_q}(I')|<|I'|.$
Since every occurrence of $Z$ as a minor in $G_1-X_q$ must use $q$ distinct members of $\mathcal{Q}_{q},$ one for each $i\in I_q,$ to realize the components $Z_i,$ $i\in I_q,$ this implies that $G_1-X_q$ is $Z$-minor-free and we may terminate.
Hence, we may assume that $\cupall\mathcal{Q}_{q}$ contains $\mathscr{F}_k$ as a minor for some $\langle \mathscr{F}_t\rangle_{t\in\mathbb{N}}\in\mathfrak{F}_{\mathsf{excl(I_q)}}.$

Now suppose that $|\mathcal{K}_i\setminus\mathcal{Q}_q|\geq |I\setminus I_q|k$ for every $i\in I\setminus I_q.$
By the \textbf{few components argument} and the \textbf{matching argument} above this implies that $G_1$ contains $\mathscr{F}_k$ as a minor for some $\langle \mathscr{F}_t\rangle_{t\in\mathbb{N}}\in\mathfrak{F}_{\mathsf{excl}(Z)}.$
Hence, we may assume that there is some $i_{q+1}\in I\setminus I_q$ such that $|\mathcal{K}_{q+1}\setminus\mathcal{Q}_q|< |I\setminus I_q|k.$
Let $\mathcal{Q}_{q+1}\coloneqq \mathcal{Q}_q\cup \mathcal{K}_{i_{q+1}}.$

Next, we make use of \autoref{obs_con_apex}.
That is, we apply \autoref{obs_con_apex} for $Z_{i_{q+1}}$ and every member of $\mathcal{Q}_{q+1}.$
Let $X_{q+1}'$ be the union of all modulators, each of size at most $f_{\ref{obs_con_apex}}(h,k),$ which are returned by these applications.
Notice that 
\begin{align*}
    |X_{q+1}'| & \leq (q+1)\cdot rk\cdot f_{\ref{obs_con_apex}}(h,k).
\end{align*}
Finally, let $X_{q+1}\coloneqq X_q \cup X_{q+1}'$ and notice that
\begin{align*}
    |X_{q+1}| \leq |X_q| + (q+1)\cdot rk\cdot f_{\ref{obs_con_apex}}(h,k) \leq (q+1)r\cdot rk\cdot f_{\ref{obs_con_apex}}(h,k).
\end{align*}
The three sets $I_{q+1},$ $\mathcal{Q}_{q+1},$ and $X_{q+1}$ now satisfy the first condition of our construction by \autoref{obs_con_apex} and our construction of $X_{q+1},$ and the second and third condition are satisfied by our choice for $\mathcal{Q}_{q+1}.$

In particular, it follows from the \textbf{matching argument} that, if $\bigcup\mathcal{Q}_{q+1}$ does not contain $\sum_{j\in[q+1]}\mathscr{F}^j_k$ as a minor for some choice of $\langle \mathscr{F}^j_t\rangle_{t\in\mathbb{N}}\in\mathfrak{F}_{\mathsf{excl}(Z_j)}$ for each $j\in I_{q+1},$ then $G_1-X_{q+1}$ is $Z$-minor-free.
This last observation indeed proves that, once $I_r,$ $\mathcal{Q}_r,$ and $X_r$ are constructed, we are done.
\end{proof}

Concluding, \autoref{added_rec_obs} and \autoref{apex_disc_obs}  imply 
that $\mathfrak{F}_{\Hcal}^\mathsf{min}$ is a universal obstruction for $\apex_{\Hcal}.$

\subsection{Universal obstructions for $\Hcal$-$\td$}
\llabel{univ_td_all}

So far we  presented universal obstructions for the 
parameters $\Hcal$-$\tw$ and $\apex_{\Hcal},$ namely 
$\mathfrak{W}_{\Hcal}^\mathsf{min}$ and $\mathfrak{F}_{\Hcal}^\mathsf{min}$ respectively.
Notice that the knowledge of $\mathfrak{W}_{\Hcal}$  has been important 
for the derivation of $\mathfrak{F}_{\Hcal}$ as it permitted to assume 
that $\Hcal$-\tw is bounded and consider a bounded width 
tree-$\apex_{\Hcal}$ decomposition in the proof of \autoref{obs_con_apex}.
We next apply the same approach to the \defi{elimination distance} 
to  $\Hcal,$ also known as the \defi{$\Hcal$-tree-depth},  
that is recursively defined  as follows:
\begin{eqnarray}
   \td_\Hcal(G) & = &
   \begin{cases}
     0 & \text{if}\ G\in\Hcal, \\
     1+\min\{\ed_\Hcal(G\setminus \{v\})\mid v\in V(G)\} & \text{if}\ G\text{ is connected}, \\
		 \max\{\ed_\Hcal(H)\mid H \text{ is a connected component of}\ G\} & \text{otherwise.}
   \end{cases}\llabel{def_td_daw}
\end{eqnarray}
Another way to define $\td_\Hcal$ is to just replace in 
\eqref{basic_scheme} $\tw$  by $\td,$ that is the  parameter of \textsl{tree-depth}. 
The \defi{tree-depth} of a graph $G$   is
 defined as the minimum height of a rooted forest $F$ 
with the property that every edge of $G$ connects a pair of vertices that have an ancestor-descendant relationship to each other in $F.$
It is easy to observe that  $\td=\td_{\varnothing}$ (recall that $\varnothing=\excl(K_{1})$).
The parameter $\td_\Hcal$ was introduced by 
Bulian and Dawar~\cite{BulianD16grap,BulianD17fixe}
as a distance measure of a graph to some graph class $\Hcal.$
Notice that $\td_{\Hcal}$ remains the same parameter if we replace $\Hcal$ by $\Hcal^{(c)}.$

It can be easily seen that for every graph $G,$
 $\tw_{\Hcal}(G)≤\Hcal$-$\tw(G)≤\apex_{\Hcal}(G).$
For every $k,$ let 
\begin{eqnarray}
\Tcal\Dcal_{k}^{\Hcal} & = & \{G\mid \Hcal\text{-}\td(G)≤k\}.\llabel{td_obs_class_sm}
\end{eqnarray}
In \cite{BulianD17fixe}, Bulian and Dawar proved that there is a computable function 
 bounding the obstructions of  $\Tcal\Dcal_{k}^{\Hcal},$ i.e., for every $\Hcal,$ there is a computable function $f_{\Hcal}^{\sf td}:\nn{1}{1}$ where 
 for every $k\in \Nbbb,$ if $Z\in\obs(\Tcal\Dcal_{k}^{\Hcal}),$ then $|Z|≤f_{\Hcal}^{\sf td}(k).$ This implied that the problem of deciding whether $\Hcal\text{-}\td(G)≤k$ is constructively in {\sf FPT}, when parameterized by $k.$ This was improved in \cite{MorelleSST23fast} where a  time $2^{2^{2^{\poly_{h}(k)}}}n^2$ algorithm for this problem was given. Moreover, the results in \cite{MorelleSST23fast} imply an explicit bound $f_{\Hcal}^{\sf td}(k)=2^{2^{2^{2^{\poly_{h}(k)}}}}.$

In this section, we explain how to use an approach similar to the one of \autoref{univ_apex_con_discon} in order to find a universal obstruction for $\Hcal\text{-}\td.$

We start from the following key observation of Bulian and Dawar in~\cite{BulianD17fixe}.

\begin{lemma} 
For every minor-closed class $\Hcal,$ $\obs(\Hcal^{(c)})=\mathsf{conn}(\obs(\Hcal)).$
\end{lemma}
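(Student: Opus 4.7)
The plan is first to note that this lemma coincides with Proposition~\ref{b_d_th_conn} (the non-proper case, where $\Hcal = \gall$, is vacuous since both sides are empty). Hence the cleanest route is simply to invoke that proposition. Nevertheless, I would also give a short direct proof by a standard two-inclusion argument, since the same ideas will be recycled for the $\Hcal$-$\td$ case in the sequel.

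For $\obs(\Hcal^{(c)}) \subseteq \mathsf{conn}(\obs(\Hcal))$, I would take $Z \in \obs(\Hcal^{(c)})$. The first key step is to show $Z$ is connected: if $Z$ had components $Z_{1},\ldots,Z_{r}$ with $r \geq 2$, each $Z_{i}$ would be a proper minor of $Z$, hence in $\Hcal^{(c)}$; being connected, $Z_{i} \in \Hcal$, and then $Z = Z_{1} + \cdots + Z_{r} \in \Hcal^{(c)}$, contradicting $Z \in \obs(\Hcal^{(c)})$. Once $Z$ is connected, $Z \notin \Hcal$, so there is some $W \in \obs(\Hcal)$ with $W \leq Z$. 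I would then fix a minor model of $W$ in $Z$ and consider the subgraph $H \subseteq Z$ formed by a spanning tree of each branch set plus one bridge edge for each edge of $W$. Using that every proper minor of $Z$ lies in $\Hcal^{(c)}$, I would argue that $Z$ can have no ``slack'' over $H$ beyond the edges required to connect up the components of $H$ (which in turn correspond to the components of $W$). After minor-minimizing the resulting connected supergraph of $W$, we land in $\mathsf{conn}(W) \subseteq \mathsf{conn}(\obs(\Hcal))$.

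For the reverse inclusion $\mathsf{conn}(\obs(\Hcal)) \subseteq \obs(\Hcal^{(c)})$, I would take $Z \in \mathsf{conn}(\obs(\Hcal))$, so $Z = W + F$ with $W \in \obs(\Hcal)$ and $F$ a minimal connecting edge set, and $Z$ minor-minimal with this property. Since $W \leq Z$, $Z \notin \Hcal$, and since $Z$ is connected, $Z \notin \Hcal^{(c)}$. For any proper minor $Z'$ of $Z$: if $Z'$ is connected, then minor-minimality of $Z$ inside $\mathsf{conn}(\obs(\Hcal))$ forces $Z'$ to be $\obs(\Hcal)$-free, so $Z' \in \Hcal \subseteq \Hcal^{(c)}$. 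If $Z'$ is disconnected, the same minor-minimality argument applied component-wise shows that no component of $Z'$ can contain a member of $\obs(\Hcal)$ as a minor, so every component is in $\Hcal$ and $Z' \in \Hcal^{(c)}$. Either way, $Z \in \obs(\Hcal^{(c)})$.

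The hard part will be the disconnected-obstruction case: when $W$ is disconnected, removing an edge of $Z$ outside the model does not destroy the minor model of $W$, and one can even have $W \leq Z'$ for a disconnected proper minor $Z'$ with every component in $\Hcal$ (the components of $W$ distribute across components of $Z'$). Making the bookkeeping precise -- in particular, relating the spanning-subgraph structure used to define $\mathsf{conn}(W)$ to minor-containment inside $Z$, and using the outer $\mathsf{min}$ in the definition of $\mathsf{conn}(\obs(\Hcal))$ to quotient out non-canonical choices of the connecting set $F$ -- is where all the actual work lies; however, since this is exactly the argument carried out by Bulian and Dawar in~\cite{BulianD17fixe}, I would mainly cite Proposition~\ref{b_d_th_conn} rather than reproduce it in full.
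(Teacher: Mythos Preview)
Your proposal is correct and matches the paper's approach exactly: the paper does not prove this statement at all but simply attributes it to Bulian and Dawar~\cite{BulianD17fixe} (it appears both as Proposition~\ref{b_d_th_conn} and again here, each time without proof). Your primary suggestion---to invoke that proposition---is precisely what the paper does, and your additional two-inclusion sketch, with its honest acknowledgment that the disconnected-obstruction case is delicate and best left to the cited reference, goes beyond what the paper provides.
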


A \defi{2-rooted-graph} is a triple $\mathbf{Ζ}=(Ζ,v,u)$ where $Z$ is a connected graph and $v,u\in V(Ζ).$
We say that two 2-rooted-graphs $\mathbf{Ζ}=(Ζ,v,u)$ and $\mathbf{Ζ'}=(Ζ',v',u')$ are \defi{isomorphic}  
if there is an isomorphism $\sigma:V(Ζ)\to V(Ζ')$ such that $\sigma(v)=v'$ and $\sigma(u)=u'.$
We define $\mathcal{R}(Ζ)$ as the set of all pairwise non-isomorphic 2-rooted-graphs whose graph is $Ζ.$ An \defi{$\Hcal$-chain}  of size $t$ is a graph obtained 
if we first consider a sequence 
\begin{eqnarray}
\lin{(Ζ^1,v^1,u^1),\ldots,(Ζ^t,v^t,u^t)}\llabel{seq_two_root}
\end{eqnarray}
 of $2$-rooted graphs, where for each $i\in[t],$ there exists some $Z^i\in\obs(\Hcal)$ such that $(Z^{i},v^i,u^i)\in\Rcal(Z),$ 
and then add the edges in $\big\{\{u^{i},v^{i+1}\}\mid i\in[t-1]\big\}$ in the disjoint union of the graphs in \eqref{seq_two_root}. An $\Hcal$-chain where 
 $(Z^{i},v^i,u^i)$ is  isomorphic to some particular  $\mathbf{Ζ},$ then we call it  the  \defi{$\mathbf{Ζ}$-chain}.
For every 2-rooted graph $\mathbf{Ζ}=(Ζ,v,u)\in\mathcal{R}(Ζ),$ we define
the parametric graph $\mathscr{E}^{\mathbf{Ζ}}=\langle\mathscr{E}^{\mathbf{Ζ}}_{t}\rangle_{t\in\mathbb{N}}$
where $\mathscr{E}^{\mathbf{Ζ}}_{t}$ is the  $\mathbf{Z}$-chain of size $t.$

\begin{observation}
\llabel{one_obst_Htd}
There exists a function $f:\nn{1}{1}$ such that 
for  every minor-closed class $\Hcal$
and every $k\in\Nbbb,$  every $\Hcal$-chain  of size $t\cdot f(h)$
contains $\mathscr{E}^{\mathbf{Ζ}}_{t}$ as a minor 
for some $\mathbf{Ζ}\in\mathcal{R}(Ζ).$
\end{observation}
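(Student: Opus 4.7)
The plan is to deduce this observation from a straightforward pigeonhole applied to the 2-rooted types appearing along the chain, followed by a contraction argument that turns the intermediate pieces of the chain into the edges of $\mathscr{E}^{\mathbf{Z}}_t$. Let me first bound the number $N(h)$ of distinct (up to isomorphism) 2-rooted graphs $(Z,v,u)$ with $Z\in\obs(\Hcal)$: every such $Z$ has at most $h$ vertices, so there are at most $2^{\binom{h}{2}}$ graphs to choose from, and each of them admits at most $h^2$ ordered pairs of roots. Hence $N(h)\leq h^{2}\cdot 2^{\binom{h}{2}}=2^{\Ocal(h^{2})}$, and I set $f(h)\coloneqq N(h)$. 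Given an $\Hcal$-chain $G$ of size $t\cdot f(h)$ with defining sequence $\lin{(Z^{1},v^{1},u^{1}),\ldots,(Z^{t\cdot f(h)},v^{t\cdot f(h)},u^{t\cdot f(h)})}$, the pigeonhole principle produces an isomorphism type $\mathbf{Z}=(Z,v,u)$, with $Z\in\obs(\Hcal)$, and indices $i_{1}<i_{2}<\cdots<i_{t}$ such that each $(Z^{i_{j}},v^{i_{j}},u^{i_{j}})$ is isomorphic to $\mathbf{Z}$.

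Next I would exhibit the $\mathbf{Z}$-chain of size $t$ as a minor of $G$ by contracting, for every $j\in[t-1]$, the subchain that lies strictly between the $i_{j}$-th and the $i_{j+1}$-th piece. Concretely, inside $G$ there is a $u^{i_{j}}$-$v^{i_{j+1}}$ path $P_{j}$ built by concatenating the chain edge $u^{l}v^{l+1}$ for every $l\in\{i_{j},i_{j}+1,\ldots,i_{j+1}-1\}$ with a $v^{l}$-$u^{l}$ path inside each intermediate connected piece $Z^{l}$ for $l\in\{i_{j}+1,\ldots,i_{j+1}-1\}$ (such a path exists because each obstruction is connected). By construction, $P_{j}$ meets $Z^{i_{j}}$ only at $u^{i_{j}}$ and meets $Z^{i_{j+1}}$ only at $v^{i_{j+1}}$, and the paths $P_{1},\ldots,P_{t-1}$ are pairwise internally disjoint as they live in pairwise disjoint segments of $G$. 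Contracting in $P_{j}$ every edge except one therefore yields a single edge with endpoints $u^{i_{j}}$ and $v^{i_{j+1}}$, while leaving the pieces $Z^{i_{1}},\ldots,Z^{i_{t}}$ untouched.

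Performing all these contractions simultaneously gives a minor of $G$ that consists of the $t$ disjoint copies of $\mathbf{Z}$ at positions $i_{1},\ldots,i_{t}$ joined consecutively by edges $u^{i_{j}}v^{i_{j+1}}$, which is by definition the $\mathbf{Z}$-chain of size $t$, i.e.\ $\mathscr{E}^{\mathbf{Z}}_{t}$. I do not foresee any real obstacle: the only mild point to verify carefully is that the contracted paths $P_{j}$ avoid the interior of the selected pieces so that the final minor is honestly the $\mathbf{Z}$-chain and not a proper quotient of it, and this is immediate from the definition of $P_{j}$. The bound $f(h)=2^{\Ocal(h^{2})}$ comes out for free from the counting done above.
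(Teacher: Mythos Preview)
Your proof is correct and is precisely the argument the paper has in mind; the statement is recorded as an \emph{Observation} with no proof given, and pigeonhole on the bounded number of 2-rooted isomorphism types followed by contracting the intermediate segments to single edges is the intended justification. One tiny remark: your line ``such a path exists because each obstruction is connected'' is not quite the right reason (obstructions of a general $\Hcal$ need not be connected); the correct justification is that each piece $Z^{l}$ is connected \emph{by definition} of a 2-rooted graph, which is already built into the definition of an $\Hcal$-chain.
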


Let  $\Hcal$  be a minor-closed class and let $\Ocal_{\Hcal}=\obs(\Hcal^{(c)}).$
Then we define the following 
collections of parametric graphs.
\begin{eqnarray}
\mathfrak{T}_{\Hcal}&  \coloneqq &  \mathfrak{W}_{\Hcal}\cup \bigcup_{Z\in\Ocal_{\Hcal}}\big\{\mathscr{E}^{\mathbf{Ζ}}\mid \mathbf{Ζ}\in\mathcal{R}(Ζ)\big\} \llabel{obst_Htd_more}\\\mathfrak{T}_{\Hcal}^\mathsf{min}& \coloneqq &  \mathsf{min}(\mathfrak{T}_{\Hcal})\nonumber
\end{eqnarray}

The next observation follows immediately from \eqref{def_td_daw}
and the definition of $\mathscr{E}^{\mathbf{Ζ}}.$
\begin{observation}
\llabel{obst_Htd_lower}
Let $\Hcal$ be a minor-closed class and $Z\in\Ocal_{\Hcal}.$ Then for every  graphs $G,$ every 2-rooted-graph graph $\mathbf{Z}\in\mathcal{R}(Ζ),$ every 
$\mathscr{E}^{\mathbf{Ζ}}=\langle\mathscr{E}^{\mathbf{Ζ}}_{t}\rangle_{{t\in\mathbb{N}}},$ and every $t\in\Nbbb,$ 
it holds that 
$\td_{\Hcal}(\mathscr{E}^{\mathbf{Ζ}}_{t})=\Omega(\log(t)).$ 
\end{observation}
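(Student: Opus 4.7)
The plan is to establish $\td_\Hcal(\mathscr{E}^{\mathbf{Ζ}}_t) \ge 1 + \lfloor \log_2 t \rfloor$ via a halving recursion that mirrors the recursive definition \eqref{def_td_daw} of $\td_\Hcal$. I would fix a 2-rooted graph $\mathbf{Ζ} = (Ζ,v,u) \in \Rcal(Ζ)$ and set $g(t) \coloneqq \td_\Hcal(\mathscr{E}^{\mathbf{Ζ}}_t)$. For the base case, a $\mathbf{Ζ}$-chain of size one is simply $Ζ$; since $Ζ \in \Ocal_\Hcal = \mathsf{conn}(\obs(\Hcal))$ is connected, being outside $\Hcal^{(c)}$ forces $Ζ \notin \Hcal$, and the recursive clause then gives $g(1) = \td_\Hcal(Ζ) \ge 1$.

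For the inductive step with $t \ge 2$, let $G \coloneqq \mathscr{E}^{\mathbf{Ζ}}_t$. Since $G$ is connected and contains $Ζ$ as a subgraph, $G \notin \Hcal$, so $\td_\Hcal(G) = 1 + \min_{x \in V(G)} \td_\Hcal(G - x)$. Every vertex $x$ of $G$ lies in a unique copy $Ζ^j$ of $Ζ$, and removing $x$ leaves the copies $Ζ^1, \ldots, Ζ^{j-1}$ together with their gluing edges $u^i v^{i+1}$ for $i \le j-2$ completely untouched, yielding a $\mathbf{Ζ}$-chain of size $j - 1$ as a subgraph of $G - x$; symmetrically, the copies $Ζ^{j+1},\ldots,Ζ^t$ together with their gluing edges form a $\mathbf{Ζ}$-chain of size $t - j$ inside $G - x$. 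Since $(j-1) + (t-j) = t-1$, the larger of these two sizes satisfies $\max(j-1, t-j) \ge \lceil (t-1)/2 \rceil = \lfloor t/2 \rfloor$, so $G - x$ contains a $\mathbf{Ζ}$-chain of size at least $\lfloor t/2 \rfloor$ as a subgraph.

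The final ingredient I would invoke is the subgraph (in fact, minor) monotonicity of $\td_\Hcal$: if $H$ is a subgraph of $G'$, then $\td_\Hcal(H) \le \td_\Hcal(G')$. This is a standard property of the Bulian--Dawar parameter, provable by a short induction on $|V(G')|$ using that $\Hcal$ is minor-closed and that vertex deletion and component decomposition interact well with subgraph inclusion; one handles separately the cases $G' \in \Hcal$, $G'$ disconnected, and $G'$ connected but not in $\Hcal$. Combined with the structural observation of the previous paragraph, it gives $\td_\Hcal(G - x) \ge g(\lfloor t/2 \rfloor)$ for every $x$, whence the halving recursion $g(t) \ge 1 + g(\lfloor t/2 \rfloor)$. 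Iterating from $g(1) \ge 1$ then yields $g(t) \ge 1 + \lfloor \log_2 t \rfloor = \Omega(\log t)$, which is the claimed bound. The only genuinely delicate point is subgraph-monotonicity — everything else is direct structural bookkeeping on the chain — and I would include a short inductive verification of that property for completeness.
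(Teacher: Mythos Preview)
Your proof is correct and follows exactly the route the paper has in mind: the paper states this as an observation that ``follows immediately from \eqref{def_td_daw} and the definition of $\mathscr{E}^{\mathbf{Ζ}}$,'' and your halving recursion using the recursive clause of \eqref{def_td_daw} together with the evident fact that deleting one vertex from a $\mathbf{Z}$-chain of length $t$ leaves a $\mathbf{Z}$-chain of length at least $\lfloor t/2\rfloor$ as a subgraph is precisely that immediate argument spelled out. The minor-monotonicity of $\td_{\Hcal}$ you invoke is standard (and also follows directly from the alternative definition of $\Hcal\text{-}\td$ via \eqref{basic_scheme} with $\td$ in place of $\tw$), so there is no gap.
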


\begin{theorem}
Let $\Hcal$ be a minor-closed class. Then 
$\mathfrak{T}_{\Hcal}$ is a universal obstruction for $\Hcal$-$\td$ with single exponential gap.
\end{theorem}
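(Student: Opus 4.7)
The proof splits along the two directions of the claimed equivalence $\mathsf{p}_{\mathfrak{T}_{\Hcal}} \sim \Hcal\text{-}\td$. For the lower bound $\mathsf{p}_{\mathfrak{T}_{\Hcal}} \preceq \Hcal\text{-}\td$, I would handle the two components of $\mathfrak{T}_{\Hcal}$ separately. Since $\mathfrak{W}_{\Hcal} \subseteq \mathfrak{T}_{\Hcal}$ and $\Hcal\text{-}\tw \leq \Hcal\text{-}\td$, the walloid part is already a lower bound by \autoref{cor_walloid_lower_bound}. The chain part is handled directly by \autoref{obst_Htd_lower}, which yields $\Hcal\text{-}\td(\mathscr{E}^{\mathbf{Z}}_t) = \Omega(\log t)$; the logarithm here is eventually absorbed into the single exponential gap of the other direction, so the maximum of these two lower bounds delivers $\mathsf{p}_{\mathfrak{T}_{\Hcal}} \preceq \Hcal\text{-}\td$.

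For the upper bound $\Hcal\text{-}\td \preceq \mathsf{p}_{\mathfrak{T}_{\Hcal}}$ with single exponential gap, I would assume $\mathsf{p}_{\mathfrak{T}_{\Hcal}}(G) \leq k$ and exploit the two pieces of $\mathfrak{T}_{\Hcal}$ in tandem. From $\mathsf{p}_{\mathfrak{W}_{\Hcal}}(G) \leq k$, \autoref{main_grid_general} produces an $\Hcal$-tree decomposition $(T,\beta)$ of $G$ of width $w \leq f_{\ref{main_grid_general}}(h,k) \in 2^{k^{\Ocal_h(1)}}$. From $\mathsf{p}_{\mathscr{E}^{\mathbf{Z}}}(G) \leq k$ for every $\mathbf{Z} \in \Rcal(Z)$ and every $Z \in \Ocal_{\Hcal}$, \autoref{one_obst_Htd} yields that $G$ contains no $\Hcal$-chain of size exceeding $L := f(h) \cdot k$. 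Using the identity $\Hcal\text{-}\td = \Hcal^{(c)}\text{-}\td$, it would then suffice to prove the following key lemma: if $G$ admits an $\Hcal^{(c)}$-tree decomposition of width at most $w$ and $G$ has no $\Hcal^{(c)}$-chain of size $\geq L$, then $\Hcal\text{-}\td(G) \leq (w+1) \cdot L$. Substituting the bounds above yields $\Hcal\text{-}\td(G) \in 2^{k^{\Ocal_h(1)}}$, as required.

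I would establish the key lemma by induction on $L$, mirroring the strategy of \autoref{obs_con_apex}. The base case $L = 1$ reduces to $G \in \Hcal^{(c)}$, hence $\Hcal\text{-}\td(G) = 0$. For the inductive step, I would select a suitable node $t \in V(T)$, place the $\leq w+1$ vertices of $\beta(t)$ at the top of an $\Hcal^{(c)}$-treedepth decomposition, and recurse on each connected component of $G - \beta(t)$. The induction then gives $\Hcal\text{-}\td(G) \leq (w+1) + (w+1)(L-1) = (w+1)L$, provided we can guarantee that the maximum length of an $\Hcal^{(c)}$-chain drops by at least one in each component of $G - \beta(t)$.

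The main obstacle is precisely this chain-reduction guarantee. After removing $\beta(t)$, individual components could still admit long chains; I must argue that combining any such surviving chain with a carefully chosen obstruction touching $\beta(t)$ extends the chain by one, contradicting the size bound $L$ on chains in $G$ unless the reduction indeed occurs. This is where the full set $\{\mathscr{E}^{\mathbf{Z}} \mid \mathbf{Z} \in \Rcal(Z)\}$ rather than a single canonical chain per $Z$ becomes essential: the precise way a surviving chain attaches to an obstruction in $\beta(t)$ is encoded by a pair of roots $(v,u)$, and a pigeonhole absorption of the form of \autoref{one_obst_Htd} collapses the branching over all of $\Rcal(Z)$ into the constant factor $f(h)$. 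Formalizing this extension step — realizing the required connecting edge through the adhesion structure of the $\Hcal^{(c)}$-tree decomposition and selecting $t$ so that the obstruction in $\beta(t)$ has the right boundary behavior — is the principal technical work, analogous in spirit to the sink-orientation argument used in the proof of \autoref{obs_con_apex} but replacing disjointness of obstructions with chaining through common adhesions.
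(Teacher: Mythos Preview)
Your proposal is correct and follows essentially the same route as the paper: the lower bound splits into the walloid part via \autoref{cor_walloid_lower_bound} together with $\Hcal\text{-}\tw\leq\Hcal\text{-}\td$, and the chain part via \autoref{obst_Htd_lower}; the upper bound first bounds $\Hcal\text{-}\tw$ via \autoref{main_grid_general}, then bounds the maximum $\Hcal$-chain length via \autoref{one_obst_Htd}, and finally proves by induction on the chain length $k'$ that $\Hcal\text{-}\td(G)\leq(q'+1)\cdot k'$ using a sink-orientation argument on the $\Hcal$-tree decomposition, exactly as you anticipate.

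The one place where you overcomplicate matters is the ``chain-extension'' step. You worry about realizing the connecting edge through the adhesion structure and about the role of the full family $\Rcal(Z)$, but the paper's argument is simpler: one may assume $G$ is connected without loss of generality, and then if \emph{both} sides $G[X_t\setminus X_{t'}]$ and $G[X_{t'}\setminus X_t]$ of some edge $tt'$ contain an $\Hcal$-chain of length $k'$, connectivity of $G$ provides a path between these two chains, which immediately yields an $\Hcal$-chain of length at least $k'+1$ as a minor, contradicting the assumed bound. No careful adhesion analysis is needed; the roots of the $2$-rooted graphs in $\Rcal(Z)$ are only used in the definition of chains and in \autoref{one_obst_Htd}, not in this step.
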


\begin{proof}
Let $\mathfrak{E}_{\Hcal}\coloneqq \bigcup_{Z\in\Ocal_{\Hcal}}\big\{\mathscr{E}^{\mathbf{Ζ}}\mid \mathbf{Ζ}\in\mathcal{R}(Ζ)\big\}).$
We first prove that $\p_{\mathfrak{T}_{\Hcal}}\preceq \Hcal\text{-}\td.$ For this let
$k\in\Nbbb$ and let 
 $\mathscr{T}_{k}\in\mathscr{T}=\lin{\mathscr{T}_{k}}_{k\in\Nbbb}\in\mathfrak{T}_{\Hcal}.$
In the case where $\mathscr{T}\in\mathfrak{W}_{\Hcal}$
we know from \autoref{cor_walloid_lower_bound} that  ${\Hcal}\text{-}\td(\mathscr{T}_{t})≥{\Hcal}\text{-}\tw(\mathscr{T}_{t})=\Omega_{h}(k),$ where $h=h(\Hcal).$
In case where $\mathscr{T}\in  \mathfrak{E}_{\Hcal},$ it follows from \autoref{obst_Htd_lower} that 
${\Hcal}\text{-}\td(\mathscr{G}_{t})=\Omega(\log(k)).$ This proves  $\p_{\mathfrak{T}_{\Hcal}}\preceq \Hcal\text{-}\td.$

We proceed with the proof that $ \Hcal\text{-}\td\preceq \p_{\mathfrak{T}_{\Hcal}}.$
Let $k\in \Nbbb$ and let 
$G$ be a graph minor-excluding
$\mathscr{T}_{k}\in\mathscr{T}$ for every $\mathscr{T}\in\mathfrak{T}_{\Hcal}.$ W.l.o.g. we assume that $G$ is connected, otherwise we apply the proof to each of the components of $G.$
As $\mathfrak{T}_{\Hcal} \lesssim^{*} \mathfrak{W}_{\Hcal}$
we know that $\Hcal$-$\tw(G)≤q\coloneqq {2^{k^{\mathcal{O}_h(1)}}}.$
Let  $(T,\beta)$ be a $\mathcal{H}$-tree decomposition
of $G$ of width $≤q.$
We define the parameter  $\Hcal$-\textsf{lc}$(G)$ as the maximum $k$ for which 
$G$ contains an $\Hcal$-chain of size $k$ as a minor. From \autoref{one_obst_Htd}
we have that $\Hcal{\sf \text{-}lc}(G)$
is bounded by $\mathcal{O}_{h}(\p_{\mathfrak{E}_{\Hcal}}(G)) = \mathcal{O}_{h}(k).$

Therefore,  it is enough to prove that, for every pair of integers $k',q'\in\Nbbb,$
and every graph $G$ of $\mathcal{H}$-treewidth at most $q',$ if $\Hcal\mathsf{\text{-}lc}(G)\leq k',$ then $\Hcal\text{-}\td(G)\leq (q'+1)\cdot k'.$
For this, we use induction on $k'.$
For every edge $tt'$ of $T$ we define $X_{t}$ and $X_{t'}$ as in \eqref{all_use_sep}.

We first observe that if there is some edge $e=tt'$ of $T$ where $\Hcal\mathsf{\text{-}lc}(G[X_t\setminus X_{t'}])\leq k'-1$ and $\Hcal\mathsf{\text{-}lc}(G[X_{t'}\setminus X_{t}])\leq k'-1,$ then 
$$\Hcal\text{-}\td(G)\leq |X_{t}\cap X_{t'}|+\max\{\Hcal\text{-}\td(G[X_t\setminus X_{t'}]),\Hcal\text{-}\td(G[X_{t'}\setminus X_{t}])\}\leq k' + (q'+1) \cdot (k'-1) = (q'+1) \cdot k'.$$
On the other side if it holds that if $\Hcal\mathsf{\text{-}lc}(G[X_t\setminus X_{t'}])=\Hcal\mathsf{\text{-}lc}(G[X_{t'}\setminus X_{t}])=k',$ then the connected graph $G$ contains some subgraph $Z$ of  $G[X_{t}\setminus X_{t'}]$ and some subgraph $Z'$ of  $G[X_{t'}\setminus X_{t}]$ such that both $Z$ and $Z'$ are $\Hcal$-chains of size $k'$ and there is a path joining $Z$ and $Z'.$
This leads to a contradiction because two $\Hcal$-chains of length $k'$ connected by some path implies the existence 
of some $\Hcal$-chain of length $k'+1.$

The conclusion of the above is that for every $e=tt'$ exactly one of $G[X_{t}\setminus X_{t'}]$ and $G[X_{t'}\setminus X_{t}]$ contains a $\Hcal$-chain of length $k'.$
We orient every edge towards the direction where the $\Hcal$-chain of length $k'$ is located.
This implies that there is a node $t$ of $T$ which is a sink with respect to this orientation of $T.$

If $t$ is not a leaf of $T$ let $A\coloneqq \beta(t),$ otherwise let $A\coloneqq A_t$ where $A_t$ is the unique adhesion set of the leaf $t\in V(T).$
As a consequence, if $C$ is a connected component of $G-A,$ then $\Hcal\mathsf{\text{-}lc}(C)\leq k'-1,$ therefore, by the induction hypothesis, $\Hcal\text{-}\td(G)\leq (q'+1) \cdot (k'-1).$

Then, $$\Hcal\text{-}\td(G)\le |X_{t}|+\max\{\Hcal\text{-}\td(C)\mid C\in\mathsf{cc}(G-A)\}\leq q'+1 + (q'+1) \cdot (k'-1)= (q'+1)\cdot k'$$ which completes the proof of the theorem.
\end{proof}

\section{Consequences and open problems}\llabel{sec_conclusion}

In this paper we provided obstructing sets for the parameters $\Hcal$-$\tw,$ $\Hcal$-$\td,$ and $\apex_{\Hcal}.$
Interestingly, our obstructing set $\mathfrak{W}_{\Hcal}$ for $\Hcal$-$\tw$ is a common ingredient to all other obstructing sets constructed in this paper.
Our proofs are all essentially based on the fact that excluding the parametric graphs in $\mathfrak{W}_{\Hcal}$ allows for the assumption of the existence of a $\mathcal{H}$-tree decomposition of bounded width.
This, in turn, facilitates the proofs that determine the rest of the parametric graphs in the obstructing sets for $\Hcal$-$\td,$ and $\apex_{\Hcal}.$
In the case of $\apex_{\Hcal},$ the ``half-integral'' nature of the walloids in $\mathfrak{W}_{\Hcal}$ is the essential reason for the half-integral Erd\H{o}s-P{\'o}sa property of graph minors as asserted by Liu's theorem.
Moreover, this is exactly how we obtain our constructive proof of Thomas' conjecture and thereby completely delineate the borderline between integrality and half-integrality in Erd\H{o}s-P{\'o}sa dualities for minors. 

The algorithmic consequences have already been discussed in \autoref{sec_algo_consequences}. We discuss below further directions yet to be investigated.

\vspace{-0.1em}
\paragraph{Compositions of classes and parameter.}
Let $\Hcal$ be a minor-closed graph class and let $\p$ be a minor-monotone graph parameter.
Extending the idea of the definition in \eqref{basic_scheme}, we define the \defi{composition} of $\Hcal$ and $\p$ as the parameter $\Hcal$-$\p\colon\gall\to\Nbbb$ where
\begin{align}
\Hcal\text{-}\p(G) = \min\{k\mid~& \text{there exists } X\subseteq V(G)\text{ s.\@ t.\@~ }\p(\torso(G,X))\leq k\text{ and} \llabel{general_composition}\\
&\text{every component of $G-X$ belongs to $\Hcal$}\}\nonumber
\end{align} 
Notice that if $\p\preceq \p'$ then $\Hcal\text{-}\p\preceq \Hcal\text{-}\p'.$
From this point of view, $\Hcal\text{-}\tw\preceq\Hcal\text{-}\td\preceq\Hcal\text{-}\size$ is implied immediately by the fact that $\tw\preceq\td\preceq\size.$
These three parameters have been studied extensively from the algorithmic point of view in \cite{EibenGHK21,JansenK021verte,AgrawalKLPRSZ22delet,Agrawal2022Distance,Jansen20235Approx,inamdar2023fpt,MorelleSST23fast}.\medskip

Observe that  the edgeless graphs in $\lin{t\cdot K_{1}}_{t\in\Nbbb}$ is a universal obstruction of $\size$
and that the paths in $\lin{P_{t}}_{t\in\Nbbb}$ is a universal obstruction of $\td.$
According to \eqref{size_obs_one}, another way to define the collection $\mathfrak{S}_{\Hcal}$ is the following.

\begin{eqnarray}
\mathfrak{S}_{\Hcal}&  = &  \mathfrak{W}_{\Hcal}\cup \bigcup_{Z\in\Ocal_{\Hcal}}\big\{\lin{t\cdot Z}_{t\in\Nbbb}\big\}  \llabel{next_td_obs}
\end{eqnarray}
Let's discuss the similarities in the definitions of \eqref{obst_Htd_more} and \eqref{next_td_obs}. 
The first is that $\mathfrak{W}_{\Hcal}$ always appears as a common ``half-integral'' component.
Notice that, in \eqref{obst_Htd_more}, $\big\{\mathscr{E}^{\mathbf{Ζ}}\mid \mathbf{Ζ}\in\mathcal{R}(Ζ)\big\}$ expresses all possible ways of substituting the vertices an edges in the paths of the universal obstructions of $\td$ by replacing vertices and edges by an obstruction $Z$ of $\Hcal.$
Similarly, in \eqref{next_td_obs}, $\lin{t\cdot Z}_{t\in\Nbbb}$ substitutes the vertices of the edgeless graphs by the universal obstructions of $\size$ by copies of some obstruction $Z$ of $\Hcal.$

We believe that a similar composition mechanism should be able to construct universal obstructions for every parameter $\Hcal\text{-}\p,$  where $\tw\preceq\p\preceq\size$ and $\p\not\in\{\tw,\size\}.$
Assume that $\p$ has some \textsl{finite} universal obstruction $\mathfrak{Q}$ and that $\mathscr{Q}=\lin{\mathscr{Q}_{t}}_{t\in\Nbbb}$ is one of its parametric graphs.
Given some obstruction $Z$ of $\Hcal,$ we apply the following transformation to each $\mathscr{Q}_{t}$: pick three $2$-rooted graphs $(Z^v,v,x_v), (Z^x,x,x), (Z^{u},x_u,u)  \in \mathcal{R}(Ζ)$  and consider the 2-rooted graph $\overline{\mathbf{Z}}=(\overline{Z},v,u)$ where $\overline{Z}$ is obtained by the disjoint union of $Z^v,$ $Z^x,$ and $Z^{u}$ and the identification of $x,$ $x_{v},$ and $x_{u}$ to $x.$
We denote by $\overline{\Rcal}_{Z}$ the set containing all 2-rooted graphs $\overline{\mathbf{Z}}$ that can be defined in this way.
Next, we replace each edge $\{v,u\}$ of $\mathscr{Q}_{t}$ by $\overline{\mathbf{Z}}=(\overline{Z},v,u)$ in the natural way.
We denote by $\mathscr{Q}^{\overline{Z}}=\lin{\mathscr{Q}_{t}^{\overline{Z}}}_{t\in\Nbbb}$ the parametric graph obtained this way.
We believe that this procedure creates an obstructing set for $\Hcal\text{-}\p$ from $\mathfrak{W}_{\Hcal}$ together with a \textsl{finite} collection of parametric graphs whose structure is ``upper bounded'' by some $\mathscr{Q}^{\overline{\mathbf{Z}}},$ as above.
In particular, we set 
\begin{eqnarray*}
\mathfrak{G}_{\Hcal}&  \coloneqq &  \mathsf{min}(\mathfrak{W}_{\Hcal}\cup \bigcup_{Z\in\Ocal_{\Hcal}}\big\{\mathscr{Q}^{\overline{\mathbf{Z}}}\mid \overline{\mathbf{Ζ}}\in\overline{\mathcal{R}}(Ζ)\big\})   
\end{eqnarray*}
and we conjecture that if $\p$ has a finite universal obstruction $\mathfrak{Z}_{\Hcal},$ then 
$\mathfrak{Z}_{\Hcal}\lesssim^{*}\mathfrak{G}_{\Hcal}.$
We stress that this conjecture, if resolved, would indicate that $\mathfrak{W}_{\Hcal}$ is ``ubiquitous'' to the universal obstructions of $\Hcal\text{-}\p.$
However, it would not give a finite universal obstruction to $\Hcal\text{-}\p$ but rather an ``upper bound'' revealing some of the structural characteristics of an obstruction set.
An interesting question is whether the result of Thomas in \cite{Thomas1989wellquasi} on the BQO of graphs with respect to the minor relation may be useful for proving the finiteness of the obstructions of $\Hcal\text{-}\p$ for particular instantiations of $\p.$

\vspace{-0.2em}
\paragraph{Bounding the size of the universal obstructions.}
According to \eqref{half_int_obs}, $|\mathfrak{F}_{\Hcal}^{\nicefrac{1}{2}}|\in 2^{2^{\mathcal{O}(\ell(h^2))}},$ where $\ell(\cdot)$ is the linkage function for which the best known bound is the one in \cite{Kawarabayashi2010Shorter} which is at least triple exponential.
This bound propagates to all obstructing sets defined in this paper, i.\@ e.\@, $|\mathfrak{W}_{\Hcal}^\mathsf{min}|, |\mathfrak{T}_{\Hcal}^\mathsf{min}|, |\mathfrak{F}_{\Hcal}^\mathsf{min}|,$ and it would be nice to improve it.
Such an improvement would be useful for the actual construction of all embedding pairs that generate $\mathfrak{F}_{\Hcal}^{\nicefrac{1}{2}},$ especially when $\obs(\Hcal)$ becomes complicated.
We believe that much better upper bounds can be obtained at the price of a worse gap function.
This is because considering bigger walloids may provide us with enough space for proving that more complicated embedding pairs are ``absorbed'' by simpler ones which would allow us to discard some of the members of the obstructing set $\mathfrak{W}_{\mathcal{H}}$ for $\mathcal{H}$-treewidth.

Our conjecture is that the bound $2^{2^{\Ocal(\ell(h^{2}))}}$ can be considerably improved to one that is polynomial in $h.$
However, we do not believe that this may become possible without paying the price of a worse gap function in our parametric equivalences.

\vspace{-0.2em}
\paragraph{Only few obstructions are enough.}
Let $\Ocal=\lin{\Ocal_{k}}_{k\in\Nbbb}$ be some parametric family of antichains for the minor relation.
We say that $\Ocal$ \defi{defines} the parameter $\p$ if $\p(G)$ is the minimum $k$ such that $G$ excludes all graphs in $\Ocal_{k}$ as minors.
We would like to suggest the following interpretation of our obstructing sets (or better, the universal obstructions obtained from their minimizations) for the parameters $\Hcal$-\tw, $\Hcal$-\td, and $\apex_{\Hcal}$:
We first define 
\begin{eqnarray}
\Tcal\Wcal_{k}^{\Hcal} & = & \{G\mid \Hcal\text{-}\tw(G)\leq k\}, \llabel{tw_obs_class_sm}
\end{eqnarray}
and recall the analogous definitions of the graph classes $\Tcal\Dcal_{k}^{\Hcal}$ and $\Acal\Pcal_{k}^{\Hcal}$ given in \eqref{td_obs_class_sm} and \eqref{apex_obs_small} respectively.
Using the terminology above, it holds that  
\begin{eqnarray*}
\p_{\obs(\Tcal\Wcal_{k}^{\Hcal})}=\Hcal\text{-}\tw, & 
\p_{\obs(\Tcal\Dcal_{k}^{\Hcal})}=\Hcal\text{-}\td,\ \  \text{and}&
\p_{\obs(\Acal\Pcal_{k}^{\Hcal})}=\apex_\Hcal.
\end{eqnarray*}
Unfortunately, the sizes of the parametric families $\obs(\Tcal\Wcal_{k}^{\Hcal}),$ $\obs(\Tcal\Dcal_{k}^{\Hcal}),$ and $\obs(\Acal\Pcal_{k}^{\Hcal})$ grow fast as $k$ increases.
For instance, in the base case where $\Hcal=\varnothing,$ where we obtain the parameters $\tw$/$\td$/$\mathsf{vc},$\footnote{we use $\mathsf{vc}$ for the vertex cover number of a graph.} exponential lower bounds to the number of obstructions are known, see \cite{Ramachandramurthi95alow} for $\tw,$ \cite{DvorakGT12forbi} for $\td,$ and \cite{Dinneen97} for $\mathsf{vc}.$
Also, as we already mentioned in the beginnings of \autoref{univ_apex_all_small} and \autoref{univ_td_all} there are explicit upper bounds for the sizes of $\obs(\Tcal\Dcal_{k}^{\Hcal})$ and $\obs(\Acal\Pcal_{k}^{\Hcal})$ (to our knowledge no explicit bound to the size of the members of $\obs(\Tcal\Wcal_{k}^{\Hcal})$ is known). 
Given the current state of the art, it appears rather hopeless to expect that some precise characterization of these obstruction sets may be found.
Our obstructing sets suggest that this becomes possible if we drop our expectations and only ask for parametric equivalence. In fact, our results imply that, \textsl{no matter the value of $k$}, we may discard all but $2^{2^{\Ocal(\ell(h^{2}))}}$ of the members of $\obs(\Tcal\Wcal_{k}^{\Hcal})$/$\obs(\Tcal\Dcal_{k}^{\Hcal})$/$\obs(\Acal\Pcal_{k}^{\Hcal})$ and still have the following parametric families of antichains
\begin{eqnarray*}
\Ocal^{\Hcal\text{-}\tw}=\lin{\Ocal_{k}^{{\Hcal\text{-}\tw}}}_{k\in\Nbbb}, & 
\Ocal^{\Hcal\text{-}\td}=\lin{\Ocal_{k}^{{\Hcal\text{-}\td}}}_{k\in\Nbbb} \ \  \text{and}&
\Ocal^{\apex_\Hcal}=\lin{\Ocal_{k}^{\apex_\Hcal}}_{k\in\Nbbb},
\end{eqnarray*}
where the parameters defined by these sets are equivalent to $\Hcal\text{-}\tw,$ $\Hcal\text{-}\td,$ and $\apex_\Hcal.$
This means that ``only'' $2^{2^{\Ocal(\ell(h^{2}))}}$ obstructions are enough for determining the parametric behavior of $\Hcal\text{-}\mathsf{tw},$ $\Hcal\text{-}\mathsf{td},$ and $\apex_{\Hcal},$ for \textsl{any} choice of the class $\Hcal.$
An interesting question is whether by including more (but still not too many) obstructions in the obstruction sets above one can obtain better parametric (preferably with polynomial or even linear gaps) estimations for the corresponding parameters.
Is there an interpretation of the constant-factor approximation algorithms in \cite{Jansen20235Approx} via parametric graph
exclusion?

\vspace{-0.2em}
\paragraph{Strict and strong brambles.}
As we have seen in \autoref{cor_bramble_to_half_int_bramble}, for every minor-closed class $\Hcal,$ the existence of a strict $\Hcal$-bramble of order $f_{\ref{cor_bramble_to_half_int_bramble}}(h,k)$ in a graph implies the existence of a $\nicefrac{1}{2}$-$\Hcal$-bramble of order $k.$
The function $f_{\ref{cor_bramble_to_half_int_bramble}}(h,k)$ here again depends on our general gap function and is single exponential in $k.$
We would certainly like to improve this and we believe that it is indeed possible.
The reason is that the $\nicefrac{1}{2}$-$\Hcal$-brambles found by our proof are of very particular nature:
they stem directly from our walloids and their construction requires the full machinery of our proof and the supporting results from \cite{kawarabayashi2020quickly} and \cite{thilikos2023excluding}.
However, $\nicefrac{1}{2}$-$\mathcal{H}$-brambles may not only be certified by walloids and we cannot exclude the existence of better bounds.
For instance, when $\Hcal$ has some planar obstruction, that is when ${\Hcal}$-$\tw=\tw+\Theta_{h}(1),$ much better dependencies are known.
In particular, Hatzel, Komosa, Pilipczuk, and Sorge proved in \cite{HatzelKPS22} that the existence of a $\nicefrac{1}{2}$-bramble of order $k$ is implied by a bramble of order $\Omega(k^2\cdot \mathsf{polylog}(k))$ (see also \cite{GroheM09,KreutzerT10} for related results).
Whether these bounds can be extended for other choices of $\Hcal$ is an open question.
For this, even a polynomial gap for the case where $\Hcal$ is the class of planar graphs would be interesting.
Another direction is to pursue better dependencies by relaxing the half-integrality, for instance by asking for a $\nicefrac{1}{3}$-$\Hcal$-bramble or a $\nicefrac{1}{4}$-$\Hcal$-bramble where no vertex is used by more than $3$ or $4$ bramble elements.
Is there such a relaxation that may lead to a polynomial dependency for all $\Hcal$s?

\vspace{-0.2em}
\paragraph{Special families of $\Hcal$'s.}
Our work leaves the project of precisely determining $\mathfrak{W}_{\Hcal}^{\mathsf{min}}$ for particular instantiations of the class $\Hcal$ as an open problem.
Are there additional properties for $\Hcal$ that may facilitate such a description?
As a first step in this direction, in     \cite{PaulPTW24Delineating}, we focus on a particular family of $\Hcal$'s, namely those whose obstructions are all \textsl{Kuratowski connected} and which contain at least one obstruction that is a minor of a \textsl{shallow-vortex grid}.
Providing the definitions of these conditions escape the purposes of this paper (for the interested reader, the definitions can be found in \cite{robertson1995sachs} and \cite{thilikos2022killing}).
We wish to mention that when these conditions are imposed on $\Hcal,$ the obstructing sets are much simpler:
They are one or two walloids capturing surfaces (that is, these walloids do not have flower segments) and, using our terminology, are those which are generated by pairs $(\Sigma,\mathbf{B})$ where $\mathbf{B}=\emptyset,$ and a third type of walloid which is generated by pairs of the form $(\Sigma^{(0,0)},\mathbf{B}^*)$ where $\mathbf{B}^*=\{\langle Z,\langle y\rangle \rangle\},$ $Z\in\mathsf{obs}(\mathcal{H}),$ and $y$ is an arbitrary vertex of $Z.$
\medskip

\paragraph{Acknowledgements:}

The authors wish to thank the anonymous reviewers for their remarks and suggestions on  earlier versions of this paper. \medskip

The second and third authors wish to thank Professor \href{https://cgi.di.uoa.gr/~prondo/}{Panos Rondogiannis} for his pivotal influence, which contributed to the realization of this project.

\end{document}